\newtheorem{lemma}{Lemma}[section]
\newtheorem{proposition}[lemma]{Proposition}
\newtheorem{corollary}[lemma]{Corollary}
\newtheorem{theorem}[lemma]{Theorem}
\newtheorem{example}[lemma]{Example}
\newtheorem{definition}[lemma]{Definition}
\newtheorem{remark}[lemma]{Remark}
\newtheorem*{Acknowledgement}{Acknowledgements}
\newtheorem{Theorem}{Theorem}
\newtheorem{assumption}[lemma]{Assumption}
\newtheorem{Corollary}[Theorem]{Corollary}
\theoremstyle{remark}
\newtheorem*{Remark}{Remark}
\newcommand\kridx[3]{\index{#2@$#1$}#1}
\newcommand\cf{cf.\@ }
\newcommand\ie{i.e.\@ }
\newcommand\pa{ \partial}
\newcommand\bbC{\mathbb C}
\newcommand\bbN{\mathbb N}
\newcommand\bbR{\mathbb R}
\newcommand\bbS{\mathbb S}
\newcommand\ol[1]{\overline{#1}}
\newcommand\set[1]{\left\{#1\right\}}
\newcommand\abs[1]{\left|#1\right|}
\newcommand\pd[1]{\frac{\pa}{\pa #1}}
\newcommand\rst{|}
\newcommand\inv{{-1}}
\newcommand\QFBT{{}^\QFB T}
\newcommand\QbT{{}^\Qb T}
\newcommand\OmegaQFB{{}^\QFB \Omega}
\newcommand\OmegaQb{{}^\Qb \Omega}
\newcommand\Omegab{{}^b \Omega}
\renewcommand\Re{\operatorname{Re}}
\newcommand\lrp[1]{\left( #1 \right)}
\newcommand\CI{\mathcal{C}^{\infty}}
\newcommand\Diff{\operatorname{Diff}}
\newcommand\cC{\mathcal{C}}
\newcommand\cO{\mathcal{O}}
\newcommand\cA{\sA}
\newcommand\cE{\mathcal{E}}
\newcommand\cD{\mathcal{D}}
\newcommand\cF{\mathcal{F}}
\newcommand\cL{\mathcal{L}}
\newcommand\cV{\mathcal{V}}
\newcommand\cU{\mathcal{U}}
\newcommand\cI{\mathcal{I}}
\newcommand\End{\operatorname{End}}
\newcommand\pr{\operatorname{pr}}
\newcommand\phg{\operatorname{phg}}
\newcommand\Spec{\operatorname{Spec}}
\newcommand\Id{\operatorname{Id}}
\newcommand\bM{\overline{M}}
\renewcommand\sc{\operatorname{sc}}
\newcommand\ff{\operatorname{ff}}
\newcommand\cH{\mathcal{H}}
\newcommand\cN{\mathcal{N}}
\newcommand\cM{\mathcal{M}}
\newcommand\AC{\operatorname{AC}}
\newcommand\res{\operatorname{res}}
\newcommand\SU{\operatorname{SU}}
\newcommand{\cP}{\mathcal{P}}
\newcommand\QAC{{\operatorname{QAC}}}
\newcommand\Qb{{\operatorname{Qb}}}
\newcommand\QFB{{\operatorname{QFB}}}
\newcommand\QFC{\operatorname{QFC}}
\newcommand\ALE{\operatorname{ALE}}
\newcommand\QALE{\operatorname{QALE}}
\newcommand\bV{\overline{V}}
\newcommand\cK{\mathcal{K}}
\newcommand\sus{\operatorname{sus}}
\newcommand\Hom{\operatorname{Hom}}
\newcommand\cS{\mathcal{S}}
\newcommand\sA{\mathscr{A}}
\newcommand\sB{\mathscr{B}}
\newcommand\bd{\operatorname{b}}
\newcommand\w{\mathfrak{w}}
\newcommand\cQ{\mathcal{Q}}
\newcommand\cR{\mathcal{R}}
\newcommand\cB{\mathcal{B}}
\newcommand\cG{\mathcal{G}}
\renewcommand\k{\mathscr{K}}
\newcommand\cl{\operatorname{cl}}
\newcommand\op{\operatorname{op}}
\newcommand\rp{\operatorname{rp}}
\newcommand\ttimes{\mathbin{\hbox to -0.07em{\raisebox{0.5em}{$\scriptstyle\sim$}}\times}}
\newcommand\rttimes{\mathbin{\hbox to -0.07em{\raisebox{0.5em}{$\scriptstyle\sim$}}\times_r}}
\newcommand\jtimes{\mathbin{\hbox to 0.07em{\raisebox{0.5em}{$\scriptstyle\sim$}}\star}}
\newcommand\rjtimes{\mathbin{\hbox to 0.07em{\raisebox{0.5em}{$\scriptstyle\sim$}}\star_r}}\newcommand\diag{\mathfrak{D}}
\newcommand\cn{\operatorname{cn}}
\newcommand\bvarrho{\overline{\varrho}}
\begin{document}
\title[$\QFB$ operators]
{Quasi-fibered boundary pseudodifferential operators}

\author{Chris Kottke}
\address{New College of Florida}
\email{ckottke@ncf.edu}


\author{Fr\'ed\'eric Rochon}
\address{Département de Mathématiques, Universit\'e du Qu\'ebec \`a Montr\'eal}
\email{rochon.frederic@uqam.ca}

\maketitle

\begin{abstract}
We develop a pseudodifferential calculus for differential operators associated to quasi-fibered boundary metrics ($\QFB$ metrics), a class of metrics including the quasi-asymptotically conical metrics ($\QAC$ metrics) of Degeratu-Mazzeo and the quasi-asymptotically locally Euclidean metrics ($\QALE$ metrics) of Joyce.  Introducing various principal symbols, we introduce the notion of fully elliptic  $\QFB$ operators and show that those are Fredholm when acting on $\QFB$ Sobolev spaces.  For $\QAC$ metrics, we also develop a pseudodifferential calculus for the conformally related class of $\Qb$ metrics.  We use these calculi to construct a parametrix for the Hodge-deRham operator of certain $\QFB$ metrics, allowing us to show that it is Fredholm on suitable Sobolev spaces and that  the space of $L^2$ harmonic forms is finite dimensional.  Our parametrix is obtained by inverting certain model operators at infinity, inversions that we achieve in part through a fine understanding of the low energy limit of the resolvent of the Hodge-deRham operator.  Our parametrix also implies that $L^2$ harmonic forms decay faster at infinity than an arbitrary $L^2$ form, the extra decay being quantified in terms of a small negative power of the distance function.  This decay of $L^2$ harmonic forms is used in a companion paper to study the $L^2$ cohomology of some $\QFB$ metrics.  \end{abstract}

\tableofcontents

\section*{Introduction}

In \cite{Joyce}, Joyce introduced the notion of quasi-asymptotically locally Euclidean ($\QALE$) metrics and produced examples that are Calabi-Yau.  Such metrics are a generalization of the well-known class of asymptotically locally Euclidean ($\ALE$) metrics, and are defined on a suitable crepant resolution $\pi : X \to \bbC^m/\Gamma$ where $\Gamma$ is a finite subgroup of $\SU(m)$.  When the action of $\Gamma$ is free on $\bbC^m\setminus \{0\}$, the associated $\QALE$ metric on $X$ is in fact an $\ALE$ metric with tangent cone at infinity $\bbC^m/\Gamma$; in particular, a cocompact set of $X$ is diffeomorphic to $(1,\infty)\times S^{2m-1}/\Gamma$ and the metric is asymptotic along with its derivatives to the model metric $dr^2 + r^2 g_{S}$, where $g_S$ is the image of the standard spherical metric on the quotient $S^{2m-1}/\Gamma$.

If instead the action of $\Gamma$ is not free on $\bbC^m\setminus \{0\}$, then $\bbC^m/\Gamma$ is still the tangent cone at infinity, but now it has rays of singularities going off to infinity and the resolution must be performed on a non-compact set. Away from this problematic region, $\QALE$ metrics behave like $\ALE$ metrics. 
On the other hand, near 
a singular point $p$ of $(\bbC^m\setminus \{0\})/\Gamma$, a neighborhood is diffeomorphic
to a subset $V$ of $\bbC^{m-1}/\Gamma_p \times \bbC$,
where $\Gamma_p$ is the stabilizer of $p$, and the resolution of $V$ is determined by
\begin{equation}
  \pi : Y_p \times \bbC \to \bbC^{m-1}/\Gamma_p \times \bbC
\label{int.2}\end{equation}
where $Y_p$ is a crepant resolution of $\bbC^{m-1}/\Gamma_p$.
This is a prototypical example of a local product resolution, as discussed in \cite{Joyce}, and allows $\QALE$ metrics to be defined iteratively by {\em depth}.  Namely, a depth 1 $\QALE$ metric is\footnote{A slightly different convention is used in \cite{DM2018}.}  simply an $\ALE$ metric, while in general
one can define the quasi-isometry class of a $\QALE$ metric on 
the subset $\pi^{-1}(V)$ to be represented by the Cartesian product
\begin{equation}
  g_{Y_p}+ g_{\bbC},
\label{int.3}\end{equation}
where $g_{\bbC}$ is the Euclidean metric and $g_{Y_p}$ is itself a $\QALE$ metric of strictly lower depth on $Y_p$. In particular, if the action of $\Gamma_p$ is free on $\bbC^m\setminus \{0\}$, then $g_{Y_p}$ has depth 1 and is an $\ALE$ metric.   

For many applications, it is necessary to go beyond quasi-isometry classes and impose stronger conditions on $\QALE$ metrics and their asymptotics with respect to the local models \eqref{int.3}.
In particular, Joyce in \cite{Joyce} relies on a refined definition to solve a complex Monge-Amp\`ere equation and produce examples of Calabi-Yau $\QALE$ metrics.  One of the key steps in this construction uses an isomorphism property of the scalar Laplacian of a $\QALE$ metric acting on suitable H\"older spaces.  

We consider here the larger class of quasi-fibered boundary ($\QFB$) metrics as introduced in \cite{CDR}, which includes the class of $\QALE$ metrics along with the more general class of quasi-asymptotically conic ($\QAC$) metrics originally introduced in \cite{DM2018}. The latter are themselves a generalization of the notion of asymptotically conic ($\AC$) metrics, which in turn also generalize $\ALE$ metrics but in a different way. The relationships between these types of metrics can be summarized in the following diagram:
\[
\begin{tikzpicture}
\matrix (m) [matrix of math nodes, row sep=0, column sep=2ex] {
	{} & \QALE & {} & {} \\
	\ALE &{} & \QAC & \QFB. \\
	{} & \AC & {} & {} \\
};
\path (m-2-1) edge[draw=none] node[auto=false,sloped] {$\subset$} (m-1-2)
      (m-2-1) edge[draw=none] node[auto=false,sloped] {$\subset$} (m-3-2)
      (m-1-2) edge[draw=none] node[auto=false,sloped] {$\subset$} (m-2-3)
      (m-3-2) edge[draw=none] node[auto=false,sloped] {$\subset$} (m-2-3)
      (m-2-3) edge[draw=none] node[auto=false,sloped] {$\subset$} (m-2-4);
\end{tikzpicture}
\]
To give a rough idea of what is a $\QFB$ metric, let us first recall that $\AC$ metrics, as the name suggests, are modelled outside a compact set on a Riemannian cone
\begin{equation}
  dr^2+ r^2g_{Y} \quad \mbox{on} \quad (0,\infty)\times Y
\label{int.4}\end{equation}
with $(Y,g_Y)$ a closed Riemannian manifold.  From this point of view, an $\ALE$ metric is an $\AC$ metric for which $(Y,g_{Y})$ is a spherical space form.  Now, the larger class of $\QAC$ metrics of Degeratu-Mazzeo \cite{DM2018} admits an iterative definition where the local model \eqref{int.4} is replaced more generally by a Cartesian product
\begin{equation}
   g_Z+ (dr^2+r^2g_Y) 
\label{int.5}\end{equation}
on a subset of $Z\times ((0,\infty)\times Y)$ of the form
\begin{equation}
   W= \{ (z,r,y)\in Z\times ((0,\infty)\times Y) \; |  \;  r>\delta,  \; d_{g_Z}(p,z)<  \frac{r}{\delta}\} \quad \mbox{for some} \; \delta>0,
\label{int.5b}\end{equation}
 where $g_{Z}$ is a  $\QAC$ metric defined on the lower dimensional space $Z$, $p\in Z$ is a fixed point, $d_{g_{Z}}$ is the Riemannian distance on $Z$ and  $((0,\infty)\times Y,dr^2+r^2g_Y)$ is a Riemannian cone with $Y$ a (possibly non-compact) smooth cross-section.  
Compared to a $\QALE$ metric, the tangent cone at infinity of a $\QAC$ metric is more complicated, of the form 
\begin{equation}
   dr^2+ r^2g_{\infty} \quad \mbox{on} \quad (0,\infty)\times S,
\label{int.6}\end{equation}
where $S$, instead of being a singular quotient of a sphere by a finite group of isometries, can more generally be taken to be a (smoothly) stratified space with $g_{\infty}$ a wedge metric \cite{AG} (also called incomplete iterated edge metric in \cite{ALMP2012}).    

More generally, $\QFB$ metrics can be defined iteratively like $\QAC$ metrics, but with the difference that in the local models \eqref{int.5}, $g_Z$ is now a $\QFB$ metric, while away from those local models the metric looks like a fibered boundary metric in the sense of \cite{Mazzeo-MelrosePhi} instead of an asymptotically conical metric.  A simple example of $\QFB$ metric would consist in taking the Cartesian product of a closed Riemannian manifold with a complete non-compact Riemannian manifold equipped with $\QAC$ metric.   In particular, $\QFB$ metrics that are not $\QAC$ have a strictly lower dimensional tangent cone at infinity and non-maximal volume growth.  Natural examples of $\QFB$ metrics arise in the ongoing work \cite{KS,FKS}, in which the authors compactify the moduli spaces of $\SU(2)$ monopoles on $\bbR^3$ and show that the natural $L^2$ metric on these moduli spaces is of $\QFB$ type.

Now, a $\QFB$ metric on a manifold $M$ naturally induces certain elliptic differential operators, including the scalar and Hodge Laplacians, the Hodge-deRham operator and so on.
More generally, we can define the set $\Diff^*_\QFB(M;E,F)$ of $\QFB$ differential operators as the compositions of vector fields which are uniformly bounded with respect to the metric, with coefficients in $\Hom(E,F)$ for vector bundles $E$ and $F$ over $M$. A more satisfactory definition of $\QFB$ structures and their associated differential operators is discussed below in terms of manifolds with fibered corners and Lie structures at infinity.

Ellipticity is defined as usual for a $\QFB$ differential operator in terms of invertibility of its principal symbol. In addition, a $\QFB$ operator admits a set of {\em normal operators}, by which it is modelled asymptotically in various regimes at infinity. These normal operators are themselves suspended versions of $\QFB$ operators, though of strictly lower depth. As in \cite{Mazzeo-MelrosePhi,DLR}, we say an elliptic operator is {\em fully elliptic} (c.f.\ Definition~\ref{sm.31} below) provided each of its normal operators is appropriately invertible. One of our first results is a Fredholm result for such operators, which follows from the construction of a parametrix within a calculus of $\QFB$ pseudodifferential operators.

\begin{Theorem}[Proposition~\ref{mp.3} and Theorem~\ref{mp.32} below]
  A fully elliptic pseudodifferential $\QFB$ operator $P\in \Psi^m_{\QFB}(M;E,F)$ induces a Fredholm operator
  \[
         P: x^{\mathfrak{t}}H^{m}_{\QFB}(M;E)\to x^{\mathfrak{t}}H^0_{\QFB}(M;F)
  \]
  for all multiweights $\mathfrak{t}$, where $H^{m}_{\QFB}(M;E)$ is the Sobolev space of order $m$ associated to a $\QFB$ metric $g$ with norm defined in \eqref{mp.28} below.  Moreover, elements in the kernel $P$ decay rapidly at infinity.  In fact, if $P$ happens to be a differential operator, then elements in the kernel even decay exponentially fast at infinity.    
   \label{int.7}\end{Theorem}
 
This result applies in particular to $\Delta_g-\lambda$, where $\Delta_g$ is the scalar Laplacian (with positive spectrum) of an exact (see Definition~\ref{pt.6}) $\QFB$ metric $g$ and $\lambda\in \bbC\setminus [0,\infty)$.  It also applies to some Dirac operators as described in Example~\ref{su.3e} below.  As explained in Remark~\ref{nbs.1} below, the exponential decay at infinity for elements in the kernel of a fully elliptic differential $\QFB$ operators can be seen as a generalization of results of Froese-Herbst \cite{Froese-Herbst} and Vasy \cite{Vasy2004}. Unfortunately, the above result does not apply to the scalar Laplacian $\Delta_g$ itself, as this operator has continuous spectrum all the way down to $0$ and cannot be Fredholm on $\QFB$ Sobolev spaces, hence is not fully elliptic. Note that this is not in contradiction with the isomorphism theorems of \cite{Joyce} and \cite{DM2018}, since as pointed out in \cite{CDR}, those results are obtained with respect to the weighted Sobolev spaces and weighted H\"older spaces of the different (but conformally related) class of $\Qb$ metrics.  

Briefly, using the manifold with fibered corners description, a $\Qb$ metric $g_{\Qb}$ on a manifold $M$ with fibered corners is of the form
 \begin{equation}
     g_{\Qb}= x_{\max}^2 g_{\QAC}
 \label{int.8}\end{equation} 
where $x_{\max}$ is a product of boundary defining functions for all the boundary hypersurfaces which are maximal with respect to the partial order induced by the fibered corners structure (roughly speaking, these represent the regions at infinity where the metric behaves asymptotically like an AC).
The $\Qb$ metrics are a generalization of the $b$ metrics of Melrose \cite{MelroseAPS} and the relation \eqref{int.8} is a natural generalization of the conformal 
relationship between $\AC$ metrics and $b$ metrics. As explained in \cite{CDR} and below, $\Qb$ metrics also admit a description in terms of a Lie structure at infinity.  

In addition to the pseudodifferential calculus for $\QFB$ operators, we develop here a pseudodifferential calculus for $\Qb$ operators. The normal operators for the latter
are more complicated and harder to invert in general. For this reason, 
rather than studying general elliptic Qb operators, we instead focus our effort on the analysis of a specific geometric operator, namely the Hodge-deRham operator of a $\QAC$ metric thought as a weighted $\Qb$ operator.  Referring to Theorem~\ref{HdR.6} as well as Corollaries~\ref{HdR.8} and \ref{HdR.9} for the detailed statement, we will in fact focus directly on the more general case of the Hodge-deRham operator of a $\QFB$ metric and construct a parametrix as follows.

\begin{Theorem}
If $(M,g)$ is a $\QFB$ manifold satisfying suitable hypotheses, then the corresponding Hodge-deRham operator admits a parametrix with nice compact error term.
\label{para.1}
\end{Theorem}
\begin{Remark}
For simplicity, in this introduction, we focus on the Hodge-deRham operator, but in the bulk of the paper, the various parametrix constructions and their implications are formulated more generally for Dirac $\QFB$ operators satisfying appropriate assumptions. 
\end{Remark}
\noindent  The hypotheses on the $\QFB$ manifold in Theorem~\ref{para.1} are of two sorts.  One is a mild geometric assumption on the form of the $\QFB$ metric ensuring nice asymptotic models for the Hodge-deRham operator; see Assumption~\ref{do.1} below.  To enforce the invertibility of these asymptotic models, we need also to assume that some of the model metrics describing the behavior of $g$ at infinity are sufficiently small to scale away some indicial roots.  However, some indicial roots are of cohomological nature and cannot be scaled away by tuning the metric.  To rule those out, we need to impose some cohomological conditions on the $\QFB$ manifold $(M,g)$.  This other sort of hypotheses is definitely restrictive, but nevertheless allows to cover a wide class of interesting examples.  For instance, according to Corollary~\ref{HdR.8}, if $g$ is a $\QALE$ metric, these conditions hold provided $\dim Y_p>4$ for each local product resolutions as in \eqref{int.2}.  

One important consequence of Theorem~\ref{para.1} is the following.
\begin{Corollary}[Corollary~\ref{do.53} with $\delta=-\frac12$]
Let $v= \prod_i x_i $ be a product of of the boundary defining functions for all the boundary hypersurfaces of $M$.  If $(M,g)$ is a $\QFB$ manifold as in Theorem~\ref{para.1}, then the associated Hodge-deRham operator $\eth_{\QFB}$ induces a Fredholm operator
 $$
    v^{-\frac12}\eth_{\QFB}v^{-\frac12}: \mathcal{D}_{\max}\to L^2_{\QFB}(M;\Lambda^*(T^*M)),
 $$
 with $\cD_{\max}$  the maximal extension of $v^{-\frac12}\eth_{\QFB}v^{-\frac12}$ seen as an unbounded operator acting on $L^2$ forms.  In particular, the space of $L^2$ harmonic forms of $(M,g)$ is finite dimensional.
\label{fredh.1}\end{Corollary}
In terms of vector fields, the operator $v^{-\frac12}\eth_{\QFB}v^{-\frac12}$ is more associated to the conformally related metric $g_{\QFC}:=v^2 g$.  By analogy with the depth one case \cite{HHM2004}, we say that $g_{\QFC}$ is a \textbf{quasi-fibered cusp metric}.  Because we are dealing with $v^{-\frac12}\eth_{\QFB}v^{-\frac12}$ instead of  $\eth_{\QFB}$, notice that it is not quite sufficient that the error term of the parametrix be compact.  We need in fact good decay of the error term at some of the boundary hypersurfaces of the $\QFB$ double space.  In turn, to obtain this good decay, we rely crucially on fine compositions results for $\QFB$ pseudodifferential operators.  This is also the case for the corresponding result for fibered boundary metrics \cite[Corollary~{3.17}]{KR0}, that is, the depth 1 case of Corollary~\ref{fredh.1}.

Another important consequence of Theorem~\ref{para.1} is the following result about the decay of harmonic forms.
\begin{Corollary}
For $(M,g)$ as in Theorem~\ref{para.1}, the space of $L^2$ harmonic forms is contained in
\[
	v^\epsilon L^2_{\QFB}(M; \Lambda^*(T^* M)),
\]
for some $\epsilon>0$.
\label{decay.1}\end{Corollary}
In the companion paper \cite{KR2}, this decay of $L^2$ harmonic forms is central in proving the Vafa-Witten conjecture on the Hilbert scheme of $n$ points on $\bbC^2$, as well as in proving the Sen conjecture for the monopole moduli space of charge $3$ using the announcement of \cite{FKS}.  Compared to the depth 1 case \cite[Corollary~{3.16}]{KR0}, notice however that we  cannot show that $L^2$ harmonic forms admit a polyhomogeneous expansion at infinity.  In fact, we are not even able to show that $L^2$ harmonic forms are conormal  in the sense of \cite[\S~3]{Melrose1992}, \ie that they stay in $L^2$ under the repeated action of $b$-vector fields.  We introduce instead the weaker notion of $\QFB$ conormal sections, that is, space of sections stable under the action of $\QFB$ vector fields.  This weaker notion of conormality allows us to give a sufficiently good pseudodifferential characterization of the inverse of the Fredholm operator of Corollary~\ref{fredh.1}.
\begin{Corollary} [Corollary~\ref{hd.4}]
The inverse of the Fredholm operator of Corollary~\ref{fredh.1}, that is, the operator $G$ such that
$$
           G(v^{-\frac12}\eth_{\QFB}v^{-\frac12})=(v^{-\frac12}\eth_{\QFB}v^{-\frac12})G=\Id -P
$$ 
with $P$ the $L^2$ orthogonal projection onto the kernel of $(v^{-\frac12}\eth_{\QFB}v^{-\frac12})$, is a $\QFB$ pseudodifferential operator of order $-1$.
\label{pch.1}\end{Corollary}

\medskip

We now proceed to discuss the above results in more detail and  put them in proper context.  
In \cite{DM2018}, Degeratu and Mazzeo were able to extend and generalize the isomorphism theorem of Joyce \cite[Corollary~9.5.14]{Joyce} to Laplace-type operators associated to a $\QAC$ metric.  While Joyce's approach was through the construction of a suitable barrier function, the results of \cite{DM2018} were obtained via careful heat kernel estimates based on the work of Grigor'yan and Saloff-Coste \cite{GSC2005}.  This allowed them to improve the isomorphism theorem of Joyce even for the scalar Laplacian of $\QALE$ metrics.  
This linear isomorphism of \cite[(7.1) and Theorem~7.6]{DM2018} was subsequently used in \cite{CDR} to produce examples of Calabi-Yau $\QAC$ metrics that are not $\QALE$.  More recently, relying on an ansatz of Hein-Naber \cite{Naber},  Yang Li \cite{Li2019} on $\bbC^3$, and independently  \cite{CR} and \cite{Gabor2019} on $\bbC^n$ for $n\ge 3$, produced examples of Calabi-Yau metrics of maximal volume growth that are not flat, giving counter-examples in all dimensions to a conjecture of Tian \cite{Tian2006}.  The tangent cone at infinity of these metrics has singular cross-section.  However, as explained in \cite{CR}, they are not quite $\QAC$ metrics, but are conformal to $\QAC$ metrics.  Other examples of Calabi-Yau metrics of maximal volume growth with tangent cone at infinity having a singular cross-section have been obtained by Biquard-Delcroix \cite{Biquard-Delcroix} via a generalization of a construction of Biquard-Gauduchon \cite{Biquard-Gauduchon}.

In all these examples, a key role was played by the mapping properties of the scalar Laplacian.  Unfortunately however,  unless strong assumptions on the curvature are made, neither the approach of Joyce nor the approach of Degeratu-Mazzeo seems to generalize to more complicated operators like the Hodge Laplacian or the Hodge-deRham operator.  On the other hand, one important ingredient in the construction of Calabi-Yau $\QAC$ metrics in \cite{CDR} was an alternative description of $\QAC$ metrics in terms of a compactification by a manifold with corners whose boundary hypersurfaces correspond to the various models of the form \eqref{int.5} at infinity.  These induce a fiber bundle on the total space of each boundary hypersurface, inducing an iterated fibration structure in the sense \cite{AM2011, ALMP2012}.  In other words, in the terminology of \cite{DLR}, the compactification is a manifold with {\em fibered corners} (defined below in section~\ref{vf.0}).  

To describe $\QAC$ metrics on such a compactification $M$, one associates a natural Lie structure at infinity in the sense of \cite{ALN04}.  Such a structure gives a Lie algebroid ${}^{\QAC}TM\to M$ with anchor map $a: {}^{\QAC}TM\to TM$ inducing an isomorphism on the interior of $M$.  A $\QAC$ metric on the interior of $M$ is then the restriction of a bundle metric on ${}^{\QAC}TM$ via the isomorphism $a_{M\setminus \pa M}: {}^{\QAC}TM|_{M\setminus \pa M}\to T(M\setminus \pa M)$.  This point of view naturally led the authors of \cite{CDR} to introduce the more general class of $\QFB$ metrics in terms of a Lie structure at infinity.

It is precisely in this setting of manifolds with corners and Lie structures at infinity that
one can hope to apply the microlocal approach of Melrose to study operators like the Hodge-deRham operator.  Roughly speaking, this approach consists of introducing a suitable manifold with corners---the {\em double space}---on which to define Schwartz kernels of pseudodifferential operators adapted to the geometry. Such operators include, among other things, the inverses (Green's functions) of Laplace-type operators. On the double space, the restriction of pseudodifferential operators to the various boundary hypersurfaces meeting the diagonal gives models---the normal operators alluded to above---for their asymptotic behaviors at infinity. One can then hope to invert a Laplace-type operator, or at least show it is Fredholm, by inverting these models.  

This microlocal approach has been carried out with great success in many situations where the Lie structure at infinity is defined on a compact manifold with boundary:  by Mazzeo-Melrose \cite{MM1987} for asymptotically hyperbolic metrics, by Epstein-Melrose-Mendoza \cite{EMM} for asymptotically complex hyperbolic metrics, by Melrose \cite{MelroseAPS, MelroseGST}  for asymptotically cylindrical metrics and asymptotically conical metrics,  by Mazzeo \cite{MazzeoEdge} for edge metrics and by Mazzeo-Melrose \cite{Mazzeo-MelrosePhi} for fibered boundary metrics.  These various pseudodifferential calculi can also be used to study operators associated to conformally related problems.  For instance, in \cite{GKM}, operators associated to wedge metrics (also called incomplete edge metrics)  can be studied using the edge calculus of \cite{MazzeoEdge}, while in his thesis \cite{Vaillant}, Vaillant study operators associated to fibered cusps metrics via the $\Phi$ calculus of \cite{Mazzeo-MelrosePhi}.  However, when the Lie structure at infinity is defined over a manifold with {\em corners} (beyond a simple boundary), it has only been in the last few years that there have been attempts to carry the microlocal approach of Melrose, notably in \cite{DLR} for fibered corners metrics and in \cite{AG} for edge metrics on stratified spaces of depth 2 or higher.  

In the present paper, we begin to develop the microlocal approach of Melrose to $\QFB$ operators.  Now, as discussed in \cite{CDR}, the definition of $\QFB$ metrics is closely related to the one of fibered corners metrics of \cite{DLR}.  One important difference however, is that, compared to  fibered corners metrics or even to edge metrics, 
 the various local models of $\QFB$ metrics are more intricate and do not admit a nice warped product presentation.  To be more precise, the local models \eqref{int.5} are of course nice Cartesian products of metrics, but the problem is that the subsets \eqref{int.5b} on which they are considered are not.  In terms of Lie structures at infinity, this corresponds to a more intricate description of $\QFB$ vector field in local coordinates, \cf \eqref{ds.4} for $\QFB$ vector fields with \cite[(2.5)]{DLR} for fibered corners vector fields or \eqref{E:VF_ie} for edge vector fields.  This has important implications for the construction of the $\QFB$ double space.  In particular, in contrast to \cite{DLR,AG}, it is not possible to solely blow up $p$-submanifolds intersecting the lifted diagonal.  This is because some of the submanifolds that  `need' to be blown-up are not initially $p$-submanifolds.  Thus it is only after performing preliminary blow-ups away from the lifted diagonal that those become $p$-submanifolds and can subsequently be blown up.  The order in which those blow-ups are performed turn out to be crucial, especially for the subsequent construction of a triple space needed to establish composition results.  We need in fact to rely on the companion paper \cite{KR3}, where a systematic way of blowing up, adapted to QFB metrics, is introduced. 
 
 As alluded  in Corollary~\ref{pch.1}, we need also to replace the conormality of \cite[\S~3]{Melrose1992} with the weaker notion of $\QFB$ conormality.  Because of this, we cannot directly rely on the standard pushforward theorem to establish composition, but fortunately, straightforward modifications of the proof of \cite[Theorem~4]{Melrose1992} suffice to provide the composition result that we need.  Notice that such a weaker notion of conormality can naturally be formulated for other calculi, for instance for the edge calculus \cite{MazzeoEdge,AG}, allowing to deal with situations where polyhomogeneity results are out of reach or not expected.  Except for this weaker notion of conormality, one can then define corresponding $\QFB$ pseudodifferential operators following the usual approach.  In particular, we can introduce suitable normal operators that correspond to suspended versions of $\QFB$ operators.  
This allows us, as in \cite{Mazzeo-MelrosePhi,DLR}, to introduce the notion of fully elliptic $\QFB$ operator in Definition~\ref{sm.31} below and to prove Theorem~\ref{int.7} above.
As discussed above, to analyze certain non-fully elliptic operators such as the Hodge-deRham operator, we develop a calculus of $\Qb$ operators. The double space for these is essentially the $\QAC$ double space, but with the last blow-up omitted.  

If we consider the Hodge-deRham operator associated to a $\QAC$ metric, there are two types of model operators to consider at infinity.  For a maximal boundary hypersurface (with respect to the partial order induced by the iterated fibration structure) the model operator essentially corresponds to the Hodge-deRham operator on the tangent cone at infinity of the $\QAC$ metric, which is a cone with cross-section a stratified space equipped with a wedge metric.  We can therefore use the edge calculus of \cite{MazzeoEdge,AG}  as well as the $\Qb$ calculus developed here to invert this model.  
For a non-maximal boundary hypersurface, the model to invert is  an operator of the form $\eth_{Z}+\eth_{V}$, where $\eth_{Z}$ is a Hodge-deRham operator of a $\QAC$ metric $g_Z$ on a manifold $Z$ of lower depth and $\eth_{V}$ is the Hodge-deRham operator of some Euclidean space $V$.  To invert it, we take a Fourier transform in the Euclidean factor and obtain a family of $\QAC$ operators parametrized by the dual variable  $\xi\in V^*$.  The key difficulty in trying to invert this family is that when $\xi\ne 0$, it is invertible as a $\QAC$ operator, while it is not when $\xi=0$. However, when $(Z,g_Z)$ is in fact an $\AC$ manifold, this is exactly the setting considered in our companion paper \cite{KR0}, where a generalization of  the works of Guillarmou-Hassell \cite{GH1,GH2} and Guillarmou-Sher \cite{GS} is obtained for the low energy limit of the resolvent of some Dirac operators associated to an $\AC$ metric and more generally a fibered boundary metric.  Using this result, we can then invert the model at $\xi=0$ as a weighted $b$ operator and combine it with the $\AC$ inverses for $\xi\ne 0$.   Thanks to the detailed description of  inverse Fourier transform of that inverse in \cite{KR0},  one can check that this inverse fits nicely as a conormal distribution on the corresponding boundary face of the $\QAC$ double space. Doing so however, the inverse of the model operator creates an error term at another boundary hypersurface away from the diagonal.  This forces us to invert a model at this boundary face as well.  Fortunately, this model is again given by the Hodge-deRham operator of a cone with cross-section a stratified space with a wedge metric, so we can invert it in the same way that it was done for the model associated to a maximal boundary hypersurface.  Thus, if we are with the Hodge-deRham operator of a $\QAC$ metric on a manifold with corners of depth 2, this completes the inversion of each model at infinity and produces a nice parametrix in the large $\QAC$ pseudodifferential calculus.  Since \cite{KR0} also works for fibered boundary metrics, a nice parametrix can also be produced for $\QFB$ manifolds of depth $2$ following roughly the same strategy.  

To extend this parametrix construction to $\QFB$ manifolds of higher depth, we can employ the same strategy provided one can obtain a suitable higher depth version of \cite{KR0}.  This turns out to be possible, namely, we manage to construct a suitable double space and a corresponding $\k,\QFB$ calculus in  which the low energy limit of the resolvent of a Hodge-deRham $\QFB$ operator admits a pseudodifferential characterization.  
\begin{Corollary}
If $(M,g)$ is a $\QFB$ manifold as in Theorem~\ref{para.1}, then the low energy limit of the resolvent of the corresponding Hodge Laplacian admits a pseudodifferential characterization in the sense of Corollary~\ref{HdR.7} below.
\label{para.2}\end{Corollary}         
The proof of this corollary relies in particular on Corollary~\ref{pch.1}.  We can also characterize the inverse Fourier transform of such a pseudodifferential characterization and check that it fits nicely where it should on a $\QFB$ double space of higher depth.  This allows to prove Theorem~\ref{para.1} by induction on the depth of the $\QFB$ manifold with Corollary~\ref{para.2} corresponding to the inductive step.  For this inductive argument to work, let us note however that we also need more generally a suspended version $\k,\QFB,\sus$ of the $\k,\QFB$ calculus, that is, a version of the calculus that allows to give a pseudodifferential characterization of the low energy limit of the resolvent of an operator of the form $\eth_Z+\eth_V$ with $\eth_Z$  the Hodge-deRham operator of a $\QFB$ metric $g_Z$ on a manifold $Z$ and $\eth_V$ the Hodge-deRham operator of some Euclidean space $V$.  This is because such resolvents occur as models at infinity (with $Z$ of lower depth) when trying to obtain a pseudodifferential characterization of the low energy limit of the resolvent of the Hodge-deRham operator of a $\QFB$ metric.    We can summarize how all these results combine to achieve the inductive step of the proof of Theorem~\ref{para.1} in the following diagram:
\begin{equation}
\xymatrix{
\mbox{depth} \; k & \QFB \ar[dd]\ar[rrrr]^-{Corollary~\ref{pch.1}} & & & & \k,\QFB \ar[d]_-{\mbox{Inverse Fourier transform}}  \ar[rrrr] & & & & \k,\QFB, \sus \ar[lllddl]\ar[dd] \\
 &  &  & & & \QFB, \sus \ar[dllll]\ar[urrrr] & & & &  \\
 \mbox{depth} \; k+1 & \QFB \ar[rrrr]_-{Corollary~\ref{pch.1}} & & & & \k,\QFB  \ar[rrrr] & & & & \k,\QFB, \sus 
}
\label{indstep.1}\end{equation}  
In this diagram, the first column corresponds to Theorem~\ref{para.1}, the items $\k,\QFB$ in the middle column correspond to Corollary~\ref{para.2} while the last column corresponds to a suspended version of Corollary~\ref{para.2}, namely, Theorem~\ref{ksm.38} below.  The item $ \QFB, \sus$  in the middle of the diagram corresponds to a suspended version of Corollary~\ref{pch.1}, namely a pseudodifferential characterization of the inverse of an operator of the form $\eth_Z+\eth_V$ with $\eth_Z$ the Hodge-deRham operator of a $\QFB$ metric $g_Z$ and $\eth_V$ the Hodge-deRham operator of some Euclidean space.  An arrow from one item to another indicates that the first item is used in a proof establishing the second item.  For instance, the arrow from $\QFB,\sus$ to $\QFB$ indicates that the proof of Theorem~\ref{para.1} on a $\QFB$ manifold of depth $k+1$  uses the suspended version of Corollary~\ref{pch.1} in depth $k$.

The paper is organized as follows.  In \S~\ref{vf.0} we review the notion of $\QFB$ metrics, $\QAC$ metrics and $\Qb$ metrics.  This is used in \S~\ref{ds.0} to construct the $\QFB$ double space and the $\Qb$ double space.  We then define pseudodifferential $\QFB$ operators and $\Qb$ operators in \S~\ref{ts.0}.  The $\QFB$ and $\Qb$ triple spaces are introduced in \S~\ref{tris.0}, yielding corresponding composition results in \S~\ref{com.0}.  In \S~\ref{sm.0}, we introduce the principal symbol of pseudodifferential $\QFB$ operators, as well as various normal operators, leading to the notion of fully elliptic $\QFB$ operator.  In \S~\ref{mp.0}, we obtain criteria for $\QFB$ operators and $\Qb$ operators to act on weighted $L^2$ spaces and show that fully elliptic differential $\QFB$ operators are Fredholm when acting on suitable $\QFB$ Sobolev spaces.  In \S~\ref{smb.0},  we introduce the principal symbol of $\Qb$ operators as well as normal operators and explain how the edge double space of \cite{MazzeoEdge, AG} naturally arises on the front faces of the $\Qb$ double space associated to maximal boundary hypersurfaces.  We take this as an opportunity to describe the results about edge operators that we will need.   In  \S~\ref{do.0}, we construct a parametrix in the $\QFB$ calculus for certain Dirac operators associated to a  $\QFB$ metric modulo some assumptions and prove Theorem~\ref{para.1} when the underlying $\QFB$ manifold is of depth $2$.  To extend this result to arbitrary depth, we introduce in \S~\ref{kqfb.0} $\k,\QFB$ operators, a pseudodifferential calculus adapted to the study of the low energy limit of the resolvent of a Dirac $\QFB$ operator.  After introducing a triple space for this calculus in \S~\ref{tkqfb.0}, we obtain in \S~\ref{ckqfb.0} good composition results.  After introducing symbol maps in \S~\ref{ksm.0}, we can finally provide in \S~\ref{qfble.0} a pseudodifferential characterization of the low energy limit of the resolvent of a Dirac $\QFB$ operator modulo some inductive assumption.  The inverse Fourier transform of this pseudodifferential characterization is given in \S~\ref{ift.0}, while \S~\ref{sle.0} provides a suspended version of the results of \S~\ref{qfble.0}.  Finally, all these results are combined in \S~\ref{HdR.0} to derive our main result Theorem~\ref{para.1}.

\begin{Acknowledgement}
The authors are grateful to Rafe Mazzeo, Richard Melrose and Michael Singer for stimulating discussions related to their project, as well as to two anonymous referees for many helpful suggestions to improve the manuscript. 
CK was supported by NSF Grant No.\ DMS-1811995. In addition, this material is based in part on work 
supported by the NSF
under Grant No.\ DMS-1440140 while CK was in residence at the
Mathematical Sciences Research Institute in Berkeley, California, during the
Fall 2019 semester.  FR was supported by NSERC and a Canada Research chair.
\end{Acknowledgement}


\numberwithin{equation}{section}

\section{$\QFB$ structures} \label{vf.0}

In this section, we will review the notion of quasi-fibered boundary metrics introduced in \cite{CDR} and clarify how such a notion depends on a choice of total boundary defining function.  
To this end, recall first that a {\bf manifold with fibered corners} (also called iterated space or manifold with corners equipped with an iterated fibration structure) \cite{AM2011,ALMP2012, DLR} is a manifold, $M$, with corners, all of whose boundary hypersurfaces $H$ are equipped with surjective submersions
\[
	\phi : H \to S, \quad \text{with fiber $Z$},
\]
satisfying a compatibility condition described below.
Here the base $S$ and fiber $Z$ are themselves manifolds with corners, and by the Ehresmann lemma \cite{Ehresmann} in the category of manifolds with corners, $\phi$ is a locally trivial fiber bundle.
It follows that boundary hypersurfaces of $H$ itself come in two types, either the restriction $\phi \rst_{\phi^\inv(T)}$ of the fiber bundle over a boundary hypersurface $T$ of the base $S$, or a smaller fiber bundle over $S$ with fiber a boundary hypersurface $W$ of the fiber $Z$.
Note that disjoint\footnote{by the usual condition that boundary hypersurfaces of a manifold with corners be embedded, only disjoint hypersurfaces of $Z$ may belong to a common boundary hypersurface of  $H$.}  boundary hypersurfaces of $Z$ may belong to the same boundary hypersurface of  $H$, so we allow for the possibility that $W \subset Z$ consists of a union of disjoint boundary hypersurfaces.

The compatibility condition is as follows: whenever $H_1 \cap H_2 \neq \emptyset$, 
we require that $\dim(S_1) \neq \dim(S_2)$ (say without loss of generality that $\dim(S_1) < \dim(S_2)$, and hence $\dim(Z_1) > \dim(Z_2)$), and then, on $H_1 \cap H_2$, $\phi_2$ restricts to a fiber bundle $\phi_2: H_1\cap H_2\to \pa_1 S_{2}$ onto a boundary hypersurface $\pa_{1} S_2$ of $S_2$ on one hand, while $\phi_1$ restricts to be a fiber bundle over $S_1$ with fiber a boundary hypersurface $\pa_{2} Z_1$ on the other hand.
We further require the existence of a fibration $\phi_{21} : \pa_1 S_2 \to S_1$ such that
$\phi_1 = \phi_{21} \circ \phi_2$; in other words,
\begin{equation}
	H_1 \cap H_2 \stackrel{\phi_2} \to \pa_{1} S_2 \stackrel{\phi_{21}} \to S_1
\label{iterated.1}\end{equation}
is an `iterated fibration'.

This fibered corners structure induces a partial order on hypersurfaces, where $H < H'$ if $H \cap H' \neq \emptyset$ and $\dim(S) < \dim(S')$, and likewise induces a fibered corners structure on each fiber space $Z$ and base space $S$.
By convention, we choose an enumeration $H_1,\ldots, H_\ell$ of the boundary hypersurfaces which is consistent with the partial order, with $H_i < H_j$ implying $i < j$, and we denote the collection of fibrations by $\phi = (\phi_1,\ldots, \phi_\ell)$; then we say that $(M,\phi)$ is a manifold with fibered corners.
It follows that $H_{i_1} \cap \cdots \cap H_{i_k} \neq \emptyset$ if and only if $H_{i_1} < \cdots < H_{i_k}$ is a totally ordered chain, and the fiber bundles induce an iterated fibration
\[
	H_{i_1}\cap \cdots \cap H_{i_k} \stackrel{\phi_{i_k}} \to \pa_{i_1 \cdots i_{k-1}} S_{i_k} \stackrel{ \phi_{i_k i_{k-1}}} \to \pa_{i_1 \cdots i_{k-2}} S_{i_{k-1}} \to \cdots \stackrel{ \phi_{i_2 i_1}} \to S_{i_1},
\]
the composition of the first $j+1$ of which constitute the restriction of $\phi_{i_{k-j}}$ to $H_{i_1}\cap \cdots \cap H_{i_k}$, which surjects onto the boundary face $\pa_{i_1 \cdots i_{k-j-1}} S_{i_{k-j}}$ of $S_{i_{k-j}}$, and has fiber
the boundary face $\pa_{i_k \cdots i_{k-j+1}} Z_{i_{k-j}}$ of $Z_{i_{k-j}}$.

As shown in \cite[Lemma~1.4]{DLR}, it is possible to choose global boundary defining functions $x_i$ for each $H_i$ which are {\em compatible} with $\phi$ in the sense that each $x_i$ is constant along the fibers of $\phi_j$ for $H_i<H_j$.  Unless otherwise stated, we will always assume that the boundary defining functions considered are compatible with the structure of fibered corners.  
By \cite[Lemma~1.10]{CDR}, recall that, given compatible boundary defining functions
$x_1,\ldots, x_{\ell}$, there is for each $i$ a tubular neighborhood 
\begin{equation}
 c_i: H_i\times [0,\epsilon)\hookrightarrow M
\label{pt.1}\end{equation}  
such that 
\begin{enumerate}
\item $c_i^*x_i=\pr_2$;
\item $c_i^*x_j=x_j\circ \pr_1$ for $j\ne i$;
\item $\phi_j \circ c_i(h,t)=\phi_j(h)\in S_j$ for $h\in H_i\cap H_j$, $H_j<H_i$;
\item For $H_j>H_i$, there is a corresponding tubular neighborhood $c_{ij}: \pa_i S_j\times [0,\epsilon)\to S_j$ such that $c_{ij}^{-1}\circ \phi_j\circ c_i(h,t)=(\phi_j(h),t)\in \pa_i S_j\times [0,\epsilon)$ for $h\in H_i\cap H_j$;
\end{enumerate}
where $\pr_1: H_i\times [0,\epsilon)\to H_i$ and $\pr_2: H_i\times [0,\epsilon)\to [0,\epsilon)$ are the projections on the first and second factors.  Similarly, in a sufficiently small neighborhood of a point $p$ in a component of the intersection $H_{i_1} \cap \cdots \cap H_{i_k}$, which we can relabel for simplicity as $H_1 \cap \cdots \cap H_k$, it is always possible to choose local coordinates
\begin{equation}
   (x_1,y_1,x_2,y_2,\ldots, x_{k},y_k,z)
\label{coor.1}\end{equation} 
where $y_i=(y_i^1,\ldots, y_i^{\ell_i})$ and $z\in(z^1,\ldots, z^q)$ are tuples,
such that 
\begin{itemize}
\item $x_i$ is a local boundary defining function for $H_i$,
\item $(x_1,\ldots, x_{i-1}, y_1,\ldots, y_i)$ are local coordinates for $S_i$ near $\phi_i(p)$, and
\item the fiber bundle $\phi_i$ is given in local coordinates by the projection
\begin{equation}
	\phi_i : (x_1,\ldots,\widehat{x}_i,\ldots,x_k,y_1,\ldots,y_k,z)\mapsto (x_1,\ldots,x_{i-1},y_1,\ldots,y_i),
\label{coor.2}\end{equation}
with the hat denoting omission.
In particular, the coordinates $(y_{i+1},\ldots, y_k, z)$ restrict to give coordinates on the fibers of $\phi_i$.  
\end{itemize}

We remark that to a manifold with fibered corners $(M,\phi)$,  we can canonically associated  a certain stratified space $X$ by iteratively collapsing the fibers of each boundary fibration. %
Thus $X$ is a disjoint union of strata $s_0, s_1, \ldots, s_\ell$ where $s_0 = M^\circ$ is dense, $s_i = S_i^\circ$, and the closure $\ol{s_i}$ of each stratum is the space obtained from $S_i$ by collapsing its boundary fibers. 
The partial order on strata where $s_i < s_j$ if $s_i \subset \ol{s_j}$ is the same as the order on hypersurfaces of $M$ discussed above, and corresponds to the reverse of the order by depth.
The resulting space $X$ carries additional data (not discussed here; see \cite{ALMP2012}) with respect to which $M$ may be recovered.

Let $(M,\phi)$ be a manifold with fibered corners as above, with compatible global boundary defining functions $x_1,\ldots, x_{\ell}$, and let $\kridx{v}{v_tot}{Total boundary defining function}=\prod_{i=1}^{\ell} x_i$ be the corresponding total boundary defining function.  
Then recall from\ \cite{CDR} that this choice of $v$, combined with the structure of manifold with fibered corners, induces a Lie algebra of quasi-fibered boundary vector fields $\cV_{\QFB}(M)$ ($\QFB$ vector fields for short).  More precisely, it is a subalgebra of the Lie algebra of $b$ vector fields 
$$
\kridx{\cV_b}{Vb}{$b$ vector fields}(M):= \{ \xi \in \CI(M,TM) \; | \; \xi x_i\in x_i\CI(M) \forall i\}
$$
defined by imposing the extra conditions that $\xi\in \cV_{b}(M)$ is in $\kridx{\cV_{\QFB}}{VQFB}{QFB vector fields}(M)$ if 
\begin{enumerate}
\item[(QFB1)] $\left.\xi\right|_{H_i}$ is tangent to the fibers of $\phi_i:H_i\to S_i$, and
\item[(QFB2)] $\xi v\in v^2\CI(M)$.
\end{enumerate}
With respect to local coordinates \eqref{coor.1}, it is straightforward to verify in light of \eqref{coor.2} that
the Lie subalgebra of vector fields satisfying (QFB1) alone, the edge vector fields $\kridx{\cV_e}{Ve}{edge vector fields}(M)$  of \cite{MazzeoEdge,ALMP2012, AG},  is generated by 
\begin{equation}
	v_1 \pd{x_1},\, v_1 \pd{y_1},\, v_2 \pd{x_2},\, v_2 \pd{y_2},\, \ldots,\, v_k \pd{x_k},\, v_k \pd{y_k},\, \pd z,
	\label{E:VF_ie}
\end{equation}
where $\kridx{v_i}{v_i}{partial boundary defining function} = \prod_{j=i}^k x_j$ and $\pd{y_i}$ and $\pd z$ are shorthand
for the sets of vector fields $\pd{y_i^1}, \ldots, \pd{y_i^{\ell_i}}$ and $\pd{z^1}, \ldots \pd{z^q}$, respectively.
Incorporating the additional condition (QFB2) leads to the subalgebra generated in 
local coordinates by
\begin{equation}
v_1x_1\frac{\pa}{\pa x_1}, v_1\frac{\pa}{\pa y_1}, v_2\left( x_2\frac{\pa}{\pa x_2}- x_1\frac{\pa}{\pa x_1} \right), v_2\frac{\pa}{\pa y_2}, \ldots, v_k\left( x_{k}\frac{\pa}{\pa x_{k}}- x_{k-1}\frac{\pa}{\pa x_{k-1}} \right), v_k\frac{\pa}{\pa y_k}, \frac{\pa}{\pa z}.
\label{ds.4}
\end{equation}
Indeed, the natural modification $v_i\, x_i\pd{x_i}$ of the iterated edge vector fields normal to the boundary hypersurfaces satisfy condition (QFB2); however the differences $x_i \pd{x_i} - x_j\pd{x_j}$ annihilate $v$ altogether, and multiplied by the factor of $v_{\min(i,j)+1}$ are seen to lie in the subalgebra of \eqref{E:VF_ie}, so \eqref{ds.4} provides a generating set.

This local computation shows that the condition (QFB2) above is equivalent to the condition appearing in \cite{CDR} that $\xi v_i \in v_i^2 \CI(M)$ for each $1\leq i \leq \ell$; in particular, the condition for $i > 2$ follows from the condition for $i = 1$ alone.  Compared to $\cV_e(M)$, the definition of $\cV_{\QFB}(M)$ depends in principle on the choice of the total boundary defining function $v$.   We say that two total boundary defining functions are \textbf{$\QFB$ equivalent} if they yield the same Lie algebra of $\QFB$ vector fields.  The following criterion will be useful to determine when two total boundary defining functions are $\QFB$ equivalent.  
\begin{lemma}
Two total boundary defining functions $v,v'\in \CI(M)$ are $\QFB$ equivalent if and only if the function 
$$
   f:= \log\left( \frac{v'}{v} \right)
$$
is such that for all boundary hypersurface $H_i$, $\left.f  \right|_{H_i}= \phi_i^* h_i$ for some $h_i\in \CI(S_i)$.  
\label{qfbs.1}\end{lemma}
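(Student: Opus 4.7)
Since $v, v' \in \CI(M)$ are both total boundary defining functions (products of the $x_i$), the ratio $v'/v$ is a smooth positive function on $M$, so $f = \log(v'/v) \in \CI(M)$. The first step is to reduce QFB equivalence to a condition on $f$: for any $b$ vector field $\xi$ satisfying (QFB1), one has
\[
    \xi v' \;=\; \xi(e^f v) \;=\; e^f(\xi f \cdot v + \xi v),
\]
so, given $\xi v\in v^2\CI(M)$, the condition $\xi v'\in (v')^2\CI(M)$ is equivalent to $\xi f \in v\CI(M)$ (using that $e^f$ is a smooth positive unit and that $v$ is a defining function, so $g v\in v^2\CI(M)\iff g\in v\CI(M)$). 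Consequently, $v$ and $v'$ are $\QFB$ equivalent if and only if $\xi f\in v\CI(M)$ for every $\xi\in\cV_\QFB(M)$, which we can check on the local generators listed in \eqref{ds.4}.

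\textbf{Sufficiency.} Suppose $f\rst_{H_i}=\phi_i^*h_i$ for each $i$. In the local coordinates \eqref{coor.1}, this says precisely that $f\rst_{\{x_i=0\}}$ is independent of the fiber coordinates $(x_{i+1},\ldots,x_k,y_{i+1},\ldots,y_k,z)$ of $\phi_i$; equivalently, $\pd{z^l}f$, $\pd{y_j}f$ ($j>i$), and $\pd{x_j}f$ ($j>i$) all vanish on $H_i$. Now check the generators: $\pd{z}f$ vanishes on every $H_i$ hence lies in $x_1\cdots x_k\CI(M)=v\CI(M)$; for $v_i\pd{y_i}$, the function $\pd{y_i}f$ vanishes on $H_1,\ldots,H_{i-1}$ so $\pd{y_i}f\in x_1\cdots x_{i-1}\CI(M)$, giving $v_i\pd{y_i}f\in v\CI(M)$; the generator $v_1x_1\pd{x_1}f=v\cdot x_1\pd{x_1}f$ is trivially in $v\CI(M)$; and for $v_i(x_i\pd{x_i}-x_{i-1}\pd{x_{i-1}})$ with $i\geq 2$, one verifies that the bracket vanishes at each of $H_1,\ldots,H_{i-1}$ (the term $x_{i-1}\pd{x_{i-1}}f$ vanishes on $H_{i-1}$ by the factor $x_{i-1}$, and all four summands vanish on $H_j$ for $j<i-1$ by the hypothesis on $f$), so the whole expression lies in $x_1\cdots x_{i-1}\CI(M)$ and multiplying by $v_i$ lands in $v\CI(M)$.

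\textbf{Necessity.} Conversely, suppose $v$ and $v'$ are $\QFB$ equivalent, so $\xi f\in v\CI(M)$ for every $\xi\in\cV_\QFB(M)$; in particular $\xi f\rst_{H_i}=0$. Fix $i$. Applying this to $\pd z$ yields $\pd{z}f\rst_{H_i}=0$. Applying it to $v_j\pd{y_j}$ with $j>i$ and noting that $v_j\rst_{H_i}=x_j\cdots x_k\rst_{H_i}$ does not involve $x_i$ and is therefore generically nonzero on $H_i$, smoothness of $\pd{y_j}f$ gives $\pd{y_j}f\rst_{H_i}=0$ on all of $H_i$. For the $\pd{x_j}$ directions with $j>i$ we argue by induction on $j$, starting with the base observation that $x_i\pd{x_i}f\rst_{H_i}=0$ automatically. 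Assuming $\pd{x_{j-1}}f\rst_{H_i}=0$ (vacuously if $j=i+1$, using the inductive step otherwise), the relation $v_j(x_j\pd{x_j}f-x_{j-1}\pd{x_{j-1}}f)\rst_{H_i}=0$ combined with the generic non-vanishing of $v_jx_j\rst_{H_i}$ forces $\pd{x_j}f\rst_{H_i}=0$. Thus $f\rst_{H_i}$ is killed by every tangent vector to the fibers of $\phi_i$, hence is constant on each fiber, and therefore descends to some $h_i\in\CI(S_i)$ with $f\rst_{H_i}=\phi_i^*h_i$.

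\textbf{Main difficulty.} The nontrivial part is the necessity direction in the $\pd{x_j}$ directions: one cannot produce a $\QFB$ vector field whose restriction to $H_i$ is a single undressed $\pd{x_j}$, because the (QFB2) condition forces the generators to come in the paired form $v_j(x_j\pd{x_j}-x_{j-1}\pd{x_{j-1}})$. One must therefore exploit the trivial identity $x_i\pd{x_i}f\rst_{H_i}=0$ to start an induction peeling off the $x_{j-1}\pd{x_{j-1}}$ term at each step, which is what makes essential use of the specific form \eqref{ds.4} of the $\QFB$ generators rather than the looser iterated-edge generators \eqref{E:VF_ie}.
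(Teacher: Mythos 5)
Your proof is correct and takes essentially the same route as the paper's: both reduce $\QFB$ equivalence to the condition that $df(\xi)\in v\,\CI(M)$ (equivalently $\tfrac{dv'}{(v')^2}(\xi)\in\CI(M)$) for the generating vector fields, and then identify this with fiber-constancy of $f\rst_{H_i}$. Your inductive peeling of $x_{j-1}\pd{x_{j-1}}f$ from the paired generators $v_j\bigl(x_j\pd{x_j}-x_{j-1}\pd{x_{j-1}}\bigr)$ in the necessity direction simply makes explicit a step the paper's proof (which defers to Lemma~1.16 of \cite{CDR}) leaves implicit.
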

\begin{proof}
We essentially follow the proof of \cite[Lemma~1.16]{CDR}.  Clearly, $v$ and $v'$ are $\QFB$ equivalent if and only if for all $\xi\in \cV_e(M)$, 
\begin{equation}
      \frac{dv}{v^2}(\xi)\in \CI(M) \; \Longleftrightarrow \; \frac{dv'}{(v')^2}(\xi)\in \CI(M).
\label{qfbs.2}\end{equation}
Now, by the definition of $f$, we have that $v'= e^f v$, so
\begin{equation}
  \frac{dv'}{(v')^2}= e^{-f}\left(  \frac{dv}{v^2}+ \frac{df}{v}\right).
\label{qfbs.3}\end{equation}
In particular, if for each boundary hypersurface $H_i$,  $\left.f  \right|_{H_i}= \phi_i^* h_i$ for some $h_i\in \CI(S_i)$, then \eqref{qfbs.2} clearly holds for each $\xi\in \cV_e(M)$.  Conversely, if for some $H_i$, $f|_{H_i}$ is not the pull-back of a function on $S_i$, then we can find $\xi\in \cV_e(M)$ such that $\frac{dv}{v^2}(\xi)\in \CI(M)$ and $df(\xi)|_{H_i}\ne 0$, in which case \eqref{qfbs.3} shows that \eqref{qfbs.2} cannot hold.  
\end{proof}
In this paper, we will usually assume that a Lie algebra of $\QFB$ vector fields has been fixed once and for all.  If $v$ is a total boundary defining function inducing this Lie algebra of $\QFB$ vector fields, we will therefore also say that another total boundary defining function $v'$ is \textbf{compatible} with this Lie algebra of $\QFB$ vector fields if it is $\QFB$ equivalent to $v$.

The sheaf of sections of $\cV_\QFB(M)$ is locally free of rank $\dim M$, and so by the Serre-Swan theorem \cite[Theorem~2.1]{ALN04}, there is a vector bundle, the $\QFB$ tangent bundle $\kridx{\QFBT}{TQFB}{QFB tangent bundle}M \to M$, with the defining property that  $\cV_\QFB(M) = C^\infty(M; \QFBT M)$.  A local frame (nonvanishing up to and including the boundary) for this vector bundle is given by the vector fields in \eqref{ds.4}.
The inclusion $\cV_\QFB(M) \subset \cV(M)$ induces a natural bundle map $\QFBT M \to TM$
which in coordinates amounts to simply evaluating \eqref{ds.4} as ordinary vector fields.  In particular, the bundle map is an isomorphism over the interior but has kernel
at a point in any boundary face $H_i$.
Composition of QFB vector fields as operators on $\CI(M)$ leads to the enveloping algebra
of  differential $\QFB$ operators, $\Diff_\QFB^\ast(M)$, which is filtered by degree and characterized by 
\[
	\kridx{\Diff^k_\QFB}{DiffQFB}{QFB differential operators}(M) = \set{\xi_1\cdots \xi_k : \xi_i \in \cV_\QFB(M)} + \Diff^{k-1}_\QFB(M),
	\quad \Diff_\QFB^0(M) = \CI(M).
\]
By tensoring with sections of $\Hom(E,F)$ for vector bundles $E$ and $F$ over $M$
we obtain the spaces of  differential QFB operators $\Diff_\QFB^\ast(M;
E, F)$ acting between sections of these respective bundles.

The QFB cotangent bundle $\QFBT^\ast M \to M$ is defined to be the dual bundle
to $\QFBT M$.  In local coordinates, notice that a basis of local sections is given by the basis
\begin{equation}
\frac{dv_1}{v_1^2}, \frac{dy_1}{v_1}, \frac{dv_2}{v_2^2}, \frac{dy_2}{v_2}, \ldots, \frac{dv_k}{v_k^2}, \frac{dy_k}{v_k}, dz
\label{ds.4b}\end{equation}
dual to \eqref{ds.4}.
Before introducing the notion of $\QFB$ metric, it will be useful to have a better understanding of the $\QFB$ tangent $\QFBT M$ near a boundary hypersurface $H_i$.
Since the fibers of $\phi_i: H_i\to S_i$ are naturally manifolds with fibered corners, we can consider the Lie algebra of vector fields $\cV_{\phi}(H_i)$ consisting of vector fields
$\xi\in \CI(H_i;TH_i)$ such that 
\begin{itemize}
\item $\xi$ is tangent to the fibers of $\phi_i:H_i\to S_i$;
\item for $H_j>H_i$, $\xi|_{H_j\cap H_i}$ is tangent to the fibers of $\phi_j: H_j\cap H_i\to \pa_iS_{j}$;
\item $\xi v_{H_i}\in v^2_{H_i}\CI(H_i)$, where $\displaystyle v_{H_i}= \prod_{H_j>H_i} x_j$.
\end{itemize}
\noindent Notice that this is just a fibered version of the QFB algebra, defining
a smooth algebra of QFB vector fields on the fibers $Z_i$ of $\phi_i: H_i \to S_i$
which vary smoothly with respect to the base $S_i$.

By the Serre-Swan theorem, to $\cV_{\phi}(H_i)$  corresponds a  vector bundle $\kridx{{}^{\phi}T(H_i/S_i)}{TphiHS}{relative QFB tangent bundle}\to H_i$ with a canonical isomorphism
$$
      \CI(H_i; {}^{\phi}T(H_i/S_i))=\cV_{\phi}(H_i).
$$
In terms of this vector bundle, the natural map $\cV_{\QFB}(M)|_{H_i}\to \cV_{\phi}(H_i)$ becomes a surjective morphism of vector bundles 
\begin{equation}
          {}^{\QFB}TM|_{H_i}\to {}^{\phi_i}T(H_i/S_i).
\label{vf.1b}\end{equation}
Let $\kridx{{}^{\phi}NH}{NphiH}{relative QFB normal bundle}_i\to H_i$ denote the kernel of this map, so that there is the following short exact sequence of vector bundles
\begin{equation}
\xymatrix{
0\ar[r] & {}^{\phi}NH_i \ar[r] & {}^{\QFB}TM|_{H_i} \ar[r] & {}^{\phi_i}T(H_i/S_i)\to 0.
}
\label{vf.1}\end{equation}
Hence, a choice of splitting ${}^{\phi}T(H_i/S_i)\hookrightarrow {}^{\QFB}TM|_{H_i}$ induces identifications
$$
       {}^{\QFB}TM|_{H_i}= {}^{\phi}NH_i\oplus {}^{\phi}T(H_i/S_i) \quad \mbox{and} \quad {}^{\phi}NH_i\cong {}^{\QFB}TM|_{H_i}/{}^{\phi}T(H_i/S_i).
$$

The vector bundle ${}^{\phi}NH_i$ is in fact naturally isomorphic to the pull-back of a vector bundle on $S_i$.  To see this, notice first that we can equip $S_i\times [0,1)$ with a structure of manifold with fibered corners.  Indeed, the boundary hypersurfaces of $S_i\times [0,1)$ are given by $S_i\times \{0\}$ and $\pa_jS_{i}\times [0,1)$ whenever $H_j<H_i$.  On $S_i$, we can put the fiber bundle $S_i\times \{0\}\to S_i\times \{0\}$ given by the identity map.  On $\pa_jS_{i}\times [0,1)$, we can instead consider the fiber bundle 
$$
        \phi_{ij}\circ \pr_1: \pa_jS_{i}\times [0,1)\to S_j
$$
obtained by composing the projection $\pr_1: \pa_jS_{i}\times [0,1)\to \pa_jS_{i}$ with $\phi_{ij}:\pa_jS_{i}\to S_j$.  As can be readily checked, this induces a structure of manifold with fibered corners on $S_{i}\times [0,1)$ with partial order on the boundary hypersurfaces given by
$$
      S_i\times \{0\}> \pa_jS_{i}\times [0,1)\quad \forall \ H_j<H_i  \quad\mbox{and} \quad \pa_jS_{i}\times [0,1) >\pa_kS_{i}\times [0,1)\; \Longleftrightarrow \; H_i>H_j>H_k.
$$
Since the boundary defining functions of $M$ are assumed to be compatible with the structure of manifold with fibered corners, we see that for $H_j<H_i$, $x_j$ descends to give a boundary defining function of $\pa_jS_{i}$ in $S_i$.  Of course, the projection $\pr_2:S_i\times [0,1)\to [0,1)$ is a boundary defining function for $S_i\times\{0\}$ in $S_i\times [0,1)$.  Taking these as boundary defining functions, we get a corresponding $\QFB$ tangent bundle, in fact a $\QAC$ tangent bundle that we will denote by ${}^{\QAC}T(S_i\times [0,1))$.  We will denote its restriction to $S_i$ by 
\begin{equation}
  {}^{\phi}NS_i:= {}^{\QAC}T(S_i\times [0,1))|_{S_i\times \{0\}}.
\label{nb.1}\end{equation}
\begin{lemma}
There is a natural isomorphism
\begin{equation}
  {}^{\phi}NH_i\cong \phi_i^*({}^{\phi}NS_i).
\label{vf.2}\end{equation}
\label{nb.2}\end{lemma}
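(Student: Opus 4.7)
The plan is to construct the isomorphism by identifying compatible local frames on both sides and then verify independence of the choices involved.

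In local coordinates \eqref{coor.1} near a point $p \in H_i$, the short exact sequence \eqref{vf.1} combined with the local generators \eqref{ds.4} of $\cV_{\QFB}(M)$ shows that a local frame of ${}^{\phi}NH_i$ consists of those $\QFB$ frame elements whose coefficient $v_j=x_j\cdots x_k$ contains the factor $x_i$, namely those indexed by $j\leq i$---since precisely these give vanishing $\phi$-vector fields on $H_i$ and hence map to zero in ${}^{\phi_i}T(H_i/S_i)$. This produces $i+(\ell_1+\cdots+\ell_i)=\dim S_i+1$ frame elements, matching the expected rank of ${}^{\phi}NS_i$ in view of the construction right before the lemma.

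On $S_i\times[0,1)$ near $(\phi_i(p),0)$, the fibered corners structure set up just before the lemma produces, by the same recipe \eqref{ds.4}, an analogous frame for the $\QAC$ tangent bundle with $t=\pr_2$ replacing $x_i$ and $w_j=x_j\cdots x_{i-1}\cdot t$ replacing $v_j$. Restricting these $\QAC$ generators to $t=0$ yields a local frame for ${}^{\phi}NS_i$, and its $\phi_i$-pullback furnishes a local frame for $\phi_i^*({}^{\phi}NS_i)$. The candidate isomorphism $\Psi\colon\phi_i^*({}^{\phi}NS_i)\to{}^{\phi}NH_i$ is then defined locally by sending each such pulled back $\QAC$ frame element to the correspondingly indexed $\QFB$ frame element. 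Since both sides have the same rank and the map is bijective on frame elements, it is a pointwise linear isomorphism in the chart.

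The main obstacle is the remaining step: showing that $\Psi$ is independent of the choice of local coordinates and of the tubular neighborhood $c_i$ in \eqref{pt.1}. This reduces to checking that the frame transitions for ${}^{\phi}NH_i$ under any compatible change of local coordinates on $M$ agree with the $\phi_i$-pullback of the corresponding frame transitions for ${}^{\phi}NS_i$ on $S_i\times[0,1)$. Using the tubular neighborhood properties (1)--(4) together with Lemma~\ref{qfbs.1} to control admissible changes of the total boundary defining function, this can be verified by direct computation: the extra fiber factor $x_{i+1}\cdots x_k$ by which $v_j$ and $w_j$ differ contributes to the transitions only through $\phi_i$-basic data, so the frame identification is preserved. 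Therefore the locally defined $\Psi$ patches to a canonical global bundle isomorphism.
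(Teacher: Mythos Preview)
Your approach is correct and is essentially the same as the paper's. The paper's proof is a one-liner: it simply asserts that, in the local coordinates \eqref{coor.1}, the isomorphism $\phi_i^*({}^{\phi}NS_i)\to{}^{\phi}NH_i$ is given by multiplication by $v_i/\pr_2$, where $v_i = x_i\prod_{H_j>H_i}x_j$. This encodes exactly your frame correspondence, since in your notation $v_j/w_j = v_i/t$ uniformly for every $j\le i$, so a single scalar factor converts the entire pulled-back $\QAC$ frame into the $\QFB$ frame for ${}^{\phi}NH_i$.

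Your final paragraph on coordinate independence goes beyond what the paper writes out; the paper leaves this entirely implicit, treating the ratio $v_i/\pr_2$ as a global prescription once the compatible boundary defining functions $x_1,\ldots,x_\ell$ have been fixed. With those fixed, there is in fact nothing to patch, since the factor $\prod_{H_j>H_i}x_j$ appearing in each frame element is globally defined; so while your invocation of Lemma~\ref{qfbs.1} and the tubular-neighborhood properties is not wrong, it is more than is needed here.
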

\begin{proof}
Consider the function $v_i= x_i \prod_{H_j>H_i}x_j$.  Then in terms of the local coordinates \eqref{coor.1}, the isomorphism 
$$
       \phi_i^*({}^{\phi}NS_i)\to {}^{\phi}NH_i
$$
is given by multiplication by $\frac{v_i}{\pr_2\circ \phi_i}$ where $\pr_2: S_i\times [0,1)\to [0,1)$ is the boundary defining function of $S_i\times \{0\}$ in $S_i\times [0,1)$.   
\end{proof}

Now, by a {\bf QFB manifold}, we mean a manifold $(M,\phi)$ with fibered corners, together with a choice of compatible boundary defining functions, which is equipped with a non-degenerate, symmetric inner product $g$ on $\QFBT M$, hereafter called a {\bf QFB metric}. 
Such a metric determines a complete Riemannian metric on the interior of $M$ with respect
to the isomorphism of $\QFBT M^\circ$ and $TM^\circ$, but is necessarily singular at the boundary.  In terms of the local basis of sections \eqref{ds.4b}, an example of $\QFB$ metric is given by
\begin{equation}
   \sum_{i=1}^k \frac{dv_i^2}{v_i^4}+ \sum_{i=1}^k\sum_{j=1}^{\ell_i} \frac{(dy^j_i)^2}{v_i^2} + \sum_{i=1}^q dz_i^2.
\label{ds.4c}\end{equation}
As a special case, a {\bf QAC manifold} is a QFB manifold for which $\phi_i = \Id : H_i \to S_i = H_i$ is a bundle with trivial fiber for each maximal boundary hypersurface $H_i$ (with respect to the partial order coming from the fibered corners structure), and we use the adornment QAC instead of QFB in this situation.
When we choose a complete list $H_1,\ldots, H_{\ell}$ of boundary hypersurfaces, we will usually assume that for such a manifold, the maximal hypersurfaces (with respect to the partial order induced by the iterated fibration structure) are enumerated last, so for instance $H_1,\ldots, H_k$ will denote the non-maximal boundary hypesurfaces while the remaining $\ell-k$ boundary hypersurfaces $H_{k+1}, \ldots, H_\ell$ will be all maximal.  As explained in \cite[\S~2.3.5]{DM2018} and \cite[Example~1.22]{CDR}, a simple example of a QAC metric is given by the Cartesian product of two asymptotically conical metrics.  

In order to give a better geometric intuition of what $\QFB$ metrics are,  we can give an example in a tubular neighborhood as in \eqref{pt.1}.  However, instead of the decomposition in terms of the level sets of $x_i$, it is better to pick a total boundary defining function $\prod_j x_j$ compatible with the Lie algebra of $\QFB$ vector fields  and decompose in terms of its level sets.  To emphasize that we could take different total boundary defining functions compatible with the Lie algebra of vector fields for different choices of boundary hypersurfaces in \eqref{pt.1}, we will denote this total boundary defining function by $u_i$.  With this understood, a decomposition in terms of the level sets of such a total boundary defining function is given by
\begin{equation}
  \kridx{\cU_i}{U}{modified normal neighborhood} := \{ (p,c)\in H_i\times [0,\epsilon)\; | \; \prod_{j\ne i} x_j(p)> \frac{c}{\epsilon} \} \subset H_i\times [0,\epsilon)
\label{pt.2}\end{equation} 
with natural diffeomorphism 
\begin{equation}
    \begin{array}{lccc}\psi_i: &(H_i\setminus \pa H_i)\times [0,\epsilon) & \to & \cU_i \\
           & (p,t) & \mapsto & (p, t \prod_{j\ne i} x_j(p)).
           \end{array}
\label{pt.3}\end{equation}
On $\cU_i$ seen as a subset of $H_i\times [0,\epsilon)$, denote by $\widetilde{\pr}_1$ and $\widetilde{\pr}_2$ the restriction of the projections $H_i\times [0,\epsilon)\to H_i$ and 
$H_i\times [0,\epsilon)\to [0,\epsilon)$.  Since $\widetilde{\pr}_i$ is defined in terms of the level sets of $u_i$, notice that it does not correspond to $\pr_i$ under the diffeomorphism \eqref{pt.3}.  A choice of connection for the fiber bundle $\phi_i: H_i\to S_i$ then induces a natural map of vector bundles
\begin{equation}
   \left(\widetilde{\pr}_1^* {}^{\phi}T^*(H_i/S_i)\right)|_{\cU_i}\hookrightarrow {}^{\QFB}T^*M|_{\cU_i}.
\label{pt.4}\end{equation}
In fact, on $\overset{\circ}{H}_i\times \{0\}\subset \cU_i$, notice that this map does not depend on the choice of connection and corresponds to the dual of the map \eqref{vf.1b}.  Using the inclusion \eqref{pt.4}, an example of $\QFB$ metric in $\cU_i$ is then given by
\begin{equation}
  \frac{du_i^2}{u_i^4}+ \frac{\widetilde{\pr_1}^*\phi_i^*g_{S_i}}{u_i^2}+ \widetilde{\pr}_1^*\kappa,
\label{pt.5}\end{equation}       
where $g_{S_i}$ is a wedge metric on $S_i\setminus \pa S_i$ in the sense of \cite{ALMP2012, AG} (see also Definition~\ref{smb.18} below) and $\kappa \in \CI(H_i; S^2({}^{\phi}T^*(H_i/S_i)))$ induces a $\QFB$ metric on each fiber of $\phi_i: H_i\to S_i$.  A $\QFB$ metric of the form \eqref{pt.5} in $\cU_i$ is said to be of \textbf{product-type near $H_i$}.  In terms of the coordinates \eqref{coor.1}, we can take $u_i=v_1$ and use the coordinates \eqref{coor.1} with the coordinate $x_i$ replaced with $u_i=v_1$.  Under this change of coordinates, the vector fields \eqref{ds.4} become
\begin{equation}
\begin{gathered}
v_1\left( x_1\frac{\pa}{\pa x_1}+ v_1\frac{\pa}{\pa v_1}\right), v_1\frac{\pa}{\pa y_1}, v_2\left( x_2\frac{\pa}{\pa x_2}-x_1\frac{\pa}{\pa x_1}\right) v_2\frac{\pa}{\pa y_2},\ldots, v_{i-1}\left( x_{i-1}\frac{\pa}{\pa x_{i-1}}- x_{i-2}\frac{\pa}{\pa x_{i-2}}\right), v_{i-2}\frac{\pa}{\pa y_{i-2}}, \\
-v_{i-1}\frac{\pa}{\pa x_{i-1}}, v_i\frac{\pa}{\pa y_i}, v_{i+1}x_{i+1}\frac{\pa}{\pa x_{i+1}}, v_{i+1}\frac{\pa}{\pa y_{i+1}}, \\
 v_{i+2}\left( x_{i+2}\frac{\pa}{\pa x_{i+2}}-x_{i+1}\frac{\pa}{\pa x_{i+1}}\right), v_{i+2}\frac{\pa}{\pa y_{i+2}}, \ldots,  
v_k\left( x_k\frac{\pa}{\pa x_k}- x_{k-1}\frac{\pa}{\pa x_{k-1}} \right), v_k\frac{\pa}{\pa y_k}, \frac{\pa}{\pa z}.
\end{gathered}
\label{pt.5b}\end{equation}  
Making a change of basis with respect to $\CI(M)$, these vector fields are equivalent to
\begin{equation}
\begin{gathered}
v_1^2\frac{\pa}{\pa v_1}, v_1\frac{\pa}{\pa x_1}, v_1\frac{\pa}{\pa y_1}, v_2\frac{\pa}{\pa x_2}, v_2\frac{\pa}{\pa y_2}, \ldots, v_{i-1}\frac{\pa}{\pa x_{i-1}}, v_{i-1}\frac{\pa}{\pa y_{i-1}}, v_i\frac{\pa}{\pa y_i}, v_{i+1}x_{i+1}\frac{\pa}{\pa x_{i+1}}, v_{i+1}\frac{\pa}{\pa y_{i+1}}, \\
v_{i+2}\left(x_{i+2}\frac{\pa}{\pa x_{i+2}}-x_{i+1}\frac{\pa}{\pa x_{i+1}}  \right), v_{i+2}\frac{\pa}{\pa y_{i+2}}, \ldots, v_k\left(x_k\frac{\pa}{\pa x_k}- x_{k-1}\frac{\pa}{\pa x_{k-1}}\right), v_k\frac{\pa}{\pa y_k}, \frac{\pa}{\pa z}.
\end{gathered}
\label{pt.5c}\end{equation}
In particular, on $H_i$, the horizontal vector fields are edge vector fields, while the vertical vector fields are $\QFB$ vector fields, in agreement with the model metric \eqref{pt.5}.

\begin{definition}
An \textbf{exact} $\QFB$ metric is a $\QFB$ metric which is product-type near $H_i$ up to a term in $x_i\CI(M, S^2(\QFBT M))$ for each boundary hypersurface $H_i$.  In other words, a $\QFB$ metric $g_{\QFB}$ is exact if, for each boundary hypersurface $H_i$, there is a tubular neighborhood as in \eqref{pt.1}, a choice of total boundary defining function $u_i$ compatible with the Lie algebra of $\QFB$ vector fields and a choice of connection on $\phi_i: H_i\to S_i$ inducing an inclusion \eqref{pt.4}, such that on $\cU_i$ as given by \eqref{pt.2}, we have that 
$$
g_{\QFB}- (c_i)_*\psi_i^*\left( \frac{du_i^2}{u_i^4}+ \frac{\widetilde{\pr_1}^*\phi_i^*g_{S_i}}{u_i^2}+ \widetilde{\pr}_1^*\kappa \right)  \in x_i\CI(c_i(H_i\times[0,\epsilon)), S^2(\QFBT M)).
$$
\label{pt.6}\end{definition}

A $\QFB$ metric naturally defines a volume density on $M^\circ$.  More generally, we define the line bundle of QFB densities  by
\[
	(\kridx{\OmegaQFB}{OQFB}{QFB density bundle})_p = \big\{\abs{\rho} :  \rho \in \textstyle\bigwedge^n \QFBT^\ast_p M\big\}, \quad p \in M,
\]
where $n = \dim(M)$. 
From the vanishing factors on the generating sections of $\QFBT M$, it follows that QFB densities are related to ordinary densities (and $b$ densities) by
\begin{equation}
	\OmegaQFB(M) = \prod_i x_i^{-\dim(S_i) - 2}\; \Omega(M) = \prod_i x_i^{-\dim(S_i)-1}\; \Omegab(M).
\label{E:QFB_density_relation}
\end{equation}

On a QAC manifold, on can also consider the conformally related Lie algebra 
of vector fields
\[
	\kridx{\cV_\Qb}{VQb}{Qb vector fields}(M) = v_{\max}^\inv \cV_\QAC(M), \quad v_{\max} = v_{k+1} = \prod_{i = k+1}^\ell x_i,
\]
that is, the Lie algebra of $\Qb$ vector fields introduced in \cite{CDR}.
The condition that the maximal boundary fibrations $\phi_i$ for $i > k$ have no nontrivial fibers is essential in order that $\cV_\Qb(M)$ forms a Lie algebra; otherwise it would contain elements which are singular at certain boundary faces when considered as ordinary vector fields, and would fail to be closed under Lie bracket.

The algebra of  differential Qb operators, $\Diff_\Qb^\ast(M),$ is defined as above from composing Qb vector fields as operators on $C^\infty(M)$, and the associated vector bundles
$\kridx{\QbT }{TQb}{Qb tangent bundle}M$ and $\QbT^\ast M$ are likewise defined in a similar manner.
Finally, we record for later use the fact that Qb densities, defined in a similar way to QFB densities, are given by
\begin{equation}
	\kridx{\OmegaQb}{OQb}{Qb density bundle}(M) = \prod_{i=1}^k x_i^{-\dim(S_i) - 1}\; \Omegab(M)
	\label{E:Qb_density_relation}
\end{equation}
where the product is taken only over indices for non-maximal boundary hypersurfaces.

\section{The $\QFB$ double space}\label{ds.0}
To define a natural class of pseudodifferential operators associated to $\QFB$ metrics, we will follow the microlocal approach of Melrose \cite{MelroseAPS} and define them as conormal distributions on an adapted double space which will be the subject of this section.  Compared to the fibered boundary case \cite{Mazzeo-MelrosePhi}, there are however important new features to construct such a double space, the main one being that we need to rely on the ordered product and the reverse ordered product of \cite{KR3}.  For the convenience of the reader, along the way, we will go over the definition of the ordered product in details and rederive some of its key features by methods slightly different from those of \cite{KR3}.  

Before that though, let us first recall from \cite{hmm} the following general lemmas about blow-ups and $b$-fibrations, since those will be used repeatedly in what follows. 
By definition, an interior $b$-map $f : X \to Y$ satisfies
\[
	f^\ast \rho_G = a_G \prod_{H \in \cM_1(X)} \rho_H^{e(H,G)}\quad \text{for each $G \in \cM_1(Y)$,}
\]
where $a_G \in \CI(X)$ is strictly positive and the exponents $e(H,G)$ are integers; here $\cM_1(X)$ denotes the set of boundary hypersurfaces of a manifold with corners $X$. A $b$-map is said to be {\bf simple} provided each $e(H,G) \in \set{0,1}$.

\begin{lemma}[{\cite[Lemma~2.5]{hmm}}] Let $f:X\to Y$ be a surjective $b$-submersion between compact manifolds with corners.  Let $T\subset Y$ be a closed $p$-submanifold.  Let $\rho_T$ be the product of the boundary defining functions  of the boundary hypersurfaces of $Y$ containing $T$ and suppose that 
$$
\displaystyle f^*\rho_T= a\prod_{i\in \cI} \rho_i
$$ 
for some $\cI\subset M_1(X)$ and a positive smooth function $a\in \CI(X)$.  Then there is a minimal collection $\cS$ of $p$-submanifolds of $X$ into which the lift of $T$ under $f$ decomposes such that $f$ lifts to a surjective $b$-submersion
\begin{equation}
      f_T: [X;\cS]\to [Y;T]
\label{bf.4}\end{equation}
for any order of blow-up of the elements of $\cS$.  Moreover, if $f$ is a $b$-fibration, then so is $f_T$.  
\label{bf.3}\end{lemma}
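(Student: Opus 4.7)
The plan is to argue locally near a point $p \in T$ and its preimages in $X$, then globalize. Choose adapted coordinates $(x_{i_1},\ldots,x_{i_a},y_1,\ldots,y_b)$ on $Y$ near $p$ in which $T$ is cut out by these functions, with the $x_{i_k}$ local boundary defining functions and the $y_l$ interior coordinates, so that $\rho_T = x_{i_1}\cdots x_{i_a}$ up to a smooth positive factor. Around any $q\in f^{\inv}(p)$, the $b$-submersion normal form writes each $f^* x_{i_k}$ as a unit times a monomial $\prod_{j\in\cI_k(q)}\rho_j^{e_{jk}}$ in the boundary defining functions of $X$. The hypothesis $f^*\rho_T = a\prod_{j\in\cI}\rho_j$ forces every exponent $e_{jk}\in\{0,1\}$ and the sets $\cI_k(q)$ to partition $\cI\cap\{j : q\in H_j\}$; in particular, each boundary hypersurface of $X$ near $q$ contributes to the pullback of at most one $x_{i_k}$ and with exponent at most one.

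Next I would identify $\cS$. Locally, $f^{\inv}(T)$ equals $\bigcap_k \{f^* x_{i_k}=0\}\cap\bigcap_l \{f^* y_l=0\}$, and since each $f^* x_{i_k}$ vanishes exactly on $\bigcup_{j\in\cI_k(q)} H_j$, this preimage decomposes into a finite union of $p$-submanifolds indexed by a choice of one hypersurface per group $\cI_k(q)$; the conditions $f^* y_l = 0$ cut out $p$-submanifolds of the relevant boundary face because $f$ is a $b$-submersion. Patching over $X$ yields a global finite collection $\cS$ of closed $p$-submanifolds whose union is $f^{\inv}(T)$, and minimality is immediate since omitting any element would leave part of $f^{\inv}(T)$ uncovered. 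By construction, distinct elements of $\cS$ that meet do so inside transverse boundary hypersurfaces and intersect cleanly, so the standard criterion for commuting blow-ups (\cf\ \cite{hmm}) implies that $[X;\cS]$ is independent of the order in which the elements of $\cS$ are blown up.

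I would then produce the lift $f_T$ via the universal property of the blow-up: after blowing up $\cS$, the lift of $f^{\inv}(T)$ to $[X;\cS]$ is contained in the union of the front faces, so the composition $[X;\cS]\to X\to Y$ factors through $[Y;T]\to Y$, giving a smooth $b$-map $f_T:[X;\cS]\to [Y;T]$. To verify that $f_T$ is a $b$-submersion I would work in projective coordinates near each front face of $[X;\cS]$ and check that $f_T^*$ sends boundary defining functions of $[Y;T]$ to monomials times positive smooth functions in boundary defining functions of $[X;\cS]$, and that the differential is surjective modulo the $b$-tangent directions; this is a direct consequence of the corresponding property of $f$ combined with the explicit expression for blow-up in projective coordinates. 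Surjectivity of $f_T$ is inherited from the surjectivity of $f$ and of the two blow-down maps.

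For the $b$-fibration claim, the boundary hypersurfaces of $[X;\cS]$ are of two kinds: front faces $\ff(S)$ of elements $S\in\cS$, each of which maps onto the front face $\ff(T)$ of $[Y;T]$ and therefore not into any corner; and lifts $H^*$ of boundary hypersurfaces $H$ of $X$, whose image $f_T(H^*)$ is the lift to $[Y;T]$ of $f(H)$. By the $b$-fibration hypothesis on $f$, each $f(H)$ is contained in a single boundary hypersurface of $Y$, so its lift is contained in a single boundary hypersurface of $[Y;T]$, never in a corner. The main obstacle is the bookkeeping in projective coordinates required to confirm that no new boundary-defining-function exponents appear and that the $b$-submersion property is preserved under the successive blow-ups; this ultimately reduces to the local consequence of the hypothesis $f^*\rho_T = a\prod_{j\in\cI}\rho_j$ recorded in the first paragraph, which rules out precisely the scenarios in which the lifted map could fail to be a $b$-submersion or $b$-fibration.
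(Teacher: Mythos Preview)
Your proposal is correct and follows the same local-coordinate argument as \cite[Lemma~2.5]{hmm}, which the paper simply cites. The paper's own proof is a two-line reduction: for $b$-fibrations it is literally the statement of \cite[Lemma~2.5]{hmm}, and for $b$-submersions that are not $b$-fibrations the hypothesis $f^*\rho_T = a\prod_{i\in\cI}\rho_i$ is exactly what allows the adapted coordinates of \cite[(2.17)]{hmm} to be chosen, so that the proof there carries over to yield the surjective $b$-submersion $f_T$.
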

\begin{proof}
If $f$ is a $b$-fibration, then this is exactly the statement of \cite[Lemma~2.5]{hmm}.  If instead $f$ is not a $b$-fibration, then the assumptions on $T$ ensure that we can still choose coordinates as in \cite[(2.17)]{hmm}, so that the proof of \cite[Lemma~2.5]{hmm} still applies to show at least that $f$ lifts to a surjective $b$-submersion $f_T: [X;\cS]\to [Y;T]$.  
\end{proof}

\begin{lemma}[{\cite[Lemma~2.7]{hmm}}]  Let $f: X\to Y$ be a $b$-fibration of compact manifolds with corners and suppose that $S\subset X$ is a closed $p$-submanifold which is $b$-transversal to $f$ in the sense that 
\begin{equation}
          \ker({}^b\! f_* | {}^{b}T_pM)+ {}^{b}T_p(S,X)= {}^{b}T_pX \quad \forall \; p\in S,
\label{bf.2}\end{equation}
where $^{b}T_p(S,X)$ is the subspace of $^{b}T_pX$ consisting of vectors tangent to $S$.  Suppose also that $f(S)$ is not contained in any boundary face of $Y$ of codimension $2$.  Then the composition of the blow-down map $[X;S]\to X$ with $f$ induces a $b$-fibration
$$
           f': [X;S]\to Y.  
$$
\label{bf.1}\end{lemma}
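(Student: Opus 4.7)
The plan is to reduce the statement to a local computation near $S$, since away from $S$ the blow-down map $\beta : [X;S] \to X$ is a diffeomorphism and the composition $f' = f \circ \beta$ inherits the $b$-fibration property from $f$ directly. Thus I will fix a point $q \in \ff := \beta^{-1}(S)$ lying over $p = \beta(q) \in S$ and work in adapted local coordinates. Because $S$ is a closed $p$-submanifold, one can choose coordinates $(x_1, \ldots, x_j, y, z_1, \ldots, z_s, w)$ on $X$ centered at $p$ in which the boundary hypersurfaces of $X$ through $p$ are the coordinate hyperplanes $\{x_i = 0\}$, and $S$ is cut out by some combination of equations $x_i = 0$ (for $i$ in a subset $I$) and $z_\ell = 0$.

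Next I would verify that $f'$ is a $b$-map and $b$-submersion. That $f'$ is a $b$-map is automatic, since $\beta$ is a $b$-map and $b$-maps compose. For $b$-submersivity at $q \in \ff$, one uses the $b$-transversality assumption \eqref{bf.2}: writing $\beta_* : {}^b T_q [X;S] \to {}^b T_p X$, the image contains the subspace ${}^b T_p(S, X)$ (coming from directions lifted from $S$) together with the radial $b$-vector field generated by the defining function of $\ff$ (which $\beta_*$ sends into the normal directions to $S$). Combining this with the fact that ${}^b T_p(S,X)$ surjects modulo $\ker (f_*)$ onto ${}^b T_{f(p)} Y$ by \eqref{bf.2}, one deduces that $(f')_* = f_* \circ \beta_*$ is surjective at $q$. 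Explicitly, after adapting coordinates on $Y$ to split off directions transverse to $f(S)$, one can read off this surjectivity from the expression of $f$ in these coordinates and the standard model of the blow-up.

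The remaining condition, that no boundary hypersurface of $[X;S]$ is mapped by $f'$ into a boundary face of $Y$ of codimension $\geq 2$, splits into two cases. For the proper transform of any old boundary hypersurface of $X$, the image under $f'$ agrees with its image under $f$, and since $f$ is a $b$-fibration this image avoids codimension two boundary faces of $Y$. For the new front face $\ff$, the image under $f'$ is contained in $f(S)$, which by hypothesis is not contained in any codimension two boundary face of $Y$. Together with $b$-submersivity this confirms that $f'$ is a $b$-fibration.

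The main obstacle is the $b$-submersivity at points of $\ff$: one must extract from the $b$-transversality hypothesis a choice of coordinates that simultaneously respects the boundary hypersurface structure, the tangential/normal decomposition along $S$, and the splitting of $TY$ into the image of $f_*$ plus directions transverse to $f(S)$, so that the blow-up becomes a clean projection model. Once such coordinates are in hand the verification reduces to routine computation with the standard polar/projective coordinates on $[X;S]$, but setting them up correctly—in particular handling the possibility that some boundary defining functions of $X$ simultaneously cut out $S$—is where the hypothesis \eqref{bf.2} does the real work.
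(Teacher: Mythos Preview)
The paper does not prove this lemma; it simply records it as a citation to \cite[Lemma~2.7]{hmm} and uses it as a black box throughout. There is therefore no proof in the paper to compare your proposal against.

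That said, your sketch is a reasonable outline of how such a result is established: the reduction to a neighborhood of the front face, the verification that $f'$ is a $b$-map by composition, the check that the front face maps into $f(S)$ (hence not into a codimension-$2$ face by hypothesis), and the use of $b$-transversality to obtain $b$-submersivity. One minor inaccuracy: you describe the radial $b$-vector $\rho\,\partial_\rho$ on the front face as contributing ``normal directions to $S$'' beyond ${}^bT_p(S,X)$, but in fact $\beta_*(\rho\,\partial_\rho)$ equals the sum $\sum x_i\,\partial_{x_i}$ over those boundary defining functions among the equations cutting out $S$, and these $b$-vectors already lie in ${}^bT_p(S,X)$. The angular directions on the front face are annihilated by $\beta_*$. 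So the range of $\beta_*$ at a front-face point is exactly ${}^bT_p(S,X)$, and surjectivity of $(f')_*$ follows from the $b$-transversality condition combined with $f$ being a $b$-submersion, with no extra contribution needed from the radial direction. This does not affect your conclusion, only the mechanism you describe for reaching it.
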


To construct the $\QFB$ double space associated to $(M,\phi)$ and $v$, we start with the Cartesian product $M\times M$.
As a first step, instead of considering the $b$-double space $M^2_b$ as in \cite{DLR}, we will use the ordered product of \cite{KR3}.  Precisely, recalled from \cite{KR3} that a \textbf{manifold with ordered corners $X$} is a manifold with corners together with a partial order on its set $\cM_1(X)$ of boundary hypersurfaces such that with respect to this partial order, two boundary hypersurfaces are comparable whenever they intersect.  For instance, a manifold with fibered corners is automatically a manifold with ordered corners.  Given two manifolds with ordered corners $X$ and $Y$, there is an induced partial order on $\cM_1(X)\times \cM_1(Y)$ by requiring that $(H,G)\le (H',G')$ in $\cM_1(X)\times \cM_1(Y)$ if and only if $H\le H'$ and $G\le G'$.  The ordered product 
$$
   X\kridx{\ttimes}{\times}{ordered product} Y= [X\times Y; \cM_1(X)\times \cM_1(Y)]
$$
of $X$ and $Y$ introduced in \cite{KR3} is then obtained from $X\times Y$ by blowing up codimension $2$ corners of the form $H\times G$ for $H\in \cM_1(X)$ and $G\in\cM_1(Y)$ in an order compatible with the partial order on $\cM_1(X)\times \cM_1(Y)$, that is, if $(H,G)<(H',G')$, then $H\times G$ must be blown up before $H'\times G'$. 

\begin{lemma}[\cite{KR3}]
The definition of the ordered product $X\ttimes Y$  is independent of the choice of blow-up order compatible with the partial order on $\cM_1(X)\times \cM_1(Y)$.  
\label{op.2a}\end{lemma}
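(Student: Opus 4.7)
The plan is to reduce the statement to a commutativity assertion for adjacent blow-ups of incomparable centers. By a standard combinatorial fact about finite posets, any two linear extensions of the partial order on $\cM_1(X)\times\cM_1(Y)$ are connected by a finite sequence of transpositions of adjacent incomparable elements. Consequently, it suffices to verify that if $(H_1,G_1)$ and $(H_2,G_2)$ are incomparable in $\cM_1(X)\times\cM_1(Y)$ and appear consecutively in a compatible blow-up order, then performing these two blow-ups in either order yields canonically diffeomorphic manifolds with corners. All other pairs of blow-ups in the two orderings will then be identical.

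Next, I would split the analysis of an incomparable pair $(H_1,G_1)$ and $(H_2,G_2)$ into three cases according to the comparability of the two coordinates in $\cM_1(X)$ and $\cM_1(Y)$ separately. If $H_1$ and $H_2$ are incomparable in $\cM_1(X)$, then by the very definition of an ordered corners structure $H_1\cap H_2=\emptyset$, so $H_1\times G_1$ and $H_2\times G_2$ are disjoint in $X\times Y$. One checks by induction on the prior blow-ups (each of which is centered on a set of the form $H\times G$ strictly below both centers, and in particular cannot contain points of both $H_1\times G_1$ and $H_2\times G_2$) that the lifts of the two centers remain disjoint at the relevant stage, so the two blow-ups trivially commute. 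The case where $G_1$ and $G_2$ are incomparable in $\cM_1(Y)$ is handled symmetrically. The remaining case has both pairs comparable but reversed, say $H_1<H_2$ and $G_1>G_2$; then in $X\times Y$ the two centers are transverse in the $b$-sense, with intersection the p-submanifold $(H_1\cap H_2)\times(G_1\cap G_2)$, and one needs the genuine commutativity-of-blow-ups lemma.

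For this last case, I would invoke the standard fact (to be borrowed from the companion paper \cite{KR3}, where this is the content of the ordered product setup) that if $A$ and $B$ are p-submanifolds of a manifold with corners $Z$ whose set-theoretic intersection is a p-submanifold and whose $b$-tangent bundles satisfy the clean-intersection condition $^b T(A\cap B)={}^b TA\cap{}^b TB$, then the iterated blow-ups $[Z;A;B]$ and $[Z;B;A]$ are canonically diffeomorphic. The verification is that after performing all the prior blow-ups (whose centers are, by compatibility of the order, all strictly below both $(H_1,G_1)$ and $(H_2,G_2)$ in the product order), the lifts of $H_1\times G_1$ and $H_2\times G_2$ still intersect cleanly in the lift of $(H_1\cap H_2)\times(G_1\cap G_2)$. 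This follows because blowing up a p-submanifold strictly contained in both clean factors preserves cleanness of the intersection in the resolution.

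The main obstacle is precisely the bookkeeping in the third case: tracking the clean intersection of the lifts of $H_1\times G_1$ and $H_2\times G_2$ through the sequence of prior blow-ups centered on the various $H'\times G'$ with $(H',G')<(H_1,G_1)$ and $(H',G')<(H_2,G_2)$. This is not difficult in any individual step but requires a uniform inductive statement; this is essentially the systematic analysis of ordered products carried out in \cite{KR3}, to which the proof appeals.
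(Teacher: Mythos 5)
Your reduction to adjacent transpositions of incomparable pairs and your treatment of the first case (one of the two coordinates incomparable, hence the centers disjoint) match the paper. The problem is in your third case, where $H_1<H_2$ and $G_1>G_2$. You assert that after the prior blow-ups the lifts of $H_1\times G_1$ and $H_2\times G_2$ \emph{still intersect cleanly} in the lift of $(H_1\cap H_2)\times(G_1\cap G_2)$, and you justify this by a lemma about blowing up a $p$-submanifold \emph{strictly contained in both} clean factors. Neither claim is right. The center forced to precede both of your two centers is $H_1\times G_2$ (since $(H_1,G_2)<(H_1,G_1)$ and $(H_1,G_2)<(H_2,G_2)$), and it is contained in neither $H_1\times G_1$ nor $H_2\times G_2$ — it merely contains their intersection $(H_1\cap H_2)\times(G_1\cap G_2)$. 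So your preservation-of-cleanness lemma does not apply as stated. More importantly, the actual effect of blowing up $H_1\times G_2$ is the opposite of what you describe: in local coordinates with $x,y'$ boundary defining functions for $H_1$, $G_2$, the lift of $H_1\times G_1$ sits where $x/y'=0$ and the lift of $H_2\times G_2$ sits where $y'/x=0$, so the two lifts become \emph{disjoint}. Your inductive cleanness statement is therefore false at the very first relevant step.

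The fix is exactly this observation, which is the paper's argument: in the third case the mandatory earlier blow-up of $H_1\times G_2$ separates the two lifts, so by the time $(H_1,G_1)$ and $(H_2,G_2)$ are reached their centers are disjoint and the commutation is again of the trivial kind. No transversal/clean-intersection commutation lemma is needed anywhere in the proof. If you want to keep your clean-intersection route you would have to apply it directly in $X\times Y$ and then argue that the whole tower of blow-ups can be reorganized around it, which is considerably more work than the disjointness observation and is not what happens in any compatible order anyway.
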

\begin{proof}
If $H\times G$ and $H'\times G'$ are incomparable, then either $H$ is incomparable to $H'$ or $G$ is incomparable to $G'$, in which case $H\times G$ and $H'\times G'$ are disjoint and their blow-ups commute, or else $H<H'$ and $G>G'$ (or vice-versa), in which case $H\times G'$ has to be blown up first.  Since the lift of $H\times G$ and $H'\times G'$ are disjoint after the blow-up of $H\times G'$, this shows again that their blow-ups commute.   
\end{proof}

When $X$ and $Y$ are manifold with fibered corners, it is also important to consider the ordered product obtained by reversing the partial order on $\cM_1(X)$ and $\cM_1(Y)$, thus reversing the partial order for the structure of manifolds with ordered corners of $X$ and $Y$.  We refer to the ordered product with respect to this reverse partial order as the reverse ordered product and denote it $X\rttimes Y$.  When $X$ and $Y$ are manifolds with fibered corners, both $X\ttimes Y$ and $X\ttimes_r Y$ are again naturally manifolds with fibered corners.  In fact, as shown in \cite{KR3}, if $g_X$ and $g_Y$ are $\QFB$ metrics on $X$ and $Y$, then their Cartesian product is naturally a QFB metric on $X\kridx{\rttimes}{\times_r}{reverse ordered product} Y$ \cite[Theorem~6.6]{KR3}, while if $g_1$ and $g_2$ are wedge metrics on $X$ and $Y$ in the sense of Definition~\ref{smb.18} below, then their Cartesian product is naturally a wedge metric on $X\ttimes Y$ \cite[Theorem~6.8]{KR3}.  Heuristically, this is the main reason why the reverse ordered product should be the starting point of the construction of the QFB double space.  As we will see however, the ordered product itself will also play a non-trivial role in this construction, the intuitive reason being that, within the construction of the $\QFB$ double space, Cartesian products of wedge metrics naturally occur.

 Thus, coming back to our manifold with fibered corners $M$, we define the reverse ordered product double space of $M$ to be  the reverse ordered product of $M$ with itself, 
\begin{equation}
   \kridx{M^2_{\rp}}{M2rp}{reverse ordered product double space}:= M\rttimes M \quad \mbox{with blow-down map} \quad \beta_{\rp}: M^2_{\rp}\to M^2.
\label{op.1}\end{equation}
Similarly, we define the ordered product double space by $\kridx{M^2_{\op}}{M2op}{ordered product double space}=M\ttimes M$.    

\begin{remark}
If $H_1,\ldots, H_{\ell}$ is an exhaustive list of the boundary hypersurfaces of $M$ such that 
$$
      H_i<H_j \; \Longrightarrow \; i<j,
$$
then notice that one way to define $M^2_{\rp}$ is to blow up the codimension $2$ corners $H_{i}\times H_j$ in reverse lexicographic order with respect to $(i,j)$, that is, we first blow up $H_\ell\times H_\ell,\ldots, H_\ell\times H_{1}$, then $H_{\ell-1}\times H_\ell,\ldots, H_{\ell-1}\times H_{1}$, and so on until we blow up $H_{1}\times H_\ell, \ldots, H_{1}\times H_{1}$.  Alternatively, we could use the reverse lexicographic order from right to left instead of from left to right, namely we can instead first blow up $H_\ell\times H_\ell,\ldots, H_{1}\times H_{\ell}$, then $H_\ell\times H_{\ell-1},\ldots, H_{1}\times H_{\ell-1}$, and so on until we blow up $H_\ell\times H_{1}$, \ldots, $H_{1}\times H_{1}$.  These two equivalent ways of defining $M^2_{\rp}$ indicate that the involution $\cI: M\times M\to M\times M$ given by $\cI(q,q')= (q',q)$ naturally lifts to an involution on $M^2_{\rp}$.  In other words, in $M^2_{\rp}$, there is still a symmetry with respect to the left and right factors.  Clearly, a similar discussion holds for $M^2_{\op}$.     
\label{ps.0a}\end{remark}

Now, on $M\times M$, let us denote by $v$ and $v'$ the lifts from the left and from the right of the total boundary defining function $v$.  On $M^2_{\rp}$, consider then the function $s:= \frac{v}{v'}$.  On the lift $H_{ii}^{\rp}$ of $H_i\times H_i$ to $M^2_{\rp}$, consider the submanifold
$$
      \kridx{\diag_i}{Di}{$s = 1$ submanifold of $H^{\rp}_{ii}$}= \{ p\in H_{ii}^{\rp} \; | \; s(p)=1\}.
$$
\begin{lemma}[\cite{KR3}]
The submanifold $\diag_i$ is a $p$-submanifold of $M^2_{\rp}$.  
\label{ps.0}\end{lemma}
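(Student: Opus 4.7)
The claim is local, so I would fix a point $\wh p \in \diag_i$ lying above $(q,q) \in H_i \times H_i$ and, shrinking as needed, assume $q$ lies in the open stratum of an intersection $H_{i_1}\cap\cdots\cap H_{i_r}$ with $i \in \{i_1,\ldots,i_r\}$. Using coordinates of the form \eqref{coor.1} on $M$ near $q$, doubled to coordinates $(x_j,y_j,z;x_j',y_j',z')$ on $M\times M$, I would write $v = a\, x_{i_1}\cdots x_{i_r}$ for some smooth $a > 0$, so that
\[
s = \frac{v}{v'} = \frac{a}{a'}\prod_{j=1}^r \frac{x_{i_j}}{x_{i_j}'}.
\]

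Only the blow-ups of the corners $H_{i_j}\times H_{i_j}$, $j=1,\ldots,r$, affect a neighborhood of $\wh p$. By Lemma~\ref{op.2a}, $M^2_{\rp}$ is insensitive to the chosen order among compatible blow-up sequences, so in the chart of interest I can introduce, via standard projective coordinates on each blow-up, smooth nonnegative functions $\tau_{i_j}$ equal to $x_{i_j}'/x_{i_j}$ (or its reciprocal, depending on the chart) for each $j$. Performing these blow-ups, with the one of the lift of $H_i \times H_i$ producing $H_{ii}^{\rp}$ as a front face locally cut out by $x_i = 0$ and with $\tau_i = x_i'/x_i$ a smooth positive coordinate there, the function $s$ pulls back to
\[
s = \frac{a}{a'}\prod_{j=1}^r \tau_{i_j}^{-1},
\]
a smooth, strictly positive function of the new local coordinates on $M^2_{\rp}$ near $\wh p$.

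Restricting to $H_{ii}^{\rp}$, the equation $s=1$ becomes $\tau_i = F$, where $F$ is a smooth strictly positive function of the remaining coordinates $\tau_{i_j}$ ($j\ne i$), $y_{i_j},y_{i_j}',z,z'$. Thus $\diag_i$ is locally the graph of $F$ inside $H_{ii}^{\rp}$, and using $x_i$ as a transverse boundary-defining coordinate one obtains a product tubular neighborhood of $\diag_i$ in $M^2_{\rp}$ compatible with the corner structure, which is exactly the p-submanifold condition.

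The main technical point is the bookkeeping of the projective coordinates $\tau_{i_j}$ across the various charts of the iterated blow-up, ensuring that in each chart the local form of $s$ remains smooth and that the equation $s=1$ is transverse to every boundary hypersurface meeting $H_{ii}^{\rp}$. Since Lemma~\ref{op.2a} allows any compatible reordering of the blow-ups, this reduces to the single-chart computation above; the systematic treatment of such projective charts in iterated blow-ups is precisely what is carried out in the companion paper \cite{KR3}.
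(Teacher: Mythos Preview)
Your localization contains a genuine gap: you assume $\wh p \in \diag_i$ lies over a \emph{diagonal} point $(q,q) \in H_i \times H_i$, but $\diag_i$ is defined only by the single equation $s=1$ inside $H_{ii}^{\rp}$ and is therefore a codimension-one $p$-submanifold of $H_{ii}^{\rp}$, not the much smaller lift of the diagonal. A generic point of $\diag_i$ lies over $(q,q')$ with $q \neq q'$, and in particular $q$ and $q'$ may lie in \emph{different} collections of boundary hypersurfaces of $M$. At such a point your assertion that ``only the blow-ups of the corners $H_{i_j}\times H_{i_j}$ affect a neighborhood of $\wh p$'' fails: if $q \in H_j$ but $q' \notin H_j$, then near $\wh p$ the function $s$ involves a factor $x_j$ in the numerator with no matching $x_j'$ in the denominator, and it is precisely the off-diagonal blow-ups of $H_j \times H_k$ with $j \ne k$ that are needed to resolve this.

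The paper's proof addresses exactly this point. It considers a corner of $M^2$ where the left factor lies in $H_{i_1}\cap\cdots\cap H_{i_\sigma}$ and the right factor in $H_{j_1}\cap\cdots\cap H_{j_\tau}$ with all indices $i_1,\ldots,i_\sigma,j_1,\ldots,j_\tau$ \emph{distinct}, so that locally $s = \prod_r x_{i_r}/\prod_t x'_{j_t}$. The key observation is that each off-diagonal blow-up of $H_{i_r}\times H_{j_t}$ introduces a projective coordinate $x_{i_r}/x'_{j_t}$ (or its reciprocal), and in the resulting chart the local expression for $s$ has $\sigma$ or $\tau$ decreased by one. Iterating, one is eventually left with $s$ equal to a single boundary defining function or its reciprocal, in which form $s=1$ is manifestly a $p$-submanifold. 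Your argument, which covers only the complementary case where the same hypersurfaces appear on both sides, would need to be combined with this off-diagonal analysis to be complete.
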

\begin{proof}
The space $\diag_i$ is given by the equation $s=1$ in $H_{ii}^{\rp}$.  Notice first that near $\pa(M^2)$ the equation $s=1$ does not define a $p$-submanifold.    Indeed, consider a corner of $M^2$, say where  $H_{i_1}\times M,\ldots, H_{i_{\sigma}}\times M, M\times H_{j_1},\ldots, M\times H_{j_{\tau}}$ intersect, where indices are chosen such that $1\le i_1<\cdots<i_{\sigma}\le \ell$ and $1\le j_1<\cdots<j_{\tau}\le \ell$. 
Near such a corner, we can choose boundary defining functions $x_{i_r}$ of $H_{i_r}\times M$ and $x_{j_r}'$ of $M\times H_{j_r}$ in such a way that 
\begin{equation}
       s= \frac{\prod_{r=1}^{\sigma} x_{i_r}}{\prod_{r=1}^{\tau}x_{j_r}'}.
\label{ps.1}\end{equation}
Clearly then, the equation $s=1$ does not yield a $p$-submanifold near that corner, in particular near for instance $x_{i_1}=x_{j_1}'=0$.  

However, near that corner, the blow-ups performed to obtain $M^2_{\rp}$ out of $M^2$ correspond to blowing up $x_{i_r}= x_{j_t}'=0$ for $r\in\{1,\ldots,\sigma\}$ and  $t\in\{1,\ldots,\tau\}$.  Such a blow-up corresponds to introducing the coordinate $\frac{x_{i_r}}{x_{j_t}'}$ (or $\frac{x_{j_t}'}{x_{i_r}}$), so after such a blow-up is performed, the effect is to allow us to choose boundary defining functions on the new blown up space in such a way that $s$ is locally of the form \eqref{ps.1}, but with $\sigma$ or $\tau$ decreased by $1$ depending on whether we are near $\frac{x_{i_r}}{x_{j_t}'}=0$ or $\frac{x_{j_t}'}{x_{i_r}}=0$ on the new blown-up space.  

Thus, after all the blow-ups are performed, we are left with a situation where for suitable choices of boundary defining functions, near the problematic regions where $\frac{1}{s}$ or $s$ are not bounded, $s$ is locally of the form \eqref{ps.1} with either $\sigma=1$ and  $\tau=0$ or $\sigma=0$ and $\tau=1$, a form in which the equation $s=1$ clearly defines a $p$-submanifold.  Hence, we see that the equation $s=1$ does indeed define a $p$-submanifold near $\pa (M^2_{\rp})$, and hence a $p$-submanifold on the boundary hypersurface $H^{\rp}_{ii}$.      
       
\end{proof}

To describe the next submanifolds that need to be blown up, we need to discuss certain geometric structures on $\diag_i$ and $H_{ii}^{\rp}$.  First, to describe the geometry of $H_{ii}^{\rp}$, it is convenient to use a description of $M^2_{\rp}$ proceeding as follows:
\begin{itemize}
\item[Step 1:]  Blow up first the $p$-submanifolds $H_j\times H_k$ for $j>i$ and $k>i$; 
\item[Step 2:]Then blow up the lifts of $H_j\times H_i$ and $H_i\times H_j$ for $j>i$;
\item[Step 3:] Blow up the lift of $H_i\times H_i$; 
\item[Step 4:] Blow up the lifts of $H_j\times H_k$ and $H_k\times H_j$ for $j<i<k$;  
\item[Step 5:]  Finally,  blow up the lifts of $H_k\times H_i$ and $H_i\times H_k$ for $k<i$, and then blow up the lifts of $H_j\times H_k$ for $j<i$ and $k<i$.   
\end{itemize}
Let $H_{ii}^{(k)}$ denotes the lift of $H_i\times H_i$ after Step $k$.   Then notice first that $H_{ii}^{(1)}$ is still a codimension $2$ corner.  Moreover, it is  naturally the total space of a fiber bundle 
\begin{equation}
\xymatrix{
   Z_i\rttimes Z_i \ar[r] & H^{(1)}_{ii} \ar[d]  \\ & S_i\times S_i
}
\label{op.2}\end{equation}   
with vertical arrow the natural lift of $\phi_i\times \phi_i: H_i\times H_i\to S_i\times S_i$ to $H^{(1)}_{ii}$.  This is because on $H_i\times H_i$, all the $p$-submanifolds that are blown-up in Step 1 are of the form $H\times G$ for $H$ and $G$ boundary hypersurfaces such that $\phi_i(H)=\phi_i(G)=S_i$.  

Now, on $H^{(1)}_{ii}$, the submanifolds blown up in Step 2 corresponds to boundary hypersurfaces, hence do not alter the intrinsic geometry of $H^{(1)}_{ii}$, that is, there is a natural diffeomorphism  $H^{(2)}_{ii}\cong H^{(1)}_{ii}$.  However, those blow-ups have important consequences on the extrinsic geometry of $H^{(1)}_{ii}$ in that $H^{(1)}_{ii}$ becomes disjoint from the lift of $H_j\times M$ and $M\times H_j$ after the lifts of $H_j\times H_i$ and $H_i\times H_j$ are blown-up.  

Since Step 3 is the blow up of $H^{(2)}_{ii}$, this means that there is a natural diffeomorphism  
$$H^{(3)}_{ii}\cong H^{(2)}_{ii}\times [0,\frac{\pi}2]_{\theta_i},
$$ 
where the angular variable can be taken to be 
$$
  \theta_i= \arctan \left( \prod_{j\ge i} \frac{x_j}{x_j'} \right)=\arctan\left( \frac{v_i}{v_i'} \right).
$$
Under this identification, the fiber bundle \eqref{op.2} lifts to a fiber bundle 
\begin{equation}
        \xymatrix{
   Z_i\rttimes Z_i \ar[r] & H^{(3)}_{ii} \ar[d]  \\ & S^2_i\times [0,\frac{\pi}2]_{\theta_i}.
}\label{op.3}\end{equation}
  
  Moving to Step 4, notice that by Step 2, it corresponds to the blow-up of $p$-submanifolds disjoint from $H^{(3)}_{ii}$, so that $H^{(4)}_{ii}=H^{(3)}_{ii}$.  
  
  If $H_i$ is a minimal boundary hypersurface, Step 5 is vacuous so that $H_{ii}^{\rp}=H^{(3)}_{ii}=H^{(2)}_{ii}\times [0,\frac{\pi}2]_{\theta_i}$.  This is not true however if $H_i$ is not a minimal boundary hypersurface.  In these cases, the blow-ups of Step~5 correspond to blow-ups of the base $S_i^2\times [0,\frac{\pi}2]_{\theta_i}$ of \eqref{op.3}, so that on $H^{\rp}_{ii}=H^{(5)}_{ii}$  the fiber bundle \eqref{op.3} lifts to a fiber bundle 
  \begin{equation}
        \xymatrix{
   Z_i\rttimes Z_i \ar[r] & H^{\rp}_{ii} \ar[d]  \\ & S_i\rjtimes S_i,
}
\label{ps.5}\end{equation}  
where $S_i\kridx{\rjtimes}{\star}{reverse join product} S_i$ is the reverse join product of \cite{KR3} given by 
\begin{equation}
S_i\rjtimes S_i = [ S_i\times S_i\times [0,\frac{\pi}2]_{\theta_i}; S_i\times \cM^r_1(S_i)\times \{0\}, \cM^r_1(S_i)\times S_i\times \{\frac{\pi}2\}, \cM^r_1(S_i)\times \cM^r_1(S_i)\times [0,\frac{\pi}2]  ],
\label{ps.7}\end{equation} 
where $\cM^r_1(S_i)$ denotes $\cM_1(S_i)$ with the reverse partial order and indicates that the blow-ups are performed in an order compatible with the reverse partial order on $\cM(S_i)$ and $\cM(S_i)\times \cM(S_i)$.    

In terms of this description, the equation $s=1$ becomes 
\begin{equation}
  \left(\frac{v_i}{v_i'}\right)\left( \frac{\rho_i}{\rho_i'}\right)=1,
\label{ps.7b}\end{equation}  
where $\rho_i=\prod_{H_j<H_i}x_j$ is a total boundary defining function for $S_i$.  In particular, the equation behaves well with respect to the fiber bundle \eqref{ps.5} in that it descends to an equation on the base $S_i\rjtimes S_i$.  By \cite[Corollary~4.15]{KR3}, the equation \eqref{ps.7b} on $S_i\rjtimes S_i$ is naturally identified with $(S_i)^2_{\op}=S_i\ttimes S_i$, so that on the $p$-submanifold $\diag_i\subset H^{\rp}_{ii}$, the fiber bundle \eqref{ps.5} induces a fiber bundle 
\begin{equation}
        \xymatrix{
   Z_i\rttimes Z_i \ar[r] & \diag_i \ar[d]^{\phi_i\rttimes \phi_i}  \\ & (S_i)^2_{\op}.
}
\label{ps.6}\end{equation}  
Referring to \cite{KR3} for details, let us briefly explain geometrically why the base of \eqref{ps.6} is given by $(S_i)^2_{\op}$ instead of $(S_i)^2_{\rp}$.  Without loss of generality, we can clearly suppose that $H_i$ is maximal with $H_i=S_i$ and  that $\phi_i: H_i\to S_i$ is the identity map, so that a $\QFB$ metric is just a $\QAC$ metric.  In this case, a model of $\QAC$ metric near $H_i$ is given by a cone metric 
$$
             dr^2+ r^2g_w  \quad \mbox{on} \quad (0,\infty)\times S_i,  \quad r=\frac1{v}.
$$
Then, on $M^2_{\rp}$, a model metric near the maximal boundary hypersurface $H^{\rp}_{ii}$ is given by the Cartesian product of the cone metric with itself.  This is again a cone metric 
$$
       dr^2+ r^2g_{ii}, \quad r= \sqrt{\frac1{v^2}+\frac1{(v')^2}},
$$
this times with $g_{ii}$ the wedge metric on $H^{\rp}_{ii}=S_i\rjtimes S_i$ corresponding to the wedge metric of the geometric suspension of $g_w$ with itself,
$$
   g_{ii}= d\theta^2+ \sin^2\theta g_w+ \cos^2\theta g_w \quad \mbox{on} \quad  (S_i\setminus\pa S_i)\times (S_i\setminus\pa S_i)\times (0,\frac{\pi}2)_{\theta}.
$$ 
In terms of this link, the equation $s=1$ corresponds to $\theta=\frac{\pi}{4}$, yielding a cone metric with link the Cartesian product of $g_w$ with itself, that is, the wedge metric $g_w\times g_w$ on $S_i\ttimes S_i$.  This geometric way of interpreting \eqref{ps.6} can be made rigorous using the explicit isomorphism  of \cite[(7.2)]{KR3}.

Having clarified this point about the base of the fiber bundle \eqref{ps.6}, let $\diag^2_{S_i,\op}$ denote the lift of the diagonal in $S_i\times S_i$ to $(S_i)^2_{\op}$ and consider the $p$-submanifold of $\diag_i$ given by 
\begin{equation}
   \kridx{\Phi_i}{Phi}{fiber diagonal in $H_{ii}^{\rp}$}:= (\phi_i\rttimes \phi_i)^{-1} (\diag^2_{S_i,\op}).
\label{ps.8}\end{equation}
When $M$ is a manifold with fibered boundary, notice that $\Phi_1$ is just the fiber diagonal $\Phi$ of \cite{Mazzeo-MelrosePhi}.  More generally, by the discussion above, notice that $\Phi_i$ comes naturally with a fiber bundle structure $\Phi_i\to \diag^2_{S_i,\op}$ with typical fiber $(Z_i)^2_{\rp}$.  
\begin{remark}
As the fiber diagonal $\Phi$ in \cite{Mazzeo-MelrosePhi}, the definition of the $p$-submanifold $\Phi_i$ depends on the choice of a total boundary defining function $v$.  However, by Lemma~\ref{qfbs.1}, \eqref{ps.7b} and \eqref{ps.8}, two  $\QFB$ equivalent total boundary defining functions induce the same $p$-submanifolds $\Phi_i$, so the dependence is only through the choice of a Lie algebra of $\QFB$ vector fields.
\label{ps.8new}\end{remark}

The $p$-submanifolds $\Phi_i$ in $M^2_{\rp}$ are precisely those that need to be blown-up to construct the $\QFB$ double space, namely
\begin{equation}
 \kridx{M^2_{\QFB}}{M2QFB}{QFB double space}:= [M^2_{\rp}; \Phi_1,\ldots, \Phi_{\ell}] 
\label{ds.1}\end{equation} 
with corresponding blow-down maps
\begin{equation}
 \beta_{\QFB-\rp}: M^2_{\QFB}\to M^2_{\rp}, \quad  \beta_{\QFB}: M^2_{\QFB}\to M^2.
 \label{ds.1b}\end{equation}
In this definition, the order in which we blow up is important, since changing the order typically gives, up to diffeomorphism, a different manifold with corners.  
We denote the `new' boundary hypersurface coming from the blow-up of $\Phi_i$ by 
$$
      \kridx{\ff_{i}}{ffi}{front face of $\Phi_i$ in $M^2_{\QFB}$}\subset M^2_{\QFB}.
$$
We say that $\ff_{i}$ is the \textbf{front face} associated to the boundary hypersurface $H_i$.   Denote by $H_{i0}$, $H_{0i}$ and $H_{ij}$ the lifts
of $H_i\times M$, $M\times H_i$ and $H_i\times H_j$.   Then $\ff_i$ for $i\in \{1,\ldots,\ell\}$ and $H_{ij}$ for $i,j\in\{0,1,\ldots,\ell\}$ with $(i,j)\ne (0,0)$ give a complete list of the boundary hypersurfaces of $M^2_{\QFB}$.  Though we will not need it until \S~\ref{kqfb.0}, it is a good moment to state and prove the following lemma.
\begin{lemma}
For $i,j\in\{1,\ldots,\ell\}$, $\ff_i\cap H_{j0}=\emptyset$ and $\ff_i\cap H_{0j}=\emptyset$.  
\label{new.2}\end{lemma}
\begin{proof}
Since $\Phi_i\subset \diag_i$, it suffices to show that $\diag_i\cap H_{j0}^{\rp}=\emptyset$ and $\diag_i\cap H_{0j}^{\rp}=\emptyset$, which follows from the fact that $s=1$ on $\diag_i$, $s=0$ on $H_{j0}^{\rp}$ and $s^{-1}=0$ on $H_{0j}^{\rp}$.
\end{proof}

Let 
$$
     \kridx{\diag_{\QFB}}{DQFB}{diagonal in $M^2_{\QFB}$}:= \overline{\beta_{\QFB}^{-1}(\overset{\circ}{\diag}_{M})}
$$ 
denote the lift of the diagonal $\diag_M\subset M^2$ to $M^2_{\QFB}$, where $\overset{\circ}{\diag}_{M}$ is the interior of the diagonal.  Clearly, the lifted diagonal $\diag_{\QFB}$ is a $p$-submanifold of $M^2_{\QFB}$.
The next lemma will describe how $\diag_{\QFB}$ behaves in terms of the lifts of the Lie algebra $\cV_{\QFB}(M)$ with respect to the natural maps
$$
     \pi_L= \pr_L\circ\beta_{\QFB}: M^2_{\QFB}\to M, \quad  \pi_R= \pr_R\circ\beta_{\QFB}: M^2_{\QFB}\to M,
$$
where $\pr_L: M\times M\to M$ is the projection onto the left factor and $\pr_R$ is the projection onto the right factor.
\begin{lemma}
The lifts to $M^2_{\QFB}$ of the Lie algebra $\cV_{\QFB}(M)$ via the maps $\pi_L$ and $\pi_R$ are transversal to the lifted diagonal $\diag_{\QFB}$.
\label{ds.2}\end{lemma}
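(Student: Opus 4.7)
The plan is to verify the transversality pointwise: at each $p \in \diag_\QFB$, I need the left lifts of the $n$ local generators of $\cV_\QFB(M)$ in \eqref{ds.4} to span a complement to $T_p\diag_\QFB$ inside $T_pM^2_\QFB$. The statement for $\pi_R^\ast\cV_\QFB(M)$ then follows by the left-right symmetry of Remark~\ref{ps.0a}.

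If $p$ lies in the interior of $M^2_\QFB$, then $\beta_\QFB$ is a local diffeomorphism there, $\cV_\QFB$ restricts to the full algebra of smooth vector fields, and the normal bundle of the ordinary diagonal in $M^2$ is canonically identified with $TM$ via $d\pi_L$. Hence any local frame of $TM$ lifts via $\pi_L$ to a frame of the normal bundle, giving transversality immediately.

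If $p$ lies on some front face $\ff_i$, I would use the local coordinates \eqref{coor.1} for $M$ near $\pi_L(p) \in H_i$, double them on $M^2$ with primes on the right factor, and introduce projective coordinates for the iterated blow-ups producing $M^2_\rp$ and then $M^2_\QFB$. The central observation is that the $p$-submanifolds $\Phi_j$ in \eqref{ps.8} are designed so that the vanishing factors $v_j$ in the generators of \eqref{ds.4} are absorbed by the projective coordinates adapted to the blow-up of $\Phi_j$; this makes the left lifts of the generators smooth on $M^2_\QFB$ up to and including $\ff_i$. Equivalently, $\pi_L$ restricts to a $b$-submersion near $\diag_\QFB$ and the induced QFB differential identifies $\pi_L^\ast({}^\QFB TM)|_{\diag_\QFB}$ with the normal bundle of $\diag_\QFB$. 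Since the generators in \eqref{ds.4} form a frame of ${}^\QFB TM$, their lifts then span that normal bundle pointwise.

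The main obstacle is the corner case where several front faces meet near $p$, in which the order of blow-ups prescribed in \eqref{ds.1} becomes decisive. I would handle it by induction on depth, exploiting the fibration $\Phi_i \to \diag^2_{S_i,\op}$ with fiber $(Z_i)^2_\rp$ visible in \eqref{ps.6}--\eqref{ps.8}: the fiberwise generators are exactly the QFB vector fields of the strictly lower-depth fiber $Z_i$ and give fiber transversality by the induction hypothesis, while the horizontal directions along $\diag^2_{S_i,\op}$ are picked up by left lifts of the edge-type vector fields in \eqref{E:VF_ie}, whose transversality is a direct calculation in the projective base coordinates. Assembling these contributions yields the full $n$-dimensional normal frame at $p$, proving the lemma.
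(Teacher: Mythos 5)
Your first paragraph (interior case, reduction to the left lift by the symmetry of Remark~\ref{ps.0a}) is fine and matches the opening of the paper's proof. From there the two arguments diverge, and your version has real gaps.

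The middle step is circular. You write that ``the $p$-submanifolds $\Phi_j$ are designed so that the vanishing factors $v_j$ \ldots are absorbed by the projective coordinates,'' then restate this as: ``equivalently, $\pi_L$ restricts to a $b$-submersion near $\diag_\QFB$ and the induced QFB differential identifies $\pi_L^\ast({}^\QFB TM)|_{\diag_\QFB}$ with the normal bundle.'' That identification is precisely the content of Corollary~\ref{ds.9b}, which the paper \emph{deduces} from Lemma~\ref{ds.2}; taking it as input begs the question. The ``absorbed'' claim is the thing to prove, and asserting it (even with the word ``designed'') is not a proof. What the paper actually does is compute: starting from the local generators \eqref{ds.4}, it lifts them in the projective coordinates $s_i=x_i/x_i'$, regroups into $\sigma_i=\prod_{j\ge i}s_j$ so the lift takes the form \eqref{ds.7}, and then passes to $S_i=(\sigma_i-1)/v_i'$, $Y_i=(y_i-y_i')/v_i'$ adapted to the blow-up of $\Phi_i$, obtaining the manifestly transversal frame \eqref{ds.9} against the description \eqref{ds.9c} of $\diag_\QFB$. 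Nothing in your proposal carries out this computation.

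Your proposed induction on depth for the corner case also has a setup mismatch. The fibration \eqref{ps.6}--\eqref{ps.8} has typical fiber $(Z_i)^2_{\rp}$ (the \emph{reverse ordered product} of the fiber with itself), not $(Z_i)^2_{\QFB}$; the latter requires the further blow-ups of $\Phi_j$ for $H_j>H_i$. So the induction hypothesis (transversality on $(Z_i)^2_{\QFB}$) does not directly apply to the fiber you name, and you would have to track how those subsequent blow-ups reconstruct the QFB double space of the fiber inside $\ff_i$ before invoking it. Similarly, identifying the ``horizontal directions'' with the edge vector fields of \eqref{E:VF_ie} is not quite right: what appears near $H_i$ is the decomposition of QFB vector fields as in \eqref{pt.5c}, where the horizontal part carries extra $v$-factors and is not a lift of a basis of $\cV_e(M)$. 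Finally, the paper avoids all of this by noting that, after blowing up $H_j\times H_j$, the lifts of $H_i\times H_j$ and $H_j\times H_i$ (for $i<j$) are disjoint from the lifted diagonal; this lets the argument discard most of the blow-ups and reduce to the clean coordinate computation, without any induction. Your proposal never uses this observation and therefore cannot reduce the corner case to something tractable without substantially more work.
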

\begin{proof}
The result is trivial away from the boundary, so let $p\in \diag_{\QFB}\cap \pa M^2_{\QFB}$ be given.  We need to show that the lemma holds in a neighborhood of $p$ in $M^2_{\QFB}$.   By the symmetry of Remark~\ref{ps.0a}, we only need to establish the result for the lift of $\cV_{\QFB}(M)$ with respect to the map $\pi_L$.  After relabeling the boundary hypersurfaces of $M$ if necessary, we can assume that $H_1,\ldots, H_k$ are the boundary hypersurfaces containing $\pi_L(p)$ and that 
\begin{equation}
      H_1<\cdots < H_k.
\label{ds.2b}\end{equation}
Near $\pi_L(p)\in \pa M$, let $(x_i,y_i, z)$ be coordinates as in \eqref{coor.1}, so that in particular $x_i$ is a boundary defining function for $H_i$ and the total boundary defining function is given locally by $v=\prod_{i=1}^k x_i$.  On $M^2$, we can then consider the coordinates 
\begin{equation}
      x_i,y_i, z, x_i',y_i',z'
\label{ds.2c}\end{equation} 
where $(x_i,y_i,z)$ denotes the pull-back of the coordinates $(x_i,y_i,z)$ from the left factor and $(x_i',y_i',z')$ the pull-back from the right factor.  In principle, we need to lift these coordinates through all the blow-ups involved in the construction of $M^2_{\rp}$ near $\beta_{\QFB-\rp}(p)$.  Using Lemma~\ref{op.2a}, we can blow up in the following order,
\begin{multline}
H_k\times H_k, H_k\times H_{k-1}, H_{k-1}\times H_k, \ldots, H_k\times H_1, H_1\times H_k, \\
H_{k-1}\times H_{k-1}, H_{k-1}\times H_{k-2}, H_{k-2}\times H_{k-1},\ldots, H_{k-1}\times H_1, H_{1}\times H_{k-2}, \ldots, H_1\times H_1.
\end{multline}
Notice then that for $i<j$ the lifts of $H_i\times H_j$ and $H_j\times H_i$ are disjoint from the lifted diagonal after the blow-up of the lift of $H_j\times H_j$.  Hence, effectively, near the lifted diagonal $\diag_{M,\rp}$ in $M^2_{\rp}$, we only need to consider the blow-up of $H_i\times H_i$ for $i\in\{1,\ldots k\}$, so that  
\begin{equation}
  s_i:=\frac{x_i}{x_i'}, x_i', y_i,y_i', z,z'
\label{ds.3}\end{equation}
are coordinates near $\beta_{\QFB-\rp}(p)$.
In terms of these coordinates, we see that
$$
     x_i= x_i's_i, \quad v_i= v_i'\sigma_i, \quad\mbox{where}\quad v_i:=\prod_{j=i}^k x_j,\quad v_i':=\prod_{j=i}^k x_j'\quad \mbox{and}  \quad \sigma_i:= \prod_{j=i}^k s_j,
$$
hence, under the map $\pr_L\circ \beta_{\rp}$, the basis of $\QFB$ vector fields \eqref{ds.4}
lifts to
\begin{equation}
v_1'\sigma_1s_1\frac{\pa}{\pa s_1}, v_1' \sigma_1\frac{\pa}{\pa y_1^{n_{1}}}, v_2'\sigma_2\left( s_2\frac{\pa}{\pa s_2}- s_1\frac{\pa}{\pa s_1} \right), v_2'\sigma_2\frac{\pa}{\pa y_2^{n_2}}, \ldots, v_k'\sigma_k\left( s_{k}\frac{\pa}{\pa s_{k}}- s_{k-1}\frac{\pa}{\pa s_{k-1}} \right), v_k'\sigma_k\frac{\pa}{\pa y_k^{n_k}}, \frac{\pa}{\pa z_q}.\label{ds.5}\end{equation}
In fact, instead of \eqref{ds.3}, we can use the alternative system of coordinates
\begin{equation}
  \sigma_i= \prod_{j=i}^k s_j, \; x_i', y_i, y_i', z,z',
\label{ds.6}\end{equation}  
which gives a simpler description of the lift, namely
\begin{equation}
v_i'\sigma_i^2\frac{\pa}{\pa \sigma_i}, v_i' \sigma_i\frac{\pa}{\pa y_i^{n_{i}}}, \frac{\pa}{\pa z_q}.
\label{ds.7}\end{equation}
Finally, to lift this basis to $M^2_{\QFB}$, one can use the system of coordinates 
\begin{equation}
  S_i:= \frac{\sigma_i-1}{v_i'}, x_i', Y_i:= \frac{y_i-y_i'}{v_i'}, y_i', z,z'
\label{ds.8}\end{equation}
near $p$, which gives
\begin{equation}
(1+v_i'S_i)^2\frac{\pa}{\pa S_i}, (1+v_i'S_i)\frac{\pa}{\pa Y_i^{n_{i}}}, \frac{\pa}{\pa z_q}.
\label{ds.9}\end{equation}
In the coordinates \eqref{ds.8}, the lifted diagonal $\diag_{\QFB}$ is given by 
\begin{equation}
    \{ (S_i, x_i',Y_i,y_i',z,z') \; | \; z=z', S_i=0, Y_i=0 \; \forall \; i\},
\label{ds.9c}\end{equation}
so that the basis of vector fields in \eqref{ds.9} is clearly transversal to $\diag_{\QFB}$.  

\end{proof}

\begin{corollary}
The natural diffeomorphism $\diag_{\QFB}\cong M$ induced by the map $\pi_L$ (or $\pi_R$) also yields a natural identification
$$
      N\diag_{\QFB}\cong ^{\QFB}\!TM, \quad N^*\diag_{\QFB}\cong ^{\QFB}\!T^*M,
$$
where $N\diag_{\QFB}$ is the normal bundle of $\diag_{\QFB}$ in $M^2_{\QFB}$.
\label{ds.9b}\end{corollary}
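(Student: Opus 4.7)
The plan is to construct the isomorphism $\alpha: {}^{\QFB}TM \to N\diag_{\QFB}$ directly from the lifts of $\QFB$ vector fields via $\pi_L$ (the argument via $\pi_R$ being analogous), and to deduce its bijectivity from the transversality established in Lemma~\ref{ds.2}. The cotangent identification will then follow by dualizing this bundle map.

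Concretely, for $\xi \in \cV_{\QFB}(M)$, denote by $\tilde\xi$ its lift to $M^2_{\QFB}$ via $\pi_L$. Using the diffeomorphism $\diag_{\QFB} \cong M$ induced by $\pi_L$, I would define
\[
	\alpha(\xi|_q) := [\tilde\xi|_p] \in T_pM^2_{\QFB}/T_p\diag_{\QFB} = N_p\diag_{\QFB},
\]
where $p \in \diag_{\QFB}$ is the unique point with $\pi_L(p) = q$. Well-definedness (namely, that $\alpha(\xi|_q)$ depends only on the value of $\xi$ at $q$, not on the choice of extension to a global section) reduces to showing that if $\xi|_q = 0$, then $\tilde\xi|_p$ is tangent to $\diag_{\QFB}$. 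This is immediate from the explicit expressions~\eqref{ds.9} for lifted $\QFB$ vector fields in the coordinates~\eqref{ds.8}: a section vanishing at $q$ is a combination of the frame~\eqref{ds.4} with coefficients vanishing at $q$, so its lift vanishes at $p$, a fortiori modulo $T_p\diag_{\QFB}$.

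The main content is then fiberwise surjectivity of $\alpha$, which is precisely the statement of Lemma~\ref{ds.2}; since both bundles have rank $\dim M$, it follows that $\alpha$ is a bundle isomorphism. In fact, the local picture makes $\alpha$ entirely explicit: in the coordinates~\eqref{ds.8}, the lifted diagonal is cut out by $S_i = 0$, $Y_i = 0$, $z = z'$ (see~\eqref{ds.9c}), so $N\diag_{\QFB}$ carries the local frame $\pa_{S_i}, \pa_{Y_i^{n_i}}, \pa_{z_q}$ (taken modulo $T\diag_{\QFB}$), and evaluating the lifted frame~\eqref{ds.9} at $S_i = Y_i = 0$ reproduces exactly this frame. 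Dualizing $\alpha$ then yields $N^*\diag_{\QFB} \cong {}^{\QFB}T^*M$. I do not anticipate a serious obstacle: the corollary is essentially a repackaging of the local computation already performed in the proof of Lemma~\ref{ds.2}, the only delicate point being the well-definedness check, which is disposed of by the same local formulas.
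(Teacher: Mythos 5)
Your proposal is correct and follows exactly the route the paper intends: the corollary is stated as an immediate consequence of Lemma~\ref{ds.2}, whose proof already exhibits the lifted frame \eqref{ds.9} and the local equations \eqref{ds.9c} for $\diag_{\QFB}$, so that the map $\xi \mapsto [\tilde\xi]$ is well defined, fiberwise surjective by transversality, and hence an isomorphism by equality of ranks. Nothing is missing.
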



For a QAC manifold with the Lie algebra $\cV_\Qb(M)$ of Qb vector fields, there is of course a corresponding $\Qb$ double space.  If we assume as before that our list $H_1,\ldots, H_{\ell}$ of boundary hypersurfaces is chosen such that $H_{1},\ldots, H_{\ell'}$ correspond to the non-maximal boundary hypersurfaces with respect to the partial order, so that for instance $x_{\max}= \prod_{i=\ell'+1}^{\ell}x_i$, then the $\Qb$ double space is given by
\begin{equation}
    \kridx{M^2_{\Qb}}{M2QB}{Qb double space}:= [M^2_{\rp}; \Phi_1,\ldots, \Phi_{\ell'}]  
\label{ds.10}\end{equation}
with corresponding blow-down maps
$$
\beta_{\Qb-\rp}: M^2_{\Qb}\to M^2_{\rp}, \quad \beta_{\Qb}=\beta_{\rp}\circ\beta_{\Qb-\rp}: M^2_{\Qb}\to M^2.$$
Let $H^{\Qb}_{i0}$, $H^{\Qb}_{0i}$ and $H^{\Qb}_{ij}$ be the lifts of $H_{i}\times M$, $M\times H_i$ and $H_i\times H_j$ to $M^2_{\Qb}$ for $i,j\in\{1,\ldots,\ell\}$.   For $H_i$ not maximal, denote also by $\ff^{\Qb}_i$ the front face created by the blow-up of $\Phi_i$.  Then, these boundary hypersurfaces  give a complete list of the boundary hypersurfaces of $M^2_{\Qb}$.  Let us denote by $\kridx{\diag_{\Qb}}{DQb}{diagonal in $M^2_{\Qb}$}$ the lift of the diagonal $\diag_M$ to $M^2_{\Qb}$.  It is again a $p$-submanifold and we have the following analog of Lemma~\ref{ds.2}.
\begin{lemma}
The lifts to $M^2_{\Qb}$ of the Lie algebra $\cV_{\Qb}(M)$ via the maps $\pr_L\circ\beta_{\Qb}$ and $\pr_R\circ\beta_{\Qb}$ are transversal to the lifted diagonal $\diag_{\Qb}$.
\label{ds.11}\end{lemma}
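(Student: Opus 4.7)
My plan is to parallel the proof of Lemma~\ref{ds.2}, using as a key simplification the inclusion $\cV_{\QAC}(M) \subset \cV_{\Qb}(M)$: any $U \in \cV_{\QAC}$ can be written $U = v_{\max}^{-1}(v_{\max} U) \in v_{\max}^{-1}\cV_{\QAC} = \cV_{\Qb}$, so the lifts of $\QAC$ vector fields are automatically lifts of $\Qb$ vector fields on $M^2_{\Qb}$. By the left-right symmetry (the analogue of Remark~\ref{ps.0a}), it suffices to treat the lift via $\pi_L := \pr_L \circ \beta_{\Qb}$. Transversality is clear away from the boundary, so I focus on $p \in \diag_{\Qb} \cap \pa M^2_{\Qb}$.

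First I would choose local coordinates $(x_1, y_1, \ldots, x_k, y_k, z)$ near $\pi_L(p)$ as in \eqref{coor.1}, corresponding to the boundary hypersurfaces $H_1 < \cdots < H_k$ containing $\pi_L(p)$. A crucial geometric point is that two maximal hypersurfaces cannot intersect: the compatibility condition forces comparable hypersurfaces to have strictly different base dimensions, yet all maximal hypersurfaces sit at the top of the partial order. Hence at most one of $H_1, \ldots, H_k$ is maximal, and if so it must be $H_k$. If none is maximal, then $\pi_L^* v_{\max}$ is smooth and positive at $p$, every $\Phi_i$ with $i \le k$ is blown up in $M^2_{\Qb}$ exactly as in $M^2_{\QFB}$, and the proof of Lemma~\ref{ds.2} carries over unchanged.

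The main case is when $H_k$ is maximal. Then $\Phi_k$ is not blown up in $M^2_{\Qb}$ and $z$ is empty (since $\phi_k = \Id$). In coordinates $(S_i, x_i', Y_i, y_i')$ for $i < k$ and $(s_k, x_k', y_k, y_k')$ for $i = k$ on $M^2_{\Qb}$ near $p$, the lifted diagonal is
\[
    \diag_{\Qb} = \set{S_i = 0,\ Y_i = 0\ (i<k),\ s_k = 1,\ y_k = y_k'}.
\]
Repeating the coordinate computations that produced \eqref{ds.9}, the lifted $\QAC$ generators for $i < k$ become $(1 + v_i' S_i)^2 \pa_{S_i}$ and $(1 + v_i' S_i)\pa_{Y_i^{n_i}}$, smooth at $p$ and evaluating there to $\pa_{S_i}$ and $\pa_{Y_i^{n_i}}$; by the inclusion above these lie in $\pi_L^* \cV_{\Qb}$. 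The $\QAC$ generators for $i = k$ lift to $x_k' s_k^2 \pa_{s_k}$ and $x_k' s_k \pa_{y_k^{n_k}}$, which vanish at $p$ whenever $x_k'(p) = 0$; dividing by $\pi_L^* v_{\max} = x_k' s_k$ then produces the smooth $\Qb$ lifts $s_k \pa_{s_k}$ and $\pa_{y_k^{n_k}}$, which evaluate at $p$ to $\pa_{s_k}$ and $\pa_{y_k^{n_k}}$. Together these four families span the normal directions to $\diag_{\Qb}$ at $p$, giving transversality.

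The main obstacle I anticipate is the apparent singularity of generic $\Qb$ lifts at the lifted maximal face $\set{x_k' = 0}$. The resolution is the dual-use scheme just outlined: the smooth lifts of the sub-algebra $\cV_{\QAC} \subset \cV_{\Qb}$ cover every non-maximal normal direction, while the specifically $\Qb$ lift associated with $i = k$, having absorbed the conformal factor $v_{\max}^{-1}$, supplies the remaining maximal directions without singularity.
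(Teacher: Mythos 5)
Your reduction by symmetry, the observation that at most one of $H_1,\ldots,H_k$ can be maximal (and must then be $H_k$), the treatment of the case where no $H_i$ is maximal, and the handling of the $i=k$ direction are all sound and consistent with the paper. The gap is in the main case, for the directions $i<k$. The coordinates you use there, $S_i=(\sigma_i-1)/v_i'$ and $Y_i=(y_i-y_i')/v_i'$ with $v_i'=\prod_{q\ge i}x_q'$, are the coordinates \eqref{ds.8} on $M^2_{\QFB}$; they are \emph{not} smooth coordinates on $M^2_{\Qb}$ near $p$. The factor $x_k'$ inside $v_i'$ is contributed precisely by the blow-up of $\Phi_k$, which is omitted in the construction of $M^2_{\Qb}$ when $H_k$ is maximal. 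On $M^2_{\Qb}$ the correct normalization is $S_i=(\sigma_i-1)/w_i'$ with $w_i'=v_i'/x_k'$, as in \eqref{ds.14}; your $S_i$ equals this quantity divided by $x_k'$, which does not extend continuously to $p$ (it is $0/0$ along $\diag_{\Qb}\cap\{x_k'=0\}$). A coordinate computation in an invalid chart cannot establish transversality.

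This is not a cosmetic issue: it breaks the ``dual-use scheme.'' If you redo the computation in the valid coordinates \eqref{ds.14}, the lift of the $\QAC$ generator $v_i'\sigma_i^2\,\pa/\pa\sigma_i$ becomes $x_k'(1+w_i'S_i)^2\,\pa/\pa S_i$ (and similarly for the $Y_i$ directions), which \emph{vanishes} at $p$ since $x_k'(p)=0$. So the sub-algebra $\cV_{\QAC}(M)\subset\cV_{\Qb}(M)$ does not supply the non-maximal normal directions at $p$; its lifts are transversal to $\diag_{\QFB}$ on $M^2_{\QFB}$ (Lemma~\ref{ds.2}) but degenerate on the less-blown-up space $M^2_{\Qb}$ over the maximal face. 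The conformal factor $v_{\max}^{-1}=(x_k'\sigma_k)^{-1}$ is needed on \emph{all} the generators, not only the $i=k$ ones: dividing by it yields $(1+w_i'S_i)^2\sigma_k^{-1}\,\pa/\pa S_i$, which is nonvanishing at $p$ and restores transversality in the $i<k$ directions. With that correction the argument goes through and coincides with the paper's proof via \eqref{ds.12}--\eqref{ds.15}.
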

\begin{proof}
As in the proof of Lemma~\ref{ds.2}, by symmetry, it suffices to prove the result for the map $\pr_L\circ\beta_{\Qb}$ near a point $p\in \diag_{\Qb}\cap \pa M^2_{\Qb}$.  As in \eqref{ds.2b}, let $H_1,\ldots, H_k$ be the boundary hypersurfaces containing $\pr_L\circ \beta_{\Qb}(p)$.  Clearly, if $H_k$ is not maximal with respect to the partial order of the structure of manifold with fibered corners,  we can proceed exactly as in the proof of Lemma~\ref{ds.2}.  If instead $H_k$ is maximal, then we can still use the coordinates \eqref{ds.6} near $\beta_{\Qb-\rp}(p)$, but with the precision that there is no $(z,z')$ coordinates since the fibers of $\phi_k:H_k\to H_k$ are points.  In terms of these coordinates, the lift of the basis of $\Qb$ vector fields  
\begin{equation}
\frac{v_1x_1}{x_k}\frac{\pa}{\pa x_1}, \frac{v_1}{x_k}\frac{\pa}{\pa y_1^{n_{1}}}, \frac{v_2}{x_k}\left( x_1\frac{\pa}{\pa x_1}- x_2\frac{\pa}{\pa x_2} \right), \frac{v_2}{x_k}\frac{\pa}{\pa y_2^{n_2}}, \ldots, \frac{v_k}{x_k}\left( x_{k-1}\frac{\pa}{\pa x_{k-1}}- x_{k}\frac{\pa}{\pa x_{k}} \right), \frac{v_k}{x_k}\frac{\pa}{\pa y_k^{n_k}}
\label{ds.12}\end{equation}
via the map $\pr_L\circ \beta_{\rp}$ is given by 
\begin{equation}
\frac{v_i'\sigma_i^2}{x_k'\sigma_k}\frac{\pa}{\pa \sigma_i}, \frac{v_i' \sigma_i}{x_k'\sigma_k}\frac{\pa}{\pa y_i^{n_{i}}}.
\label{ds.13}\end{equation}
To lift to $M^2_{\Qb}$, we use, instead of \eqref{ds.8}, the system of coordinates 
\begin{equation}
  S_i:= \frac{\sigma_i-1}{w_i'}, x_i', Y_i:= \frac{y_i-y_i'}{w_i'}, y_i' \quad \mbox{for}\; i<k \quad \mbox{together with} \quad \sigma_k, x_k', y_k, y_k', \quad \mbox{where} \quad w_i':=\frac{v_i'}{x_k'},\label{ds.14}\end{equation}
near $p\in M^2_{\Qb}$.  In terms of these coordinates, the basis of vector fields \eqref{ds.13} lifts to
\begin{equation}
\frac{(1+w_i'S_i)^2}{\sigma_k}\frac{\pa}{\pa S_i}, \frac{(1+w_i'S_i)}{\sigma_k}\frac{\pa}{\pa Y_i^{n_{i}}}\quad \mbox{for}\; i<k \quad \mbox{and} \quad \sigma_k\frac{\pa}{\pa \sigma_k}, \frac{\pa}{\pa y_k}.
\label{ds.15}\end{equation}
Since the lifted diagonal $\diag_{\Qb}$ is locally given by the equations
$$
S_i=0, Y_i=0 \quad \mbox{for} \; i<k \quad \mbox{and} \quad  \sigma_k=1, y_k=y_k'
$$
in the coordinates \eqref{ds.14}, we clearly see that the basis of vector fields \eqref{ds.15} is transversal to $\diag_{\Qb}$.
\end{proof}

\begin{corollary}
The natural diffeomorphism $\diag_{\Qb}\cong M$ induced by $\pr_L\circ\beta_{\Qb}$ (or $\pr_R\circ\beta_{\Qb}$) yields a natural identification
$$
     N\diag_{\Qb}\cong {}^{\Qb}TM, \quad N^*\diag_{\Qb}\cong {}^{\Qb}T^*M,
$$
where $N\diag_{\Qb}$ is the normal bundle of $\diag_{\Qb}$ in $M^2_{\Qb}$.
\label{ds.15b}\end{corollary}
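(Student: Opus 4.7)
The plan is to model the argument directly on the proof of Corollary~\ref{ds.9b}, using Lemma~\ref{ds.11} as the main input; essentially all the work has already been done in that lemma and it remains only to translate its transversality statement into a bundle isomorphism.

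First, since $\pr_L\circ\beta_{\Qb}$ restricts to a diffeomorphism on the interior of $\diag_{\Qb}$ and since $\diag_{\Qb}$ is by construction a $p$-submanifold of $M^2_{\Qb}$, the induced map $\diag_{\Qb}\to M$ is a diffeomorphism of manifolds with corners. Next I would define a bundle map
\[
    \Psi:{}^{\Qb}TM\longrightarrow N\diag_{\Qb}
\]
as follows: given $\xi\in\cV_{\Qb}(M)=\CI(M;{}^{\Qb}TM)$, lift $\xi$ through $\pr_L\circ\beta_{\Qb}$ to a smooth vector field $\tilde\xi$ on $M^2_{\Qb}$. By Lemma~\ref{ds.11}, $\tilde\xi$ is transversal to $\diag_{\Qb}$, so $\tilde\xi|_{\diag_{\Qb}}\pmod{T\diag_{\Qb}}$ is a well-defined section of $N\diag_{\Qb}$, and we set $\Psi(\xi)$ to be this section under the identification $\diag_{\Qb}\cong M$. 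This recipe is clearly $\CI(M)$-linear and hence descends to a bundle map.

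To show that $\Psi$ is an isomorphism it suffices to check surjectivity pointwise (the ranks agree). This is a local computation that is already essentially contained in the proof of Lemma~\ref{ds.11}: if $H_k$ through the base point is not maximal, the argument is the same as for Corollary~\ref{ds.9b}, while if $H_k$ is maximal, the lifted local frame \eqref{ds.15} evaluated on $\diag_{\Qb}$ (where $S_i=0$, $Y_i=0$, $\sigma_k=1$, $y_k=y_k'$) reduces to
\[
    \frac{\pa}{\pa S_i},\ \frac{\pa}{\pa Y_i^{n_i}}\ (i<k),\qquad \frac{\pa}{\pa \sigma_k},\ \frac{\pa}{\pa y_k},
\]
which in the system of coordinates \eqref{ds.14} are precisely the directions normal to $\diag_{\Qb}$, and hence form a basis of $N\diag_{\Qb}$ at the point. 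This establishes the isomorphism $N\diag_{\Qb}\cong{}^{\Qb}TM$, and the corresponding identification $N^*\diag_{\Qb}\cong{}^{\Qb}T^*M$ follows by dualizing.

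No serious obstacle is expected here: Lemma~\ref{ds.11} does the analytical work, and the only additional observation is that the local frames exhibited there actually span the normal bundle at the diagonal, which is immediate from inspection of the coordinate formulas. As remarked in the statement, one may equivalently use $\pr_R\circ\beta_{\Qb}$ by the symmetry of Remark~\ref{ps.0a}, and the two identifications agree since the lifts through $\pi_L$ and $\pi_R$ of a given $\QFB$ vector field differ by a vector field tangent to $\diag_{\Qb}$.
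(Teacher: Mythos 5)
Your proposal is correct and matches the paper's (implicit) argument: the corollary is stated as an immediate consequence of Lemma~\ref{ds.11}, exactly parallel to how Corollary~\ref{ds.9b} follows from Lemma~\ref{ds.2}, and your spelling out of the bundle map via lifting $\Qb$ vector fields and reading off the local frames \eqref{ds.15} on the diagonal is precisely the intended reasoning.
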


\section{$\QFB$ pseudodifferential operators} \label{ts.0}

Thanks to the double space of the previous section, we can now introduce the notion of pseudodifferential $\QFB$ operators as conormal distributions on $M^2_{\QFB}$.  Compared to \cite{Mazzeo-MelrosePhi}, one important new features is that we will allow these conormal distributions to have a weaker asymptotic behavior on the boundary hypersurfaces of the double space $M^2_{\QFB}$, namely they will be $\QFB$ conormal in the sense of Definitions~\ref{con.1} and \ref{ext.8} below instead of $b$ conormal.  This will give us flexibility needed to construct parametrices later on.      

However, before introducing pseudodifferential $\QFB$ operators, let us first explain how differential $\QFB$ operators admit a simple description in terms of conormal distributions on $M^2_{\QFB}$.  The simplest one is the identity operator.  In the coordinates \eqref{ds.2c} near $\diag_M\cap (\pa M\times \pa M)$, its Schwartz kernel takes the form
$$
   \kappa_{\Id}= \left( \prod_{i=1}^k \delta(x_i'-x_i) \delta(y_i'-y_i)dx_i'dy_i'\right) \delta(z'-z)dz'
$$
with $\delta(\zeta)$ denoting the Dirac $\delta$ distribution supported at $\zeta=0$.
In terms of the coordinates \eqref{ds.3}, this becomes
\begin{equation}
\begin{aligned}
  \kappa_{\Id}&= \left(  \prod_{i=1}^k {x'_i}^{-1}\delta(1-s_i)\delta(y_i'-y_i)dx_i'dy_i'\right) \delta(z'-z)dz' \\
                      &= \left(  \prod_{i=1}^k \delta(1-s_i)\delta(y_i'-y_i)\frac{dx_i'}{x_i'}dy_i'\right) \delta(z'-z)dz'. 
\end{aligned}  
\label{pdo.1}\end{equation}
Thus, in terms of the coordinates \eqref{ds.6} and with the convention that $\sigma_{k+1}:=1$, this gives
\begin{equation}
\begin{aligned}
  \kappa_{\Id}&= \left(  \prod_{i=1}^k \sigma_{i+1}\delta(\sigma_{i+1}-\sigma_i)\delta(y_i'-y_i)\frac{dx_i'}{x_i'}dy_i'\right) \delta(z'-z)dz' \\
                      &= \left(  \prod_{i=1}^k \delta(1-\sigma_i)\delta(y_i'-y_i)\frac{dx_i'}{x_i'}dy_i'\right) \delta(z'-z)dz',
\end{aligned}  
\label{pdo.2}\end{equation}
so that finally, in terms of the coordinates \eqref{ds.8}, we have that
\begin{equation}
\begin{aligned}
  \kappa_{\Id}&= \left(  \prod_{i=1}^k \delta(-S_i)\delta(-Y_i)\frac{dx_i'}{x_i'(v_i')^{1+k_i}}dy_i'\right) \delta(z'-z)dz' \\
                      &= \left(  \prod_{i=1}^k \delta(-S_i)\delta(-Y_i)\right) \delta(z'-z) \pi_R^*(\nu_{\QFB}),
\end{aligned}  
\label{pdo.3}\end{equation}
where $k_i=\dim S_i-\dim S_{i-1}-1$ if $i>1$, $k_1=\dim S_1$ otherwise and $\nu_{\QFB}\in \CI(M;^{\QFB}\Omega)$ is a non-vanishing $\QFB$ density on $M$, the last equality following from \eqref{E:QFB_density_relation}. In other words, 
$$
    \kappa_{\Id}\in \cD^0(\diag_{\QFB})\cdot \pi^*_R(\nu_{\QFB})
$$
where $\cD^0(\diag_{\QFB})$ is the space of Dirac delta distributions supported (and varying smoothly) on the $p$-submanifold $\diag_{\QFB}\subset M^2_{\QFB}$.  More generally, the Schwartz kernel of an operator $P\in \Diff^k_{\QFB}(M)$ is 
$$
   \kappa_P= \pi^*_LP \cdot \kappa_{\Id}\in \cD^k(\diag_{\QFB})\cdot \pi^*_R(\nu_{\QFB})
$$ 
where $\cD^k(\diag_{\QFB})$ is the space of smooth Dirac delta distributions of order at most $k$ supported (and varying smoothly) on $\diag_{\QFB}$, namely
$$
     \cD^k(\diag_{\QFB}):= \Diff^k(M^2_{\QFB})\cdot \cD^0(\diag_{\QFB}).
$$
Since $\pi_L^*(\cV_{\QFB}(M))$ is transversal to $\diag_{\QFB}$ by Lemma~\ref{ds.2}, we see that the space of Schwartz kernels corresponding to operators in 
$\Diff^k_{\QFB}(M)$ is precisely $\cD^k(\diag_{\QFB})\cdot \pi^*_R(\nu_{\QFB})$.  If $E$ and $F$ are smooth complex vector bundles on $M$, then the same discussion shows that the space of Schwartz kernels corresponding to the operators in $\Diff^k_{\QFB}(M;E,F)$ is precisely 
$$
       \cD^k(\diag_{\QFB})\cdot \CI(M^2_{\QFB}; \beta_{\QFB}^*(\Hom(E,F)\otimes \pr_R^*{}^{\QFB}\Omega))
$$
where $\Hom(E,F):=\pr^*_{L}F\otimes \pr_R^*(E^*)$ on $M^2$.  This preliminary discussion suggests to define the {\bf small calculus} of $\QFB$ pseudodifferential operators as the union over $m\in \bbR$ of the spaces
\begin{equation}
\kridx{\Psi^m_{\QFB}}{PsiQFB}{QFB pseudodifferential operators (small calculus)}(M;E,F):=\{\kappa\in I^m(M^2_{\QFB}; \diag_{\QFB};\beta_{\QFB}^*(\Hom(E,F)\otimes \pr_R^*{}^{\QFB}\Omega)) \; | \; \kappa\equiv 0 \; \mbox{at} \; \pa M^2_{\QFB}\setminus \ff_{\QFB} \},
\label{pdo.4}\end{equation}
where $\ff_{\QFB} = \cup_i \ff_i$ is the union of the front faces of $M^2_{\QFB}$ and 
$$I^m(M^2_{\QFB};\diag_{\QFB};\beta_{\QFB}^*(\Hom(E,F)\otimes \pr_R^*{}^{\QFB}\Omega))
$$ 
is the space of conormal distributions of order $m$ at $\diag_{\QFB}$.

To define $\Qb$ pseudodifferential operators, we can proceed in a similar way.  One can first check that in the coordinates \eqref{ds.14},
\begin{equation}
\begin{aligned}
\kappa_{\Id}&= \left( \prod_{i=1}^{k-1} \delta(-S_i)\delta(-Y_i) \frac{dx_i'dy_i'}{x_i'(w_i')^{1+k_i}} \right)\left( \delta(1-\sigma_k)\delta(y_k'-y_k)\frac{dx_k'}{x_k'}dy_k' \right) \\
                    &=\left( \prod_{i=1}^{k-1}\delta(-S_i)\delta(-Y_i) \right)\left(  \delta(1-\sigma_k)\delta(y_k'-y_k) \right)  \beta_{\Qb}^*\pr_R^*\nu_{\Qb}
\end{aligned}
\label{pdo.5}\end{equation}
for some non-vanishing $\Qb$ density $\nu_{\Qb}$.  Hence, the space of Schwartz kernels associated $\Qb$ differential operators of order $k$ is just
$$
       \cD^k(\diag_{\Qb})\cdot \CI(M^2_{\Qb}; \beta_{\Qb}^*\left( \Hom(E,F)\otimes \pr_R^* {}^{\Qb}\Omega \right)),
$$
and the {\bf small calculus} of $\Qb$ pseudodifferential operators is the union over $m\in \bbR$ of the spaces
\begin{equation}
\kridx{\Psi^m_{\Qb}}{PsiQB}{Qb pseudodifferential operators (small calculus)}(M;E,F):= \{\kappa \in I^m(M^2_{\Qb};\diag_{\Qb}; \beta_{\Qb}^*(\Hom(E,F)\otimes \pr_R^*{}^{\Qb}\Omega)) \; | \; \kappa\equiv 0 \; \mbox{at} \; \pa M^2_{\Qb}\setminus \ff_{\Qb}\},
\label{pdo.6}\end{equation} 
where $\ff_{\Qb}$ is the union of the boundary hypersurfaces of $M^2_{\Qb}$ intersecting the lifted diagonal $\diag_{\Qb}$.

To construct good parametrices, one needs often to enlarge slightly the space of pseudodifferential operators to allow non-trivial behavior at other faces.  To this end, let us first recall some notation from \cite{Melrose1992}.  Thus, let $X$ be a manifold with corners with boundary hypersurfaces $B_1,\ldots, B_{\ell}$ and corresponding boundary defining functions $\rho_i$.  For $\mathfrak{s}\in \bbR^{\ell}$ a multiweight, we denote the conormal functions with multiweight $\mathfrak{s}$ by 
\begin{equation}
   \kridx{\sA^{\mathfrak{s}}_-}{As-}{conormal functions with multiweight $\mathfrak{s}$}(X)= \bigcap_{\mathfrak{t}<\mathfrak{s}} \rho^{\mathfrak{t}} H^{\infty}_b(X)
\label{pdo.7}\end{equation}
where $\rho^{\mathfrak{t}}= \rho_1^{\mathfrak{t}_1}\cdots \rho_{\ell}^{\mathfrak{t}_{\ell}}$, $\mathfrak{t} < \mathfrak{s}$ means that $t_i < s_i$ for every $i$, and 
$$
     H^{\infty}_b(X)= \bigcap_k H^k_b(X)
$$
with 
$$
   \kridx{H^k_b}{Hkb}{$b$ Sobolev space}(X)= \{u\in L^2_b(X)\; | \; Pu\in L^2_b(X)\; \forall P\in \Diff^k_b(X)\},
$$
the $b$ Sobolev space of order $k$ associated to the space $L^2_b(X)$ of square integrable functions with respect to a $b$ density.  Given an index family $\cE$ for the boundary hypersurfaces of $X$, we denote by $\sA^{\cE}_{\phg}(X)$ the space of smooth functions on the interior of $X$ admitting at each boundary hypersurface a polyhomogeneous expansion for the corresponding index set of the index family $\cE$.  More generally, for $\mathfrak{s}>\mathfrak{s}'$, following  \cite[(22)]{Melrose1992}, 
we can consider the space 
\begin{equation}
          \kridx{\sB^{\cE/\mathfrak{s}}_{\phg}\sA^{\mathfrak{s}'}_-}{BEsA}{partially polyhomogeneous conormal functions}(X)
\label{pdo.8}\end{equation}
of functions  in  $\sA^{\mathfrak{s}'}_-(X)$ 
admitting a partial expansion at each boundary hypersurface $H$ with exponents in $\cE(H)$ and remainder term in $\sA^{\mathfrak{s}_H}_-(X)$ with
$$
     \mathfrak{s}_H(H')= \left\{ \begin{array}{ll} \mathfrak{s}(H), & H=H' \\
              \mathfrak{s}'(H'), & \mbox{otherwise.}   \end{array}  \right.
$$
In particular, 
$$
\sB^{\emptyset/\mathfrak{s}}_{\phg}\sA^{\mathfrak{s}'}_-(X)=\sA^{\mathfrak{s}}_-(X).
$$
When $\mathfrak{s}' = 0$ and $\mathfrak{s} > 0$, we use the simpler notation
\[
	\kridx{\sA_{\phg}^{\cE/\mathfrak{s}}}{AEs}{partially polyhomogeneous conormal functions}(X) := \sB_{\phg}^{\cE/\mathfrak{s}} \sA_-^0(X).
\]

In particular, $\sA^{\cE/\infty}_{\phg}(X)=\sA^{\cE}_{\phg}(X)$ and $\sA^{\emptyset/\mathfrak{s}}_{\phg}(X) = \sA^{\mathfrak{s}}_-(X)$. Since these spaces are $\CI$-modules, for $E\to X$ a smooth vector bundle, we can also consider the space of partially polyhomogeneous sections of $E$
$$
	\sA^{\cE/\mathfrak{s}}_{\phg}(X; E) := 
	\sA^{\cE/\mathfrak{s}}_{\phg}(X) \otimes_{\CI(X)}\CI(X;E).
$$

Coming back to $\QFB$ operators, given an index family $\cE$ for $M^2_{\QFB}$ and a multiweight $\mathfrak{s}$, we can consider the spaces
of pseudodifferential operators
\begin{equation}
\begin{gathered}
  \Psi^{-\infty,\cE/\mathfrak{s}}_{\QFB}(M;E,F):=\sA^{\cE/\mathfrak{s}}_{\phg}(M^2_{\QFB};\beta^*_{\QFB}(\Hom(E,F)\otimes\pr_R^*\Omega_{\QFB})), \\
  \kridx{\Psi^{m,\cE/\mathfrak{s}}_{\QFB}}{PsiQFBEs}{QFB pseudodifferential operators (large calculus)}(M;E,F):=  \Psi^{m}_{\QFB}(M;E,F)+  \Psi^{-\infty,\cE/\mathfrak{s}}_{\QFB}(M;E,F), \quad \mbox{for}\; m\in \bbR.  \end{gathered}
\label{pdo.9}\end{equation}
  This will be particularly interesting when $\cE$ is a \textbf{$\QFB$ nonnegative index family}, which is an index family $\cE$ such that $\inf \Re\cE(\ff_i)\ge 0$ and $\inf\Re\cE(H_{ij})>h_j$ for each $i$ and $j$ in $\{0,1,\ldots,\ell\}$ with the convention that $H_0=M$, $h_0=0$ and $h_j=\dim S_j+1$ (c.f.~\eqref{E:QFB_density_relation} and notice that the asymmetry between for instance $\cE(H_{i0})$ and $\cE(H_{0i})$ is due to the fact that we are using right densities).  An index family $\cE$ is said to be \textbf{$\QFB$ positive} if it is $\QFB$ nonnegative and $\inf\Re \cE(\ff_i)>0$ for all $i$.  Similarly, a multiweight $\mathfrak{q}$ is said to be \textbf{$\QFB$ positive} (respectively $\QFB$ \textbf{nonnegative}) if $\mathfrak{q}(\ff_i)> 0$ (respectively $\mathfrak{q}(\ff_i)\ge 0$) and $\mathfrak{q}(H_{ij})>h_j$ for each $i$ and $j$ in $\{0,1,\ldots,\ell\}$ such that $H_i\cap H_j\ne \emptyset$.
In particular, we will consider the spaces   
\begin{equation}
\Psi^{-\infty,\mathfrak{r}}_{\QFB}(M;E,F):= \Psi^{-\infty,\emptyset/\mathfrak{r}}_{\QFB}(M;E,F)
\label{co.11b}\end{equation} 
for $\mathfrak{r}$ a $\QFB$ positive multiweight.

Similarly, if $\cE$ is an index family on $M^2_{\Qb}$ and a multiweight $\mathfrak{s}$, there are corresponding spaces of $\Qb$ pseudodifferential operators
\begin{equation}
\begin{gathered}
  \Psi^{-\infty,\cE/\mathfrak{s}}_{\Qb}(M;E,F):=\sA^{\cE/\mathfrak{s}}_{\phg}(M^2_{\Qb};\diag_{\Qb};\beta^*_{\Qb}(\Hom(E,F)\otimes\pr_R^*\Omega_{\Qb})), \\ 
  \kridx{\Psi^{m,\cE/\mathfrak{s}}_{\Qb}}{PsiQbEs}{Qb pseudodifferential operators (large calculus)}(M;E,F):= \Psi^{m}_{\Qb}(M;E,F)+ \Psi^{-\infty,\cE/\mathfrak{s}}_{\Qb}(M;E,F).
      \end{gathered}
\label{pdo.10}\end{equation} 

We will be interested the case where $\cE$ is a \textbf{$\Qb$ nonnegative index family}, that is, an index family $\cE$ such that $\inf \Re\cE(H^{\Qb}_{ii})\ge 0$ for $H_i$ maximal, $\Re\cE(\ff_i^{\Qb})\ge 0$ for $H_i$ non-maximal and otherwise such that 
$$
  \Re\cE(H^{\Qb}_{ij})> \widetilde{h}_j \quad \mbox{for}  \;  i,j\in \{0,\ldots\ell\},
$$
 where 
\begin{equation}
\widetilde{h}_j= \left\{  \begin{array}{ll} 0, &   j>0, H_j \; \mbox{is maximal}, \\
   h_j, & \mbox{ otherwise}.
   \end{array}  \right.
\label{max.1}\end{equation}
We will also say that a multiweight $\mathfrak{q}$ is \textbf{$\Qb$ positive} (respectively $\Qb$ \textbf{nonnegative}) if  $\mathfrak{q}(H^{\Qb}_{ii})> 0$ (respectively $\mathfrak{q}(H^{\Qb}_{ii})\ge 0$) for $H_i$ maximal, $\mathfrak{q}(\ff_i^{\Qb})> 0$ (respectively $\mathfrak{q}(\ff_i^{\Qb})\ge 0$) for $H_i$ non-maximal and otherwise such that 
$$
  \mathfrak{q}(H^{\Qb}_{ij})> \widetilde{h}_j \quad \mbox{for}  \;  i,j\in \{0,\ldots\ell\}.
$$
Special cases are obtained by considering the space
\begin{equation}
\Psi^{-\infty,\mathfrak{r}}_{\Qb}(M;E,F):= \Psi^{-\infty,\emptyset/\mathfrak{r}}_{\Qb}(M;E,F),  
\label{co.15}\end{equation} 
where $\mathfrak{r}$ is a $\Qb$ positive multiweight.

To describe how $\QFB$ pseudodifferential operators act, it is convenient to first check that the projections $\pi_L$ and $\pi_{R}$ from $M^2_{\QFB}$ to $M$ are $b$-fibrations.

\begin{lemma}
The projections $\pi_L$ and $\pi_R$ from $M^2_{\QFB}$ to $M$ are $b$-fibrations.  Similarly, the projections $\pr_L\circ \beta_{\Qb}$ and $\pr_R\circ \beta_{\Qb}$ are $b$-fibrations from $M^2_{\Qb}$ to $M$.
\label{ao.1}\end{lemma}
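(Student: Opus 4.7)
The plan is to show that $\pi_L$ is a $b$-fibration; the case of $\pi_R$ follows by the left-right symmetry of Remark~\ref{ps.0a}, and the $\Qb$ statement is handled in exactly the same manner since $M^2_{\Qb}$ is constructed from $M^2_{\rp}$ by the same sequence of blow-ups as $M^2_{\QFB}$ except that the $\Phi_i$ for maximal $H_i$ are omitted. The starting point is that $\pr_L : M\times M \to M$ is a $b$-fibration, being a trivial fiber bundle projection; every boundary hypersurface of $M\times M$ is of the form $H\times M$ or $M\times H$, mapped respectively to $H$ with multiplicity $1$ or into the interior with multiplicity $0$.

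The strategy is then to apply Lemma~\ref{bf.1} inductively along the sequence of blow-ups that builds $M^2_{\QFB}$ out of $M\times M$. At each stage I have a $b$-fibration to $M$ and I blow up a $p$-submanifold that is either (the lift of) a product $H_i\times H_j$ or one of the $\Phi_i$. In both cases the two hypotheses of Lemma~\ref{bf.1} need to be verified, namely $b$-transversality of the center of blow-up to the current projection, and the fact that its image is not contained in a codimension $2$ boundary face of $M$.

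For the products $H_i\times H_j$ (and their lifts) that produce $M^2_{\rp}$, the image condition is trivial since $\pr_L(H_i\times H_j)=H_i$ is a boundary hypersurface. For $b$-transversality, the key observation is that the kernel of ${}^b\!\pr_L$ at $(p,q)$ is the full $b$-tangent space $\{0\}\oplus {}^bT_qM$ of the right factor, while ${}^bT_{(p,q)}(H_i\times H_j,M\times M)={}^bT_pM\oplus {}^bT_qM$ equals the entire $b$-tangent space; hence the transversality condition \eqref{bf.2} holds automatically. This property is preserved under the iterated blow-ups because, in the adapted local coordinates appearing in \eqref{ds.3} and \eqref{ds.6}, the lift of $H_i\times H_j$ remains defined by the vanishing of coordinates coming exclusively from the right factor (possibly after passing through $s_i$ or $\sigma_i$), which are in the kernel of the lifted projection.

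The main check is then the transversality of the $\Phi_i$, which is the step I expect to be the most delicate. Here the image condition is again fine because $\pi_L(\Phi_i)=H_i$. For $b$-transversality I would work in the local coordinates $(\sigma_i,x_i',y_i,y_i',z,z')$ of \eqref{ds.6} on $M^2_{\rp}$, in which $\Phi_i$ is cut out by $\sigma_i=1$ and the equations forcing $(x_1',\dots,x_{i-1}',y_1',\dots,y_i')=(x_1,\dots,x_{i-1},y_1,\dots,y_i)$ (the diagonal at the base level $\diag^2_{S_i,\op}$ pulled back by $\phi_i\rttimes\phi_i$). A direct inspection shows that the basis \eqref{ds.7} of the lift of $\cV_{\QFB}(M)$ under $\pi_L$ is transversal to $\Phi_i$ (this is essentially Lemma~\ref{ds.2} applied to the subfamily of vector fields normal to $\Phi_i$), and since ${}^bT\Phi_i$ contains all the right-factor $b$-vector fields, the sum with $\ker({}^b\!\pi_L)$ fills out ${}^bT M^2_{\rp}$. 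Once this is established, Lemma~\ref{bf.1} applies, completing the induction and delivering the $b$-fibration $\pi_L:M^2_{\QFB}\to M$. The verification for $M^2_{\Qb}$ is identical on the $\Phi_i$ with $H_i$ non-maximal and requires nothing new for the unblown-up maximal front faces, since those correspond to codimension one boundary hypersurfaces over which $\pr_L\circ\beta_{\Qb}$ is already a $b$-fibration by the preceding step.
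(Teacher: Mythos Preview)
Your approach is essentially the same as the paper's: start from the $b$-fibration $\pr_L:M^2\to M$ and iteratively apply Lemma~\ref{bf.1}, first through the corner blow-ups producing $M^2_{\rp}$ (where ${}^bT_p(S,X)={}^bT_pX$ makes \eqref{bf.2} trivial), then through the blow-ups of the $\Phi_i$ (where the image is the single hypersurface $H_i$ and $b$-transversality is checked directly). One small imprecision: your claim that ``${}^bT\Phi_i$ contains all the right-factor $b$-vector fields'' is not quite right, since $\Phi_i$ lies over the diagonal of $(S_i)^2_{\op}$ and hence right-factor base directions $\partial_{y_j'}$, $j\le i$, are \emph{not} tangent to $\Phi_i$; what makes \eqref{bf.2} work is rather that these missing directions lie in $\ker({}^b\pi_L)$, while the left-factor directions are recovered as differences of diagonal directions (in ${}^bT\Phi_i$) and right-factor directions (in the kernel). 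The paper phrases this simply as ``it is easy to see from \eqref{ps.8} that \eqref{bf.2} still holds.''
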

\begin{proof}
Starting with the $b$-fibrations $\pr_L: M^2\to M$ and $\pr_R: M^2\to M$, one can first apply iteratively Lemma~\ref{bf.1} for each blow-up leading to $M^2_{\rp}$  to show that $\pr_L\circ \beta_{\rp}$ and $\pr_R\circ \beta_{\rp}$ are $b$-fibrations.  Indeed, for each of these blow-ups, $^{b}T_p(S,X)= {}^{b}T_pX$, so condition \eqref{bf.2} is automatically satisfied, while $f(S)$ always corresponds to a boundary hypersurface of $Y$.  From there, the result follows by iteratively applying Lemma~\ref{bf.1} to the blow-ups of $\Phi_1,\ldots,\Phi_{\ell}$.  In this case, each $\Phi_i$ is sent inside the boundary hypersurface $H_i\subset M$ and no other.  On the other hand, we no longer have that $^{b}T(S,X)=^{b}TX$, but it is easy to see from \eqref{ps.8} that \eqref{bf.2} still holds.  
\end{proof}

Given $f\in \cA^{\cF/\mathfrak{t}}_{\phg}(M;E)$, we would like to define the action of $P\in\Psi^{m,\cE/\mathfrak{s}}_{\QFB}(M;E,F)$ by
\begin{equation}
 Pf:= (\pi_L)_*(P\cdot \pi_R^*f))= \left[(\pi_L)_*(\cdot P\cdot \pi_R^*f\otimes\pi_L^*\varpi )\right]\otimes\varpi^{-1}
\label{pdo.11}\end{equation}
for $\varpi$ any non-vanishing $b$ density on $M$.   To describe in greater details what we obtain, recall from \eqref{E:QFB_density_relation} that if $\nu_b$ is a non-vanishing $b$ density, then
\begin{equation}
  \kridx{\nu_{\QFB}}{nuQFB}{non-vanishing QFB density}=\left(\prod_{i=1}^{\ell} x_i^{1+\dim S_i}\right)^{-1}\nu_b
\label{pdo.12}\end{equation} 
is a non-vanishing $\QFB$ density.    Let $\rho_{\ff_i}$ and $\rho_{ij}$ be boundary defining functions for $\ff_i$ and $H_{ij}$ respectively.  To describe the lift from the right of a $\QFB$ density on the double space, we will need the following lemma due to Melrose, the proof of which is a simple local
coordinate computation.
\begin{lemma}
Let $Y$ be a $p$-submanifold of a manifold with corners $X$.  Let $w$ the codimension of $Y$ within the smallest boundary face of $X$ containing $Y$.  Let $\beta$ be the blow-down map from $[X;Y]$ to $X$.  If  $\rho_Y\in\CI([X;Y])$ is a boundary defining function for the new boundary hypersurface created by the blow-up of $Y$, then
$$
  \beta^* {}^{b}\Omega(X)= (\rho_Y^w){}^{b}\Omega([X;Y]).
$$
\label{pdo.12a}\end{lemma}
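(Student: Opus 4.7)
The plan is a direct local-coordinate computation, since both sides of the asserted equality are nonvanishing sections of a line bundle over $[X;Y]$ and the identity is local on $[X;Y]$. Near a point $p\in Y$, I would choose coordinates $(\rho_1,\ldots,\rho_c,y_1,\ldots,y_w,z_1,\ldots,z_k)$ on $X$ in which $\rho_1,\ldots,\rho_c$ are local boundary defining functions for precisely those boundary hypersurfaces of $X$ which contain $Y$, so that the smallest boundary face $F$ containing $Y$ is cut out by $\rho_1=\cdots=\rho_c=0$, and so that $Y$ is locally $\{\rho_1=\cdots=\rho_c=y_1=\cdots=y_w=0\}$ (if $p$ also lies on additional boundary hypersurfaces of $X$ not containing $Y$, their boundary defining functions appear among the $z$-coordinates and are inert in what follows). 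In these coordinates a local generator of ${}^{b}\Omega(X)$ is
\begin{equation*}
\nu=\left|\frac{d\rho_1}{\rho_1}\cdots\frac{d\rho_c}{\rho_c}\,dy_1\cdots dy_w\,dz_1\cdots dz_k\right|.
\end{equation*}

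Next I would work in the projective chart of the blow-up in which $\rho_1$ dominates the remaining transverse coordinates, with coordinates $(\rho_1,\widetilde\rho_2,\ldots,\widetilde\rho_c,\widetilde y_1,\ldots,\widetilde y_w,z_1,\ldots,z_k)$ on $[X;Y]$ related to the originals by $\rho_i=\rho_1\widetilde\rho_i$ for $i\geq2$ and $y_j=\rho_1\widetilde y_j$. In this chart $\rho_1$ is a boundary defining function for the front face and for no other hypersurface, so one may take $\rho_Y=\rho_1$, while the $\widetilde\rho_i$ cut out the proper transforms of $\{\rho_i=0\}$ for $i\geq2$. A local generator of ${}^{b}\Omega([X;Y])$ in this chart is therefore
\begin{equation*}
\nu'=\left|\frac{d\rho_1}{\rho_1}\frac{d\widetilde\rho_2}{\widetilde\rho_2}\cdots\frac{d\widetilde\rho_c}{\widetilde\rho_c}\,d\widetilde y_1\cdots d\widetilde y_w\,dz_1\cdots dz_k\right|.
\end{equation*}

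Pulling $\nu$ back using $\beta^{\ast}(d\rho_i/\rho_i)=d\rho_1/\rho_1+d\widetilde\rho_i/\widetilde\rho_i$ for $i\geq2$ and $\beta^{\ast}dy_j=\widetilde y_j\,d\rho_1+\rho_1\,d\widetilde y_j$, and noting that any extra $d\rho_1$ wedged with the $d\rho_1/\rho_1$ already present gives zero, I obtain $\beta^{\ast}\nu=\rho_1^{w}\,\nu'$; equivalently, the pullback of $d\rho_1\wedge\cdots\wedge d\rho_c\wedge dy_1\wedge\cdots\wedge dy_w$ acquires a factor $\rho_1^{c+w-1}$, while division by $\beta^{\ast}(\rho_1\cdots\rho_c)=\rho_1^{c}\widetilde\rho_2\cdots\widetilde\rho_c$ removes $\rho_1^{c}$, leaving exactly $\rho_1^{w}=\rho_Y^{w}$. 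The same bookkeeping in the analogous projective chart where some $y_{j_0}$ dominates (with $\rho_Y=y_{j_0}$ there) yields the identical factor $\rho_Y^{w}$, and since such charts cover a neighborhood of the front face (away from which $\beta$ is a diffeomorphism and the identity is trivial), the formula $\beta^{\ast}{}^{b}\Omega(X)=\rho_Y^{w}\,{}^{b}\Omega([X;Y])$ holds throughout $[X;Y]$. The whole argument reduces to counting powers of the front-face boundary defining function, so there is no conceptual obstacle.
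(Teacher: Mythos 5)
Your computation is correct and is exactly the "simple local coordinate computation" the paper alludes to without writing out: in each projective chart the logarithmic factors $d\rho_i/\rho_i$ attached to hypersurfaces containing $Y$ absorb no powers of the front-face variable, while each of the $w$ transverse interior differentials $dy_j$ contributes one, giving the exponent $w$. No gaps; the treatment of the chart where some $y_{j_0}$ dominates and of inert boundary hypersurfaces not containing $Y$ is also handled properly.
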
  
Using this lemma, one computes that
\begin{equation}
    \beta_{\QFB}^{*}{}^{b}\Omega(M^2)= \left(\prod_i \rho_{\ff_i}^{1+\dim S_i} \right){}^{b}\Omega(M^2_{\QFB}).
\label{pdo.12b}\end{equation}
Since 
\begin{equation}
\pr_L^*{}^{b}\Omega(M)\otimes \pr_R^*{}^{b}\Omega(M)= {}^{b}\Omega(M^2),
\label{pdo.12c}\end{equation} 
we deduce from \eqref{pdo.12} and \eqref{pdo.12b} that 
\begin{equation}
\pi_L^*\nu_b \cdot\pi_R^*\nu_{\QFB}\in \frac{{}^{b}\Omega(M^2_{\QFB})}{\rho^{\mathfrak{r}}}
\label{pdo.13}\end{equation}
with $\kridx{\mathfrak{r}}{r}{}$ the multiweight such that 
\begin{equation}
	\rho^{\mathfrak{r}} = \prod_{i=1}^\ell \prod_{j=0}^\ell \rho_{ji}^{1 + \dim S_i}
\label{pdo.14}\end{equation}
with $\rho_{ji}$ denoting a boundary defining function for $H_{ji}$.
To formulate how $\QFB$ operators act on sections of a bundle, the following notation will be useful.
\begin{definition}
Given weights $\mathfrak{s},\mathfrak{t}$ with associated index sets $\cE$ and $\cF$ respectively, we denote by $\mathfrak{s}\kridx{\dot{+}}{+}{multiweight sum with associated index sets}\mathfrak{t}$ the weight given by
$$
      \min\{\mathfrak{s}+\mathfrak{t}, \mathfrak{s}+\mathfrak{f}, \mathfrak{e}+\mathfrak{t}\}
$$
with associated index set $\cE+\cF$, where 
$$
       \mathfrak{e}:= \min\{ \Re a\; | (a,k)\in \cE\} \quad \mbox{and} \quad \mathfrak{f}:= \min\{ \Re a\; | (a,k)\in \cF\}.
$$
The operator $\dot{+}$ is clearly commutative.  It is also associative.  That is, if  $\mathfrak{r}$ is another weight with associated index set $\cD$, then
\begin{equation}
       (\mathfrak{r}\dot{+}\mathfrak{s})\dot{+}\mathfrak{t}= \mathfrak{r}\dot{+}(\mathfrak{s}\dot{+}\mathfrak{t})
\label{co.8c}\end{equation}
with associated index set $\cD+\cE+\cF$.  We can thus unambiguously use the notation $\mathfrak{r}\dot{+}\mathfrak{s}\dot{+}\mathfrak{t}$ to denote the weight \eqref{co.8c} with associated index set $\cD+\cE+\cF$.  

\label{co.8b}\end{definition}

Using the pushforward theorem of \cite[\S8]{Melrose1992} and the notation therein  yields the following.

\begin{theorem}
If $P\in \Psi^{m,\cE/\mathfrak{s}}_{\QFB}(M;E,F)$ and $f\in \sA^{\cF/\mathfrak{t}}_{\phg}(M;E)$ are such that
$$
       \min\{ \mathfrak{s}(H_{0i}),\min\Re(\cE(H_{0i}^{\QFB}))\}+\min\{\mathfrak{t}(H_{i}),\min\Re(\cF(H_{i}))\}>1+\dim S_i, \quad \forall \ i,
$$
then $Pf:=(\pi_L)_*(P\cdot \pi_R^*f)$ is well-defined and such that 
$$
         Pf\in \sA^{\cK/\mathfrak{k}}_{\phg}(M;F)
$$
with $\cK= (\pi_L)_{\#}(\cE-\mathfrak{r}+ \pi_R^{\#}\cF)$ and $\mathfrak{k}= (\pi_L)_{\#}(\mathfrak{s}-\mathfrak{r}\dot{+}\pi_R^{\#}\mathfrak{t})$, that is,
$$
\cK(H_i)= \cE(H^{\QFB}_{i0}) \overline{\cup} ((\cE)(\ff_i)+\cF(H_i)) \overline{\cup}\left(\underset{H_{j}\in\cM_1(M)}{\overline{\cup}} ((\cE-\mathfrak{r})(H^{\QFB}_{ij})+\cF(H_j))\right)
$$
and 
$$
\mathfrak{k}(H_i)= \min\{ \mathfrak{s}(H_{i0}^{\QFB}), (\mathfrak{s}-\mathfrak{r})(\ff_i)\dot{+} \mathfrak{t}(H_i),(\mathfrak{s}-\mathfrak{r})(H_{ij}^{\QFB}) \dot{+}\mathfrak{t}(H_j) \; \mbox{for} \;
   H_j\in \cM_1(M)\},
$$
where $\mathfrak{r}$ is the multiweight defined in \eqref{pdo.14}.
\label{pdo.15}\end{theorem}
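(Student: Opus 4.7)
The plan is to realize $Pf$ as a $b$-fibration pushforward of a partially polyhomogeneous $b$-density on $M^2_{\QFB}$ and invoke Melrose's pushforward theorem from \cite[\S8]{Melrose1992}, noting that $\pi_L : M^2_{\QFB}\to M$ is a $b$-fibration by Lemma~\ref{ao.1}. To reduce to a decomposition where the smooth and polyhomogeneous parts do not interact, I would split $P = P_0 + P_1$ with $P_0 \in \Psi^m_{\QFB}(M;E,F)$ and $P_1 \in \Psi^{-\infty,\cE/\mathfrak{s}}_{\QFB}(M;E,F)$ and treat each summand separately; the conormal singularity of $P_0$ at $\diag_{\QFB}$ contributes only to the usual symbolic order along the (transversal by Lemma~\ref{ds.2}) diagonal and is absorbed by the pushforward, leaving the polyhomogeneous bookkeeping to be done for $P_1$ (and at the front faces for $P_0$ using that its Schwartz kernel is smooth up to $\ff_{\QFB}$).

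The first technical step is to convert the $\QFB$-density factor into a $b$-density on the double space. Tensoring \eqref{pdo.11} with a nonvanishing $b$-density $\varpi$ on $M$ turns the integrand into a section of $\beta^*_{\QFB}(\Hom(E,F)\otimes \pr_R^*{}^{\QFB}\Omega\otimes \pr_L^*{}^{b}\Omega)$. By \eqref{pdo.13}--\eqref{pdo.14} this equals $\rho^{-\mathfrak{r}}$ times a $b$-density on $M^2_{\QFB}$, where $\rho^{\mathfrak{r}} = \prod_{i,j}\rho_{ji}^{1+\dim S_i}$ has no contribution from the front faces $\ff_i$ (they are created by the $\Phi_i$ blow-ups, not by blowing up boundary faces of $M^2$).

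The second step is to track partial polyhomogeneity of $P_1\cdot \pi_R^*f$ at each boundary face of $M^2_{\QFB}$. Since $\pi_R$ is a $b$-fibration that sends $H_{ji}$ and $\ff_i$ into $H_i$, the lift $\pi_R^{\#}\cF$ has index set $\cF(H_i)$ at each of these hypersurfaces, so the product has index family $\cE + \pi_R^{\#}\cF$ and remainder weight $\mathfrak{s}\mathbin{\dot+}\pi_R^{\#}\mathfrak{t}$. Incorporating the $\rho^{-\mathfrak{r}}$ shift from the density conversion replaces these by $\cE-\mathfrak{r}+\pi_R^{\#}\cF$ and $\mathfrak{s}-\mathfrak{r}\mathbin{\dot+}\pi_R^{\#}\mathfrak{t}$ at the $H_{ji}$, while at the front faces $\ff_i$ the indices remain $\cE(\ff_i)+\cF(H_i)$ and $\mathfrak{s}(\ff_i)\mathbin{\dot+}\mathfrak{t}(H_i)$ since $\mathfrak{r}$ has no component at $\ff_i$.

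The third step is to verify the integrability hypothesis of the pushforward theorem. The only boundary hypersurfaces of $M^2_{\QFB}$ whose image under $\pi_L$ is the whole of $M$ (i.e.\ those that are fully integrated out) are the faces $H_{0i}$; the admissibility condition there is that the combined weight be strictly positive, which after accounting for the density shift $\mathfrak{r}(H_{0i})=1+\dim S_i$ is exactly the stated hypothesis $\mathfrak{s}(H_{0i})+\mathfrak{t}(H_i)>1+\dim S_i$ (replaced by the corresponding condition on $\cE(H_{0i})$ when those indices dominate). At every other face the integrability is automatic.

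Applying the pushforward theorem now gives $(\pi_L)_*(P_1\cdot \pi_R^*f \cdot \pi_L^*\varpi) \in \sA^{\cK/\mathfrak{k}}_{\phg}(M; F\otimes \Omega_b)$; tensoring back with $\varpi^{-1}$ yields the claimed statement. The explicit formulas for $\cK(H_i)$ and $\mathfrak{k}(H_i)$ are read off by decomposing the preimage $\pi_L^{-1}(H_i)$ into its three types of components (the lifted face $H_{i0}$, the front face $\ff_i$, and the corner lifts $H_{ij}$ for $H_i\cap H_j\neq\emptyset$) and noting that $\mathfrak{r}$ vanishes at $H_{i0}$, vanishes on $\ff_i$, and contributes $1+\dim S_j$ at $H_{ij}$. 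The main obstacle I anticipate is this careful bookkeeping of the $\mathfrak{r}$ shifts across the three different types of hypersurfaces, together with keeping track of how extended unions $\overline{\cup}$ arise from Melrose's pushforward formula when indices coming from the three sources coincide.
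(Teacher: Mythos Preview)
Your proposal is correct and follows essentially the same approach as the paper: both apply Melrose's pushforward theorem from \cite[\S8]{Melrose1992} to the $b$-fibration $\pi_L$, with the density conversion \eqref{pdo.13}--\eqref{pdo.14} providing the shift by $\mathfrak{r}$, and the transversality of $\pi_L$ to $\diag_{\QFB}$ (Lemma~\ref{ds.2}) ensuring that the conormal singularity is integrated out. The paper's proof is a two-sentence remark to this effect, whereas you have (correctly) unpacked the bookkeeping of which faces contribute to each term of $\cK(H_i)$ and $\mathfrak{k}(H_i)$ and why the integrability condition reduces to the stated inequality at $H_{0i}$.
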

\begin{proof}
Since $\pi_L$ is transverse to the lifted diagonal, notice that the singularities of the conormal distribution $P\cdot \pi_R^*f$ are integrated out.  Hence, we can apply the pushforward theorem as if it were a conormal function.    
\end{proof}

\begin{corollary}
An operator $P\in \Psi^m_{\QFB}(M;E,F)$ induces a continuous linear map
$$
             P: \CI(M;E)\to \CI(M;F)
$$
which restricts to a continuous linear map
$$
      P: \dot{\cC}^{\infty}(M;E)\to \dot{\cC}^{\infty}(M;E).
$$
\label{pdo.16}\end{corollary}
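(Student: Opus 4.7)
The plan is to obtain this as a direct consequence of the pushforward statement of Theorem~\ref{pdo.15}. The first step is to recognize the small calculus $\Psi^m_{\QFB}(M;E,F)$ as embedded in an extended calculus $\Psi^{m,\cE/\mathfrak{s}}_{\QFB}(M;E,F)$ by choosing $\cE$ and $\mathfrak{s}$ so as to encode the conditions that the Schwartz kernel is smooth up to each front face $\ff_i$ and vanishes to infinite order at every other boundary hypersurface of $M^2_{\QFB}$. Concretely, take $\cE(\ff_i) = \bbN_0 \times \{0\}$ for every $i \in \{1,\ldots,\ell\}$ and $\cE(H_{ij}) = \emptyset$ for $(i,j)\ne (0,0)$, together with $\mathfrak{s}(\ff_i) = 0$ and $\mathfrak{s}(H_{ij}) = +\infty$ for $(i,j)\ne (0,0)$. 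Similarly, a smooth section $f \in \CI(M;E)$ is identified with an element of $\sA^{\cF/\mathfrak{t}}_{\phg}(M;E)$ by taking $\cF(H_i) = \bbN_0 \times \{0\}$ and $\mathfrak{t}(H_i) = 0$, while a section $f \in \dot\cC^{\infty}(M;E)$ corresponds to $\cF(H_i) = \emptyset$ and $\mathfrak{t}(H_i) = +\infty$.

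With these choices, the hypothesis in Theorem~\ref{pdo.15} is trivially satisfied for both situations: at $H_{0i}^{\QFB}$ we have $\mathfrak{s}(H_{0i}) = +\infty$ and $\cE(H_{0i}^{\QFB}) = \emptyset$, so $\min\{\mathfrak{s}(H_{0i}), \min \Re \cE(H_{0i}^{\QFB})\} = +\infty$, which dominates $1 + \dim S_i$ regardless of the behavior of $f$. Thus $Pf = (\pi_L)_*(P\cdot \pi_R^* f)$ is well-defined in both cases.

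For $f \in \CI(M;E)$, the index set $\cK(H_i)$ produced by Theorem~\ref{pdo.15} is the extended union of $\cE(H_{i0}^{\QFB}) = \emptyset$, of $\cE(\ff_i) + \cF(H_i) = \bbN_0$, and of $(\cE - \mathfrak{r})(H_{ij}^{\QFB}) + \cF(H_j) = \emptyset$ for $H_i \cap H_j \ne \emptyset$, so $\cK(H_i) = \bbN_0 \times \{0\}$. A parallel computation shows $\mathfrak{k}(H_i) = +\infty$. Consequently $Pf \in \sA^{\cK/\mathfrak{k}}_{\phg}(M;F) = \CI(M;F)$. For $f \in \dot\cC^{\infty}(M;E)$, an analogous tally with $\cF(H_i) = \emptyset$ produces $\cK(H_i) = \emptyset$ and $\mathfrak{k}(H_i) = +\infty$, hence $Pf \in \dot\cC^{\infty}(M;F)$.

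The continuity assertion is not explicit in the statement of Theorem~\ref{pdo.15}, but it follows from the fact that the pushforward theorem of \cite{Melrose1992} is a continuous operation between the corresponding Fr\'echet spaces of partially polyhomogeneous conormal distributions, together with the continuity of $\pi_R^*$ and of multiplication by the fixed conormal distribution $P$. The only point that requires minor care is to ensure the topology on $\Psi^m_{\QFB}(M;E,F)$ induced from the inclusion into the extended calculus agrees with its natural Fr\'echet topology as a space of conormal distributions; this is routine since the index data above are chosen so tightly that the two topologies coincide. I do not expect any genuine obstacle: the content of the corollary is essentially the observation that the transversality of $\pi_L$ to $\diag_{\QFB}$ provided by Lemma~\ref{ds.2} lets the pushforward theorem apply to a kernel that is singular only along the diagonal, and the bookkeeping of index sets/weights simply records that smoothness and rapid vanishing at $\pa M$ are each preserved under $P$.
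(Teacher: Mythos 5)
Your proposal follows exactly the paper's route: realize the small calculus inside the extended calculus via the index family $\cE$ equal to $\bbN_0$ at each front face $\ff_i$ and $\emptyset$ elsewhere, encode $\CI(M;E)$ (resp.\ $\dot{\cC}^{\infty}(M;E)$) by $\cF=\bbN_0$ (resp.\ $\cF=\emptyset$), and apply Theorem~\ref{pdo.15}, whose hypothesis holds vacuously because the kernel vanishes to infinite order at $H_{0i}$. The one slip is your choice $\mathfrak{s}(\ff_i)=0$: with that multiweight the formula of Theorem~\ref{pdo.15} gives $\mathfrak{k}(H_i)=\min\{\ldots,(\mathfrak{s}-\mathfrak{r})(\ff_i)\dot{+}\mathfrak{t}(H_i),\ldots\}=0$ rather than $+\infty$ (note $\mathfrak{r}$ is supported only at the faces $H_{ji}$), so you would only conclude that $Pf$ has a partial expansion with conormal remainder of weight $0$, not that $Pf$ is smooth. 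The fix is simply to take $\mathfrak{s}=\infty$ at every boundary hypersurface including the front faces, as the paper does; with that choice your bookkeeping for $\cK$ and $\mathfrak{k}$ goes through and the rest of your argument, including the continuity remark, is fine.
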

\begin{proof}
It suffices to apply Theorem~\ref{pdo.15}  with $\mathfrak{s}=\infty$, $\cE$ the index family with all index sets $\bbN_0$ at each front face $\ff_i$ and $\emptyset$ elsewhere,  and $\cF$ the index family with all index sets given by $\bbN_0$ for the first assertion and $\emptyset$ for the second assertion.
\end{proof}

There is a similar result for $\Qb$ pseudodifferential operators.   Let $\rho_{\ff^{\Qb}_i}$, $\rho_{i0}$, $\rho_{0i}$ and $\rho_{ij}$ be boundary defining functions for $\ff^{\Qb}_i, H^{\Qb}_{i0}, H^{\Qb}_{0i}$ and $H^{\Qb}_{ij}$.  
Now,  let $\cI$ be the set associated to non-maximal boundary hypersurfaces,
i.e., $H_i$ is maximal if and only if $i\notin \cI$.   Given a non-vanishing $b$ density $\nu_b$ on $M$, we recall from \eqref{E:Qb_density_relation} that
\begin{equation}
  \kridx{\nu_{\Qb}}{nuQb}{non-vanishing Qb density}:= \left(  \prod_{i\in\cI}x_i^{1+\dim S_i}\right)^{-1}\nu_b
\label{pdo.17}\end{equation}
is a non-vanishing $\Qb$ density.  Using Lemma~\ref{pdo.12a}, we compute that 
\begin{equation}
  \beta^*_{\Qb}  {}^{b}\Omega(M^2)= \left( \prod_{i\in \cI} \rho^{1+\dim S_i}_{\ff^{\Qb}_i} \right) {}^{b}\Omega(M^2_{\Qb}).
\label{pdo.18}\end{equation}
Hence, we deduce from \eqref{pdo.12c}, \eqref{pdo.17} and \eqref{pdo.18} that 
$$
    \beta_{\Qb}^*(\pr_L^*\nu_b\cdot \pr_R^*\nu_{\Qb})\in \frac{{}^{b}\Omega(M^2_{\Qb})}{\rho^{\mathfrak{r}'}}
$$ 
with $\kridx{\mathfrak{r}'}{rprime}{}$ is the multiweight such that 
\begin{equation}
    \rho^{\mathfrak{r}'}= \prod_{i\in \cI} \prod_{j=0}^{\ell} \rho_{ji}^{1+\dim S_i}.
\label{mwp.1}\end{equation}
Hence, the pushforward theorem of \cite[\S~8]{Melrose1992} yields the following.

\begin{theorem}
If $P\in \Psi^{m,\cE/\mathfrak{s}}_{\Qb}(M;E,F)$ and $f\in \sA^{\cF/\mathfrak{t}}_{\phg}(M;E)$ are such that for each $H_i\in\cM_1(M)$, 
$$
         \min\{\mathfrak{s}(H^{\Qb}_{0i}), \min\Re( \cE(H_{0i}^{\Qb}))\}  + \min\{ \mathfrak{t}(H^{\Qb}_{0i}),\min\Re(\cF(H_i))\}> \left\{  \begin{array}{ll} 1 +\dim S_i, & i\in\cI, \\
                                                                                                                      0, & \mbox{otherwise},
                                                                                                                                      \end{array}  \right.                                                                                                                          
$$
then $Pf:= (\pr_L\circ \beta_{\Qb})_*(P\cdot (\pr_R\circ\beta_{\Qb})^* f)$ is well-defined and such that 
$$
    Pf\in \sA^{\cK/\mathfrak{k}}_{\phg}(M;F)
$$
with 
$$
  \cK=(\pr_L\circ\beta_{\Qb})_{\#}(\cE-\mathfrak{r}' + (\pr_R\circ\beta_{\Qb})^{\#}\cF)  \quad \mbox{and}  \quad \mathfrak{k}=(\pr_L\circ\beta_{\Qb})_{\#}(\mathfrak{k}-\mathfrak{r}' \dot{+} (\pr_R\circ\beta_{\Qb})^{\#}\mathfrak{t}),$$
that is,
$$
    \cK(H_i)= \cE(H_{i0}^{\Qb})  \overline{\cup} (\cE(\ff_i^{\Qb})+\cF(H_i))\overline{\cup} \left( \underset{H_j\in\cM_1(M)}{\overline{\cup}} ((\cE-\mathfrak{r}')(H^{\Qb}_{ij})+\cF(H_j)) \right)
$$
and 
$$
\mathfrak{k}(H_i)=\min\{\mathfrak{s}(H_{i0}^{\Qb}), \mathfrak{s}(\ff_i^{\Qb})\dot{+}\mathfrak{t}(H_i), (\mathfrak{s}-\mathfrak{r}')(H^{\Qb}_{ij})\dot{+} \mathfrak{t}(H_j) \; \mbox{for} \; H_i\cap H_j\ne \emptyset  \}
$$
if $i\in \cI$, and otherwise
$$
    \cK(H_i)= \cE(H_{i0}^{\Qb}) \overline{\cup} \left( \underset{H_j\in\cM_1(M)}{\overline{\cup}} ((\cE-\mathfrak{r}')(H^{\Qb}_{ij})+\cF(H_j)) \right)
$$
and 
$$
\mathfrak{k}(H_i)=\min\{\mathfrak{s}(H_{i0}^{\Qb}), (\mathfrak{s}-\mathfrak{r}')(H^{\Qb}_{ij})\dot{+} \mathfrak{t}(H_j) \; \mbox{for} \; H_i\cap H_j\ne \emptyset  \},
$$
where $\mathfrak{r}'$ is the multiweight defined in \eqref{mwp.1}.

\label{pdo.19}\end{theorem}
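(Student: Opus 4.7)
The proof will follow the same blueprint as Theorem~\ref{pdo.15}, adapted to the $\Qb$ double space. The essential ingredients are already in place: by Lemma~\ref{ao.1} both projections $\pr_L \circ \beta_{\Qb}, \pr_R \circ \beta_{\Qb} : M^2_{\Qb} \to M$ are $b$-fibrations, by Lemma~\ref{ds.11} the lifts of $\cV_{\Qb}(M)$ under these maps are transverse to $\diag_{\Qb}$, and the density correction between the Cartesian and $\Qb$-adapted viewpoints is recorded in \eqref{mwp.1}. The strategy is to lift $P$ and $f$ to $M^2_{\Qb}$, multiply them, rewrite the resulting section against a $b$-density on $M^2_{\Qb}$, and then invoke the pushforward theorem of \cite[\S 8]{Melrose1992}.

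First I would pull back $f$ via $\pr_R \circ \beta_{\Qb}$. By the $b$-map/$b$-fibration properties of this projection, the polyhomogeneous datum $(\cF,\mathfrak{t})$ on $M$ lifts to a partially polyhomogeneous section on $M^2_{\Qb}$ whose index set at $H_{0i}^{\Qb}$ and (when $i \in \cI$) at $\ff_i^{\Qb}$, as well as at each $H_{ji}^{\Qb}$ with $H_j \cap H_i \neq \emptyset$, is $\cF(H_i)$, with matching contribution to the partial weight. The product $P \cdot (\pr_R \circ \beta_{\Qb})^* f$ is then conormal at $\diag_{\Qb}$ of order $m$, with partially polyhomogeneous behavior at each boundary face given by summing the index sets of $\cE$ and $(\pr_R \circ \beta_{\Qb})^{\#} \cF$, together with the analogous sum of weights $\mathfrak{s} \,\dot{+}\, (\pr_R \circ \beta_{\Qb})^{\#} \mathfrak{t}$.

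Next, to apply pushforward I use $(\pr_L \circ \beta_{\Qb})^* \nu_b \cdot (\pr_R \circ \beta_{\Qb})^* \nu_{\Qb} \in \rho^{-\mathfrak{r}'}\,{}^b\Omega(M^2_{\Qb})$, which follows by combining \eqref{pdo.17} with Lemma~\ref{pdo.12a} applied to each blow-up yielding $M^2_{\Qb}$; this reproduces the multiweight $\mathfrak{r}'$ of \eqref{mwp.1}, which encodes the asymmetry that the correction is present only over non-maximal indices. Converting the left-factor pushforward into a $b$-pushforward thus shifts the multiweight at each boundary face of $M^2_{\Qb}$ by $\mathfrak{r}'$. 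Transversality of $\pr_L \circ \beta_{\Qb}$ to $\diag_{\Qb}$ (Lemma~\ref{ds.11}) means the conormal singularity integrates out, so the Schwartz-kernel-valued distribution can be treated as an ordinary partially polyhomogeneous section for the purposes of pushforward, exactly as in Theorem~\ref{pdo.15}. The integrability hypothesis is precisely the condition ensuring that no boundary face of $M^2_{\Qb}$ whose index is dropped below zero after the $\mathfrak{r}'$ shift, while mapping surjectively into the interior of $M$ under $\pr_L \circ \beta_{\Qb}$, contributes a divergence; here the right-hand side is $1 + \dim S_i$ for $i \in \cI$ because of the $H_{0i}^{\Qb}$ entry of $\mathfrak{r}'$, and $0$ otherwise since there is then no such entry.

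Finally I would apply Melrose's pushforward theorem, whose output is precisely $(\pr_L \circ \beta_{\Qb})_{\#}$ applied to the combined index family and weight. For a fixed target hypersurface $H_i \subset M$, the boundary faces of $M^2_{\Qb}$ that surject onto $H_i$ under $\pr_L \circ \beta_{\Qb}$ are $H_{i0}^{\Qb}$, then $\ff_i^{\Qb}$ if $i \in \cI$, and each $H_{ij}^{\Qb}$ with $H_j \cap H_i \neq \emptyset$; taking the extended union $\overline{\cup}$ of their index sets and the min of their weights after applying the shift $-\mathfrak{r}'$ reproduces the two displayed formulas for $\cK(H_i)$ and $\mathfrak{k}(H_i)$, one for $i \in \cI$ (where $\ff_i^{\Qb}$ is present) and one for $i \notin \cI$ (where it is not). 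The main subtlety, and the only place the $\Qb$ statement genuinely diverges from the $\QFB$ one of Theorem~\ref{pdo.15}, lies in this bookkeeping asymmetry between maximal and non-maximal hypersurfaces, namely that $\mathfrak{r}'$ is supported on non-maximal indices and that the maximal faces $H_i$ do not possess a front face in $M^2_{\Qb}$; once this is kept consistently, the remainder of the argument is a direct transcription of the proof of Theorem~\ref{pdo.15}.
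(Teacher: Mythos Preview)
Your proposal is correct and follows exactly the same approach as the paper: the paper's proof consists essentially of the single observation that, with the density computation \eqref{pdo.17}--\eqref{mwp.1} already in hand, one applies the pushforward theorem of \cite[\S 8]{Melrose1992} (using transversality of $\pr_L\circ\beta_{\Qb}$ to $\diag_{\Qb}$ from Lemma~\ref{ds.11} to integrate out the conormal singularity, just as in Theorem~\ref{pdo.15}). Your write-up is in fact more detailed than the paper's, which simply records the statement as a direct consequence of Melrose's pushforward theorem.
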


For the small $\Qb$ calculus, this result specializes to the following.

\begin{corollary}
An operator $P\in \Psi^m_{\Qb}(M;E,F)$ induces a continuous linear map 
$$
       P: \CI(M;E)\to \CI(M;F)
$$
which restricts to a continuous linear map
$$
      P: \dot{\cC}^{\infty}(M;E)\to \dot{\cC}^{\infty}(M;F).
$$
\label{pdo.20}\end{corollary}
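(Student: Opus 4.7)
The plan is to follow the blueprint of the proof of Corollary~\ref{pdo.16}, now applying Theorem~\ref{pdo.19} in place of Theorem~\ref{pdo.15}. First I would recast $P\in\Psi^m_{\Qb}(M;E,F)$ within the enlarged-calculus framework \eqref{pdo.10}, namely as an element of $\Psi^{m,\cE/\infty}_{\Qb}(M;E,F)$, where $\cE$ is the index family assigning $\bbN_0$ to every boundary hypersurface belonging to $\ff_{\Qb}$ (so $\cE(\ff^{\Qb}_i)=\bbN_0$ for $i\in\cI$ and $\cE(H^{\Qb}_{ii})=\bbN_0$ for $i\notin\cI$) and $\emptyset$ to every other boundary hypersurface of $M^2_{\Qb}$. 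This encodes precisely the defining property of the small calculus: conormal smoothness at $\ff_{\Qb}$ together with vanishing to infinite order elsewhere.

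For the first assertion I take $f\in\CI(M;E)$, viewed as an element of $\sA^{\cF/\infty}_{\phg}(M;E)$ with $\cF\equiv\bbN_0$; for the second I take $f\in\dot{\cC}^{\infty}(M;E)$, so $\cF\equiv\emptyset$. In both cases the hypothesis of Theorem~\ref{pdo.19} is satisfied automatically, since $\cE(H^{\Qb}_{0i})=\emptyset$ and $\mathfrak{s}(H^{\Qb}_{0i})=\infty$ make the left-hand side of the required inequality infinite.

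Then I would read off $\cK$ and $\mathfrak{k}$ from the explicit pushforward formulas. At each boundary hypersurface $H_i$ of $M$, the three contributions to $\cK(H_i)$ simplify as follows: $\cE(H^{\Qb}_{i0})=\emptyset$; the term $\cE(\ff^{\Qb}_i)+\cF(H_i)$ reduces to $\cF(H_i)$ for $i\in\cI$ and is absent for $i\notin\cI$; and the terms $(\cE-\mathfrak{r}')(H^{\Qb}_{ij})+\cF(H_j)$ with $H_i\cap H_j\neq\emptyset$ are empty whenever $i\neq j$ because such off-diagonal side faces $H^{\Qb}_{ij}$ do not meet $\diag_{\Qb}$ and so $\cE$ vanishes there, while for the maximal case $i=j\notin\cI$ the contribution again reduces to $\cF(H_i)$. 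Consequently $\cK\subseteq\cF$ and $\mathfrak{k}=\infty$, giving $Pf\in\CI(M;F)$ in the first case and $Pf\in\dot{\cC}^{\infty}(M;F)$ in the second. Continuity in each case is inherited from the continuity of the pushforward under the $b$-fibrations of Lemma~\ref{ao.1}.

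The only point requiring real care is the vanishing of the off-diagonal side-face contributions: this uses the definition of $\ff_{\Qb}$ as the union of boundary hypersurfaces meeting $\diag_{\Qb}$ and hence the fact that $\cE$ is supported exactly there. Once this is recorded, the rest is a purely formal application of Theorem~\ref{pdo.19}.
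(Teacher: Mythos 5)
Your proposal is correct and matches the paper's approach: the paper leaves this corollary to the same argument as Corollary~\ref{pdo.16}, namely applying the pushforward result (here Theorem~\ref{pdo.19}) with $\mathfrak{s}=\infty$, $\cE$ equal to $\bbN_0$ exactly on the faces constituting $\ff_{\Qb}$ and $\emptyset$ elsewhere, and $\cF=\bbN_0$ or $\emptyset$ according to the assertion. The one point worth stating slightly more carefully is that the term $(\cE-\mathfrak{r}')(H^{\Qb}_{ii})+\cF(H_i)$ for $H_i$ \emph{non-maximal} also vanishes, since the side face $H^{\Qb}_{ii}$ (as opposed to the front face $\ff^{\Qb}_i$) is separated from $\diag_{\Qb}$ by the blow-up of $\Phi_i$ — the same reasoning you invoke for $i\neq j$.
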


In fact, to be able to construct good parametrices for manifolds with fibered corners of depth higher than 2, we will need to further enlarge the calculus by giving up on conormality, that is, by replacing the usual notion of conormality of functions by the notions of $\QFB$ conormal functions and $\Qb$ conormal functions.  To describe this, let $\cV$ denote the  Lie structure at infinity on $M$ corresponding to a Lie algebra of $\Qb$ vector fields or $\QFB$ vector fields.  
\begin{definition}
For $E\to M$ a vector bundle over $M$, the space of $\cV$ conormal sections is defined by 
$$
\kridx{\cA_{\cV}}{AV}{$L^\infty$ conormal sections associated to Lie structure $\cV$}(M;E)=\{ \sigma \in L^\infty(M;E) \; | \;
\forall k \in \bbN, \; \forall X_1,\ldots, X_k\in \cV_{\cV}(M),  \; \nabla_{X_1}\cdots\nabla_{X_k}\sigma\in L^\infty(M;E)\},
$$
where $\nabla$ is a choice of connection for $E$.  More generally, we can consider the weighted space of $\cV$ conormal sections 
\begin{equation}
\begin{aligned}
x^{\mathfrak{t}}\cA_{\cV}(M;E) &= \{ \sigma \in x^{\mathfrak{t}}L^\infty(M;E) \; | \;  x^{-\mathfrak{t}}\sigma\in\cA_{\cV}(M;E)\}, \\
&=\{ \sigma \in x^{\mathfrak{t}}L^\infty(M;E) \; | \;
\forall k \in \bbN, \; \forall X_1,\ldots, X_k\in \cV_{\cV}(M),  \; \nabla_{X_1}\cdots\nabla_{X_k}\sigma\in x^{\mathfrak{t}}L^\infty(M;E)\}.
\end{aligned}
\end{equation}
The space $x^{\mathfrak{t}}\cA_{\cV}(M;E)$ is a Fr\'echet space with semi-norms induced by the norm of $x^{\mathfrak{t}}L^\infty(M;E)$ and the $x^{\mathfrak{t}}L^\infty$ norms of the $\QFB$ derivatives.  
\label{con.1}\end{definition}

More generally, we can consider the slightly enlarged space of $\cV$ conormal sections
\begin{equation}
  \kridx{\sA^{\mathfrak{s}}_{\cV,-}}{AVs}{elarged space of ($L^\infty$) $\cV$ conormal sections}(M;E)=  \bigcap_{\mathfrak{t}<\mathfrak{s}} x^{\mathfrak{t}}\cA_{\cV}(M;E).
\label{con.1d}\end{equation}
for $\mathfrak{s}$ a multiweight for the manifold with corners $M$.  It will be also useful to consider the $L^2_b$ version of Definition~\ref{con.1}, namely the space
\begin{equation}
\kridx{\cA_{\cV,2}}{AV2}{$L^2$ conormal sections associated to Lie structure $\cV$}(M;E):=\{ \sigma \in L^2_b(M;E) \; | \;
\forall k \in \bbN, \; \forall X_1,\ldots, X_k\in \cV_{\cV}(M),  \; \nabla_{X_1}\cdots\nabla_{X_k}\sigma\in L^2_b(M;E)\}
\label{con.1b}\end{equation}
and its weighted version $x^{\mathfrak{t}}\cA_{\cV,2}(M;E)$.  
\begin{remark}
When $\cV=b$ stands for the Lie structure at infinity associated to $b$-vector fields, notice that \eqref{con.1d} can be alternatively defined by
$$
 \sA^{\mathfrak{s}}_{b,-}(M;E)=  \bigcap_{\mathfrak{t}<\mathfrak{s}} x^{\mathfrak{t}}\cA_{b,2}(M;E)
$$
thanks to the Sobolev embedding theorem applied to $b$-metrics.  However, for $\cV=\QFB$ or $\cV=\Qb$, this argument involving the Sobolev embedding theorem does not work.  In fact, using $\cA_{\cV,2}(M;E)$ instead of $\cA_{\cV}(M;E)$ yields a different space,
$$
\sA^{\mathfrak{s}}_{\cV,-}(M;E) \ne  \bigcap_{\mathfrak{t}<\mathfrak{s}} x^{\mathfrak{t}}\cA_{\cV,2}(M;E).
$$

\label{con.1c}\end{remark}

Now, we will be also interested in this construction for  $(M^2,\cV\times \cV)$ where $\cV\times \cV$ is the natural Lie structure at infinity obtained by taking the Cartesian product of $(M,\cV)$ with itself.  In this case, $\cA_{\cV\times \cV}(M^2)=\cA_{\cV\times \cV}(M^2;E)$ with $E$ a trivial line bundle corresponds to the space  
 \begin{equation}
      \cA_{\cV}(M^2):= \{ \kappa\in L^\infty(M^2) \; | \; (\pr_R^*P)\kappa\in L^\infty(M^2), \; (\pr_L^* P)\kappa\in L^\infty(M^2)  \quad \forall P\in \Diff^*_{\cV}(M)\}.
\label{st.1}\end{equation}      
We can also denote the space $\sA^{\mathfrak{s}}_{\cV\times \cV,-}(M^2)$ by
$$
  \sA^{\mathfrak{s}}_{\cV,-}(M^2)=  \bigcap_{\mathfrak{t}<\mathfrak{s}} \rho^{\mathfrak{t}}\cA_{\cV}(M^2)
$$
for $\mathfrak{s}$ a multiweight for the manifold with corners $M^2$.
\begin{remark}
Alternatively, multiplying by the pull-back from the right of a non-vanishing $b$-density $\nu_b$, we can consider the space
$$
       \cA_{\cV}(M^2; \pr_R^*(\Omega_b(M)))= \cA_{\cV}(M^2)\cdot \pr_R^*\nu_b,
$$
which can be seen as a space of Schwartz kernels of operators.   From this point of view, the space in \eqref{st.1} corresponds to Schwartz kernels in $L^{\infty}(M^2; \pr_R^*\Omega_b)$ remaining in that space under composition from the left or from the right by $\cV$ differential operators, that is, 
\begin{equation}
\cA_{\cV}(M^2; \pr_R^*(\Omega_b(M)))= \{  K\in L^\infty(M^2; \pr_R^*\Omega_b(M)) \; |  \; P\circ K, K\circ P\in L^\infty(M^2; \pr_R^*\Omega_b(M)) \quad \forall P\in \Diff^*_{\cV}(M)\}.
\label{st.3}\end{equation}
\label{st.2}\end{remark}  
For $E\to M^2$ a vector bundle on $M^2$, we can consider the space of sections
$$
\sA^{\mathfrak{s}}_{\cV,-}(M^2;E):= \sA^{\mathfrak{s}}_{\cV,-}(M^2)\otimes_{\CI(M^2)} \CI(M^2;E).
$$

 Using Lemmas~\ref{ds.2} and \ref{ds.11}, it is not hard to infer that $\cV$ vector fields lifts from the left and from the right to be $b$-vector fields on $M^2_{\cV}$. This means that we can naturally replace $M^2$ by $M^2_{\cV}$ within \eqref{st.1}, that is, we can consider the space 
\begin{equation}
  \cA_{\cV}(M^2_{\cV}):= \{ \kappa\in L^\infty(M^2_{\cV})\;  | \; ((\pr_R\circ \beta_{\cV})^*P)\kappa\in L^\infty(M^2_{\cV}), \; ((\pr_L\circ \beta_{\cV})^*P)\kappa\in L^\infty(M^2_{\cV})
  \; \forall\ P\in \Diff^*_{\cV}(M)\}
\label{ext.1}\end{equation} 
and correspondingly the space
\begin{equation}
\sA^{\mathfrak{s}}_{\cV,-}(M^2_{\cV})=  \bigcap_{\mathfrak{t}<\mathfrak{s}}  \rho^{\mathfrak{t}}\cA_{\cV}(M^2_{\cV})
\label{ext.2}\end{equation}
for $\mathfrak{s}$ a multiweight for the manifold with corners $M^2_{\cV}$.  

In fact, if $\cB$ is any boundary face of $M^2_{\cV}$, we can similarly consider the space 
\begin{equation}
\cA_{\cV}(\cB)= \{\kappa\in L^\infty(\cB)\; |\; ((\pr_R\circ \beta_{\cV})^*P)\widetilde{\kappa} \in L^\infty(M), \; ((\pr_L\circ \beta_{\cV})^*P)\widetilde{\kappa} \in L^\infty(M)
  \; \forall\ P\in \Diff^*_{\cV}(M)\},
\label{ext.3}\end{equation}
where $\widetilde{\kappa}$ is a smooth extension of $\kappa$ off $\cB$, and correspondingly the space
\begin{equation}
\sA^{\mathfrak{s}}_{\cV,-}(\cB)= \bigcap_{\mathfrak{t}<\mathfrak{s}} \rho^{\mathfrak{t}} \cA_{\cV}(\cB)
\label{ext.4}\end{equation}
for $\mathfrak{s}$ a multiweight for the manifold with corners $\cB$.

Let $\cM$ be $M^2_{\cV}$ or any boundary face of $M^2_{\cV}$.  If $\cE$ is an index set on $\cM$ and $\mathfrak{s}$ is a multiweight such that $\inf\Re\cE< \mathfrak{s}$, we can consider the space
$\cA^{\cE/\mathfrak{s}}_{\cV,\phg}(\cM)$ admitting at each boundary hypersurface $\cB$ of $\cM$  a partial polyhomogeneous expansion up to order $\mathfrak{s}(\cB)$.  More precisely,  consider the multiweight $\mathfrak{t}$ defined by 
$$
\mathfrak{t}(\cB)= \min\{\mathfrak{s}(\cB),\min\{  \Re \sigma\; | \;  (\sigma,k)\in \cE(\cB)\}\}
$$
for each boundary hypersurfaces $\cB$ of $\cM$.  Then the space  $\kridx{\cA^{\cE/\mathfrak{s}}_{\cV,\phg}}{AVEs}{partially polyhomogeneous conormal sections with Lie structure $\cV$}(\cM)$ is defined recursively on the depth of $\cM$ to consist of functions in $\kappa\in \cA^{\mathfrak{t}}_{\cV,-}(\cM)$ such that 
 for each boundary hypersurface $\cB$ of $\cM$, there is  for each $(\sigma,k)\in \cE(\cB)$ with $\Re \sigma< \mathfrak{s}(\cB)$ an element $\kappa_{\cB,(\sigma,k)}\in \cA^{(\cE|_{\cB})/(\mathfrak{s}|_{\cB})}_{\cV,\phg}(\cB)$ and a smooth extension $\widetilde{\kappa}_{\cB,(\sigma,k)}$ off $\cB$ such that 
 \begin{equation}
           \left(\kappa- \underset{\Re\sigma<\mathfrak{s}(\cB)}{\sum_{(\sigma,k)\in \cE(\cB)}} (\rho_{\cB}^{\sigma}(\log \rho_{\cB})^k) \widetilde{\kappa}_{\cB,(\sigma,k)} \right)\in \cA^{\mathfrak{t}_{\cB}}_{\cV,-}(\cM),
 \label{ext.5}\end{equation}
where 
\begin{equation}
 \mathfrak{t}_{\cB}(\cH)= \left\{  \begin{array}{ll} \mathfrak{t}(\cH), & \cH\ne \cB, \\ \mathfrak{s}(\cB), & \cH=\cB.  \end{array} \right.
\label{ext.6}\end{equation}
Moreover, we require that each partial polyhomogeneous expansions at the various boundary hypersurfaces are compatible in the sense that if $\cB'$ is another boundary hypersurface in $M^2_{\cV}$ with $\cB'\cap \cB$, then the terms $\widetilde{\kappa}_{\cB,(\sigma,k)}$ have a corresponding partial polyhomogeneous expansion at $\cB\cap \cB'$.   Of course, if $\cB$ has no boundary, then we use the convention $\cA^{(\cE|_{\cB})/(\mathfrak{s}|_{\cB})}_{\cV,\phg}(\cB)=\cA_{\cV}(\cB)$, so each space is ultimately well-defined proceeding by induction on the depth of $\cM$.  We are mostly interested in the case $\cM=M^2_{\cV}$.  Since $\cA^{\cE/\mathfrak{s}}_{\cV,\phg}(M^2_{\cV})$ is a $\CI(M^2_{\cV})$-module, we can for $E$ a vector bundle on $M^2_{\cV}$ associate the space of sections
\begin{equation}
 \cA^{\cE/\mathfrak{s}}_{\cV,\phg}(M^2_{\cV};E):= \cA^{\cE/\mathfrak{s}}_{\cV,\phg}(M^2_{\cV})\otimes_{\CI(M^2_{\cV})} \CI(M^2_{\cV};E).
\label{ext.7}\end{equation} 
With this notation, we can now introduce the larger space of pseudodifferential $\QFB$ operators that we need.

\begin{definition}
Let $E,F$ be vector bundles on $M$.  For $\cE$ an index family for $M^2_{\QFB}$ and $\mathfrak{s}$ a multiweight, we can consider the spaces of weakly conormal $\QFB$ pseudodifferential operators
\begin{equation}
\begin{gathered}
    \Psi^{-\infty,\cE/\mathfrak{s}}_{\QFB,\cn}(M;E,F):= \cA^{\cE/\mathfrak{s}}_{\QFB,\phg}(M^2_{\QFB};\beta_{\QFB}^*(\Hom(E,F)\otimes \pr_R^*{}^{\QFB}\Omega)), \\
    \kridx{\Psi^{m,\cE/\mathfrak{s}}_{\QFB,\cn}}{PsiQFBEscn}{weakly conormal QFB pseudodifferential operators}(M;E,F):=\Psi^{m}_{\QFB}(M;E,F)  + \Psi^{-\infty,\cE/\mathfrak{s}}_{\QFB,\cn}(M;E,F)  \quad m\in \bbR. \end{gathered}
\label{ext.8b}\end{equation}

\label{ext.8}\end{definition}
In particular, if $\mathfrak{s}$ is a $\QFB$ positive multiweight, we can consider the space of operators
\begin{equation}
      \Psi^{-\infty,\mathfrak{s}}_{\QFB,\cn}(M;E,F):= \Psi^{-\infty,\emptyset/\mathfrak{s}}_{\QFB,\cn}(M;E,F).
\label{ext.10}\end{equation}
We will be particularly interested in the case where $\mathfrak{s}=\beta_{\QFB}^{\#}\mathfrak{r}$ in the sense of \cite[(3.2)]{Melrose1992} for some multiweight $\mathfrak{r}$ associated to $M^2$, in which case we denote this space of operators by 
\begin{equation}
  \kridx{\Psi^{-\infty,\mathfrak{r}}_{\QFB,\res}}{PsiQFBres}{QFB $\mathfrak{r}$ residual pseudodifferential operators}(M;E,F):= \Psi^{-\infty,\beta_{\QFB}^{\#}\mathfrak{r}}_{\QFB,\cn}(M;E,F)
\label{ext.11}\end{equation}
and call it the space of \textbf{$\QFB$ $\mathfrak{r}$ residual pseudodifferential operators}.  

There is a corresponding definition for $\Qb$ operators.
\begin{definition}
Let $E,F$ be vector bundles on $M$.  For $\cE$ an index family for $M^2_{\Qb}$ and $\mathfrak{s}$ a multiweight, we can consider the spaces of \textbf{weakly conormal $\Qb$ pseudodifferential operators}
\begin{equation}
\begin{gathered}
\Psi^{-\infty,\cE/\mathfrak{s}}_{\Qb,\cn}(M;E,F):= \cA^{\cE/\mathfrak{s}}_{\Qb,\phg}(M^2_{\QFB};\beta_{\Qb}^*(\Hom(E,F)\otimes \pr_R^*{}^{\QFB}\Omega)), \\
    \kridx{\Psi^{m,\cE/\mathfrak{s}}_{\Qb,\cn}}{PsiQbEscn}{weakly conormal Qb pseudodifferential operators}(M;E,F):=\Psi^{m}_{\Qb}(M;E,F)  + \Psi^{-\infty,\cE/\mathfrak{s}}_{\Qb,\cn}(M;E,F)  \quad m\in \bbR.     \end{gathered}
\label{ext.9b}\end{equation}
 \label{ext.9}\end{definition}   
 Similarly, if $\mathfrak{s}$ is a multiweight, we can consider the space
 $$
    \Psi^{-\infty,\mathfrak{s}}_{\Qb,\cn}(M;E,F)= \Psi^{-\infty,\emptyset/\mathfrak{s}}_{\Qb,\cn}(M;E,F),
 $$
 and if $\mathfrak{r}$ is a multiweight on $M^2$ such that $\beta^*_{\Qb}\mathfrak{r}$ is $\Qb$ positive, we use the notation
 \begin{equation}
 \kridx{\Psi^{-\infty,\mathfrak{r}}_{\Qb,\res}}{PsiQbres}{Qb $\mathfrak{r}$ residual pseudodifferential operators}(M;E,F):= \Psi^{-\infty,\beta_{\Qb}^{\#}\mathfrak{r}}_{\Qb,\cn}(M;E,F)
 \label{ext.12}\end{equation}
 and refer to such a space as the space of \textbf{$\Qb$ $\mathfrak{r}$ residual pseudodifferential operators}.
 In all these cases, we have obviously natural inclusions
 $$
    \Psi^{-\infty,\cE/\mathfrak{s}}_{\cV}(M;E,F)\subset \Psi^{-\infty,\cE/\mathfrak{s}}_{\cV,\cn}(M;E,F) 
 $$
 for $\cV=\QFB$ and $\cV=\Qb$.
 
 \begin{remark}
For $\cV=\QFB$ or $\Qb$, notice that by Remark~\ref{st.2}, Lemma~\ref{ds.2} and the fact that $\cV$ vector fields lift from the left and from the right to $b$-vector fields on $M^2_{\cV}$, the space $\Psi^{-\infty,\cE/\mathfrak{s}}_{\cV,\cn}(M;E,F)$ is stable under left or right composition with $\cV$ differential operators.   
\label{ext.8c}\end{remark}

\section{The $\QFB$ triple space} \label{tris.0}

To obtain nice composition results for $\QFB$ operators, we need to construct a $\QFB$ triple space.  This is where the use of the ordered product of \cite{KR3} will be particularly useful.  The key property that will make the whole construction works is that the ordered product is associative.  When we consider ordered product or the reverse ordered product of manifolds with fibered corners, this is consistent with the fact that the ordered product corresponds geometrically to the Cartesian product of wedge metrics, while the reverse ordered product corresponds to the Cartesian product of $\QFB$ metrics.  

To see that the ordered product is associative directly from the definition, recall first from \cite{KR3} that the ordered product $X\ttimes Y$ of two manifolds with ordered corners is again a manifold with ordered corners.  Indeed, if $\cM_{1,0}(X)=\cM_1(X)\cup\{X\}$, then the partial order on $\cM_{1}(X)$ can be extended to $\cM_{1,0}(X)$ by declaring the new element $X$ to be such that $X>H$ for all $H\in \cM_1(X)$.  With this notation, there is a natural identification
$$
  \cM_{1,0}(X\ttimes Y)=\cM_{1,0}(X)\ttimes\cM_{1,0}(Y).
$$   
Under this identification, the partial order on $\cM_{1,0}(X\ttimes Y)$ making $X\ttimes Y$ a manifold with ordered corners is then the one such that  $(H,G)\le(H',G')$ if and only if $H\le G$ and $H'\le G'$ for $H,H'\in \cM_{1,0}(X)$ and $G,G'\in\cM_{1,0}(Y)$.

Since $X\ttimes Y$ is itself a manifold with ordered corners, we can consider its ordered product $(X\ttimes Y)\ttimes Z$ with another manifold with ordered corners $Z$.  The ordered product is in fact associative in the following sense.    
\begin{lemma}[\cite{KR3}] 
If $X$, $Y$ and $Z$ are manifolds with ordered corners, the identity map in the interior of $X\times Y\times Z$ extends uniquely to a diffeomorphism
$$
           (X\ttimes Y)\ttimes Z\cong X\ttimes(Y\ttimes Z).
$$ 
\label{op.6}\end{lemma}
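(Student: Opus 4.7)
The plan is to exhibit both $(X\ttimes Y)\ttimes Z$ and $X\ttimes(Y\ttimes Z)$ as iterated blow-ups of the triple Cartesian product $X\times Y\times Z$ along a single common collection of boundary $p$-submanifolds, in orders compatible with a natural product partial order, and then to invoke a three-factor generalization of Lemma~\ref{op.2a} to conclude that any two compatible orders yield the same space up to canonical diffeomorphism.

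To set this up, I would extend each partial order on $\cM_1(\cdot)$ to $\cM_{1,0}(\cdot)$ by declaring the full space to be maximal, and equip $\cM_{1,0}(X)\times\cM_{1,0}(Y)\times\cM_{1,0}(Z)$ with the product partial order. Let
\[
\cS=\{H\times G\times K : (H,G,K)\in \cM_{1,0}(X)\times\cM_{1,0}(Y)\times\cM_{1,0}(Z),\ \mathrm{codim}\ge 2\},
\]
so that $\cS$ consists of the codimension-two corners (one factor equal to the full space) together with the codimension-three corners (all three factors proper). Unraveling $(X\ttimes Y)\ttimes Z$: after forming $X\ttimes Y$ by blowing up $\{H\times G\times Z\}_{(H,G)\in\cM_1(X)\times\cM_1(Y)}$, each element of $\cM_1(X\ttimes Y)$ is either the lift of $H\times Y$, the lift of $X\times G$, or a front face $\ff_{H,G}$. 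The subsequent blow-ups, indexed by $\cM_1(X\ttimes Y)\times \cM_1(Z)$, therefore correspond precisely to the lifts of the remaining triples in $\cS$, namely $H\times Y\times K$, $X\times G\times K$ and $H\times G\times K$ respectively. Hence $(X\ttimes Y)\ttimes Z$ is realized as a blow-up of $X\times Y\times Z$ along all of $\cS$ in an order compatible with the product partial order. The symmetric analysis realizes $X\ttimes(Y\ttimes Z)$ as another such blow-up of $X\times Y\times Z$ along $\cS$ in a generally different but still compatible order.

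The main obstacle is the three-factor analogue of Lemma~\ref{op.2a}, asserting that any two orders on $\cS$ compatible with the product partial order yield canonically diffeomorphic results. For two distinct triples $(H,G,K)$ and $(H',G',K')$ in $\cS$, either they are comparable in the product order, in which case any compatible order blows up the smaller first and the nested blow-ups commute in the standard manner, or they are incomparable, in which case at least one pair of coordinates is incomparable in its own factor. Applying the two-factor argument of Lemma~\ref{op.2a} in that coordinate shows that after the forced intermediate blow-ups in $\cS$, the lifts of $H\times G\times K$ and $H'\times G'\times K'$ become disjoint $p$-submanifolds, so their blow-ups commute. Preservation of the $p$-submanifold property throughout is ensured by iterated application of Lemma~\ref{bf.3}. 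Once this combinatorial lemma is in place, the identity on the common dense interior $X^\circ\times Y^\circ\times Z^\circ$ extends uniquely to the desired diffeomorphism $(X\ttimes Y)\ttimes Z\cong X\ttimes(Y\ttimes Z)$.
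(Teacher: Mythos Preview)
Your overall strategy---realizing both sides as iterated blow-ups of $X\times Y\times Z$ along a common family $\cS$ and then proving order-independence---is natural, but the argument as written has a genuine gap: the blow-up order implicit in $(X\ttimes Y)\ttimes Z$ is \emph{not} compatible with the product partial order on $\cM_{1,0}(X)\times\cM_{1,0}(Y)\times\cM_{1,0}(Z)$. Concretely, take $X,Y,Z$ each with a single boundary hypersurface $H,G,K$. In the product order the triple $(H,G,K)$ is strictly smaller than $(H,G,Z)$ (since $K<Z$), so any compatible order must blow up $H\times G\times K$ first. But to form $(X\ttimes Y)\ttimes Z$ you blow up $H\times G\times Z$ in the first stage and only reach (the lift of) $H\times G\times K$---namely $\ff_{H,G}\times K$---in the second stage. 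Thus your three-factor analogue of Lemma~\ref{op.2a}, even if true, does not apply directly, and the sentence ``Hence $(X\ttimes Y)\ttimes Z$ is realized as a blow-up \ldots\ in an order compatible with the product partial order'' is false as stated.

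The approach is salvageable, but it requires an additional layer of commutativity arguments beyond the incomparable case you sketch: you must show that swapping $H\times G\times Z$ with every smaller $(H',G',K')$ (with $K'$ proper, $H'\le H$, $G'\le G$) leaves the space unchanged. When $H'=H$ and $G'=G$ this is commutativity of nested blow-ups, since $H\times G\times K'\subset H\times G\times Z$; when $H'<H$ or $G'<G$ one needs further separation arguments after prior blow-ups. None of this is in your sketch. By contrast, the paper's proof avoids this combinatorics entirely: it constructs $b$-fibrations $(X\ttimes Y)\ttimes Z\to X\ttimes(Y\ttimes Z)$ and back by repeatedly lifting the obvious projections through the blow-ups via Lemmas~\ref{bf.3} and~\ref{bf.1}, and then observes that the two composites restrict to the identity on the dense interior and hence are mutual inverses.
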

\begin{proof}
The proof of \cite{KR3} relies on a certain universal property of the ordered product which is established using results of \cite{KM}.  For the convenience of the reader, we will present an alternative proof of the associativity of the ordered product relying instead directly on Lemmas~\ref{bf.3} and \ref{bf.1}.  

First, as in the proof of Lemma~\ref{ds.2}, we can use Lemma~\ref{bf.1} to see that the projections from $X\times Y$ onto $X$ and $Y$ lifts to $b$-fibrations
\begin{equation}
    X\ttimes Y\to X \quad \mbox{and} \quad X\ttimes Y\to Y.
\label{op.7}\end{equation}
Similarly, the projections $(X\ttimes Y)\times Z$ onto $X\ttimes Y$ and $Z$ induce $b$-fibrations
\begin{equation}
   (X\ttimes Y)\ttimes Z \to X\ttimes Y \quad \mbox{and} \quad (X\ttimes Y)\ttimes Z\to Z.  
\label{op.8}\end{equation}
In particular, combining the $b$-fibrations $X\ttimes Y\to Y$ with the identity map $Z\to Z$ yields a $b$-fibration
\begin{equation}
        (X\ttimes Y)\times Z\to Y\times Z.  
\label{op.9}\end{equation}
Let $\cM_1^Y(X\ttimes Y)$ be the subset of $\cM_1(X\ttimes Y)$ consisting of boundary hypersurfaces not mapped surjectively onto $Y$ under the $b$-fibration $X\ttimes Y\to Y$.  Then applying Lemma~\ref{bf.3}, we see that the $b$-fibration \eqref{op.9} lifts to a $b$-fibration
\begin{equation}
  [(X\ttimes Y)\times Z; \cM^Y_1(X\ttimes Y)\times \cM_{1}(Z)]\to Y\ttimes Z,  
\label{op.10}\end{equation}
where the order in which we blow up $p$-submanifolds in $(X\ttimes Y)\times Z$ is compatible with the partial order on $\cM_1(X\ttimes Y)\times \cM_1(Z)$.  Using Lemma~\ref{bf.1}, this can be further lifted to a $b$-fibration
\begin{equation}
  (X\ttimes Y)\ttimes Z\to Y\ttimes Z.    
\label{op.11}\end{equation}
On the other hand, composing the first map in  \eqref{op.8} with the first map in \eqref{op.7} yields a $b$-fibration
\begin{equation}
    (X\ttimes Y)\ttimes Z\to X.
\label{op.12}\end{equation}
Combining the $b$-fibrations \eqref{op.11} and \eqref{op.12} then yields a surjective $b$-submersion
\begin{equation}
   (X\ttimes Y)\ttimes Z\to X\times (Y\ttimes Z).
\label{op.13}\end{equation}
However, this is not a $b$-fibration, since for $H\in \cM_1(X)$ and $(G,F)\in (\cM_{1,0}(Y)\times\cM_{1,0}(Z))\setminus\{(Y,Z)\}$, the boundary hypersurface corresponding to the lift of $H\times G\times F$ in $(X\ttimes Y)\ttimes Z$ is mapped by \eqref{op.13} into a codimension $2$ corner of $X\times (Y\ttimes Z)$.  The codimension $2$ corners obtained in this way corresponds in fact exactly to those that need to be blown-up to obtain $X\ttimes (Y\ttimes Z)$.    If $T$ is the first codimension 2 corner that is blown up to obtain $X\ttimes (Y\ttimes Z)$ from $X\times (Y\ttimes Z)$, then we can apply Lemma~\ref{bf.3} to lift \eqref{op.13} to a surjective $b$-submersion
\begin{equation}
       (X\ttimes Y)\ttimes Z \to [X\times (Y\ttimes Z);T].
\label{op.14}\end{equation}
Indeed, if $T$ corresponds to the lift of $H\times G\times F$, then the inverse image of $T$ in $(X\ttimes Y)\ttimes Z$ is the $p$-submanifold $S$ corresponding to the lift of $H\times G\times F$ to $(X\ttimes Y)\ttimes Z$.  In particular, $S\in \cM_1((X\ttimes Y)\ttimes Z)$ is a boundary hypersurface, so blowing it up does not alter the geometry of $(X\ttimes Y)\ttimes Z$ and the conclusion of Lemma~\ref{bf.3} yields the claimed surjective $b$-submersion \eqref{op.14}.  Notice moreover that under the map \eqref{op.14}, the boundary hypersurface $S$ is no longer sent into a codimension 2 corner, but onto the boundary hypersurface created by the blow-up of $T$.    

Continuing in this way, we can apply Lemma~\ref{bf.3} repeatedly to see that the map \eqref{op.13} lifts to a surjective $b$-submersion
\begin{equation}
       \psi_1:(X\ttimes Y)\ttimes Z\to X\ttimes(Y\ttimes Z).  
\label{op.15}\end{equation} 
By the discussion above, none of the boundary hypersurfaces of $(X\ttimes Y)\ttimes Z$ is sent onto a codimension $2$ corner of $X\ttimes (Y\ttimes Z)$, so that \eqref{op.15} is in fact a $b$-fibration.  

Of course, by symmetry, a similar argument yields a $b$-fibration
$$
       \psi_2: X\ttimes (Y\ttimes Z)\to (X\ttimes Y)\ttimes Z.
$$      
Since $\psi_1\circ \psi_2$ and $\psi_2\circ\psi_1$ restrict to be the identity on the interior of $X\ttimes (Y\ttimes Z)$ and $(X\ttimes Y)\ttimes Z$, we must have by continuity that 
$$
    \psi_1\circ\psi_2=\Id \quad \mbox{and} \quad \psi_2\circ\psi_1=\Id.  
$$
Hence, $\psi_1$ is a diffeomorphism with inverse $\psi_2$, from which the result follows.  
\end{proof}

Thanks to this lemma, we can unambiguously use the notation 
$$
X\ttimes Y\ttimes Z=(X\ttimes Y)  \ttimes Z= X\ttimes (Y\ttimes Z)\cong  (X\ttimes Z)\ttimes Y
$$ 
and similarly for the reverse ordered product.

Now, if $M$ is a manifold with fibered corners, this discussion indicates that we can define the ordered product triple space of $M$ by 
\begin{equation}
   \kridx{M^3_{\op}}{M3op}{ordered product triple space}= M\ttimes M \ttimes M
\label{op.16b}\end{equation}
and the reverse ordered triple space by
\begin{equation}
    \kridx{M^3_{\rp}}{M3rp}{reverse ordered product triple space}= M\rttimes M \rttimes M.
\label{op.16}\end{equation} 
In the later case, the partial order on $\cM_{1,0}(X)$ extending the reverse order is the one obtained by declaring $X$  to be such that $X>_r H$ for all $H\in \cM_1(X)$, that is, by declaring $X$ to be maximal with respect to the reverse order.

\begin{proposition}
Let $\alpha$ be a permutation of $\{1,2,3\}$ and consider the diffeomorphism $A: M^3\to M^3$ given by $A(m_1,m_2,m_3)= (m_{\alpha(1)}, m_{\alpha(2)}, m_{\alpha(3)}).$  Then $A$ lifts to  diffeomorphisms $A_{\rp}: M^3_{\rp}\to M^3_{\rp}$ and $A_{\op}: M^3_{\op}\to M^3_{\op}$
\label{ts.7}\end{proposition}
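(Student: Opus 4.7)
The plan is to reduce to adjacent transpositions and then use a combination of the associativity Lemma~\ref{op.6} and the two-factor symmetry of Remark~\ref{ps.0a}. Since the symmetric group $S_3$ is generated by the transpositions $(1\,2)$ and $(2\,3)$, it suffices to construct lifts of these two and take compositions. I will focus on $M^3_{\op}$; the argument for $M^3_{\rp}$ is identical, with $\rttimes$ replacing $\ttimes$ throughout.

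For the transposition $(2\,3)$, use associativity to write $M^3_{\op} = M \ttimes (M \ttimes M)$. On $M \ttimes M$, the swap of the two factors lifts from the interchange on $M \times M$ precisely as described in Remark~\ref{ps.0a}: the collection of codimension-$2$ corners $H_i \times H_j$ blown up in the construction is stable under interchange, and the two blow-up orders obtained by exchanging left-and-right are both compatible with the product partial order, hence produce the same space. Combining this involution on the second factor with the identity on the first gives a self-diffeomorphism of $M \times (M \ttimes M)$ that permutes the codimension-$2$ centers used in the construction of $M \ttimes (M \ttimes M)$ while preserving the product partial order. It therefore lifts uniquely to a self-diffeomorphism of $M \ttimes (M \ttimes M) = M^3_{\op}$, giving the lift of $(2\,3)$. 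The lift of $(1\,2)$ is obtained symmetrically from $M^3_{\op} = (M \ttimes M) \ttimes M$, and composing yields $A_{\op}$ for every $\alpha \in S_3$.

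Alternatively, one can bypass associativity by arguing directly: $M^3_{\op}$ is obtained from $M^3$ by blowing up products $B_1 \times B_2 \times B_3$ with $B_i \in \cM_{1,0}(M)$ and $(B_1,B_2,B_3) \neq (M,M,M)$, in any order compatible with the componentwise partial order. Any permutation of factors permutes this collection of $p$-submanifolds bijectively and preserves the partial order (because the three factors are copies of the same manifold with fibered corners), so one may choose a blow-up sequence invariant under $A$---for instance by processing $A$-orbits together as soon as their members become eligible---and read off the lift of $A$ directly from the interior diffeomorphism.

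The main subtlety in either approach is the functoriality statement: a diffeomorphism of manifolds with ordered corners that preserves the partial order and sends the prescribed collection of blow-up centers to itself lifts uniquely to the ordered product. This follows from the universal property of iterated blow-ups combined with Lemma~\ref{op.2a}, which ensures that permuting within a compatible order does not change the resulting blown-up space. Once this is verified, both $A_{\op}$ and $A_{\rp}$ are determined by continuity from their action on the dense interior $M^3 \setminus \partial M^3$, and uniqueness of the lift is automatic.
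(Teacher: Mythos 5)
Your proof is correct and follows essentially the same route as the paper, which simply cites Remark~\ref{ps.0a} (the two-factor swap) together with Lemma~\ref{op.6} (associativity) and leaves the reduction to adjacent transpositions implicit. Your write-up just makes explicit the generation of $S_3$ by transpositions and the lifting of order-preserving diffeomorphisms through the iterated blow-ups.
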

\begin{proof}
This follows from Remark~\ref{ps.0a} and Lemma~\ref{op.6}.  
\end{proof}
Let $\pr_L,\pr_C,\pr_R: M^3\to M^2$  be the projections given by
\begin{equation}
    \pr_L(m,m',m'')= (m,m'), \quad \pr_C(m,m',m'')= (m,m''), \quad \pr_R(m,m',m'')= (m',m''). 
\label{ts.1a}\end{equation}
\begin{proposition}
For $o=L,C,R$, the projection $\pr_{o}: M^3\to M^2$ lifts to a $b$-fibration $\pi^{\rp}_{o}:M^3_{\rp}\to M^2_{\rp}$ inducing the commutative diagram
\begin{equation}
\xymatrix{
    M^3_{\rp} \ar[d]^{\pi^{\rp}_{o}} \ar[r] & M^3 \ar[d]^{\pr_o} \\
     M^2_{\rp} \ar[r] & M^2,
}
\label{ts.8a}\end{equation} 
where the horizontal arrows are given by the natural blow-down maps.  Similarly, the projection $\pr_{o}$ lifts to $b$-fibration $\pi^{\op}_o: M^3_{\op}\to M^2_{\op}$ inducing the commutative diagram \eqref{ts.8a} with $\rp$ replaced by $\op$.  
\label{ts.8}\end{proposition}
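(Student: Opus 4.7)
My plan is to reduce all three projections to a single one by symmetry and then exploit the associativity of the reverse ordered product to present $M^3_{\rp}$ as the reverse ordered product of $M$ with $M^2_{\rp}$, at which point the desired lift appears as the second-factor projection already built in the proof of Lemma~\ref{op.6}.

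More precisely, first observe that by Proposition~\ref{ts.7}, permutations of the three factors lift to diffeomorphisms of $M^3_{\rp}$ and of $M^3_{\op}$. The corresponding permutations of $\{1,2,3\}$ that fix the pair selected by $\pr_{o}$ act on $M^2$ by permuting its two factors, which lift to diffeomorphisms of $M^2_{\rp}$ and $M^2_{\op}$ by the analogous symmetry on the double space (Remark~\ref{ps.0a}). Consequently it is enough to establish the statement for $\pr_R$, since the $\pr_L$ and $\pr_C$ cases follow by conjugating with the appropriate permutation diffeomorphisms.

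Now apply Lemma~\ref{op.6} (associativity) to write, canonically,
\[
M^3_{\rp} \;=\; M \rttimes M \rttimes M \;=\; M \rttimes (M \rttimes M) \;=\; M \rttimes M^2_{\rp}.
\]
By the argument in the proof of Lemma~\ref{op.6} (which constructs the reverse ordered product by iteratively blowing up products $H \times F$ with $H \in \cM_1(M)$ and $F \in \cM_1(M^2_{\rp})$ in an order compatible with the reverse partial order), the ordinary projection $M \times M^2_{\rp} \to M^2_{\rp}$ is a $b$-fibration, and at each blow-up one may apply Lemma~\ref{bf.1}: the $p$-submanifold $H \times F$ lies entirely in the kernel factor of the projection, so \eqref{bf.2} is automatic, and its image is the boundary hypersurface $F$ of $M^2_{\rp}$, not a codimension-$2$ corner. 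Hence the projection lifts step by step to a $b$-fibration
\[
\pi^{\rp}_R : M \rttimes M^2_{\rp} \longrightarrow M^2_{\rp}.
\]
Since $\pi^{\rp}_R$ restricts to $\pr_R$ on the interior, commutativity of the square \eqref{ts.8a} holds on a dense set and therefore everywhere by continuity. The same blueprint (associativity of $\ttimes$ together with Lemma~\ref{bf.1}) produces $\pi^{\op}_R : M^3_{\op} \to M^2_{\op}$, and the other two projections follow from the symmetry above.

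The only subtle point I foresee is verifying, at each blow-up involved in constructing $M \rttimes M^2_{\rp}$ out of $M \times M^2_{\rp}$, that Lemma~\ref{bf.1} actually applies: one must confirm that every $p$-submanifold $H \times F$ being blown up is $b$-transversal to the projection and is not sent to a codimension-$2$ corner of $M^2_{\rp}$. Both conditions are immediate from the product structure ($b$-transversality because the projection's kernel contains $TH$, and the codimension condition because $H \times F \mapsto F$ with $F$ a boundary hypersurface of $M^2_{\rp}$), so no serious difficulty arises; the proof is essentially a bookkeeping exercise built on the associativity lemma.
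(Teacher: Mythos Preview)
Your proof is correct and follows essentially the same approach as the paper: reduce to a single projection by the permutation symmetry of Proposition~\ref{ts.7}, use associativity to present $M^3_{\rp}$ as a reverse ordered product of $M$ with $M^2_{\rp}$, and then invoke Lemma~\ref{bf.1} to lift the factor projection through the blow-ups. The paper chooses $o=L$ rather than $o=R$ and states the application of Lemma~\ref{bf.1} more tersely, but the argument is the same.
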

\begin{proof}
By Proposition~\ref{ts.7}, it suffices to prove the proposition for $o=L$.  In this case, we need to show that the projection
$$
     (M\rttimes M)\times M\to M\rttimes M
$$
lifts to a $b$-fibration
$$
   \pi^{\rp}_L: (M\rttimes M)\rttimes M\to M\rttimes M,
$$
which is a simple consequence of Lemma~\ref{bf.1}.
\end{proof}

Let $H_1,\ldots, H_{\ell}$ be an exhaustive list of the boundary hypersurfaces of $M$ listed in an order compatible with the partial order on $\cM_1(M)$.  Using the convention that $H_0:=M$, denote by $H_{ijk}^{\rp}$ the lift of $H_i\times H_j\times H_k$ to $M^3_{\rp}$ for $i,j,k\in\{0,1,\ldots,\ell\}$.  Thus, $H^{\rp}_{000}=M^3_{\rp}$, and otherwise $H_{ijk}^{\rp}$ corresponds to a boundary hypersurface of $M^3_{\rp}$ and all boundary hypersurfaces of $M^3_{\rp}$ are obtained in this way.  

Let $\diag_{\rp}$ be the lift of the diagonal $\diag_M\subset M^2$ in $M^2_{\rp}$.  Let also $\diag^3_{\rp}\subset M^3_{\rp}$ be the lift of the triple diagonal 
$$
\diag^3_M=\{ (m,m,m)\in M^3\; | \; m\in M\}.
$$
As the next lemma shows, the triple space $M^3_{\rp}$ behaves well with respect to the $p$-submanifolds
$$
   \diag_{\rp,o}= (\pi^{\rp}_o)^{-1}(\diag_{\rp}), \quad o\in \{L,C,R\}.
$$
\begin{lemma}
For $o\ne o'$, the $b$-fibration $\pi^{\rp}_o$ is transversal to $\diag_{\rp,o'}$ and induces a diffeomorphism $\diag_{\rp,o'}\cong M^2_{\rp}$ sending $\diag^3_{\rp}$ onto $\diag_{\rp}$ in $M^2_{\rp}$.
\label{ts.8b}\end{lemma}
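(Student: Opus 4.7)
By Proposition~\ref{ts.7} it suffices to prove the three assertions (transversality, diffeomorphism $\diag_{\rp,o'} \to M^2_{\rp}$, and $\diag^3_{\rp} \mapsto \diag_{\rp}$) in the single representative case $o = L$, $o' = C$. At the unblown level, $\pr_L$ restricted to $\pr_C^{-1}(\diag_M) = \{(m,m',m)\}$ is the diffeomorphism $(m,m',m) \mapsto (m,m')$ onto $M^2$, sending $\diag^3_M$ to $\diag_M$; and $\pr_L$ is transversal to $\pr_C^{-1}(\diag_M)$ because $T(\pr_C^{-1}(\diag_M))$ consists of vectors $(\xi,\xi',\xi)$ while $\ker(d\pr_L)$ consists of vectors $(0,0,\eta)$, which together span $TM^3$.

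The plan is to promote this to a global statement on $M^3_{\rp}$ by a local computation in the spirit of the proof of Lemma~\ref{ds.2}. Near a point $p \in \diag^3_{\rp}$ mapping to $q \in H_1 \cap \cdots \cap H_k$, choose local coordinates $(x_1,y_1,\ldots,x_k,y_k,z)$ on $M$ near $q$ as in \eqref{coor.1}, and let $(x_{i,j}, y_{i,j}, z_j)$ for $j \in \{1,2,3\}$ denote their pull-backs from the three factors of $M^3$. Using Lemma~\ref{op.2a} together with the associativity in Lemma~\ref{op.6}, organize the blow-ups defining $M^3_{\rp}$ so that near the lifted triple diagonal only the blow-ups of the ``diagonal'' faces $H_i \times H_i \times H_i$ remain relevant (all other boundary faces of $M^3$ becoming disjoint from $\diag^3_{\rp}$ after earlier blow-ups, just as in the double-space argument). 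These remaining blow-ups introduce projective coordinates $s_i^{jk} = x_{i,j}/x_{i,k}$ for $j \neq k$ and $\sigma_i^{jk} = \prod_{n \geq i} s_n^{jk}$, which together with $x_{i,3}$, the $y_{i,j}$, and the $z_j$ give local coordinates on $M^3_{\rp}$.

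In these coordinates $\diag_{\rp} \subset M^2_{\rp}$ is cut out locally by $\sigma_i^{12} = 1$, $y_{i,1} = y_{i,2}$, $z_1 = z_2$, so that $\diag_{\rp,C} = (\pi^{\rp}_C)^{-1}(\diag_{\rp})$ is the $p$-submanifold $\{\sigma_i^{13} = 1,\ y_{i,1} = y_{i,3},\ z_1 = z_3\}$ of $M^3_{\rp}$. The restriction $\pi^{\rp}_L|_{\diag_{\rp,C}}$ is the coordinate projection forgetting $\sigma_i^{13}$, $y_{i,3}$, $z_3$, hence a local diffeomorphism onto $M^2_{\rp}$ carrying $\diag^3_{\rp} = \{\sigma_i^{12} = \sigma_i^{13} = 1,\ y_{i,1} = y_{i,2} = y_{i,3},\ z_1 = z_2 = z_3\}$onto $\diag_{\rp}$. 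The $b$-transversality of $\pi^{\rp}_L$ to $\diag_{\rp,C}$ is read off simultaneously, since $\ker(d\pi^{\rp}_L)$ is spanned (after lifting through the blow-ups) by the directions $\partial_{\sigma_i^{13}}, \partial_{y_{i,3}}, \partial_{z_3}$, complementary to $T\diag_{\rp,C}$ inside ${}^bT M^3_{\rp}$.

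The main obstacle is the combinatorial organization of the triple-space blow-ups required to obtain the clean local coordinate description above and to verify that $\diag_{\rp,C}$ is indeed a $p$-submanifold of $M^3_{\rp}$; this bookkeeping extends the double-space argument of Lemma~\ref{ds.2} and is handled by Lemmas~\ref{op.2a} and~\ref{op.6} together with the more systematic framework of the companion paper~\cite{KR3}.
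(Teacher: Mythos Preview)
Your local computation near $\diag^3_{\rp}$ is on the right track, but it leaves a genuine gap: $\diag_{\rp,C} = (\pi^{\rp}_C)^{-1}(\diag_{\rp})$ is strictly larger than a neighbourhood of the triple diagonal --- it consists of (lifts of) points $(m,m',m)$ with $m'$ arbitrary. Boundary points where $m$ and $m'$ lie in different or incomparable boundary strata are not covered by your chart, and the reduction ``only $H_i\times H_i\times H_i$ matters'' fails there. In Lemma~\ref{ds.2} the analogous local chart suffices because that lemma concerns only the lifted diagonal itself; here you must handle all of $\diag_{\rp,C}$, and deferring the remaining bookkeeping to \cite{KR3} is not a proof.

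The paper sidesteps all of this by using associativity (Lemma~\ref{op.6}) structurally rather than as a coordinate device. Taking $o=R$, $o'=L$ and writing $M^3_{\rp} = M^2_{\rp} \rttimes M$, one observes that in the intermediate space $M^2_{\rp}\times M$ the set $\diag_{\rp,L}$ corresponds to $\diag_{\rp}\times M$, already a $p$-submanifold; the lift of $\pr_R$ to $M^2_{\rp}\times M\to M^2$ is visibly transversal to it and restricts to the diffeomorphism $\diag_{\rp}\times M \cong M\times M$ (via $\diag_{\rp}\cong M$). It then remains only to check that each blow-up building $M^3_{\rp}$ from $M^2_{\rp}\times M$ preserves this transversality --- a uniform verification at every point of $\diag_{\rp,L}$, with no case analysis by boundary stratum. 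This is both shorter and global.
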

\begin{proof}
By Proposition~\ref{ts.7}, we can assume that $o=R$ and $o'=L$.  Starting with $M^2_{\rp}\times M$, consider the $p$-submanifold $\diag_{\rp}\times M$.  On this space, notice that the lift of the projection $\pr_R: M^3\to M^2$ induces a $b$-fibration $M^2_{\rp}\times M\to M^2$ transversal to $\diag_{\rp}\times M$ which induces a diffeomorphism 
\begin{equation}
    \diag_{\rp}\times M\cong M\times M.
\label{ts.8c}\end{equation}
When performing the blow-ups constructing $M^3_{\rp}$ out of $M^2_{\rp}\times M$, the transversality of the lift of $\pr_R$ with respect to the lift of $\diag_{\rp}\times M$ is preserved after each blow-up, so that after the final blow-up, we obtain that $\pi^{\rp}_R$ is transversal to $\diag_{\rp,L}$ and induces the claimed diffeomorphism.   
\end{proof}

As in \cite{Mazzeo-MelrosePhi}, to define the $\QFB$ triple space, we need then to blow-up the lifts of the $p$-submanifolds $\Phi_i$ with respect to $\pi^{\rp}_{o}$ for $o\in \{L,C,R\}$.  Now, each $p$-submanifold $\Phi_i\subset M^2_{\rp}$ is such that its lift with respect to $\pi^{\rp}_{L}$ is the normal family $\{\Phi^L_{ij}\; | \; j\in \{0,1,\ldots,\ell\} \}$ with $\Phi^L_{ij}$ inside $H^{\rp}_{iij}$.    Similarly, let $\{\Phi^C_{ij}\; \; | \; j\in\{0,1,\ldots,\ell\}\}$ and $\{\Phi^R_{ij}\; |\; j\in \{0,\ldots \ell\}\}$ be the normal families corresponding to the lifts of $\Phi_i$ with respect to $\pi^{\rp}_{C}$ and $\pi^{\rp}_R$.

For $o$ and $o'$ distinct, the $p$-submanifolds $\Phi_{ii}^{o}$ and $\Phi^{o'}_{ii}$ intersect.  To describe their intersection, let $x_i$, $x_i'$ and $x_i''$ denote the pull-back of $x_i$ to $M^3$ with respect to the projection on the first, second and third factors, so that 
\begin{equation}
   \pr_L^*s= \prod_i \left( \frac{x_i}{x_i'} \right), \quad \pr_C^*s= \prod_i \left( \frac{x_i}{x_i''} \right), \quad \pr_R^* s= \prod_i \left(\frac{x_i'}{x_i''} \right).
\label{ts.13}\end{equation} 
 In $M^3_{\rp}$, consider then the subset $\diag_{i,oT}$ of $H^{\rp}_{iii}$ where $(\pi^{\rp}_o)^* s$ lifts to be equal to one.  Proceeding as in the proof of Lemma~\ref{ps.0}, one can show that $\diag_{i,o,T}$ is a $p$-submanifold of $M^3_{\rp}$.  From \eqref{ts.13}, we clearly see that the intersection of any pair of 
 $\diag_{i,LT}$, $\diag_{i,CT}$ and $\diag_{i,RT}$ is the $p$-submanifold 
 $$
      \diag_{i,T}= \diag_{i,LT}\cap \diag_{i,CT}\cap\diag_{i,RT}.
 $$
 Since $\Phi^o_{ii}\subset \diag_{i,oT}$, we see that $\Phi^{o}_{ii}\cap\Phi^{o'}_{ii}\subset \diag_{i,T}$ for $o\ne o'$.  To give a more precise description of this intersection, we need to discuss natural geometric structures on $\diag_{i,oT}$ and $\diag_{i,T}$.  First, by Proposition~\ref{ts.7}, we can assume that $o=L$.  In this case, $\diag_{i,LT}$ corresponds to the lift of $\diag_i\times H_i\subset M^2_{\rp}\times M$ to $M^3_{\rp}=M^2_{\rp}\rttimes M$, where $\diag_i$ is the $p$-submanifold of Lemma~\ref{ps.0}.  Hence, to describe $\diag_{i,LT}$, we can keep track of the effect on $\diag_i\times H_i$ of each blow-up performed to obtain $M^2_{\rp}\rttimes M$ from $M^2_{\rp}\times M$.  Since $\diag_i\subset M^2_{\rp}$ only intersects boundary hypersurfaces $H^{\rp}_{jk}$ for $j,k\in\{0,1\ldots,\ell\}$ with $(H_j,H_k)$ comparable with $(H_i,H_i)$, we will only keep track of the blow-ups involving those.  Keeping this in mind, we can follow essentially the same steps as in the description of $\diag_i$ in \S~\ref{ds.0}, namely the blow-ups can be performed as follows:
 \begin{itemize}
 \item[Step 1:]  Blow up the $p$-submanifolds $H^{\rp}_{jk}\times H_q$ for $q>i$ and $(j,k)>(i,i)$ using lexicographic order;
 \item[Step 2:]  Blow up the lifts of $H^{\rp}_{jk}\times H_i$ and $H^{\rp}_{ii}\times H_q$ for $(j,k)>(i,i)$ and $q>i$;
 \item[Step 3:]  Blow up the lift of $H^{\rp}_{ii}\times H_i$;
 \item[Step 4:]  Blow up the lifts of $H^{\rp}_{jk}\times H_q$ for $(j,k)>(i,i)$ and $q<i$, as well as the lifts of $H^{\rp}_{jk}\times H_q$ for $(j,k)<(i,i)$ and $q>i$;
 \item[Step 5:]  Blow up the lifts of $H^{\rp}_{jk}\times H_i$ and $H^{\rp}_{ii}\times H_q$ for $(j,k)<(i,i)$ and $q<i$, then blow up the lifts of $H^{\rp}_{jk}\times H_q$ for $(j,k)<(i,i)$ and $q<i$.
 \end{itemize} 
 Let $\diag^{(k)}_{i,LT}$ be the lift of $\diag_i\times H_i$ after Step $k$.  Combining the fiber bundle \eqref{ps.6} with the fiber bundle $\phi_i: H_i\to S_i$ yields the fiber bundle 
\begin{equation}
\xymatrix{
  (Z_i)^2_{\rp}\times Z_i \ar[r]  & \diag_i\times H_i \ar[d]^{(\phi_i\rttimes\phi_i)\times \phi_i} \\
   &   (S_i)^2_{\op}\times S_i.
 }
 \label{ts.27}\end{equation}
 Clearly, after Step 1, $\diag^{(1)}_{i,LT}$ is still a codimension 3 $p$-submanifold contained in a codimension 2 corner and the fiber bundle \eqref{ts.27} lifts to a fiber bundle 
 \begin{equation}
\xymatrix{
  (Z_i)^3_{\rp} \ar[r]  & \diag^{(1)}_{i,LT} \ar[d] \\
   &   (S_i)^2_{\op}\times S_i.
 }
 \label{ts.28}\end{equation} 
  Now, the $p$-submanifolds blown up in Step 2 correspond to boundary hypersurfaces in $\diag^{(1)}_{i,LT}$, so the blow-down map induces a natural diffeomorphism $\diag^{(2)}_{i,LT}\cong \diag^{(1)}_{i,LT}$.  However, notice that after Step 2 is completed, the lifts of $M^2_{\rp}\times H_q$ and $H^{\rp}_{jk}\times M$ are disjoint from $\diag^{(2)}_{i,LT}$ for $q>i$ and $(j,k)>(i,i)$.  The effect of Step 3 is to transform the codimension 3 $p$-submanifold $\diag^{(2)}_{i,LT}$ into a codimension 2 $p$-submanifold contained in a boundary hypersurface, namely 
 $$
        \diag^{(3)}_{i,LT}=\diag^{(2)}_{i,LT}\times [0,\frac{\pi}2]_{\theta_i},
 $$
 where the angular variable can be taken to be
 $$
      \theta_i= \arctan\left(  \frac{\prod_{j\ge i} \prod_{k\ge i}x_{jk}}{\prod_{j\ge i }x_j''}\right)=\arctan\left( \frac{v_{ii}}{v_i''}\right)
 $$
 with $x_{jk}$ a boundary defining function for $H^{\rp}_{jk}$ in $M^2_{\rp}$ and 
 $$
   v_{ii}:= \prod_{j\ge i} \prod_{k\ge i}x_{jk}.
 $$ 
Under this identification, the fiber bundle \eqref{ts.28} lifts to a fiber bundle 
 \begin{equation}
\xymatrix{
  (Z_i)^3_{\rp} \ar[r]  & \diag^{(3)}_{i,LT} \ar[d] \\
   &   (S_i)^2_{\op}\times S_i\times [0,\frac{\pi}2]_{\theta_i}.
 }
 \label{ts.29}\end{equation} 
Now, by Step 2, the $p$-submanifold blown up in Step 4 are all disjoint from $\diag^{(3)}_{i,LT}$, so in fact $\diag^{(4)}_{i,LT}=\diag^{(3)}_{i,LT}.$ 

Finally, if $H_i$ is a minimal boundary hypersurface, then Step 5 is vacuous and $\diag^{(5)}_{i,LT}=\diag^{(4)}_{i,LT}$ with $(S_i)^2_{\op}=S_i^2$.  This is not the case however if $H_i$ is not minimal.   In this case,  the blow-ups of Step~5 correspond to blow-ups of the base $(S_i)^2_{\op}\times S_i\times [0,\frac{\pi}2]_{\theta_i}$ of \eqref{ts.29}, so that $\diag_{i,LT}=\diag^{(5)}_{i,LT}$ and the fiber bundle \eqref{ts.29} lifts to a fiber bundle
\begin{equation}
\xymatrix{
  (Z_i)^3_{\rp} \ar[r]  & \diag_{i,LT} \ar[d] \\
   &   (S_i)^2_{\op}\rjtimes S_i
 }
 \label{ts.29b}\end{equation} 
with  $(S_i)^2_{\op}\rjtimes S_i$ the reverse join product
$$
[(S_i)^2_{\op}\times S_i\times [0,\frac{\pi}2]_{\theta_i}; (S_i)^2_{\op}\times \cM_1^r(S_i)\times \{0\}, \cM_1^r((S_i)^2_{\op})\times S_i\times \{\frac{\pi}2\}, \cM_1^r((S_i)^2_{\op})\times \cM^r_1(S_i)\times [0,\frac{\pi}2]_{\theta_i}]
$$
of $(S_i)^2_{\op}$ and $S_i$.  In terms of this description, the equation for $\diag^{\rp}_{i,CT}$ is given by
\begin{equation}
       \left(\frac{v_{ii}}{v_i''}\right)\left(\frac{\rho_{ii}}{\rho_i''}\right)=1,
\label{ts.30}\end{equation}
where $\rho_i''=\prod_{H_j<H_i}x_j''$ is a total boundary defining function for $S_i$ (on the right factor) and
$$
    \rho_{ii}= \prod_{H_j<H_i}\prod_{H_k<H_i} x_{jk}
$$
is a total boundary defining function for $(S_i)^2_{\op}$.
  In particular, this equation behaves well with respect to the fiber bundle \eqref{ts.29} in the sense that it descends to an equation on the base $(S_i)^2_{\op}\rjtimes S_i$.  By \cite[Corollary~4.15]{KR3}, the equation \eqref{ts.30} on $(S_i)^2_{\op}\rjtimes S_i$ is naturally identified with $(S_i)^3_{\op}= (S_i)^2_{\op}\ttimes S_i$.  Hence, on the submanifold $\diag_{i,T}=\diag_{i,LT}\cap \diag_{i,CT}$, the fiber bundle \eqref{ts.29} restricts to give a fiber bundle
\begin{equation}
\xymatrix{
  (Z_i)^3_{\rp} \ar[r]  & \diag_{i,T} \ar[d]^{\phi_i\rttimes\phi_i\rttimes\phi_i} \\
   &   (S_i)^3_{\op}.
 }
 \label{ts.31}\end{equation}

If $\pi^{\op}_{L,S_i}: (S_i)^3_{\op}\to (S_i)^2_{\op}$ is the $b$-fibration induced by the projection $S^3_i\to S^2_i$ on the first two factors, then notice that with respect to the fiber bundle \eqref{ts.31}, 
\begin{equation}
   \Phi^L_{ii}\cap \diag_{i,T}= (\phi_i\rttimes \phi_i\rttimes \phi_i)^{-1}((\pi^{\op}_{L,S_i})^{-1}(\diag^2_{S_i,\op})),
\label{ts.32}\end{equation}
where $\diag^2_{S_i,\op}$ is the lift of the diagonal in $S_i^2$ to $(S_i)^2_{\op}$.  By symmetry, that is, more specifically by Proposition~\ref{ts.7}, we have more generally for $o\in\{L,C,R\}$ that
\begin{equation}
   \Phi^o_{ii}\cap \diag_{i,T}= (\phi_i\rttimes \phi_i\rttimes \phi_i)^{-1}((\pi^{\op}_{o,S_i})^{-1}(\diag^2_{S_i,\op})),
\label{ts.33}\end{equation}

We are now ready to describe how $\Phi^L_{ii}$, $\Phi^C_{ii}$ and $\Phi^R_{ii}$ intersect.

\begin{proposition}
In terms of the fiber bundle \eqref{ts.31}, the intersection of any pair of $\Phi^L_{ii}$, $\Phi^C_{ii}$ and $\Phi^R_{ii}$ is the $p$-submanifold 
$$
     \Phi_{i,T}= \Phi^L_{ii}\cap \Phi^C_{ii}\cap \Phi^R_{ii}\cong (\phi_i\rttimes\phi_i\rttimes\phi_i)^{-1}(\diag^3_{S_i,\op}),
$$
where $\diag^3_{S_i,\op}\subset (S_i)^3_{\op}$ is the lift of the triple diagonal
$$
      \diag^3_{S_i}= \{ (s,s,s)\in(S_i)^3 \; | \; s\in S_i \}.
$$ 
\label{ts.18}\end{proposition}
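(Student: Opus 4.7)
The plan is to reduce the statement to an identity between lifts of pairwise and triple diagonals in $(S_i)^3_{\op}$. The key facts already established are: for each $o\in\{L,C,R\}$, $\Phi^o_{ii}\subset \diag_{i,oT}$; for $o\ne o'$, $\diag_{i,oT}\cap \diag_{i,o'T}=\diag_{i,T}$; and \eqref{ts.33} identifies $\Phi^o_{ii}\cap \diag_{i,T}$ with the preimage under $\phi_i\rttimes\phi_i\rttimes\phi_i$ of $(\pi^{\op}_{o,S_i})^{-1}(\diag^2_{S_i,\op})$. Combining these, for any pair $o\ne o'$,
\begin{equation*}
\Phi^{o}_{ii}\cap \Phi^{o'}_{ii}=\Phi^{o}_{ii}\cap\Phi^{o'}_{ii}\cap\diag_{i,T} = (\phi_i\rttimes\phi_i\rttimes\phi_i)^{-1}\bigl((\pi^{\op}_{o,S_i})^{-1}(\diag^2_{S_i,\op})\cap (\pi^{\op}_{o',S_i})^{-1}(\diag^2_{S_i,\op})\bigr),
\end{equation*}
so the problem becomes one about $(S_i)^3_{\op}$.

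Next, I would verify the identity
\begin{equation*}
(\pi^{\op}_{L,S_i})^{-1}(\diag^2_{S_i,\op})\cap (\pi^{\op}_{C,S_i})^{-1}(\diag^2_{S_i,\op}) = \diag^3_{S_i,\op},
\end{equation*}
from which Proposition~\ref{ts.7} (applied to $S_i$) yields the analogous identity for every pair of projections. The inclusion $\supset$ is immediate from the commutative diagram \eqref{ts.8a}, since $\diag^3_{S_i}$ projects into $\diag^2_{S_i}$ under both $\pr_L$ and $\pr_C$. For the inclusion $\subset$, one argues first on the interior $(S_i^\circ)^3$, where the two conditions $s_1=s_2$ and $s_1=s_3$ force $s_1=s_2=s_3$, and then extends by continuity using that $\diag^3_{S_i,\op}$ is the closure of its interior in $(S_i)^3_{\op}$. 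Symmetrizing via Proposition~\ref{ts.7} shows that all three pairwise intersections coincide with the putative $\Phi_{i,T}=(\phi_i\rttimes\phi_i\rttimes\phi_i)^{-1}(\diag^3_{S_i,\op})$.

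It remains to verify the $p$-submanifold claim, which, given the fiber-bundle structure \eqref{ts.31}, reduces to showing that $\diag^3_{S_i,\op}$ is a $p$-submanifold of $(S_i)^3_{\op}$. The argument is a three-factor analogue of the proof of Lemma~\ref{ps.0}: in local coordinates the defining equations of $\diag^3_{S_i}$ fail to cut out a $p$-submanifold only near mixed corners of $S_i^3$, but each of the ordered-product blow-ups used to build $(S_i)^3_{\op}$ introduces a coordinate ratio that eliminates one of the obstructing monomial factors, and after all of them have been performed the equations take the desired product form. The main obstacle I anticipate is precisely the bookkeeping needed to carry out this local coordinate analysis with three factors rather than two, since there are now many more strata to inspect and one must keep track of the order in which blow-ups are performed; once this is settled, pulling the $p$-submanifold back through the submersion $\phi_i\rttimes\phi_i\rttimes\phi_i$ preserves the $p$-submanifold property and completes the proof.
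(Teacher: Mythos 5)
Your proposal is correct and follows essentially the same route as the paper: reduce via $\Phi^o_{ii}\subset\diag_{i,oT}$ and $\diag_{i,oT}\cap\diag_{i,o'T}=\diag_{i,T}$ to the identity \eqref{ts.33}, and then identify the intersection of the pullbacks of the pairwise diagonals in $(S_i)^3_{\op}$ with the lifted triple diagonal. The paper's proof is in fact terser than yours—it stops at "the result follows from \eqref{ts.33}"—so the extra detail you supply (the interior/closure argument for the triple-diagonal identification and the local-coordinate verification of the $p$-submanifold property) is a reasonable fleshing-out of steps the authors leave implicit rather than a divergence in method.
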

\begin{proof}
Since $\Phi^o_{ii}\subset \diag_{i,oT}$, we see that for $o\ne o'$,
$$
     \Phi^o_{ii}\cap \Phi^{o'}_{ii}\subset \diag_{i,oT}\cap \diag_{io'T}=\diag_{i,T},
$$
so the result follows from \eqref{ts.33}.
\end{proof}

As the next lemma shows,  many potential intersections between the $p$-submanifolds $\Phi^o_{ij}$ and $\Phi^{o'}_{ik}$ for $o\ne o'$ can be ruled out.   

\begin{lemma}
For $o,o'\in \{L,C,R\}$ distinct  and $j,k\in\{0,1,\ldots,\ell\}$, we have that 
$$
\Phi^o_{ij}\cap \Phi^{o'}_{ik}\ne\emptyset\; \Longrightarrow \; j\ge k=i \quad \mbox{or} \quad k\ge j=i \quad \mbox{and} \quad \Phi^o_{ij}\cap \Phi^{o'}_{ik}\subset \Phi_{i,T}.  
$$
 
\label{ts.12}\end{lemma}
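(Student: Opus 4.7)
By the symmetry of Proposition~\ref{ts.7}, we may assume $o=L$ and $o'=C$. Since $\Phi^L_{ij}\subset H^{\rp}_{iij}$ and $\Phi^C_{ik}\subset H^{\rp}_{iki}$, any point of the intersection lies in $H^{\rp}_{iij}\cap H^{\rp}_{iki}$, which forces the middle factor into $H_i\cap H_k$ and the right factor into $H_i\cap H_j$ (with $H_0=M$ imposing no constraint). In particular, the intersection is already empty whenever $H_j$ or $H_k$ is incomparable to $H_i$, so we may restrict to the case where both $H_j$ and $H_k$ are comparable to $H_i$.

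To handle the remaining cases I would extend the local coordinate computation from the proof of Lemma~\ref{ds.2} to the triple space. Near a putative intersection point, lifting coordinates on the three factors of $M^3$ through the blow-ups defining $M^3_{\rp}$, the submanifold $\Phi^L_{ij}$ is cut out by the condition $v/v'=1$ together with the $\phi_i$-matching of the first two factors (and the appropriate location in $H^{\rp}_{iij}$), while $\Phi^C_{ik}$ is cut out by $v/v''=1$ and the $\phi_i$-matching of the first and third factors. Combining these conditions yields $v=v'=v''$ and the simultaneous matching of all three $\phi_i$-coordinates, which is precisely the condition defining $\Phi_{i,T}$ via the fibration \eqref{ts.31}, giving the inclusion $\Phi^L_{ij}\cap \Phi^C_{ik}\subset \Phi_{i,T}$.

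The restriction on $j,k$ then follows by tracing which boundary faces of $M^3_{\rp}$ meet $\Phi_{i,T}$ nontrivially. If $k\neq i$ and $H_k<H_i$, then $m'\in H_i\cap H_k$ forces $\phi_i(m')\in \pa_k S_i$, and the matching condition drags the first factor into the same deeper stratum of $S_i$; combined with the analogous constraint coming from $j$, this pushes the intersection into a boundary face strictly smaller than either $H^{\rp}_{iij}$ or $H^{\rp}_{iki}$, producing an empty intersection unless one of the two refinements is trivial, that is, unless $j=i$ or $k=i$. The remaining index is then constrained only to correspond to a hypersurface above $H_i$ in the partial order (or to $0$), yielding the asserted dichotomy $j\ge k=i$ or $k\ge j=i$. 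The main obstacle is the careful bookkeeping of how the order of blow-ups used to build $M^3_{\rp}$ interacts with the fibration \eqref{ts.31} and the $p$-submanifold structure of $\Phi_{i,T}$, in order to identify precisely which pairs $(j,k)$ produce a nonempty intersection; this is essentially a direct continuation of the step-by-step analysis already carried out in the construction of $\diag_{i,LT}$ preceding Proposition~\ref{ts.18}.
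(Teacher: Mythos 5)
Your overall plan (symmetry reduction, locating each $\Phi^o_{ij}$ in a boundary face of $M^3_{\rp}$, and reducing the containment statement to the triple-diagonal condition of Proposition~\ref{ts.18}) points in the right direction, but the step that constitutes the actual content of the lemma --- determining exactly which pairs $(j,k)$ can give a nonempty intersection --- is not carried out, and the mechanism you propose for it would not work. Saying that the matching conditions ``push the intersection into a boundary face strictly smaller than either $H^{\rp}_{iij}$ or $H^{\rp}_{iki}$'' does not yield emptiness: a nonempty set can perfectly well sit inside a deeper corner. The paper's argument rules out the bad pairs by two concrete devices that your sketch does not supply. First, since $M^3_{\rp}$ is a (reverse) ordered product, two of its boundary hypersurfaces meet only if they are comparable; applied to $H^{\rp}_{iij}\supset\Phi^L_{ij}$ and $H^{\rp}_{kii}\supset\Phi^R_{ik}$ this immediately forces $j\le i\le k$. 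Second, the remaining cases are eliminated by exhibiting a function taking different constant values on the two submanifolds: $\pr_R^*s$ lifts to $0$ on $H^{\rp}_{ii0}$ but to $1$ on every $\Phi^R_{ik}$ (killing $j=0$), and $(\pr_R^*s_j)^{-1}$ lifts to $0$ on $H^{\rp}_{iij}$ but to $1$ on $\Phi^R_{ik}$ (killing $0<j<i$). Only after this does one land in $H^{\rp}_{iii}$, where $\Phi^o_{iq}\cap H^{\rp}_{iii}\subset\Phi^o_{ii}$ reduces the containment claim to Proposition~\ref{ts.18}. Your closing admission that ``the main obstacle is the careful bookkeeping\ldots in order to identify precisely which pairs $(j,k)$ produce a nonempty intersection'' is precisely the part that must be proved, so as written the proposal is a plan rather than a proof.

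A secondary error: you assert that the remaining index may ``correspond to a hypersurface above $H_i$ in the partial order (or to $0$).'' The index $0$ is in fact excluded --- the $j=0$ case gives an empty intersection by the $\pr_R^*s$ argument above --- consistent with the statement $j\ge k=i$ or $k\ge j=i$, which forces the non-$i$ index to label a hypersurface genuinely above $H_i$.
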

\begin{proof}
By Proposition~\ref{ts.7}, we can assume that $o=L$ and $o'=R$ and that $j\le k$.  Suppose first that $j=0$.  Then $\pr_R^* s$ lifts to be zero on the interior of $H_{ii0}$, which means it lifts to be zero on $H^{\rp}_{ii0}$.  In particular,
$$
     \Phi^R_{ik}\cap H^{\rp}_{ii0}=\emptyset \quad \forall k\in \{0,1,\ldots,\ell\},
$$
since $\pi_R^* s$ lifts to be equal to $1$ on $\Phi^R_{ik}$.  Since $\Phi^L_{i0}$ is included in $H^{\rp}_{ii0}$, this implies that
$$
     \Phi^L_{i0}\cap \Phi^R_{ik}=\emptyset \quad \forall k\in \{0,1,\ldots,\ell\}.  
$$ 
Thus, suppose instead that $0<j\le k$ and that $\Phi^L_{ij}\cap \Phi^R_{ik}\ne \emptyset $.  Since $\Phi^L_{ij}\subset H^{\rp}_{iij}$ and $\Phi^R_{ik}\subset H^{\rp}_{kii}$, this means that
$$
     H^{\rp}_{iij}\cap H^{\rp}_{kii}\ne \emptyset.  
$$
In particular, $H^{\rp}_{iij}$ and $H^{\rp}_{kii}$ must be comparable with respect to the partial order on the boundary hypersurfaces of $M^3_{\rp}= M^2_{\rp}\rttimes M$.  Since $j\le k$, we must have therefore that $H^{\rp}_{iij}\le H^{\rp}_{kii}$, which implies that $(i,i)\le (k,i)$ and $j\le i$, that is
$$
       j\le i \le k.
$$
First, if $j<i\le k$, then the function $(\pr_{R}^*s_j)^{-1}$ lifts to be equal to zero on $H^{\rp}_{iij}$, while it lifts to be equal to $1$ on $\Phi^R_{ik}$, so that 
$$
      \Phi^L_{ij}\cap \Phi^R_{ik}\subset H^{\rp}_{iij}\cap \Phi^R_{ik}=\emptyset
$$       
as claimed.

If instead $j=i$, then $\Phi^L_{ij}=\Phi^L_{ii}\subset H^{\rp}_{iii}$.  Since $\Phi^{o}_{iq}\cap H^{\rp}_{iii}\subset\Phi^o_{ii}$ for all $q$ and $o$, we must have that
\begin{equation}
  \Phi^L_{ij}\cap \Phi^{R}_{ik}= \Phi^L_{ij}\cap \Phi^{R}_{ik}\cap H^{\rp}_{iii}\subset  \Phi^L_{ii}\cap \Phi^R_{ii}=\Phi_{i,T}
\label{ts.34}\end{equation}
as claimed.  \end{proof}

These preliminary results allow us to define the $\QFB$ triple space as follows.

\begin{definition}
The $\QFB$ triple space $\kridx{M^3_{\QFB}}{M3QFB}{QFB triple space}$ is obtained from $M^3_{\rp}$ by blowing up the sequence of families of $p$-submanifolds 
$$
   \{ \Phi_{i,T}, \{\Phi^L_{ij}\}_{j=0}^{\ell},\{\Phi^C_{ij}\}_{j=0}^{\ell},\{\Phi^R_{ij}\}_{j=0}^{\ell}\}
$$
for $i\in\{1,\ldots, \ell\}$ with $i$ and $j$ increasing.  We denote by $\beta^3_{\QFB}: M^3_{\QFB}\to M^3$ the corresponding blow-down map. 
\label{ts.11}\end{definition}
Similarly, in the $\QAC$ setting,  we can then define the $\Qb$ triple space as follows.
\begin{definition}
Suppose the maximal boundary hypersurfaces of $M$ are given by $H_{k+1},\ldots, H_{\ell}$ and are such that $S_i=H_i$ with $\phi_i$ the identity map.  In this case, the $\Qb$ triple space $\kridx{M^3_{\Qb}}{M3Qb}{Qb triple space}$ is obtained by blowing up the sequence of families of $p$-submanifolds
$$
   \{ \Phi_{i,T}, \{\Phi^L_{ij}\}_{j=0}^{\ell},\{\Phi^C_{ij}\}_{j=0}^{\ell},\{\Phi^R_{ij}\}_{j=0}^{\ell}\}
$$
for $i\in\{1,\ldots, k\}$ with $i$ and $j$ increasing.
\label{ts.22}\end{definition}

\begin{proposition}
The projection $\pr_o: M^3\to M^2$ for $o\in \{L,C,R\}$ lifts to a $b$-fibration 
$$\pi^3_{\QFB,o}: M^3_{\QFB}\to M^2_{\QFB}
$$  
Similarly, when the $\Qb$ double space is defined, its lifts to a $b$-fibration
 $$
 \pi^3_{\Qb,o}: M^3_{\Qb}\to M^2_{\Qb}.
 $$
\label{ts.23}\end{proposition}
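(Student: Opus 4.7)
By the symmetry of Proposition~\ref{ts.7}, which extends to $M^3_{\QFB}$ since the permutation diffeomorphisms $A_{\rp}$ permute the three families $\{\Phi^L_{ij}\}_j$, $\{\Phi^C_{ij}\}_j$, $\{\Phi^R_{ij}\}_j$ and preserve $\Phi_{i,T}$ (and since, once $\Phi_{i,T}$ has been blown up, the blow-ups of these three families commute thanks to Lemma~\ref{ts.12} and Proposition~\ref{ts.18}), it suffices to prove the statement for $o=L$. From Proposition~\ref{ts.8} we already know that $\pi^{\rp}_L\colon M^3_{\rp}\to M^2_{\rp}$ is a $b$-fibration; the plan is to lift it through the iterated blow-ups defining $M^2_{\QFB}$ and $M^3_{\QFB}$ in parallel, by induction on $i\in\{1,\ldots,\ell\}$.

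For the inductive step at index $i$, assume we have a $b$-fibration $f_{i-1}\colon X_{i-1}\to Y_{i-1}$, where $Y_{i-1}=[M^2_{\rp};\Phi_1,\ldots,\Phi_{i-1}]$ and $X_{i-1}$ is $M^3_{\rp}$ with all the $\QFB$ triple-space blow-ups performed for indices strictly less than $i$. The target $Y_i$ is obtained from $Y_{i-1}$ by blowing up the lift of $\Phi_i$, which is contained entirely in the lift of the single boundary hypersurface $H^{\rp}_{ii}$. The pullback $f_{i-1}^*\rho_{H^{\rp}_{ii}}$ equals, up to a positive smooth factor, the product of the boundary defining functions of the hypersurfaces $H^{\rp}_{iij}$ of $X_{i-1}$ for $j\in\{0,1,\ldots,\ell\}$, since these are precisely the boundary hypersurfaces that $f_{i-1}$ maps surjectively onto $H^{\rp}_{ii}$. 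Lemma~\ref{bf.3} then yields a minimal collection of $p$-submanifolds into which the lift of $\Phi_i$ decomposes; comparing with the fiber-bundle description \eqref{ps.6} and the commutative diagram of Proposition~\ref{ts.8} identifies this collection with the $L$-family $\{\Phi^L_{ij}\}_{j=0}^{\ell}$, whose blow-up in any order lifts $f_{i-1}$ to a $b$-fibration landing in $Y_i$.

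The construction of $M^3_{\QFB}$ requires, in addition, the preliminary blow-up of $\Phi_{i,T}$ (which must be performed first so that by Lemma~\ref{ts.12} and Proposition~\ref{ts.18} the $L$-, $C$-, and $R$-families become pairwise disjoint) and the subsequent blow-ups of the $C$- and $R$-families $\{\Phi^C_{ij}\}_j$ and $\{\Phi^R_{ij}\}_j$. For each of these additional $p$-submanifolds, the local coordinate descriptions developed in \S\ref{ds.0} together with the fiber-bundle structure \eqref{ts.31} show that the submanifold is $b$-transversal in the sense of \eqref{bf.2} to the current $b$-fibration, and that its image under the $b$-fibration is contained in a single boundary hypersurface of $Y_i$, never in a codim-$2$ corner. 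Lemma~\ref{bf.1} then applies and each such blow-up preserves the $b$-fibration property, yielding the desired $f_i\colon X_i\to Y_i$ at the end of stage $i$. After stage $\ell$ we obtain $\pi^3_{\QFB,L}\colon M^3_{\QFB}\to M^2_{\QFB}$, and the $\Qb$ version is identical but carried out only for indices $i\in\{1,\ldots,k\}$ corresponding to non-maximal boundary hypersurfaces. The principal technical obstacle, and the step most likely to demand care, is the verification of $b$-transversality and of the codim-$1$ nature of the images at each inductive stage; this amounts to explicit coordinate computations in the spirit of the proof of Lemma~\ref{ds.2}.
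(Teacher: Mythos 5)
Your overall architecture matches the paper's: reduce to $o=L$ by the symmetry of Proposition~\ref{ts.7}, proceed by induction on $i$, use Lemma~\ref{bf.3} to handle the $L$-family $\{\Phi^L_{ij}\}_j$ (which is indeed the decomposition of the lift of $\Phi_i$ under $\pi^{\rp}_L$), and use Lemma~\ref{bf.1} for the blow-ups that live only in the source. However, there is a genuine gap in how you handle $\Phi_{i,T}$, and it is precisely the point on which the paper's proof spends most of its effort.

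The difficulty is this: by Definition~\ref{ts.11}, $\Phi_{i,T}$ must be blown up \emph{before} the $L$-family, and $\Phi_{i,T}\subset \Phi^L_{ii}$ while $\Phi_{i,T}\cap\Phi^L_{ij}\neq\emptyset$ for $j>i$ without these submanifolds being nested. As the paper notes explicitly, these blow-ups do not (obviously) commute, so you cannot first apply Lemma~\ref{bf.3} to blow up the $L$-family and then treat $\Phi_{i,T}$ as one of the ``additional'' source-only blow-ups handled by Lemma~\ref{bf.1}: the space you obtain by performing the blow-ups in that order is not known to be $M^3_{\QFB}$. Conversely, once $\Phi_{i,T}$ has been blown up first, Lemma~\ref{bf.3} no longer applies verbatim, since its hypotheses concern the lift of $\Phi_i$ under the original $b$-fibration, not under the map modified by the preliminary blow-up. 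The paper's resolution is to re-enter the proof of \cite[Lemma~2.5]{hmm}: in the local coordinates where the lift of $\Phi_i$ decomposes into $S_1,\ldots,S_q$, one identifies $\Phi_{i,T}$ with an extra submanifold $S_0=\{x_1=y_1=\cdots=y_{m+1+\dim S_i}=0\}$ of higher codimension inside $S_1=\Phi^L_{ii}$, prepends a corresponding step ($R_0$, $x_j^{(0)}$, $y_j^{(0)}$) to the inductive construction of coordinates, and verifies directly that the total exponent structure of $(\pi^{\rp}_L)^*x_1''$ is still that of a $b$-fibration after blowing up $S_0,S_1,\ldots,S_q$ in that order. Without this (or an equivalent commutation argument), your inductive step does not go through; your closing remark locates the main technical burden in the $b$-transversality checks for Lemma~\ref{bf.1}, but the real obstruction is the non-commuting interaction between $\Phi_{i,T}$ and the $\Phi^L_{ij}$.
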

\begin{proof}
By Proposition~\ref{ts.8}, $\pr_o$ lifts to a $b$-fibration $\pi^{\rp}_{o}: M^3_{\rp}\to M^2_{\rp}$, so it suffices to show that $\pi^{\rp}_{o}$ lifts to a $b$-fibration $\pi^3_{\QFB,o}: M^3_{\QFB}\to M^2_{\QFB}$ and similarly for the $\Qb$ triple space.  

Now, in the sequence of blow-ups
\begin{equation}
   \{ \Phi_{i,T}, \{\Phi^L_{ij}\}_{j=0}^{\ell},\{\Phi^C_{ij}\}_{j=0}^{\ell},\{\Phi^R_{ij}\}_{j=0}^{\ell}\},
\label{ts.23b}\end{equation}
notice that by Proposition~\ref{ts.18}, the first blow-up separates $\Phi^{L}_{ii}, \Phi^{C}_{ii}$ and $\Phi^{R}_{ii}$.  Combined with Lemma~\ref{ts.12}, this means that in this sequence, the blow-ups after the one of $\Phi_{i,T}$ commute when they are associated to distinct $o,o'\in \{L,C,R\}$.
Thus, without loss of generality, we can assume that $o=L$.  Looking first at the sequence of blow-ups for $i=1$, set
\begin{equation}
   M^3_{\QFB,1}:= [M^3_{\rp}; \Phi_{1,T}, \{\Phi^L_{1j}\}_{j=0}^{\ell},\{\Phi^C_{1j}\}_{j=0}^{\ell},\{\Phi^R_{1j}\}_{j=0}^{\ell}].
\label{ts.24}\end{equation}
At this point, we would like to apply Lemma~\ref{bf.3}, but since $\Phi^{L}_{ij}\cap \Phi_{i,T}\ne \emptyset$ for $j>i$, their blow-ups does not seem to commute, so we need to proceed slightly differently than in \cite{Mazzeo-MelrosePhi}, where such a problem do not arise.  More precisely, we will check that the proof of Lemma~\ref{bf.3}, that is the proof of \cite[Lemma~2.5]{hmm}, can still be applied even if we first blow up $\Phi_{i,T}$.  To see this, notice that in our setting, the starting point of the proof of \cite[Lemma~2.5]{hmm} is to notice that there are local coordinates $(x'_1,\ldots, x'_{k'}, y_1',\ldots, y_{n'-k'}')$ on $M^2_{\rp}$ with $\Phi_1$ corresponding to the $p$-submanifold 
$$
     x'_1=0,  \quad y'_{1}=\cdots=y'_{m}=0, 
$$
and coordinates $(x_1,\ldots x_k, y_1,\ldots, y_{n-k})$ on $M^{3}_{\rp}$ such that
$$
    (\pi^{\rp}_{L})^*x'_1= \prod_{j=1}^{q} x_j,  \quad (\pi^{\rp}_{L})^*y'_j= y_j.
$$
In these coordinate charts, this gives $q$ $p$-submanifolds to blow up on $M^3_{\rp}$ described locally by
$$
    S_j=\{  x_j=y_1=\cdots=y_m=0\},  \quad j\in\{1,\ldots,q\}.
$$
We can assume that $\Phi^L_{ii}$ corresponds to $S_1$, in which case we can assume $\Phi_{i,T}$ corresponds to 
$$
    S_0= \{x_1=y_1=\cdots=y_m=\cdots= y_{m+1+\dim S_i}=0\},
$$
since $\Phi_{i,T}$ is of codimension $1+\dim S_i$ in $\Phi^{L}_{ii}$.  The proof of \cite[Lemma~2.5]{hmm} would then consists in lifting the coordinates $(x_1,\ldots,x_k,y_1,\ldots,y_{n-k})$ after each of the subsequent blow-ups of $S_1,\ldots, S_q$ and checking that the corresponding map $\pi^{\rp}_{L}$ lifts to a $b$-fibration.  Now, blowing-up first $S_0$ obviously adds a layer of complication to the proof, but the point is that it does not compromise the final result, namely, if we successively blow up $S_0, S_1,\ldots, S_{q}$, then the map $\pi^{\rp}_{L}$ still lifts to a $b$-fibration.  Indeed, the blow-up of $S_0$ amounts to add a superfluous step in the inductive construction of the coordinates $R_k, x_{j}^{(k)}, y_j^{(k)}$ of the proof of \cite[Lemma~2.5]{hmm} by adding a new set of such coordinates at the beginning of the inductive construction.  In greater details,
we first introduce the functions
$$
\begin{gathered}
 R_0 = x_1+ \left( y_1^2+\cdots+ y_{m+1+\dim S_i}^2 \right)^{\frac12},  \\
  x_1^{(0)}=\frac{x_1}{R_0},  \quad x^{(0)}_j=x_j,  \; j>1, \\
 y^{(0)}_j=  \left\{ \begin{array}{ll}  \frac{y_j}{R_0}, & j\le m+1+\dim S_i, \\
                      y_j, & \mbox{otherwise}.   \end{array} \right.   
\end{gathered}  
$$ 
which can be used to construct local coordinates on $[M^3_{\rp}; \Phi_{i,T}]$.  For $\nu\in\{1,\ldots,q\}$, one can then recursively construct the functions
$$
\begin{gathered}
 R_{\nu} = x_{\nu}^{(\nu-1)}+ \left( (y^{(\nu-1)}_1)^2+\cdots+ (y^{(\nu-1)}_{m})^2 \right)^{\frac12},  \\
  x_\nu^{(\nu)}=\frac{x^{(\nu-1)}_\nu}{R_\nu},  \quad x^{(\nu)}_j=x^{(\nu-1)}_j,  \; j\ne \nu, \\
 y^{(\nu)}_j=  \left\{ \begin{array}{ll}  \frac{y^{(\nu-1)}_j}{R_\nu}, & j\le m, \\
                      y^{(\nu-1)}_j, & \mbox{otherwise},   \end{array} \right.   
\end{gathered}  
$$ 
which can be used to obtain local coordinates on $[M^3_{\rp};S_0,\ldots,S_{\nu}]$.  On the other hand, on $[M^2_{\rp};\Phi_{i}]$, we can consider the functions
$$
\begin{gathered}
R'=x_1'+ ((y'_1)^2+\cdots (y'_m)^2)^{\frac12}, \\
x_1''= \frac{x_1'}{R'}, \quad x_j''=x_j', \quad j>1,
y_j''= \left\{ \begin{array}{ll}  \frac{y'_j}{R'}, & j\le m, \\
                      y'_j, & \mbox{otherwise},   \end{array} \right. 
\end{gathered}
$$
which can be used to obtain local coordinates.  In terms of these functions, 
$$
\begin{gathered}
(\pi^{\rp}_L)^*R'= aR_0R_1\ldots R_q, \\
(\pi^{\rp}_L)^*x_1''= a^{-1}\prod_{j=1}^q x_{j}^{(q)}, \\
(\pi^{\rp}_L)^*y_j''=a^{-1}y^{(q)}_j,
\end{gathered}
$$
where, as in the proof of \cite[Lemma~2.5]{hmm}, 
$$
a= \prod_{j=1}^q x_j^{(q)}+ \sqrt{(y^{(q)}_1)^2+\cdots (y^{(q)}_m)^2}
$$ 
is a smooth positive function on $[M^3_{\rp};S_0,\ldots,S_q]$.  This shows that $\pi^{\rp}_L$ lifts to a $b$-fibration
\begin{equation}
         [M^3_{\rp}; \Phi_{1,T},\{\Phi^L_{1j}\}^{\ell}_{j=0}]\to [M^2_{\rp};\Phi_1]
\label{ts.25b}\end{equation}
as claimed.
Applying Lemma~\ref{bf.1}, this further lifts to a $b$-fibration
$$
        M^3_{\QFB,1}\to [M^2_{\rp};\Phi_1].
$$
Defining recursively
$$
M^3_{\QFB,i}:= [M^3_{\QFB,i-1}; \Phi_{i,T}, \{\Phi^L_{ij}\}_{j=0}^{\ell},\{\Phi^C_{ij}\}_{j=0}^{\ell}, \{\Phi^R_{ij}\}_{j=0}^{\ell}],
$$
we can then show by induction on $i$ using the same argument for each $i$ that  $\pi^{\rp}_{L}$ lifts to a $b$-fibration 
$$
           M^3_{\QFB,i}\to [M^2_{\rp};\Phi_1,\Phi_2,\ldots,\Phi_i],
$$
which gives the result.  

\end{proof}

As in \cite{Mazzeo-MelrosePhi}, the $b$-fibrations $\pi^3_{\QFB,o}$ and $\pi^3_{\Qb,o}$ behave well with respect to the lifted diagonals.  More precisely, for $o\in\{L,C,R\}$, let $\diag_{\QFB,o}$ (respectively $\diag_{\Qb,o})$ be the lift of $\pi_o^{-1}(\diag_M)$ to $M^3_{\QFB}$ (respectively $M^3_{\Qb}$).  These are clearly $p$-submanifolds.  For $o\ne o'$, the intersection $\diag_{\QFB,o}\cap \diag_{\QFB,o'}$ (respectively $\diag_{\Qb,o}\cap\diag_{\Qb,o'}$) is the $p$-submanifold $\diag_{\QFB,T}$ (respectively $\diag_{\Qb,T}$) given by the lift of the triple diagonal
$$
      \diag^3_M:= \{(m,m,m)\in M^3\; | \; m\in M\} \subset M^3
$$  
to $M^3_{\QFB}$ (respectively $M^3_{\Qb}$).  

\begin{lemma}
For $o\ne o'$,  $\pi^{3}_{\QFB,o}$ is transversal to $\diag_{\QFB,o'}$ and induces a diffeomorphism $\diag_{\QFB,o'}\cong M^2_{\QFB}$ sending $\diag_{\QFB,T}\subset \diag_{\QFB,o'}$ onto $\diag_{\QFB}\subset M^2_{\QFB}$.  Furthermore, an analogous statement holds for the $Qb$ triple space.  
\label{ts.26}\end{lemma}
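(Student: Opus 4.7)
The plan is to bootstrap from Lemma~\ref{ts.8b}, which gives the analogous statement at the level of the reverse ordered double and triple spaces, then to track the behavior through the families of blow-ups producing $M^3_{\QFB}$ from $M^3_{\rp}$ (respectively $M^3_{\Qb}$ from $M^3_{\rp}$). By Proposition~\ref{ts.7} it suffices to treat one representative pair, for instance $o = R$ and $o' = L$; the other cases follow by permutation. The base case is then $\pi^{\rp}_R|_{\diag_{\rp,L}}\colon \diag_{\rp,L} \to M^2_{\rp}$, a diffeomorphism sending $\diag^3_{\rp}$ onto $\diag_{\rp}$.

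The inductive step runs over $i$ and the four families $\{\Phi_{i,T}, \{\Phi^L_{ij}\}_j, \{\Phi^C_{ij}\}_j, \{\Phi^R_{ij}\}_j\}$ blown up at stage $i$ in Definition~\ref{ts.11}, paired with the single blow-up of $\Phi_i$ on the double-space side. Assuming inductively that the restricted map is already a diffeomorphism onto the partial blow-up of $M^2_{\rp}$, I would verify that blowing up $\Phi_{i,T}$ followed by the appropriate member of the family $\{\Phi^R_{ij}\}$ corresponds, on the lifted diagonal, to the single blow-up of $\Phi_i$. The $p$-submanifold $\Phi_{i,T}$ meets the lift of $\diag_{\rp,L}$ in a sub-submanifold that is mapped under $\pi^{\rp}_R$ onto the relevant part of $\Phi_i$, by the fiber bundle structure \eqref{ts.31}. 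The decisive input is Lemma~\ref{ts.12}: all pairwise intersections $\Phi^o_{ij} \cap \Phi^{o'}_{ik}$ for $o \neq o'$ are contained in $\Phi_{i,T}$. Hence, after $\Phi_{i,T}$ is blown up first, the lifts of $\Phi^L_{ij}, \Phi^C_{ij}, \Phi^R_{ij}$ separate; the partial lift of $\diag_{\rp,L}$ meets only the $\Phi^R_{ij}$ family transversally, while $\Phi^L_{ij}$ and $\Phi^C_{ij}$ become disjoint from it.

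Transversality of $\pi^3_{\QFB,R}$ to $\diag_{\QFB,L}$ then propagates through the induction because each blow-up is along a $p$-submanifold either contained in the partial diagonal or disjoint from it, with the base transversality supplied by Lemma~\ref{ts.8b}. The image of the triple-diagonal lift is tracked in parallel, yielding $\pi^3_{\QFB,R}(\diag_{\QFB,T}) = \diag_{\QFB}$ at the end. The $\Qb$ case is proved identically, restricting the induction to the non-maximal indices $i \leq k$ of Definition~\ref{ts.22}; Proposition~\ref{ts.7} and Lemma~\ref{ts.12} continue to hold in that restricted setting. The principal obstacle is ensuring that once $\Phi_{i,T}$ has been blown up, the families $\Phi^L_{ij}, \Phi^C_{ij}$ become disjoint from the partial lift of $\diag_{\rp,L}$, since without the containment provided by Lemma~\ref{ts.12} this disjointness would fail and the diffeomorphism property would not persist through the subsequent blow-ups.
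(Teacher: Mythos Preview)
Your approach is essentially the same as the paper's: both start from Lemma~\ref{ts.8b}, reduce by Proposition~\ref{ts.7} to a single pair $(o,o')$, and then track transversality through the blow-ups of Definition~\ref{ts.11}. The paper's proof is extremely terse (it simply says ``we can check at each step that the transversality is preserved''), so your more detailed sketch is a reasonable elaboration of the same argument.

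That said, the specific mechanism you describe is not quite accurate. With $o'=L$, the submanifolds $\Phi^L_{ij}$ do \emph{not} become disjoint from the lift of $\diag_{\rp,L}$ after blowing up $\Phi_{i,T}$; rather, since $\diag_{\rp,L}=(\pi^{\rp}_L)^{-1}(\diag_{\rp})$ and $\Phi^L_{ij}\subset(\pi^{\rp}_L)^{-1}(\Phi_i)$ with $\diag_{\rp}\cap H^{\rp}_{ii}\subset\Phi_i$, one finds $\Phi^L_{ij}\cap\diag_{\rp,L}=\diag_{\rp,L}\cap H^{\rp}_{iij}$, a full boundary hypersurface of $\diag_{\rp,L}$. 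Blowing up a $p$-submanifold that meets $\diag_{\rp,L}$ in one of its own boundary hypersurfaces leaves $\diag_{\rp,L}$ unchanged, which is why these blow-ups are harmless---not because they are disjoint. Similarly, it is the blow-up of $\Phi_{i,T}$ alone (not $\Phi_{i,T}$ together with a $\Phi^R_{ij}$) that corresponds on $\diag_{\rp,L}$ to the blow-up of $\Phi_i$: one has $\Phi_{i,T}\cap\diag_{\rp,L}=\Phi^R_{ii}\cap\diag_{\rp,L}=\Phi^C_{ii}\cap\diag_{\rp,L}\cong\Phi_i$, and after $\Phi_{i,T}$ is blown up the remaining $\Phi^R_{ii},\Phi^C_{ii}$ blow-ups meet the lifted diagonal only in the new front face, while $\Phi^R_{ij},\Phi^C_{ij}$ for $j\ne i$ are genuinely disjoint from it. With these corrections the inductive step goes through as you outline.
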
  
\begin{proof}
By Lemma~\ref{ts.8b}, we know that the corresponding statement for $M^3_{\rp}$ holds.  By Proposition~\ref{ts.7}, we can assume that $o=L$ and $o'=R$.  Performing the blow-ups to construct $M^3_{\QFB}$ out of $M^3_{\rp}$ in the order specified by \eqref{ts.23b} with $i$ increasing, we can check at each step that the transversality is preserved, so that $\pi^{3}_{\QFB,L}$ is transversal to $\diag_{\QFB,R}$ and induces the claimed diffeomorphism
$$
         \diag_{\QFB,R}\cong M^2_{\QFB}.  
$$
Not performing the blow-ups \eqref{ts.23b} for $H_i$ maximal, we obtain  the corresponding result for the $\Qb$ triple space.  
\end{proof}

\section{Composition of $\QFB$ operators and $\Qb$ operators} \label{com.0}

Using the triple space of \S~\ref{tris.0}, we can now obtain composition results for $\QFB$ pseudodifferential operators using standard technics combined with small adjustments due to the fact that our operators are in general only weakly conormal in the sense of Definition~\ref{ext.8}. To this end,  recall first that on $M^2_{\QFB}$, $H_{ij}$ denotes the lift of $H^{\rp}_{ij}$ for $i,j\in \{0,1,\ldots, \ell\}$ while $\ff_i$ corresponds to the boundary hypersurface introduced by the blow-up of $\Phi_i\subset M^2_{\rp}$.  Similarly, on $M^3_{\QFB}$, for $i,j,k\in \{0,1,\ldots,\ell\}$ and with the convention that $H_0:=M$, let us denote by $H_{ijk}$ the lift of $H_i\times H_j\times H_k$ to $M^3_{\QFB}$.   Let us denote by $\ff_{i,T}$ and $\ff^o_{ij}$   the boundary hypersurfaces of $M^3_{\QFB}$ created by the blow-ups of $\Phi_{i,T}$ and $\Phi^o_{ij}$ for $o\in\{L,C,R\}$, $i\in\{1,\ldots,\ell\}$ and $j\in\{0,1,\ldots,\ell\}$.  Using this notation, we can then describe how the boundary hypersurfaces behave with respect to the three maps of Proposition~\ref{ts.23}.  

For the middle map $\pi^3_{\QFB,C}$, one can check that $H_{0i0}$ for $i\in\{1,\ldots,\ell\}$ are the only boundary hypersurfaces sent surjectively onto $M^{2}_{\QFB}$, while for the other boundary hypersurfaces, they are sent to the boundary as follows for $i,j\in\{1,\ldots,\ell\}$,
\begin{equation}
\begin{aligned}
(\pi^{3}_{\QFB,C})^{-1}(H_{i0})&= \ff^L_{i0}\cup \bigcup_{k=0}^{\ell}H_{ik0}, \quad 
(\pi^{3}_{\QFB,C})^{-1}(H_{0i})= \ff^R_{i0} \cup \bigcup_{k=0}^{\ell} H_{0ki}, \\ 
(\pi^{3}_{\QFB,C})^{-1}(H_{ij})&= \ff^L_{ij}\cup \ff^R_{ji}\cup \bigcup_{k=0}^{\ell}H_{ikj}, \quad     
(\pi^{3}_{\QFB,C})^{-1}(\ff_i)= \ff_{i,T} \cup \bigcup_{k=0}^{\ell} \ff^C_{ik}.
\end{aligned}
\label{co.1}\end{equation} 
Similarly, for the map $\pi^{3}_{\QFB,L}: M^3_{\QFB}\to M^2_{\QFB}$, the only boundary hypersurfaces sent onto $M^{2}_{\QFB}$ are given by  $H_{00i}$ for $i\in\{1,\ldots,\ell\}$.  For the other boundary hypersurfaces, they are sent to the boundary of $M^2_{\QFB}$ as follows for $i,j\in\{1,\ldots,\ell\}$,
\begin{equation}
\begin{aligned}
(\pi^{3}_{\QFB,L})^{-1}(H_{i0})&= \ff^C_{i0}  \cup \bigcup_{k=0}^{\ell} H_{i0k}, \quad 
(\pi^{3}_{\QFB,L})^{-1}(H_{0i})=  \ff^R_{i0}  \cup  \bigcup_{k=0}^{\ell} H_{0ik}, \\
(\pi^{3}_{\QFB,L})^{-1}(H_{ij})&= \ff^C_{ij} \cup \ff^R_{ji} \cup \bigcup_{k=0}^{\ell} H_{ijk}, \quad
(\pi^{3}_{\QFB,L})^{-1}(\ff_i)= \ff_{i,T}\cup \bigcup_{k=0}^{\ell} \ff^{L}_{ik}. 
\end{aligned}
\label{co.2}\end{equation} 
For the $b$-fibration  $\pi^{3}_{\QFB,R}$, the only boundary hypersurfaces sent onto $M^2_{\QFB}$ are given by $H_{i00}$ for each $i$.  For the other boundary hypersurfaces, they are sent to the boundary of $M^2_{\QFB}$ as follows for $i,j\in\{1,\ldots,\ell\}$,
\begin{equation}
\begin{aligned}
(\pi^{3}_{\QFB,R})^{-1}(H_{i0})&=\ff^L_{i0}\cup \bigcup_{k=0}^{\ell} H_{ki0}, \quad
(\pi^{3}_{\QFB,R})^{-1}(H_{0i})= \ff^C_{i0}\cup \bigcup_{k=0}^{\ell}   H_{k0i}, \\
(\pi^{3}_{\QFB,R})^{-1}(H_{ij})&=\ff^L_{ij}\cup \ff^C_{ji}\cup \bigcup_{k=0}^{\ell} H_{kij}, \quad
(\pi^{3}_{\QFB,R})^{-1}(\ff_i)= \ff_{i,T}\cup \bigcup_{k=0}^{\ell} \ff^R_{ik}. 
\end{aligned}
\label{co.3}\end{equation} 

Now, using Lemma~\ref{pdo.12a}, one computes that 
\begin{equation}
   (\beta^{3}_{\QFB})^*({}^{b}\Omega(M^3))=  \prod_{i=1}^{\ell}  \left( \rho^2_{\ff_{i,T}} \prod_{j=0}^{\ell}(\rho_{\ff^L_{ij}}\rho_{\ff^{C}_{ij}}\rho_{\ff^R_{ij}} ) \right)^{h_i}  {}^{b}\Omega(M^3_{\QFB}),
\label{co.4}\end{equation}
where $\beta^{3}_{\QFB}: M^3_{\QFB}\to M^3$ is the natural blow-down map, $h_i= 1+\dim S_i$ and $\rho_{H}$ denotes a boundary defining function for the boundary hypersurface $H$ of $M^3_{\QFB}$.   On the other hand, on the double space $M^2_{\QFB}$, recall that 
$$
   \pi_R^* x_i= \rho_{\ff_i}\prod_{j=0}^{\ell} \rho_{ji},
$$ 
where $\rho_{ji}$ denotes a boundary defining function for $H_{ji}$ in $M^2_{\QFB}$.  
Pulling it back to the $\QFB$ triple space via $\pi^3_{\QFB,L}$ and $\pi^{3}_{\QFB,R}$, we obtain from \eqref{co.2} and \eqref{co.3} that 
\begin{equation}
\begin{aligned}
(\pi^{3}_{\QFB,L})^*\pi_R^*x_i &= a_i \rho_{\ff_{i,T}} \prod_{j=0}^{\ell} \left( \rho_{\ff^L_{ij}} \rho_{\ff^R_{ij} } \prod_{k=0}^{\ell}\left(\rho_{jik} \right)  \right)\left( \prod_{j=1}^{\ell}\rho_{\ff^C_{ji}} \right), \\
(\pi^{3}_{\QFB,R})^*\pi_R^*x_i&= b_i \rho_{\ff_{i,T}} \prod_{j=0}^{\ell} \left( \rho_{\ff^R_{ij}} \rho_{\ff^C_{ij} } \prod_{k=0}^{\ell}\left(\rho_{kji} \right)  \right)\left( \prod_{j=1}^{\ell}\rho_{\ff^L_{ji}} \right),
\end{aligned}
\label{co.5}\end{equation}
where $a_i$ and $b_i$ are smooth positive functions and  $\rho_{ijk}$ denotes a boundary defining function for $H_{ijk}$.  Hence, in terms of \eqref{pdo.12}, we see that
\begin{equation}
(\pi^{3}_{\QFB,L})^*(\pi_L^*{}^{b}\Omega(M)\cdot \pi_R^* {}^{\QFB}\Omega(M))   \cdot(\pi^{3}_{\QFB,R})^* (\pi_R^* {}^{\QFB}\Omega(M))= (\rho^{\mathfrak{a}})\ {}^{b}\Omega(M^3_{\QFB})
\label{co.7}\end{equation}
with $\mathfrak{a}$ the multiweight such that 
\begin{equation}
\rho^{\mathfrak{a}}= \prod_{i=1}^{\ell} \left(  \left( \prod_{j=1}^{\ell} \rho_{\ff^C_{ji}}\rho_{\ff^L_{ji}} \right) \left( \prod_{j=0}^\ell \rho_{\ff^R_{ij}} \prod_{k=0}^{\ell} \rho_{jik}\rho_{kji}\right) \right)^{-h_i}.
\label{co.8}\end{equation}

We are now in a position to state and prove the following composition result.  

\begin{theorem}
Let $E,F$ and $G$ be vector bundles over $M$.  Suppose that $\cE$ and $\cF$ are index families and $\mathfrak{s}$ and $\mathfrak{t}$ are multiweights  such that for each $i$, 
\begin{equation}
   \min\{\mathfrak{s}_{0i},\min\Re\cE_{0i}\}+\min\{\mathfrak{t}_{i0},\min\Re \cF_{i0}\}> h_i=1+\dim S_i.
\label{co.9c}\end{equation}
Then given $A\in\Psi^{m,\cE/\mathfrak{s}}_{\QFB}(M;F;G)$ and $B\in \Psi^{m',\cF/\mathfrak{t}}_{\QFB}(M;E,F)$, their composition is well-defined with 
$$
  A\circ B\in \Psi^{m+m',\cK/\mathfrak{k}}_{\QFB}(M;E,G),
$$
where, using the convention that $h_0=0$, the  index family $\cK$ is for $i,j\in\{1,\ldots,\ell\}$ given by 
\begin{equation}
\begin{aligned}
\cK_{i0}&= \cE_{i0}\overline{\cup}(\cE_{\ff_{i}}+\cF_{i0}) \overline{\cup} \overline{\bigcup_{k\ge1}} (\cE_{ik}+\cF_{k0}-h_k), \\
\cK_{0i}&= \cF_{0i}\overline{\cup} (\cE_{0i}+ \cF_{\ff_i})\overline{\cup} \overline{\bigcup_{k\ge 1}} (\cE_{0k}+\cF_{ki}-h_k), \\
\cK_{ij}&= (\cE_{\ff_i}+\cF_{ij})\overline{\cup} (\cE_{ij}+ \cF_{\ff_j})\overline{\cup} \overline{\bigcup_{k\ge 0}} (\cE_{ik}+\cF_{kj}-h_k), \\
\cK_{\ff_i}&= (\cE_{\ff_i}+\cF_{\ff_i})\overline{\cup} \overline{\bigcup_{k\ge 0}} (\cE_{ik}+\cF_{ki}-h_k), 
\end{aligned}
\label{co.9a}\end{equation}
and the multiweight $\mathfrak{k}$ is given by 
\begin{equation}
\begin{aligned}
\mathfrak{k}_{i0}&= \min\{\mathfrak{s}_{i0}, (\mathfrak{s}_{\ff_i}\dot{+}\mathfrak{t}_{i0}), \min_{k\ge 1}\{(\mathfrak{s}_{ik}\dot{+}\mathfrak{t}_{k0}-h_k)  \} \},
 \\
\mathfrak{k}_{0i}&= \min\{ \mathfrak{t}_{0i}, (\mathfrak{s}_{0i}\dot{+}\mathfrak{t}_{\ff_i}),  \min_{k\ge 1}\{
  (\mathfrak{s}_{0k}\dot{+}\mathfrak{t}_{ki}-h_k)\}\}, \\
\mathfrak{k}_{ij}&= \min\{(\mathfrak{s}_{\ff_i}\dot{+}\mathfrak{t}_{ij}), (\mathfrak{s}_{ij}\dot{+}\mathfrak{t}_{\ff_j}), \min_{k\ge 0}\{\mathfrak{s}_{ik}\dot{+}\mathfrak{t}_{kj}-h_k\}\},  \\
\mathfrak{k}_{\ff_i}&= \min\{(\mathfrak{s}_{\ff_i}\dot{+}\mathfrak{t}_{\ff_i}),\min_{k\ge 0} \{ (\mathfrak{s}_{ik}\dot{+}\mathfrak{t}_{ki}-h_k) \} \}.
\end{aligned}
\label{co.9b}\end{equation}
If instead $\cE=\cF=\emptyset$ except possibly at $\ff_i$ for $H_i$ a boundary hypersurface, where it could be $\bbN_0$ instead of $\emptyset$, then with  $A\in \Psi^{m,\cE/\mathfrak{s}}_{\QFB,\cn}(M;F,G)$ and $B\in \Psi^{m',\cF/\mathfrak{t}}_{\QFB,\cn}(M;E,F)$ only weakly conormal $\QFB$ pseudodifferential operators, we have that
$$
       A\circ B\in \Psi^{m+m',\cK/\mathfrak{k}}_{\QFB,\cn}(M;E,G)
$$
with indicial family $\cK$ and multiweight $\mathfrak{k}$ still given by\eqref{co.9a} and \eqref{co.9b}.
\label{co.9}\end{theorem}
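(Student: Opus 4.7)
The approach is the standard microlocal one based on the triple space. Given $\kappa_A \in \Psi^{m,\cE/\mathfrak{s}}_{\QFB}(M;F,G)$ and $\kappa_B \in \Psi^{m',\cF/\mathfrak{t}}_{\QFB}(M;E,F)$, one realises the composition as
\[
\kappa_{A\circ B} = (\pi^3_{\QFB,C})_* \bigl( (\pi^3_{\QFB,L})^* \kappa_A \cdot (\pi^3_{\QFB,R})^* \kappa_B \bigr),
\]
using the $b$-fibrations $\pi^3_{\QFB,L}, \pi^3_{\QFB,C}, \pi^3_{\QFB,R}: M^3_{\QFB} \to M^2_{\QFB}$ of Proposition~\ref{ts.23}. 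Since $\pi^3_{\QFB,L}$ and $\pi^3_{\QFB,R}$ are $b$-submersions, the pulled-back kernels are conormal along the lifted diagonals $\diag_{\QFB,L}$ and $\diag_{\QFB,R}$; by Lemma~\ref{ts.26} these are mutually transversal with intersection $\diag_{\QFB,T}$, so their product is well-defined and conormal along $\diag_{\QFB,T}$ of total order $m+m'$. Since $\pi^3_{\QFB,C}$ restricts to a diffeomorphism from $\diag_{\QFB,T}$ onto $\diag_{\QFB}$, the pushforward is conormal along $\diag_{\QFB}$ of order $m+m'$, recovering the symbol order in the conclusion.

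Density accounting is organised using the lift identity \eqref{co.7}--\eqref{co.8}: the integrand is rewritten as a polyhomogeneous factor, whose indices and weights come from $\cE$, $\cF$, $\mathfrak{s}$, $\mathfrak{t}$ corrected by $\mathfrak{a}$, times a $b$ density on $M^3_{\QFB}$. The pushforward theorem of \cite[\S8]{Melrose1992}, applicable since $\pi^3_{\QFB,C}$ is a $b$-fibration, then yields a polyhomogeneous conormal distribution on $M^2_{\QFB}$ whose index set and weight on each boundary hypersurface $H$ are, respectively, the extended union and the minimum, over preimages of $H$ in $M^3_{\QFB}$, of the contributions coming from $A$ and $B$. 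The integrability hypothesis \eqref{co.9c} is precisely the condition ensuring convergence of the pushforward integral at the faces $H_{0i0}$, which are the only boundary hypersurfaces of $M^3_{\QFB}$ sent surjectively onto $M^2_{\QFB}$ by $\pi^3_{\QFB,C}$.

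The remaining task, which is where the main combinatorial work lies, is to assemble the explicit formulas \eqref{co.9a}--\eqref{co.9b}. For each of the four types of boundary hypersurface $H_{i0}, H_{0i}, H_{ij}, \ff_i$ of $M^2_{\QFB}$, one reads off from \eqref{co.1}--\eqref{co.3} the complete list of preimages in $M^3_{\QFB}$ together with their images under $\pi^3_{\QFB,L}$ and $\pi^3_{\QFB,R}$; the contribution from each such preimage to $\cK$ at $H$ is the sum of the $\cE$-index at the image under $\pi^3_{\QFB,L}$ with the $\cF$-index at the image under $\pi^3_{\QFB,R}$, shifted by $-h_k$ whenever a factor of the form $\rho^{-h_k}$ appears in \eqref{co.8} at that preimage, and similarly with $\dot{+}$ in place of $+$ for $\mathfrak{k}$. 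The principal point to verify is that the triple-diagonal face $\ff_{i,T}$, which appears in the preimage of both $\ff_i$ and $H_{ij}$ under $\pi^3_{\QFB,C}$, contributes the pure $(\cE_{\ff_i}+\cF_{\ff_i})$ and $(\cE_{\ff_i}+\cF_{ij})$ etc.\ terms cleanly, using the diffeomorphism $\diag_{\QFB,T}\cong\diag_{\QFB}$ together with the behaviour of $\pi^3_{\QFB,L}$ and $\pi^3_{\QFB,R}$ near $\ff_{i,T}$; the other, more standard, contributions come from the faces $H_{ikj}$ for $k\geq 0$ together with the asymmetric faces $\ff^L_{ij}$ and $\ff^R_{ji}$, and combine to give the extended unions and minima appearing in \eqref{co.9a}--\eqref{co.9b}.
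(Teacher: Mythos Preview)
Your approach is exactly the paper's: pull back to the triple space via the $b$-fibrations of Proposition~\ref{ts.23}, use the density identity \eqref{co.7}--\eqref{co.8}, apply Melrose's pushforward theorem for the index families and multiweights, and invoke Lemma~\ref{ts.26} for the conormal order along the diagonal. The paper's own proof is terser but does precisely this.

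There is one concrete misattribution in your last paragraph. You assert that $\ff_{i,T}$ lies in $(\pi^3_{\QFB,C})^{-1}(H_{ij})$ and that it is responsible for the $(\cE_{\ff_i}+\cF_{ij})$ term in $\cK_{ij}$. This is incorrect: from \eqref{co.1}, $\ff_{i,T}$ lies only in $(\pi^3_{\QFB,C})^{-1}(\ff_i)$, contributing the single term $(\cE_{\ff_i}+\cF_{\ff_i})$ to $\cK_{\ff_i}$. The term $(\cE_{\ff_i}+\cF_{ij})$ in $\cK_{ij}$ comes instead from the face $\ff^L_{ij}$, which by \eqref{co.2} and \eqref{co.3} maps to $\ff_i$ under $\pi^3_{\QFB,L}$ and to $H_{ij}$ under $\pi^3_{\QFB,R}$; likewise $(\cE_{ij}+\cF_{\ff_j})$ comes from $\ff^R_{ji}$. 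This does not affect the validity of the method, but you should correct the bookkeeping.
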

\begin{proof}
Suppose first that $A\in\Psi^{m,\cE/\mathfrak{s}}_{\QFB}(M;F;G)$ and $B\in \Psi^{m',\cF/\mathfrak{t}}_{\QFB}(M;E,F)$.  For operators of order $-\infty$, it suffices to apply the pushforward theorem of \cite[Theorem~5]{Melrose1992}.  When the operators are of order $m$ and $m'$, we need to combine the pushforward theorem with Lemma~\ref{ts.26} to see that the composed operators have the claimed order, \cf \cite[Proposition~B7.20]{EMM}.  

If instead $A\in \Psi^{m,\cE/\mathfrak{s}}_{\QFB,\cn}(M;F,G)$ and $B\in \Psi^{m',\cF/\mathfrak{t}}_{\QFB,\cn}(M;E,F)$ are only weakly conormal $\QFB$ pseudodifferential operators with the above restrictions on $\cE$ and $\cF$, then we cannot use the pushforward theorem of \cite[Theorem~5]{Melrose1992} since $b$-vector fields do not naturally act on the Schwartz kernel of such operators.  Still, we can adapt the proof of \cite[Theorem~4]{Melrose1992} to this weakly conormal setting.  Indeed, by Remark~\ref{st.2}, Lemma~\ref{ds.2} and the fact that the lift from the left or from the right of $\QFB$ vector fields gives $b$-vector fields on $M^2_{\QFB}$, notice that the stability of the composition under the action of $\QFB$ vector fields from the left or from the right is automatic.  Thus, if $m=m'=-\infty$, it suffices as in the proof of \cite[Theorem~4]{Melrose1992} to apply Fubini's theorem locally to obtain the result.  The expansion at $\ff_i$ can be recovered inductively using the fact that $\ff_{i,T}$ is a triple space for $\ff_i$.  Notice in particular that with our assumptions on the index families, the setting is simpler in that only $\ff_{i,T}$ in the triple space contributes to the expansion at $\ff_i$.   Otherwise, from the definition of weakly conormal $\QFB$ pseudodifferential operators and the conormal case, we can assume at least that either $m=-\infty$, or else $m'=-\infty$, in which case by Lemma~\ref{ts.26} the conormal singularity along the diagonal is simply integrated out so that the  result follows as before.  
\end{proof}

\begin{corollary}
If $E$, $F$ and $G$ are vector bundles over $M$, then
$$
    \Psi^{m}_{\QFB}(M;F,G)\circ \Psi^{m'}_{\QFB}(M;E,F) \subset \Psi^{m+m'}_{\QFB}(M;E,G).
$$
\label{co.10}\end{corollary}
\begin{proof}
It suffices to apply Theorem~\ref{co.9} with $\mathfrak{s}=\mathfrak{t}=\infty$ and $\cE=\cF$ the index family which is $\bbN_0$ at the front face $\ff_i$ for each $i$ and the empty set at all other boundary hypersurfaces.  
\end{proof}

We can apply the same strategy to analyze the composition of $\Qb$ pseudodifferential operators.  The same notation can be used to describe the boundary faces of $M^2_{\Qb}$ and $M^3_{\Qb}$, with the difference though that  when $H_i$ is maximal, $M^2_{\Qb}$ does not have the boundary hypersurface $\ff_i$ and $M^3_{\Qb}$ does not have the boundary hypersurfaces $\ff_{i,T}$ and  $\ff^o_{ij}$ for $o\in\{L,C,R\}$ and $j\in\{0,\ldots,\ell\}$.  For $H_i$ maximal, we should also take $h_i:=0$ instead of $1+\dim S_i$.  With this in mind, we have the following composition result.

\begin{theorem}
Let $E,F$ and $G$ be vector bundles over $M$.  Suppose that on $M^2_{\Qb}$, $\cE$ and $\cF$ are index families and $\mathfrak{s}$ and $\mathfrak{t}$ are multiweights  such that for each $i$, 
\begin{equation}
  \min\{ \mathfrak{s}_{0i}, \min\Re\cE_{0i}\}+\min\{\mathfrak{t}_{i0},\min\Re \cF_{i0}\}> h_i   \quad \mbox{with} \quad h_i:=\left\{\begin{array}{ll}  0, & H_i\; \mbox{ maximal}, \\
     1+\dim S_i, & \mbox{otherwise}. \end{array} \right.
\label{co.13c}\end{equation}
Then given $A\in\Psi^{m,\cE/\mathfrak{s}}_{\Qb}(M;F;G)$ and $B\in \Psi^{m',\cF/\mathfrak{t}}_{\Qb}(M;E,F)$, their composition is well-defined with 
$$
  A\circ B\in \Psi^{m+m',\cK/\mathfrak{k}}_{\Qb}(M;E,G),
$$
where the index family $\cK$, with the convention that $\cE_{\ff_i}=\cF_{\ff_i}=\emptyset$ when $H_i$ is a  maximal hypersurface, is  given by \begin{equation}
\begin{aligned}
\cK_{i0}&= \cE_{i0}\overline{\cup}(\cE_{\ff_{i}}+\cF_{i0}) \overline{\cup} \overline{\bigcup_{k\ge1}} (\cE_{ik}+\cF_{k0}-h_k), \\
\cK_{0i}&= \cF_{0i}\overline{\cup} (\cE_{0i}+ \cF_{\ff_i})\overline{\cup} \overline{\bigcup_{k\ge 1}} (\cE_{0k}+\cF_{ki}-h_k), \\
\cK_{ij}&= (\cE_{\ff_i}+\cF_{ij})\overline{\cup} (\cE_{ij}+ \cF_{\ff_j})\overline{\cup} \overline{\bigcup_{k\ge 0}} (\cE_{ik}+\cF_{kj}-h_k), \\
\cK_{\ff_i}&= (\cE_{\ff_i}+\cF_{\ff_i})\overline{\cup} \overline{\bigcup_{k\ge 0}} (\cE_{ik}+\cF_{ki}-h_k) \quad (\mbox{for $H_i$ not maximal}),
\end{aligned}
\label{co.13a}\end{equation}
and where the multiweight $\mathfrak{k}$, with the convention that $\mathfrak{s}_{\ff_i}=\mathfrak{t}_{\ff_i}=\infty$ when $H_i$ is maximal, is given by 
\begin{equation}
\begin{aligned}
\mathfrak{k}_{i0}&= \min\{\mathfrak{s}_{i0}, (\mathfrak{s}_{\ff_i}\dot{+}\mathfrak{t}_{i0}), \min_{k\ge 1}\{(\mathfrak{s}_{ik}\dot{+}\mathfrak{t}_{k0}-h_k)  \} \},
 \\
\mathfrak{k}_{0i}&= \min\{ \mathfrak{t}_{0i}, (\mathfrak{s}_{0i}\dot{+}\mathfrak{t}_{\ff_i}),  \min_{k\ge 1}\{
  (\mathfrak{s}_{0k}\dot{+}\mathfrak{t}_{ki}-h_k)\}\}, \\
\mathfrak{k}_{ij}&= \min\{(\mathfrak{s}_{\ff_i}\dot{+}\mathfrak{t}_{ij}), (\mathfrak{s}_{ij}\dot{+}\mathfrak{t}_{\ff_j}), \min_{k\ge 0}\{\mathfrak{s}_{ik}\dot{+}\mathfrak{t}_{kj}-h_k\}\},  \\
\mathfrak{k}_{\ff_i}&= \min\{(\mathfrak{s}_{\ff_i}\dot{+}\mathfrak{t}_{\ff_i}),\min_{k\ge 0} \{ (\mathfrak{s}_{ik}\dot{+}\mathfrak{t}_{ki}-h_k) \} \}  \quad (\mbox{for $H_i$ not maximal}).
\end{aligned}
\label{co.13b}\end{equation}
If instead $\cE=\cF=\emptyset$ except possibly at boundary hypersurfaces intersecting the lifted diagonal, where they could be given by $\bbN_0$ instead of $\emptyset$, then  for $A\in\Psi^{m,\cE/\mathfrak{s}}_{\Qb,\cn}(M;F;G)$ and $B\in \Psi^{m',\cF/\mathfrak{t}}_{\Qb,\cn}(M;E,F)$ only weakly conormal $\Qb$ pseudodifferential operators, we have that
$$
  A\circ B\in \Psi^{m+m',\cK/\mathfrak{k}}_{\Qb,\cn}(M;E,G)
$$
with $\cK$ and $\mathfrak{k}$ still given by \eqref{co.13a} and \eqref{co.13b}. 
\label{co.13}\end{theorem}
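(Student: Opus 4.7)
The plan is to mirror the proof of Theorem~\ref{co.9}, working on the $\Qb$ triple space $M^3_{\Qb}$ in place of $M^3_{\QFB}$ and replacing every $\QFB$ density factor by the corresponding $\Qb$ density factor. Specifically, for $A\in\Psi^{m,\cE/\mathfrak{s}}_{\Qb}(M;F,G)$ and $B\in\Psi^{m',\cF/\mathfrak{t}}_{\Qb}(M;E,F)$, I would write
\[
   A\circ B = (\pi^3_{\Qb,C})_{*}\bigl( (\pi^3_{\Qb,L})^{*}A \cdot (\pi^3_{\Qb,R})^{*}B \bigr),
\]
using the $b$-fibrations $\pi^3_{\Qb,o}:M^3_{\Qb}\to M^2_{\Qb}$ of Proposition~\ref{ts.23}. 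By Lemma~\ref{ts.26}, $\pi^3_{\Qb,L}$ is transverse to $\diag_{\Qb,R}$ (and vice versa), so the two pulled-back conormal distributions intersect transversally along $\diag_{\Qb,T}$ and their product has conormal order $m+m'$ on $\diag_{\Qb,C}$; since $\pi^3_{\Qb,C}$ also restricts to the diffeomorphism $\diag_{\Qb,C}\cong M^2_{\Qb}$ sending $\diag_{\Qb,T}$ to $\diag_{\Qb}$, the pushforward will carry this conormal singularity onto $\diag_{\Qb}$ with the expected order, exactly as in \cite[Prop.~B7.20]{EMM}.

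The remaining content is a bookkeeping computation of index sets and weights via the pushforward theorem \cite[Thm.~5]{Melrose1992} applied to the $b$-fibration $\pi^3_{\Qb,C}$. First I would enumerate how each boundary hypersurface of $M^3_{\Qb}$ maps under the three $b$-fibrations, following the pattern of \eqref{co.1}--\eqref{co.3}, but with the understanding that $\ff^{\Qb}_{i,T}$ and the $\ff^{\Qb,o}_{ij}$ simply are absent from $M^3_{\Qb}$ whenever $H_i$ is maximal, and likewise $\ff^{\Qb}_i$ is absent from $M^2_{\Qb}$. Second, using Lemma~\ref{pdo.12a} together with \eqref{E:Qb_density_relation}, I would compute the analog of \eqref{co.7}--\eqref{co.8}: the ratio
\[
 \frac{(\pi^3_{\Qb,L})^{*}(\pi_L^{*}\,{}^b\Omega(M)\cdot\pi_R^{*}\,{}^{\Qb}\Omega(M)) \cdot (\pi^3_{\Qb,R})^{*}(\pi_R^{*}\,{}^{\Qb}\Omega(M))}{{}^b\Omega(M^3_{\Qb})}
\]
is a product of boundary defining functions raised to powers $-h_i$ (with $h_i = 0$ at maximal faces, $h_i=1+\dim S_i$ otherwise), and this is precisely the source of the $-h_k$ terms appearing in \eqref{co.13a}--\eqref{co.13b}. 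Third, the hypothesis \eqref{co.13c} ensures integrability under $(\pi^3_{\Qb,C})_{*}$ at the faces $H_{0i0}$ where the left and right factors collide, just as \eqref{co.9c} does in the $\QFB$ case.

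Applying the pushforward theorem to the partially polyhomogeneous distribution $(\pi^3_{\Qb,L})^{*}A\cdot(\pi^3_{\Qb,R})^{*}B$ (viewed as a conormal distribution whose singularity is integrated out by the transversality above), and then reading off the image index sets via $(\pi^3_{\Qb,C})_{\#}$, I expect to recover exactly \eqref{co.13a}--\eqref{co.13b}: the three terms in $\cK_{ij}$ correspond to contributions from $\ff^{\Qb,L}_{ij}$, $\ff^{\Qb,C}_{ji}$ (i.e.\ the front-face contributions), $\ff^{\Qb,R}_{ji}$ and from the faces $H_{ikj}$ (the convex-combination/extended-union terms), the $-h_k$ shift coming from the density computation, and the convention $\cE_{\ff_i}=\emptyset$, $\mathfrak{s}_{\ff_i}=\infty$ at maximal $H_i$ simply reflecting the absence of those faces.

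The main obstacle I anticipate is purely combinatorial rather than analytic: one must verify case by case that for maximal hypersurfaces the various missing front faces drop out of the formulas consistently, both in the density factor and in the index-set unions, and that the hypotheses on $\cE,\cF,\mathfrak{s},\mathfrak{t}$ at maximal faces are weak enough (because $h_i=0$ there) that the pushforward theorem applies without additional assumptions. Once one checks that the statement of \cite[Thm.~5]{Melrose1992} goes through under these modified conventions --- which it does because the b-fibration property and the transversality of Lemma~\ref{ts.26} hold uniformly in the $\Qb$ setting --- the conclusion follows in the same manner as Theorem~\ref{co.9}.
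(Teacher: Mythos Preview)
Your proposal is correct and follows essentially the same approach as the paper's proof, which simply states that the argument of Theorem~\ref{co.9} (pushforward theorem plus Lemma~\ref{ts.26}) carries over with three modifications: the preimages \eqref{co.1}--\eqref{co.3} must be adjusted at maximal $H_i$ since the faces $\ff_i$, $\ff_{i,T}$, $\ff^o_{ij}$ are absent, the blow-up density formula \eqref{co.4} restricts to nonmaximal $i$, and the $\Qb$-density analog of \eqref{co.7}--\eqref{co.8} likewise involves only nonmaximal $i$. You have identified exactly these points, so your outline matches the paper's argument.
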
  
\begin{proof}
As for Theorem~\ref{co.9}, this follows from the pushforward theorem and Lemma~\ref{ts.26}, but with the following three differences.  Firstly, for $\Qb$ operators, the preimages \eqref{co.1}, \eqref{co.2} and \eqref{co.3} need to be changed when $H_i$ is maximal to reflect the fact that the boundary hypersurfaces $\ff_i$ on $M^2_{\Qb}$  and $\ff_{i,T}, \ff^o_{ij}$ on $M^3_{\Qb}$ do not exist.  Secondly, the analog of \eqref{co.4} only involves a product over $i$ for $H_i$ not maximal.  Finally, the operators are defined in terms of $\Qb$ densities, and for the analog of \eqref{co.7} for $\Qb$ densities, the product \eqref{co.8} must be replaced by a product on $i$ for $H_i$ not maximal.  
\end{proof}

For the small $\Qb$ calculus, this yields the following composition result.
\begin{corollary}
If $E,F$ and $G$ are vector bundles on $M$,then 
$$
    \Psi^{m}_{\Qb}(M;F,G)\circ \Psi^{m'}_{\Qb}(M;E,F)\subset \Psi^{m+m'}_{\Qb}(M;E,G).
$$
\label{co.14}\end{corollary}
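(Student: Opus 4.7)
The plan is to deduce Corollary~\ref{co.14} from Theorem~\ref{co.13} exactly as Corollary~\ref{co.10} was deduced from Theorem~\ref{co.9}: choose index family and multiweight data so that the enlarged calculus $\Psi^{m,\cE/\mathfrak{s}}_{\Qb}$ coincides with the small calculus $\Psi^m_{\Qb}$. Concretely, I would take $\mathfrak{s}=\mathfrak{t}=\infty$ at every boundary hypersurface of $M^2_{\Qb}$, and let $\cE=\cF$ be the index family equal to $\bbN_0$ at each front face $\ff^{\Qb}_i$ associated to a non-maximal $H_i$, equal to $\bbN_0$ at each diagonal-meeting face $H^{\Qb}_{ii}$ associated to a maximal $H_i$, and equal to $\emptyset$ at all other boundary hypersurfaces of $M^2_{\Qb}$. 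The inclusion $\Psi^m_{\Qb}(M;E,F)\subset \Psi^{m,\cE/\mathfrak{s}}_{\Qb}(M;E,F)$ is then immediate from \eqref{pdo.6}, since the kernels in the small calculus are by definition conormal at $\diag_{\Qb}$, smooth up to the faces of $\ff_{\Qb}$, and rapidly vanishing at all remaining faces.

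With these choices the hypothesis \eqref{co.13c} is satisfied vacuously, since $\mathfrak{s}_{0i}=\mathfrak{t}_{i0}=\infty$ for every $i$, so Theorem~\ref{co.13} produces $A\circ B\in \Psi^{m+m',\cK/\mathfrak{k}}_{\Qb}(M;E,G)$. It then remains to verify that $(\cK,\mathfrak{k})$ reduces to the data of the small calculus. By inspection of \eqref{co.13b}, every entry of $\mathfrak{k}$ is a minimum whose terms each contain some factor of $\infty$ coming from $\mathfrak{s}$ or $\mathfrak{t}$, so $\mathfrak{k}\equiv \infty$. For $\cK$, each term in \eqref{co.13a} is of the form $\cE_\bullet+\cF_\bullet$ with a possible shift by some $h_k$, and since $\cE$ and $\cF$ are supported on disjoint faces indexed by distinct values of $i$, every cross-term whose $k$ does not match the face under consideration contains an empty factor and therefore vanishes. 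The only surviving entries of $\cK$ are then $\bbN_0+\bbN_0=\bbN_0$ at $\ff^{\Qb}_i$ for non-maximal $H_i$ and at $H^{\Qb}_{ii}$ for maximal $H_i$, exactly matching the input data.

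No step in this plan is genuinely hard; the content is bookkeeping around the conventions distinguishing maximal from non-maximal boundary hypersurfaces. The only item that warrants a second look is the shifted diagonal cross-term $\cE_{ii}+\cF_{ii}-h_i$ contributing to $\cK_{\ff_i}$ for non-maximal $H_i$ and to $\cK_{ii}$ for maximal $H_i$. For non-maximal $H_i$ one has $\cE_{ii}=\cF_{ii}=\emptyset$, because the lifted diagonal $\diag_{\Qb}$ does not meet $H^{\Qb}_{ii}$ in that case, so the cross-term vanishes. For maximal $H_i$ the shift $h_i$ equals $0$ by the convention in \eqref{co.13c}, so no negative-index contribution appears and the cross-term still evaluates to $\bbN_0$. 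These are precisely the two cases handled by the conventions built into Theorem~\ref{co.13}, which is why no further argument is needed.
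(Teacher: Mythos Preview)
Your proposal is correct and follows exactly the approach implicit in the paper: apply Theorem~\ref{co.13} with $\mathfrak{s}=\mathfrak{t}=\infty$ and the index family supported on $\ff_{\Qb}$, mirroring the proof of Corollary~\ref{co.10}. The paper states Corollary~\ref{co.14} without proof, and your bookkeeping (in particular the check that the shifted cross-term $\cE_{ii}+\cF_{ii}-h_i$ causes no trouble because $h_i=0$ for maximal $H_i$ and $\cE_{ii}=\emptyset$ for non-maximal $H_i$) is exactly what is needed to fill in the details.
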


For $\cV=\QFB$ or $\Qb$, notice that $\cV$ $\mathfrak{r}$ residual operators have the semi-ideal property with respect to bounded operators.  This can be stated as follows.
\begin{proposition}
Suppose that $\cV=\QFB$ or $\Qb$.  For $q\in\{1,2\}$, Let $\mathfrak{r}_q$ be a multiweight on $M^2$ such that $\beta_{\cV}^{\#}\mathfrak{r}_q$ is $\cV$ positive.  If $K_q\in \Psi^{-\infty,\mathfrak{r}_q}_{\cV,\res}(M;E_q,F_q)$ for $q\in\{1,2\}$   and $P: L^2_b(M;F_1)\to L^2_b(M;E_2)$ is a bounded operator, then 
$$
     K_2\circ P\circ K_1\in \Psi^{-\infty,\mathfrak{r}}_{\cV,\res}(M;E_1,F_2),
$$
where $\mathfrak{r}$ is the multiweight given by
$$
    \mathfrak{r}(H_i\times M)=\mathfrak{r}_2(H_i\times M) \quad \mbox{and} \quad \mathfrak{r}(M\times H_i)=\mathfrak{r}_1(M\times H_i)
$$
for $H_i$ a boundary hypersurface of $M$.
\label{com.8}\end{proposition}
\begin{proof}
First, using a partition of unity, we can restrict the discussion to coordinate charts where $E_1$, $E_2$, $F_1$, and $F_2$ are trivial.  Hence we can assume that all these vector bundles are in fact trivial line bundles.  In this case, the Schwartz kernel of the operator $P\circ K_1$ can be seen as an element of 
$\sA_{\cV,-}^{\mathfrak{s}}(M; L^2_b(M))\cdot \nu_{\cV}$ with $\nu_{\cV}$ a non-vanishing $\cV$ density on $M$ and $\mathfrak{s}$ is the multiweight on $M$ given by $\mathfrak{s}(H)=\mathfrak{r}_1(M\times H)$ for $H$ a boundary hypersurface of $M$.  Since 
\begin{equation}
v^{\delta}L^2_b(M)\subset L^1_b(M) \quad \forall \delta>0
\label{com.8b}\end{equation} 
 by Hölder inequality, this means that $K_2 P K_1$ is obtained from an element  $f \in \sA^{\mathfrak{r}}_{\cV,-}(M^2; L^1_b(M;\Omega_b(M))\cdot \pr_R^*\nu_{\cV}$ by 
$$
      (K_2 P K_1)(m,m')= \left(\int_M f(m,m') \right).
$$
Thus, we see clearly that $K_2 P K_1\in \Psi^{-\infty,\mathfrak{r}}_{\cV,\res}(M)$ as claimed.  
\end{proof}

Of course, we can deduce composition results for $\cV$ $\mathfrak{r}$ residual pseudodifferential operators from Theorem~\ref{co.9} and Theorem~\ref{co.13}.  However, working directly with the triple space $M^3$, one can obtain the following simpler composition result.
\begin{proposition}
Suppose that $\cV=\QFB$ or $\Qb$ and let $\mathfrak{r}$ and $\mathfrak{s}$ be multiweight on $M^2$ such that $\beta_{\cV}^{\#}\mathfrak{r}$ and $\beta_{\cV}^{\#}(\mathfrak{s})$ are $\cV$ positive.   If $A\in\Psi^{-\infty,\mathfrak{r}}_{\cV,\res}(M;F,G)$ and $B\in\Psi^{-\infty,\mathfrak{s}}_{\cV,\res}(M;E,F)$, then
$$
  A\circ B\in \Psi^{-\infty,\mathfrak{k}}_{\cV,\res}(M;E,G)
$$
with $\mathfrak{k}$ the multiweight on $M^2$ given by
$$
     \mathfrak{k}(H\times M)=\mathfrak{r}(H\times M) \quad \mbox{and} \quad \mathfrak{k}(M\times H)=\mathfrak{s}(M\times H)
$$
for $H$ a boundary hypersurface of $M$.
\label{com.9}\end{proposition}
\begin{proof}
As in the proof of Proposition~\ref{com.8}, we can assume that $E,F$ and $G$ are trivial line bundles.  Then the Schwartz kernel of $A\circ B$ is of the form 
$$
 (K_{A\circ B})(m,m')= \int_M f(m,m')
$$
for some $f\in \cA^{\mathfrak{k}}_{\cV,-}(M^2;L^1_b(M;\Omega_b(M))\cdot \pr_R^*\nu_{\cV}$, from which the result follows.
\end{proof}

\section{Symbol maps of $\QFB$ operators} \label{sm.0}

In this section, we will introduce important symbol maps used in the construction of parametrices.  First, to define the principal symbol of $\QFB$ operators, we can use the principal symbol for conormal distributions in the sense of Hörmander \cite[Theorem~18.2.11]{Hormander3} for $Z$ a submanifold of a manifold $Y$, namely
$$
  \sigma_m: I^m(Y;Z,\Omega^{\frac12}_Y)\to S^{[M]}(N^*Z;\Omega^{\frac12}(N^*Z))
$$
with $M=m-\frac{1}2\dim Y+ \frac12 \dim Z$ and 
$$
   S^{[M]}(N^*Z)= S^{M}(N^*Z)/S^{M-1}(N^*Z),
$$
where $S^M(N^*Z)$ is the space of functions $\psi\in \CI(N^*Z)$ such that
$$
     \sup_{u,\xi} \frac{|D^{\alpha}_uD_{\xi}^{\beta}\psi|}{(1+|\xi|^2)^{\frac{M-|\beta|}2}}<\infty \quad \forall \; \alpha\in \bbN_0^{\dim Z},\beta\in \bbN_0^{\dim Y-\dim Z}
$$
in any local trivialization $N^*Z |_{\cU}\cong \cU\times \bbR_{\xi}^{\dim Y-\dim Z}$ of the conormal bundle of $Z$.  More specifically, for $\QFB$ operators, we need to take 
$Y= M^2_{\QFB}$ and  $Z=\diag_{\QFB}$.  By Corollary~\ref{ds.9b},  $N^*\diag_{\QFB}\cong {}^{\QFB}T^*M$.  Notice that the natural identification of ${}^{\QFB}T^*M$ and $T^*M$ on the interior of $M$ shows that ${}^{\QFB}T^*M$ admits a singular canonical  $1$-form with differential corresponding to a singular symplectic form.  Since ${}^{\QFB}\Omega_R = \pi_R^\ast \OmegaQFB$ is naturally isomorphic to 
${}^{\QFB}\Omega^{\frac12}_L\otimes {}^{\QFB}\Omega^{\frac12}_R$ on $\diag_{\QFB}$ and since this singular symplectic form of ${}^{\QFB}T^*M$ provides a natural trivialization of $\Omega({}^{\QFB}T^*M)$, this gives a map
\begin{equation}
\kridx{\sigma_m}{sigma}{principal symbol}: \Psi^m_{\QFB}(M;E,F)\to S^{[m]}({}^{\QFB}T^*M;\pi^*\hom(E,F)),
\label{sm.1}\end{equation}
where $\pi: {}^{\QFB}T^*M\to M$ is the canonical projection.  By construction, this map induces a short exact sequence 
\begin{equation}
\xymatrix{
0 \ar[r] & \Psi^{m-1}_{\QFB}(M;E,F)\ar[r] & \Psi^{m}_{\QFB}(M;E,F) \ar[r]^-{\sigma_m} & S^{[m]}({}^{\QFB}T^*M; \pi^*\hom(E,F))\ar[r] & 0
}
\label{sm.2}\end{equation}
such that 
\begin{equation}
  \sigma_{m+m'}(A\circ B)= \sigma_m(A) \sigma_{m'}(B) 
\label{sm.3}\end{equation} 
for $A\in \Psi^m_{\QFB}(M;F,G)$ and $B\in\Psi^{m'}_{\QFB}(M;E,F)$.  

\begin{definition}
An operator $P\in \Psi^m_{\QFB}(M;E,F)$ is \textbf{elliptic} if its principal symbol $\sigma_m(P)$ is invertible.   
\label{sm.4}\end{definition}

To capture the asymptotic behavior of $\QFB$ operators near each boundary hypersurface $H_i$ of $M$, we need as in other pseudodifferential calculi  to introduce normal operators.  For a boundary hypersurface $H_i$ of $M$,  consider the space
\begin{equation}
\Psi^m_{\ff_i}(H_i;E,F):=\{ \kappa\in I^m(\ff_i; \diag_{\ff_i}, (\beta^*_{\QFB}\Hom(E,F)\otimes \pi^*_R {}^{\QFB}\Omega)|_{\ff_i}) \; | \; \kappa \equiv 0 \; \mbox{at}  \; \pa\ff_i\setminus \ff_{\QFB,i}\},
\label{sm.5}\end{equation}
where $\ff_{\QFB,i}=  \ff_i\cap (\overline{\ff_{\QFB}\setminus\ff_i})$ and $\diag_{\ff_i}=\diag_{\QFB}\cap \ff_i$.  
Clearly, the restriction to $\ff_i$ gives a map 
\begin{equation}
    \kridx{N_i}{Normi}{normal operator at $\ff_i$}: \Psi^{m}_{\QFB}(M;E,F)\to \Psi^m_{\ff_i}(H_i;E,F)
\label{sm.6}\end{equation}
inducing the short exact sequence
\begin{equation}
\xymatrix{
   0\ar[r] & x_i\Psi^m_{\QFB}(M;E,F) \ar[r] & \Psi^{m}_{\QFB}(M;E,F) \ar[r]^-{N_i} & \Psi^m_{\ff_i}(H_i;E,F)\ar[r] & 0.
}
\label{sm.7}\end{equation}
By Theorem~\ref{co.9}, notice that there is a natural operation of composition
\begin{equation}
  \Psi^m_{\ff_i}(H_i;F,G)\circ \Psi^{m'}_{\ff_i}(H_i;E,F)\subset \Psi^{m+m'}_{\ff_i}(H_i;E,G)
\label{sm.8}\end{equation}
such that 
\begin{equation}
    N_i(A\circ B)= N_i(A)\circ N_i(B)
\label{sm.9}\end{equation}
for $A\in \Psi^m_{\QFB}(M;F,G)$ and $B\in \Psi^{m'}_{\QFB}(M;E,F)$.    To be able to use some argument by induction on the depth of $M$, it will be useful   to describe in a way similar to   \cite[\S~4]{Mazzeo-MelrosePhi} and \cite[\S~6]{DLR} the composition \eqref{sm.8} in terms of the notion of suspended operators originally introduced by Melrose in \cite{Melrose1995}. 

\begin{definition} 
Let $V$ be a finite dimensional real vector space and let $\bV$ be its radial compactification. Given $E$ and $F$ two vector bundles on $M$, the space of \textbf{$V$ suspended $\QFB$ operators of order $m$} acting from sections of $E$ to sections of $F$ is given by 
\begin{equation}
\kridx{\Psi^m_{\QFB,V}}{PsiQFBV}{$V$ suspended QFB pseudodifferential operators}(M;E,F)= \{ \kappa\in I^m(\bV\times M^2_{\QFB};\{0\}\times \diag_{\QFB},\cV)\; | \; \kappa\equiv 0 \; \mbox{at} \; (\bV\times (\pa M^2_{\QFB}\setminus \ff_{\QFB}))\cup (\pa \bV\times M^2_{\QFB})\},
\label{sm.9b}\end{equation} 
where $\cV=\pr_2^*(\beta^*_{\QFB} \Hom(E,F)\otimes \pi_R^* {}^{\QFB}\Omega)\otimes \pr_1^*{}^{\sc}\Omega$ with ${}^{\sc}\Omega$ the natural scattering density bundle on $\bV$ and with
$\pr_1:\bV\times M^2_{\QFB}\to \bV$ and $\pr_2:\bV\times M^2_{\QFB}\to M^2_{\QFB}$ the projections on the first and second factors.
\label{sm.9}\end{definition}
Let 
$$
      \begin{array}{llcl}
      a: & V^2 & \to & V \\
         & (v,v') & \mapsto & v-v'
      \end{array}
$$
be the projection on the anti-diagonal of $V^2$.  Let also  $\pr_L$ and $\pr_R$ be the projections $V^2\to V$ on the left and right factors respectively.  Then the action of an operator
$P\in \Psi^m_{\QFB,V}(M;E,F)$ onto a section $\sigma\in \dot{\cC}^{\infty}(\bV\times M;E)$ is naturally defined by
\begin{equation}
    P\sigma:= (\pr_L\times \pi_L)_*(a^*\kappa_P\cdot (\pr_R\times \pi_R)^*\sigma),
\label{sm.10}\end{equation}  
where $P$ is seen as an operator with Schwartz kernel $\kappa_P$.   For $A\in \Psi^{-\infty}_{\QFB,V}(M;F,G)$ and $B\in \Psi^{-\infty}_{\QFB,V}(M;E,F)$, this means that their composition is given by convolution in the $V$ factor and by the usual composition of $\QFB$ operators in the $M$ factor,
\begin{equation}
  A\circ B(v)= \int_V A(v-v')\circ B(v'),
\label{sm.11}\end{equation}
where the integration is with respect to $v'$ using the density of $B$.  In fact, more generally, given $P\in \Psi^m_{\QFB,V}(M;E,F)$, one can, in the sense of distributions, define its Fourier transform
\begin{equation}
   \widehat{P}(\Upsilon):= \int_V e^{-i\Upsilon\cdot v} P(v),  \quad \Upsilon\in V^*,
\label{sm.12}\end{equation}
again using the density of $P$ to `integrate' in $v$.  For each $\Upsilon\in V^*$, this gives an element $\widehat{P}(\Upsilon)\in \Psi^m_{\QFB}(M;E,F)$.  As for the usual Fourier transform, it is straightforward to check that 
\begin{equation}
    \widehat{A\circ B}(\Upsilon)= \widehat{A}(\Upsilon)\circ \widehat{B}(\Upsilon) \quad \forall \; \Upsilon\in V^*.
\label{sm.13}\end{equation}
Similarly, given $\sigma\in \dot{\cC}^{\infty}(\bV\times M;E)$, we define its Fourier transform by
$$
     \widehat{\sigma}(\Upsilon)= \int_V e^{-i\Upsilon\cdot v} \sigma(v) \nu
$$
with $\nu$ the translation invariant density on $V$ associated to some choice of inner product. In this case, we have that
\begin{equation}
         \widehat{B\sigma}(\Upsilon)= \widehat{B}(\Upsilon) \widehat{\sigma}(\Upsilon)  \quad \forall \; \Upsilon\in V^*.
\label{sm.14}\end{equation}
Since the operator $P$ can be recovered from $\widehat{P}$ by taking the inverse Fourier transform, we see that the family $\Upsilon\mapsto \widehat{P}(\Upsilon)$ completely determines $P$.  Notice however that the Fourier transform of $V$ suspended $\QFB$ operators does not give all smooth families of $\QFB$ operators on $V$.  For instance, by taking also the Fourier transform in directions conormal to $\diag_{\QFB}$, one easily sees that 
\begin{equation}
  D^{\alpha}_{\Upsilon}\widehat{P}(\Upsilon)\in \Psi^{m-|\alpha|}_{\QFB}(M;E,F), \quad \forall \; \alpha\in \bbN_0^{\dim V}, \; \forall\; \Upsilon\in V^*.
\label{sm.15}\end{equation}
For operators of order $-\infty$, the image of the Fourier transform can be completely characterized, namely a smooth family
$$
     V^*\ni \Upsilon\mapsto \widehat{P}(\Upsilon) \in \Psi^{-\infty}_{\QFB}(M;E,F)
$$
is the Fourier transform of an element in $\Psi^{-\infty}_{\QFB,V}(M;E,F)$ if and only if for any Fréchet semi-norm $\|\cdot \|$ of the space $\Psi^{-\infty}_{\QFB}(M;E,F)$, 
\begin{equation}
      \sup_{\Upsilon} \| \Upsilon^{\alpha}D^{\beta}_{\Upsilon} \widehat{P}\| <\infty \quad \forall \; \alpha,\beta\in \bbN_0^{\dim V}.
\label{sm.15b}\end{equation}
More generally, for operators of order $m\in \bbR$, the Fourier transform $\widehat{P}$ of a $V$ suspended $\QFB$ operator  $P\in \Psi^m_{\QFB,V}(M;E,F)$ must satisfy
\begin{equation}
  \sup_{\Upsilon}  \|  (1+|\Upsilon|^2)^{\frac{|\alpha|-m}2} D^{\alpha}_{\Upsilon}\widehat{P}\|<\infty, \quad \forall \; \alpha\in \bbN_0^{\dim V}
\label{sm.16}\end{equation}
for any Fréchet semi-norm of $\Psi^m_{\QFB}(M;E,F)$, though in this case those conditions are not enough to fully characterize the image of $\Psi^m_{\QFB,V}(M;E,F)$ under the Fourier transform.

As in \cite{Mazzeo-MelrosePhi} and \cite{DLR}, the notion of suspended $\QFB$ operators extends naturally to family.  More precisely, let 
\begin{equation}
\xymatrix{
     Z \ar[r] & H \ar[d]^{\nu} \\
          & S
}
\label{sm.17}\end{equation}
be a fiber bundle with $H$ and $S$ compact manifolds with corners.  Suppose that the fibers are manifolds with fibered corners and that consistent choices of compatible boundary defining functions are made, so that the space $\Psi^m_{\QFB}(H/S;E,F)$ of fiberwise $\QFB$ operators of order $m$ acting from sections of $E\to H$ to sections of $F\to H$ makes sense.  The prototypical example is of course the boundary fiber bundle  $\phi_i: H_i\to S_i$ of a manifold with fibered corners $M$.   In this setting, given $V\to S$ a real vector bundle, we can form the space 
$\Psi^m_{\QFB,V}(H/ S;E,F)$ of fiberwise  $V$ suspended $\QFB$ operators of order $m$ acting from sections of $E$ to sections of $F$.  They can be thought of as smooth families of suspended $\QFB$ operators giving for each $s\in S$ a $V_s$ suspended $\QFB$ operator in $\Psi^m_{\QFB,V_s}(\nu^{-1}(s);E,F)$, where $V_s$ is the fiber of $V$ above $s$.   

For the fiber bundle $\phi_i: H_i\to S_i$ coming from the fibered corners structure of $M$, the natural vector bundle to consider on $S_i$ is the vector bundle ${}^{\phi}NS_i$ of \eqref{vf.2}.  As spaces of conormal distributions, there is in fact a canonical identification 
\begin{equation}
   \Psi^m_{\ff_i}(H_i;E,F) \cong \Psi^m_{\QFB,{}^{\phi}NS_i}(H_i/S_i;E,F).  
\label{sm.18}\end{equation}
Indeed, on $\ff_i$, there is a natural fiber bundle 
\begin{equation}
\phi_{\ff_i}:\ff_i\to S_i
\label{sm.18a}\end{equation}
given by
$$
     \phi_{\ff_i}=\phi_i\circ \pr_R\circ\beta_{\QFB}|_{\ff_i}= \phi_i\circ\pr_L\circ \beta_{\QFB}|_{\ff_i}.
$$
For $s\in S_i$, the fiber $\phi_{\ff_i}^{-1}(s)$ is not quite 
$$
     \overline{{}^{\phi}N_sS_i}\times (\phi_i^{-1}(s))^2_{\QFB},
$$
where ${}^{\phi}N_sS_i$ is the fiber of ${}^{\phi}NS_i$ above $s\in S_i$, but a slightly more complicated compactification of 
\begin{equation}
      {}^{\phi}N_sS_i\times (\phi_i^{-1}(s))^2_{\QFB}.
\label{sm.18b}\end{equation}      
                  
More precisely, as expected, the boundary hypersurface $\ff_i\cap H_{ii}$ of $\ff_i$ corresponds  to one face added to infinity to compactify \eqref{sm.18b}, but the blow-up of $\Phi_j$ for $H_j>H_i$ creates another one, namely $\ff_i\cap H_{jj}$.  Notice that this is  consistent with the fact that the isomorphism \eqref{vf.2} involves multiplication by $x_j$ (through multiplication by $v_i$).  Since we impose rapid decay at $\ff_i\cap H_{jj}$ for $H_j\ge H_i$, the identification \eqref{sm.18} can therefore be seen as being induced by the isomorphism \eqref{vf.2}.  
\begin{remark}
The identification \eqref{sm.18} should be compared with the identification \cite[(7.12)]{DLR} which holds essentially for the same reasons.  In fact, the justification in \cite{DLR} for this identification is not the correct one since for the same reason as for the $\QFB$ calculus, the front face considered in \cite{DLR} is not quite the radial compactification of a vector bundle as erroneously claimed in \cite[(7.11)]{DLR}.   
\label{DLR.1}\end{remark}

However, it is not completely clear a priori that the natural composition of \eqref{sm.8} is the same as the composition for suspended $\QFB$ operators.  
\begin{lemma}
Under the identification \eqref{sm.18}, the composition \eqref{sm.8} corresponds to the composition for families of suspended $\QFB$ operators.  
\label{sm.19}\end{lemma}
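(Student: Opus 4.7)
The plan is to reduce the statement to a direct identification, at the level of Schwartz kernels, between the pushforward-along-$\pi^3_{\QFB,C}$ formula for composition of QFB operators restricted to $\ff_i$, and the convolution-plus-fiberwise-composition formula \eqref{sm.10}--\eqref{sm.11} for suspended operators. The key input is a triple-space analogue of the identification \eqref{sm.18}, together with the description in \eqref{co.1}--\eqref{co.3} of how the three $b$-fibrations behave on boundary faces.

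First, I would make the identification \eqref{sm.18} fully explicit: using the isomorphism \eqref{vf.2} and the construction of $M^2_\QFB$ via the blow-up of $\Phi_i=(\phi_i\rttimes\phi_i)^{-1}(\diag^2_{S_i,\op})$, the face $\ff_i$ fibers over $S_i$ with fiber over $s \in S_i$ canonically identified with the compactified vector space $\overline{({}^\phi NS_i)_s}$ times the QFB double space of $Z_{i,s} := \phi_i^{-1}(s)$, the extra faces at infinity coming precisely from the normal family of blow-ups $\{\Phi_j : H_j > H_i\}$. With respect to this identification, elements of $\Psi^m_{\ff_i}(H_i;E,F)$ are exactly the Schwartz kernels of $({}^\phi NS_i)_s$-suspended QFB operators on $Z_{i,s}$ varying smoothly in $s$, i.e.\ elements of $\Psi^m_{\QFB,{}^\phi NS_i}(H_i/S_i;E,F)$.

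Next I would establish the analogous description of the triple-space face $\ff_{i,T}\subset M^3_\QFB$. By Proposition~\ref{ts.18}, $\Phi_{i,T} = (\phi_i\rttimes\phi_i\rttimes\phi_i)^{-1}(\diag^3_{S_i,\op})$, and the same argument as for $\ff_i$ shows that $\ff_{i,T}$ fibers over $S_i$ (via $\diag^3_{S_i,\op}\cong S_i$) with fiber over $s$ identified with $\overline{({}^\phi NS_i)_s \times ({}^\phi NS_i)_s}$ times the QFB triple space of $Z_{i,s}$. Using \eqref{co.1}--\eqref{co.3}, the restrictions of the three $b$-fibrations $\pi^3_{\QFB,o}|_{\ff_{i,T}} : \ff_{i,T}\to \ff_i$ are then shown fiberwise to split as a product of two maps: on the triple-space factor $(Z_{i,s})^3_\QFB$ they are the three $b$-fibrations $\pi^L,\pi^C,\pi^R$ onto the fiberwise QFB double space $(Z_{i,s})^2_\QFB$; on the normal factor $({}^\phi NS_i)_s^2$ they are, respectively, projection to the first factor, the difference map $(v,v')\mapsto v-v'$ (since the center map becomes the antidiagonal projection $a$ of \eqref{sm.10} by \eqref{ts.13} and the definition of $\Phi_{i,T}$ via the equations $(\pi_L^*s)=(\pi_R^*s)=1$), and projection to the second factor.

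Given these two identifications, the composition in $\Psi^m_{\ff_i}(H_i;E,F)$ induced by \eqref{sm.8} is, by construction, the restriction to $\ff_i$ of $(\pi^3_{\QFB,C})_*\bigl((\pi^3_{\QFB,L})^\ast A \cdot (\pi^3_{\QFB,R})^\ast B\bigr)$. Restricting this pushforward to $\ff_{i,T}\to\ff_i$ and using the product decomposition of the previous step, the integration over the fibers factors into an integration in $v'\in ({}^\phi NS_i)_s$ of $A(v-v')$ against $B(v')$, composed with the standard triple-space pushforward on each fiber $(Z_{i,s})^3_\QFB \to (Z_{i,s})^2_\QFB$. This is exactly the formula \eqref{sm.11} for composition of $({}^\phi NS_i)_s$-suspended QFB operators on $Z_{i,s}$, proving the lemma.

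The principal obstacle is the normal-bundle bookkeeping in Step 2: one must verify that the linear structure on the normal factor of $\ff_{i,T}$ is indeed the square of the linear structure on the normal factor of $\ff_i$, and that the three restricted $b$-fibrations act as $\mathrm{pr}_L$, $a$ (the antidiagonal projection), and $\mathrm{pr}_R$ respectively. Concretely, this requires choosing projective coordinates adapted to the successive blow-ups $\Phi_{i,T}, \Phi^L_{ij}, \Phi^C_{ij}, \Phi^R_{ij}$ (in the order given in Definition~\ref{ts.11}) and checking that the defining equations $\pi_L^\ast s=1$ and $\pi_R^\ast s=1$ of $\Phi^L_{ii}$ and $\Phi^R_{ii}$ within $\diag_{i,T}$ impose precisely the affine conditions that make the center map into the difference map on normal coordinates; this is a local computation analogous to the one carried out in the proof of Proposition~\ref{ts.23}.
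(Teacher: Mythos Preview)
Your approach is correct in outline but takes a genuinely different route from the paper's.  The paper follows the method of \cite[\S~4]{Mazzeo-MelrosePhi} and works entirely at the operator level, avoiding any direct analysis of the triple-space face $\ff_{i,T}$.  Concretely, it shows via a local coordinate computation on $M^2_{\QFB}$ (in coordinates \eqref{sm.20}) that the restriction of $P$ to $H_j$, as an operator on sections, equals $\widehat{N_j(P)}(0)$; it then recovers $\widehat{N_j(P)}(\Upsilon)$ for general $\Upsilon$ by conjugating $P$ by oscillatory functions $e^{if}$ with suitably chosen phase $f$ (see \eqref{sm.25}--\eqref{sm.27}).  The multiplicativity $\widehat{N_j(P\circ Q)}(\Upsilon) = \widehat{N_j(P)}(\Upsilon)\circ\widehat{N_j(Q)}(\Upsilon)$ then follows from the trivial identity $(e^{-if}PQe^{if})|_{H_j} = (e^{-if}Pe^{if})|_{H_j}\circ(e^{-if}Qe^{if})|_{H_j}$, and the lemma is concluded by inverse Fourier transform.

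Your approach instead works directly with Schwartz kernels: you propose to identify $\ff_{i,T}$ fiberwise as the product of a squared normal space with the fiber QFB triple space, and to verify that the three restricted $b$-fibrations split as $(\pr_L,a,\pr_R)$ on the normal factor times the three fiber QFB projections.  This is the ``structural'' route familiar from the $b$- or scattering calculus, and it would also prove the lemma.  The coordinate verification you correctly flag as the principal obstacle is precisely what the paper's oscillatory-test-function trick circumvents: the paper needs only a double-space computation, while your method requires the triple-space blow-up analysis of Definition~\ref{ts.11} in adapted coordinates.  The trade-off is that the paper's argument is shorter and self-contained, whereas yours, if completed, would give a more transparent geometric explanation of why the induced composition is a convolution.
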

\begin{proof}
We will follow the approach of \cite[\S~4]{Mazzeo-MelrosePhi}. Fix a boundary hypersurface $H_j$ of $M$.  Given $p\in H_j$, let 
\begin{equation}
\{(x_i,y_i), i\in \{1,\ldots,k\}, z\}
\label{sm.19b}\end{equation}
 be coordinates near $p$ as in \eqref{coor.1}.  Here, we assume without loss of generality that the boundary hypersurfaces have been labelled so that $1\le j\le k$.  On $M^2$, consider the corresponding coordinates \eqref{ds.2c}.  On $M^2_{\QFB}$, instead of \eqref{ds.8}, let us use the coordinates
\begin{equation}
   S_i'= \frac{\sigma_i'-1}{v_i}, x_i, Y_i'= \frac{y_i'-y_i}{v_i}, y_i, z,z',
\label{sm.20}\end{equation}
where $v_i=\prod_{q=i}^k x_q$ and $\sigma_i'=\prod_{q=i}^k s_q'$ with $s_q'= \frac{x_q'}{x_q}$.  Let $e\in \CI(M;E)$ be a smooth section with support in the coordinate chart \eqref{sm.19b}.  Then for $P\in \Psi^m_{\QFB}(M;E,F)$, we see, using the convention that $S_{k+1}=0$ and $v_{k+1}=0$, that in these coordinates
\begin{equation}
(Pe)(x_i,y_i,z)= \int \kappa_P(x_i,y_i,z, S_i',Y_i',z') e\left( \frac{x_i(1+v_iS_i')}{1+v_{i+1}S_{i+1}'}, y_i+v_iY_i,z) \right) \prod_i(dS'_i dY'_i) dz',
\label{sm.21}\end{equation}
where $\kappa_P$, which denotes the Schwartz kernel of $P$, is conormal to $\{S_i'=0, Y_i'=0, 1\le i\le k, z=z'\}$.   If we restrict to $H_j$, that is, if we set $x_j=0$, this gives

\begin{multline}
Pe(x_1,\ldots, x_{j-1},0,x_{j+1},\ldots, x_k, y_1,\ldots, y_k,z)  =  \\
\int \kappa_P(x_1,\ldots, x_{j-1},0,x_{j+1},\ldots, x_k, y_1,\ldots, y_k,z,S_1',\ldots, S_k',Y_1',\ldots, Y_k', z') \\
\cdot e(x_1,\ldots, x_{j-1}, 0, \frac{x_{j+1}(1+v_{j+1}S_{j+1}')}{1+v_{j+2}S_{j+2}'}, \ldots, \frac{x_{k}(1+v_{k}S_{k}')}{1+v_{k+1}S_{k+1}'}, y_1,\ldots, y_{j}, y_{j+1}+v_{j+1}Y_{j+1}',\ldots, y_k+v_kY_k',z') \\
\prod_i(dS'_i dY'_i) dz'.
\label{sm.22}\end{multline}
Thus, as an operator $P:\CI(M;E)\to \CI(M;F)$, the restriction $P|_{H_i}$ in this coordinate chart has Schwartz kernel
\begin{equation}
   \left( \int \kappa_P \prod_{i=1}^j dS_i'dY_i' \right)\otimes \prod_{i=j+1}^k(dS_i'dY_i')dz'.
\label{sm.23}\end{equation}
That is, globally on $H_i$, using the identifications \eqref{vf.2} and \eqref{sm.18}, it is given by
\begin{equation}
        \int_{{}^{\phi}NS_i/S_i} N_i(P)= \widehat{N_i(P)}(0),
\label{sm.23}\end{equation}
the Fourier transform of $N_i(P)$ evaluated at the zero section.  Of course, if $P$ and $Q$ are two $\QFB$ operators, then
\begin{equation}
    (P\circ Q)|_{H_i}= P|_{H_i}\circ Q|_{H_i}\quad \Longrightarrow \quad \widehat{N_i(P\circ Q)}(0)= \widehat{N_i(P)}(0)\circ \widehat{N_i(Q)}(0).  
\label{sm.24}\end{equation}
To obtain the Fourier transform of $N_i(P)$ at other values, we can instead conjugate $P$ by the `oscillatory test function' $e^{if}$ with
\begin{equation}
 f(x_1,\ldots, x_k, y_1,\ldots, y_k,z)= \frac{f_1(y_1)}{v_1}+\cdots + \frac{f_j(y_j)}{v_j}
\label{sm.25}\end{equation}
for $f_1,\ldots, f_j$ smooth functions.  
In fact, one computes that 
\begin{equation}
\frac{f_i(y_i')}{v_i'}-\frac{f_i(y_i)}{v_i}= \frac{f_i(y_i+v_iY_i')}{v_i(1+v_iS_i')}- \frac{f_i(y_i)}{v_i}=  -f(y_i)S_i' +\sum_q \frac{\pa f_i}{\pa y_i^q}(y_i) (Y_i')^q + \mathcal{O}(v_i)
\label{sm.26}\end{equation}
is smooth all the way down to $v_i=0$ in the coordinates \eqref{sm.20}.  Thus, let us extend $\phi_j$ to a fiber bundle in a tubular neighborhood of $H_j$ in $M$. If $\chi\in \CI(M)$ is equal to $1$ near $\phi_j^{-1}(\phi_j(p))$ and is supported in $\phi_j^{-1}(\cU)$ for some open set $\cU$ on $S_j$ where the coordinates $(x_1,\ldots,x_j,y_1,\ldots, y_j)$ on $S_j\times [0,\epsilon)_{x_j}$ are valid, we see by \eqref{sm.26}  that 
\begin{multline}
(\chi e^{-if})\circ P \circ (\chi e^{if})|_{\phi_j^{-1}(\phi_j(p))}= \left(\int_{{}^{\phi}NS_j|_{\phi_j(p)}}  e^{i(\sum_{i\le j}(-f_i(y_i)S_i'+ \sum_q \frac{\pa f_i}{\pa y_i^q}(y_i)(Y_i')^q))}
\kappa_P|_{{}^{\phi}NH_i|_{\phi_j^{-1}(\phi_j(p))}} \prod_{i\le j} dS_i' dY_i'\right) \\
\otimes \left(\prod_{i>j} dS_i' dY_i'\right) dz'.
\label{sm.27}\end{multline}
Moreover, we clearly have that 
\begin{equation}
  (\chi e^{-if})\circ PQ \circ (\chi e^{if})|_{\phi_j^{-1}(\phi_j(p))}= \left((\chi e^{-if})\circ P \circ (\chi e^{if})|_{\phi_j^{-1}(\phi_j(p))}\right) \circ\left( (\chi e^{-if})\circ Q \circ (\chi e^{if})|_{\phi_j^{-1}(\phi_j(p))}\right).
  \label{sm.28}\end{equation}
Hence, since $p\in H_j$ was arbitrary, a careful choice of the smooth functions $f_1,\ldots, f_j$ together with \eqref{sm.27} and \eqref{sm.28} shows that for all $\Upsilon\in {}^{\phi}N_jS_j$,
\begin{equation}
 \widehat{N_j(P\circ Q)}(\Upsilon)= \widehat{N_j(P)}(\Upsilon)\circ\widehat{N_j(Q)}(\Upsilon).
\label{sm.29}\end{equation}
The result therefore follows by taking the inverse Fourier transform.  
\end{proof}

In particular, seen as a suspended operator, the normal operator $N_i(P)$ has a natural action
\begin{equation}
   N_i(P):  \dot{\cC}^{\infty}(\overline{{}^{\phi}NH_i};E) \to \dot{\cC}^{\infty}(\overline{{}^{\phi}NH_i};F)
 \label{sm.30}\end{equation}
induced by \eqref{sm.10}, where $\overline{{}^{\phi}NH_i}$ is the fiberwise radial compactification of the vector bundle ${}^{\phi}NH_i$.   This allows us to formulate the following definition.  
\begin{definition}
An operator $P\in \Psi^m_{\QFB}(M;E,F)$ is \textbf{fully elliptic} if it is elliptic and if 
$$
    N_i(P):  \dot{\cC}^{\infty}(\overline{{}^{\phi}NH_i};E) \to \dot{\cC}^{\infty}(\overline{{}^{\phi}NH_i};F)
  $$
  is invertible for each boundary hypersurface $H_i$ of $M$.  
\label{sm.31}\end{definition}

Suspended $\QFB$ operators have themselves a natural symbol map.  More precisely, the principal symbol  of conormal  distributions induces a principal symbol for $V$ suspended $\QFB$ operators taking the form
\begin{equation}
      \sigma_m: \Psi^m_{\QFB,V}(M;E,F)\to S^{[m]}(V^*\times {}^{\QFB}T^*M; \phi_V^*\hom(E,F)),
\label{sm.32}\end{equation}
where $\phi_V: V^*\times {}^{\QFB}T^*M\to M$ is the composition of the projection on the second factor with the bundle map ${}^{\QFB}T^*M\to M$.  Similarly, we can define the normal operator $N_i(P)$ of $P\in \Psi^m_{\QFB,V}(M;E,F)$ to be the restriction of the Schwartz kernel of $P$ at $V\times \ff_i$ in $V\times M^2_{\QFB}$.    As can be seen from the discussion above, the normal operator can be seen again as a family of suspended $\QFB$ operator associated to the fiber bundle $\phi_i:H_i\to S_i$ and the vector bundle $V\times {}^{\phi}NS_i\to S_i$, so that the notion of normal operator induces a map
\begin{equation}
    N_i: \Psi^m_{\QFB,V}(M;E,F)\to \Psi^m_{\QFB,V\times {}^{\phi}NS_i}(H_i/S_i;E,F).
\label{sm.33}\end{equation} 
In particular, the normal operator $N_i(P)$ has a natural action
\begin{equation}
    N_i(P): \dot{\cC}^{\infty}(\overline{V\times {}^{\phi}NH_i};E)\to \dot{\cC}^{\infty}(\overline{V\times {}^{\phi}NH_i};F)
\label{sm.34}\end{equation}
\begin{definition}
A $V$ suspended $\QFB$ operator $P\in \Psi^m_{\QFB,V}(M;E,F)$ is \textbf{elliptic} if its principal symbol $\sigma_m(P)$ is invertible in $S^{[m]}(V^*\times {}^{\QFB}T^*M; \phi_V^*\hom(E,F))$.  Furthermore, it is said to be \textbf{fully elliptic} if it is elliptic and if the map
$$
N_i(P): \dot{\cC}^{\infty}(\overline{V\times {}^{\phi}NH_i};E)\to \dot{\cC}^{\infty}(\overline{V\times {}^{\phi}NH_i};F)
$$
is invertible for each boundary hypersurface $H_i$ of $M$.  
\label{sm.35}\end{definition}

Fully elliptic operators admit nice parametrices.  To describe those, let us consider the space 
\begin{equation}
\dot{\Psi}^{-\infty}(M;E,F):= \{  A\in \dot{\Psi}^{-\infty}_{\QFB}(M;E,F)\; |  \; \kappa_A \; \mbox{is rapidly vanishing at} \; \ff_{\QFB}\}.
\label{mp.1}\end{equation}
As the notation suggests, the definition of this space only depends on the manifold with corners $M$, not on the structure of manifold with fibered corners or the choice of a Lie algebra of $\QFB$ vector fields.  Similarly, for a vector space $V$, we can consider the space
\begin{equation}
\dot{\Psi}^{-\infty}_V(M;E,F):= \{ A\in \Psi^{-\infty}_{\QFB,V}(M;E,F) \; | \; \widehat{A}(\Upsilon)\in \dot{\Psi}^{-\infty}(M;E,F) \; \forall \; \Upsilon\in V\}.
\label{mp.2}\end{equation}

\begin{proposition}
If $P\in \Psi^m_{\QFB}(M;E,F)$ is fully elliptic, then there exists $Q\in \Psi^{-m}_{\QFB}(M;F,E)$ such that 
$$
     QP-\Id \in \dot{\Psi}^{-\infty}(M;E),  \quad PQ-\Id\in \dot{\Psi}^{-\infty}(M;F).
$$
Moreover, there are natural inclusions $\ker P\subset \dot{\cC}^{\infty}(M;E)$ and $\ker P^*\subset \dot{\cC}^{\infty}(M;F)$, where $P^*$ is the formal adjoint of $P$ with respect to a $\QFB$ density and  choices of bundle metrics for $E$ and $F$.  In fact, if $P$ is a differential operators, then elements of $\ker P$ and $\ker P^*$ decay exponentially fast at infinity. Similarly, if $V$ is a finite dimensional real vector space and $P\in \Psi^m_{\QFB,V}(M;E,F)$ is fully elliptic, then there exists $Q\in \Psi^{-m}_{\QFB,V}(M;F,E)$ such that 
$$
   QP-\Id\in \dot{\Psi}^{-\infty}_V(M;E), \quad PQ-\Id\in \dot{\Psi}^{-\infty}_V(M;F).  
$$ 
\label{mp.3}\end{proposition}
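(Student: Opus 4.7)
The plan is a two-stage parametrix construction, with the two cases (unsuspended and $V$-suspended) handled simultaneously by induction on the depth of $M$. In the interior, ellipticity of $\sigma_m(P)$ combined with the short exact sequence \eqref{sm.2} and the composition law \eqref{sm.3} yields, via the standard symbolic Neumann argument plus asymptotic summation within $\Psi_{\QFB}^{-m}$, an element $Q_0\in\Psi^{-m}_{\QFB}(M;F,E)$ with
\begin{equation*}
  R_0^L:=Q_0 P-\Id\in\Psi^{-\infty}_{\QFB}(M;E),\qquad R_0^R:=PQ_0-\Id\in\Psi^{-\infty}_{\QFB}(M;F).
\end{equation*}
The same argument, using \eqref{sm.32}, works verbatim for $V$-suspended operators, producing a symbolic parametrix modulo $\Psi^{-\infty}_{\QFB,V}$. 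The remaining task is to upgrade the residual error from smoothing to \emph{rapidly vanishing at $\ff_{\QFB}$}, i.e.\ to land in $\dot\Psi^{-\infty}(M;\cdot)$ respectively $\dot\Psi^{-\infty}_V(M;\cdot)$.

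For this, I would induct on the depth of $M$. The base case $\depth M=0$ is a closed manifold (or $V$ times a closed manifold, where one uses the Fourier characterisation \eqref{sm.15b}) and is standard. For the inductive step, enumerate $H_1,\dots,H_\ell$ compatibly with the partial order and, iteratively, construct a correction $Q_i\in\Psi^{-\infty,\cE_i/\mathfrak s_i}_{\QFB}$ whose normal operator at $\ff_i$ inverts $N_i(R_{i-1}^R)$. The key point is that each fibrewise normal operator $N_i(P)$ is a ${}^\phi N S_i$-suspended $\QFB$ operator on $H_i/S_i$ by Lemma~\ref{sm.19}, and $H_i/S_i$ has strictly smaller depth; by the inductive hypothesis applied to the suspended statement, $N_i(P)$ has an inverse modulo $\dot\Psi^{-\infty}_{{}^\phi NS_i}$. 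Composing with $N_i(R_{i-1}^R)$ and lifting off $\ff_i$ by a smooth extension (using the identification \eqref{sm.18} together with the collar neighbourhood provided by the boundary fibration of $M^2_{\QFB}$) produces the desired $Q_i$.

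To carry out this iteration I would process the faces in an order compatible with the partial order (say minimal $H_i$ first). The composition law of Theorem~\ref{co.9}, applied with $\cF$ concentrated at $\ff_i$, shows that correcting at $\ff_i$ only creates new errors at faces $H_j$ with $H_j\cap H_i\neq\emptyset$ and with an index shift by $h_k$ strictly improving conormality at previously corrected front faces; this is precisely the content of the formulas \eqref{co.9a}--\eqref{co.9b}, and it guarantees that the corrections propagate in the correct triangular fashion. After $\ell$ steps the total right error lies in $\dot\Psi^{-\infty}$, and asymptotic summation (standard, since we only need to sum a sequence whose conormal order increases) yields a right parametrix $Q^R$. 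Running the same construction on the left produces $Q^L$, and $Q^L-Q^R\in\dot\Psi^{-\infty}$ by a sandwich argument, so a single $Q$ works on both sides. The regularity statements $\ker P\subset\dot\cC^\infty(M;E)$ and $\ker P^*\subset\dot\cC^\infty(M;F)$ follow immediately from $u=(\Id-QP)u$ together with the mapping property that elements of $\dot\Psi^{-\infty}$ send distributions into $\dot\cC^\infty$.

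The main obstacle is the bookkeeping of the inductive boundary correction: one must verify at each step that the chosen extension of $R_i\circ N_i(E)$ off $\ff_i$ does not produce terms that spoil the improvements already made at $\ff_1,\dots,\ff_{i-1}$. This reduces, via Theorem~\ref{co.9}, to checking the inequalities on index sets at the faces $H_{ij}$ for $H_i\cap H_j\neq\emptyset$; the partial-order choice of enumeration together with the index shifts by $h_k$ in \eqref{co.9a} makes these inequalities automatic. The suspended case is then identical with $\Upsilon\in V^*$ carried as a parameter, using that differentiation in $\Upsilon$ only lowers the pseudodifferential order (cf.\ \eqref{sm.15}--\eqref{sm.16}).
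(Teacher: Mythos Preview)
Your overall scheme (induction on depth, symbolic step, then boundary correction) is right, but the boundary step has a genuine gap. You invoke the inductive hypothesis only to get a \emph{parametrix} for $N_i(P)$ modulo $\dot\Psi^{-\infty}_{{}^\phi NS_i}$, and then try to correct face by face. A parametrix is not enough: if $N_i(P)G_i=\Id+S_i$ with $S_i$ fibrewise residual, then your correction $Q_i$ with $N_i(Q_i)=-G_i\,N_i(R^R_{i-1})$ leaves a new error at $\ff_i$ equal to $-S_i\,N_i(R^R_{i-1})$, which is fibrewise residual but \emph{not} zero on $\ff_i$; no iteration in the fiber direction makes it vanish on $\ff_i$ itself. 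What is actually needed is the genuine inverse $N_i(P)^{-1}$ in the suspended calculus. This exists because full ellipticity (Definition~\ref{sm.31}) requires $N_i(P)$ to be invertible on $\dot\cC^\infty$, and Corollary~\ref{mp.4} shows such an inverse lies in $\Psi^{-m}_{\QFB,{}^\phi NS_i}$. The induction must therefore package Proposition~\ref{mp.3} and Corollary~\ref{mp.4} together: at depth $d$ one uses Corollary~\ref{mp.4} at depth $<d$ to get the $N_i(P)^{-1}$, builds the parametrix, and then proves Corollary~\ref{mp.4} at depth $d$ from that parametrix.

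Once you have all the $N_i(P)^{-1}$ in hand, the face-by-face iteration and the bookkeeping you worry about disappear. One simply chooses $Q_0\in\Psi^{-m}_{\QFB}$ with $\sigma_{-m}(Q_0)=\sigma_m(P)^{-1}$ \emph{and} $N_i(Q_0)=N_i(P)^{-1}$ for every $i$ simultaneously; these assignments are compatible at corners because they all solve the same equation. Then $Q_0P-\Id,\ PQ_0-\Id\in v\Psi^{-1}_{\QFB}$ with $v$ the total boundary defining function, and a single Neumann iteration in powers of $v$ (producing $Q_\ell\in v^\ell\Psi^{-m-\ell}_{\QFB}$ with error in $v^{\ell+1}\Psi^{-\ell-1}_{\QFB}$) followed by asymptotic summation gives a two-sided parametrix directly, with no need for a separate left/right construction or for tracking index sets through Theorem~\ref{co.9}.
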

\begin{remark}
The exponential decay of Proposition~\ref{mp.3} can be seen as a generalization of the exponential decay in many body type scattering obtained by Froese-Herbst \cite{Froese-Herbst}  and Vasy \cite{Vasy2004} with very different methods.  Indeed,  suitably interpreted, the conditions in those papers correspond to requiring that a certain $\QAC$ differential operator be fully elliptic. 
\label{nbs.1}\end{remark}
\begin{proof}
If $M$ is a closed manifold, this is a well-known result.  Hence, we can proceed by induction on the depth of $M$.  Let us warn the reader though that the inductive step will only be completed with the proof of Corollary~\ref{mp.4}.  In fact, by this corollary, we can assume that $N_i(P)^{-1}\in\Psi^{-m}_{\ff_i}(M;F,E)$.  This means that we can choose $Q_0\in \Psi^{-m}_{\QFB}(M;E,F)$ such that 
$\sigma_{-m}(Q_0)= \sigma_m(P)^{-1}$ and $N_i(Q_0)=N_i(P)^{-1}$ for all boundary hypersurface $H_i$, which implies that 
$$
     Q_0P-\Id\in v\Psi^{-1}_{\QFB}(M;E), \quad PQ_0-\Id\in v\Psi^{-1}_{\QFB}(M;F),
$$
where $v=\prod_i x_i$ is a total boundary defining function of $M$.  Proceeding by induction on $\ell\in \bbN_0$ as in \cite[Proposition~9.1]{DLR}, we can more generally find 
$Q_{\ell}\in v^{\ell}\Psi^{-m-\ell}_{\QFB}(M;F,E)$ such that $\widetilde{Q}_{\ell}= Q_0+Q_1+\cdots + Q_{\ell}$ satisfies 
$$
      \widetilde{Q}_{\ell}P-\Id\in v^{\ell+1}\psi^{-1-\ell}_{\QFB}(M;E), \quad P\widetilde{Q}_{\ell}-\Id\in v^{\ell+1}\Psi^{-1-\ell}_{\QFB}(M;F).
$$
The parametrix $Q$ can therefore be taken to be an asymptotic sum of the $Q_{\ell}$.  Now, if $e\in \ker P$, then 
$$
      Pe=0 \; \Longrightarrow\; QPe=0 \; \Longrightarrow \; e= (\Id- QP)e\in \dot{\cC}^{\infty}(M;E)
$$  
since $\Id-QP\in \dot{\Psi}^{-\infty}(M;E)$.  There is a similar argument for $\ker P^*$.  If $P$ is a differential operator, then by the property (QFB2) of $\QFB$ vector fields,  the conjugated operator $P_{\lambda}:=e^{-\frac{\lambda}v}Pe^{\frac{\lambda}v}$ for $\lambda\in\bbR$ is also a $\QFB$ operator.  Since the property of being fully elliptic is an open condition, this means that $P_{\lambda}$ is fully elliptic for $\lambda>0$ small enough, implying that
$$
         e^{\frac{\lambda}{v}}\ker P= \ker P_{\lambda}\subset  \dot{\cC}^{\infty}(M;E)  \quad \Longrightarrow \quad \ker P\subset e^{-\frac{\lambda}v} \dot{\cC}^{\infty}(M;E).
$$   
Again, there is a similar argument for $P^*$.
For the parametrix of a suspended $\QFB$ differential operators, it suffices to `suspend' the previous argument, which essentially only involves notational changes.  To complete the induction on the depth of $M$, it remains to prove the corollary below. 
\end{proof}

\begin{corollary}
If $P\in \Psi^m_{\QFB,V}(M;E,F)$ is  fully elliptic and invertible as a map 
$$
        P: \dot{\cC}^{\infty}(\overline{V}\times M;E) \to \dot{\cC}^{\infty}(\overline{V}\times M;F),
$$
then its inverse is in $\Psi^{-m}_{\QFB,V}(M;F,E)$.   
\label{mp.4}\end{corollary}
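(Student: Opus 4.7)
The plan is to combine the parametrix from Proposition~\ref{mp.3} with the hypothesis that $P$ is invertible on $\dot{\cC}^{\infty}(\overline{V}\times M;E)$, using the classical Melrose-style argument that the difference between the true inverse and a good parametrix lies in the residual ideal $\dot{\Psi}^{-\infty}_V$.

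First, apply Proposition~\ref{mp.3} to choose $Q\in \Psi^{-m}_{\QFB,V}(M;F,E)$ with
\[
  PQ-\Id = -R \in \dot{\Psi}^{-\infty}_V(M;F), \quad QP-\Id = -R' \in \dot{\Psi}^{-\infty}_V(M;E).
\]
Since $Q\in \Psi^{-m}_{\QFB,V}(M;F,E)$, it suffices to show that $P^{-1}-Q$ lies in $\dot{\Psi}^{-\infty}_V(M;F,E)$. From $PQ=\Id-R$ we have the identity $P^{-1}-Q = P^{-1}R$, so the entire task is to show $P^{-1}R\in \dot{\Psi}^{-\infty}_V(M;F,E)$.

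Next, observe that $P^{-1}$ is translation invariant along $V$: since $P$ commutes with translation $\tau_a$ for each $a\in V$, we have $P^{-1}\tau_a = P^{-1}\tau_a P P^{-1} = P^{-1}P\tau_a P^{-1} = \tau_a P^{-1}$, so the composition $P^{-1}R$ is $V$-suspended in the appropriate sense. Moreover, $P\colon \dot{\cC}^{\infty}(\overline{V}\times M;E)\to \dot{\cC}^{\infty}(\overline{V}\times M;F)$ is continuous (by an evident suspended analogue of Corollary~\ref{pdo.16}) and bijective, so by the open mapping theorem for Fr\'echet spaces its inverse $P^{-1}$ is continuous as well.

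Now compute the Schwartz kernel of $P^{-1}R$ column by column. Since $R \in \dot{\Psi}^{-\infty}_V(M;F)$, its kernel $\kappa_R$ is rapidly vanishing at every boundary hypersurface of $\overline{V}\times M^2_{\QFB}$; in particular, for each $y$ in the interior of $V\times M$, the section $\kappa_R(\cdot,y)$ belongs to $\dot{\cC}^{\infty}(\overline{V}\times M;F)$, and the assignment $y\mapsto \kappa_R(\cdot,y)$ is smooth into this Fr\'echet space with rapid decay as $y$ approaches any boundary face. Applying the continuous operator $P^{-1}$ in the first set of variables yields
\[
  \kappa_{P^{-1}R}(\cdot,y) = P^{-1}\bigl(\kappa_R(\cdot,y)\bigr) \in \dot{\cC}^{\infty}(\overline{V}\times M;E),
\]
and by continuity of $P^{-1}$, this is again a smooth function of $y$ valued in $\dot{\cC}^{\infty}(\overline{V}\times M;E)$ with rapid decay as $y$ approaches the boundary. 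Together with the translation invariance established above, this shows $P^{-1}R \in \dot{\Psi}^{-\infty}_V(M;F,E)$, so $P^{-1} = Q + P^{-1}R \in \Psi^{-m}_{\QFB,V}(M;F,E)$.

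The main obstacle is the fourth step: carefully verifying that applying $P^{-1}$ columnwise genuinely produces the Schwartz kernel of $P^{-1}R$ with the claimed joint smoothness and rapid-decay properties in \emph{both} argument variables. The translation-invariance ensures that the $V$-suspended structure is preserved, but one must check that the continuity of $P^{-1}$ as a map of Fr\'echet spaces of rapidly vanishing sections upgrades the smooth rapidly-decaying $y$-dependence of $\kappa_R(\cdot,y)$ to smooth rapidly-decaying $y$-dependence of $P^{-1}\kappa_R(\cdot,y)$; this is a standard consequence of continuity into a Fr\'echet space, but it is where the hypothesis that $P$ preserves $\dot{\cC}^{\infty}(\overline{V}\times M;\cdot)$ is essentially used to complete the induction initiated in Proposition~\ref{mp.3}.
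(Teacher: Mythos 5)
Your proposal is correct, but it takes a genuinely different route from the paper. The paper works entirely on the Fourier transform side: from $\widehat{P}(\Upsilon)\widehat{Q}(\Upsilon)=\Id+\widehat{R}_1(\Upsilon)$ it inverts $\Id+\widehat{R}_1(\Upsilon)$ by a Neumann series for $|\Upsilon|>K$ (using the rapid decay of $\widehat{R}_1$ in $\Upsilon$ from \eqref{sm.15b}), and for every $\Upsilon$ it uses the \emph{two-sided} algebraic identity $\widehat{P}^{-1}=\widehat{Q}-\widehat{Q}\widehat{R}_1+\widehat{R}_2\widehat{P}^{-1}\widehat{R}_1$, in which the last term is residual simply because it is sandwiched between two elements of $\dot{\Psi}^{-\infty}$ and hence maps $\cC^{-\infty}(M;F)$ to $\dot{\cC}^{\infty}(M;E)$; the explicit Neumann-series formula for large $|\Upsilon|$ is then what supplies the Schwartz-type estimates in $\Upsilon$ required for membership in $\Psi^{-\infty}_{\QFB,V}$. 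You instead stay on the physical side, use the \emph{one-sided} identity $P^{-1}-Q=P^{-1}R$, and pay for it by needing the stronger mapping property that $P^{-1}$ preserves $\dot{\cC}^{\infty}(\overline{V}\times M;\cdot)$ continuously — which is exactly the hypothesis of the corollary plus the open mapping theorem for Fr\'echet spaces. The trade-off is real: the paper's sandwich only needs weak mapping properties of $\widehat{P}(\Upsilon)^{-1}$ but must separately manufacture the decay as $|\Upsilon|\to\infty$, whereas your columnwise argument gets the decay at $\pa\bV$ (equivalently the $\Upsilon$-estimates) for free from the continuity of $P^{-1}$ on the Fr\'echet space of rapidly vanishing sections. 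The one point you should spell out — and which you correctly flag — is the upgrade from column-by-column membership to joint smoothness and joint rapid decay of the kernel in all variables; this follows from the standard fact that a smooth, rapidly vanishing map from the compact manifold with corners $M$ (the right variable) into the Fr\'echet space $\dot{\cC}^{\infty}(\overline{V}\times M;E)$ has a jointly smooth, jointly rapidly vanishing kernel, together with translation invariance to descend to a function of $v-v'$. With that step made explicit, your argument is complete and is arguably the more economical of the two.
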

\begin{proof}
Let $Q$ be the parametrix of the previous proposition.  Taking the Fourier transform in $V$, this gives
$$
           \widehat{P}(\Upsilon)\widehat{Q}(\Upsilon)= \Id+ \widehat{R}_1(\Upsilon) \quad \forall\; \Upsilon\in V^*,   \quad \widehat{Q}(\Upsilon)\widehat{P}(\Upsilon)= \Id+ \widehat{R}_2(\Upsilon) \quad \forall\; \Upsilon\in V^*,
$$
where $R_1\in \dot{\Psi}^{-\infty}_V(M;F)$ and $R_2\in \dot{\Psi}^{-\infty}_V(M;E)$.  By \eqref{sm.15b}, we see that $\widehat{R}_i(\Upsilon)$ decays rapidly when $|\Upsilon|\to \infty$, hence there is $K>0$ such that $\Id+\widehat{R}_i(\Upsilon)$ is invertible for $|\Upsilon|>K$ with inverse $\Id+\widehat{S_i}(\Upsilon)$ for
$$
        \widehat{S}_i(\Upsilon)= \sum_{k=1}^{\infty} (-1)^k \widehat{R}_i(\Upsilon)^k\in \dot{\Psi}^{-\infty}(M;F)
$$
satisfying \eqref{sm.15b}.  Therefore, for $|\Upsilon|>K$, we have that 
$$
       \widehat{P}(\Upsilon)^{-1}= \widehat{Q}(\Upsilon)(\Id+\widehat{S}_1(\Upsilon)).
$$
On the other hand, the invertibility of $P$ clearly implies the invertibility of $\widehat{P}(\Upsilon)$ for all $\Upsilon\in V^*$.  Moreover, we have that
\begin{equation}
\begin{aligned}
\widehat{P}(\Upsilon)^{-1}&= \widehat{P}(\Upsilon)^{-1}(\widehat{P}(\Upsilon)\widehat{Q}(\Upsilon)-\widehat{R}_1(\Upsilon))= \widehat{Q}(\Upsilon)- \widehat{P}(\Upsilon)^{-1}\widehat{R}_1(\Upsilon) \\
& = \widehat{Q}(\Upsilon)- (\widehat{Q}(\Upsilon)\widehat{P}(\Upsilon)-\widehat{R}_2(\Upsilon))\widehat{P}(\Upsilon)^{-1}\widehat{R}_1(\Upsilon) \\
&= \widehat{Q}(\Upsilon)- \widehat{Q}(\Upsilon)\widehat{R}_1(\Upsilon) + \widehat{R}_2(\Upsilon) \widehat{P}(\Upsilon)^{-1}\widehat{R}_1(\Upsilon).
\end{aligned}
\label{mp.5}\end{equation}
Clearly, the last term is an operator of the form 
$$
 \widehat{R}_2(\Upsilon) \widehat{P}(\Upsilon)^{-1}\widehat{R}_1(\Upsilon): \cC^{-\infty}(M;F)\to \dot{\cC}^{\infty}(M;E),
 $$
 hence it is an element of $\dot{\Psi}^{-\infty}(M;F,E)$, see for instance \cite[Proposition~5.4]{DLR}.  Consequently, we have that
 $$
         \widehat{P}(\Upsilon)^{-1}= \widehat{Q}(\Upsilon)+ \widehat{W}(\Upsilon)
 $$
 with $W\in \Psi^{-\infty}_{\QFB,V}(M;F,E)$ such that $\widehat{W}(\Upsilon)= \widehat{Q}(\Upsilon)\widehat{S}(\Upsilon)$ for $|\Upsilon|>K$.  Taking the inverse Fourier transform then gives the result. 
\end{proof}

\section{Mapping properties of $\QFB$ operators}  \label{mp.0}

In this section, we will describe how $\QFB$ operators and $\Qb$ operators act on adapted Sobolev spaces.  As a first step, we can use Schur's test to obtain a criterion for $L^2$ boundedness of operators of order $-\infty$.  

\begin{proposition}
Let $\mathfrak{r}$ and $\mathfrak{r}'$ be the multiweights of \eqref{pdo.14} and \eqref{mwp.1}.   Then an operator 
$$
P\in \Psi^{-\infty,\cE/\mathfrak{s}}_{\QFB,\cn}(M;E,F)
$$ 
induces a bounded operator 
\begin{equation}
P: x^{\mathfrak{t}}L^2_b(M;E)\to x^{\mathfrak{t}'}L^2_b(M;F)
\label{lt.1a}\end{equation}
provided $\Re(\mathfrak{s}-\mathfrak{r}+\pi_R^{\#}\mathfrak{t}-\pi_L^{\#}\mathfrak{t}')> 0$ and $\Re(\cE-\mathfrak{r}+\pi_R^{\#}\mathfrak{t}-\pi_L^{\#}\mathfrak{t}')\ge 0$ with strict inequality except possibly at $\ff_i$ for $H_i$ a boundary hypersurface of $M$. Similarly,
an operator  $P\in \Psi^{-\infty,\cE/\mathfrak{s}}_{\Qb,\cn}(M;E,F)$ induces a bounded operator 
\begin{equation}
P: x^{\mathfrak{t}}L^2_b(M;E)\to x^{\mathfrak{t}'}L^2_b(M;F)
\label{lt.1b}\end{equation}
provided $\Re(\mathfrak{s}-\mathfrak{r}'+(\pr_R\circ\beta_{\Qb})^{\#}\mathfrak{t}-(\pr_L\circ\beta_{\Qb})^{\#}\mathfrak{t}')> 0$ and $\Re(\cE-\mathfrak{r}'+(\pr_R\circ\beta_{\Qb})^{\#}\mathfrak{t}-(\pr_L\circ\beta_{\Qb})^{\#}\mathfrak{t}')\ge 0$ with strict inequality except possibly at $H_{ii}$ for $H_i$ a maximal boundary hypersurface of $M$ and at $\ff_i$ for $H_i$ a non-maximal boundary hypersurface.  
\label{lt.1}\end{proposition}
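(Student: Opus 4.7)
The plan is to apply Schur's test in $L^2_b(M)$, first reducing to the unweighted case by conjugation. Boundedness of $P: x^{\mathfrak{t}} L^2_b \to x^{\mathfrak{t}'} L^2_b$ is equivalent to boundedness of $x^{-\mathfrak{t}'} P x^{\mathfrak{t}}: L^2_b \to L^2_b$, whose Schwartz kernel is obtained by multiplication with $\pi_L^* x^{-\mathfrak{t}'} \cdot \pi_R^* x^{\mathfrak{t}}$; this shifts the multiweight of $P$ to $\mathfrak{s} + \pi_R^{\#}\mathfrak{t} - \pi_L^{\#}\mathfrak{t}'$ and the index family to $\cE + \pi_R^{\#}\mathfrak{t} - \pi_L^{\#}\mathfrak{t}'$. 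From here on I assume $\mathfrak{t} = \mathfrak{t}' = 0$ and work under the hypotheses $\Re(\mathfrak{s} - \mathfrak{r}) > 0$ and $\Re(\cE - \mathfrak{r}) \ge 0$, strictly away from the $\ff_i$.

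By Schur's lemma it then suffices to verify that the two integrals
\[
\sup_{m \in M} \int_M |\tilde K(m, m')| \, \nu_b(m') \quad \text{and} \quad \sup_{m' \in M} \int_M |\tilde K(m, m')| \, \nu_b(m)
\]
are finite, where the kernel is written $\kappa_P = \tilde K \cdot \pi_R^* \nu_{\QFB}$ with $\tilde K \in \sA^{\cE/\mathfrak{s}}_{\phg}(M^2_{\QFB})$, the $\Hom(E,F)$ factor being paired with bundle metrics. Each integral amounts to a pushforward on $M^2_{\QFB}$ of $|\tilde K| \cdot \pi_L^* \nu_b \cdot \pi_R^* \nu_{\QFB}$ (or its symmetric analog) via $\pi_L$ or $\pi_R$, and the density identity \eqref{pdo.13} rewrites $\pi_L^* \nu_b \cdot \pi_R^* \nu_{\QFB}$ as $\rho^{-\mathfrak{r}}$ times a nonvanishing $b$-density on $M^2_{\QFB}$. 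Hence the Schur integrands are $|\tilde K|\rho^{-\mathfrak{r}}$ paired with a smooth $b$-density on $M^2_{\QFB}$.

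Since $\pi_L, \pi_R$ are $b$-fibrations by Lemma~\ref{ao.1}, Melrose's pushforward theorem (\cite[\S8]{Melrose1992}) yields polyhomogeneous pushforwards on $M$ whose exponents are the face-by-face minima of $\cE - \mathfrak{r}$ (leading terms) and $\mathfrak{s} - \mathfrak{r}$ (remainder), taken over boundary hypersurfaces of $M^2_{\QFB}$ lying above each boundary hypersurface of $M$. Boundedness on $M$ requires strict positivity of the exponent at every boundary hypersurface of $M^2_{\QFB}$ whose projection to $M$ produces a $b$-type integration factor $d\rho/\rho$ along the fiber. The front faces $\ff_i$ are the sole exceptions: each $\ff_i$ arises from the blow-up of $\Phi_i$, and its fibers under $\pi_L|_{\ff_i}, \pi_R|_{\ff_i}$ are compactifications (via the identification \eqref{sm.18}) along which $\rho^{-\mathfrak{r}}\mu$ remains integrable even when the leading exponent of $\tilde K$ vanishes, so only nonnegativity of $\cE - \mathfrak{r}$ is needed there. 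At every other boundary hypersurface $H_{ij}$ of $M^2_{\QFB}$ with $(i,j) \neq (0,0)$, the pushforward involves integration along a direction normal to a boundary hypersurface of $M$, forcing strict positivity. These are exactly the hypotheses on $\mathfrak{s} - \mathfrak{r}$ and $\cE - \mathfrak{r}$, and the symmetry of Remark~\ref{ps.0a} ensures that both pushforwards are simultaneously controlled by a single set of inequalities.

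The $\Qb$ case follows by the same argument on $M^2_{\Qb}$, using the density identity derived from \eqref{pdo.17} and \eqref{pdo.18} that produces the multiweight $\mathfrak{r}'$ of \eqref{mwp.1} in place of $\mathfrak{r}$. The only structural change is that maximal boundary hypersurfaces $H_i$ carry no blow-up of $\Phi_i$ in $M^2_{\Qb}$; the diagonal face $H^{\Qb}_{ii}$ then takes over the role played by $\ff_i$ in the $\QFB$ argument, and only nonnegativity of the exponent is required there. This is exactly the content of the convention $\widetilde{h}_j = 0$ in \eqref{max.1}. The whole proof is essentially bookkeeping: the main technical content is to identify which boundary hypersurfaces of $M^2_{\QFB}$ or $M^2_{\Qb}$ act as $b$-integration faces under $\pi_L$ and $\pi_R$, and I anticipate no essential obstacle beyond this careful accounting of exponents.
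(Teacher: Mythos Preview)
Your proposal is correct and follows essentially the same approach as the paper: reduce to the unweighted case by conjugation, rewrite the kernel times $\pi_L^*\nu_b \cdot \pi_R^*\nu_{\QFB}$ as a conormal $b$-density using \eqref{pdo.13}, and then apply Schur's test by invoking Melrose's pushforward theorem along the $b$-fibrations $\pi_L$ and $\pi_R$. The only refinement worth making is in your explanation of why nonnegativity suffices at $\ff_i$: the cleaner reason is that $\ff_i$ is mapped by both $\pi_L$ and $\pi_R$ onto the boundary hypersurface $H_i$ (not onto the interior of $M$), so its exponent contributes to the leading order of the pushforward at $H_i$ rather than to an integrability condition, and since the other faces over $H_i$ carry strictly positive exponents no logarithmic term is generated in the extended union.
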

\begin{proof}
We will give the proof for $\QFB$ operators, the proof for $\Qb$ operators being similar.  Moreover, using partitions of unity, we can reduce the proof to the case where $E$ and $F$ are trivial line bundles. First, since \eqref{lt.1a} is bounded if and only if 
$$
   x^{-\mathfrak{t}'}\circ P \circ x^{\mathfrak{t}}: L^2_b(M)\to L^2_b(M)
$$
is, replacing $\cE$ by $\cE+\pi_R^{\#}\mathfrak{t}-\pi_L^{\#}\mathfrak{t}'$ and $\mathfrak{s}$ by $\mathfrak{s}+\pi_R^{\#}\mathfrak{t}-\pi_L^{\#}\mathfrak{t}'$ , we may suppose that $\mathfrak{t}=\mathfrak{t}'=0$.    By \eqref{pdo.13}, if $\cK_P$ is the Schwartz kernel of $P$, then
$$
  \pi_L^*\nu_b\cdot \cK_P\in \cA^{(\cE-\mathfrak{r})/(\mathfrak{s}-\mathfrak{r})}_{\QFB,\phg}(M^2_{\QFB}; {}^{b}\Omega(M^2_{\QFB})),
$$
where $\nu_b$ is a choice of non-vanishing $b$ density on $M$.  To obtain the result, it suffices then to show that 
\begin{equation}
       (\pi_L)_*(\pi_L^*\nu_b\cdot \cK_P)\quad \mbox{and} \quad (\pi_R)_*(\pi_L^*\nu_b\cdot \cK_P)
\label{lt.2}\end{equation}
are bounded sections of ${}^{b}\Omega(M)$, for then the result is a direct consequence of Schur's test.  But to show that the sections of \eqref{lt.2} are bounded, it suffices to apply Fubini's theorem locally on $M^2_{\QFB}$ together with the Hölder inequality as in \eqref{com.8b} locally in the fibers of $\pi_L$ and $\pi_R$.  
\end{proof} 

\begin{corollary}
An operator $P\in \Psi^{-\infty}_{\QFB}(M;E,F)$ induces a bounded operator 
\begin{equation}
P: x^{\mathfrak{t}}L^2_b(M;E)\to x^{\mathfrak{t}}L^2_b(M;F)
\label{lt.4}\end{equation}
for all multiweights $\mathfrak{t}$.  Similarly, $P\in \Psi^{-\infty}_{\Qb}(M;E,F)$ induces a bounded operator \eqref{lt.4} for all multiweight $\mathfrak{t}$.
\label{lt.3}\end{corollary}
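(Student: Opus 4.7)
The plan is to deduce Corollary~\ref{lt.3} directly from Proposition~\ref{lt.1} by presenting an element of the small calculus as an element of the enlarged calculus with an appropriate index family. First I would note that any $P \in \Psi^{-\infty}_{\QFB}(M;E,F)$ may be viewed as lying in $\Psi^{-\infty,\cE/\mathfrak{s}}_{\QFB}(M;E,F)$ with $\cE(\ff_i) = \bbN_0$ (to encode smoothness at the front faces), $\cE(H_{ij}) = \emptyset$, and $\mathfrak{s}$ chosen to be $+\infty$ at every boundary hypersurface (reflecting the rapid vanishing at non-front faces and the fact that $\mathfrak{s}$ at $\ff_i$ is unconstrained when the full expansion is smooth).

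Next I would take $\mathfrak{t}' = \mathfrak{t}$ in Proposition~\ref{lt.1} and verify its two hypotheses at each boundary hypersurface of $M^2_{\QFB}$. At each non-front face $H_{ij}$ with $(i,j) \neq (0,0)$ both conditions hold trivially since $\min \Re \cE = +\infty$ and $\mathfrak{s} = +\infty$ there. At each front face $\ff_i$ the key input is the explicit formula $\pi_R^* x_i = \rho_{\ff_i} \prod_{j} \rho_{ji}$ recorded just before \eqref{pdo.14}, together with the symmetric one $\pi_L^* x_i = \rho_{\ff_i} \prod_j \rho_{ij}$. These show that at a point in the interior of $\ff_i$ only the factor coming from $x_i$ vanishes (and to order one) under either projection, so that $\pi_L^\# \mathfrak{t}(\ff_i) = \mathfrak{t}(H_i) = \pi_R^\# \mathfrak{t}(\ff_i)$. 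Combined with $\mathfrak{r}(\ff_i) = 0$, which is immediate from \eqref{pdo.14} since the product there is taken only over $H_{ji}$ and not over front faces, this produces the equality $\Re(\cE - \mathfrak{r} + \pi_R^\# \mathfrak{t} - \pi_L^\# \mathfrak{t})(\ff_i) = 0$, which is precisely the borderline case the proposition explicitly allows at $\ff_i$. The strict $\mathfrak{s}$-inequality is automatic since $\mathfrak{s}(\ff_i)$ can be chosen arbitrarily large.

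For the $\Qb$ assertion I would run the identical argument against the $\Qb$ half of Proposition~\ref{lt.1}, treating $\cE(\ff_i^{\Qb}) = \cE(H_{ii}^{\Qb}) = \bbN_0$ (for $H_i$ non-maximal and maximal respectively), $\cE(H_{ij}^{\Qb}) = \emptyset$ otherwise, and $\mathfrak{s}=+\infty$ throughout. The only new bookkeeping is at the boundary hypersurfaces $H_{ii}^{\Qb}$ associated to maximal $H_i$, which replace the absent $\ff_i^{\Qb}$; but these are handled in the same way, since the defining functions $\pi_L^*x_i$ and $\pi_R^*x_i$ each vanish to order one there, $\mathfrak{r}'(H_{ii}^{\Qb}) = 0$ by \eqref{mwp.1} (the product there is taken over non-maximal indices only), and the cancellation $\pi_R^\# \mathfrak{t} - \pi_L^\# \mathfrak{t} = 0$ again reduces the condition to the borderline case the proposition allows. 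I do not anticipate any genuine obstacle: the entire corollary is a careful exponent count at the boundary faces that meet the lifted diagonal, and Proposition~\ref{lt.1} has been formulated precisely so that this count succeeds.
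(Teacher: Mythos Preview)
Your proposal is correct and is precisely the intended deduction: the paper states Corollary~\ref{lt.3} without proof immediately after Proposition~\ref{lt.1}, and the only thing to do is exactly what you describe---realize the small calculus as the case $\cE(\ff_i)=\bbN_0$, $\cE=\emptyset$ elsewhere, $\mathfrak{s}=\infty$, and check that with $\mathfrak{t}'=\mathfrak{t}$ the weights cancel at the front faces (respectively at $H_{ii}^{\Qb}$ for maximal $H_i$ in the $\Qb$ case) so that the borderline equality permitted by Proposition~\ref{lt.1} is attained there while all other faces are handled by rapid decay.
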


To upgrade these results to operators of order $0$, we can use Hörmander's trick, which relies on the construction of an approximate square root.  

\begin{lemma}
Suppose that $\cV=\QFB$ or $\cV=\Qb$.  Then given $B\in \Psi^{0}_{\cV}(M)$ which is formally self-adjoint with respect to a choice of $b$ density $\nu_b$ on $M$, there exists $C>0$ such that 
$$
C+B= A^*A+R
$$
for some $A\in \Psi^0_{\cV}(M)$ and $R\in \Psi^{\infty}_{\cV}(M)$. 
\label{lt.5}\end{lemma}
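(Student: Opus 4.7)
The plan is to carry out the H\"ormander square-root construction in the $\QFB$ (respectively $\Qb$) calculus, by induction on the depth of $M$. The base case, $M$ closed, is the standard fact for $\Psi^0$ on a closed manifold. The inductive step requires matching not only the principal symbol of a candidate square root but also its normal operator at each front face $\ff_i$.

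First I would construct a symbolic square root. Since $B$ is formally self-adjoint, $b_0 := \sigma_0(B) \in S^{[0]}({}^{\cV}T^*M;\pi^*\End(E))$ is pointwise self-adjoint, and being homogeneous of degree zero at fibre infinity it is uniformly bounded. Pick $C > 0$ large enough that $C\cdot \Id + b_0$ is uniformly positive-definite as a pointwise endomorphism, and set $a_0 := (C\cdot \Id + b_0)^{1/2} \in S^{[0]}$.

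Next I would produce compatible normal operators. By Lemma \ref{sm.19}, the normal operator $N_i(B) \in \Psi^0_{\ff_i}(H_i;E)$ is a family of ${}^{\phi}NS_i$-suspended $\QFB$ operators on the fibration $\phi_i : H_i \to S_i$, formally self-adjoint because $B$ is. Since the fibers $Z_i$ have strictly smaller depth, one can invoke the inductive hypothesis on each fiber; to handle the suspension I would Fourier-transform in the suspension variable to obtain a smooth family $\Upsilon \mapsto \widehat{N_i(B)}(\Upsilon) \in \Psi^0_{\QFB}(Z_i;E)$ of self-adjoint order-zero operators. For $|\Upsilon|$ large the parameter-dependent principal symbol is positive-elliptic, giving a symbolic square root; for bounded $\Upsilon$ the inductive hypothesis on $Z_i$ produces a square root smoothly in the parameter. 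Checking the decay conditions \eqref{sm.15b}--\eqref{sm.16} yields a self-adjoint suspended $n_i$ of order $0$ with $\sigma_0(n_i) = a_0|_{H_i}$ and $n_i^* n_i - (C + N_i(B)) \in \dot{\Psi}^{-\infty}_{{}^{\phi}NS_i}(H_i/S_i;E)$.

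Finally I would assemble and iterate: pick $A_0 \in \Psi^0_\cV(M;E)$ self-adjoint with $\sigma_0(A_0) = a_0$ and $N_i(A_0) = n_i$ at every front face (possible by a partition-of-unity patching followed by symmetrization), and set $E_1 := (C+B) - A_0^*A_0$, which lies in $\Psi^{-1}_\cV(M;E)$ with residual normal operators at each $\ff_i$. A standard asymptotic-sum iteration, solving the symbolic equation $2 a_0 \sigma_{-k}(A_k) \equiv -\sigma_{-k}(E_k)$ together with the linearized normal-operator equations at each $\ff_i$ at every step (both invertible because $a_0$ is positive and the $n_i$ are square roots of positive operators), produces $A \sim \sum_{k\ge 0} A_k \in \Psi^0_\cV(M;E)$ with $R := (C+B) - A^*A \in \Psi^{-\infty}_\cV(M;E)$, as required. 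The principal technical obstacle is the normal-operator step: it requires an analogue of the lemma for families of suspended $\QFB$ operators varying smoothly over $S_i$, which one must verify by cleanly treating the large- and bounded-$\Upsilon$ regimes of the suspension Fourier transform and matching them into a single element of the suspended calculus with the right growth behavior.
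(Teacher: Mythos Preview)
Your argument is essentially correct but does far more work than the lemma requires. Note that the conclusion only asks for $R\in\Psi^{-\infty}_{\cV}(M)$, \emph{not} for $R$ to have vanishing normal operators or to lie in $\dot{\Psi}^{-\infty}(M)$. Consequently the paper's proof is purely symbolic: choose $C>0$ so that $C+\sigma_0(B)$ is positive, take any self-adjoint $A_0\in\Psi^0_{\cV}(M)$ with $\sigma_0(A_0)=(C+\sigma_0(B))^{1/2}$, and then iteratively correct by $Q_\ell\in\Psi^{-\ell-1}_{\cV}(M)$ solving $2\sigma_0(A_0)\sigma_{-\ell-1}(Q_\ell)=\sigma_{-\ell-1}(R_\ell)$ at the principal-symbol level only, followed by an asymptotic sum. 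No induction on depth, no normal operators, no suspended calculus---the argument uses nothing beyond the short exact sequence \eqref{sm.2} and multiplicativity \eqref{sm.3}, and is therefore identical to the closed-manifold case.

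Your route would yield the stronger statement that $R$ has residual normal operators at every $\ff_i$, which is genuinely harder: it requires the suspended analogue of the lemma on lower-depth fibers (your acknowledged ``principal technical obstacle''), and also a compatibility check at intersections $\ff_i\cap\ff_j$ so that the prescribed $n_i$ can be simultaneously realized by a single $A_0$. Neither of these is needed here, since in the application (Theorem~\ref{lt.6}) the remainder $R$ is controlled simply by the $L^2$-boundedness of $\Psi^{-\infty}_{\cV}$ from Corollary~\ref{lt.3}.
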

\begin{proof}
Take $C>0$ sufficiently large so that $C+\sigma_0(B)$ has a unique positive square root.  Choose $A_0\in \Psi^0_{\cV}(M)$ so that 
$$
    \sigma_0(A_0)=(C+\sigma_0(B))^{\frac12}.
$$ 
Replacing $A_0$ by $\frac12(A_0+A_0^*)$ if necessary, we can assume that $A_0$ is formally self-adjoint with respect to the $b$ density $\nu_b$.  In this case,
$$
      C+P-A_0^2\in \Psi^{-1}_{\cV}(M),
$$
so that $A$ is a square root up to a term in $\Psi^{-1}_{\cV}(M)$.  To improve this error term, we can proceed by induction and assume that we have found a formally self-adjoint operator $A_{\ell}\in \Psi^0_{\cV}(M)$ such that 
$$
      R_{\ell}:=C+P-A_{\ell}^2\in \Psi^{-\ell-1}_{\cV}(M).
$$
We can then try to find a better square root approximation $A_{\ell+1}=A_{\ell}+Q_{\ell}$ with $Q_{\ell}\in\Psi^{-\ell-1}_{\cV}(M)$ a formally self-adjoint operator to be found.   Since we have 
$$
    C+P-A_{\ell+1}^{2}= R_{\ell} -Q_{\ell}A_{\ell}-A_{\ell}Q_{\ell} \quad \mod \; \Psi^{-\ell-2}_{\cV}(M),
$$ 
this suggests to take $Q_{\ell}$ such that 
$$
  \sigma_{-\ell-1}(R_{\ell})= 2\sigma_0(A_{\ell})\sigma_{-\ell-1}(Q_{\ell}),
$$
which can be done with $Q_{\ell}$ formally self-adjoint.  We can then define $A\in\Psi^0_{\cV}(M)$ to be a formally self-adjoint operator given by an asymptotic sum specified by the $A_{\ell}$, so that 
$$
   C+B-A^2\in \Psi^{-\infty}_{\cV}(M).
$$
\end{proof}

\begin{theorem}
Let $E_1$ and $E_2$ be vector bundles on $M$ and set $\cH_i= x^{\mathfrak{t}_i}L^2_b(M;E_i)$  Then an operator $P$ in $\Psi^{0,\cE/\mathfrak{s}}_{\QFB,\cn}(M;E_1,E_2)$ induces a bounded operator 
$$
P: x^{\mathfrak{t}_1}L^2_b(M;E_1)\to x^{\mathfrak{t}_2}L^2_b(M;E_2)
$$
provided  $\Re(\mathfrak{s}-\mathfrak{r}+\pi_R^{\#}\mathfrak{t}_1-\pi_L^{\#}\mathfrak{t}_2)> 0$ and $\Re(\cE-\mathfrak{r}+\pi_R^{\#}\mathfrak{t}_1-\pi_L^{\#}\mathfrak{t}_2)\ge 0$ with strict inequality except possibly at $\ff_i$ for $H_i$ a boundary hypersurface of $M$.
  Moreover,
the induced linear map
\begin{equation}
   \Psi^{0,\cE/\mathfrak{s}}_{\QFB,\cn}(M;E_1,E_2)\to \cL(\cH_1,\cH_2)
\label{lt.6a}\end{equation}
is continuous, where $ \cL(\cH_1,\cH_2)$ is the space of continuous linear maps $\cH_1\to \cH_2$ with the topology induced by the operator norm.  
Similarly, an element $P\in \Psi^{0,\cE/\mathfrak{s}}_{\Qb,\cn}(M;E_1,E_2)$ induces a bounded operator 
$$
P: x^{\mathfrak{t}_1}L^2_b(M;E_1)\to x^{\mathfrak{t}_2}L^2_b(M;E_2)
$$
provided $
\Re(\mathfrak{s}-\mathfrak{r}'+\pi_R^{\#}\mathfrak{t}_1-\pi_L^{\#}\mathfrak{t}_2)> 0$  and $\Re(\cE-\mathfrak{r}'+\pi_R^{\#}\mathfrak{t}_1-\pi_L^{\#}\mathfrak{t}_2)\ge 0$ with strict inequality except possibly at $H_{ii}^{\Qb}$ for $H_i$ maximal and at $\ff_i^{\Qb}$ for $H_i$ non-maximal.
 Again, the induced map
\begin{equation}
   \Psi^{0,\cE/\mathfrak{s}}_{\Qb,\cn}(M;E_1,E_2)\to \cL(\cH_1,\cH_2)
\label{lt.6b}\end{equation}
is continuous.
\label{lt.6}\end{theorem}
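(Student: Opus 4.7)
The plan is to decompose $P = P_0 + P_\infty$ according to the definition $\Psi^{0,\cE/\mathfrak{s}}_{\QFB} = \Psi^{0}_{\QFB} + \Psi^{-\infty,\cE/\mathfrak{s}}_{\QFB}$ in \eqref{pdo.9} and treat the two summands separately. The residual part $P_\infty \in \Psi^{-\infty,\cE/\mathfrak{s}}_{\QFB}(M;E_1,E_2)$ falls directly under Proposition~\ref{lt.1}: the hypotheses on $(\cE,\mathfrak{s})$ relative to $(\mathfrak{r},\mathfrak{t}_1,\mathfrak{t}_2)$ are exactly what is needed there to obtain $P_\infty\colon x^{\mathfrak{t}_1}L^2_b(M;E_1) \to x^{\mathfrak{t}_2}L^2_b(M;E_2)$, with a bound depending continuously on the defining seminorms of the Schwartz kernel of $P_\infty$. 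For the smooth part $P_0 \in \Psi^0_{\QFB}(M;E_1,E_2)$, the conditions in the statement force $\Re\mathfrak{t}_1(H_i)\ge \Re\mathfrak{t}_2(H_i)$ at every $H_i$, so that the conjugated kernel $\pi_L^\ast x^{-\mathfrak{t}_2}\,\pi_R^\ast x^{\mathfrak{t}_1}\,\kappa_{P_0}$ remains well defined on $M^2_{\QFB}$: at each front face $\ff_i$ the factors $\pi_L^\ast x_i$ and $\pi_R^\ast x_i$ both vanish simply, so their powers combine to a factor $\rho_{\ff_i}^{\mathfrak{t}_1(H_i)-\mathfrak{t}_2(H_i)}$ which is bounded, while the rapid vanishing of $\kappa_{P_0}$ at the faces $H_{ij}$, $H_{i0}$, $H_{0i}$ absorbs any negative powers of boundary defining functions arising there.

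Thus $\widetilde{P}_0 := x^{-\mathfrak{t}_2} P_0\, x^{\mathfrak{t}_1}$ belongs to $\Psi^0_{\QFB}(M;E_1,E_2)$, and it suffices to show $\widetilde{P}_0\colon L^2_b(M;E_1)\to L^2_b(M;E_2)$ is bounded, since this is equivalent to boundedness of $P_0\colon x^{\mathfrak{t}_1}L^2_b \to x^{\mathfrak{t}_2}L^2_b$. For this I will run the standard H\"ormander square-root trick, adapted to the $\QFB$ calculus via Lemma~\ref{lt.5}. The operator $B := \widetilde{P}_0^\ast\widetilde{P}_0 \in \Psi^0_{\QFB}(M;E_1)$ is formally self-adjoint for a fixed $b$-density and hermitian inner product on $E_1$, with principal symbol $|\sigma_0(\widetilde{P}_0)|^2 \ge 0$, bounded above on $\QFBT^\ast M$ by homogeneity and compactness of the cosphere bundle. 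Choose $K^2$ strictly larger than $\sup|\sigma_0(\widetilde{P}_0)|^2$. Applying Lemma~\ref{lt.5} to $K^2\Id - B$ (whose principal symbol is then strictly positive, admitting a smooth positive square root) produces $A\in \Psi^0_{\QFB}(M;E_1)$ and $R\in\Psi^{-\infty}_{\QFB}(M;E_1)$ with $K^2\Id - B = A^\ast A + R$, and for any $u\in\dot{\cC}^\infty(M;E_1)$ pairing with $u$ gives
\[
\|\widetilde{P}_0 u\|_{L^2_b}^2 = K^2\|u\|_{L^2_b}^2 - \|Au\|_{L^2_b}^2 - \langle Ru,u\rangle \le \bigl(K^2 + \|R\|_{L^2_b\to L^2_b}\bigr)\|u\|_{L^2_b}^2.
\]
The operator norm of $R$ is finite by Corollary~\ref{lt.3}, and density of $\dot{\cC}^\infty(M;E_1)$ in $L^2_b(M;E_1)$ finishes the bound.

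The continuity of the linear map \eqref{lt.6a} follows because every constant appearing in the argument depends continuously on a seminorm of $P$: the constant $K$ can be chosen as any upper bound for a symbol seminorm of $\widetilde{P}_0$ (which itself is controlled by a seminorm of $P$); the inductive symbolic construction in the proof of Lemma~\ref{lt.5} produces $A$ and $R$ whose seminorms are likewise controlled by those of $\widetilde{P}_0$; and Proposition~\ref{lt.1} gives a Schur bound on the $P_\infty$ contribution whose constant depends continuously on the relevant polyhomogeneous seminorms of $P_\infty$. The $\Qb$ statement \eqref{lt.6b} is obtained by repeating the argument verbatim on $M^2_{\Qb}$ with $\mathfrak{r}$ replaced by $\mathfrak{r}'$ from \eqref{mwp.1}, using the $\Qb$ versions of Lemma~\ref{lt.5}, Proposition~\ref{lt.1}, and Corollary~\ref{lt.3}; the exceptional faces at which equality is tolerated (namely $H_{ii}^{\Qb}$ for $H_i$ maximal and $\ff_i^{\Qb}$ otherwise) correspond exactly to the places where the kernels of the smooth part need not vanish.

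The main technical point is the verification that conjugation by weights preserves the $\Psi^0_{\QFB}$ (or, for $\Qb$, the $\Psi^0_{\Qb}$) class, which requires tracing how $\pi_L^\ast x_i$ and $\pi_R^\ast x_i$ behave near all corners of the double space, in particular verifying that the differences of powers of boundary defining functions produced by the conjugation are absorbed by the rapid vanishing of $\kappa_{P_0}$ at the non-front faces; the hypothesis $\Re\mathfrak{t}_1\ge \Re\mathfrak{t}_2$ at each $H_i$ is precisely what ensures no unabsorbed negative power remains at $\ff_i$. Once this bookkeeping is settled, H\"ormander's trick operates as in the boundaryless setting, with the residual term rendered harmless by the Schur estimate of Corollary~\ref{lt.3}.
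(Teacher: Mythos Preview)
Your proof follows essentially the same strategy as the paper's: decompose into the small-calculus part and the residual part, handle the latter by Proposition~\ref{lt.1} (Schur's test), and handle the former by H\"ormander's square-root trick via Lemma~\ref{lt.5} together with Corollary~\ref{lt.3}. The paper conjugates by the weights first and then splits, whereas you split first and then conjugate; this is a cosmetic rearrangement, and both orderings require the same implicit check that conjugation of the $\Psi^0_{\QFB}$ part stays in the class (your remark that the hypotheses force $\Re\mathfrak{t}_1(H_i)\ge\Re\mathfrak{t}_2(H_i)$ is exactly what is needed here, and the paper glosses over the same point).

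The one place where the paper's argument is genuinely different, and cleaner, is the continuity of \eqref{lt.6a}. You argue by tracking all constants through the construction, in particular asserting that the seminorms of $A$ and $R$ produced by Lemma~\ref{lt.5} are controlled by those of $B$. This is plausible but not established by the proof of Lemma~\ref{lt.5} as written: the square-root construction is nonlinear, involves a choice of constant $C$ depending on $B$, and terminates in an asymptotic summation, so verifying that the assignment $B\mapsto R$ is bounded into $\Psi^{-\infty}_{\QFB}$ (or even into $\cL(L^2_b)$) requires additional work you have not supplied. The paper sidesteps this entirely with the closed graph theorem: the map \eqref{lt.6a} is linear, the weak pairings $P\mapsto\langle u_2,Pu_1\rangle$ with $u_i\in\dot{\cC}^\infty$ are manifestly continuous in $P$, hence the graph is closed for a topology weaker than the operator-norm topology and a fortiori closed in the latter, giving continuity immediately. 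This is both shorter and avoids the constant-tracking you would otherwise have to justify.
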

\begin{proof}
We will present the proof for $\QFB$ operators, the proof for $\Qb$ operators being similar.  First, using  partitions of unity, we can assume that $E$ and $F$ are trivial line bundles.  Replacing $P$ by $x^{-\mathfrak{t}_2}\circ P\circ x^{\mathfrak{t}_1}$, we can assume as well that $\mathfrak{t}_1=\mathfrak{t}_2=0$.  By Proposition~\ref{lt.1}, we can further assume $P\in \Psi^{0}_{\QFB}(M)$.   Fix a non-vanishing $b$ density $\nu_b$ and let $P^*$ be the formal adjoint of $P$ with respect to this choice.  By Lemma~\ref{lt.5} applied to $B:=-P^*P$, there exists $C>0$ and $A\in\Psi^0_{\QFB}(M;E,F)$ such that 
$$
     C-P^*P= A^*A+R
$$ 
for some $R\in \Psi^{-\infty}_{\QFB}(M)$.  If we denote the $L^2$ norm of the $b$ density $\nu_b$ by $\|\cdot\|_b$ and the inner product by $\langle\cdot,\cdot\rangle_b$, then given $u\in \CI_c(M\setminus \pa M)$, we compute that 
\begin{equation}
\begin{aligned}
\|Pu\|_b^2 &= C\|u\|_b^2- \|Au\|_b^2-\langle u, Ru\rangle_b \\
 &\le C\|u\|_b^2+|\langle u, Ru\rangle_b|\le C\|u\|_b^2+  \|u\|_b\|Ru\|_b  \le  C'\|u\|_b^2
 \end{aligned}
\label{lt.7}\end{equation}
for some constant $C'>C$, where we have used Corollary~\ref{lt.3} in the last step to control the term $\|Ru\|_b$.  Since $\CI_c(M\setminus \pa M)$ is dense in $L^2_b(M)$, $L^2$ boundedness follows.  To prove the continuity of the linear map \eqref{lt.6a}, notice first that the map
$$
 \Psi^{0,\cE/\mathfrak{s}}_{\QFB,\cn}(M;E_1,F_1)\ni P \mapsto \langle u_2, Pu_1\rangle_{\cH_2}
 $$    
 is continuous for all $u_1\in \dot{\cC}^{\infty}(M;E_1)$ and $u_2\in\dot\cC^{\infty}(M;E_2)$.  Hence, the graph of the linear map \eqref{lt.6a} is closed with respect to the topology induced by the semi-norms 
 $A\mapsto |\langle u_2,Au_1\rangle_{\cH_2}|$ on $\cL(\cH_1,\cH_2)$.  Since this topology is weaker than the operator norm topology, this means that the graph of this map is also closed when we use the operator norm topology on $\cL(\cH_1,\cH_2)$.  The map \eqref{lt.6a} is therefore continuous by the closed graph theorem.
\end{proof}

For some applications in our companion paper \cite{KR2}, we need an extension of this result on the weighted space  of $L^2_b$ $\QFB$ conormal sections
$$
    x^{\mathfrak{t}}\cA_{\QFB,2}(M;E):=\{ \sigma\in x^{\mathfrak{t}}L^2_b(M;E) \; | \; \forall k\in \bbN, \; \forall X_1,\ldots, X_k\in \cV_{\QFB}(M), \quad \nabla_{X_1}\cdots\nabla_{X_k}\sigma\in x^{\mathfrak{t}}L^2_b(M;E) \}
$$
for $\nabla$ a choice of connection for the vector bundle $E$.  This is a Fréchet space with semi-norms induced by the norms of $x^{\mathfrak{t}}L^2_b(M;E)$ and the $x^{\mathfrak{t}}L^2_b$ norms of the $\QFB$ derivatives.

\begin{corollary}
Let $E_1$ and $E_2$ be vector bundles on $M$.  Then an operator $P$ in $\Psi^{0,\cE/\mathfrak{s}}_{\QFB,\cn}(M;E_1,E_2)$ induces a continuous linear map 
$$
P: x^{\mathfrak{t}_1}\cA_{\QFB,2}(M;E_1)\to x^{\mathfrak{t}_2}\cA_{\QFB,2}(M;E_2)
$$
provided  $\Re(\mathfrak{s}-\mathfrak{r}+\pi_R^{\#}\mathfrak{t}_1-\pi_L^{\#}\mathfrak{t}_2)> 0$ and $\Re(\cE-\mathfrak{r}+\pi_R^{\#}\mathfrak{t}_1-\pi_L^{\#}\mathfrak{t}_2)\ge 0$ with strict inequality except possibly at $\ff_i$ for $H_i$ a boundary hypersurface of $M$.  In particular, an operator $P\in \Psi^{0}_{\QFB}(M,E_1,E_2)$ induces a continuous linear map 
$$
P: x^{\mathfrak{t}}\cA_{\QFB,2}(M;E_1)\to x^{\mathfrak{t}}\cA_{\QFB,2}(M;E_2)
$$
for all multiweight $\mathfrak{t}$. 
\label{con.2}\end{corollary}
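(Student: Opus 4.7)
The plan is to derive the result from Theorem~\ref{lt.6} by a standard commutator argument reducing iterated $\QFB$ derivatives of $P\sigma$ to iterated $\QFB$ derivatives of $\sigma$ alone. By Theorem~\ref{lt.6}, $P$ already induces a bounded map $x^{\mathfrak{t}_1}L^2_b(M;E_1)\to x^{\mathfrak{t}_2}L^2_b(M;E_2)$, so the only remaining task is to control $\nabla_{X_1}\cdots\nabla_{X_k}(P\sigma)$ in $x^{\mathfrak{t}_2}L^2_b$ for arbitrary $X_1,\ldots,X_k\in\cV_{\QFB}(M)$.

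The key technical input I will establish is that for any $X\in\cV_\QFB(M)$, the commutator $[\nabla_X,P]$ again lies in $\Psi^{0,\cE/\mathfrak{s}}_{\QFB}(M;E_1,E_2)$. To this end, I will view $\nabla_X$ as an element of $\Psi^1_{\QFB}$ with trivial conormal data (index set $\bbN_0$ at each front face $\ff_i$ and $\emptyset$ at every $H_{ij}$, multiweight $\infty$ throughout) and apply the composition theorem~\ref{co.9} to $\nabla_X\circ P$ and $P\circ\nabla_X$. Inspection of the pushforward formulas~\eqref{co.9a}--\eqref{co.9b}, together with the observations that $\cE+\bbN_0=\cE$ and $\mathfrak{s}\dot{+}\infty=\mathfrak{s}$ for the trivial data attached to $\nabla_X$, will show that both compositions lie in $\Psi^{1,\cE/\mathfrak{s}}_{\QFB}$. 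Since $\sigma_1(\nabla_X)=i\langle\xi,X\rangle\cdot\Id$ commutes with $\sigma_0(P)$, the commutator has vanishing principal symbol of order $1$ and therefore lies in $\Psi^{0,\cE/\mathfrak{s}}_{\QFB}$, by the short exact sequence \eqref{sm.2} applied to both summands in the splitting $\Psi^{m,\cE/\mathfrak{s}}_{\QFB}=\Psi^m_{\QFB}+\Psi^{-\infty,\cE/\mathfrak{s}}_{\QFB}$.

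Once this is in hand, a straightforward induction on $k$ yields a finite decomposition
\begin{equation*}
\nabla_{X_1}\cdots\nabla_{X_k}\circ P=\sum_{\alpha} P_\alpha\circ D_\alpha
\end{equation*}
in which each $P_\alpha\in\Psi^{0,\cE/\mathfrak{s}}_{\QFB}(M;E_1,E_2)$ and each $D_\alpha\in\Diff^{j_\alpha}_{\QFB}(M;E_1)$ with $j_\alpha\le k$. For $\sigma\in x^{\mathfrak{t}_1}\cA_{\QFB}(M;E_1)$, each $D_\alpha\sigma$ lies in $x^{\mathfrak{t}_1}L^2_b(M;E_1)$, and Theorem~\ref{lt.6} delivers a bound $\|P_\alpha D_\alpha\sigma\|_{x^{\mathfrak{t}_2}L^2_b}\le C_\alpha\|D_\alpha\sigma\|_{x^{\mathfrak{t}_1}L^2_b}$; summing over $\alpha$ shows that $P\sigma\in x^{\mathfrak{t}_2}\cA_{\QFB}(M;E_2)$ and that each defining semi-norm of $P\sigma$ is controlled by a finite sum of semi-norms of $\sigma$, giving continuity of the induced Fr\'echet map. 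The \emph{in particular} statement follows by taking $\cE$ trivial and $\mathfrak{s}=\infty$ off the front faces with $\mathfrak{t}_1=\mathfrak{t}_2=\mathfrak{t}$, for which the weighting hypothesis on $\mathfrak{s}-\mathfrak{r}+\pi_R^{\#}\mathfrak{t}_1-\pi_L^{\#}\mathfrak{t}_2$ is automatic.

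The main point demanding care is the bookkeeping in the composition theorem: one must parse \eqref{co.9a}--\eqref{co.9b} for the degenerate input data attached to a differential operator and verify that none of the extended-union or $\dot{+}$ contributions degrades the index family or multiweight of $P$. Everything else is routine once the commutator claim is secured.
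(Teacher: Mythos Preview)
Your proposal is correct and follows essentially the same approach as the paper's own proof: both reduce to Theorem~\ref{lt.6} via the commutator identity $\nabla_X P=[\nabla_X,P]+P\nabla_X$, using that $\sigma_1(\nabla_X)$ is scalar so the commutator drops to $\Psi^{0,\cE/\mathfrak{s}}_{\QFB}$, and then iterate. The paper first makes the cosmetic reductions $E=E_1\oplus E_2$ and $\mathfrak{t}_1=\mathfrak{t}_2=0$ and states the recursion more tersely, while you spell out the composition-theorem bookkeeping and the explicit decomposition $\nabla_{X_1}\cdots\nabla_{X_k}P=\sum_\alpha P_\alpha D_\alpha$; but the substance is identical.
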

\begin{proof}
Setting $E=E_1\oplus E_2$, we can regard $P$ as an element of $\Psi^{0,\cE/\mathfrak{s}}_{\QFB,\cn}(M;E)$, so that it suffices to show that $P$ induces a continuous linear map
$$
    P: x^{\mathfrak{t}_1}\cA_{\QFB,2}(M;E)\to x^{\mathfrak{t}_2}\cA_{\QFB,2}(M;E). 
$$
Replacing $P$ by $x^{-\mathfrak{t}_2}Px^{\mathfrak{t}_1}$, we can further assume that $\mathfrak{t}_1=\mathfrak{t}_2=0$.
Let $\nabla$ be a choice of connection for $E$ and let $X\in \cV_{\QFB}(M)$ be a $\QFB$ vector field.  Then the principal symbol of $\nabla_X\in \Psi^1_{\QFB}(M;E)$ commutes with the one of $P$, so that $[\nabla_X, P] \in \Psi^{0,\cE/\mathfrak{s}}_{\QFB,\cn}(M;E)$ is of order $0$.  Since $\nabla_X$ induces by definition a continuous linear operator 
$$
   \nabla_X: \cA_{\QFB,2}(M;E)\to \cA_{\QFB,2}(M;E),
$$
the result therefore follows by using Theorem~\ref{lt.6} and 
$$
      \nabla_XP= [\nabla_X,P]+ P \nabla_X
$$
to show recursively that for all $k\in\bbN$ and $X_1,\ldots, X_k\in \cV_{\QFB}(M)$, 
$$
        \nabla_{X_1}\cdots\nabla_{X_{k}}P: \cA_{\QFB,2}(M;E)\to L^2_b(M;E)
$$
is a continuous linear map.  

\end{proof}

We can use the continuity of the map \eqref{lt.6a} to extract a simple compactness criterion for operators of negative order.

\begin{corollary}
Consider a multiweight $\mathfrak{s}$ and an index family $\cE$ such that $\Re(\mathfrak{s}-\mathfrak{r}+\pi_R^{\#}\mathfrak{t}_1-\pi_L^{\#}\mathfrak{t}_2)> 0$ and 
$
\Re(\cE-\mathfrak{r}+\pi_R^{\#}\mathfrak{t}_1-\pi_L^{\#}\mathfrak{t}_2)\ge 0,
$
with strict inequality except possibly at $\ff_i$ for $H_i$ a boundary hypersurface of $M$.  Then for $\delta>0$, 
an operator $A\in \Psi^{-\delta,\cE/\mathfrak{s}}_{\QFB,\cn}(M;E_1,E_2)$ is compact when acting from $\cH_1$ to $\cH_2$  for $\cH_i= x^{\mathfrak{t}_i}L^2_{b}(M;E_i)$ provided $N_j(A)=0$ for all boundary hypersurface $H_j$ of $M$.  Similarly, consider instead a multiweight $\mathfrak{s}$  such that  $\Re(\mathfrak{s}-\mathfrak{r}'+\pi_R^{\#}\mathfrak{t}_1-\pi_L^{\#}\mathfrak{t}_2)> 0$, and an index family $\cE$  such that  
$$
\Re(\cE-\mathfrak{r}'+\pi_R^{\#}\mathfrak{t}_1-\pi_L^{\#}\mathfrak{t}_2)\ge 0,
$$
with strict inequality except possibly at $H_{ii}^{\Qb}$ for $H_i$ maximal and at $\ff_{i}^{\Qb}$ for $H_i$ not maximal.  Then for $\delta>0$, an operator $A\in \Psi^{-\delta,\cE/\mathfrak{s}}_{\Qb,\cn}(M;E_1,E_2)$ is compact in $\cL(\cH_1,\cH_2)$ provided its restriction to $H_{ii}^{\Qb}$ for $H_i$ maximal and to $\ff^{\Qb}_i$ for $H_i$ non-maximal vanishes.
 \label{mp.15}\end{corollary}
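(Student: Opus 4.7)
The plan is to approximate $A$ in the operator norm of $\cL(\cH_1,\cH_2)$ by operators whose Schwartz kernels are compactly supported in the open interior $M^\circ\times M^\circ$. Such approximants will be compact---they are classical pseudodifferential operators of negative order $-\delta$ supported on a compact piece of $M^\circ$, hence compact on $L^2$ by Rellich--Kondrachov---and compactness passes to operator-norm limits.

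The first step is to extract from $A$ a strictly positive gain of boundary decay in the target. The vanishing $N_j(A)=0$ for every $j$ forces $\kappa_A$ to vanish at each front face $\ff_j$, so $\inf\Re\cE(\ff_j)>0$. Combined with the strict weight inequalities at the other boundary hypersurfaces, this allows one to choose a strictly positive multiweight $\vec\epsilon>0$ so small that the hypotheses of Theorem~\ref{lt.6} remain valid when the target weight $\mathfrak{t}_2$ is replaced by $\mathfrak{t}_2+\vec\epsilon$. Consequently Theorem~\ref{lt.6} yields that $A$ is bounded as a map
\[
A:\cH_1\;\longrightarrow\; x^{\mathfrak t_2+\vec\epsilon}L^2_b(M;E_2).
\]

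Next I would choose an exhaustion $\chi_n\in\CI_c(M^\circ)$ with $\chi_n\equiv 1$ on $\{v\ge 1/n\}$, where $v=\prod_i x_i$, and set $A_n:=M_{\chi_n}\,A\,M_{\chi_n}$. Its Schwartz kernel is supported in a compact subset of $M^\circ\times M^\circ$ and is conormal of order $-\delta<0$ to the diagonal, so $A_n:\cH_1\to\cH_2$ is compact by the standard Rellich argument on a compact piece of $M^\circ$. To finish, it remains to show $\|A-A_n\|_{\cL(\cH_1,\cH_2)}\to 0$. Splitting
\[
A-A_n=(1-M_{\chi_n})A+M_{\chi_n}A(1-M_{\chi_n})
\]
and treating the second term by the analogous bound applied to the formal adjoint, it suffices to estimate $(1-M_{\chi_n})A$. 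For $u\in\cH_1$ one has
\[
\|(1-\chi_n)Au\|_{\cH_2}\le \|(1-\chi_n)v^{\vec\epsilon}\|_{\infty}\cdot\|Au\|_{x^{\mathfrak t_2+\vec\epsilon}L^2_b}\le C\|(1-\chi_n)v^{\vec\epsilon}\|_{\infty}\|u\|_{\cH_1},
\]
and since $\vec\epsilon>0$ while $(1-\chi_n)$ is supported in $\{v\le 1/n\}$, we have $\|(1-\chi_n)v^{\vec\epsilon}\|_{\infty}\le n^{-\epsilon_{\min}}\to 0$.

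The main technical obstacle is the first step, namely the careful verification that the strict weight inequalities and vanishing normal operators combine to permit a uniform shift $\mathfrak t_2\mapsto\mathfrak t_2+\vec\epsilon$ at every boundary hypersurface simultaneously while preserving the strict inequalities at the mixed faces $H_{ij}$; this is essentially bookkeeping using the pullback formulas for $\pi_L^{\#}$ and $\pi_R^{\#}$ and the form of the multiweight $\mathfrak{r}$ in \eqref{pdo.14}. The $\Qb$ case follows by the same argument with the obvious substitutions, $H^{\Qb}_{ii}$ for $H_i$ maximal and $\ff_i^{\Qb}$ for $H_i$ non-maximal playing the role of the QFB front faces, and the $\Qb$ version of Theorem~\ref{lt.6} supplying the operator-norm continuity.
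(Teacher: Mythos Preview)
Your argument is correct and complete, but it proceeds along a different route than the paper's proof. The paper argues abstractly: it invokes the continuity of the inclusion map $\Psi^{0,\emptyset/\mathfrak w}_{\QFB}(M;E_1,E_2)\to\cL(\cH_1,\cH_2)$ from Theorem~\ref{lt.6} for a sufficiently small $\QFB$-positive multiweight $\mathfrak w$, and then observes that the closure of $\dot{\Psi}^{-\infty}(M;E_1,E_2)$ in $\Psi^{-\delta,\cE/\mathfrak s}_{\QFB}$ with respect to this topology consists exactly of those operators with all normal operators zero. Since $\dot{\Psi}^{-\infty}$ consists of compact operators (closure of finite rank) and the inclusion into $\cL(\cH_1,\cH_2)$ is continuous, compactness follows.

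Your approach is more explicit and hands-on: you extract a quantitative gain of decay from the vanishing of the normal operators, push it through Theorem~\ref{lt.6} to land in a strictly smaller weighted $L^2$ space, and then use spatial cut-offs together with Rellich to exhibit an explicit norm-approximating sequence of compact operators. This has the advantage of being constructive and of making the role of each hypothesis visible; the paper's approach has the advantage of being shorter once the continuity statement in Theorem~\ref{lt.6} is in hand. One small imprecision: you write ``$\inf\Re\cE(\ff_j)>0$'', but strictly speaking the index family $\cE$ is fixed; what you mean (and what you use) is that $A$ actually lies in the smaller space with the leading term at $\ff_j$ removed. Also, for the second term $\chi_n A(1-\chi_n)$ it is slightly cleaner to argue directly via the bound $A:x^{\mathfrak t_1-\vec\epsilon}L^2_b\to\cH_2$ (which follows from Theorem~\ref{lt.6} by the same $\vec\epsilon$-shift on the source side) rather than passing to the adjoint, since the weight conditions are asymmetric.
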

\begin{proof}
We will give the proof for $\QFB$ operators, the proof for $\Qb$ operators being similar.  Replacing $P$ by $x^{-\mathfrak{t}_2}Px^{\mathfrak{t}_1}$ if necessary, we can assume that $\mathfrak{t}_1=\mathfrak{t}_2=0$.  Recall first that the space
$\cK(\cH_1,\cH_2)$ of compact operators acting from  $\cH_1$ to $\cH_2$ is the closure of the operators of finite rank in $\cL(\cH_1,\cH_2)$.  Since $\dot{\cC}^{\infty}(M;E_i)$ is dense in $L^2_{b}(M;E_i)$, the space $\cK(\cH_1,\cH_2)$ can also be seen as the closure of finite rank operators in $\dot{\Psi}^{-\infty}(M;E_1,E_2)$, where 
\begin{equation}
\dot{\Psi}^{-\infty}(M;E_1,E_2):= \{  A\in \Psi^{-\infty}_{\QFB}(M;E_1,E_2)\; |  \; \kappa_A \; \mbox{is rapidly vanishing at} \; \ff_{\QFB}\}.
\label{mp.1}\end{equation}
The finite rank operators are clearly dense in $\dot{\Psi}^{-\infty}(M;E_1,E_2)$, so $\cK(\cH_1,\cH_2)$ is also the closure of \eqref{mp.1} in $\cL(\cH_1,\cH_2)$.  Now, for $\mathfrak{w}$ a $\QFB$ positive multiweight and $\cF$ an index family given by $\bbN_0$ at $\ff_i$ for $H_i\in \cM_1(M)$ and the empty set elsewhere, the map $\Psi^{0,\cF/\mathfrak{w}}_{\QFB,\cn}(M;E_1,E_2)\to \cL(\cH_1,\cH_2)$ is continuous by Theorem~\ref{lt.6}.   On the other hand, using the topology induced by the one of  $\Psi^{0,\cF/\mathfrak{w}}_{\QFB,\cn}(M;E_1,E_2)$ for $\mathfrak{w}$ $\QFB$ positive and sufficiently small, the closure of $\dot{\Psi}^{-\infty}(M;E_1,E_2)$ in $\Psi^{-\delta,\cE/\mathfrak{s}}_{\QFB}(M;E_1,E_2)$ is precisely given by those operators   $A\in \Psi^{-\delta,\cE/\mathfrak{s}}_{\QFB,\cn}(M;E_1,E_2)$ for which $N_i(A)=0$ for all boundary hypersurfaces $H_i$ of $M$.  It follows that those operators are compact.  
\end{proof}

There are more generally natural Sobolev spaces associated to $\QFB$ operators.  For $m>0$, the associated $\QFB$ Sobolev space of order $m$ is 
\begin{equation}
\kridx{H^m_{\QFB}}{HQFB}{QFB Sobolev space}(M;E):=\{  f\in L^2_{\QFB}(M;E) \; |\; Pf\in L^2_{\QFB}(M;E)  \; \forall P\in \Psi^m_{\QFB}(M;E)\}.
\label{mp.21}\end{equation}
For $m<0$, we define instead the $\QFB$ Sobolev space of order $m$ by
\begin{equation}
  H^m_{\QFB}(M;E):= \{  f\in \cC^{-\infty}(M;E) \; |\; f=\sum_{j=1}^N P_jf_j \; \mbox{for some} \; N\in\bbN, \; f_j\in L^2_{\QFB}(M;E),  \; P_j\in \Psi^{-m}_{\QFB}(M;E) \}.
\label{mp.22}\end{equation}
Similarly, we can define the $V$ suspended version by 
\begin{equation}
\kridx{H^m_{\QFB,V}}{HQFBV}{$V$ suspended QFB Sobolev space}(M;E):=\{  f\in L^2_{\QFB}(\bV\times M;E) \; |\; Pf\in L^2_{\QFB}(\bV\times M;E)  \; \forall P\in \Psi^m_{\QFB,V}(M;E)\}
\label{mp.23}\end{equation}
for $m>0$ and 
\begin{multline}
  H^m_{\QFB,V}(M;E):= \{  f\in \cC^{-\infty}(\bV\times M;E) \; |\; f=\sum_{j=1}^N P_jf_j \;\mbox{for some} \; N\in\bbN, \\ 
   f_j\in L^2_{\QFB}(\bV\times M;E),  \; P_j\in \Psi^{-m}_{\QFB,V}(M;E) \}
\label{mp.24}\end{multline}
for $m<0$, where $L^2_{\QFB}(\bV\times M;E)$ is the space of square integrable sections induced by a choice of Hermitian metric on $E$ and by the density associated to the Cartesian product of an Euclidean metric on $V$ and a $\QFB$ metric on $M$.  These spaces can be given a Hilbert space structure using fully elliptic $\QFB$ operators.  To see this, for $m>0$, let $A_m\in \Psi_{\QFB,V}^{\frac{m}2}(M;E)$ be a choice of elliptic $V$ suspended $\QFB$ operator and consider the formally self-adjoint operator $D_m\in \Psi^m_{\QFB,V}(M;E)$ defined by
\begin{equation}
 D_m: A_m^*A_m+ \Id_E.
\label{mp.25}\end{equation}
\begin{lemma}
For $m>0$, the operator $D_m$ is fully elliptic and invertible.
\label{mp.26}\end{lemma}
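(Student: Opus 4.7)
The plan is to proceed by induction on the depth of $M$. In the base case, $M$ is a closed manifold, $D_m$ is an ordinary $V$-suspended pseudodifferential operator, and the normal operator condition of full ellipticity is vacuous; the symbolic calculation $\sigma_m(D_m) = \sigma_{m/2}(A_m)^{\ast} \sigma_{m/2}(A_m) + \Id_E$ yields a pointwise positive section since $A_m$ is elliptic, so $D_m$ is fully elliptic, and its invertibility on $\dot{\cC}^{\infty}(\overline V \times M;E)$ follows from standard suspended elliptic theory combined with the lower bound $D_m \geq \Id$, which makes $D_m$ a positive self-adjoint operator on $L^2$.

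For the inductive step, the same symbolic computation gives symbolic ellipticity in any depth. For each boundary hypersurface $H_i$, the compatibility of normal operators with composition (Lemma~\ref{sm.19}) and with formal adjoints yields $N_i(D_m) = N_i(A_m)^{\ast} N_i(A_m) + \Id$, which under the identification \eqref{sm.18} is a family of $(V \times {}^{\phi}NS_i)$-suspended $\QFB$ operators on the fibers $Z_i$ of $\phi_i: H_i \to S_i$. Each fiber operator $N_i(A_m)|_{Z_i}$ is elliptic by restriction of symbols, and since $Z_i$ has strictly smaller depth than $M$, the inductive hypothesis (with $N_i(A_m)|_{Z_i}$ playing the role of $A_m$) shows that $N_i(D_m)|_{Z_i}$ is fully elliptic and invertible. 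Corollary~\ref{mp.4} then places each fiber inverse in the small calculus, and smooth dependence on the base point in $S_i$ of both the parametrix in Proposition~\ref{mp.3} and the Neumann-series correction in Corollary~\ref{mp.4} assembles these pointwise inverses into a continuous linear inverse of $N_i(D_m)$ on $\dot{\cC}^{\infty}(\overline{V \times {}^{\phi}NH_i};E)$. This establishes that $D_m$ is fully elliptic.

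Invertibility of $D_m$ itself will then follow from its positivity. Since $\langle D_m u, u\rangle_{L^2} \geq \|u\|_{L^2}^2$ for every $u \in \dot{\cC}^{\infty}(\overline V \times M;E)$, $D_m$ extends to a positive self-adjoint operator on $L^2(\overline V \times M;E)$ with spectrum contained in $[1, \infty)$, so a bounded inverse exists on $L^2$. To promote this to an inverse on $\dot{\cC}^{\infty}$, I apply Proposition~\ref{mp.3} (now justified by the full ellipticity established above) to obtain a parametrix $Q \in \Psi^{-m}_{\QFB, V}(M;E)$ with $R := \Id - QD_m \in \dot{\Psi}^{-\infty}_V(M;E)$; for any $f \in \dot{\cC}^{\infty}$, the $L^2$-solution $u := D_m^{-1} f$ satisfies $u = Qf + Ru$, which lies in $\dot{\cC}^{\infty}$ since $Q$ preserves smoothness and $R$ maps $L^2$ into $\dot{\cC}^{\infty}$.

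The principal obstacle will be the families aspect of the inductive step, namely upgrading pointwise invertibility of the fiber operators $N_i(D_m)|_{Z_i}$ to an honest inverse in the families calculus $\Psi^{-m}_{\QFB, V \times {}^{\phi}NS_i}(H_i/S_i;E)$. This is essentially a parametric reprise of Corollary~\ref{mp.4} and requires checking that each step in its proof (principal-symbol inversion, iterative improvement of the parametrix at the boundary, and Neumann-series correction of the smoothing residual) depends smoothly on the base parameter in $S_i$.
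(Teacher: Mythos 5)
Your proposal is correct and shares the paper's skeleton — induction on the depth of $M$, with full ellipticity of $D_m$ reduced to the observation that each normal operator $N_i(D_m)=N_i(A_m)^*N_i(A_m)+\Id$ has the same form on a lower-depth space — but the invertibility step is carried out by a genuinely different route. The paper takes the Fourier transform in $V$, fixes $\Upsilon\in V^*$, and proves that $P=\widehat{D}_m(\Upsilon)$ is bijective on $\dot{\cC}^{\infty}(M;E)$: injectivity by showing kernel elements lie in $\dot{\cC}^{\infty}$ (Proposition~\ref{mp.3}) and integrating by parts, and surjectivity by a density-plus-compactness argument (the coercive bound $\|Pv\|_{L^2}\ge\|v\|_{L^2}$ gives boundedness of an approximating sequence, and the compactness of the smoothing remainder via Corollary~\ref{mp.15} extracts a convergent subsequence). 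You instead work directly with the suspended operator, use the lower bound $\langle D_m u,u\rangle\ge\|u\|^2$ to produce a self-adjoint extension with spectrum in $[1,\infty)$ and hence a bounded $L^2$ inverse, and then bootstrap regularity through the parametrix identity $u=Qf+Ru$. This is cleaner in that it dispenses with the subsequence argument, at the modest cost of two points worth making explicit: (i) one should name the extension (e.g.\ Friedrichs), since essential self-adjointness of $D_m$ on $\dot{\cC}^{\infty}$ is not obvious, and note that any self-adjoint extension inherits the lower bound $1$ and that its action agrees with the distributional action of $D_m$, so that the identity $QD_m=\Id-R$, valid on $\dot{\cC}^{\infty}$, applies to the $L^2$ solution $u$; (ii) the families/smooth-dependence issue you flag at the end is real but is present at exactly the same (implicit) level in the paper's own treatment of Proposition~\ref{mp.3} and Corollary~\ref{mp.4}, so it is not a defect specific to your argument. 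Your reduction of the inductive step via Lemma~\ref{sm.19} and the compatibility of $N_i$ with adjoints matches what the paper does in one sentence.
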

\begin{proof}
Notice that the normal operators of $D_m$ are of the same form, so proceeding by induction on the depth of $M$, we see it suffices to show that $D_m$ is invertible assuming it is fully elliptic.   Taking the Fourier transform in $V$, set for fixed $\Upsilon\in V^*$
$$
      P:= \widehat{D}_m(\Upsilon)= B^*B+\Id_E \quad \mbox{with} \; B:=\widehat{A}_m(\Upsilon).
$$
By Proposition~\ref{mp.3}, if $Pu=0$, then $u\in \dot{\cC}^{\infty}(M;E)$.  Hence, we can integrate by parts, yielding
\begin{equation}
\begin{aligned}
Pu=0 \; &\Longrightarrow \; \langle u, B^* Bu +u \rangle =0 \\
             &\Longrightarrow \| Bu\|^2_{L^2}+ \|u\|^2_{L^2}=0 \\
             &\Longrightarrow \; \|u\|^2_{L^2}=0 \; \Longrightarrow \; u\equiv 0.
\end{aligned}
\label{mp.27}\end{equation}
Since $P$ is formally self-adjoint, this means $\ker P= \ker P^*= \{0\}$.  This shows that 
$$P: \dot{\cC}^{\infty}(M;E)\to \dot{\cC}^{\infty}(M;E)
$$ 
is injective.  To see that it is also surjective, let $f\in \dot{\cC}^{\infty}(M;E)$ be given.  Since $\ker P^*=\{0\}$, we know that there is a sequence $u_i\in \dot{\cC}^{\infty}(M;E)$ such that 
$Pu_i\to f$ in $L^2$.  Notice that this  sequence must be bounded in $L^2$, for otherwise, scaling a subsequence, we  can obtain  a  sequence $v_i\in \dot{\cC}^{\infty}(M;E)$ with $\|v_i\|_{L^2}=1$ such that $Pv_i\to 0$ in $L^2$, a contradiction since
$$
       \|Pv_i\|^2_{L^2}= \|Bv_i\|^2_{L^2}+ \|v_i\|^2_{L^2}\ge \|v_i\|^2_{L^2}=1.
$$
 Using a parametrix as in Proposition~\ref{mp.3}, namely
$$
     QP=\Id+ R,  \quad \mbox{with} \; Q\in \Psi^{-m}_{\QFB}(M;E), \quad R\in \dot{\Psi}^{-\infty}(M;E),
$$
we have that 
$$
        u_i= QPu_i- R u_i.  
$$
By Theorem~\ref{mp.15}, this means that  we can extract a subsequence such that $Ru_i$, and hence $u_i$, converges in $L^2$.  If $u_i\to u$ in $L^2$, then 
$$
      u= Qf-Ru,
$$
so $u\in \dot{\cC}^{\infty}(M;E)$ and $\displaystyle Pu=\lim_{i\to \infty}Pu_i=f$, showing that $P$ is indeed surjective.  Thus, $P$ is invertible.  Since $\Upsilon\in V^*$ was arbitrary, this means that $D_m$ is invertible, concluding the proof. 
\end{proof}

Using the operator $D_m$ for $m>0$, $D_m:=(D_{-m})^{-1}$ for $m<0$ and $D_0=\Id_E$ for $m=0$, we can define a norm on $H^m_{\QFB}(M;E)$ by 
\begin{equation}
  \|u\|_{H^m_{\QFB}}:= \| D_m u\|_{L^2}.
\label{mp.28}\end{equation}
Using the density of $\dot{\cC}^{\infty}(M;E)$ in $L^2_{\QFB}(M;E)$ and Theorem~\ref{lt.6} when $m<0$, it is straightforward to check that $H^m_{\QFB}(M;E)$ is the closure of $\dot{\cC}^{\infty}(M;E)$ with respect to the norm \eqref{mp.28}.  Notice in particular that $D_m$ induces an isomorphism 
\begin{equation}
D_m: H^m_{\QFB}(M;E)\to L^2_{\QFB}(M;E).
\label{mp.28b}\end{equation}  
\begin{proposition}
Let $\w$ be the multiweight given by $\w(H_i)= \frac{h_i}2=\frac{\dim S_i+1}2$, so that 
$$
L^2_{\QFB}(M;E)= x^{\w}L^2_b(M;E).
$$  Then a $\QFB$ pseudodifferential operator $P\in\Psi^{m,\cE/\mathfrak{s}}_{\QFB,\cn}(M;E,F)$ induces a bounded linear map 
$$
       P: x^{\mathfrak{t}_1}H^{p}_{\QFB}(M;E)\to x^{\mathfrak{t}_2}H^{p-m}_{\QFB}(M;F)
$$
for any $p\in \bbR$ provided $\Re(\mathfrak{s}-\mathfrak{r}+\pi_R^{\#}(\mathfrak{t}_1+\w)-\pi_L^{\#}(\mathfrak{t}_2+\w))> 0$ and $\Re(\cE-\mathfrak{r}+\pi_R^{\#}(\mathfrak{t}_1+\w)-\pi_L^{\#}(\mathfrak{t}_2+\w))\ge 0$ with strict inequality except possibly at $\ff_i$ for $H_i$ a boundary hypersurface of $M$.
\label{mp.29}\end{proposition}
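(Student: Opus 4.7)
The plan is to reduce the weighted $H^p_\QFB \to H^{p-m}_\QFB$ bound to the weighted $L^2_b \to L^2_b$ bound proved in Theorem~\ref{lt.6} by conjugating $P$ with invertible elliptic operators. Recall from \eqref{mp.28b} that for each $p \in \bbR$ the operator $D_p \in \Psi^p_\QFB(M;E)$ (constructed as in Lemma~\ref{mp.26}, or its non-suspended analog) yields an isomorphism $D_p \colon H^p_\QFB(M;E) \to L^2_\QFB(M;E)$; moreover, since $D_p$ is fully elliptic and invertible, Proposition~\ref{mp.3} (and Corollary~\ref{mp.4}) ensure that $D_p^{-1} \in \Psi^{-p}_\QFB(M;E)$ lies in the small calculus. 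Likewise $D_{p-m} \in \Psi^{p-m}_\QFB(M;F)$ yields an isomorphism on the target. Finally, the identity $\nu_\QFB = x^{-2\w}\nu_b$ coming from \eqref{E:QFB_density_relation} implies $L^2_\QFB(M;E) = x^\w L^2_b(M;E)$, so $x^\mathfrak{t} L^2_\QFB = x^{\mathfrak{t}+\w} L^2_b$.

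First I would show that boundedness of $P \colon x^{\mathfrak{t}_1} H^p_\QFB(M;E) \to x^{\mathfrak{t}_2} H^{p-m}_\QFB(M;F)$ is equivalent, via these isomorphisms, to boundedness of the conjugated operator
\[
Q := D_{p-m} \circ P \circ D_p^{-1} \colon x^{\mathfrak{t}_1 + \w} L^2_b(M;E) \longrightarrow x^{\mathfrak{t}_2 + \w} L^2_b(M;F).
\]
Second, I would apply the composition Theorem~\ref{co.9} to the two small-calculus factors $D_p^{-1}$ and $D_{p-m}$ to verify that $Q \in \Psi^{0,\cE/\mathfrak{s}}_\QFB(M;E,F)$ with \emph{the same} index family $\cE$ and multiweight $\mathfrak{s}$ as $P$. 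The key observation is that small-calculus operators can be represented with trivial index sets ($\bbN_0$ at each front face $\ff_i$ and $\emptyset$ at each side face $H_{ij}$) and trivial multiweights ($0$ at $\ff_i$, $\infty$ at $H_{ij}$), so in the formulas \eqref{co.9a}--\eqref{co.9b} the only non-trivial contributions at $\ff_i$ and $H_{ij}$ come from the factor $P$, and the extraneous terms are dominated thanks to the $\infty$ weights.

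Third, I would apply Theorem~\ref{lt.6} directly to $Q$ with source weight $\mathfrak{t}_1+\w$ and target weight $\mathfrak{t}_2+\w$. The hypothesis of that theorem becomes $\Re(\mathfrak{s}-\mathfrak{r}+\pi_R^{\#}(\mathfrak{t}_1+\w)-\pi_L^{\#}(\mathfrak{t}_2+\w)) > 0$ together with the corresponding inequality for $\cE$, with strict inequality except possibly at the front faces $\ff_i$. This is exactly the hypothesis of Proposition~\ref{mp.29}, so $Q$ is bounded on the weighted $L^2_b$ spaces, and hence $P$ is bounded on the weighted $H^p_\QFB$ spaces.

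The main obstacle is the bookkeeping in the second step: ensuring that conjugation by $D_p^{-1}$ and $D_{p-m}$ genuinely preserves the index family and multiweight of $P$ at every boundary hypersurface. Concretely one must check that $\bbN_0 + \cE(\ff_i) = \cE(\ff_i)$ and that the $\min$ in \eqref{co.9b} at $\ff_i$ reduces to $\mathfrak{s}(\ff_i)$, using the conventions on index sets and the $\infty$ weights of the small-calculus factors at side faces. Once this is confirmed, the rest of the argument is a direct combination of Theorem~\ref{co.9} and Theorem~\ref{lt.6}.
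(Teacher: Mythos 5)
Your proposal is correct and follows essentially the same route as the paper: conjugate $P$ by the invertible fully elliptic operators $D_{p}$ and $D_{p-m}$, use Theorem~\ref{co.9} to check the conjugated operator remains in $\Psi^{0,\cE/\mathfrak{s}}_{\QFB}$, and then invoke Theorem~\ref{lt.6}. The only cosmetic difference is that the paper first conjugates by $x^{\mathfrak{t}_1},x^{\mathfrak{t}_2}$ to reduce to $\mathfrak{t}_1=\mathfrak{t}_2=0$ before using the unweighted isomorphism \eqref{mp.28b}, whereas you fold the weights into the $L^2_b$ spaces; if you keep your order you should note that the weighted isomorphism $D_p\colon x^{\mathfrak{t}}H^p_{\QFB}\to x^{\mathfrak{t}}L^2_{\QFB}$ is \eqref{mp.30}, which the paper deduces \emph{from} this proposition, so performing the weight conjugation first avoids any appearance of circularity.
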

\begin{proof}
Replacing the operator $P$ by $x^{-\mathfrak{t}_2}Px^{\mathfrak{t}_1}$ if necessary, we can assume that $\mathfrak{t}_1=\mathfrak{t}_2=0$.  Clearly, we can also assume that $\cE$ is the empty set except possibly at $\ff_i$ for $H_I$ a boundary hypersurface of $M$.  Since \eqref{mp.28b} is an isomorphism, the result then follows from Theorem~\ref{lt.6} by noticing by Theorem~\ref{co.9} that 
$$
       P= D_{m-p}\widetilde{P}D_p \quad \mbox{with} \quad \widetilde{P}= D_{p-m}PD_{-p}\in \Psi^{0,\cE/\mathfrak{s}}_{\QFB,\cn}(M;E).
$$
\end{proof}

The previous proposition shows in particular that $D_m$ induces an isomorphism
\begin{equation}
     D_m: x^kH^p_{\QFB}(M;E)\to x^kH^{p-m}_{\QFB}(M;E)
\label{mp.30}\end{equation}
for all $p\in \bbR$ and $k\in \bbR^{\ell}$.  

\begin{proposition}
There is a continuous inclusion $x^kH^m_{\QFB}(M;E) \subset x^{k'}H^{m'}_{\QFB}(M;E)$ if and only if $m\ge m'$ and $k\ge k'$.  The inclusion is compact  if $m>m'$ and $k>k'$.
\label{mp.31}\end{proposition}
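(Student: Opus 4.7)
The plan is to transfer the inclusion question to a question about the boundedness or compactness of a single $\QFB$ operator on $L^2_{\QFB}(M;E)$ and then invoke the mapping theorems of Section~\ref{mp.0}. Taking the non-suspended version ($V = \{0\}$) of $D_m$, which is fully elliptic and invertible by Lemma~\ref{mp.26}, the map $\Phi_{k,m}: u \mapsto x^k D_{-m} u$ is by construction an isometric isomorphism $L^2_{\QFB}(M;E) \xrightarrow{\cong} x^k H^m_{\QFB}(M;E)$. Conjugating the inclusion $\iota: x^k H^m_{\QFB} \hookrightarrow x^{k'} H^{m'}_{\QFB}$ by these yields
\[
T := \Phi_{k',m'}^{-1}\,\iota\,\Phi_{k,m} \;=\; D_{m'}\, x^{k-k'}\, D_{-m} \;:\; L^2_{\QFB}(M;E) \to L^2_{\QFB}(M;E),
\]
so $\iota$ is continuous (resp.\ compact) iff $T$ is bounded (resp.\ compact) on $L^2_{\QFB}$.

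For the sufficiency part, assume $m \ge m'$ and $k \ge k'$. Since $k - k' \ge 0$ componentwise, the conormal function $x^{k-k'}$ is bounded and its $\QFB$ derivatives remain conormal, so multiplication by it acts boundedly on $H^p_{\QFB}$ for every $p$; combined with the composition result (Corollary~\ref{co.10}), $T$ lies in an appropriately weighted $\QFB$ calculus of order $m' - m \le 0$, and Theorem~\ref{lt.6} (or Proposition~\ref{mp.29}) delivers $L^2$-boundedness. If moreover $m > m'$ and $k > k'$ componentwise, then the order is strictly negative; since $x^{k-k'}$ vanishes on each boundary hypersurface $H_i$ (because $k_i - k'_i > 0$), one has $N_i(x^{k-k'}) = 0$, and by multiplicativity of normal operators under composition (Lemma~\ref{sm.19} together with \eqref{sm.9}), $N_i(T) = N_i(D_{m'}) \circ N_i(x^{k-k'}) \circ N_i(D_{-m}) = 0$ for every $i$. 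Corollary~\ref{mp.15} then yields the compactness of $T$, hence of $\iota$.

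For the necessity of the conditions, assume $\iota$ is continuous. Testing with $f \in \CI_c(M^\circ)$ reduces the question locally to a classical Sobolev inclusion on the interior (where ${}^{\QFB}TM \cong TM$), forcing $m \ge m'$. To extract $k \ge k'$, fix a boundary hypersurface $H_j$ and a cutoff $\chi$ supported in a small neighborhood of an interior point of $H_j$ disjoint from all other boundary faces. Using $L^2_{\QFB} = x^{\w} L^2_b$ with $\w_j = (1+\dim S_j)/2$, a direct integration against a $b$-density shows that $\chi\, x_j^b \in x^k L^2_{\QFB}$ if and only if $b > k_j + \w_j$. Thus if $k'_j > k_j$, any $b$ with $k_j + \w_j < b \le k'_j + \w_j$ produces a function in $x^k L^2_{\QFB}$ that is not in $x^{k'} L^2_{\QFB}$, contradicting even the continuous inclusion at $m = m' = 0$; the general $(m, m')$ case reduces to this by pre-composing with appropriate $D_{\pm m}$-isomorphisms.

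The main technical point is making the identity $N_i(T) = 0$ rigorous: this hinges on interpreting $x^{k-k'}$ as an element of a suitable weighted $\QFB$ calculus, together with the pointwise vanishing of $x_i$ on $\diag_{\QFB} \cap \ff_i$ (under the identification $\diag_{\QFB} \cong M$ from Corollary~\ref{ds.9b}). A related subtlety is handling non-integer weights, where $x^{k-k'}$ is only conormal rather than smooth; this is managed by working within the extended calculi $\Psi^{\cdot,\cE/\mathfrak{s}}_{\QFB}$ developed in Section~\ref{ts.0}, after which all statements follow from the mapping theorems and compactness criterion already established.
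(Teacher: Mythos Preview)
Your approach is essentially the same as the paper's: conjugate by the isomorphisms $D_m$ and $x^k$ to transfer the inclusion to an operator on $L^2_{\QFB}$, then invoke Proposition~\ref{mp.29} for boundedness and Corollary~\ref{mp.15} for compactness (the paper phrases the latter as ``$v^\epsilon D_{-\epsilon}\in v^\epsilon\Psi^{-\epsilon}_{\QFB}$ is compact,'' which is precisely your observation that the weight factor annihilates all normal operators). One useful addition in your write-up is the necessity argument for the ``only if'' direction, which the paper's own proof omits; your test-function argument (interior Sobolev comparison for $m\ge m'$, and $\chi x_j^b$ near a single hypersurface for $k\ge k'$) is the standard way to handle this.
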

\begin{proof}
The fact that these inclusions are continuous follows from the isomorphism \eqref{mp.30} and Proposition~\ref{mp.29}.  The compactness follows using the isomorphism \eqref{mp.30} and the fact that for $\epsilon>0$, 
$v^{\epsilon} D_{-\epsilon}\in v^{\epsilon}\Psi^{-\epsilon}_{\QFB}(M;E)$ is a compact operator, where $v$ is a total boundary defining function for $M$.  
\end{proof}

Finally, we obtain the following Fredholm criterion.  

\begin{theorem}
A fully elliptic operator $P\in \Psi^m_{\QFB}(M;E,F)$ induces a Fredholm operator
$$
     P: x^{\mathfrak{t}}H^{p+m}_{\QFB}(M;E)\to x^{\mathfrak{t}}H^p_{\QFB}(M;F)
$$
for all $p\in \bbR$ and $\mathfrak{t}\in\bbR^{\ell}$.  
\label{mp.32}\end{theorem}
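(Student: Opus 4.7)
The plan is a standard parametrix argument: use full ellipticity to build a two-sided inverse modulo compact operators, then invoke the Fredholm alternative. The input is Proposition~\ref{mp.3}, which yields a parametrix $Q\in\Psi^{-m}_{\QFB}(M;F,E)$ satisfying
\[
   R_1 := QP-\Id_E \in \dot{\Psi}^{-\infty}(M;E), \qquad R_2 := PQ-\Id_F \in \dot{\Psi}^{-\infty}(M;F).
\]
Since the small calculus operators have index sets $\cE_{\ff_i}=\bbN_0$ and $\cE_{ij}=\emptyset$ with weights $\mathfrak{s}_{ij}=\infty$ at the side faces, the hypotheses of Proposition~\ref{mp.29} are vacuously satisfied, so $P$ and $Q$ are bounded on the asserted weighted Sobolev scales:
\[
   P: x^{\mathfrak{t}}H^{p+m}_{\QFB}(M;E)\to x^{\mathfrak{t}}H^p_{\QFB}(M;F),\qquad Q: x^{\mathfrak{t}}H^{p}_{\QFB}(M;F)\to x^{\mathfrak{t}}H^{p+m}_{\QFB}(M;E).
\]

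Next I would verify that $R_1$ and $R_2$ act compactly on the corresponding spaces. To transport compactness from $L^2_b$ to the Sobolev setting, one conjugates by the isomorphism $D_{p+m}$ of \eqref{mp.28b} and its weighted version \eqref{mp.30}. A short check using the composition formulas \eqref{co.9a}--\eqref{co.9b} with $A\in\Psi^m_{\QFB}$ in the small calculus and $B\in\dot{\Psi}^{-\infty}$ (all $\cF$ empty, all $\mathfrak{t}=\infty$) shows that every composition yields an index family that is $\emptyset$ and multiweight $\infty$ at every boundary hypersurface, including at the front faces. Consequently $D_{p+m}\, R_1\, D_{-(p+m)} \in \dot{\Psi}^{-\infty}(M;E)$ and likewise for $R_2$; since these operators have order $-\infty$ and all normal operators vanish, Corollary~\ref{mp.15} delivers compactness on the appropriate weighted $L^2_b$ spaces. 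Conjugating back via $D_{\pm(p+m)}$ yields compactness of $R_1$ on $x^{\mathfrak{t}}H^{p+m}_{\QFB}(M;E)$ and of $R_2$ on $x^{\mathfrak{t}}H^p_{\QFB}(M;F)$.

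Having both $QP=\Id+R_1$ and $PQ=\Id+R_2$ with $Q$ bounded and $R_j$ compact, the standard Fredholm alternative closes the argument: any such operator has closed range of finite codimension and finite-dimensional kernel, hence is Fredholm. The kernel and cokernel are in fact identified with the kernels of $P$ and its formal adjoint acting on $\dot{\cC}^\infty$, by the improvement-of-regularity statement already noted in Proposition~\ref{mp.3}.

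The only delicate point is the bookkeeping showing that compositions of small-calculus $\QFB$ operators with elements of $\dot{\Psi}^{-\infty}$ remain in $\dot{\Psi}^{-\infty}$, so that the compactness criterion of Corollary~\ref{mp.15} is applicable after conjugation by $D_{p+m}$. Because Theorem~\ref{co.9} allows growth of the index family at $\ff_i$ when combining operators with nontrivial behavior at the front faces, one must check that the vanishing of $R_j$'s kernel at \emph{every} boundary face, together with the smoothness of $D_{\pm(p+m)}$ at $\ff_i$, suffices to force each $\cK_{\ff_i}=\emptyset$ and $\mathfrak{k}_{\ff_i}=\infty$; this is routine but is where the $\QFB$-specific structure of the double space does the real work.
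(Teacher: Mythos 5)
Your proof is correct and follows essentially the same route as the paper: the parametrix from Proposition~\ref{mp.3}, boundedness on the weighted Sobolev scale from Proposition~\ref{mp.29}, compactness of the error terms, and the standard Fredholm alternative. The only cosmetic difference is that the paper obtains compactness of the errors via the compact inclusion $x^kH^m_{\QFB}\subset x^{k'}H^{m'}_{\QFB}$ of Proposition~\ref{mp.31} rather than by conjugating with $D_{p+m}$ and invoking Corollary~\ref{mp.15}; both mechanisms rest on the same extra decay and smoothing enjoyed by elements of $\dot{\Psi}^{-\infty}$.
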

\begin{proof}
The parametrix construction of Proposition~\ref{mp.3} together with Proposition~\ref{mp.29} and Proposition~\ref{mp.31} show that $P$ is invertible modulo a compact operator, from which the result follows.  
\end{proof}

Here is a natural application of Proposition~\ref{mp.3}, Corollary~\ref{mp.4} and Theorem~\ref{mp.32}.

\begin{corollary}
Let $g$ be an exact $\QFB$ metric in the sense of Definition~\ref{pt.6} and $\Delta_g$ be the corresponding scalar Laplacian (with positive spectrum).  Then for $\lambda\in \bbC\setminus [0,\infty)$, the operator $\Delta_g-\lambda$ is fully elliptic and invertible with inverse in $\Psi^{-2}_{\QFB}(M)$.  
\label{mp.33}\end{corollary}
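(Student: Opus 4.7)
The plan is to argue by induction on the depth of $M$. For depth zero ($M$ closed), $\Delta_g - \lambda$ is a classical elliptic self-adjoint operator with resolvent in the ordinary pseudodifferential calculus, which coincides with $\Psi^{-2}_\QFB(M)$.

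For the inductive step, ellipticity of $\Delta_g - \lambda \in \Diff^2_\QFB(M)$ is immediate, its principal symbol being the dual $\QFB$ metric $|\xi|^2_{g^{-1}}$. The substantive task is to compute the normal operators $N_i(\Delta_g - \lambda) = N_i(\Delta_g) - \lambda$ and verify their invertibility. Using the exactness condition of Definition~\ref{pt.6} together with the vector field basis \eqref{pt.5c} and the model metric \eqref{pt.5}, I expect that under the identification \eqref{sm.18}, the Fourier transform in the suspension variables satisfies
\[
	\widehat{N_i(\Delta_g)}(\Upsilon) \;=\; \Delta_{Z_i,\kappa|_s} \,+\, |\Upsilon|^2_{g_{S_i}^{-1}}, \qquad \Upsilon \in ({}^\phi NS_i)^*_s,
\]
where $\Delta_{Z_i,\kappa|_s}$ is the fiberwise $\QFB$ Laplacian of the exact $\QFB$ metric $\kappa|_s$ on the fiber $Z_i = \phi_i^{-1}(s)$ inherited from $g$. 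Consequently $\widehat{N_i(\Delta_g - \lambda)}(\Upsilon) = \Delta_{Z_i,\kappa|_s} - (\lambda - |\Upsilon|^2)$.

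For $\lambda \in \bbC \setminus [0,\infty)$ and any $\Upsilon \in ({}^\phi NS_i)^*$, one checks that $\lambda - |\Upsilon|^2$ also lies in $\bbC \setminus [0,\infty)$: if $\Im \lambda \neq 0$ this is clear, and if $\lambda \in \bbR \setminus [0,\infty)$ then $\lambda - |\Upsilon|^2 < 0$. Since $Z_i$ has strictly smaller depth than $M$ and $\kappa|_s$ is exact, the inductive hypothesis yields that $\Delta_{Z_i,\kappa|_s} - (\lambda - |\Upsilon|^2)$ is fully elliptic and invertible with inverse in $\Psi^{-2}_\QFB(Z_i)$, smoothly in $(s, \Upsilon)$. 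Corollary~\ref{mp.4} then gives invertibility of the suspended operator $N_i(\Delta_g - \lambda)$ with inverse in $\Psi^{-2}_{\QFB, {}^\phi NS_i}(H_i/S_i) \cong \Psi^{-2}_{\ff_i}(H_i)$, establishing full ellipticity of $\Delta_g - \lambda$.

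For invertibility of $\Delta_g - \lambda$ itself, completeness of $g$ ensures that $\Delta_g$ is essentially self-adjoint on $L^2_\QFB(M)$ with $\spec \subset [0,\infty)$, so the resolvent exists as a bounded operator on $L^2_\QFB(M)$ for $\lambda \in \bbC \setminus [0,\infty)$. Theorem~\ref{mp.32} upgrades this to a Fredholm operator $H^2_\QFB(M) \to L^2_\QFB(M)$ whose kernel is trivial (any element lies in $\dot{\cC}^\infty(M) \subset L^2_\QFB(M)$ by full ellipticity and Proposition~\ref{mp.3}, hence vanishes since the resolvent exists on $L^2$) and whose cokernel vanishes by self-adjointness, so it is invertible. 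Finally, with $Q \in \Psi^{-2}_\QFB(M)$ the parametrix from Proposition~\ref{mp.3} satisfying $R := (\Delta_g-\lambda)Q - \Id \in \dot{\Psi}^{-\infty}(M)$, one has $(\Delta_g-\lambda)^{-1} = Q - (\Delta_g-\lambda)^{-1} R$, and the correction term lies in $\dot{\Psi}^{-\infty}(M) \subset \Psi^{-2}_\QFB(M)$ since $(\Delta_g-\lambda)^{-1}$ composed with a smoothing operator mapping $\cC^{-\infty}$ into $\dot{\cC}^\infty$ remains smoothing. Hence $(\Delta_g - \lambda)^{-1} \in \Psi^{-2}_\QFB(M)$.

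The main technical obstacle is the explicit identification of $\widehat{N_i(\Delta_g)}$ asserted in the second paragraph: while the splitting is heuristically transparent from the product-type model \eqref{pt.5}, a careful coordinate computation using the basis \eqref{pt.5c} is required to confirm that the $x_i \CI(M, S^2(\QFBT M))$ error permitted by exactness contributes only to the ideal $x_i \Psi^2_\QFB(M)$ and therefore does not affect the normal operator via the short exact sequence \eqref{sm.7}.
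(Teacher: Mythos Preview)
Your proof is correct and follows essentially the same route as the paper: induction on depth, identification of the normal operator's Fourier transform as $\Delta_{\phi_i^{-1}(s)} + \|\Upsilon\|^2 - \lambda$, invocation of the inductive hypothesis to get full ellipticity, and then invertibility via self-adjointness combined with the Fredholm result. The paper concludes by citing Corollary~\ref{mp.4} (with $V=\{0\}$) to place the inverse in $\Psi^{-2}_{\QFB}(M)$; your hand-rolled version of that step, $(\Delta_g-\lambda)^{-1}=Q-(\Delta_g-\lambda)^{-1}R$, is the right idea but as written uses only the right parametrix---to conclude $(\Delta_g-\lambda)^{-1}R\in\dot\Psi^{-\infty}(M)$ you need the two-sided sandwich of \eqref{mp.5}, i.e.\ also the left parametrix $Q'(\Delta_g-\lambda)=\Id+R'$ so that $(\Delta_g-\lambda)^{-1}R = Q'R - R'(\Delta_g-\lambda)^{-1}R$ with the last term manifestly residual.
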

\begin{proof}
We can proceed by induction on the depth of $M$.  When $M$ is closed, the result is standard.  Hence, assume the result holds for all manifolds with fibered corners of lower depth.  Now the normal operators of $\Delta-\lambda$  are of similar form, but are suspended and are defined on a manifold with fibered corners of lower depth, namely the fibers of the various fiber bundles $\phi_i:H_i\to S_i$.  For a point $s\in S_i$, the corresponding suspended operator is of the form
$$
     \Delta_{\phi_i^{-1}(s)}+ \Delta_{{}^{\phi}N_sS_i}-\lambda,
$$ 
where $\Delta_{\phi_i^{-1}(s)}$ is the induced Laplacian $\phi_i^{-1}(s)$ and $\Delta_{{}^{\phi}N_sS_i}$ is the Laplacian for a certain norm $\|\cdot\|_{{}^{\phi}N_sS_i}$ on the vector space ${}^{\phi}N_sS_i$.   
 the Fourier transform gives an operator of the form
$$
\Delta_{\phi_i^{-1}(s)}+ \|\Upsilon\|^2_{{}^{\phi}N_sS_i}-\lambda,  \quad \Upsilon\in {}^{\phi}N^*_sS_i.
$$
By our induction hypothesis, this is fully elliptic and invertible for all $\Upsilon$, hence invertible as a suspended operator.  We thus see that $\Delta_g-\lambda$ is fully elliptic.  By standard arguments using integration by parts, $\Delta_g-\lambda$ and its formal adjoint $\Delta_g-\overline{\lambda}$ have no kernel.  Hence, 
$$
\Delta_g-\lambda: H^2_{\QFB}(M)\to H^0_{\QFB}(M)
$$
is invertible.  By Corollary~\ref{mp.4}, the inverse is in $\Psi^{-2}_{\QFB}(M)$, which completes the proof.  
\end{proof}

\section{Symbol maps for $\Qb$ operators and the edge calculus} \label{smb.0}

In this section, we will introduce symbol maps for $\Qb$ operators.  Some of these symbol maps will be in terms of edge operators.  For completeness, we will therefore give a detailed description of the edge calculus of \cite{MazzeoEdge, AG} and derive some specific results that we will need. 
  
Now, to define the principal symbol of pseudodifferential $\Qb$ operators, we can proceed in the same way as for pseudodifferential $\QFB$ operators.  Indeed, we can use Corollary~\ref{ds.15b} and the principal symbol for conormal distributions of Hörmander to define a principal symbol map
\begin{equation}
\sigma_m: \Psi^m_{\Qb}(M;E,F)\to S^{[m]}({}^{\Qb}T^*M; \pi^*\hom(E,F)),
\label{smb.1}\end{equation}
where $\pi: {}^{\Qb}T^*M\to M$ is the canonical projection.  This map is surjective and induces a short exact sequence
\begin{equation}
\xymatrix{
0 \ar[r] & \Psi^{m-1}_{\Qb}(M;E,F) \ar[r] & \Psi^{m}_{\Qb}(M;E,F)\ar[r]^-{\sigma_m} & S^{[m]}({}^{\Qb}T^*M;\pi^*\hom(E,F))\ar[r] &0
}
\label{smb.2}\end{equation}
such that 
\begin{equation}
\sigma_{m+m'}(A\circ B)= \sigma_m(A)\sigma_{m'}(B)
\label{smb.3}\end{equation}
for $A\in \Psi^{m}_{\Qb}(M;F,G)$ and $B\in \Psi^{m'}_{\Qb}(M;E,F)$.  
\begin{definition}
A $\Qb$ operator $P\in \Psi^m_{\Qb}(M;E,F)$ is \textbf{elliptic} if its principal symbol $\sigma_m(P)$ is invertible.  
\label{smb.4}\end{definition}

As for $\QFB$ operators, to capture the asymptotic behavior of $\Qb$ operators near each boundary hypersurface $H_i$ of $M$, we need to introduce normal operators.  First, for $H_i$ a maximal boundary hypersurface of $M$, we consider the spaces 
\begin{equation}
\begin{gathered}
\Psi^m_{\Qb,ii}(H_i;E,F):=  \{A\in I^m(H^{\Qb}_{ii};\Delta_{ii};\beta^*_{\Qb}(\Hom(E,F)\otimes \pr_R^*\Omega_{\Qb})) \; | \;  A\equiv 0 \;\mbox{at} \;\pa H^{\Qb}_{ii}\cap \overline{(\ff^{\Qb}\setminus H^{\Qb}_{ii})} \},\\
\Psi^{-\infty,\cH/\mathfrak{h}}_{\Qb,ii}(H_i;E,F):=\sA^{\cH/\mathfrak{h}}_{\phg}(H^{\Qb}_{ii};\beta^*_{\Qb}(\Hom(E,F)\otimes \pr_R^*\Omega_{\Qb})), \\
\Psi^{m,\cH/\mathfrak{h}}_{\Qb,ii}(H_i;E,F):=\Psi^{m}_{\Qb,ii}(H_i;E,F)+ \Psi^{-\infty,\cH/\mathfrak{h}}_{\Qb,ii}(H_i;E,F), \quad m\in \bbR,
\end{gathered}
\label{smb.5}\end{equation}
for $\cH$ and $\mathfrak{h}$ an index family and multiweight associated to the manifold with corners $H_{ii}^{\Qb}$, where  
$$\diag_{ii}:=H_{ii}^{\Qb}\cap \diag_{\Qb}
$$ 
is the restriction of the lifted diagonal to $H_{ii}^{\Qb}$.  
Clearly, restriction to $H_{ii}^{\Qb}$ induces a map 
\begin{equation}
 N_{ii}: \Psi^{m,\cE/\mathfrak{s}}_{\Qb}(M;E,F)\to \Psi^{m,\cG/\mathfrak{g}}_{\Qb,ii}(H_i;E,F)
\label{smb.6}\end{equation}
when $\inf\Re \cE(H_{ii})\ge 0$ with $(i\bbR,0)\cap \cE(H_{ii})\subset (0,0)$, in which case $\cG$ and $\mathfrak{g}$ are the restriction of $\cE$ and $\mathfrak{s}$ to $H^{\Qb}_{ii}$.  
 This gives rise to a short exact sequence
\begin{equation}
\xymatrix{
0\ar[r] & x_i\Psi^m_{\Qb}(M;E,F) \ar[r] & \Psi^m_{\Qb}(M;E,F) \ar[r]^-{N_{ii}} & \Psi^m_{\Qb,ii}(H_i;E,F)\ar[r] & 0.
}
\label{smb.7}\end{equation}
The composition rules of Theorem~\ref{co.13} induce a natural operation of composition
\begin{equation}
\Psi^{m,\cG/\mathfrak{g}}_{\Qb,ii}(H_i;F,G)\circ \Psi^{m',\cH/\mathfrak{h}}_{\Qb,ii}(H_i;E,F)\subset \Psi^{m+m',\cL/\mathfrak{l}}_{\Qb,ii}(H_i;E,G)
\label{smb.8}\end{equation}
such that 
\begin{equation}
 N_{ii}(A\circ B)= N_{ii}(A)\circ N_{ii}(B).
\label{smb.9}\end{equation} 
 For $H_i$ not maximal, we consider instead the spaces 
 \begin{equation}
 \begin{gathered}
\Psi^m_{\ff^{\Qb}_i}(H_i;E,F):=  \{A\in I^m(\ff^{\Qb}_{i};\diag_{\ff^{\Qb}_i};\beta^*_{\Qb}(\Hom(E,F)\otimes \pr_R^*\Omega_{\Qb})) \; | \;  A\equiv 0 \;\mbox{at} \;\pa\ff^{\Qb}_{i}\cap \overline{(\ff^{\Qb}\setminus \ff^{\Qb}_i)} \},\\
\Psi^{-\infty,\cH/\mathfrak{h}}_{\ff^{\Qb}_i}(H_i;E,F):=\sA^{\cH/\mathfrak{h}}_{\phg}(\ff^{\Qb}_{i};\beta^*_{\Qb}(\Hom(E,F)\otimes \pr_R^*\Omega_{\Qb})), \\
\Psi^{m,\cH/\mathfrak{h}}_{\ff^{\Qb}_i}(H_i;E,F):=\Psi^{m}_{\ff^{\Qb}_i}(H_i;E,F)+ \Psi^{-\infty,\cH/\mathfrak{h}}_{\ff^{\Qb}_i}(H_i;E,F), \quad m\in\bbR,
\end{gathered}
 \label{smb.11}\end{equation}
 for $\cH$ and $\mathfrak{h}$ an index family and a multiweight on $\ff_i^{\Qb}$, where $\diag_{\ff^{\Qb}_i}:=\diag_{\Qb}\cap \ff^{\Qb}_i$ is the restriction to $\ff^{\Qb}_i$ of the lifted diagonal.   
Again, provided $\inf\Re \cE(\ff^{\Qb}_i)\ge 0$ with $(i\bbR,0)\cap \cE(\ff^{\Qb}_i)\subset (0,0)$, restricting to $\ff^{\Qb}_{i}$ induces a map
\begin{equation}
  N_i: \Psi^{m,\cE/\mathfrak{s}}_{\Qb}(M;E,F)\to \Psi^{m,\cH/\mathfrak{h}}_{\ff^{\Qb}_i}(H_i;E,F)
\label{smb.12}\end{equation}
with $\cH$ and $\mathfrak{h}$ the restrictions of $\cE$ and $\mathfrak{s}$ to $\ff^{\Qb}_i$.  
This induces a short exact sequence
\begin{equation}
\xymatrix{
0\ar[r] & x_i\Psi^m_{\Qb}(M;E,F) \ar[r] & \Psi^m_{\Qb}(M;E,F) \ar[r]^-{N_{i}} & \Psi^m_{\ff^{\Qb}_i}(H_i;E,F)\ar[r] & 0.
}
\label{smb.13}\end{equation}
By Theorem~\ref{co.13}, there are  natural operations of composition
\begin{equation}
\Psi^{m,\cG/\mathfrak{g}}_{\ff^{\Qb}_i}(H_i;F,G)\circ \Psi^{m',\cH/\mathfrak{h}}_{\ff^{\Qb}_i}(H_i;E,F)\subset \Psi^{m+m',\cL/\mathfrak{l}}_{\ff^{\Qb}_i}(H_i;E,G)
 \label{smb.14}\end{equation}
such that 
\begin{equation}
    N_i(A\circ B)= N_i(A)\circ N_i(B).
\label{smb.15}\end{equation}

One important difference between $\Qb$ operators and $\QFB$ operators is that the normal operators of $\Qb$ operators do not admit a simple iterative description in terms of suspended $\Qb$ operators.  In fact, the composition rules in \eqref{smb.8} and \eqref{smb.14} are more intricate and `non-commutative' in nature.  For this reason, to invert those normal operators, it is often easier to work with $\QAC$ operators.  When $H_i$ is maximal, one ultimately has to deal with edge operators to describe the inverse of the normal operators.  

To see how edge operators arise, let us first review their definition from \cite{MazzeoEdge,AG}.   Thus, let $(M,\phi)$ be a manifold with fibered corners with boundary hypersurfaces $H_1,\ldots, H_{\ell}$ and boundary fiber bundles  $\phi_i:~H_i\to~S_i$.  Let $x_1,\ldots, x_{\ell}$ be corresponding boundary defining functions.  Then the Lie algebra of \textbf{edge vector fields} $\kridx{\cV_e}{Ve}{edge vector fields}(M)$ is given by 
\begin{equation}
\cV_e(M)= \{ \xi\in \cV_b(M)\; | \; (\phi_i)_*(\xi|_{H_i})=0 \; \forall\; i\} 
\label{smb.17}\end{equation}
and corresponds to $b$ vector fields tangent to the fibers of $\phi_i:H_i\to S_i$ for each boundary hypersurface $H_i$.  In terms of the coordinates \eqref{coor.1}, a local basis of vector fields is given by 
$$
      v_i\frac{\pa}{\pa x_i}, v_i\frac{\pa}{\pa y_i^{j}}, \frac{\pa}{\pa z^k}.
$$
By the Serre-Swan theorem, there is a corresponding Lie algebroid $\kridx{{}^{e}T}{TMe}{edge tangent bundle} M\to M$ called the \textbf{edge tangent bundle} with anchor map $a_e: {}^{e}TM\to TM$ inducing a canonical isomorphism of Lie algebras
$$
  \CI(M;{}^{e}TM)=\cV_e(M).
$$
Notice that away from $\pa M$, the anchor map induces a canonical isomorphism of vector bundles
$$
       {}^{e}TM|_{M\setminus \pa M} \cong TM|_{M\setminus \pa M}.
$$
However, on $\pa M$, the anchor map is neither injective nor surjective.  
\begin{definition}
An \textbf{edge metric} smooth up to the boundary on $M\setminus \pa M$ is a complete Riemannian metric $g_e$ on $M\setminus \pa M$ which is of the form 
$$
        g_e= (a_e^{-1})^*(h_e|_{M\setminus \pa M})
$$ 
for some smooth bundle metric $h_e\in \CI(M; S^2({}^{e}T^*M))$ on ${}^{e}TM$.  A \textbf{wedge metric} smooth up to the boundary on $M\setminus \pa M$ is instead a Riemannian metric $g_w$ on $M\setminus \pa M$ of the form 
$$
     g_w= v^2g_e
$$
for some edge metric $g_e$ smooth up to the boundary, where $v=\prod_{i=1}^{\ell} x_i$ is a total boundary defining function.  
\label{smb.18}\end{definition}
\begin{remark}
A wedge metric is not geodesically complete.  It is also called \textbf{incomplete iterated edge metric} in \cite{ALMP2012}.
\label{smb.19}\end{remark}
In the coordinates \eqref{coor.1}, an example of edge metric is given by 
\begin{equation}
\sum_{i=1}^l \left(\frac{dx_i^2}{v_i^2}+ \sum_{j=1}^{k_i} \frac{(dy_i^j)^2}{v_i^2}\right) + \sum_{m=1}^q dz_m^2.
\label{smb.20}\end{equation}
As for $\QFB$ metrics, there are important subclasses of edge and wedge metrics.
\begin{definition}
An edge metric $g_e$ is of \textbf{product-type} near $H_i$ if in a tubular neighborhood of $H_i$ as in \eqref{pt.1}, it is of the form 
\begin{equation}
   \frac{dx_i^2}{v_i^2}+ \frac{\phi_i^*g_{S_i}}{v_i^2}+ \kappa_i,
\label{e.1a}\end{equation}
where $g_{S_i}$ is an edge metric on $S_i$, $\kappa_i$ is a $2$-tensor inducing an edge metric on the fibers of $\phi_i: H_i\to S_i$ such that $\phi_i^*g_{S_i}+ \kappa_i$ is a Riemannian metric turning $\phi_i$ into a Riemannian submersion onto $(S_i,g_{S_i})$.  An edge metric is \textbf{exact} if it is product-type near $H_i$ up to a term in $x_i\CI(M;S^2({}^eT^*M))$ for each boundary hypersurface $H_i$.  A wedge metric $g_w$ is \textbf{product-type} near $H_i$ (respectively \textbf{exact}) if the corresponding edge metric $g_e= \frac{g_w}{v^2}$ is product-type near $H_i$ (respectively exact).  Alternatively, in terms of \eqref{e.1a}, a wedge metric $g_w$ is product-type near $H_i$ if in a tubular neighborhood of $H_i$ as in \eqref{pt.1},
\begin{equation}
  g_w= \rho_i^2 (dx_i^2+ \phi_i^*g_{S_i}+ x_i^2 \kappa_{w,i}), \quad \mbox{with} \; \rho_i:=\prod_{H_j<H_i} x_j,
\label{e.1b}\end{equation}
where $\kappa_{w,i}= \lrp{\frac{v_i^2}{x_i^2}}\kappa_i$ is a 2-tensor inducing a wedge metric on the fibers of $\phi_i: H_i\to S_i$ in such a way that $\phi_i^*g_{S_i}+ \kappa_{w,i}$ is a Riemannian metric turning $\phi_i$ into a Riemannian submersion onto $(S_i,g_{S_i})$.
\label{e.1}\end{definition}

The edge double space of \cite{MazzeoEdge,AG} can be described as follows.  On the codimension two face $H_i\times H_i$ in $M\times M$, consider the fiber diagonal
\begin{equation}
  \Phi_i^e= (\phi_i\times \phi_i)^{-1}(\diag_{S_i}),
\label{smb.21}\end{equation}
where $\diag_{S_i}$ is the diagonal in $S_i\times S_i$.  Unless $H_i$ is a minimal boundary hypersurface, notice that $\diag_{S_i}$ is not a $p$-submanifold of $S_i^2$, and correspondingly $\Phi^e_i$ is not a $p$-submanifold of $M^2$.  Choosing the labelling of the boundary hypersurfaces of $M$ to be compatible with the partial order of the boundary hypersurfaces in the sense that
$$
     H_i<H_j\; \Longrightarrow \; i<j,
$$ 
the \textbf{edge double space} can nevertheless be defined by 
\begin{equation}
  \kridx{M^2_e}{M2e}{edge double space}=[M^2;\Phi_1^{e},\ldots, \Phi_{\ell}^{e}] \quad \mbox{with blow-down map} \quad \beta_e: M^2_e\to M^2.  
\label{smb.22}\end{equation}
Indeed, each blow-up is well-defined, since for $H_i$ not minimal, the lift of $\Phi_i^{e}$ becomes a $p$-submanifold once the first $i-1$ blow-ups are performed.  Let us denote by 
$\ff_i^{e}$ the boundary hypersurface created by the blow-up of $\Phi_i^{e}$.  Let also $\ff_e$ denote the union of all these new boundary hypersurfaces,
$$
     \ff_e:= \bigcup_{i=1}^{\ell} \ff_i^{e}.
$$
Denote by $H_{i0}^{e}$ and $H_{0i}^{e}$ the lifts to $M^2_e$ of the boundary hypersurfaces $H_i\times M$ and $M\times H_i$ in $M^2$.  Finally, let $\diag_e\subset M^2_e$ denote the lift of the diagonal in $M^2$ and let  
$$
     \Omega_e:= \left|  \Lambda^{\operatorname{top}}({}^{e}T^*M)\right|
$$
denote the edge density bundle on $M$.  Given $E$ and $F$ vector bundles on $M$, the \textbf{small calculus of edge pseudodifferential operators} acting from sections of $E$ to sections of $F$ is then the union over $m\in \bbR$ of the spaces
\begin{equation}
\kridx{\Psi^m_e}{Psie}{edge pseudodifferential operators (small calculus)}(M;E,F):= \{ \kappa\in I^m(M_e^2;\diag_e; \beta_e^*(\Hom(E,F)\otimes \pr_R^*\Omega_e))\; | \; \kappa\equiv 0\; \mbox{at} \; \pa M^2_e\setminus \ff_e\}.
\label{smb.23}\end{equation}
More generally, given an index family $\cE$ for $M^2_e$ and a multiweight $\mathfrak{s}$, there are  corresponding spaces of edge pseudodifferential operators 
\begin{equation}
\begin{gathered}
\Psi^{-\infty,\cE/\mathfrak{s}}_e(M;E,F):= \sA^{\cE/\mathfrak{s}}_{\phg}(M^2_e;\diag_e;\beta^*_e(\Hom(E,F)\otimes \pr_R\Omega_e)), \\
\kridx{\Psi^{m,\cE/\mathfrak{s}}_e}{PsieEs}{edge pseudodifferential operators (large calculus)}(M;E,F):=\Psi^{m}_e(M;E,F)+ \Psi^{-\infty,\cE/\mathfrak{s}}_e(M;E,F), \quad m\in \bbR.
\end{gathered}
\label{smb.24}\end{equation}
\begin{remark}
To be consistent with our notation for $\QFB$ operators and $\Qb$ operators, we use in \eqref{smb.24} a right density slightly different from the one of \cite[(3.11)]{AG}.
\label{smb.25}\end{remark}
As in Definition~\ref{ext.8} we can define the spaces of weakly conormal edge pseudodifferential operators by
\begin{equation}
\begin{gathered}
\Psi^{-\infty,\cE/\mathfrak{s}}_{e,\cn}(M;E,F):= \sA^{\cE/\mathfrak{s}}_{e,\phg}(M^2_e;\diag_e;\beta^*_e(\Hom(E,F)\otimes \pr_R\Omega_e)), \\
\kridx{\Psi^{m,\cE/\mathfrak{s}}_{e,\cn}}{PsieEscn}{weakly conormal edge pseudodifferential operators}(M;E,F):=\Psi^{m}_e(M;E,F)+ \Psi^{-\infty,\cE/\mathfrak{s}}_{e,\cn}(M;E,F), \quad m\in \bbR.
\end{gathered}
\label{smb.24cn}\end{equation}

For $\tau>0$, we can also consider the spaces
\begin{equation}
\kridx{\Psi^{-\infty,\tau}_{e,\res}}{Psievr}{very residual edge pseudodifferential operators}(M;E,F):= \Psi_{e,\cn}^{-\infty,\cE/\mathfrak{s}_{\tau}}(M;E,F) \quad \mbox{and} \quad \Psi^{m,\tau}_{e,\cn}(M;E,F):= \Psi_e^{m}(M;E,F)+ \Psi^{-\infty,\cF/\mathfrak{s}_{\tau}}_{e,\res}(M;E,F)
\label{smb.26}\end{equation}
obtained by taking $\mathfrak{s}_{\tau}$ to be the multiweight given by $\tau$ at $H_{i0}^{e}$ and $\ff_i^{e}$ and by $\tau+\dim S_i$ at $H_{0i}^{e}$, $\cE$ to be the index family given by the empty set at every boundary hypersurfaces and $\cF$ to be the index family given by $\bbN_0$ at $\ff_i^{e}$ for $H_i$ a boundary hypersurface and by the empty set elsewhere.  Taking into account Remark~\ref{smb.25}, we can now state the composition theorem of \cite{AG}  and its generalization to weakly conormal edge pseudodifferential operators as follows.

\begin{theorem}[\cite{AG}]
Suppose that $\cE,\cF$ are index families and $\mathfrak{s},\mathfrak{t}$ are multiweights such that for all $i$,
$$
  \min\{\mathfrak{s}(H^{e}_{0i}),\min \Re \cE(H_{0i}^{e})\}+ \min\{\mathfrak{t}(H^{e}_{i0}),\min\Re \cF(H^{e}_{i0})\}>\dim S_i.
$$
Then given $A\in \Psi^{m,\cE/\mathfrak{s}}_e(M;F,G)$ and $B\in \Psi^{m',\cF/\mathfrak{t}}_e(M;E,F)$, we have that 
$$
       A\circ B\in \Psi^{m+m',\cK/\mathfrak{k}}_e(M;E,G),
$$
where the index family $\cK$ is given by 
\begin{equation}
\begin{aligned}
\cK(H^{e}_{i0})&= \cE(H^{e}_{i0})\overline{\cup} \left( \cE(\ff^{e}_i)+ \cF(H^{e}_{i0}) \right), \\
\cK(H_{0i}^{e})&= \cF(H_{0i}^{e})\overline{\cup} \left( \cE(H_{0i}^{e})+ \cF(\ff^{e}_i) \right), \\
\cK(\ff_i^{e}) &= \left(\cE(\ff^{e}_i)+ \cF(\ff_i^{e}) \right) \overline{\cup} \left(\cE(H^{e}_{i0})+\cF(H^{e}_{0i}) \right),
\end{aligned}
\label{smb.27b}\end{equation}
while the multiweight $\mathfrak{k}$ is given by
\begin{equation}
\begin{aligned}
\mathfrak{k}(H^{e}_{i0})&= \min\left\{\mathfrak{s}(H^{e}_{i0}), \mathfrak{s}(\ff^{e}_i)\dot{+} \mathfrak{t}(H^{e}_{i0}) \right\}, \\
\mathfrak{k}(H_{0i}^{e})&= \min\left\{\mathfrak{t}(H_{0i}^{e}),  \mathfrak{s}(H_{0i}^{e})\dot{+} \mathfrak{t}(\ff^{e}_i) \right\}, \\
\mathfrak{k}(\ff_i^{e}) &= \min\left\{\mathfrak{s}(\ff^{e}_i)\dot{+} \mathfrak{t}(\ff_i^{e}), \mathfrak{s}(H^{e}_{i0})\dot{+}\mathfrak{t}(H^{e}_{0i}) \right\}.
\end{aligned}
\label{smb.27c}
\end{equation}
If instead $\cE=\cF=\emptyset$ except possibly at $\ff_i^e$ for $H_i$ a boundary hypersurface, where it could be $\bbN_0$ instead of $\emptyset$, then with $A\in \Psi^{m,\cE/\mathfrak{s}}_{e,\cn}(M;F,G)$ and $B\in \Psi^{m',\cF/\mathfrak{t}}_{e,\cn}(M;E,F)$ only weakly conormal edge pseudodifferential operators, we have that
$$
       A\circ B\in \Psi^{m+m',\cK/\mathfrak{k}}_{e,\cn}(M;E,G),
$$
\label{smb.27}\end{theorem}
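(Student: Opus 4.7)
My plan is to follow exactly the template used for the $\QFB$ and $\Qb$ composition results (Theorems~\ref{co.9} and \ref{co.13}). I would construct an \emph{edge triple space} $M^3_e$ carrying three $b$-fibrations down to $M^2_e$, realize the Schwartz kernel of $A\circ B$ as a pushforward via the ``middle'' map of a product of pullbacks of the kernels of $A$ and $B$ via the ``left'' and ``right'' maps, and then read off $\cK$ and $\mathfrak{k}$ directly from the pushforward theorem of \cite[Theorem~5]{Melrose1992}, after checking with the edge analogue of Lemma~\ref{ts.26} that the middle fibration is transverse to the lifted $o$-diagonals of the other two, so that conormality along $\diag_e$ is preserved.

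To construct $M^3_e$, for each boundary hypersurface $H_i$ I would introduce the triple fiber diagonal $\Phi^e_{i,T}=(\phi_i\times\phi_i\times\phi_i)^{-1}(\diag_{S_i}^3)$ inside $M^3$, and, for $o\in\{L,C,R\}$, the lifts $\Phi^{e,o}_i$ of $\Phi^e_i\subset M^2$ under the three projections $\pi_o\colon M^3\to M^2$. As in Definition~\ref{ts.11}, I would then define
\[
M^3_e:=[M^3;\Phi^e_{1,T},\Phi^{e,L}_1,\Phi^{e,C}_1,\Phi^{e,R}_1,\ldots,\Phi^e_{\ell,T},\Phi^{e,L}_\ell,\Phi^{e,C}_\ell,\Phi^{e,R}_\ell],
\]
with the blow-ups performed in increasing order in $i$. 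As in the proof of Lemma~\ref{ts.12}, once $\Phi^e_{i,T}$ has been blown up, the three $p$-submanifolds $\Phi^{e,o}_i$ are disjoint, so their order is immaterial. The fact that each projection $\pi_o$ lifts to a $b$-fibration $\pi^3_{e,o}\colon M^3_e\to M^2_e$ would then follow by iterating Lemmas~\ref{bf.3} and \ref{bf.1}, exactly as in the proof of Proposition~\ref{ts.23}, including the device used there to handle the fact that $\Phi^e_{i,T}\cap\Phi^{e,L}_{ij}\neq\emptyset$ does not spoil the commutation of blow-ups needed to apply \cite[Lemma~2.5]{hmm}.

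The remainder is mechanical: denoting by $\ff^e_{i,T}$ and $\ff^{e,o}_i$ the new front faces, I would use Lemma~\ref{pdo.12a} to compute $(\beta^3_e)^*\Omega_b(M^3)$ and compare it with the three pullbacks of $\pi_R^\ast\Omega_e$, obtaining a combined density of the form $\rho^{\mathfrak{a}}\,\Omega_b(M^3_e)$, analogous to \eqref{co.7}--\eqref{co.8} but with each $h_i=1+\dim S_i$ replaced by $\dim S_i$ in view of the edge density relation $\Omega_e(M)=\prod_i x_i^{-\dim S_i}\Omega_b(M)$ (with the right-density convention as in Remark~\ref{smb.25}); the pushforward theorem then delivers \eqref{smb.27b} and \eqref{smb.27c}. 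The main obstacle is the construction of $M^3_e$ itself: since $\Phi^e_i$ is not a $p$-submanifold of $M^2$ when $H_i$ is not minimal, neither are its triple-space lifts, and one must check at each stage that the iterated-fibration compatibility $\phi_{ji}\circ\phi_j=\phi_i$ (for $H_j>H_i$) together with the previously performed blow-ups in increasing $i$ turn each member of the sequence into a $p$-submanifold before it is blown up. Once this is established, tracking the index sets through the pushforward theorem (together with the conormal bookkeeping encoded by the $\dot{+}$ and $\overline{\cup}$ operations of Definition~\ref{co.8b}) is straightforward.
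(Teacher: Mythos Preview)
Your sketch is a faithful adaptation of the $\QFB$/$\Qb$ template to the edge setting and would indeed yield a proof; however, the paper does not actually prove this theorem. It is stated with attribution to \cite{AG} and no proof is given in the present paper---the result is simply quoted and then applied to derive Corollary~\ref{smb.28}. So there is nothing to compare your approach against here: the authors defer entirely to the cited reference for the construction of the edge triple space and the pushforward argument.

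That said, your outline is essentially the standard strategy one finds in \cite{AG} (and before that in \cite{MazzeoEdge} for depth one), so in that sense you have correctly anticipated the underlying argument. The one point worth flagging is your remark about the $p$-submanifold issue: you are right that $\Phi^e_i$ and its triple-space analogues are not $p$-submanifolds before the lower-$i$ blow-ups are performed, and verifying that the blow-ups in increasing $i$ resolve this is the genuine content of the construction---but this is handled in \cite{AG} rather than in the present paper.
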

\begin{proof}
By \cite{AG}, we only need to provide a proof for weakly conornal edge pseudodifferential operators, in which case it is a straightforward adaptation of the corresponding argument for the composition of weakly conormal $\QFB$ pseudodifferential operators in the proof of Theorem~\ref{co.9}.
\end{proof}

Applying this result to \eqref{smb.23} and \eqref{smb.26} yields the following.
\begin{corollary}
For $m,m'\in \bbR$, and $E,F$ and $G$ vector bundles on $M$,
$$
    \Psi^{m}_e(M;F,G)\circ \Psi^{m'}_e(M;E,F)\subset \Psi_e^{m+m'}(M;E,G).
$$
Similarly, for $A\in \Psi^{m,\tau}_e(M;F,G)$ and $B\in \Psi^{m',\tau}_{e}(M;E,F)$, we have that
$$
     A\circ B\in \Psi^{m+m',\tau}_e(M;E,G).
$$ 
Moreover, $A\circ B$ is in fact in  $\Psi^{-\infty,\tau}_{e,\res}(M;E,G)$ as soon as $A\in \Psi^{-\infty,\tau}_{e,\res}(M;F,G)$ or $B\in  \Psi^{-\infty,\tau}_{e,\res}(M;E,F)$.\label{smb.28}\end{corollary}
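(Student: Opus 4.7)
My plan is to deduce all three statements as direct specializations of Theorem~\ref{smb.27}, the task being to identify the right index family and multiweight data for each of $\Psi^m_e$, $\Psi^{m,\tau}_e$, and $\Psi^{-\infty,\tau}_{e,\vr}$, and then to verify that this data is preserved (or improved) by the composition formulas \eqref{smb.27b}--\eqref{smb.27c}.  I expect no real obstacle beyond careful bookkeeping of the $\overline{\cup}$ and $\dot{+}$ operations; the only delicate point is tracking how empty index sets interact with nonempty ones through the convention of Definition~\ref{co.8b}.

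First I would treat the small calculus inclusion.  I will write $\Psi^m_e(M;E,F) = \Psi^{m,\cE_0/\infty}_e(M;E,F)$, where $\cE_0$ is the index family assigning $\bbN_0$ to each front face $\ff_i^{e}$ and $\emptyset$ to every other boundary hypersurface, and $\infty$ denotes the multiweight equal to $+\infty$ at every face.  The hypothesis of Theorem~\ref{smb.27} is then vacuously satisfied, and applying \eqref{smb.27b}--\eqref{smb.27c} with $\cE = \cF = \cE_0$ and $\mathfrak{s} = \mathfrak{t} = \infty$ yields $\cK = \cE_0$ and $\mathfrak{k} = \infty$, placing $A\circ B$ back in $\Psi^{m+m'}_e(M;E,G)$.

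Second, for the weighted statement, I would keep $\cE = \cF = \cE_0$ but replace $\infty$ by the multiweight $\mathfrak{s}_\tau$ of \eqref{smb.26}.  The hypothesis of Theorem~\ref{smb.27} then becomes $(\tau + \dim S_i) + \tau > \dim S_i$, which holds because $\tau > 0$.  The index family output is again $\cE_0$, and a direct computation of $\mathfrak{k}$ using Definition~\ref{co.8b}, noting that at each front face $\mathfrak{e} = \mathfrak{f} = 0$ thanks to the $\bbN_0$ index sets, gives $\mathfrak{k}(H_{i0}^{e}) = \tau$, $\mathfrak{k}(H_{0i}^{e}) = \tau + \dim S_i$, and $\mathfrak{k}(\ff_i^{e}) = \tau$; that is, $\mathfrak{k} = \mathfrak{s}_\tau$, and so $A\circ B \in \Psi^{m+m',\tau}_e(M;E,G)$.

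Finally, for the improvement to $\Psi^{-\infty,\tau}_{e,\vr}$, suppose without loss of generality that $\cE = \emptyset$ at every face (the case $\cF = \emptyset$ being symmetric).  Then each of the three expressions computing $\cK$ in \eqref{smb.27b} is either of the form $\emptyset + (\cdot)$ or is a union of such, so $\cK = \emptyset$ at every face; a parallel computation with $\dot{+}$ shows that $\mathfrak{k} \ge \mathfrak{s}_\tau$, confirming $A\circ B \in \Psi^{-\infty,\tau}_{e,\vr}(M;E,G)$.  In short, the entire proof is arithmetic bookkeeping inside Theorem~\ref{smb.27}, which is why no step poses a genuine difficulty.
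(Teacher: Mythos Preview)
Your proposal is correct and matches the paper's approach exactly: the paper presents this corollary without proof, stating only that ``applying this result to \eqref{smb.23} and \eqref{smb.26} yields the following,'' and your bookkeeping with the index families $\cE_0$, $\emptyset$ and the multiweight $\mathfrak{s}_\tau$ is precisely what is intended.
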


Coming back to $\Qb$ pseudodifferential operators, let us explain now how the edge double space $(S_i)^2_e$ naturally arises within $H_{ii}^{\Qb}$ for $H_i$ maximal.  
\begin{figure}[h]
\begin{tikzpicture}
\draw(-1,0) arc [radius=1, start angle=180, end angle=150];
\draw(0,1) arc[radius=1, start angle=90, end angle= 120];
\draw(-0.866025,0.5) arc[radius=0.366025404, start angle=180, end angle=90];
\draw(-1,0)--(5,0);
\draw(0,1)--(4,1);
\draw(5,0) arc[radius=1, start angle=0, end angle=30];
\draw(4,1) arc[radius=1, start angle=90, end angle=60];
\draw(4.866025,0.5) arc[radius=0.366025404, start angle=0, end angle=90];
\draw(-0.866025,0.5)--(4.866025,0.5);
\draw(-0.5,0.866025)--(4.5,0.866025);
\draw(0,1) arc[radius=1, start angle=0, end angle=90];
\draw(5,0) arc[radius=2, start angle=240, end angle=270];
\draw(-1,2)--(-1,4);
\draw(-1,2)--(-4,-1);
\draw(-1,0)--(-3,-2);
\draw(-3,-2) arc[radius=1, start angle=0, end angle=90];
\draw(6,-0.267949)--(6.732050808,-2);
\draw(-3,-2)--(6.732050808,-2);
\draw(6,-0.267949)--(6,4);
\draw(4,1)--(4,4);
\draw(-4,-1)--(-4,2);
\draw(6.73205,-2)--(6.73205,2);
\node at (2,-1) {$H^{\Qb}_{10}$};
\node at (-2.5,0) {$H^{\Qb}_{12}$};
\node at (-2.5,2) {$H^{\Qb}_{02}$};
\node at (2,2) {$H^{\Qb}_{01}$};
\node at (2,0.7) {$\ff^{\Qb}_{1}$};
\node at (3,0.25) {$H^{\Qb}_{11}$};
\node at (6.35,1) {$H^{\Qb}_{20}$};
\node at (5.5,2) {$H^{\Qb}_{21}$};

\draw[dashed] (-0.866025,0.7)--(4.866025,0.7);
\draw[dashed] (-0.866025,0.7)--(-3.29289,-1.29289);
\draw[dashed] (4.866025,0.7)--(4.866025,4);

\end{tikzpicture}
\caption{Picture of $H^{\Qb}_{22}$ with $H_1<H_2$ when $M$ is of depth $2$ with the dashed lines representing the intersections of $\diag_2^{\Qb}$ with the corresponding boundary hypersurfaces. Beware that $\diag_2^{\Qb}$ intersects $H^{\Qb}_{11}$, but due to the lack of dimensions, this is not represented in this picture.}
\label{fig.1}\end{figure}
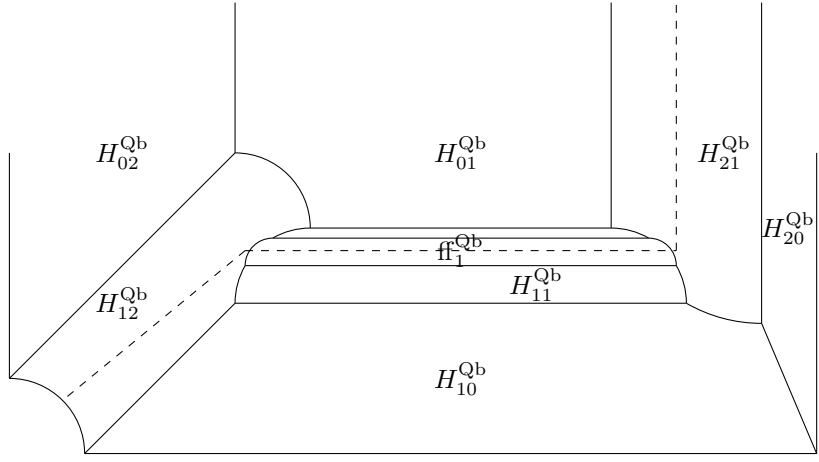
Let $\diag_i^{\Qb}$ be the lift of the $p$-submanifold $\diag_i$ of Lemma~\ref{ps.0} to the $\Qb$ double space.  Since $\phi_i: H_i\to S_i$ is the identity map, we see from \eqref{ps.6} that there is a natural diffeomorphism 
\begin{equation}
  \diag_i\cong (S_i)^2_{\op}.
\label{smb.29}\end{equation}
 Now, for $H_j<H_i$, there is a corresponding boundary hypersurface $\pa_jS_{i}$ of $S_i$ such that  $\phi_j: H_j\to S_j$ restricts to a fiber bundle 
$$
    \phi_j: \pa_jS_{i}\to S_j.
$$
Similarly, by \cite[Corollary~4.15]{KR3}, the fiber bundle  $\phi_j\rttimes \phi_j: \diag_j\to (S_j)^2_{\op}$ restricts to a fiber bundle
\begin{equation}
  \phi_j\rttimes \phi_j: \pa_{jj}(S_i)^2_{\op}\to (S_j)^2_{\op},
\label{smb.30}\end{equation}
where $\pa_{jj}(S_i)^2_{\op}$ is the boundary hypersurface of $(S_i)^2_{\op}$ corresponding to the lift of $\pa_j S_{i}\times \pa_jS_{i}\subset S_i^2$.

From the definition of the $\Qb$ double space and the identification \eqref{smb.29}, it clearly follows that there is a natural diffeomorphism
\begin{equation}
\diag^{\Qb}_i\cong [(S_i)^2_{\op}; (\phi_{j_1}\rttimes \phi_{j_1})^{-1}(\diag^2_{S_{j_1,\op}}),\ldots,(\phi_{j_k}\rttimes \phi_{j_k})^{-1}(\diag^2_{S_{j_k,\op}})],  
\label{smb.30}\end{equation}
where $H_{j_1},\ldots, H_{j_k}$ is an exhaustive list of the boundary hypersurfaces not equal to $H_i$ but intersecting it such that 
$$
         H_{j_m}<H_{j_{m'}}\quad \Longrightarrow \quad m<m'.
$$ 
\begin{lemma}
There is a natural diffeomorphism 
$$
     \diag_i^{\Qb}\cong [(S_i)^2_e; \cM_1(S_i)\times \cM_1(S_i)],
$$
where the blow-ups of the lifts of the elements of $\cM_1(S_i)\times \cM_1(S_i)$ are done in an order compatible with the partial order on $\cM_1(S_i)\times \cM_1(S_i)$.  
\label{smb.31}\end{lemma}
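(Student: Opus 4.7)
The plan is to realize both sides as iterated blow-ups of $S_i\times S_i$ along the \emph{same} collection of submanifolds and then invoke appropriate commutativity-of-blow-ups lemmas to rearrange the order. Concretely, combining \eqref{smb.30} with the definition of the ordered product, the left-hand side is identified with
\[
\bigl[S_i\times S_i;\; \cM_1(S_i)\times\cM_1(S_i),\; \tPhi^e_{j_1},\ldots,\tPhi^e_{j_k}\bigr],
\]
where $\tPhi^e_{j_m}$ denotes the lift of $\Phi^e_{j_m}=(\phi_{j_m}\times\phi_{j_m})^{-1}(\diag_{S_{j_m}})$ to $(S_i)^2_{\op}$, the ordered corner blow-ups being performed first in an order compatible with the partial order on $\cM_1(S_i)\times\cM_1(S_i)$; while the right-hand side is
\[
\bigl[S_i\times S_i;\; \Phi^e_{j_1},\ldots,\Phi^e_{j_k},\; \cM_1(S_i)\times\cM_1(S_i)\bigr],
\]
the edge blow-ups being performed first (each $\Phi^e_{j_m}$ becoming a $p$-submanifold after the preceding ones are blown up, as in \eqref{smb.22}). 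The first step is to check that under the chain of lifts implicit in the ordered-product construction, $\tPhi^e_{j_m}$ coincides with the fiber product $(\phi_{j_m}\rttimes\phi_{j_m})^{-1}(\diag^2_{S_{j_m},\op})$ inside $(S_i)^2_{\op}$; this is essentially the content of the fibration \eqref{smb.30} combined with the fact that $\diag^2_{S_{j_m},\op}$ is precisely the lift of the ordinary diagonal of $S_{j_m}^2$ to $(S_{j_m})^2_{\op}$.

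Granting this, the problem reduces to showing that the two iterated blow-ups above yield diffeomorphic manifolds with corners. I would proceed by induction on the depth of $S_i$, the case of depth zero being trivial since then $\cM_1(S_i)=\emptyset$ and $(S_i)^2_e=S_i^2=(S_i)^2_{\op}=\diag_i^{\Qb}$. For the inductive step, let $H_{j_1}$ be minimal among the $H_{j_m}$ meeting $H_i$ (so that $\pa_{j_1}S_i\subset S_i$ is a minimal boundary hypersurface and $\Phi^e_{j_1}$ is a genuine $p$-submanifold of $S_i\times S_i$). The key observation is then that $\Phi^e_{j_1}$ is contained in the single codimension-$2$ corner $\pa_{j_1}S_i\times\pa_{j_1}S_i$, and, apart from this corner, it meets no other element of $\cM_1(S_i)\times\cM_1(S_i)$ in its boundary strata. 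One applies the standard fact that, for a $p$-submanifold $Y$ contained in a $p$-submanifold $Z$ with $Y$ meeting $Z$ cleanly, the blow-ups of $Y$ and $Z$ commute in the sense that
\[
[X;Y,Z]\cong [X;Z,\widetilde Y],
\]
where $\widetilde Y$ is the lift of $Y$ to $[X;Z]$ (see for instance \cite{KR3}). This allows the first edge blow-up $\Phi^e_{j_1}$ to be swapped past the corner blow-ups $H\times G$ for all $(H,G)\in\cM_1(S_i)\times\cM_1(S_i)$ that do not interact with it (disjoint or transverse pairs), and past the one corner it does meet using the containment just noted. The resulting equivalence of blow-ups then identifies both sides with a partially blown-up space to which the inductive hypothesis applies, the induction variable being the number of remaining edge fiber diagonals to be blown up, or equivalently the depth of $S_i$ after collapsing the minimal stratum.

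The main obstacle is bookkeeping the commutations: the ordered-product blow-ups must be performed in an order compatible with the partial order on $\cM_1(S_i)\times\cM_1(S_i)$, while the edge blow-ups must be performed in an order compatible with the partial order on $\cM_1(S_i)$. I would verify that the two partial orders can be interleaved consistently, and that at each step the submanifold being blown up is indeed a $p$-submanifold of the intermediate blow-up. This amounts to checking that the partial orders on boundary hypersurfaces induced from $M$ on $S_i$ are compatible with the constraint that blowing up $\Phi^e_{j_m}$ requires the preceding $\Phi^e_{j_{m'}}$ (with $H_{j_{m'}}<H_{j_m}$) to have been blown up, and that this is unaffected by performing corner blow-ups in parallel. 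Once these compatibility checks are in place, the commutativity lemmas supply the required diffeomorphism, naturality being automatic since all the maps in sight are lifts of the identity on $S_i^\circ\times S_i^\circ$.
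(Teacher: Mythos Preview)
Your proposal is correct and follows essentially the same route as the paper. Both arguments begin by writing $\diag_i^{\Qb}$ as $[(S_i)^2;\cM_1(S_i)\times\cM_1(S_i),\Phi^e_{j_1},\ldots,\Phi^e_{j_k}]$ via \eqref{smb.30} and the definition of the ordered product, and then show that the fiber-diagonal blow-ups can be moved to the front of the list using exactly the two ingredients you identify: commutativity of nested blow-ups for $\Phi^e_{j_s}\subset\pa_{j_s}S_i\times\pa_{j_s}S_i$, and transversality (or disjointness) for the remaining corners. The only difference is packaging: you frame the reordering as an induction on depth, while the paper moves each $\Phi^e_{j_s}$ past the corner blow-ups directly, one at a time; the paper also makes explicit that for $p<m$ the corners $\pa_{j_p}S_i\times\pa_{j_{p'}}S_i$ become disjoint from $\Phi^e_{j_m}$ only after $\Phi^e_{j_p}$ has been blown up. One small imprecision in your write-up: $\Phi^e_{j_1}$ does intersect corners other than $\pa_{j_1}S_i\times\pa_{j_1}S_i$ (e.g.\ $\pa_{j_m}S_i\times\pa_{j_1}S_i$ for $m>1$); the relevant point is that it is \emph{contained} only in that one, which is what triggers the nested-blow-up commutation, while the other intersections are transversal.
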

\begin{proof}
Since $(\phi_{j_s}\rttimes \phi_{j_s})^{-1}(\diag^2_{S_{j_s,\op}})$ is just the lift of $(\phi_{j_s}\times \phi_{j_s})^{-1}(\diag^2_{S_{j_s}})\subset S_i\times S_i$ to $(S_i)^2_{\op}$, we see by the definition of the ordered product that \eqref{smb.30} can be rewritten 
\begin{equation}
\diag_i^{\Qb}\cong [(S_i)^2; \cM_1(S_i)\times \cM_1(S_i), (\phi_{j_1}\times \phi_{j_1})^{-1}(\diag^2_{S_{j_1}}),\ldots,(\phi_{j_k}\times \phi_{j_k})^{-1}(\diag^2_{S_{j_k}})]  
\label{smb.32}\end{equation}
with the blow-ups of the elements of $\cM_1(S_i)\times \cM(S_i)$ done in an order compatible with the partial order of $\cM_1(S_i)\times \cM(S_i)$.   
Hence, the results will follow provided we can permute the blow-ups so that the last $k$ blow-ups $(\phi_{j_1}\times \phi_{j_1})^{-1}(\diag^2_{S_{j_1}}),\ldots,(\phi_{j_k}\times \phi_{j_k})^{-1}(\diag^2_{S_{j_k}})$ are performed first instead of at the end.  

Starting with $(\phi_{j_1}\times \phi_{j_1})^{-1}(\diag^2_{S_{j_1}})$, notice that on $[(S_i)^2;\pa_{j_1}S_{i}\times \pa_{j_1}S_{i}]$, the blow-up of its lift commutes with the blow-up of each element of $(\cM_1(S_i)\times \cM_1(S_i))\setminus\{(\pa_{j_1}S_{i},\pa_{j_1}S_{i})\}$ by transversality.  The blow-up of $(\phi_{j_1}\times \phi_{j_1})^{-1}(\diag^2_{S_{j_1}})$ then commutes with the one of $S_{ij_1}\times S_{ij_1}$  by the commutativity of nested blow-ups.  

We can proceed in a similar way for $(\phi_{j_2}\times \phi_{j_2})^{-1}(\diag^2_{S_{j_2}}),\ldots,(\phi_{j_{k-1}}\times \phi_{j_{k-1}})^{-1}(\diag^2_{S_{j_{k-1}}})$ and $(\phi_{j_k}\times \phi_{j_k})^{-1}(\diag^2_{S_{j_k}})$ with  the commutativity of blow-ups coming from transversality, except between $S_{j_s}\times S_{j_s}$ and $(\phi_{j_s}\times \phi_{j_s})^{-1}(\diag^2_{S_{j_s}})$ where it is a consequence of the commutativity of nested blow-ups.  One new feature for $1\le p<m$ is that the commutativity of the blow-ups of $\pa_{j_p}S_{i}\times \pa_{j_{p}}S_{i}$ and  $(\phi_{j_m}\times \phi_{j_m})^{-1}(\diag^2_{S_{j_m}})$ follows from the fact that those $p$-submanifolds are disjoints after the blow-up of $(\phi_{j_p}\times \phi_{j_p})^{-1}(\diag^2_{S_{j_p}})$. 

\end{proof}

The last lemma indicates how edge operators will arise in the construction of a parametrix for $\QFB$ operators.  To invert the model edge operators arising in such a construction, we can rely on \cite{MazzeoEdge, ALMP2012, AG}.  Unfortunately, the specific results that we will need are not quite stated in these papers.  For the convenience of the reader, we will therefore describe the edge operators that we will encounter in the construction of a parametrix for $\QFB$ operators and explain how they can be inverted within the edge calculus.  

Thus, let $g_w$ be an exact wedge metric.  It induces a Clifford bundle associated to the wedge tangent bundle $\kridx{{}^{w}T}{Tw}{wedge tangent bundle} M:= (v^{-1}){}^{e}TM$.  Let $E\to M$ be a choice of Clifford module and let $\nabla^E$ be a choice of Clifford connection which is at the same time a ${}^{w}T^*M$ valued connection in the sens of \cite[Definition~2.20]{ALN04}.  Let $\eth_w\in \frac{1}{v}\Diff^1_e(M;E)$ be the corresponding wedge Dirac operator.  It acts formally on $L^2_w(M;E)$, the space of square integrable sections of $E$ with respect to the wedge metric $g_w$.  It will be convenient to consider the conjugated operator 
$$
       D_w:= \lrp{\prod_i x_i^{\frac{f_i+1}2}}\eth_w \lrp{\prod_i x_i^{-\frac{f_i+1}2}},
$$
where $f_i$ is the dimension of the fibers of $\phi_i: H_i\to S_i$.  In this way, the operator $\eth_w$ acting formally on $L^2_w(M;E)$ will correspond to $D_w$ acting formally on 
$$
L^2_b(M;E):= \lrp{\prod_i x_i^{\frac{f_i+1}2}}L^2_w(M;E),
$$
the space of square integrable sections of $E$ with respect to the same fiberwise norm, but using a $b$-density instead of the volume form of $g_w$ on $M$.   The operator $v^{\frac12}D_w v^{\frac12}$ is an elliptic edge differential operator.  As such, for each boundary hypersurface $H_i$, it has a corresponding normal operator
$$
     N_{i,e}(v^{\frac12}D_w v^{\frac12}):= v^{\frac12}D_w v^{\frac12}|_{\ff^e_i}.
$$
We will be able to construct a nice parametrix for $(v^{\frac12}D_w v^{\frac12})$ provided we can invert those normal operators.  Relying on \cite{ALMP2012, AG}, this can be determined by considering the indicial families
$$
      I_{H_i}(vD_w,\lambda)u:= (x_i^{-\lambda}(vD_w) x_i^{\lambda}\widetilde{u})|_{H_i},  \quad \lambda\in \bbC,
$$
for $u\in\CI(H_i;E)$ and $\widetilde{u}\in \CI(M;E)$ such that $\widetilde{u}|_{H_i}= u$.  To describe this indicial family,  let $\eth_{w,i}$ be the Dirac operator associated to the metric
\begin{equation}
dx_i^2+ \phi_i^*g_{S_i}+ x_i^2 \kappa_{w,i},
\label{e.2}\end{equation}
namely the metric \eqref{e.1b} without the conformal factor of $\rho_i^2$.  By \eqref{e.1b}, the conformal invariance of Dirac operator \cite[Theorem~5.24]{Lawson} and the local description of Clifford modules \cite[Proposition~3.40]{BGV}, the Dirac operator $\eth_{w,i}$  differs from $\rho_i \eth_w$ by a term of order $0$ which is bounded in terms of the norm of the metric \eqref{e.1b}.  This means that the difference $x_i\rho_i\eth_{w}-x_i\eth_{w,i}$ is of order zero and vanishes on $H_i$.  This implies that $\frac{v}{\rho_i}\eth_{w,i}$ yields the same indicial family as $v\eth_w$ at $H_i$.  Now, for each $s\in S_i$, the metric \eqref{e.2} induces a cone metric 
$$
  dx_i^2+ x^2_i\kappa_{w,i,s} \quad \mbox{on} \quad \kridx{\cC_i}{Ci}{model fiber cone for $H_i$ in wedge setting} := [0,\infty)_{x_i}\times F_{i,s},
$$   
where $\kappa_{w,i,s}$ is the wedge metric obtained by  restriction of $ \kappa_{w,i}$ to the fiber $F_{i,s}:=\phi_i^{-1}(s)$.  There is an associated Dirac operator, which by \cite[Proposition~2.5]{Chou} and \cite[(2.21)]{KR0} takes the form
$$
     \eth_{\cC_i,s} = c_i\frac{\pa}{\pa x_i}+ \frac{1}{x_i} \lrp{\eth_{F_{i,s}}+ \frac{c_i f_i}2}
$$
where $c_i= \cl(dx_i)$ is Clifford multiplication by $dx_i$ while  $\eth_{F_{i,s}}= \hat{\eth}_{F_{i,s}} + \cN_{i,s}$ with $\hat{\eth}_{F_{i,s}}$ the Dirac operator on the fiber $F_{i,s}$ and $\cN_{i,s}\in\CI(F_{i,s};\End(E))$  a self-adjoint operator anti-commuting with $c_i$.  To work with an operator acting formally on sections square integrable with respect to a $b$-density, it is convenient to consider the conjugated operator
\begin{equation}
     D_{\cC_i,s}:= \lrp{\prod_{H_j\ge H_i} x_j^{\frac{f_j+1}2}} \eth_{\cC_i,s}  \lrp{\prod_{H_j\ge H_i} x_j^{-\frac{f_j+1}2}}.
\label{e.3}\end{equation}
The indicial family of $D_w$ at $H_i$ restricted to the fiber $F_{i,s}$ then corresponds to the indicial family of $D_{\cC_i,s}$, namely
$$
      I_{H_i}(vD_w,\lambda)|_{F_{i,s}}= \frac{v}{x_i \rho_i} I(x_iD_{\cC_i,s},\lambda)
$$
with
\begin{equation}
    I(x_iD_{\cC_i,s},\lambda)= c_i\lambda + D_{F_{i,s}}-\frac{c_i}2,
\label{e.3b}\end{equation}
where 
$$
 D_{F_{i,s}}:= \lrp{\prod_{H_j> H_i} x_j^{\frac{f_j+1}2}} \eth_{F_{i,s}}  \lrp{\prod_{H_j> H_i} x_j^{-\frac{f_j+1}2}}.
$$

When $\eth_w$ is a Hodge-deRham operator acting on forms taking values in a flat unitary vector bundle $F$, we can give a more explicit description of the self-adjoint operator $\cN_{i,s}$.  If $\overline{\eta}$ is a $F$ valued $k$-form on the cone $[0,\infty)_{x_i}\times F_{i,s}$ obtained by parallel transport of its restriction $\eta$ to $\{1\}\times F_{i,s}$ along geodesics emanating from the tip of the cone, then there is a decomposition
$$
   \overline{\eta}= \overline{\alpha}+ dx_i\wedge \overline{\beta}, \quad \eta=\alpha+dx_i\wedge \beta, \quad \overline{\alpha}=x_i^k\alpha, \; \overline{\beta}= x^{k-1}\beta
$$
for some $F$ valued forms $\alpha,\beta$ on $F_{i,s}$.  In terms of this decomposition, Clifford multiplication by $dx_i$ is given by 
$$
        c_i= \lrp{\begin{matrix} 0 & -1 \\ 1 & 0 \end{matrix}}.
$$
By \cite[(2-9)]{ARS3}, the operator $\hat{\eth}_{F_{i,s}}$ is then given by 
$$
       \hat{\eth}_{F_{i,s}}= \lrp{\begin{matrix} \mathfrak{d}_{F_{i,s}}& 0 \\
          0 & -\mathfrak{d}_{F_{i,s}}  \end{matrix} },
$$
where $\mathfrak{d}_{F_{i,s}}$ is the Hodge-deRham operator acting on forms on $F_{i,s}$ taking values in $F$, while
$$
    \cN_{i,s}= \lrp{\begin{matrix}  0 & N_{F_{i,s}}-\frac{f_{i,s}}2 \\ N_{F_{i,s}}-\frac{f_{i,s}}2 & 0    \end{matrix}   }
$$
with $N_{F_{i,s}}$ the degree operator multiplying a form (of pure degree) by its degree.  In this case, the indicial family \eqref{e.3b} is given by 
$$
   I(x_iD_{\cC_i,s},\lambda)= \lrp{\begin{matrix} \mathfrak{D}_{F_i,s} & -\lambda+N_{F_{i,s}}-\frac{f_{i,s}-1}2 \\
             \lambda+N_{F_{i,s}}-\frac{f_{i,s}+1}2 & -\mathfrak{D}_{F_{i,s}}  \end{matrix} }
$$ 
with
$$
     \mathfrak{D}_{F_{i,s}}= \lrp{\prod_{H_j> H_i} x_j^{\frac{f_j+1}2}} \mathfrak{d}_{F_{i,s}} \lrp{\prod_{H_j> H_i} x_j^{-\frac{f_j+1}2}}. 
$$

The assumption that we will use to ensure that the various normal operators of $v^{\frac{1}2}D_w v^{\frac12}$ can be inverted is the following.
\begin{assumption}
For each boundary hypersurface $H_i$ and for each $s\in S_i$, we suppose that 
$$
c_i I(x_iD_{\cC_i,s},0)=cD_{F_{i,s}}+\frac12
$$
 is essentially self-adjoint as a wedge operator acting formally on $L^2_b(F_{i,s};E)$.  We also suppose  that
\begin{equation}
    c_iI(x_iD_{\cC_i,s},\lambda): w_i^{\frac12}L^2_b(F_{i,s};E)\to w_i^{-\frac12}L^2_b(F_{i,s};E) \quad \mbox{with} \quad w_i:=\prod_{H_j>H_i}x_j,
\label{e.3a}\end{equation}
is invertible for $\lambda\in [0,1]$.  \label{e.3}\end{assumption}
It is sometimes also useful to make the following assumption to obtain polyhomogeneity results.
\begin{assumption}
For each boundary hypersurface $H_i$, we suppose that the spectrum of $c_i I(x_iD_{\cC_i,s},0)$ does not depend on $s\in S_i$.
\label{e.3cn}\end{assumption}

The values of $\lambda$ for which \eqref{e.3a} is not invertible are the \textbf{indicial roots} of $I(x_iD_{\cC_i,s},\lambda)$.  When $\eth_w$ is a Hodge-deRham operator, $c_iI(x_iD_{\cC_i,s},0)$ will be essentially self-adjoint if and only if $\mathfrak{d}_{F_{i,s}}$ is.  In this case, the indicial roots can be described in terms of the spectrum of $\mathfrak{d}_{F_{i,s}}$.  If $\mathfrak{d}_{F_{i,s}}= d^{F_{i,s}}+ \delta^{F_{i,s}}$ with $d^{F_{i,s}}$ the deRham differential on $F_{i,s}$ and $\delta^{F_{i,s}}$ its formal adjoint, then the Friedrich extension gives a self-adjoint extension for $\delta^{F_{i,s}}d^{F_{i,s}}$ and $d^{F_{i,s}}\delta^{F_{i,s}}$.  Denote by $(\delta^{F_{i,s}}d^{F_{i,s}})_q$ and $(d^{F_{i,s}}\delta^{F_{i,s}})_q$ the parts of $\delta^{F_{i,s}}d^{F_{i,s}}$ and $d^{F_{i,s}}\delta^{F_{i,s}}$ acting on forms of degree $q$.  

\begin{lemma}(\cite[Proposition~2.3]{ARS3}). When $\eth_w$ is a Hodge-deRham operator, the indicial roots of $I(x_iD_{\cC_i,s},\lambda)$ are given by 
\begin{equation}
\begin{gathered}
  \{ q-\frac{f_{i.s}-1}2, -\left(q-\frac{f_{i,s}+1}2\right), \; | \;  \ker_{L^2_w}\mathfrak{d}_{F_{i,s}} \quad \mbox{is not trivial in degree} \; q\} \\
  \bigcup \left\{ \ell\pm \sqrt{\zeta+(q-\frac{f_{i,s}-1}2)^2} \quad | \quad \ell \in \{0,1\}, \zeta\in \Spec(\delta^{F_{i,s}}d^{F_{i,s}})_q\setminus \{0\} \right\}  \\
  \bigcup \left\{ \ell\pm \sqrt{\zeta+(q-\frac{f_{i,s}+1}2)^2} \quad | \quad \ell \in \{0,1\}, \zeta\in \Spec(d^{F_{i,s}}\delta^{F_{i,s}})_q\setminus \{0\} \right\}.
\end{gathered}
\label{e.3g}\end{equation}
\label{e.3f}\end{lemma}
\begin{remark}
By Lemma~\ref{e.3f}, when $\eth_w$ is a Hodge-deRham operator, the condition of invertibility of the indicial family in Assumption~\ref{e.3} will be satisfied provided
\begin{equation}
 \left| q-\frac{f_{i,s}}2 \right|\le \frac12\quad \Longrightarrow \quad \ker_{L^2_w}\mathfrak{d}_{F_i,s} \; \mbox{is trivial in degree $q$}
\label{e.3i}\end{equation}
and the positive eigenvalues of $\delta^{F_{i,s}}d^{F_{i,s}}$ $d^{F_{i,s}}\delta^{F_{i,s}}$ are sufficiently large.  The latter condition can always be arranged by changing the metric $g_{w}$ via a suitable scaling of the family of wedge metrics $\kappa_{w,i,s}$, namely by requiring that $\kappa_{w,i,s}$ is sufficiently small with respect to some fixed family of wedge metrics.  When \eqref{e.3i} will hold, we will therefore usually assume that the family of wedge metrics $\kappa_{w,i,s}$ is \textbf{sufficiently small} for Assumption~\ref{e.3} to hold.
\label{e.3h}\end{remark}

Using Assumption~\ref{e.3}, it is possible to construct the following parametrix for $D_w$.
\begin{theorem}
Let $\eth_w$ be a wedge Dirac operator associated to an exact wedge metric and satisfying Assumption~\ref{e.3}.  Then for $\lambda\in\bbC$ and $a\in [-\frac12,\frac12]$, there exist $Q_{\lambda,a}\in \Psi^{-1,\cQ/\mathfrak{q}}_{e,\cn}(M;E)$ and $R_{\lambda,a}\in \Psi^{-\infty,\cR/\mathfrak{r}}_{e,\cn}(M;E)$ such that
$$
       v^{\frac12-a}(D_w-\lambda)v^{\frac12+a}Q_{\lambda,a}=\Id-R_{\lambda,a},
$$
where $\cQ$ and $\cR$ are index families depending on $a$, but not on $\lambda$,  such that for each boundary hypersurface $H_i$,  
$$
\begin{aligned}
\inf \Re(\cQ|_{\ff_i^e})\ge 0, \quad \inf \Re(\cQ|_{H_{i0}^e})>\frac12-a, \quad \inf \Re(\cQ|_{H_{0i}^e})>\dim S_i+\frac12+a, \\
\cR|_{\ff^e_i}=\bbN_0+1, \quad \inf\Re(\cR|_{H^e_{i0}})>\frac12-a \quad \mbox{and} \quad  \inf \Re(\cR|_{H_{0i}^e})>\dim S_i+\frac12+a,
\end{aligned}
$$
while $\mathfrak{q}$ and $\mathfrak{r}$ are multiweights such that
$$
\begin{aligned}
\mathfrak{q}(\ff_i^e)> 0, \quad \mathfrak{q}(H_{i0}^e)>\frac12-a, \quad \mathfrak{q}(H_{0i}^e)>\dim S_i+\frac12+a, \\
\mathfrak{r}(\ff^e_i)>0, \quad \mathfrak{r}(H^e_{i0})>\frac12-a \quad \mbox{and} \quad  \mathfrak{r}(H_{0i}^e)>\dim S_i+\frac12+a.
\end{aligned}
$$
Furthermore, if Assumption~\ref{e.3cn} also holds, then in fact  $Q_{\lambda,a}\in \Psi^{-1,\cQ}_{e}(M;E)$ and $R_{\lambda,a}\in \Psi^{-\infty,\cR}_{e}(M;E)$ and we can enforce that  $\cR|_{\ff^e_i}=\cR|_{H^e_{i0}}=\emptyset$.  
\label{e.4}\end{theorem}
\begin{proof}
As stated, this theorem is not quite in \cite{AG}, but the main ideas for its proof are.  Thus, we will mostly focus on the small complementary details needed for a complete proof.  First, the edge operator 
$$
v^{\frac12-a}(D_w-\lambda)v^{\frac12+a}
$$ 
has the same principal symbol and normal operators as $v^{\frac12-a}D_w v^{\frac12+a}$.  By Assumption~\ref{e.3} and \cite{ALMP2012,AG}, its normal operators are invertible, so there exists a parametrix $Q_1\in \Psi^{-1,\cQ_1/\mathfrak{q}_1}_{e,\cn}(M;E)$ such that 
$$
     v^{\frac12-a}(D_w-\lambda)v^{\frac12+a}Q_1=\Id-R_1
$$
for some $R_1\in \Psi^{-\infty,\cR_1/\mathfrak{r}_1}_{e,\cn}(M;E)$, where the index families $\cQ_1$ and $\cR_1$ are such that
$$
\begin{aligned}
      \cQ_1|_{\ff^e_i}=\bbN_0, \quad \inf\Re (\cQ_1|_{H^e_{i0}})>\frac12-a, \quad \inf\Re (\cQ_1|_{H^e_{0i}})>\dim S_i+\frac12+a \\
       \cR_1|_{\ff^e_i}=\bbN_0+1, \quad \inf\Re (\cR_1|_{H^e_{i0}})>\frac12-a, \;\mbox{and} \; \inf\Re (\cR_1|_{H^e_{0i}})>\dim S_i+\frac12+a
\end{aligned}
$$
and the multiweights $\mathfrak{q}_1$ and $\mathfrak{r}_1$ are such that
$$
\begin{aligned}
\mathfrak{q}_1(\ff_i^e)> 0, \quad \mathfrak{q}_1(H_{i0}^e)>\frac12-a, \quad \mathfrak{q}_1(H_{0i}^e)>\dim S_i+\frac12+a, \\
\mathfrak{r}_1(\ff^e_i)>0, \quad \mathfrak{r}_1(H^e_{i0})>\frac12-a \quad \mbox{and} \quad  \mathfrak{r}_1(H_{0i}^e)>\dim S_i+\frac12+a.
\end{aligned}
$$
Here, we have used induction on depth (with inductive step completed with Corollary~\ref{e.7} below) to determine $Q_1$ at $\ff^e_i$ for each boundary hypersurface $H_i$.  If we also assume that Assumption~\ref{e.3cn} holds, then using induction on depth  (with inductive step completed with Corollary~\ref{e.7} below), we can in fact assume that $Q_1\in \Psi^{-1,\cQ_1}_{e}(M;E)$ and $R_1\in \Psi^{-\infty,\cR_1}_{e}(M;E)$.  In this case, using the invertibility of \eqref{e.3a} and proceeding in increasing order with respect to partial order on the boundary hypersurfaces, we can remove the expansion of the error term $R_1$ at $H^e_{i0}$ proceeding essentially as in \cite[Lemma~5.44]{MelroseAPS}.  Indeed, if $H_i<H_j$ then once the expansion has been removed at $H_{i0}^e$, the term that we add to $Q_1$ at $H^e_{j0}$ to eliminate terms in the expansion of $R_1$ at $H^e_{j0}$ vanishes rapidly at $H^e_{i0}$, so do not compromise the decay already achieved at $H^e_{i0}$.  Hence, we find $Q_2\in \Psi^{-1,\cQ_2}_e(M;E)$ and $R_2\in\Psi^{-\infty,\cR_2}_e(M;E)$ such that 
$$
       v^{\frac12-a}(D_w-\lambda)v^{\frac12+a}Q_2=\Id-R_2
$$
with $\cQ_2$ and $\cR_2$ satisfying the same properties as $\cQ_1$ and $\cR_1$, but with $\cR_2|_{H_{i0}^e}=\emptyset$ for each boundary hypersurface $H_i$.  Using \eqref{smb.27b}, one can then construct $S\in \Psi^{-1,\cS}_e(M;E)$ with $\cS$ satisfying the same properties as $\cR_2$ and with $S\sim \sum_{j=1}^{\infty} R_2^j$, so that
$$
     (\Id-R_2)(\Id+S)=\Id- R_{\lambda,a}
$$
with $R_{\lambda,a}$ as in the statement of the theorem.  It suffices then to take 
$$
    Q_{\lambda,a}:= Q_2(\Id+S)
$$
to get the result.  In this construction, the index families of $Q_{\lambda,a}$ and $R_{\lambda,a}$ are determined by the normal operators of $v^{\frac12-a}(D_w-\lambda)v^{\frac12+a}$.  Since these are the same as those of $v^{\frac12-a}D_wv^{\frac12+a}$, this means that these indicial families do not depend on $\lambda$ as claimed.
\end{proof}
Denote by $H^k_e(M;E)$ the $L^2$ Sobolev space of order $k$ associated to the edge metric $g_e= \frac{g_w}{v^2}$ and set
$$
     \kridx{H^k_w}{Hw}{wedge Sobolev space}(M;E):=  v^{-\frac{n}2}H^k_e(M;E) \quad \mbox{where} \quad n=\dim M.
$$ 
\begin{corollary}
The operator $\eth_w$ of Theorem~\ref{e.4} induces a Fredholm operator
$$
        (\eth_w-\lambda): v^{a+\frac12}H^1_w(M;E)\to v^{a-\frac12}L^2_w(M;E)  
$$
of index zero for each $\lambda\in \bbC$ and $a\in [-\frac12,\frac12]$.
\label{e.5}\end{corollary}
\begin{proof}
This corresponds to inverting the conjugated operator $v^{\frac12-a}(D_w-\lambda) v^{\frac12+a}$ modulo compact operators.  By \cite[Corollary~3.8]{AG}, the parametrix $Q_{\lambda,a}$ of Theorem~\ref{e.4} provides a right inverse.  On the other hand, the parametrix $Q_{\overline{\lambda},-a}$ is such that
$$
      (v^{\frac12+a}(D_w-\overline{\lambda}) v^{\frac12-a})Q_{\overline{\lambda},-a}=\Id -R_{\overline{\lambda},-a},
$$
so taking its adjoints yields a left inverse for $v^{\frac12-a}(D_w-\lambda) v^{\frac12+a}$ modulo compact operators
$$
    Q_{\overline{\lambda},-a}^*(v^{\frac12-a}(D_w-\lambda) v^{\frac12+a})= \Id-R_{\overline{\lambda},-a}^*.
$$
Indeed, by \cite[Corollary~3.8]{AG}, the operator $R_{\overline{\lambda},-a}^*$ is compact when acting on 
$$
 \lrp{\prod_i x_i^{\frac{f_i+1}2}}H^1_w(M;E)= \lrp{\prod_i x_i^{\frac{f_i+1}2}}v^{-\frac{n}2}H^1_e(M;E)\subset L^2_b(M;E).
$$
Clearly, the cokernel of $v^{\frac12-a}(D_w-\lambda) v^{\frac12+a}$ is identified with the kernel of its formal adjoint $v^{\frac12+a}(D_w-\overline{\lambda}) v^{\frac12-a}$.  In particular, for $a=\lambda=0$, the index of $v^{\frac12-a}(D_w-\lambda) v^{\frac12+a}$ is zero.  By continuity of the index in the space of Fredholm operators, the index is therefore zero for all $a\in [-\frac12,\frac12]$ and $\lambda\in \bbC$.
\end{proof}

\begin{corollary}
if Assumption~\ref{e.3} holds, the eigensections of $D_w$ in $L^2_b(M;E)$ are contained in $v^{1+\epsilon}L^2_b(M;E)$ for some $\epsilon>0$.  Moreover, they are polyhomogeneous if Assumption~\ref{e.3cn} holds.
\label{e.6}\end{corollary}
\begin{proof}
Let $\psi\in L^2_b(M;E)$ be such that $D_w\psi=\lambda\psi$ for some $\lambda\in \bbC$.  The adjoint of the parametrix $Q_{\overline{\lambda},\frac12}$ of Theorem~\ref{e.4} is such that
$$
  Q_{\overline{\lambda},\frac12}^*v(D_w-\lambda) = \Id-R_{\overline{\lambda},\frac12}^*.
$$  
Hence, 
$$
\begin{aligned}
(D_w-\lambda)\psi=0  \quad & \Longrightarrow \quad Q_{\overline{\lambda},\frac12}^*v(D_w-\lambda)\psi=0  \quad \Longrightarrow \quad (\Id-R_{\overline{\lambda},\frac12}^*)\psi=0 \\
  & \Longrightarrow \quad \psi= R_{\overline{\lambda},\frac12}^*\psi.
\end{aligned}
$$
Noticing that $\inf\Re (\cR|_{H^e_{0i}})-\dim S_i> \frac12+\frac12=1$ and  $\mathfrak{r}(H^e_{0i})-\dim S_i> \frac12+\frac12=1$ by Theorem~\ref{e.4}, we deduce by bootstrapping that $\psi\in v^{1+\epsilon}L^2_b(M;E)$ for some $\epsilon>0$ as claimed.  If Assumption~\ref{e.3cn} also holds, then the equation   $\psi= R_{\overline{\lambda},\frac12}^*\psi$ combined with Theorem~\ref{e.4} shows that  $\psi$ is polyhomogeneous with index set at $H_i$ corresponding to $(\cR|_{H^e_{0i}}-\dim S_i)$ with $\cR$ the index family of $R_{\overline{\lambda},\frac12}$.  
\end{proof}

Let $P_{\lambda,a}: L^2_b(M;E)\to \ker_{L^2_b}(v^{\frac12-a}(D_w-\lambda)v^{\frac12+a})$ be the orthogonal projection onto the kernel of $v^{\frac12-a}(D_w-\lambda)v^{\frac12+a}$ for $a\in [-\frac12,\frac12]$.  By Corollary~\ref{e.5}, this kernel is finite dimensional and the cokernel of  $v^{\frac12-a}(D_w-\lambda)v^{\frac12+a}$ is identified with the kernel $\ker_{L^2_b}(v^{\frac12+a}(D_w-\overline{\lambda})v^{\frac12-a})$ of its formal adjoint. By Corollary~\ref{e.6}, the projection $P_{\lambda,a}$ is in $\Psi^{-\infty,\tau}_{e,\res}(M;E)$ with $\tau> 1-(\frac12+a)=\frac12-a$.  If Assumption~\ref{e.3cn} holds, it is also very residual.  
\begin{corollary}
If Assumption~\ref{e.3} holds, there exists $G_{\lambda,a}\in\Psi^{-1,\cG/\mathfrak{g}}_{e,\cn}(M;E)$ such that 
$$
\begin{aligned}
  v^{\frac12-a}(D_w-\lambda)v^{\frac12+a}G_{\lambda,a}& = \Id- P_{\overline{\lambda},-a},  \\
  G_{\lambda,a}v^{\frac12-a}(D_w-\lambda)v^{\frac12+a} &= \Id- P_{\lambda,a},
\end{aligned}
$$
with index family $\cG$ and multiweight $\mathfrak{g}$ depending on $a$, but not on $\lambda$, such that 
\begin{equation}
     \inf\Re(\cG|_{\ff^e_i})\ge 0, \quad \inf \Re(\cG|_{H_{i0}^e})>\frac12-a \quad \mbox{and} \quad \inf\Re(\cG|_{H^e_{0i}})>\dim S_i+\frac12+a,
\label{e.7b}\end{equation}
and
\begin{equation}
     \mathfrak{g}(\ff^e_i)> 0, \quad \mathfrak{g}(H_{i0}^e)>\frac12-a \quad \mbox{and} \quad \mathfrak{g}(H^e_{0i})>\dim S_i+\frac12+a.
\label{e.7bcn}\end{equation}
Furthermore, if Assumption~\ref{e.3cn} holds, then in fact $G_{\lambda,a}\in\Psi^{-1,\cG}_{e}(M;E)$.
\label{e.7}\end{corollary}
\begin{proof}
This can be proved using the approach of \cite[Theorem~4.20]{MazzeoEdge}.
By Corollary~\ref{e.5}, the operator $G_{\lambda,a}$ exists at least as a bounded operator.  If $Q_{\lambda,a}$ is the parametrix of Theorem~\ref{e.4}, then
\begin{equation}
\begin{aligned}
  G_{\lambda,a}&= G_{\lambda,a}\Id = G_{\lambda,a}((v^{-a+\frac12}(D_w-\lambda)v^{a+\frac12})(Q_{\lambda,a}) + R_{\lambda,a}) \\
    &= (\Id- P_{\lambda,a})(Q_{\lambda,a})+ G_{\lambda,a} (R_{\lambda,a}).
\end{aligned}  
\label{e.8}\end{equation}
Similarly, the adjoint of the parametrix $Q_{\overline{\lambda},-a}$ of Theorem~\ref{e.4} is such that
$$
   Q_{\overline{\lambda},-a}^* v^{\frac12-a}(D_w-\lambda)v^{\frac12+a}= \Id-R^*_{\overline{\lambda},-a},
$$
so
\begin{equation}
\begin{aligned}
 G_{\lambda,a} &= \Id G_{\lambda,a}= [Q^*_{\overline{\lambda},-a}(v^{\frac12-a}(D_w-\lambda)v^{\frac12+a})+ R^*_{\overline{\lambda},-a}] G_{\lambda,a} \\
   &=  Q^*_{\overline{\lambda},-a}(\Id-P_{\overline{\lambda},-a})+ R^*_{\overline{\lambda},-a}G_{\lambda,a}.
\end{aligned}
\label{e.9}\end{equation}
Plugging \eqref{e.9} in \eqref{e.8} therefore yields
\begin{equation}
G_{\lambda,a}= (\Id- P_{\lambda,a})(Q_{\lambda,a})+ Q^*_{\overline{\lambda},-a}(\Id-P_{\overline{\lambda},-a})R_{\lambda,a}+ R^*_{\overline{\lambda},-a}G_{\lambda,a}R_{\lambda,a}.
\label{e.10}\end{equation}
Since $G_{\lambda,a}$ is a bounded operator on $L^2_b(M;E)$ while $R_{\lambda,a}$ and $R^*_{\overline{\lambda},-a}$ are in $\Psi^{-\infty,\tau}_{e,\res}(M;E)$ for some $\tau>0$, we conclude from \eqref{e.10} and the edge version of Proposition~\ref{com.8} that the operator $G_{\lambda,a}$ is a weakly conormal edge operator as claimed.  If Assumption~\ref{e.3cn} holds, then $R_{\lambda,a}$ and $R^*_{\overline{\lambda},-a}$ are very residual and we can conclude from \eqref{e.10} that in fact $G_{\lambda,a}\in\Psi^{-1,\cG}_{e}(M;E)$ as claimed.
\end{proof}

\begin{corollary}
The wedge operator $\eth_w$ is essentially self-adjoint with unique self-adjoint extension given by $vH^1_w(M;E)$.
\label{e.11}\end{corollary}
\begin{proof}
We can proceed as in \cite[Lemma~1.4 and 1.5]{ARS3} to show that $D_w$ is essentially self-adjoint.  Indeed, let $\cD_{\min}(\eth_w)$ and $\cD_{\max}(\eth_w)$ denote the minimal and maximal extensions of $\eth_w$.  Since $\CI_c(M;E)$ is dense in $vH^1_w(M;E)$,  
$$
        vH^1_w(M;E)\subset \cD_{\min}(\eth_{w})\subset \cD_{\max}(\eth_{w}),
$$
so the result will follow by showing that $\cD_{\max}(\eth_w)\subset vH^1_w(M;E)$.  Thus, let $u\in D_{\max}(\eth_w)$ be given.  By definition, $u\in L^2_w(M;E)$ and $f:=\eth_wu\in L^2_w(M;E)$.  In terms of the conjugated operator, this means that 
$$
\widetilde{u}:= \lrp{\prod_i x_i^{\frac{f_i+1}2}}u\in L^2_b(M;E)
$$  
is such that $\widetilde{f}:= D_w\widetilde{u}\in L^2_b(M;E)$.  
Let $G_{0,-\frac12}$ be the operator of Corollary~\ref{e.7}, so that
$$
    G_{0,-\frac12}vD_w= \Id- P_{0,-\frac12}.
$$
In this case,
$$
   \quad (G_{0,\frac12}v)\widetilde{f}=(\Id-P_{0,-\frac12})\widetilde{u} \quad
     \Longrightarrow \quad \widetilde{u}= (G_{0,\frac12}v)\widetilde{f} +P_{0,-\frac12}\widetilde{u}.
$$
By \eqref{e.7b}, \eqref{e.7bcn} and Corollary~\ref{e.6}, $G_{0,\frac12}v$ and $P_{0,-\frac12}$ map $L^2_b(M;E)$ into 
$$
   v\lrp{\prod_i x_i^{\frac{f_i+1}2}}H^1_w(M;E). 
$$
 Hence,
$$
 u=   \lrp{\prod_i x_i^{-\frac{f_i+1}2}}\widetilde{u}=    \lrp{\prod_i x_i^{-\frac{f_i+1}2}}\left(  (G_{0,-\frac12}v)\widetilde{f} +P_{0,-\frac12}\widetilde{u}\right)\in vH^1_w(M;E)
$$
as claimed.
\end{proof}

\begin{corollary}
The unique self-adjoint extension of $\eth_w$ has discrete spectrum.  
\label{e.13}\end{corollary}
\begin{proof}
By Corollary~\ref{e.7}, $(D_w+i)^{-1}= v G_{i,\frac12}$ is a compact operator, hence the spectrum of $D_w$ and $\eth_w$ must be discrete.
\end{proof}

\begin{remark}
Corollary~\ref{e.7} completes the inductive step in the proof of Theorem~\ref{e.4}.  Proceeding by induction on the depth, we see from Corollary~\ref{e.11} that essential self-adjointness in Assumption~\ref{e.3} automatically holds.
\label{e.14}\end{remark}

\section{Parametrix construction of Dirac $\QFB$ operators} \label{do.0}

The purpose of this section is  to construct a parametrix for a certain class of $\QFB$ Dirac operators which includes examples of Hodge-deRham operators.  When $M$ is a manifold with fibered boundary, such a construction can be found in \cite[\S~3]{KR0}, itself a generalization of \cite[Proposition~16]{HHM2004}.  We will follow the same strategy for Dirac operators associated to more general $\QFB$ metrics.  This will require substantial changes however, one of the important differences being that the models at infinity that need to be inverted will no longer have a discrete spectrum.  In fact, using results of \cite[\S~9]{KR0}, it is only in the depth two case that we will initially be able to check that all the required assumptions in the construction hold; see Theorem~\ref{do.62} below.  The subsequent sections of the paper will consist in generalizing \cite{KR0} to allow in  \S~\ref{HdR.0} to remove this restriction on the depth.     

Keeping this in mind, let $(M,\phi)$ be a manifold with fibered corners of dimension $n$ and let $g_{\QFB}$ be an associated  $\QFB$ metric.  
\begin{assumption}
For each boundary hypersurface $H_i$ of $M$, we suppose that the $\QFB$ metric is exact and such that
$$
    g_{\QFB}- \left( \frac{dv^2}{v^4}+ \frac{\phi_i^*g_{S_i}}{v^2}+\kappa_i\right)\in x_i^2\CI(M;S^2({}^{\QFB}T^*M))
$$
for a choice of compatible total boundary defining function $v$ possibly depending on $H_i$, 
where $g_{S_i}$ is a wedge metric on $S_i\setminus \pa S_i$ and $\kappa_i\in \CI(H_i,S^2({}^{\QFB}T^*(H_i/S_i))$ is a fiberwise family of $\QFB$ metrics seen as a $2$-tensor on $H_i$ via a choice of connection for the fiber bundle $\phi_i: H_i\to S_i$.  In particular, $\phi_i: H_i\setminus \pa H_i\to S_i\setminus \pa S_i$ is a Riemannian submersion for the metrics $\phi_i^*g_{S_i}+\kappa_i$ and $g_{S_i}$ on $H_i\setminus \pa H_i$ and $S_i\setminus \pa S_i$ respectively.  
\label{do.1}\end{assumption}      

The metric $g_{\QFB}$ induces a Clifford bundle associated to the $\QFB$ tangent bundle ${}^{\QFB}TM$.  Let $E\to M$ be a choice of Clifford module and let $\nabla^E$ be a choice of Clifford connection which is at the same time a ${}^{\QFB}T^*M$-valued connection in the sense of \cite[Definition~2.20]{ALN04}.  Let $\eth_{\QFB}\in\Diff^1_{\QFB}(M;E)$ be the corresponding Dirac operator.  An example to keep in mind is when $E=\Lambda^*({}^{\QFB}T^*M)$ with $\nabla^E$ the connection induced by the Levi-Civita connection of $g_{\QFB}$, in which case $\eth_{\QFB}$ is the Hodge-deRham operator of the $\QFB$ metric $g_{\QFB}$.  We want to consider the operator $\eth_{\QFB}$ as acting formally on the weighted $L^2$ space $v^{\delta}L^2_{\QFB}(M;E)$ for some $\delta\in \bbR$.

\begin{definition}
At $H_i$ a boundary hypersurface of $M$, the \textbf{vertical family} of $\eth_{\QFB}$, denoted by $\eth_{v,i}$, is the fiberwise family of operators $\eth_{v,i}\in \Diff^1_{\QFB}(H_i/S_i;E)$ obtained by restricting the action of $\eth_{\QFB}$ to $H_i$.
\label{su.3}\end{definition}

The vertical family $\eth_{v,i}$ can be seen as the vertical part of the normal operator $N_i(\eth_{\QFB})$.  Indeed, thinking of $N_i(\eth_{\QFB})$ as a ${}^{\phi}NS_i$ suspended family of $\QFB$ operators, we see that the vertical family
$$
          \eth_{v,i}= \widehat{N_i(\eth_{\QFB})}(0)
$$
is the Fourier transform of the normal operator in ${}^{\phi}NS_i$ evaluated at the zero section.  Alternatively, 
\begin{equation}
  N_i(\eth_{\QFB})= \eth_{v,i}+ \eth_{h,i}
\label{su.3b}\end{equation}
where $\eth_{h,i}$ is a family of fiberwise Euclidean Dirac operators in the fibers of ${}^{\phi}NS_i$.  Taking the Fourier transform in ${}^{\phi}NS_i$, we thus have that 
\begin{equation}
   \widehat{N_i(\eth_{\QFB})}(\xi)= \eth_{v,i} + \sqrt{-1}\cl(\xi),  \quad \xi\in {}^{\phi}N^*S_i,
\label{su.3c}\end{equation}
where $\cl(\xi)$ denotes Clifford multiplication by ${}^{\phi}N^*S_i$.  From \eqref{su.3c}, we obtain a simple criterion for the operator $\eth_{\QFB}$ to be fully elliptic.  
\begin{theorem}
If $\eth_{v,i}$ is a family of fully elliptic invertible $\QFB$ operators for each boundary hypersurface $H_i$, then $\eth_{\QFB}$ is a fully elliptic $\QFB$ operator.  Thus, Proposition~\ref{mp.3} and Theorem~\ref{mp.32}  apply. In particular 
$$
       \eth_{\QFB}: x^{\mathfrak{t}}H^{m+1}_{\QFB}(M;E)\to x^{\mathfrak{t}}H^m_{\QFB}(M;E)
$$
is Fredholm for each multiweight $\mathfrak{t}$ and $m\in \bbR$.   
\label{su.3d}\end{theorem}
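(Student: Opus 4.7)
The plan is to proceed by induction on the depth of $M$, reducing the full ellipticity of $\eth_{\QFB}$ to the full ellipticity and invertibility of each normal operator $N_i(\eth_{\QFB})$, which is itself a suspended $\QFB$ operator and hence falls within the inductive scheme. The base case is when $M$ is closed, which is standard: ellipticity is immediate from the fact that the principal symbol is Clifford multiplication, and there are no normal operators to consider. For the inductive step, ellipticity of $\eth_{\QFB}$ itself again follows because $\sigma_1(\eth_{\QFB})(\xi) = \sqrt{-1}\cl(\xi)$ is invertible on ${}^{\QFB}T^*M \setminus 0$. The essential content is therefore to show the invertibility of $N_i(\eth_{\QFB})$ as a ${}^{\phi}NS_i$-suspended $\QFB$ operator on $H_i \to S_i$ (using the identification \eqref{sm.18}), for each boundary hypersurface $H_i$.

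To verify this, I will pass to the Fourier transform in the suspension variable and use the explicit formula \eqref{su.3c},
\begin{equation*}
\widehat{N_i(\eth_{\QFB})}(\xi) = \eth_{v,i} + \sqrt{-1}\cl(\xi), \qquad \xi \in {}^{\phi}N^*S_i.
\end{equation*}
The key algebraic ingredient is that $\cl(\xi)$ anti-commutes with $\eth_{v,i}$ (since $\eth_{v,i}$ involves only vertical Clifford multiplications and $\nabla^E$ is a ${}^{\QFB}T^*M$-valued Clifford connection), yielding
\begin{equation*}
(\eth_{v,i} + \sqrt{-1}\cl(\xi))^2 = \eth_{v,i}^2 + |\xi|^2.
\end{equation*}
At $\xi = 0$ the operator is $\eth_{v,i}$, invertible by hypothesis; at $\xi \neq 0$ the right-hand side is a fully elliptic operator with a strictly positive spectral bound, and by the inductive hypothesis (applied to $\eth_{v,i}^2$, which has lower depth than $M$) it is invertible with inverse in the small calculus, so
$$
\bigl(\eth_{v,i} + \sqrt{-1}\cl(\xi)\bigr)^{-1} = \bigl(\eth_{v,i} - \sqrt{-1}\cl(\xi)\bigr)\bigl(\eth_{v,i}^2 + |\xi|^2\bigr)^{-1}.
$$

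Next, I will invoke the inductive hypothesis on $H_i$ (of strictly smaller depth than $M$) to assert that $N_i(\eth_{\QFB})$ is fully elliptic as a suspended operator: its principal symbol is inherited from that of $\eth_{\QFB}$ and is invertible, while its own normal operators at boundary faces $H_i \cap H_j$ coincide with suitable restrictions of $N_j(\eth_{\QFB})$ (by commutativity of restrictions to corners) and hence are invertible by induction. Combined with the pointwise invertibility above and the resolvent bound on $(\eth_{v,i}^2 + |\xi|^2)^{-1}$, which decays like $|\xi|^{-2}$ and satisfies the symbol-type estimates \eqref{sm.16} on the Fréchet topology of $\Psi^{-1}_{\QFB}(H_i/S_i; E)$, we obtain that the family $\widehat{N_i(\eth_{\QFB})}^{-1}$ is the Fourier transform of an element of $\Psi^{-1}_{\QFB,{}^{\phi}NS_i}(H_i/S_i; E)$. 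Corollary~\ref{mp.4} then promotes this to a global inverse in the suspended small calculus, so that $N_i(\eth_{\QFB})$ is invertible. This establishes Definition~\ref{sm.31} for $\eth_{\QFB}$, and the remaining Fredholm statement is an immediate application of Theorem~\ref{mp.32}.

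The principal obstacle is the bookkeeping of the induction across the two interleaved notions of full ellipticity (Definitions~\ref{sm.31} and \ref{sm.35}): verifying full ellipticity of the suspended operator $N_i(\eth_{\QFB})$ requires invertibility of \emph{its} normal operators, which involves Dirac-type operators on the corner pieces that one must recognize, via naturality of the normal-operator construction under the identification \eqref{sm.18}, as restrictions of the same data assumed invertible on $M$. Verifying the uniform symbol estimates \eqref{sm.16} for the resolvent family as $|\xi| \to \infty$, locally uniformly in the base $S_i$ and in every $\QFB$ Fréchet seminorm, is the second technical point but is controlled by elliptic regularity applied to $(\eth_{v,i}^2 + |\xi|^2)^{-1}$ together with the algebraic identity above.
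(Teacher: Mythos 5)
Your overall strategy matches the paper's proof closely: both hinge on the anti-commutation of $\cl(\xi)$ with $\eth_{v,i}$, leading to $\widehat{N_i(\eth_{\QFB})}(\xi)^2 = \eth_{v,i}^2 + |\xi|^2_{{}^\phi N S_i}$, positivity of the right-hand side by the invertibility and formal self-adjointness of $\eth_{v,i}$, and the passage from pointwise invertibility of the Fourier-transformed family to invertibility of $N_i(\eth_{\QFB})$ as a suspended $\QFB$ operator. The paper's version is considerably more compact: it does not set up a fresh induction on depth inside this proof, but instead leans on the induction already built into Proposition~\ref{mp.3} and Corollary~\ref{mp.4}, so your explicit inductive scaffolding (base case of a closed manifold, recognizing normal operators of $N_i(\eth_{\QFB})$ as restrictions of other normal operators, etc.) is correct but not strictly needed here.

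One concrete algebraic slip: with $A = \eth_{v,i}$ and $B = \sqrt{-1}\,\cl(\xi)$ anti-commuting and $B^2 = |\xi|^2$, you correctly write $(A+B)^2 = A^2 + B^2$, but the inverse you then display,
$$
\bigl(\eth_{v,i} + \sqrt{-1}\,\cl(\xi)\bigr)^{-1} = \bigl(\eth_{v,i} - \sqrt{-1}\,\cl(\xi)\bigr)\bigl(\eth_{v,i}^2 + |\xi|^2\bigr)^{-1},
$$
is not correct: $(A+B)(A-B) = A^2 - 2AB - B^2 \neq A^2 + B^2$. The formula you were after is $(A+B)^{-1} = (A+B)(A^2 + B^2)^{-1}$, which follows directly from $(A+B)^2 = A^2 + B^2$ together with the fact that $A^2 + B^2$ commutes with $A+B$. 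Since the paper never writes an explicit inverse formula (it only uses invertibility of the square), this error does not undermine your argument, but as written the displayed identity is false. Also, when you assert that $\eth_{v,i}^2 + |\xi|^2$ has its inverse in the small calculus ``by the inductive hypothesis (applied to $\eth_{v,i}^2$),'' the cleaner justification is to observe that $\eth_{v,i}^2 + |\xi|^2$ is fully elliptic (being the square of the fully elliptic $\eth_{v,i}$ plus a nonnegative constant) and invertible by positivity, and then apply Corollary~\ref{mp.4} directly, rather than re-appealing to the Dirac-operator theorem on a square.
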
  
\begin{proof}
Using that $\cl(\xi)$ anti-commutes with $\eth_{v,i}$, we have that 
$$
          \widehat{N_i(\eth_{\QFB})}^2(\xi)= \eth_{v,i}^2+ |\xi|_{{}^{\phi}NS_i}^2
$$ 
where the norm in $\xi$ is computed in terms of the metric induced by $g_{\QFB}$ on ${}^{\phi}NS_i$.  By the invertibility of $\eth_{v,i}$ and its formal self-adjointness, which follows from \eqref{su.3b} and the formal self-adjointness of $\eth_{\QFB}$, we thus see that $N_{i}(\eth_{\QFB})^2$, and hence $N_i(\eth_{\QFB})$, is a family of invertible suspended $\QFB$ operators.  In other words, $\eth_{\QFB}$ is a fully elliptic $\QFB$ operator.   
\end{proof}
\begin{example}
Suppose that $M$ is spin, as well as all the fibers of each fiber bundle $\phi_i: H_i\to S_i$.  Suppose that for each fiber of each fiber bundle $\phi_i:H_i\to S_i$, the $\QFB$ metric induced by $g_{\QFB}$ has non-negative scalar curvature which is positive somewhere.  If $\eth_{\QFB}$ denotes the corresponding Dirac operator, then in this case, $\eth_{v,i}$ essentially corresponds to the family of Dirac operators in the fibers of $\phi_i: H_i\to S_i$.  Hence, proceeding by induction on the depth of the fibers of $\phi_i:H_i\to S_i$ and using Lichnerowicz formula as well as Theorem~\ref{su.3d}, we can show that $\eth_{v,i}$ is a family of  fully elliptic invertible $\QFB$ operators.  Hence, Theorem~\ref{su.3d} can be applied to $\eth_{\QFB}$ as well. 
\label{su.3e}\end{example}

For many interesting examples, notably the Hodge-deRham operator, the hypothesis of Theorem~\ref{su.3d} is not satisfied.  To be able to construct a good parametrix in these cases, we will rely on a weaker invertibility assumption on the vertical family $\eth_{v,i}$.   
Let $L^2_{\QFB}(H_i/S_i;E)$ be the Hilbert bundle corresponding to the fiberwise $L^2$ spaces on $\phi_i:H_i\to S_i$ specified by the fiberwise $\QFB$ metrics $\kappa_i$ and the bundle metric of $E$. 
 \begin{assumption}
For $H_i$ a boundary hypersurface, the vertical family $\eth_{v,i}$ is such that its fiberwise $L^2$ kernels in $L^2_{\QFB}(H_i/S_i;E)$ form a (finite rank) vector bundle $\ker \eth_{v,i}$ over $S_i$.  When $H_i$ is not maximal, we will also assume that 
\begin{equation}
\begin{aligned}
\ker\eth_{v,i} &\subset \left(\prod_{H_j>H_i}x_j^{\frac{\dim S_j+1-\dim S_i}2}\right)\cA_{\QFB,2}(H_i\setminus S_i;E) \\
 &\subset \left(\prod_{H_j>H_i}x_j^{\frac{\dim S_j+1-\dim S_i}2}\right)L^2_{b}(H_i\setminus S_i;E)=\left(\prod_{H_j>H_i}x_j^{\frac12}\right)L^2_{\QFB}(H_i\setminus S_i;E).
\end{aligned}
\label{su.7b}\end{equation}
\label{su.7}\end{assumption}
\begin{remark}
If $\eth_{\QFB}$ is an Hodge-deRham operator, then for each fiber of $\phi_i:H_i\to S_i$, the operator $\eth_{v,i}$ corresponds to a direct sum of Hodge-deRham operators.  In particular, if $H_i$ is maximal, then the dimension of its $L^2$ kernel is finite dimensional by Hodge theory.  Similarly, if $H_i$ is \textbf{submaximal}, that is, if the fibers of $\phi_i$ are manifolds with boundary, then the dimension of its $L^2$ kernel is finite dimensional and topological by \cite{HHM2004}.  Hence, in both cases, the assumption that the fiberwise $L^2$ kernel of $\ker\eth_{v,i}$ form a finite rank vector bundle $\ker \eth_{v,i}\to S_i$ is automatic. 
\label{su.7e}\end{remark}

Using the fiberwise $\QFB$ metric on fibers of $\phi_i: H_i\to S_i$ induced by $g_{\QFB}$ and the bundle metric $E$, we can define the family of fiberwise orthogonal $L^2$ projections 
\begin{equation}
   \Pi_{h,i}: \CI(S_i;L^2(H_i/S_i;E))\to \CI(S_i;\ker\eth_{v,i})
\label{su.8}\end{equation}
onto $\ker \eth_{v,i}$.  This can be used to define a natural indicial family.  
\begin{definition}
For $H_i$ a boundary hypersurface, the \textbf{indicial family} $\bbC\ni \lambda\mapsto I(\eth_{b,i},\lambda)\in \Diff^1_{w}(S_i;\ker\eth_{v,i})$ associated to $\eth_{\QFB}$ is defined for $u\in \CI(S_i;\ker\eth_{v,i})$ by
$$
   I(\eth_{b,i},\lambda)u:= \Pi_{h,i}\left( v^{-\lambda}(v^{-1}\eth_{\QFB})v^{\lambda}\widetilde{u} \right)|_{H_i}, 
$$
where $\widetilde{u}\in\CI(M,E)$ for $H_i$ maximal and $\widetilde{u}\in \left(\prod_{H_j>H_i}x_{j}^{\frac{\dim S_j+1-\dim S_i}2 }\right)\cA_{\QFB,2}(M;E)$, when $H_i$ is not maximal, is a smooth extension off $H_i$ of $u$ such that $\widetilde{u}|_{H_i}=u$.  
\label{su.8b}\end{definition}
\begin{lemma}
The indicial family $I(\eth_{b,i},\lambda)$ is well-defined, namely $I(\eth_{b,i},\lambda)$ does not depend on the choice of extension $\widetilde{u}$.  
\label{su.9}\end{lemma}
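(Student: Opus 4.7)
The plan is to reduce the question to the fact that $\eth_{v,i}$ is fiberwise formally self-adjoint, hence $\Pi_{h,i}\circ \eth_{v,i}=0$. Set $f:=\widetilde{u}_1-\widetilde{u}_2$, so that $f|_{H_i}=0$; in the maximal case this gives $f\in x_i\CI(M;E)$, while in the submaximal case $f\in x_i\,x_{\max}^{(n-\dim S_i)/2}H^{\infty}_b(M;E)$. Writing $f=x_i\widetilde{f}$, the task is to show
\[
\Pi_{h,i}\bigl(v^{-\lambda}(v^{-1}\eth_{\QFB})(v^\lambda f)\bigr)\bigr|_{H_i}=0.
\]

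The first step is a general conjugation identity. Condition (QFB2) says $dv\in v^2\CI(M;{}^{\QFB}T^{\ast}M)$, so $\omega:=v^{-2}dv$ is a smooth QFB one-form and $[\eth_{\QFB},v^\lambda]=\lambda v^{\lambda+1}\cl(\omega)$, hence
\[
v^{-\lambda}(v^{-1}\eth_{\QFB})v^\lambda=v^{-1}\eth_{\QFB}+\lambda\,\cl(\omega).
\]
The second summand applied to $f$ carries an explicit $x_i$-factor and vanishes at $H_i$. For the first, condition (QFB2) applied to $x_i$ yields $dx_i=x_i\omega_i$ with $\omega_i$ a smooth QFB one-form, and $v=x_iV$ with $V=\prod_{j\ne i}x_j$, so
\[
v^{-1}\eth_{\QFB}(x_i\widetilde{f})=V^{-1}\bigl(\cl(\omega_i)\widetilde{f}+\eth_{\QFB}\widetilde{f}\bigr).
\]
Using Assumption~\ref{do.1} to replace $\eth_{\QFB}$ by its product-type model $v^2\gamma\,\partial_v+v\,\eth_{\mathrm{hor},i}+\eth_{v,i}$ modulo an $x_i\,\Diff^1_{\QFB}$-error, restriction to $H_i$ gives $\eth_{\QFB}\widetilde{f}|_{H_i}=\eth_{v,i}\widetilde{f}_0$ with $\widetilde{f}_0:=\widetilde{f}|_{H_i}$. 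Thus
\[
\bigl[v^{-\lambda}(v^{-1}\eth_{\QFB})(v^\lambda f)\bigr]\bigr|_{H_i}=V^{-1}|_{H_i}\bigl(\cl(\omega_i|_{H_i})\widetilde{f}_0+\eth_{v,i}\widetilde{f}_0\bigr).
\]

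Applying $\Pi_{h,i}$, the term $\Pi_{h,i}\eth_{v,i}\widetilde{f}_0$ is zero because $\eth_{v,i}$ is fiberwise formally self-adjoint on $L^2_{\QFB}(H_i/S_i;E)$, so its range is $L^2$-orthogonal to $\ker\eth_{v,i}$. A direct computation in the local basis~\eqref{ds.4} shows that $\omega_i|_{H_i}=0$ whenever $H_i$ is maximal: the only contribution to $\omega_i$ surviving at $x_i=0$ would come from pairing with $v_{i+1}(x_{i+1}\partial_{x_{i+1}}-x_i\partial_{x_i})$, and this vector field is absent from the basis when $H_i$ is maximal. This settles the maximal case.

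The remaining obstacle, which is the main difficulty, is the submaximal case. There $\omega_i|_{H_i}$ is the nonzero vertical QFB one-form proportional to the covector dual to the radial direction into the maximal fiber boundary $\pa_{\max}Z_i$ of $Z_i$. One must verify $\Pi_{h,i}\cl(\omega_i|_{H_i})=0$ on the weighted class in which $\widetilde{f}_0$ lies. This is handled by a fiberwise integration by parts: for $u\in\ker\eth_{v,i}$, one rewrites $\langle u,\cl(\omega_i|_{H_i})\widetilde{f}_0\rangle$ using the anticommutation of Clifford multiplication by the radial covector with the radial part of $\eth_{v,i}$, and the boundary contributions at $\pa_{\max}Z_i$ are absorbed by the $x_{\max}^{1/2}$-decay of $u$ supplied by~\eqref{su.7b} together with the dimensional bound~\eqref{su.7f}; continuity of $\Pi_{h,i}$ on $x_{\max}^{(n-\dim S_i)/2}H^{\infty}_b$ (again from~\eqref{su.7b}) then closes the argument.
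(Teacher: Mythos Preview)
Your treatment of the maximal case is correct and essentially coincides with the paper's: since $H_i$ is maximal, the fibers of $\phi_i$ are closed, $V|_{H_i}=\rho_i$ is constant along these fibers and therefore commutes with $\Pi_{h,i}$, and $\omega_i|_{H_i}=0$, so everything collapses to $\Pi_{h,i}\eth_{v,i}\widetilde f_0=0$ by formal self-adjointness.

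The submaximal case, however, has a genuine gap. After you reach
\[
\bigl[v^{-\lambda}(v^{-1}\eth_{\QFB})(v^{\lambda}f)\bigr]\big|_{H_i}
 = V^{-1}|_{H_i}\bigl(\cl(\omega_i|_{H_i})\widetilde f_0+\eth_{v,i}\widetilde f_0\bigr),
\]
you apply $\Pi_{h,i}$ and treat the two terms separately, asserting $\Pi_{h,i}\eth_{v,i}\widetilde f_0=0$ and then arguing that $\Pi_{h,i}\cl(\omega_i|_{H_i})\widetilde f_0=0$. But the prefactor $V^{-1}|_{H_i}=x_{\max}^{-1}\rho_i^{-1}$ is \emph{not} constant along the fibers of $\phi_i$ when $H_i$ is submaximal (the $x_{\max}$ part is a genuine fiber function), so $\Pi_{h,i}$ does not commute with it. What you actually need is $\Pi_{h,i}\bigl(x_{\max}^{-1}\eth_{v,i}\widetilde f_0\bigr)$ and $\Pi_{h,i}\bigl(x_{\max}^{-1}\cl(\omega_i|_{H_i})\widetilde f_0\bigr)$, and neither of these vanishes individually; only their sum does. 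Your final paragraph attempting to kill the Clifford term by an ``anticommutation'' argument is correspondingly aiming at a false target.

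The paper avoids this entirely by factoring through $v$ rather than $x_i$: writing $f=x_iw=v\,(x_{\max}^{-1}\rho_i^{-1}w)$ and using the full condition $[\eth_{\QFB},v]\in v^{2}\CI(M;\End(E))$ (which is (QFB2), and is strictly stronger than $[\eth_{\QFB},x_i]\in x_i\CI$), the commutator term vanishes at $H_i$ and one is left with the single expression $\eth_{v,i}\bigl(x_{\max}^{-1}\rho_i^{-1}w|_{H_i}\bigr)$, to which $\Pi_{h,i}$ applies directly. The integration by parts needed to justify $\Pi_{h,i}\eth_{v,i}(\cdot)=0$ on this slightly singular argument is exactly where the decay hypothesis \eqref{su.7b} on $\ker\eth_{v,i}$ enters. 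Your decomposition can be repaired along the same lines: simply recombine your two terms as $V^{-1}\eth_{v,i}\widetilde f_0+V^{-1}\cl(\omega_i)\widetilde f_0=\eth_{v,i}(V^{-1}\widetilde f_0)$, which is the paper's expression, and then run the integration-by-parts argument once.
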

\begin{proof}
When $H_i$ is maximal, the proof is as in \cite[Lemma~3.5]{KR0} or \cite[Lemma~4.3]{ARS1} and relies on the fact that 
$$
           [\eth_{\QFB},x_{\max}]\in x_{\max}^2\CI(M;\End(E)).
$$       
For $H_i$ submaximal, the general strategy is also to proceed as in \cite[Lemma~3.5]{KR0}, but the fact that the fibers of $\phi_i: H_i\to S_i$ are not closed manifolds yields to some subtleties.   First, we see from condition ($\QFB$ 2) in the definition of $\QFB$ vector fields that 
\begin{equation}
    [\eth_{\QFB},v]\in v^2\CI(M;\End(E)).
\label{su.10}\end{equation}        
Now, if $\widetilde{u}_1$ and $\widetilde{u_2}$ are two choices of smooth extensions of $u$, then for some  
$$w\in \left( \prod_{H_j>H_i} x_{i}^{\frac{\dim S_j+1-\dim S_i}2}\right)\cA_{\QFB,2}(M;E)
$$ 
smooth in $x_i$, 
$$
     \widetilde{u}_1-\widetilde{u}_2= x_i w= v(\widetilde{v}_{i}^{-1}\rho_i^{-1}w), \quad \mbox{where} \; \rho_i= \prod_{H_j<H_i} x_j\quad \widetilde{v}_i= \prod_{H_j>H_i}x_j.
$$
Hence, we compute that 
\begin{equation}
\begin{aligned}
\left(v^{-\lambda}(v^{-1}\eth_{\QFB})v^{\lambda}(\widetilde{u}_1-\widetilde{u}_2)\right)|_{H_i} &= 
     \left( v^{-\lambda-1}\eth_{\QFB}v^{\lambda+1} \widetilde{v}_{i}^{-1}\rho_i^{-1}w \right)|_{H_i} \\
     &= \left(\eth_{\QFB}(\widetilde{v}_{i}^{-1}\rho_i^{-1}w)\right)|_{H_i}+ \left(v^{-\lambda-1}[\eth_{\QFB},v^{\lambda+1}](\widetilde{v}_{i}^{-1}\rho_i^{-1}w)\right)|_{H_i} \\
     &=\left(\eth_{\QFB}(\widetilde{v}_{i}^{-1}\rho_i^{-1}w)\right)|_{H_i}+ (\lambda+1)\left( v^{-1}[\eth_{\QFB},v](\widetilde{v}_{i}^{-1}\rho_i^{-1}w) \right)|_{H_i} \\
     &= \eth_{v,i}\left(\widetilde{v}_{i}^{-1}\rho_i^{-1}w|_{H_i} \right).
     \end{aligned}
\label{su.11}\end{equation}
Now, by \eqref{su.3b}, the formal self-adjointness of $\eth_{\QFB}$ on $L^2_{\QFB}(M;E)$ implies the formal self-adjointness of $\eth_{v,i}$ on $L^2_{\QFB}(H_i/S_i;E)$.  Hence, given $\psi\in \rho_i\CI(S_i;\ker\eth_{v,i})$, thanks to \eqref{su.7b}, we can integrate by parts to obtain
\begin{equation}
\langle \psi, \eth_{v,i}(\widetilde{v}_{i}^{-1}\rho_i^{-1}w|_{H_i})\rangle_{L^2_{\QFB}(H_i/S_i;E)}=\langle \eth_{v,i}\psi, \widetilde{v}_{i}^{-1}\rho_i^{-1}w|_{H_i}\rangle_{L^2_{\QFB}(H_i/S_i;E)}=0.  
\label{su.12}\end{equation}
This implies that 
$$
\Pi_{h,i}\left(v^{-\lambda}(v^{-1}\eth_{\QFB})v^{\lambda}(\widetilde{u}_1-\widetilde{u}_2)\right)|_{H_i}=\Pi_{h,i}\eth_{v,i}\left(\widetilde{v}_{i}^{-1}\rho_i^{-1}w|_{H_i} \right)=0,
$$
which shows that the definition of the indicial family does not depend on the choice of smooth extension $\widetilde{u}$ as claimed.  
\end{proof}

To have a more precise description of the indicial family, we will suppose, as for the $\QFB$ metric, that the Clifford module $E$ is asymptotically modelled near $H_i$ on a Clifford module (also denoted $E$) for the model metric
\begin{equation}
  g_{\cC_{\phi_i}}:= \frac{dv^2}{v^4}+ \frac{\phi_i^*g_{S_i}}{v^2}+ \kappa_i \quad \mbox{on} \quad \kridx{\cC_{\phi_i}}{Ciphi}{model cone for $H_i$ in QFB setting}:= (0,\infty)_v\times (H_i\setminus\pa H_i).
\label{su.1}\end{equation}
Denote by $\eth_{\cC_{\phi_i}}$ the corresponding Dirac operator.  
\begin{assumption}
Near $H_i$ for $H_i$ a boundary hypersurface, we have that
$$
     \eth_{\QFB}-\eth_{\cC_{\phi_i}}\in x_i^2\Diff^1_{\QFB}(M;E).
$$
\label{su.2}\end{assumption}
 By Assumption~\ref{su.2}, the model operator $\eth_{\cC_{\phi_i}}$ has the same vertical family as $\eth_{\QFB}$.  Now, on $\cC_{\phi_i}$, notice that the fiber bundle 
$$
    \Id\times \phi_i: (0,\infty)_v\times (H_i\setminus \pa H_i) \to (0,\infty)_v\times (S_i\setminus \pa S_i)
$$
induces a Riemannian submersion from $(\cC_{\phi_i},g_{\cC_{\phi_i}})$ onto the cone $\cC_i= (0,\infty)_v\times (S_i\setminus\pa S_i)$ with cone metric
\begin{equation}
   g_{\cC_i}:= \frac{dv^2}{v^4}+ \frac{g_{S_i}}{v^2}.
\label{su.4}\end{equation}
By \cite[Proposition~10.12, Lemma~10.13]{BGV}, the Dirac operator $\eth_{\cC_{\phi_i}}$ takes the form
\begin{equation}
\eth_{\cC_{\phi_i}}= \eth_{v,i}+ \widetilde{\eth}_{\cC_i},
\label{su.5}\end{equation}
where $\eth_{v,i}$ is the vertical family acting on the fibers of $\Id\times \phi_i$ and $\widetilde{\eth}_{\cC_i}$ is a horizontal Dirac operator induced by the connection of $\Id\times \phi_i$ and the Clifford connection 
\begin{equation}
      \nabla^E+\frac{c(\omega)}{2},
\label{su.6}\end{equation}  
where $\omega$ is the (pull-back to $\cC_{\phi_i}$ of the) $\Lambda^2(T^*(H_i\setminus\pa H_i))$-valued form on $H_i$ of \cite[Definition~10.5]{BGV} associated to the Riemannian submersion $\phi_i: H_i\setminus \pa H_i\to S_i\setminus \pa S_i$ with respect the metrics $\phi_i^*g_{S_i}+ \kappa_i$ on $H_i\setminus \pa H_i$ and $g_{S_i}$ on $S_{i}\setminus\pa S_i$, while $c(\omega)$ is defined in terms of $\omega$ in \cite[Proposition~10.12 (2)]{BGV}.  

The indicial family is of course related with the horizontal operator $\widetilde{\eth}_{\cC_i}$.  Using the projection $\Pi_{h,i}$, we can first form the Dirac operator
\begin{equation}
  \eth_{\cC_i}= \Pi_{h,i}\widetilde{\eth}_{\cC_i}\Pi_{h,i}
\label{do.10}\end{equation}
with Clifford connection $\Pi_{h,i} (\nabla^E+ \frac{c(\omega)}{2})\Pi_{h,i}$.  The term $c(\omega)$ is defined in terms of the second fundamental form and the curvature of the fiber bundle $\phi_i: H_i\to S_i$.  Those depend only on the fiberwise metric, so really are pull-back of forms on $H_i$ via the projection $(0,\infty)\times H_i\to H_i$.  However, when measured with respect to $g_{\cC_{\phi_i}}$, that is, in terms of the $\QFB$ tangent bundle, the part involving the curvature is $\mathcal{O}(v_i^2)$ with $v_i=\prod_{H_j\ge H_i}x_j$.  In particular, this part will not contribute to the indicial family.  However, the part coming from the fundamental form is $\mathcal{O}(v_i)$, so does contribute to the indicial family. 

In terms of the decomposition 
\begin{equation}
\CI(\cC_i;\ker\eth_{v,i})\cong \CI((0,\infty))\widehat{\otimes}\CI(S_i; \ker\eth_{v,i}|_{\{1\}\times \cC_i})
\label{do.11}\end{equation}
induced by parallel transport along geodesics emanating from the tip of the cone, we can show, using \cite[Proposition~2.5]{Chou} as in \cite[(3.21)]{KR0}, that 
\begin{equation}
\eth_{\cC_i}= cv^2\frac{\pa}{\pa v} + v\left(  \eth_{S_i}-\frac{c\bd_i}2\right)+ v_i^2\cV_{\Omega_i},
\label{do.12}\end{equation}
where $\bd_i=\dim S_i$,  $c=\cl\left(\frac{dv}{v^2}\right)$ is Clifford multiplication by $\frac{dv}{v^2}$, $v_i^2\cV_{\Omega_i}$is the part of $\Pi_h \frac{c(\omega)}{2}\Pi_h$ coming from the curvature of $\phi_i:H_i\to S_i$, and $\eth_{S_i}=\widehat{\eth}_{S_i}+\cN_i$ with $\widehat{\eth}_{S_i}\in\Diff^1_{w}(S_i;\ker \eth_v)$ the Dirac operator induced by the connection 
\begin{equation}
\Pi_{h,i}\left(  \nabla^E+ \frac{\widehat{c(\omega)}}2\right)\Pi_{h,i}
\label{do.13}\end{equation}
with $\widehat{c(\omega)}$ the part of $c(\omega)$ involving the second fundamental form of the fiber bundle  $\phi_i: H_i\to S_i$, and 
$
\cN_i\in \CI(S_i;\End(\ker\eth_{v,i}))
$ 
is a self-adjoint operator anti-commuting with $c$.

Let us give more details about the contribution of  $\widehat{c(\omega)}$.  As mentioned above, the contribution from the second fundamental form is $\cO(v_i)$, but because of the factor of $v$ in front of $\eth_{S_i}$ in \eqref{do.12}, the contribution coming from $\widehat{c(\omega)}$ is in
\begin{equation}
         \frac{1}{\rho_i}\CI(S_i;\End(\ker\eth_{v,i}))\subset \frac{1}{\rho_i}\Diff^1_e(S_i;\ker\eth_{v,i})=: \Diff^1_{w}(S_i;\ker\eth_{v,i}),
\label{do.14}\end{equation}           
which is consistent with the fact $\widehat{\eth}_{S_i}$ is a wedge operator.  

Plugging \eqref{do.12} into the definition of indicial family of $\eth_{\QFB}$ at $H_i$, we thus find that 
\begin{equation}
   I(\eth_{b,i},\lambda)= c\lambda+ \left( \eth_{S_i}-\frac{c\bd_i}2\right).
\label{do.15}\end{equation}
Notice from this description that the indicial family $I(\eth_{b,i},\lambda)$ can be seen as the Mellin transform of the operator 
\begin{equation}
  \eth_{b,i}:=cv\frac{\pa}{\pa v}+ \eth_{S_i}-\frac{c\bd_i}2.
\label{do.15b}\end{equation} 
As in \cite[\S~3]{KR0}, if $\eth_{\QFB}$ is the Hodge-deRham operator of $g_{\QFB}$, then we can give a more explicit description of the self-adjoint term $\cN_i$ using \cite[\S~5.3]{HHM2004}.  First, in this case, 
\begin{equation}
\ker\eth_{v,i}= \Lambda^*({}^{\phi}N^*S_i)\otimes \cH^*_{L^2}(H_i/S_i),
\label{do.15c}\end{equation} 
where ${}^{\phi}N^*S_i$ is the dual of the vector bundle ${}^{\phi}NS_i$ in \eqref{vf.2} and $\cH^*_{L^2}(H_i/S_i)$ is the bundle of fiberwise $L^2$ harmonic forms, which by \cite[Proposition~15]{HHM2004} is a flat vector bundle with respect to the connection \eqref{do.13}.  Now, if $\overline{\eta}$ is a $\cH^*_{L^2}(H_i/S_i)$ valued $k$-form on $\cC$ obtained by parallel transport of its restriction $\eta$ to $\{1\}\times S_i$ along geodesics emanating from the tip of the cone $\cC_i$, then there is a decomposition
\begin{equation}
    \overline{\eta}= \overline{\alpha}+ \frac{dv}{v^2}\wedge\overline{\beta}, \quad \eta=\alpha+ dv\wedge \beta,  
    \quad \overline{\alpha}=\frac{\alpha}{v^k}, \quad \overline{\beta}=\frac{\beta}{v^{k-1}},
\label{do.16}\end{equation}
for some $\cH^*_{L^2}(H_i/S_i)$ valued forms $\alpha,\beta\in \CI(S_i;\Lambda^*({}^{w}T^*S_i)\otimes\cH^*_{L^2}(H_i/S_i))$.  In terms of such a decomposition,  Clifford multiplication by $\frac{dv}{v^2}$ is given by
$$
   c= \left( \begin{array}{cc} 0 & -1 \\ 1 & 0  \end{array} \right).
$$
More importantly, we infer from \cite[Proposition~15]{HHM2004} that the operator $\eth_{S_i}=\widehat{\eth}_{S_i}+\cN_i$ is such that 
\begin{equation}
  \widehat{\eth}_{S_i}= \left( \begin{array}{cc}  \mathfrak{d}_{S_i}  & 0 \\
                                                                   0 & -\mathfrak{d}_{S_i}  \end{array} \right)
\label{do.17}\end{equation}
 with $\mathfrak{d}_{S_i}$ the Hodge-deRham operator acting on $\CI(S_i;\Lambda^*({}^{w}T^*S_i)\otimes \cH^*_{L^2}(H_i/S_i)) $ and 
\begin{equation}
   \cN_i= \left(  \begin{array}{cc} 0 & \frac{\bd_i}2- \cN_{S_i} \\ \frac{\bd_i}2-\cN_{S_i} & 0\end{array}\right)
\label{do.18}\end{equation}
with $\cN_{S_i}$ the degree operator multiplying a form on $S_i$ (of pure degree) by its degree.

 Let $\w$ be the multiweight given by $\w(H_i)= \frac{h_i}2=\frac{\bd_i+1}2=\frac{\dim S_i+1}2$, so that $L^2_{\QFB}(M;E)= x^{\w}L^2_b(M;E)$. To work with an operator acting formally on $L^2_b(M;E)$, we will therefore consider the conjugated operator
\begin{equation}
      D_{\QFB}:= x^{-\w}\eth_{\QFB} x^{\w}.
\label{do.2}\end{equation}  
\begin{definition}
At $H_i$ a boundary hypersurface of $M$, the \textbf{vertical family} of $D_{\QFB}$, denoted by $D_{v,i}$, is the fiberwise family of operators $D_{v,i}\in \Diff^1_{\QFB}(H_i/S_i;E)$ obtained by restricting the action of $D_{\QFB}$ to $H_i$.
\label{do.3}\end{definition}

When $H_i$ is maximal, notice that the restriction of $\eth_{\QFB}$ yields the same vertical family as $D_{\QFB}$.     However, if $H_i$ is not maximal, this is no longer the case.  Indeed, if $H_j$ is a boundary hypersurface such that $H_i<H_j$ with no boundary hypersurface $H$ such that $H_i<H<H_j$, then notice that in the coordinates \eqref{coor.1}, the $\QFB$ vector field 
\begin{equation}
         \xi=v_j\left(x_j\frac{\pa}{\pa x_j}- x_i\frac{\pa}{\pa x_i}\right),
\label{do.4}\end{equation}
which restricts to $v_jx_j\frac{\pa}{\pa x_j}$ on $H_i$, commutes with $v_i$, but not with $x_i$.  More precisely, 
$$
(x_i^{-1}\circ\xi\circ x_i)-\xi=x_i^{-1}[\xi,x_i]=-v_j
$$ 
does not restrict to $0$ on $H_i$.  In fact, more generally, from \eqref{ds.4}, we see that  
$$
        v_q^{-1}\circ\xi'\circ v_{q}- \xi'
$$          
restricts to zero on $H_i$ for all $\xi'\in \cV_{\QFB}(M)$ if and only if $H_q\le H_i$.  Now, in terms of the coordinates \eqref{coor.1} and the convention that $\bd_0=0$, we have that
\begin{equation}
x^{\w}= v_1^{\frac12}\prod_q v_q^{\frac{\bd_q-\bd_{q-1}}2} = v_1^{\frac12}\left( \prod_{q\le i} v_q^{\frac{\bd_q-\bd_{q-1}}2} \right)\left(\prod_{q>i} v_q^{\frac{\bd_q-\bd_{q-1}}{2}}\right)=v_1^{\frac12}\left( \prod_{q\le i} v_q^{\frac{\bd_q-\bd_{q-1}}2} \right)
\left( \prod_{q>i} x_q^{\nu_q} \right)
\label{do.5}\end{equation} 
with 
\begin{equation}
    \nu_q:= \sum_{i<p\le q} \frac{\bd_p-\bd_{p-1}}2= \frac{\bd_q-\bd_{i}}2.
\label{do.6}\end{equation}
Hence, from this discussion, we see that if $\eth_{v,i}$ denotes the restriction of $\eth_{\QFB}$ to $H_i$, then in the coordinates \eqref{coor.1},
\begin{equation}
                    D_{v,i}= \left( \prod_{q>i} x_q^{\nu_q} \right)^{-1} \eth_{v,i}\left( \prod_{q>i} x_q^{\nu_q} \right). 
\label{do.7}\end{equation}
Then notice that the weights $\nu_q$ are precisely such that 
\begin{equation}
     L^2_{\QFB}(H_i/S_i;E)= \left( \prod_{q>i} x_q^{\nu_q} \right)L_b^2(H_i/S_i;E)
\label{do.8}\end{equation}
with $L^2_b(H_i/S_i;E)$ the corresponding Hilbert bundle of fiberwise $L^2$ spaces with respect to the family of fiberwise $b$-metrics conformally related to $\kappa_i$.  Indeed, it can be checked  readily that $\nu_q= \frac{\bd_{iq}+1}2$ with $\bd_{iq}$ the dimension of the base of the fiber bundle $\phi_q: H_q\to S_q$ restricted to $H_q\cap \phi_i^{-1}(s)$ for some $s\in S_i$.  Hence, from \eqref{do.7}, we see that the vertical family $\eth_{v,i}$   
 acting formally on $L^2_{\QFB}(H_i/S_i;E)$ is unitarily equivalent to the vertical family $D_{v,i}$ acting formally on $L^2_b(H_i/S_i;E)$.  In particular, if $H_i$ is submaximal, notice that the family of vertical metrics on the fibers of $\phi_i$ induced from $g_{\QFB}$ is just a family of fiber boundary metrics in the sense of \cite{Mazzeo-MelrosePhi,HHM2004}.    
 
 In terms of the vertical family $D_{v,i}$, the normal operator $D_{\QFB}$ is given by
 \begin{equation}
   N_i(D_{\QFB})=D_{v,i}+ \eth_{h,i}.
 \label{no.1}\end{equation}
 This is a family of suspended $\QFB$ operators.  We do not assume that it is fully elliptic, so we cannot invert it in the sense of Corollary~\ref{mp.4}.  In fact, one of the difficulty is to invert the normal operator on the range of the conjugated projection
\begin{equation}
   \widetilde{\Pi}_{h,i}:= \left( \prod_{q>i} x_q^{\nu_q}\right)^{-1} \Pi_{h,i} \left( \prod_{q>i} x_q^{\nu_q}\right)
\label{do.19}\end{equation}
corresponding to the fiberwise $L^2$ projection onto the kernel of $D_{v,i}$ in $L^2_b(H_i/S_i;E)$.  Of course, for $H_i$ maximal, we have that $L^2_{\QFB}(H_i/S_i;E)=L^2_b(H_i/S_i;E)= L^2(H_i/S_i;E)$ since the fibers of $\phi_i$ are closed manifolds, so we can simply take $\widetilde{\Pi}_{h,i}=\Pi_{h,i}$ in this case.    We will assume that the normal operator in \eqref{no.1} is invertible in the following sense.
\begin{assumption}
For each boundary hypersurface $H_i$, the normal operator \eqref{no.1} has a well-defined inverse
\begin{equation}
    G_{\ff_i}\in \Psi^{-1,\cE/\mathfrak{e}}_{\ff_i}(H_i;E),
\label{do.46b}\end{equation}
where $\cE$ is an index family which is the empty set except at $H_{ii}$, $H_{jj}$ and $\ff_j$ for $H_j$ such that $H_i<H_j$, where  
\begin{equation}
     \cE|_{H_{ii}}= \bd_i+\bbN_0,  \quad \cE|_{H_{jj}}=\bd_j+\bbN_0 \quad \mbox{and}\quad \cE|_{\ff_j}=\bbN_0,
\label{do.46c}\end{equation}
and $\mathfrak{e}$ a $\QFB$ positive multiweight except at $H_{jj}$ for $H_j>H_i$ where
$$
    \mathfrak{e}(H_{jj})>\bd_j.
$$  
  Furthermore , for $H_j\ge H_i$, we suppose that the term $A_j$ of order $\bd_j$ at $H_{jj}$
 is such that 
 \begin{equation}
 \widetilde{\Pi}_{h,j}A_j= A_j\widetilde{\Pi}_{h,j}=A_j
 \label{do.46d}\end{equation}
 with $A_i$ more precisely given by
 \begin{equation}
   A_i =\sqrt{-1}\cl(\xi)\widetilde{\Pi}_{h,i} \quad \mbox{for}  \; \xi\in S({}^{\phi}NS_i),
 \label{do.46dd}\end{equation}
 where $S({}^{\phi}NS_i)$, the unit vector bundle of ${}^{\phi}NS_i$, can be identified with the image of $H_{ii}\cap \ff_i$ under the fiber bundle induced by \eqref{ps.6}.
\label{do.46}\end{assumption}
\begin{remark}
Implicit in Assumption~\ref{do.46} is the invertibility of $\widetilde{v}_i^{-\frac12}D_{v,i}\widetilde{v}_i^{-\frac12}$ off its kernel and cokernel in the sense of Corollary~\ref{hd.4} (with $\delta=-\frac12$) below.  Indeed, the way we will ultimately establish Assumption~\ref{do.46} is by recurrence on the depth through Theorems~\ref{qfble.38} and \ref{ift.15} below and the proofs of these theorems rely on Corollary~\ref{hd.4}.  
\label{do.46e}\end{remark}

Even if we can invert the normal operator \eqref{no.1} in the sense of Assumption~\ref{do.46}, the  decay of the inverse at $H_{ii}$ is not enough to yield a compact error term.  To improve the situation at $H_{ii}$, we need to consider 
the indicial family of $D_{\QFB}$  at $H_i$ defined in terms of the conjugated projection \eqref{do.19} as follows.
\begin{definition}
The \textbf{indicial family} $\bbC\ni \lambda\mapsto I(D_{b,i},\lambda)\in \Diff^1_w(S_i;\ker D_{v,i})$ of $D_{\QFB}$ at $H_i$  is given by 
$$
      I(D_{b,i},\lambda)u:= \widetilde{\Pi}_{h,i}\left( v^{-\lambda}(v^{-1}D_{\QFB}v^{\lambda}\widetilde{u}) \right)|_{H_i},  \quad u\in\CI_c(S_i\setminus\pa S_i;\ker D_{v,i}),
$$
where $\widetilde{u}\in \widetilde{v}_{i}^{\frac12}\cA_{\QFB}(M;E)$ is a smooth extension off $H_i$ such that $\widetilde{u}|_{H_i}=u$.   
\label{do.20}\end{definition}
As for $I(\eth_{b,i},\lambda)$, the proof of Lemma~\ref{su.9} can be adapted to show that $I(D_{b,i},\lambda)$ does not depend on the choice of smooth extension $\widetilde{u}$.  Alternatively, from the definition of $D_{\QFB}$ and $\widetilde{\Pi}_{h,i}$, we have that
\begin{equation}
    I(D_{b,i},\lambda)= x^{-\w}I(\eth_{b,i},\lambda)x^{\w}.
\label{do.21}\end{equation}
Now, with $\rho_q=\prod_{H_p<H_q}x_p$, we see from \eqref{do.5} that
\begin{equation}
\begin{aligned}
x^{\w} &= v_1^{\frac12}\left( \prod_{q\le i} \left(\frac{v_1}{\rho_q}\right)^{\frac{\bd_q-\bd_{q-1}}2} \right) 
\left( \prod_{q>i} x_q^{\nu_q}\right)=v_1^{\frac{\bd_i+1}2}\left( \prod_{q\le i} \rho_q^{-\frac{\bd_q-\bd_{q-1}}2} \right) 
\left( \prod_{q>i} x_q^{\nu_q}\right) \\
&=v_1^{\frac{\bd_i+1}2}\left( \prod_{q\le i} x_q^{\frac{\bd_q-\bd_{i}}2} \right) 
\left( \prod_{q>i} x_q^{\nu_q}\right)= v_1^{\frac{\bd_i+1}2}\left( \prod_{q< i} x_q^{\frac{\bd_q-\bd_{i}}2} \right) 
\left( \prod_{q>i} x_q^{\nu_q}\right).
\end{aligned}
\label{do.22}\end{equation}
But  $\bd_i-\bd_q-1$ is precisely the dimension of the fibers of $\phi_{iq}:\pa_qS_i\to S_q$ of \eqref{iterated.1}.  In particular, we have that 
\begin{equation}
  L^2_w(S_i;\ker D_{v,i})= \left( \prod_{q<i}x^{\frac{\bd_q-\bd_i}2} \right)L^2_b(S_i;\ker D_{v,i}).
\label{do.23}\end{equation}
This allows us to conclude from \eqref{do.21} that $I(\eth_{b,i},\lambda+\frac{\bd_i+1}2)$ acting formally on $L^2_w(S_i;\ker \eth_{v,i})$ corresponds to $I(D_{b,i},\lambda)$ acting formally on $L^2_b(S_i;\ker D_{v,i})$.  Thus, in terms of \eqref{do.15}, we have that
\begin{equation}
    I(D_{b,i},\lambda)=c\lambda + \left( D_{S_i}+ \frac{c}2 \right),
\label{do.24}\end{equation}
where 
\begin{equation}
   D_{S_i}= \left( \prod_{H_q<H_i}x_q^{\frac{\bd_q-\bd_i}2} \right)^{-1}\left( \prod_{q>i}x_{q}^{\nu_q} \right)^{-1}\eth_{S_i}\left( \prod_{q>i}x_{q}^{\nu_q} \right)\left( \prod_{H_q<H_i}x_q^{\frac{\bd_q-\bd_i}2} \right). 
\label{do.25}\end{equation}
In particular, $\eth_{S_i}$ acting formally on $L^2_w(S_i;\ker\eth_{v,i})$ is unitarily equivalent to $D_{S_i}$ acting formally on $L^2_{b}(S_i;\ker D_{v,i})$.   As for \eqref{do.15}, the indicial family \eqref{do.24} can be seen as the Mellin transform of 
\begin{equation}
 D_{b,i}:= cv\frac{\pa}{\pa v} + \left(D_{S_i}+ \frac{c}2\right).
\label{do.25b}\end{equation}

To formulate our main assumption on the indicial family of $D_{\QFB}$, notice finally that $D_{S_i}$ anti-commutes with $c$, so that $cD_{S_i}$ is formally self-adjoint as a wedge operator acting formally on $L^2_b(S_i;E)$. 
\begin{assumption}
For $H_i$ a boundary hypersurface, we suppose that the wedge operator 
$$
cI(D_{b,i},0)=cD_{S_i}-\frac12
$$  
is essentially self-adjoint as a wedge operator acting formally on $L^2_b(S_i;\ker D_v)$.  In other words, we suppose that $c\eth_{S_i}$ is essentially self-adjoint as a wedge operator acting formally on $L^2_{w}(S_i;\ker\eth_{v,i})$.  Furthermore, we suppose that there is $\delta\in\bbR$ such that $cI(D_{b,i},\delta-\frac12+ \lambda)$ seen as a formal operator
\begin{equation}
    cI(D_{b,i},\delta-\frac12+\lambda): \rho_i^{\frac12}L^2_b(S_i;\ker D_{v,i})\to \rho_{i}^{-\frac12}L^2_b(S_i;\ker D_{v,i}),   
\label{do.26a}\end{equation}
is invertible for $\Re \lambda \in (-\mu_R,\mu_L)$ for some $\mu_L>\frac12$ and $\mu_R>\frac12$  
with inverse in $\rho_i^{\frac12}\Psi^{-1,\tau}_{e,\cn}(S_i;\ker D_{v,i})\rho_i^{\frac12}$ for some fixed $\tau>0$ not depending on $\lambda$.  
\label{do.26}\end{assumption}
The assumption that $cI(D_{b,i},0)$ is essentially self-adjoint allows us to define the indicial roots of $I(D_{b,i},\lambda)$ unambiguously.  
\begin{definition}
Assuming $cI(D_{b,i},0)$ is essentially self-adjoint, an \textbf{indicial root} of $I(D_{b,i},\lambda)$ is a complex number $\mu$ such that 
$cI(D_{b,i},\mu)$ is not invertible when acting on the unique self-adjoint extension of $cI(D_{b,i},0)$.  A \textbf{critical weight} of the indicial family $I(D_{b,i},\lambda)$ is a real number $\delta$ such that $\delta+i\mu$ is an indicial root for some $\mu\in\bbR$.    
\label{do.27}\end{definition}

\begin{remark}
Since $v^{-\frac12}D_{\QFB}v^{-\frac12}= v^{\frac12}(v^{-1}D_{\QFB})v^{-\frac12}$ is formally self-adjoint, notice that the set of indicial roots of $I(D_{b,i},\lambda)$ is symmetric with respect to the line $\Re\lambda=-\frac12$.  Hence, if $\zeta$ is an indicial root of $I(D_{b,i},\lambda)$, then so is $-1-\overline{\zeta}$.  
\label{do.26b}\end{remark}
When $\eth_{\QFB}$ is the Hodge-deRham operator, then by \eqref{do.15}, \eqref{do.17} and \eqref{do.18}, the indicial family $I(D_{b,i},\lambda)$ is given by 
\begin{equation}
I(D_{b,i},\lambda)=\left(  \begin{array}{cc} \mathfrak{D}_{S_i}  & -\lambda+\frac{\bd_i-1}2-\cN_{S_i} \\
 \lambda+\frac{\bd_i+1}2-\cN_{S_i} & -\mathfrak{D}_{S_i}   \end{array}\right)
\label{do.27b}\end{equation}
with 
\begin{equation}
      \mathfrak{D}_{S_i}= \left( \prod_{H_q<H_i}x_q^{\frac{\bd_q-\bd_i}2} \right)^{-1}\left( \prod_{q>i}x_{q}^{\nu_q} \right)^{-1}\mathfrak{d}_{S_i}\left( \prod_{q>i}x_{q}^{\nu_q} \right)\left( \prod_{H_q<H_i}x_q^{\frac{\bd_q-\bd_i}2} \right).
\label{do.27c}\end{equation}
In particular, the operator $cI(D_{b,i},0)$ will be essentially self-adjoint if and only if $\mathfrak{d}_{S_i}$ will be essentially self-adjoint on $L^2_w(S_i;\ker\eth_{v,i})$.  By Remark~\ref{e.3h}, this will be the case provided \eqref{e.3i} holds for $\eth_{S_i}$ and the wedge metric $g_{S_i}$ is suitably chosen, that is, the metrics on the links of each strata of the stratified space associated to $S_i$ are all sufficiently small.  In this case, the $L^2$ kernel $\ker_{L^2_w}\mathfrak{d}_{S_i}$ decomposes in terms of degree.  Let $\ker^q_{L^2_w}\mathfrak{d}_{S_i}$ be the part corresponding to forms of degree $q$.  Writing $\mathfrak{d}_{S_i}= d^{S_i}+ \delta^{S_i}$ with $d^{S_i}$ the deRham differential and $\delta^{S_i}$ its formal adjoint, notice that we can also use the Friedrich extension to have self-adjoint extensions of $\delta^{S_i}d^{S_i}$ and $d^{S_i}\delta^{S_i}$. In particular, we can define the spectrum of these operators using these self-adjoint extensions.  Denote by $(\delta^{S_i}d^{S_i})_q$ and $(d^{S_i}\delta^{S_i})_q$ the parts of $(\delta^{S_i}d^{S_i})$ and $(d^{S_i}\delta^{S_i})$ acting on forms of degree $q$.     
\begin{lemma}
When $\eth_{\QFB}$ is a Hodge-deRham operator, the indicial roots of $I(D_{b,i},\lambda)$ are given by
\begin{equation}
\begin{gathered}
  \{ q-\frac{\bd_i+1}2, -\left(q-\frac{\bd_i-1}2\right), \; | \;  \ker_{L^2_w}\mathfrak{d}_{S_i} \quad \mbox{is not trivial in degree} \; q\} \\
  \bigcup \left\{ \ell\pm \sqrt{\zeta+(q-\frac{\bd_i-1}2)^2} \quad | \quad \ell \in \{-1,0\}, \zeta\in \Spec(\delta^{S_i}d^{S_i})_q\setminus \{0\} \right\}  \\
  \bigcup \left\{ \ell\pm \sqrt{\zeta+(q-\frac{\bd_i+1}2)^2} \quad | \quad \ell \in \{-1,0\}, \zeta\in \Spec(d^{S_i}\delta^{S_i})_q\setminus \{0\} \right\}.
\end{gathered}
\label{do.27e}\end{equation}
\label{do.27d}\end{lemma}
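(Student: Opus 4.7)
The plan is to reduce the problem to computing the real spectrum of a self-adjoint operator on $L^2_w(S_i;\ker\eth_{v,i})$. The starting observation is that, because $c^2=-1$ and $c$ anti-commutes with $D_{S_i}$, identity \eqref{do.24} gives
$$
cI(D_{b,i},\lambda)=cD_{S_i}-\lambda-\tfrac12,
$$
so $\lambda$ is an indicial root precisely when $\lambda+\tfrac12\in\Spec(cD_{S_i})$. Via the conjugation \eqref{do.25}, $cD_{S_i}$ is unitarily equivalent to $c\eth_{S_i}$ acting on $L^2_w(S_i;\ker\eth_{v,i})$, which is essentially self-adjoint by Assumption~\ref{do.26}; the anti-commutation $c\eth_{S_i}=-\eth_{S_i}c$ moreover gives $(c\eth_{S_i})^2=\eth_{S_i}^2$, so the spectrum is real.

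Next I would invoke the Hodge decomposition
$$
L^2_w(S_i;\ker\eth_{v,i})=\ker_{L^2_w}\mathfrak{d}_{S_i}\oplus\overline{\operatorname{im}\, d^{S_i}}\oplus\overline{\operatorname{im}\, \delta^{S_i}},
$$
available under the Witt-type hypothesis that makes $\mathfrak{d}_{S_i}$ essentially self-adjoint as in \cite{ALMP2012}. On harmonic forms of degree $q$, $\mathfrak{d}_{S_i}$ vanishes so $\eth_{S_i}$ reduces to the off-diagonal matrix $\cN_i$ of \eqref{do.18}, and a direct computation shows that $c\eth_{S_i}$ has eigenvalues $\pm(q-\tfrac{\bd_i}{2})$. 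After the shift $\lambda=\mu-\tfrac12$ this produces the first line of \eqref{do.27e}, contributed precisely when $\ker_{L^2_w}\mathfrak{d}_{S_i}$ is nonzero in degree $q$.

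For the non-harmonic part the strategy is to exhibit explicit four-dimensional $c\eth_{S_i}$-invariant subspaces. Given an eigenform $\alpha$ of $(\delta^{S_i}d^{S_i})_q$ of degree $q$ with eigenvalue $\zeta\neq 0$ (hence $\delta^{S_i}\alpha=0$), set $\tilde\alpha=d^{S_i}\alpha/\sqrt{\zeta}$ of degree $q+1$ and consider the span of $\alpha$ and $\tilde\alpha$ in each of the two spinor slots. On this subspace $\mathfrak{d}_{S_i}$ becomes anti-diagonal with entries $\sqrt{\zeta}$, while $\omega=\tfrac{\bd_i}{2}-\cN_{S_i}$ is diagonal with entries $\tfrac{\bd_i}{2}-q$ and $\tfrac{\bd_i}{2}-q-1$; the resulting $4\times 4$ matrix for $c\eth_{S_i}$ splits into two $2\times 2$ blocks whose characteristic polynomials directly yield eigenvalues $\ell+\tfrac12\pm\sqrt{(q-\tfrac{\bd_i-1}{2})^2+\zeta}$ for $\ell\in\{-1,0\}$, producing after the shift the second line of \eqref{do.27e}. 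The analogous analysis starting from $\alpha\in\ker d^{S_i}$ of degree $q$ with $d^{S_i}\delta^{S_i}\alpha=\zeta\alpha$, paired with $\delta^{S_i}\alpha/\sqrt{\zeta}$ of degree $q-1$, gives the third line.

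The main point to verify is completeness of the list, which follows because the four-dimensional invariant subspaces above together with the harmonic piece span $L^2_w(S_i;\ker\eth_{v,i})$ by the Hodge decomposition. The only delicate bookkeeping will be to notice that the two non-harmonic parameterizations overlap via the bijection $\Spec(\delta^{S_i}d^{S_i})_q\setminus\{0\}\leftrightarrow\Spec(d^{S_i}\delta^{S_i})_{q+1}\setminus\{0\}$ induced by $d^{S_i}$, so that the lemma is simply recording the same set of non-harmonic indicial roots under both natural parameterizations.
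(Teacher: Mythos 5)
Your computation is correct, and I checked the key steps: writing $cI(D_{b,i},\lambda)=cD_{S_i}-\lambda-\tfrac12$ so that indicial roots are $\Spec(cD_{S_i})-\tfrac12$, the harmonic block giving $\pm(q-\tfrac{\bd_i}2)-\tfrac12$, and the two $2\times 2$ blocks on $\mathrm{span}\{\alpha, d^{S_i}\alpha/\sqrt{\zeta}\}$ having traces $\mp 1$ and determinant $-\bigl((\tfrac{\bd_i}2-q)(\tfrac{\bd_i}2-q-1)+\zeta\bigr)$, whence eigenvalues $\ell+\tfrac12\pm\sqrt{\zeta+(q-\tfrac{\bd_i-1}2)^2}$ with $\ell\in\{-1,0\}$, exactly reproducing \eqref{do.27e} after the shift. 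The route is different from the paper's only in presentation: the paper's proof consists of a single citation to \cite[Proposition~2.3]{ARS3} together with the remark that the signs are flipped because the indicial family is taken at the infinite end of the cone rather than at the tip, whereas you carry out that computation explicitly. What your version buys is self-containedness and a transparent verification of the sign conventions (which is precisely the delicate point the paper's remark is warning about); what the citation buys is brevity and the functional-analytic bookkeeping already done in \cite{ARS3}. The one point you gloss over is completeness: to conclude that the harmonic piece together with your four-dimensional invariant subspaces exhausts $L^2_w(S_i;\ker\eth_{v,i})$ you need the Friedrichs extensions of $(\delta^{S_i}d^{S_i})_q$ and $(d^{S_i}\delta^{S_i})_q$ to have a complete system of eigenforms (discrete spectrum), which holds here because $S_i$ is a compact stratified space with a wedge metric satisfying the Witt condition; a sentence to that effect would close the argument.
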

\begin{proof}
It suffices to follow the argument of \cite[Proposition~2.3]{ARS3}.  However, since \cite[Proposition~2.3]{ARS3}
 consider the indicial family at the tip of the cone instead of at the infinite end, notice that the signs of our indicial roots are flipped with respect to \cite[Proposition~2.3]{ARS3}.
 \end{proof}

For $\delta$ as in Assumption~\ref{do.26}, we want to construct a parametrix for the operator $\eth_{\QFB}$ acting formally on $v^{\delta}L^2_{\QFB}(M;E)$, which by the discussion above is unitarily equivalent to the operator 
\begin{equation}
  D_{\QFB,\delta}:=v^{-\delta}D_{\QFB}v^{\delta}
\label{do.27}\end{equation}
acting formally on $L^2_{b}(M;E)$.

\begin{theorem}
Suppose that Assumptions \ref{do.1}, \ref{su.7}, \ref{su.2}, \ref{do.46} and \ref{do.26} hold.  Then there exist $Q\in\Psi^{-1,\cQ/\mathfrak{q}}_{\QFB,\cn}(M;E)$ and $R\in\Psi^{-\infty,\mathfrak{r}}_{\QFB,\cn}(M;E)$ such that 
\begin{equation}
     D_{\QFB,\delta}Q=\Id+R,
\label{do.28b}\end{equation}
where $\cQ$ is a $\QFB$ nonnegative index family, except at $H_{ii}$ for $H_i$ a boundary hypersurface, where we have $\cQ|_{H_{ii}}=\bd_i+\bbN_0$, while the multiweight $\mathfrak{r}$ is  $\QFB$ positive and such that
$$
  \mathfrak{r}(H_{i0})=\mu_L+\frac12>1, \quad \mathfrak{r}(H_{0i})=\bd_i+\frac12+\mu_R>\bd_i+1
$$ 
and
\begin{equation}
     \mathfrak{r}(\ff_i)=\mu_L+\mu_R>1, \quad  \mathfrak{r}(H_{ij})=\mu_L+\mu_R> \bd_j+2,
\label{ip.1a}\end{equation}
and the multiweight $\mathfrak{q}$ is such that
$$
\mathfrak{q}(H_{i0})=\mu_L-\frac12, \quad \mathfrak{q}(H_{0i})=\bd_i+\frac12+\mu_R>\bd_i+1, \quad \mathfrak{q}(\ff_i)>0, \quad \mathfrak{q}(H_{ij})> \left\{ \begin{array}{ll} b_i, & i=j, \\
                              b_j+1, & i\ne j\ne 0.   \end{array} \right.
$$
Moreover, the term $A_i$ of order $\bd_i$ at $H_{ii}$ of $Q$ is such that $\widetilde{\Pi}_{h,i}A_i= A_i\widetilde{\Pi}_{h,i}=A_i$.
\label{do.28}\end{theorem}
  \begin{remark}
  In \eqref{do.28b}, the fact that the error term decays faster than $\bd_i+1$ at $H_{ii}$ implies that $\widetilde{\Pi}_{h,i}Q$ has the same asymptotic behavior as $Q$ at $H_{ii}$ up to a term of order $\bd_i+1$ and a weakly conormal section vanishing faster than $x_i^{\bd_i+1+\nu}$ for some $\nu>0$.  The same remark applies to various other parametrix constructions in the paper, for instance the one in Assumption~\ref{do.46}.
  \label{do.28c}\end{remark}
\begin{remark}
Alternatively, the error term of Theorem~\ref{do.28} can be seen as a $\QFB$ $\mathfrak{r}'$ residual pseudodifferential operator with $\mathfrak{r}'(H\times M)=\mu_L+\frac12$ and $\mathfrak{r}'(M\times H)=\mu_R-\frac12$ for $H\in\cM_1(M)$,
$$
    R\in \Psi^{-\infty,\mathfrak{r}'}_{\QFB,\res}(M;E).
$$
\label{do.28d}\end{remark}

The proof of the theorem will follow the strategy of the proof of \cite[Theorem~3.9]{KR0} for fibered boundary operators, really a strategy that goes back to the work of Vaillant \cite{Vaillant} for the related class of fibered cusp operators.  However,  one major difference is that we were not able to obtain an analogue of Step 5 of the proof of \cite[Theorem~3.9]{KR0}, which would correspond in our setting to obtain rapid decay of the error term at $H_{i0}$ for all $i$.  It seems possible to ensure such decay at one of the boundary hypersurface $H_{i0}$, but we could not find a systematic way to proceed to enforce decay at all other such boundary hypersurfaces $H_{j0}$ without compromising the decay initially obtained at $H_{i0}$.  Notice that such an issue does not arise for the corresponding parametrix construction for edge operators in Theorem~\ref{e.4} when Assumption~\ref{e.3cn} hold, but in terms of the conditions imposed on the edge Dirac operator, the analogue in the $\QFB$ setting would be to ask that the Dirac operator  be fully elliptic, in which case Proposition~\ref{mp.3}  can be applied.  As already explain however, full ellipticity is too strict a condition for many interesting situations.  To palliate the fact that we were not able to obtain rapid decay of the error term at $H_{i0}$ for all $i$, milder conditions on the Dirac operator were imposed to ensure that the decay at $H_{i0}$ is at least `sufficiently good' to proceed with the remaining part of the parametrix construction and obtain ultimately a compact error term.  Because of this lack of decay, compared to \cite[Corollary~3.15]{KR0}, this is the main reason why we cannot obtained polyhomogeneity results.  More importantly, this is the main reason behind  the introduction of the larger class of weakly conormal $\QFB$ pseudodifferential operators, namely this lack of the decay of the error term will only allow us subsequently in Corollary~\ref{hd.4} to show that the inverse of the Dirac operator, when it exists, is within this larger class of pseudodifferential operators.  

 As for \cite[Theorem~3.9]{KR0}, the proof of Theorem~\ref{do.28} will be decomposed in few steps. 

\subsection*{Step 0: Symbolic inversion}

\begin{proposition}
There exists $Q_0\in\Psi^{-1}_{\QFB}(M;E)$ and $R_0\in\Psi^{-\infty}_{\QFB}(M;E)$ such that 
$$
        D_{\QFB,\delta}Q_0=\Id+R_0.
$$
\label{do.29}\end{proposition}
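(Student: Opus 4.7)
The plan is to carry out the standard symbol-level parametrix construction in the small $\QFB$ calculus, relying on the short exact sequence \eqref{sm.2} and the composition law \eqref{sm.3}. First, I would verify that $D_{\QFB,\delta}\in \Diff^1_{\QFB}(M;E)$ is elliptic in the sense of Definition~\ref{sm.4}. Since $v^\delta$ and $x^{\mathfrak{w}}$ are smooth positive functions on the interior and conjugation preserves the differential order within $\Diff_{\QFB}^\ast$, we have $D_{\QFB,\delta}\in \Diff^1_{\QFB}(M;E)$ with the same principal symbol as $\eth_{\QFB}$. As $\eth_{\QFB}$ is a Dirac operator associated to the Clifford module $E$ over $({}^{\QFB}TM,g_{\QFB})$, its principal symbol is $\sigma_1(D_{\QFB,\delta})(\xi) = i\,\cl(\xi)$ for $\xi\in {}^{\QFB}T^\ast M$, whose square is $-|\xi|_{g_{\QFB}}^2\,\Id_E$; thus $\sigma_1(D_{\QFB,\delta})$ is invertible off the zero section with inverse $-i\cl(\xi)/|\xi|_{g_{\QFB}}^2 \in S^{[-1]}({}^{\QFB}T^\ast M;\pi^\ast \End(E))$.

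Next, using the surjectivity of the symbol map in \eqref{sm.2}, I would pick $Q_0^{(0)}\in \Psi^{-1}_{\QFB}(M;E)$ with
\[
\sigma_{-1}(Q_0^{(0)}) = \sigma_1(D_{\QFB,\delta})^{-1}.
\]
By Corollary~\ref{co.10}, $D_{\QFB,\delta}\,Q_0^{(0)}\in \Psi^0_{\QFB}(M;E)$, and by the multiplicativity \eqref{sm.3} of the principal symbol its principal symbol equals $\Id_E$, so
\[
R^{(1)} := D_{\QFB,\delta}\,Q_0^{(0)} - \Id \in \Psi^{-1}_{\QFB}(M;E).
\]
I would then proceed inductively: assuming we have constructed $Q_0^{(0)},\ldots,Q_0^{(k)}$ with $Q_0^{(j)}\in \Psi^{-1-j}_{\QFB}(M;E)$ and the partial sum $\widetilde Q^{(k)} = Q_0^{(0)}+\cdots+Q_0^{(k)}$ satisfying $D_{\QFB,\delta}\widetilde Q^{(k)} - \Id \in \Psi^{-1-k}_{\QFB}(M;E)$, we take $Q_0^{(k+1)}\in \Psi^{-2-k}_{\QFB}(M;E)$ with
\[
\sigma_{-2-k}(Q_0^{(k+1)}) = -\sigma_1(D_{\QFB,\delta})^{-1}\cdot \sigma_{-1-k}\bigl(D_{\QFB,\delta}\widetilde Q^{(k)}-\Id\bigr),
\]
which yields $D_{\QFB,\delta}\widetilde Q^{(k+1)} - \Id \in \Psi^{-2-k}_{\QFB}(M;E)$ again by \eqref{sm.3}.

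Finally, I would form an asymptotic sum $Q_0 \sim \sum_{k\ge 0} Q_0^{(k)}$ in $\Psi^{-1}_{\QFB}(M;E)$. Because the individual $Q_0^{(k)}$ are conormal distributions of decreasing order supported on $\diag_{\QFB}$ (with trivial behavior at $\pa M^2_{\QFB}\setminus \ff_{\QFB}$), the usual Borel-type summation, performed via a partition of unity in the fiber variable on a local model $N^\ast\diag_{\QFB}\cong {}^{\QFB}T^\ast M$ of the conormal bundle, produces such a $Q_0$, unique modulo $\Psi^{-\infty}_{\QFB}(M;E)$. The resulting error $R_0 := D_{\QFB,\delta}Q_0 - \Id$ then lies in $\bigcap_{k} \Psi^{-k}_{\QFB}(M;E) = \Psi^{-\infty}_{\QFB}(M;E)$, as required. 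There is no serious obstacle at this stage; the real work---inverting the normal operators at each front face $\ff_i$ and improving the error term to obtain the indicated index set $\cQ$ and weights $\mathfrak{q}$, $\mathfrak{r}$---occurs in the subsequent steps, where Assumption~\ref{do.26} and the edge/$\Qb$ machinery of \S\ref{smb.0} and \cite{KR0} must be invoked.
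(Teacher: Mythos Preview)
Your proof is correct and follows essentially the same approach as the paper: the standard symbolic parametrix construction using the short exact sequence \eqref{sm.2}, the multiplicativity \eqref{sm.3}, and an asymptotic sum. The only cosmetic difference is that the paper iterates via the Neumann-series formula $Q_0^{(k)}=-Q_0'R_0^{(k-1)}$ rather than choosing each correction via the symbol map as you do, but these are equivalent standard variants of the same argument.
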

\begin{proof}
Since $D_{\QFB,\delta}$ is an elliptic $\QFB$ operator, we can take $Q_0'\in \Psi^{-1}_{\QFB}(M;E)$ with $\sigma_{-1}(Q_0')= \sigma_1(D_{\QFB,\delta})^{-1}$, so that 
$$
     D_{\QFB,\delta}Q_0'= \Id + R_0' \quad \mbox{with} \quad R_0'\in \Psi^{-1}_{\QFB}(M;E).
$$
Proceeding inductively, we can find more generally $Q_0^{(k)}= -Q_0'R_0^{(k-1)}\in \Psi^{-k}_{\QFB}(M;E)$ and $R_0^{(k)}\in \Psi^{-k}_{\QFB}(M;E)$ such that
$$
  D_{\QFB,\delta} \left( \sum_{j=1}^k Q_0^{(j)}\right)= \Id + R_0^{(k)}.
$$
Taking an asymptotic sum over the $Q_0^{(k)}$ gives the desired operators $Q_0$ and $R_0$.  
\end{proof}

\subsection*{Step 1: Inversion at $\ff_i$ for $H_i$ maximal}

In this step, we improve the parametrix by removing the error term at $\ff_i$ for $H_i$ maximal.  

\begin{proposition}
There exists $Q_1\in \Psi^{-1\cQ_1}_{\QFB}(M;E)$ and $R_1\in \Psi^{-\infty,\cR_1}_{\QFB}(M;E)$ such that 
$$
         D_{\QFB,\delta}Q_1=\Id+ R_1,
$$
where the index family $\cQ_1$ and $\cR_1$ are given by the empty set  except at $\ff_i$ for $H_i$ non-maximal, where 
$$
    \cQ_1|_{\ff_i}=\bbN_0, \quad \cR_1|_{\ff_i}=\bbN_0,
$$
and at $\ff_i$ and $H_{ii}$ for $H_i$ maximal, where
$$
   \cQ_1|_{\ff_i}=\bbN_0, \quad \cR_1|_{\ff_i}=\bbN_0+1, \quad \cQ_1|_{H_{ii}}= \bd_i+\bbN_0, \quad \cR_1|_{H_{ii}}= \bd_i+1+\bbN_0.
$$
Moreover, the leading terms $A$ and $B$ of $Q_1$ and $R_1$ at $H_{ii}$ for $H_i$ maximal are such that 
\begin{equation}
A=\widetilde{\Pi}_{h,i} A\widetilde{\Pi}_{h,i}, \quad \widetilde{\Pi}_{h,i}B= \widetilde{\Pi}_{h,i}B\widetilde{\Pi}_{h,i}.
\label{do.30b}\end{equation}
\label{do.30}\end{proposition}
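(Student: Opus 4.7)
The plan is to cancel the contribution of $R_0$ at each front face $\ff_i$ for $H_i$ maximal. Since maximal boundary hypersurfaces are pairwise disjoint, so are the front faces $\ff_i$, and the corrections can be constructed independently for each such $i$; I fix one throughout. The goal is to solve the normal equation
\begin{equation*}
N_i(D_{\QFB,\delta})\, a = -N_i(R_0)
\end{equation*}
modulo one order of decay, then lift $a$ to an operator $A\in \Psi^{-1,\cQ_1}_{\QFB}(M;E)$ supported near $\ff_i$ so that $Q_0+A$ is an improved parametrix at $\ff_i$. Decomposing $E|_{H_i}$ via the fiberwise $L^2$ projection $\widetilde{\Pi}_{h,i}$ onto $\ker D_{v,i}$ and its orthogonal complement, the problem splits into two pieces that will be handled separately.

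On the range of $\Id-\widetilde{\Pi}_{h,i}$ the vertical family $D_{v,i}$ is invertible, so by induction on the depth of $M$ using Corollary~\ref{mp.4} applied to the fibers of $\phi_i$, the suspended normal operator $\widehat{N_i(D_{\QFB,\delta})}(\xi) = D_{v,i} + \sqrt{-1}\cl(\xi)$ restricted to this range is a fully elliptic and invertible family of suspended QFB operators. Inverse Fourier transform in ${}^\phi NS_i$ produces a contribution to $a$ lying in the small calculus $\Psi^{-1}_{\ff_i}(H_i;E)$ whose Schwartz kernel decays rapidly at the corner $\ff_i\cap H_{ii}$; this part therefore does not contribute to the leading order at $H_{ii}$, accounting for the structural constraint $A=\widetilde{\Pi}_{h,i}A\widetilde{\Pi}_{h,i}$ of the leading term.

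On the range of $\widetilde{\Pi}_{h,i}$, the model reduces, via the decomposition in \eqref{do.12}--\eqref{do.25b}, to inverting the cone operator $D_{b,i}= cv\pa_v + D_{S_i}+ c/2$ acting on sections of $\ker D_{v,i}\to S_i$. Mellin transform in $v$ converts this into the indicial family $cI(D_{b,i},\lambda)$, whose inverse is provided by Assumption~\ref{do.26} in the edge calculus $\rho_i^{1/2}\Psi^{-1,\tau}_e(S_i;\ker D_{v,i})\rho_i^{1/2}$ along the contour $\Re\lambda = \delta-\frac12$, holomorphically in a strip of width $\mu>\frac12$. An inverse Mellin contour integral then produces a Schwartz kernel on the model cone with leading order $v^{\bd_i}$ at $H_{ii}$, the exponent $\bd_i=\dim S_i$ arising from the fundamental-solution scaling of a first order elliptic cone operator in asymptotically $(\bd_i+1)$-dimensional space. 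Pulling this kernel back through the identification of $\ff_i$ in terms of ${}^\phi NH_i$ from \S~\ref{sm.0}, together with the inclusion of the edge double space $(S_i)^2_e$ inside $\ff_i$ recorded in \S~\ref{smb.0}, gives the desired contribution to $a$, whose leading coefficient at $H_{ii}$ is manifestly of the form $\widetilde{\Pi}_{h,i}\cdot\widetilde{\Pi}_{h,i}$.

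Combining the two contributions and lifting to $M$ by a cutoff extension near $\ff_i$ yields $A\in\Psi^{-1,\cQ_1}_{\QFB}(M;E)$ with the stated index sets, whereupon $D_{\QFB,\delta}(Q_0+A) = \Id + R_0 + D_{\QFB,\delta}A$ has vanishing normal operator at $\ff_i$ by construction; consequently the residual error is of order $1$ at $\ff_i$ (improving $R_0$ by one order there), and at $H_{ii}$ the fact that $D_{v,i}$ annihilates $\widetilde{\Pi}_{h,i}$-valued sections forces the $v^{\bd_i}$ term of $D_{\QFB,\delta}A$ to vanish, giving order at least $\bd_i+1$ with leading coefficient $B$ satisfying $\widetilde{\Pi}_{h,i}B = \widetilde{\Pi}_{h,i}B\widetilde{\Pi}_{h,i}$ inherited from the analogous property of $A$ and the fact that $D_{\QFB,\delta}$ preserves the splitting modulo lower order terms. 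A standard Borel asymptotic summation over integer powers of boundary defining functions for $\ff_i$ and $H_{ii}$ then produces the claimed $Q_1$ and $R_1$. The principal technical obstacle will be verifying that the edge-calculus inverse of $cI(D_{b,i},\delta-\frac12+\lambda)$ depends on $\lambda$ holomorphically with $\tau$-uniform bounds in $\Psi^{-1,\tau}_e$, so that the Mellin contour integral yields a polyhomogeneous conormal distribution on $M^2_{\QFB}$ with exactly the stated index structure at $\ff_i$ and $H_{ii}$ and no spurious contributions elsewhere.
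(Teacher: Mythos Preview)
Your proposal contains a genuine confusion between two different model operators. At the front face $\ff_i$ for $H_i$ maximal, the normal operator $N_i(D_{\QFB,\delta})$ is a ${}^{\phi}NS_i$-suspended family, and on the range of $\widetilde{\Pi}_{h,i}$ it reduces to the \emph{Euclidean} Dirac operator $\eth_{h,i}$ acting on sections of $\ker\eth_{v,i}$ over the fibers of ${}^{\phi}NS_i$ (which are $(\bd_i+1)$-dimensional Euclidean spaces). It does \emph{not} reduce to the cone operator $D_{b,i}=cv\pa_v+D_{S_i}+c/2$; that operator involves the wedge Dirac $D_{S_i}$ acting along $S_i$ and is the model at the face $H_{ii}$, which is the subject of the \emph{next} step (Proposition~\ref{do.40}), not this one. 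Correspondingly, Assumption~\ref{do.26}, the Mellin transform, and the edge-calculus inverse have no role here.

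The paper's proof simply writes down the explicit Green function: the inverse of the Euclidean Laplacian $\Delta_{h,i}=\eth_{h,i}^2$ on $\bbR^{\bd_i+1}$ is $|u|^{-(\bd_i-1)}$ (for $\bd_i\ne 1$), so $(\eth_{h,i})^{-1}=\cl(u)/|u|^{\bd_i+1}$, which decays like $|u|^{-\bd_i}$ at infinity in the fibers of ${}^{\phi}NS_i$. This is precisely what produces the index set $\bd_i+\bbN_0$ at $H_{ii}\cap\ff_i$, and the form $\widetilde{\Pi}_{h,i}A\widetilde{\Pi}_{h,i}=A$ is immediate since this piece of the inverse lives entirely on $\ker\eth_{v,i}$. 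One then sets $q_1=q_1^{\perp}+q_1^o$ with $q_1^{\perp}$ the suspended inverse on the complement (this part of your argument is fine, though no induction on depth is needed since the fibers of $\phi_i$ are closed when $H_i$ is maximal), chooses $Q_1$ with $N_i(Q_1)=q_1$ and $Q_1-Q_0$ of order $-\infty$, and reads off $R_1$ from the composition formula. No Borel summation is required at this stage.
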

\begin{proof}
We need to find the inverse of 
$$
    N_i(D_{\QFB,\delta})= N_i(\eth_{\QFB})= \eth_{v,i}+ \eth_{h,i}.
$$
To do this, we can decompose this operator using the fiberwise $L^2$ projection $\widetilde{\Pi}_{h,i}=\Pi_{h,i}$ onto the bundle $\ker\eth_{v,i}$.  Indeed, by \cite[Lemma~3.2]{KR0}, the operators $\eth_{v,i}$ and $\eth_{h,i}$ anti-commute, which implies that they both commute with $\widetilde{\Pi}_{h,i}$.  Now, on the $L^2$ orthogonal complement of $\ker\eth_{v,i}$, the operator $N_i(\eth_{\QFB})$ can be inverted as a family of ${}^{\phi}NS_i$ suspended pseudodifferential operators.  Denote the inverse by $q_1^{\perp}$.  On $\ker\eth_{v,i}$, the operator $N_i(\eth_{\QFB})$ corresponds to the Euclidean Dirac operator $\eth_{h,i}$ acting on sections of $\ker\eth_{v,i}$ seen as a bundle over ${}^{\phi}NS_i$.  The square of the this operator,
$$
    \Delta_{h,i}:= \eth_{h,i}^2,
$$ 
is just a family of Euclidean Laplacian acting in each fiber of ${}^{\phi}NS_i$ on a trivial vector bundle, namely the pull-back of $\ker \eth_{v,i}$ to that fiber.  The inverse is well-known to be the Green function
\begin{equation}
     G_{h,i}=\left\{  \begin{array}{ll} \frac{\Id_{\ker\eth_{v,i}}}{|u|^{\bd_i+1-2}_{{}^{\phi}NS_i}}= \frac{\Id_{\ker\eth_{v,i}}}{|u|^{\bd_i-1}_{{}^{\phi}NS_i}}, & \bd_i\ne 1, \\
     \Id_{\ker\eth_{v,i}}\log |u|_{{}^{\phi}NS_i}, & \bd_i=1,
     \end{array} \right.
\label{do.31}\end{equation}
where $|\cdot|_{{}^{\phi}NS_i}$ is the Euclidean norm induced by the $\QFB$ metric $g_{\QFB}$.  Correspondingly, the inverse of $\eth_{h,i}$ acting on sections of $\ker\eth_{v,i}$ is the composed operator,
\begin{equation}
      (\eth_{h,i})^{-1}= \eth_{h,i}G_{h,i}.
\label{do.32}\end{equation}
If $\eth_{h,i}=\sum_j \gamma^j\pa_{u_j}$ in local Euclidean coordinates in the fibers of ${}^{\phi}NS_i$, then in fact
\begin{equation}
  (\eth_{h,i})^{-1}= \sum_j \frac{\gamma^ju_j}{|u|_{{}^{\phi}NS_i}^{\bd_i+1}}= \frac{\Id_{\ker\eth_{v,i}}}{|u|_{{}^{\phi}NS_i}^{\bd_i+1}}\cl(u)
\label{do.32d}\end{equation}
with $\cl(u)$ denoting Clifford multiplication by $u$.  Denote this inverse by $q_1^o$.  Then, the combination $q_1=q_1^{\perp}+ q_1^o$ will be an inverse for $N_i(D_{\QFB,\delta})$.  Since $N_i(Q_0)$ is an inverse of $N_i(D_{\QFB,\delta})$ modulo operators of order $-\infty$, we see that $q_1$ and $N_i(Q_0)$ agree up to operators of order $-\infty$.  One important difference however is that $N_i(Q_0)$ decays rapidly in the fibers of ${}^{\phi}NS_i$, which is not the case for $q_1$.  In fact, because of the term $q_1^o$, the term $q_1$ will only decay like $|u|^{-\bd_i}_{{}^{\phi}NS_i}$ at infinity in the fibers of ${}^{\phi}NS_i$.  Hence, seen as a Schwartz kernel on $\ff_i$, this means that $q_1$ does not decay rapidly at $H_{ii}$, but has a term $A$ of order $\bd_i$.  Since it comes from $q_1^o$, this term is such that $\widetilde{\Pi}_{h,i}A\widetilde{\Pi}_{h,i}=A$.

Thus, it suffices to take $Q_1\in\Psi^{-1,\cQ_1}_{\QFB}(M;E)$ such that $N_{i}(Q_1)=q_1$ with difference $Q_1-Q_0$ an operator of order $-\infty$.  We can also choose $Q_1$ such that its term $A$ of order $\bd_i$ at $H_{ii}$ is such that 
\begin{equation}
\widetilde{\Pi}_{h,i}A\widetilde{\Pi}_{h,i}=A.
\label{do.32b}\end{equation}
Let $R_1$ be the error term such that
 \begin{equation}
  D_{\QFB,\delta}Q_1=\Id+ R_1.
\label{do.33}\end{equation}  
Then the claimed properties for $R_1$ follows from the composition result of Theorem~\ref{co.9} together with the fact that $N_i(Q_1)$ is the inverse of $N_{i}(D_{\QFB,\delta})$.  Thanks to \eqref{do.32b} and Assumptions \ref{do.1} and \ref{su.2}, we deduce that the leading term $B$ of $R_1$  at $H_{ii}$ is of order $\bd_i+1$ and such that \eqref{do.30b} holds.  

\end{proof}

\subsection*{Step 2: Inversion at $H_{ii}$ for $H_i$ maximal}

The parametrix $Q_1$ has an error term of order $\bd_i+1$ at $H_{ii}$ for $H_i$ maximal.  To obtain a better behavior at that face, we should choose more carefully the term $A$ of order $\bd_i$ of $Q_1$ at $H_{ii}$.  Since this term is such that $\widetilde{\Pi}_{h,i}A\widetilde{\Pi}_{h,i}=A$, we know by the proof of Lemma~\ref{su.9} that the way the operator $D_{\QFB,\delta}$ acts on it is essentially through its indicial family.  For $D_{\QFB,0}=D_{\QFB}$, the model operator acting on $A$ is $vD_{b,i}$ with $D_{b,i}$ given in \eqref{do.25b}. The corresponding model for $D_{\QFB,\delta}$ is the conjugated operator 
\begin{equation}
   vD_{b,i,\delta}=  v(v^{-\delta}D_{b,i}v^{\delta})= v^{\frac12}(v^{-(\delta-\frac12)}D_{b,i}v^{\delta-\frac12})v^{\frac12}=      v^{\frac12}\left( cv\frac{\pa}{\pa v}+ D_{S_i}+ c\delta\right)v^{\frac12}.
\label{do.34}\end{equation}  
This model operator can be seen as a $\QAC$ operator, so the corresponding $\Qb$ operator is 
\begin{equation}
\begin{aligned}
 D_{\Qb,i,\delta} &:= x_{\max}^{-\frac12}(vD_{b,i,\delta})x_{\max}^{-\frac12}= \left(\frac{v}{\rho_i}\right)^{-\frac12}(v^{\frac12}D_{b,i,\delta-\frac12}v^{\frac12})\left(\frac{v}{\rho_i}\right)^{-\frac12} \\
 &= \rho_{i}^{\frac12}D_{b,i,\delta-\frac12} \rho_i^{\frac12}
 = \rho_{i}^{\frac12}\left( cv\frac{\pa}{\pa v}+ D_{S_i}+ c\delta \right)\rho_i^{\frac12}.
 \end{aligned}
\label{do.35}\end{equation}
Taking the Mellin transform in $v$ yields the indicial family
\begin{equation}
 I(D_{\Qb,i,\delta},\lambda)= \rho_{i}^{\frac12}I(D_{b,i},\delta-\frac12+\lambda)\rho_i^{\frac12}.
\label{do.36}\end{equation}
By Assumption~\ref{do.26}, this is invertible for $|\Re \lambda|<\mu$ with inverse 
\begin{equation}
     I(D_{b,i},\delta-\frac12+\lambda)^{-1}\in \rho_i^{\frac12}\Psi^{-1,\tau}_{e,\cn}(S_i;\ker D_{v,i})\rho_i^{\frac12}
\label{do.37}\end{equation}
for some fixed $\tau>0$ not depending on $\lambda$.  Hence, taking the inverse Mellin transform for $\Re\lambda=0$ yields the following candidate for the inverse $D_{\QFB,\delta}$ at $H_{ii}$,
\begin{equation}
(D_{\Qb,i,\delta})^{-1}= \rho_i^{-\frac12}\left( \frac1{2\pi} \int_{-\infty}^{\infty} e^{i\xi\cdot \log s} I(D_{b,i},\delta-\frac12+i\xi)^{-1}d\xi\right)\rho_i^{-\frac12},
\label{do.38}\end{equation}
where $s=\frac{v}{v'}$ is the function of \eqref{ps.1}.  Since the indicial family \eqref{do.37} is invertible for all $\Re\lambda\in (-\mu_R,\mu_L)$, we see that not only \eqref{do.38} decays rapidly as $\log s\to \pm \infty$, but also 
\begin{equation}
\begin{gathered}
e^{-\nu_L\log s}(D_{\Qb,i,\delta})^{-1}\quad \mbox{decays rapidly as}\quad \log s\to -\infty, \\
e^{\nu_R\log s}(D_{\Qb,i,\delta})^{-1}\quad \mbox{decays rapidly as}\quad \log s\to +\infty,\end{gathered}
\label{do.38b}\end{equation}
for all $\nu_L<\mu_L$ and $\nu_R<\mu_R$.  
\begin{lemma}
The operator $(D_{\Qb,i,\delta})^{-1}$ is an element of $\Psi^{-1,\mathfrak{t}}_{\Qb,ii}(S_i;\ker D_{v,i})$, where $\mathfrak{t}$ is a $\Qb$ positive  multiweight such that $\mathfrak{t}(H_{i0}\cap H_{ii})=\mu_L$,  
$\mathfrak{t}(H_{0i}\cap H_{ii})=\mu_R$ (in terms of $\Qb$ density) and $\mathfrak{t}(H_{j0}\cap H_{ii})=\mu_L$,  
$\mathfrak{t}(H_{0j}\cap H_{ii})=\bd_j+1+\mu_R$ for $H_j< H_i$.  
\label{do.39}\end{lemma}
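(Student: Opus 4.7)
The plan is to extract the Schwartz-kernel structure of the operator defined by \eqref{do.38} face by face on $H^{\Qb}_{ii}$, by combining the edge calculus description of the indicial family inverse with contour shifting in the Mellin variable. First I would verify that the oscillatory integral \eqref{do.38} converges to a well-defined conormal distribution on $H^{\Qb}_{ii}$; this uses the uniform (in $\xi\in\mathbb{R}$) edge symbol estimates for $I(D_{b,i},\delta-\tfrac12+i\xi)^{-1}$ as an edge operator of order $-1$ provided by Assumption~\ref{do.26}.

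Next I would analyze the conormal singularity at the lifted diagonal $\diag_{ii}\subset H^{\Qb}_{ii}$. By Lemma~\ref{smb.31}, a neighborhood of $\diag_{ii}$ is modeled on the edge double space of $S_i$ (with additional nested blow-ups), augmented by the Mellin variable $\log s$ transverse to $\{s=1\}$. The conormality of the integrand at the edge-lifted diagonal of $(S_i)^2_e$ as an edge operator of order $-1$, together with Fourier inversion in $\log s$, produces a conormal distribution of order $-1$ at $\diag_{ii}$ of exactly the form required for $\Psi^{-1}_{\Qb,ii}$.

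I would then read off the polyhomogeneous behavior at each boundary face of $H^{\Qb}_{ii}$ away from $\diag_{ii}$. The decay at $H_{i0}\cap H_{ii}$ (where $s\to 0$) and $H_{0i}\cap H_{ii}$ (where $s\to\infty$) comes from shifting the contour in \eqref{do.38}: since by Assumption~\ref{do.26} the integrand is holomorphic in the strip $|\Re\lambda|<\mu$ with uniform edge symbol estimates, shifting to $\Re\lambda=\pm(\mu-\epsilon)$ yields $s^{\mp(\mu-\epsilon)}$ decay for every $\epsilon>0$, hence multiweight $\mu$ at those two faces. The behavior at $H_{j0}\cap H_{ii}$ and $H_{0j}\cap H_{ii}$ for $H_j<H_i$ is inherited from the edge structure of the indicial family inverse: after the outer conjugation by $\rho_i^{-1/2}$ cancels the inner $\rho_i^{1/2}$ factors, the kernel acts in the $S_i$ direction as an element of $\Psi^{-1,\tau}_e(S_i;\ker D_{v,i})$, and translating the edge multiweights into $\Qb$ conventions accounts for the asymmetry $\mu$ versus $\bd_j+1+\mu$. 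The additional $\bd_j+1=\dim S_j+1$ on the right side reflects the density conversion between wedge densities on $S_i$ and $\Qb$ right densities on $H^{\Qb}_{ii}$ (compare \eqref{E:QFB_density_relation} and \eqref{E:Qb_density_relation}).

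The main obstacle is the careful bookkeeping in this last step: matching the edge calculus conventions on $S_i$---including right densities, the internal $\rho_i^{\pm 1/2}$ factors, and the partial polyhomogeneity encoded in $\tau$---with the $\Qb$ conventions on $H^{\Qb}_{ii}$. A local coordinate analysis near each boundary face of $H^{\Qb}_{ii}$, together with a verification that the Mellin-variable decay and edge-variable decay combine consistently (in particular, that wherever both apply the weaker decay $\mu$ dominates, which we may arrange by ensuring $\tau\ge\mu$ without loss of generality), is needed to assemble the expansions into an element of $\Psi^{-1,\mathfrak{t}}_{\Qb,ii}(S_i;\ker D_{v,i})$ with exactly the multiweights claimed.
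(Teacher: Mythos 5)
Your overall strategy coincides with the paper's: identify the kernel as a conormal distribution on $H^{\Qb}_{ii}$ via the edge--double--space description of $\diag_i^{\Qb}$ from Lemma~\ref{smb.31}, get the decay at the faces $s=0$ and $1/s=0$ by contour shifting in the strip $|\Re\lambda|<\mu$, and inherit the weights at $H_{j0}\cap H_{ii}$, $H_{0j}\cap H_{ii}$ from the edge multiweight $\mathfrak{s}_\tau$ of \eqref{do.37} together with the edge--to--$\Qb$ density conversion. All of that matches.

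There is, however, a genuine gap at the step where you claim that ``Fourier inversion in $\log s$ produces a conormal distribution of order $-1$ at $\diag_{ii}$ of exactly the form required.'' When $H_i$ is not minimal, the correct variable transverse to $\diag_i^{\Qb}$ near the corners $\ff_j^{\Qb}\cap H^{\Qb}_{ii}$ (for $H_j<H_i$) is not $\log s$ but $\log s/\rho_{ii}$, where $\rho_{ii}=\prod_{H_j<H_i}\rho_{\ff_j^{\Qb}}$; this is forced by the structure \eqref{do.35} of $D_{\Qb,i,\delta}$, whose ellipticity is measured in the rescaled dual variable $\widetilde\xi=\rho_{ii}\xi$ rather than in $\xi$. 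As the paper points out, the family $\lambda\mapsto I(D_{b,i},\delta-\frac12+\lambda)^{-1}$ is \emph{not} the Fourier transform of a suspended edge operator of order $-1$ in $\xi$: its symbolic decay degenerates as $\rho_{ii}\to 0$. Without the substitution $\widetilde\xi=\rho_{ii}\xi$ in \eqref{do.38} (yielding \eqref{pc.14f}), the $\xi$-integral does not visibly produce a kernel that is conormal uniformly up to $\ff_j^{\Qb}\cap H^{\Qb}_{ii}$ with the leading term of order zero there that membership in $\Psi^{-1,\mathfrak{t}}_{\Qb,ii}$ requires; one would instead appear to get a distribution degenerating at those front faces. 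Since $\mathfrak{t}$ must be $\Qb$ positive at every face of $H^{\Qb}_{ii}$, and not merely at the four faces named in the statement, this rescaling is the essential technical point and must be supplied. (A minor further quibble: $\tau$ is fixed by Assumption~\ref{do.26}, so you cannot simply ``arrange $\tau\ge\mu$''; but since the statement only prescribes $\mathfrak{t}$ at the four named faces and asks for $\Qb$ positivity elsewhere, no comparison between $\tau$ and $\mu$ is actually needed.)
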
 
\begin{proof}
Let $\diag^{\Qb}_i$ be the $p$-submanifold of \eqref{smb.30} seen as a $p$-submanifold of the $\Qb$ double space of the manifold with fibered corners $M_{S_i}:=S_i\times [0,1)_{x_i}$.  Let $\diag_{\Qb}$ denote the lift of the diagonal to the $\Qb$ double space $(M_{S_i})^2_{\Qb}$.  Let $\ff_{\Qb}$ be the union of the boundary hypersurfaces of $(M_{S_i})^2_{\Qb}$ intersecting $\diag_{\Qb}$ and let $H^{\Qb}_{ii}$ be the front face corresponding to the maximal boundary hypersurface $S_i\times\{0\}$.  

By \eqref{do.37} and Lemma~\ref{smb.31}, for each fixed $\Re \lambda\in(-\mu_R,\mu_L)$ the inverse in \eqref{do.37} is naturally a  conormal distribution on $\diag^{\Qb}_i$, namely $\rho_i^{-\frac12} \left(I(D_{b,i},\delta-\frac12+\lambda)  \right)^{-1}\rho_i^{-\frac12}$ is an element of
\begin{multline}
 \sA_{e,-}^{\mathfrak{t}}(\diag_i^{\Qb};\beta_{\Qb}^*(\Hom(E))\otimes \beta_{\Qb-e}^*\pr_R^*\Omega_e) \\+ \{u\in I^{-1}(\diag^{\Qb}_i;\diag^{\Qb}_i\cap\diag_{\Qb};\beta_{\Qb}^*(\Hom(E))\otimes \beta_{\Qb-e}^*\pr_R^*\Omega_e)\;| \; u\equiv 0 \;\mbox{on} \; \pa \diag_i^{\Qb}\setminus (\overline{\ff_{\Qb}\setminus H_{ii}^{\Qb}})\},
\label{pc.14e}\end{multline}
where $\beta_{\Qb-e}: \diag^{\Qb}_i\to (S_i)^2_e$ is the blow-down map induced by Lemma~\ref{smb.31} and $\mathfrak{t}=\beta_{\Qb-e}^{\#}(\mathfrak{s}_{\tau})$ is the natural pull-back to $\diag_i^{\Qb}$ of the multiweight $\mathfrak{s}_{\tau}$ of \eqref{smb.26} when we take $M=S_i$.  

Thus, the only question remaining is what happens when we integrate in $\xi$ in \eqref{do.38}.  First, by  \cite[Theorem~7.1]{KR3}, notice that there is a natural diffeomorphism

\begin{multline}
    H^{\Qb}_{ii}\cong [S_i^2\times \overline{\bbR^{+}_s}; S_i\times \cM_1(S_i)\times \{\infty\}, \cM_1(S_i)\times S_i\times \{0\}, \cM_1(S_i)\times \cM_1(S_i)\times \overline{\bbR^{+}_s},  \\
    (\phi_{j_1}\times\phi_{j_1})^{-1}(\cD^2_{S_{j_1}})\times \{0\},\ldots,(\phi_{j_k}\times\phi_{j_k})^{-1}(\cD^2_{S_{j_k}})\times \{0\}]
\label{diff.1}\end{multline}
with the blow-ups performed in any order compatible with the partial order and with $\pa_{j_1}S_i,\ldots,\pa_{j_k}S_i$ an exhaustive list of the boundary hypersurfaces of $S_i$ listed in a way compatible with the partial order.  Under this diffeomorphism, the $p$-submanifold $\cD_i^{\Qb}$ just corresponds to the lift of $S_i^2\times \{0\}$, so the inverse Fourier transform in \eqref{do.38} naturally yields a Schwartz kernel on $H^{\Qb}_{ii}$.  Let us emphasize that the diffeomorphism \eqref{diff.1} does not follow from an argument relying on the commutativity of blow-ups of transversal or nested $p$-submanifolds.  In fact, in the interior, the diffeomorphism does not even correspond to the identity map, making it harder in principle to track how  boundary hypersurfaces are identified under this diffeomorphism.  This is nevertheless possible and a precise description is also provided in \cite[\S~7]{KR3}.  Since it will be particularly important for our discussion, let us recall briefly how this goes.  First, in terms of this diffeomorphism,  the boundary hypersurface $H^{\Qb}_{j0}\cap H^{\Qb}_{ii}$ of $H^{\Qb}_{ii}$ corresponds to the lift of $\pa_j S_i\times S_i\times \{0\}$ in the right hand side of  \eqref{diff.1}, while $H^{\Qb}_{ji}\cap H^{\Qb}_{ii}$ corresponds to the lift of $\pa_j S_i\times S_i\times \overline{\bbR^{+}_s}$.  Similarly, $H^{\Qb}_{0j}\cap H^{\Qb}_{ii}$ corresponds to the lift of $S_i\times \pa_j S_i\times \{\infty\}$ in the right hand side of \eqref{diff.1}, while $H^{\Qb}_{ij}\cap H^{\Qb}_{ii}$ corresponds to the lift of $S_i\times \pa_j S_i\times \overline{\bbR^+_s}$.  These identifications are consistent with the facts that $s=0$ at $H_{j0}^{\Qb}$, but not at $H_{ji}^{\Qb}$, and that $s=\infty$ at $H^{\Qb}_{0j}$, but not at $H^{\Qb}_{ij}$.  On the other hand, for $(j, j')\notin\{(0,0),(i,i)\}$, the boundary hypersurface $H^{\Qb}_{jj'}\cap H^{\Qb}_{ii}$ corresponds to the lift of $\pa_jS_i \times \pa_{j'}S_i\times \overline{\bbR^+_s}$, while $\ff^{\Qb}_j\cap H^{\Qb}_{ii}$ corresponds to the lift of $(\phi_{j}\times\phi_{j})^{-1}(\cD^2_{S_j})\times \{0\}$.

By the discussion above, at faces where $s=0$  in $H_{ii}^{\Qb}$,  that is, at the faces $H^{\Qb}_{j0}\cap H^{\Qb}_{ii}$ for $H_{j}\cap H_i\ne \emptyset$, we obtain for all $\nu_L<\mu_L$ decay like $\rho^{\nu_L}$ for $\rho$ the corresponding boundary defining function.    while at faces where $\frac{1}{s}=0$, that is, at the faces $H^{\Qb}_{0j}\cap H^{\Qb}_{ii}$ for $H_{j}\cap H_i\ne \emptyset$, we obtain for all $\nu_R<\mu_R$ decay like $\rho^{\nu_R}$ .  At the faces $H^{\Qb}_{jj'}\cap H^{\Qb}_{ii}$ for $(j, j')\notin\{(0,0),(i,i)\}$, taking into account the change from edge to $\Qb$ density, we already get a decay of order $\tau+\widetilde{h}_{j'} $ by \eqref{do.37}, where $\widetilde{h}_j$ is defined in \eqref{max.1}

 For the remaining faces, notice first that symbolically, we know already that $D_{\Qb,i,\delta}$ is elliptic as an operator in $\Psi^1_{\Qb,ii}(S_i;\ker D_{v,i})$.  In fact, when we take a Fourier transform of $D_{\Qb,i,\delta}$ in the direction conormal to  $\diag_{i}^{\Qb}$, we should do it with respect to $\frac{\log s}{\rho_{ii}}$, the natural variable taking into account the blow up of $\Phi_j$ for $H_j<H_i$,  not $\log s$, where 
$$
  \rho_{ii}= \prod_{H_j<H_i} \rho_{\ff^{\Qb}_{ij}}
$$
with $\rho_{\ff^{\Qb}_{ij}}$ a boundary defining function for the lift $\ff^{\Qb}_{ij}$ of $(\phi_j\times \phi_j)^{-1}(\diag^2_{S_j})$ in $(M_{S_i})^2_{\Qb}$.  In terms of the dual variable, this means that the variable $\widetilde{\xi}= \rho_{ii} \xi$, not $\xi$, must be used to measure ellipticity, in agreement with \eqref{do.35}.  Because of this, notice that \eqref{pc.14e} is not quite the Fourier transform of a suspended edge operator of order $-2$.

In fact, this suggests to make the change of variable $\widetilde{\xi}=\rho_{ii}\xi$ in \eqref{do.38} to get
\begin{equation}
(D_{\Qb,i,\delta})^{-1}= \rho_i^{-\frac12}\left(  \frac{1}{2\pi}\int_{-\infty}^{\infty} e^{i\widetilde{\xi}\cdot\frac{\log s}{\rho_{ii}}} \left(I(D_{b,i},\delta-\frac12+ \frac{i\widetilde{\xi}}{\rho_{ii}})  \right)^{-1}\frac{d\widetilde{\xi}}{\rho_{ii}} \right) \rho_i^{-\frac12}.
\label{pc.14f}\end{equation}
The integral in $\widetilde{\xi}$ then gives rapid decay in $\frac{\log s}{\rho_{ii}}$.  Hence, at $\ff_{j}^{\Qb}\cap H^{\Qb}_{ii}$, we obtain the expected term of order zero plus decay of order $\tau$ from \eqref{do.37} and \eqref{pc.14f}.  Moreover, the rapid decay in $\frac{\log s}{\rho_{ii}}$ ensures that on 
$$
     [(S_i)^2;\cM_1(S_i)\times \cM_1(S_i)]\times \bbR_{\log s},
$$  
we only need to blow up the lift of $(\phi_j\times\phi_j)^{-1}(\diag^2_{S_j})\times \{0\}$ for $H_j<H_i$ to obtain a conormal distribution conormal (or polyhomogeneous) at all boundary hypersurfaces, that is, there is no need to blow up as well the lift of
$(\phi_j\times\phi_j)^{-1}(\diag^2_{S_j})\times\bbR_{\log s}$ for $H_j<H_i$, a place where $\rho_{ii}=0$.  Correspondingly, this means that no further blow-up is required on $H_{ii}^{\Qb}$ to give a complete conormal description of $(D_{\Qb,i,\delta})^{-1}$.  
\end{proof}

\begin{proposition}
There exists  $Q_2\in\Psi^{-1,\cQ_2/\mathfrak{q}_2}_{\QFB,\cn}(M;E)$ and $R_2\in\Psi^{-\infty,\cR_2/\mathfrak{r}_2}_{\QFB,\cn}(M;E)$ such that 
$$
   D_{\QFB,\delta}Q_2=\Id+R_2, \quad N_i(R_2)=0 \quad \mbox{for} \quad H_i \; \mbox{maximal},
$$
where $\mathfrak{q}_2$ and $\mathfrak{r}_2$ are $\QFB$ positive multiweights and $\cR_2$ and $\cQ_2$ are $\QFB$ nonnegative index families   except at $H_{ii}$ for $H_i$ maximal where 
$$
 \cQ_2|_{H_{ii}}= \bd_i+ \bbN_0.
$$
Moreover, for $H_i$ maximal and for a boundary hypersurface $H_j<H_i$, the index families $\cQ_2$ and $\cR_2$ are given by the empty set at $H_{j0}, H_{0j}$ and $H_{i0}, H_{0i}$, while the multiweights are given by
\begin{equation}
\begin{gathered}
\mathfrak{q}_2(H_{i0})=\mu_L-\frac12, \quad \mathfrak{r}_2(H_{i0})=\mu_L+\frac12, \quad \mathfrak{q}_2(H_{0i})=\mathfrak{r}_2(H_{0i})=\bd_i+\frac12+\mu_R, \\
   \mathfrak{q}_2(H_{j0})=\mu_L, \quad \mathfrak{r}_2(H_{j0})=\mu_L+1 \quad \mbox{and} \quad 
  \mathfrak{q}_2(H_{0j})=\mathfrak{r}_2(H_{0j})=\bd_j+1+\mu_R.
\end{gathered}  
\label{do.40a}\end{equation}
Finally, for $H_i$ maximal, the term $A_i$ of order $\bd_i$ at $H_{ii}$ of $Q_2$ is such that $\widetilde{\Pi}_{h,i}A_i=A_i\widetilde{\Pi}_{h,_i}=A_i$.
\label{do.40}\end{proposition}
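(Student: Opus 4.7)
The plan is to iteratively correct $Q_1$ by adding terms whose Schwartz kernels are polyhomogeneous at $H_{ii}$ for maximal $H_i$, each one eliminating the next order in the Taylor expansion of $R_1$ at the front face $\ff_i$. By the second identity in \eqref{do.30b}, the leading coefficient of $R_1$ at $\ff_i$ lies in the image of $\widetilde{\Pi}_{h,i}$ from the left, so the leading-order equation reduces to an equation on $\ker D_{v,i}$ whose governing model is the $\Qb$ operator $D_{\Qb,i,\delta}$ of \eqref{do.35}. By Lemma~\ref{do.39}, this model admits an inverse $(D_{\Qb,i,\delta})^{-1}\in \Psi^{-1,\mathfrak{t}}_{\Qb,ii}(S_i;\ker D_{v,i})$ with the weights specified there.

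First I would build an initial correction $\widetilde{Q}\in \Psi^{-1,\cQ'}_{\QFB}(M;E)$ whose normal operator at $\ff_i$ vanishes, while its leading term at $H_{ii}$ is of order $\bd_i$ and obtained by applying $\rho_i^{-1/2}(D_{\Qb,i,\delta})^{-1}\rho_i^{-1/2}$ in the left variable (composed with $\widetilde{\Pi}_{h,i}$) to a smooth extension of the leading coefficient of $R_1$ at $\ff_i$. By design, $D_{\QFB,\delta}(Q_1-\widetilde{Q}) = \Id + R_1^{(1)}$ has its leading term at $\ff_i$ cancelled, i.e.\ the index set at $\ff_i$ jumps up by one. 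Iterating the same procedure at successive orders at $H_{ii}$, and using Assumptions~\ref{do.1} and \ref{su.2} to guarantee that the iterated leading errors continue to lie in the image of $\widetilde{\Pi}_{h,i}$ modulo absorbable lower-order terms, then Borel-summing the resulting asymptotic series produces $Q_2$ with $\cQ_2|_{H_{ii}} = \bd_i + \bbN_0$ and a remainder $R_2$ vanishing to infinite order at $\ff_i$ for every maximal $H_i$. Because two maximal hypersurfaces are necessarily incomparable in the partial order and therefore disjoint by the fibered corners compatibility condition, the corrections for different maximal $H_i$ do not interact and can be carried out independently.

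The multiweights in \eqref{do.40a} are read off from the decay in Lemma~\ref{do.39}. The inverse $(D_{\Qb,i,\delta})^{-1}$ contributes $\Qb$-weight $\mu$ at $H_{i0}\cap H_{ii}$ and $H_{0i}\cap H_{ii}$, and $\mu$ (respectively $\bd_j+1+\mu$) at $H_{j0}\cap H_{ii}$ (respectively $H_{0j}\cap H_{ii}$) for $H_j<H_i$. The identity $vD_{b,i,\delta} = x_{\max}^{1/2} D_{\Qb,i,\delta} x_{\max}^{1/2}$ converts $\Qb$-density weights into $\QFB$-density weights, shifting the weights at the maximal faces by $-\tfrac12$ at $H_{i0}$ (giving $\mu-\tfrac12$) and by $\bd_i+\tfrac12$ at $H_{0i}$ (giving $\bd_i+\tfrac12+\mu$), while leaving the non-maximal weights as $\mu$ and $\bd_j+1+\mu$; these are precisely the weights in \eqref{do.40a}. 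The weights for $\mathfrak{r}_2$ follow from composing with $D_{\QFB,\delta}$, which acts as a differential $\QFB$ operator of order one and thus shifts only $\QFB$-scale weights.

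The main technical obstacle is verifying that the $\Qb$-inverted kernels lift coherently as polyhomogeneous distributions on $M^2_{\QFB}$ along $H_{ii}$, which uses the identification of $\diag_i^{\Qb}$ with a blow-up of the edge double space $(S_i)^2_e$ from Lemma~\ref{smb.31}, and that the iterated leading-order error equations continue to take values in the image of $\widetilde{\Pi}_{h,i}$ from the left, so that Lemma~\ref{do.39} remains applicable at every stage of the iteration. Once these compatibilities are in place, the composition rules of Theorem~\ref{co.9} give both the prescribed index sets and the multiweights stated.
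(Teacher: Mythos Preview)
Your proposal has a systematic confusion between the faces $\ff_i$ and $H_{ii}$, and this leads to a real gap. After Proposition~\ref{do.30} one already has $\cR_1|_{\ff_i}=\bbN_0+1$, so $N_i(R_1)=0$ for $H_i$ maximal; there is nothing to iterate away at $\ff_i$. The obstruction that Proposition~\ref{do.40} removes is the term of order $\bd_i+1$ of $R_1$ at $H_{ii}$, which prevents $\cR_1$ from being $\QFB$ nonnegative there. The identity \eqref{do.30b} you cite concerns the leading terms at $H_{ii}$, not at $\ff_i$, and it says $\widetilde{\Pi}_{h,i}B=\widetilde{\Pi}_{h,i}B\widetilde{\Pi}_{h,i}$, \emph{not} $B=\widetilde{\Pi}_{h,i}B$; in particular $B$ may have a nontrivial component $(\Id-\widetilde{\Pi}_{h,i})B$ in $(\ker D_{v,i})^{\perp}$ which the $\Qb$ inverse of Lemma~\ref{do.39} cannot touch. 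So your iterative scheme, which at each stage only applies $(D_{\Qb,i,\delta})^{-1}$, cannot eliminate the full leading error, and the Borel summation is both unnecessary and aimed at the wrong face.

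The paper's argument is a direct two-step substitution rather than an iteration. One \emph{replaces} the order-$\bd_i$ term of $Q_1$ at $H_{ii}$ by the explicit kernel $q_2=x_{\max}^{-1/2}(D_{\Qb,i,\delta})^{-1}x_{\max}^{-1/2}$, checking it matches $Q_1$ at $\ff_i\cap H_{ii}$ via the principal symbol of $vD_{b,i,\delta}$. Because $q_2$ lies in the range of $\widetilde{\Pi}_{h,i}$ on both sides and inverts the indicial model there, the residual term $\widetilde{B}$ of order $\bd_i+1$ satisfies $\widetilde{\Pi}_{h,i}\widetilde{B}=0$ (by the mechanism in the proof of Lemma~\ref{su.9}), and is then removed by subtracting $D_{v,i}^{-1}\widetilde{B}$ at order $\bd_i+1$. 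This second correction, using the vertical inverse on $(\ker D_{v,i})^{\perp}$, is precisely the ingredient missing from your scheme. Finally, the gain $\mathfrak{r}_2(H_{i0})=\mu+\tfrac12$ versus $\mathfrak{q}_2(H_{i0})=\mu-\tfrac12$ is not a generic consequence of $D_{\QFB,\delta}$ being first order: it comes from the specific cancellation that $q_2=\widetilde{\Pi}_{h,i}q_2\widetilde{\Pi}_{h,i}$, extended off $H_{ii}$ using the right boundary defining function, is annihilated to one extra order by $D_{\QFB,\delta}$ at $H_{i0}$ via Lemma~\ref{su.9}.
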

\begin{proof}
The idea is to first take take $\widetilde{Q}_2$ as $Q_1$, except that at $H_{ii}$ for $H_i$ maximal, we choose explicitly $\widetilde{Q}_2$ to be at leading order 
\begin{equation}
       q_2:= x_{\max}^{-\frac12}(D_{\Qb,i,\delta})^{-1}x_{\max}^{-\frac12}.
\label{do.40b}\end{equation}
By Lemma~\ref{do.39} and taking into account the change from $\Qb$ densities to $\QFB$ densities, we see that $\widetilde{Q}_2$ is in $\Psi^{-1,\cQ_2/\mathfrak{q}_2}_{\QFB,\cn}(M;E)$ for $\cQ_2$ and $\mathfrak{q}_2$ as claimed.   To see it agrees with $Q_1$ at $\ff_i\cap H_{ii}$, notice that the leading order behavior of $x_{\max}^{-\frac12}(D_{\Qb,i,\delta})^{-1}x_{\max}^{-\frac12}$ at $\ff_i$ is the inverse Fourier transform of its principal symbol, that is, the inverse Fourier transform of the inverse of the principal symbol of $vD_{b,i,\delta}$, which has the same principle symbol as $vD_{b,i}$.  Thus, this is precisely $\frac{\cl(u)}{|u|_{{}^{\phi}NS_i}}$, the term of order $\bd_i$ of $Q_1|_{\ff_i}$ at $H_{ii}$.   

With this choice, we have that
$$
          D_{\QFB,\delta}\widetilde{Q}_2=\Id +\widetilde{R}_2
$$
with error term $\widetilde{R}_2$ as claimed except for the fact that it still has possibly a term $\widetilde{B}$ of order $\bd_i+1$ at $H_{ii}$ and the extra decay of the error term at $H_{i0}$ and $H_{j0}$.  For the latter, notice that $q_2$ is such that $\widetilde{\Pi}_{h,i}q_2=0$, so if we extend it off $H_{ii}$ by $\widetilde{q}_2$ so that we still have $\widetilde{\Pi}_{h,i}\widetilde{q}_2=0$ near $H_{i0}$ and $H_{j0}$, we see from Lemma~\ref{su.9} that $D_{\QFB,\delta}\widetilde{q}_2$ will vanish at order $\mu_L+\frac12$ at $H_{i0}$ and at order $\mu_L+1$ at $H_{j0}$ as claimed.  For the term $\widetilde{B}$ of order $\bd_i+1$ at $H_{ii}$, by the proof of Lemma~\ref{su.9}, this term $\widetilde{B}$ is such that
$\widetilde{\Pi}_{h,i}\widetilde{B}=0$.  Hence, subtracting the term $D_{v,i}^{-1}\widetilde{B}$ at order $\bd_i+1$ at $H_{ii}$ to $\widetilde{Q}_2$,  we can get a parametrix $Q_2$ with error term as claimed.  
\end{proof}

Assume now that $H_1,\ldots,H_{\ell}$ is a complete list of the boundary hypersurfaces of $M$ such that
$$
   H_i<H_j\; \Longrightarrow \; i<j.
$$
The goal of the next two steps will be to prove the following proposition by induction on $i$.
\begin{proposition}
There exist $Q_{3,i}\in \Psi^{-1,\cQ_{3,i}/\mathfrak{q}_{3,i}}_{\QFB,\cn}(M;E)$ and $R\in \Psi^{-\infty,\cR_{3,i}/\mathfrak{r}_{3,i}}_{\QFB,\cn}(M;E)$ such that 
$$
    D_{\QFB,\delta}Q_{3,i}=\Id+ R_{3,i} \quad \mbox{with} \quad N_j(R_{3,i})=0 \; \mbox{for} \; j>i,
$$
where $\cR_{3,i}$ is $\bbN_0$ at $\ff_j$ for all $j$ and is the empty set elsewhere, while $\cQ_{3,i}$ is $\QFB$ nonnegative index families except at $H_{jj}$ for $j>i$ where
$$
      \left.  \cQ_{3,i}\right|_{H_{jj}}=\bd_j+\bbN_0.
$$
Here, $\mathfrak{r}_{3,i}$ is a $\QFB$ positive multiweight with
$$
     \mathfrak{r}_{3,i}(H_{j0})=\mu_L+\frac12 \quad \mbox{and} \quad \mathfrak{r}_{3,i}(H_{0j})=\bd_j+\frac12+\mu_R \quad \mbox{for} \;  j>i,
$$
while $\mathfrak{q}$ is a multiweight which is $\QFB$ positive, except at $H_{jj}$ for $j>i$, where 
$$
       \mathfrak{q}_{3,i}(H_{jj})>\bd_j,
$$
and such that
$$
     \mathfrak{q}_{3,i}(H_{j0})=\mu_L-\frac12 \quad \mbox{and} \quad \mathfrak{q}_{3,i}(H_{0j})=\bd_j+\frac12+\mu_R \quad \mbox{for} \;  j>i.
$$
Moreover, for $j>i$, the term $A_j$ of order $\bd_j$ of $Q_{3,i}$ at $H_{jj}$ is such that $\widetilde{\Pi}_{h,j}A_j=A_j\widetilde{\Pi}_{h,j}=A_j$ and $\widetilde{\Pi}_{h,j} Q_{3,i}$ has the same asymptotic behavior as $Q_{3,i}$ at $H_{jj}$ up to a term of order $\bd_j+1$ and a weakly conormal section vanishing faster than $x_j^{\bd_j+1+\nu}$ for some $\nu>0$.
\label{hd.1}\end{proposition}

By \textbf{Step 2}, Proposition~\ref{hd.1} holds for $i$ whenever $H_i$ is maximal.  Thus, assuming it holds for some $i$, we will show in the next two steps that it holds for $i-1$.

\subsection*{Step 3: Inversion at $\ff_i$ for $H_i$ not maximal}

In this step, we will assume that $H_i$ is not maximal and that we have already a parametrix $Q_{3,i}$ with remainder $R_{3,i}$ as in Proposition~\ref{hd.1}.  We need to improve the parametrix so that its error term decays at $\ff_i$. By \eqref{su.3c} and \eqref{do.7}, the corresponding normal operator at this face is 
\begin{equation}
N_i(D_{\QFB,\delta})= D_{v,i}+ \eth_{h,i}.
\label{do.41}\end{equation}
Thanks to Assumption~\ref{do.46}, this normal operator is invertible, so that we can improve the parametrix as follows.
\begin{proposition}
Assuming there is a parametrix $Q_{3,i}$ as in Proposition~\ref{hd.1}, there exists 
$$Q_{4,i}\in \Psi^{-1,\cQ_{4,i}/\mathfrak{q}_{4,i}}_{\QFB,\cn}(M;E)\quad  \mbox{and} \quad R_{4,i}\in\Psi^{-\infty,\cR_{4,i}/\mathfrak{r}_{4,i}}_{\QFB,\cn}(M;E)
$$
 such that 
$$
    D_{\QFB,\delta}Q_{4,i}=\Id+R_{4,i} \quad \mbox{with} \; N_j(R_{4,i})=0 \; \mbox{for} \; j\ge i,
$$
where $\cQ_{4,i},\cR_{4,i}, \mathfrak{q}_{4,i}$ and $\mathfrak{r}_{4,i}$ are as $\cQ_{3,i}, \cR_{3,i}, \mathfrak{q}_{3,i}$ and $\mathfrak{r}_{3,i}$, except that 
$$
  \left. \cQ_{4,i}\right|_{H_{ii}}= \bd_i+\bbN_0 \quad \mbox{and}\quad \left.  \cR_{4,i}\right|_{H_{ii}}=\bd_i+1+\bbN_0.
$$
Moreover, for $j\ge i$, the term $A_j$ of order $\bd_j$ of $\cQ_{4,i}$ at $H_{jj}$ is such that $\widetilde{\Pi}_{h,j}A_j\widetilde{\Pi}_{h,j}=A_j$, while the term $B_i$ of order $\bd_i+1$ of $R_{4,i}$ at $H_{ii}$ is such that $\widetilde{\Pi}_{h,i}B= \widetilde{\Pi}_{h,i}B\widetilde{\Pi}_{h,i}$.  Also, for $j\ge i$, $\widetilde{\Pi}_{h,j}Q_{4,i}$ has the same asymptotic behavior as $Q_{4,i}$ at $H_{jj}$ up to a term of order $\bd_j+1$ and a weakly conormal section vanishing faster than $x_j^{\bd_j+1+\nu}$ for some $\nu>0$.
\label{hd.2}\end{proposition} 
\begin{proof}
 The idea then is to take $Q_{4,i}$ to be $Q_{3,i}$, except at $\ff_i$ where we take $Q_{4,i}|_{\ff_i}=G_{\ff_i}$.  Compared to $Q_{3,i}$, this forces $Q_{4,i}$ to have a term $A$ of order $\bd_i$ at $H_{ii}$ which we can assume is such that $\widetilde{\Pi}_{h,i}A\widetilde{\Pi}_{h,i}=A$.  We can apply the composition formula of Theorem~\ref{co.9} to deduce the corresponding properties for $R_{4,i}$ at $H_{ii}$.  Of course, to be able to do this substitution, we need to know that $G_{\ff_i}$ agrees  with $Q_{3,i}$ at $\ff_{i}\cap\ff_j$ and $\ff_i\cap H_{jj}$ for $H_j>H_i$, which follows from 
$$
    G_{\ff_i}= G_{\ff_i}\Id= G_{\ff_i}\circ N_i(D_{\QFB,\delta}Q_{3,i}-R_{3,i})= N_{i}(Q_{3,i})- G_{\ff_i}N_i(R_{3,i}).
$$
Indeed, setting $\cE_{\ff_i}=\cF_{\ff_i}=0$ in Theorem~\ref{co.9} yields a composition result for operators on $\ff_i$ which implies in particular that $G_{\ff_i}N_i(R_{3,i})$ has only terms of order strictly higher than $0$ and $\bd_j$ at $\ff_j\cap \ff_i$ and $H_{jj}\cap\ff_i$ respectively.  Notice that it may in particular introduce terms of order between $\bd_j$ and $\bd_j+1$ at $\ff_i\cap H_{jj}$, but by Remark~\ref{do.28c} applied to Assumption~\ref{do.46}, we can extend $G_{\ff_i}$ off $\ff_i$ in such a way that these do not create terms of order less or equal $\bd_j+1$ at $H_{jj}$ for the error term $R_{4,i}$.
  
\end{proof}

\subsection*{Step 4: Inversion at $H_{ii}$ for $H_i$ not maximal}

\begin{proof}[Proof of Proposition~\ref{hd.1} for each $i$]
We can now complete the inductive step of Proposition~\ref{hd.1}.  Indeed,
it suffices to improve the parametrix of Proposition~\ref{hd.2} to remove the term of order $\bd_i+1$ of the error term at $H_{ii}$.  In order to do this, we can proceed as in Step~2, using \eqref{do.46dd} to see that the term of order $\bd_i$ we need to add at $H_{ii}$ matches what we have so far.  However, in the last part of Step~2, namely at the end of the proof of Proposition~\ref{do.40}, we need to explain how to construct the right inverse $D^{-1}_{v,i}$ to $D_{v,i}$ used to get rid of the term $\widetilde{B}$.  By Remark~\ref{do.46e}, we can assume that $(\widetilde{v}^{-\frac12}_i D_{v,i}\widetilde{v}^{-\frac12})$ is invertible in the sense of Corollary~\ref{hd.4} below (with $\delta=-\frac12$).  We can now follow the approach of Guillarmou-Hassell \cite[\S~3.2]{GH2} as in \cite[(8.19)]{KR0} to obtain the desired right inverse for $D_{v,i}$ to complete Step~2.  Since such an argument is more naturally discussed in the study of the low energy limit of the resolvent, we refer to \eqref{qfble.12} below for a detailed discussion of  \cite[(8.19)]{KR0} adapted to the $\QFB$ setting.      This yields Proposition~\ref{hd.1} for $i-1$.  Proceeding by induction on $i$ and using Proposition~\ref{do.40}, we thus see that Proposition~\ref{hd.1} holds for all $i$ including $i=0$.
\end{proof}

\subsection*{Step 5: Final improvement of the error term}

\begin{proof}[Proof of Theorem~\ref{do.28}]
The parametrix of Proposition~\ref{hd.1} for $i=0$ is almost the one claimed in Theorem~\ref{do.28}.  The only part missing is a better decay rate of the error term at $H_{ij}$ and $\ff_{i}$ for $i,j\in\{1,\ldots,\ell\}$.  If $R_{3,0}$ is the remainder term given by Proposition~\ref{hd.1} for $i=0$, then by Theorem~\ref{co.9}, there exists $q\in\bbN$ such that 
$R^q_{3,0}\in \Psi^{-\infty,\cR_{3,0}/\mathfrak{r}_q}_{\QFB}(M;E)$ with $\mathfrak{r}_q$ satisfying \eqref{ip.1a} and such that
$$
          \mathfrak{r}_q(H_{i0})=\frac12+\mu_L>1, \quad \mathfrak{r}_q(H_{0i})=\bd_i+\frac12+\mu_R >\bd_i+1. 
$$ 
Hence, it suffices to replace by $Q_{3,0}$ by $\displaystyle Q_{3,0}\left( \sum_{k=0}^{q-1}(-R_{3,0})^k \right)$, since then
$$
  D_{\QFB,\delta}Q_{3,0}\left( \sum_{k=0}^{q-1}(-R_{3,0})^k \right)= (\Id +R_{3,0})\left( \sum_{k=0}^{q-1}(-R_{3,0})^k \right)= \Id +(-1)^{q-1}R^q_{3,0}.
$$
\end{proof}

The parametrix of Theorem~\ref{do.28} can be used  to show that $D_{\QFB,\delta}$ is Fredholm when acting on a suitable space.  To describe this space, consider the first order operator
\begin{equation}
     D_{\QFC,\delta}:= v^{-1}D_{\QFB,\delta}
\label{do.51}\end{equation}
associated to the metric 
$$
    g_{\QFC}= v^2 g_{\QFB}.
$$
By analogy with the relation between fibered boundary metrics and fibered cusp metrics, we say that $g_{\QFC}$ is a quasi-fibered cusp metric ($\QFC$ metric).  We refer to \cite{KR2} for a more detailed discussion on $\QFC$ metrics.
Let 
\begin{equation}
    M_{\QFC,\delta}:= \{\sigma\in L^2_b(M;E)\; | \; D_{\QFC,\delta}u\in L^2_b(M;E)\}
\label{do.52}\end{equation}
be the maximal closed extension of $D_{\QFC,\delta}$ acting on $L^2_b(M;E)$.  Since $g_{\QFC}$ is a complete metric, notice that $M_{\QFC}$ is also equal to the minimal closed extension, so $D_{\QFC}$ has only one closed extension over $L^2_b(M;E)$.  
\begin{corollary} 
If  Assumptions~\ref{do.1}, \ref{su.7}, \ref{su.2} and \ref{do.26} hold, then $D_{\QFB}$ induces a Fredholm operator
\begin{equation}
  D_{\QFB}: v^{\delta}M_{\QFC,\delta}\to v^{\delta+1}L^2_b(M;E).
\label{do.53b}\end{equation}
\label{do.53}\end{corollary}  
\begin{proof}
By Theorem~\ref{do.28}, there exist $Q_1\in\Psi^{-1,\cQ/\mathfrak{q}}_{\QFB,\cn}(M;E)$ and $R_1\in \Psi^{-\infty,\cR/\mathfrak{r}}_{\QFB,\cn}(M;E)$ such that 
\begin{equation}
     D_{\QFB,\delta}Q_1= \Id+R_1.
\label{do.54}\end{equation}
Conjugating by $v$ then gives
\begin{equation}
     D_{\QFC,\delta}Q_1v= \Id+ v^{-1}R_1v.
\label{do.55}\end{equation}
By Remark~\ref{do.26b}, we know that Assumption~\ref{do.26} also holds with $\delta$ replaced with $-\delta$, so by Theorem~\ref{do.28}, there exists $Q_2\in\Psi^{-1,\cQ_2/\mathfrak{q}_2}_{\QFB,\cn}(M;E)$ and $R_2\in \Psi^{-\infty,\mathfrak{r}_2}_{\QFB,\cn}(M;E)$ such that 
\begin{equation}
  (v^{\delta}D_{\QFB}v^{-\delta})Q_2=\Id+R_2.
\label{do.56}\end{equation}
Taking the adjoint then gives
\begin{equation}
   Q_2^*v^{-\delta}D_{\QFB}v^{\delta}=\Id+R_2^*,
\label{do.57}\end{equation}
that is,
\begin{equation}
(Q_2^*v)D_{\QFC,\delta}= \Id+ R_2^*.
\label{do.58}\end{equation}
By Corollary~\ref{mp.15}, we see that $v^{-1}R_1v$  and $R_2^*$ induce compact operators
\begin{equation}
    v^{-1}R_1v: L^2_b(M;E)\to L^2_b(M;E)
\label{do.59}\end{equation}
and
\begin{equation}
  R_2^*: L^2_b(M;E)\to L^2_b(M;E).
\label{do.60}\end{equation}
In fact, thanks to the extra decay provided by \eqref{ip.1a}, we see that
\begin{equation}
  D_{\QFC,\delta} R_2^*: L^2_b(M;E)\to L^2_b(M;E)
\label{do.60}\end{equation}
is also a compact operator, which means that the operator $R_2^*$ also induces a compact operator
$$
R_2^*: M_{\QFC,\delta}\to M_{\QFC,\delta}.
$$
This means that the operator 
$$
 D_{\QFC,\delta}: M_{\QFC,\delta}\to L^2_b(M;E)
$$
is invertible on both sides modulo compact operators, that is, it is Fredholm.  In terms of the operator $D_{\QFB}$, this means that \eqref{do.53b} is Fredholm as claimed.
\end{proof}

This has the following consequence on the $L^2$ kernel of the operator $D_{\QFB,\delta}$. 
\begin{corollary}
If  Assumptions~\ref{do.1}, \ref{su.7}, \ref{su.2}, \ref{do.46} and \ref{do.26} hold, then the space 
\begin{equation}
   \{  \sigma\in v^{\delta}L^2_{\QFB}(M;E)\; | \; \eth_{\QFB}\sigma=0\} 
\label{do.48a}\end{equation}
is finite dimensional and included in 
$$
     v^{\nu+\delta}x^{\mathfrak{w}}(\cA_{\QFB,2}(M;E)\cap\cA_{\QFB}(M;E))\subset v^{\nu+\delta}x^{\mathfrak{w}}L^2_b(M;E)=v^{\nu+\delta}L^2_{\QFB}(M;E)
$$
for all $\nu<\mu_L-\frac12$.  In particular, the $L^2$ orthogonal projection $P_1: L^2_b(M;E)\to \ker_{L^2_b}D_{\QFB,\delta}$ is an element of $\Psi^{-\infty,\mathfrak{m}}_{\QFB,\res}(M;E)$ with $\mathfrak{m}$ the multiweight given by $\mu_L-\frac12$ and $\mu_L-\frac12 +\bd_i+1$ at the boundary hypersurfaces $H_i\times M$ and $M\times H_i$ on $M^2$. 
\label{do.48}\end{corollary}
\begin{proof}
By Corollary~\ref{do.53}, the space \eqref{do.48a} is finite dimensional.  
By Remark~\ref{do.26b}, we can apply Theorem~\ref{do.28} to $D_{\QFB,-\delta}$, but with $\mu_L$ and $\mu_R$ interchanged, so  there exist $Q\in \Psi^{-1,\cQ/\mathfrak{q}}_{\QFB,\cn}(M;E)$ and $R\in\Psi^{-\infty,\mathfrak{r}}_{\QFB,\cn}(M;E)$ such that 
$$
       D_{\QFB,-\delta}Q=\Id+R.
$$
Taking the adjoint gives
$$
Q^*v^{-\delta}D_{\QFB}v^{\delta}=\Id +R^*,
$$
 so that 
\begin{equation}
      Q^*D_{\QFB,\delta}=\Id +R^*.
\label{do.49}\end{equation}
Hence, given $u\in L^2_b(M;E)$ which is in the kernel of $D_{\QFB,\delta}$, we can apply both sides of \eqref{do.49} to it, which yields
\begin{equation}
       u=-R^*u.
\label{do.50}\end{equation}
By Proposition~\ref{lt.1}, we thus deduce from \eqref{do.50} and the properties of $\mathfrak{r}$ that 
$$
   u\in v^{\nu}L^2_{b}(M;E)
$$
for all $\nu<\mu_L-\frac12$.  By Remark~\ref{do.28d}, we can also deduce from \eqref{do.50} using Hölder inequality as in \eqref{com.8b} that 
$$
   u\in v^{\nu}L^{\infty}(M;E)
$$
for all $\nu<\mu_L-\frac12$.  
Since $R^*$ stays of the same form when composed with $\QFB$ differential operators, we have in fact that
$$
u\in v^{\nu}(\cA_{\QFB,2}(M;E)\cap \cA_{\QFB}(M;E)) \quad \forall \nu<\mu_L-\frac12.
$$
The result then follows from the unitary equivalence between $D_{\QFB,\delta}$ acting formally on $L^2_b(M;E)$ and $\eth_{\QFB}$ acting formally on $v^{\delta}L^2_{\QFB}(M;E)$.  
\end{proof}

When $\eth_{\QFB}$ is the Hodge-deRham operator, the decay obtained in Corollary~\ref{do.48} can sometime be improved when we know that the kernel of $\eth_{\QFB}$ in $v^{\delta}L^2_{\QFB}(M;E)$ only occurs in certain degrees.  This improved decay turns out to be essential to apply Theorem~\ref{do.28} to important examples, for instance in \cite{KR2}.  For this reason, we will give a precise formulation. 
\begin{corollary}
Suppose that the operator $\eth_{\QFB}$ of Corollary~\ref{do.48} is a Hodge-deRham operator and its kernel $\ker_{v^\delta L^2_{\QFB}}\eth_{\QFB}$ in $v^{\delta}L^2_{\QFB}(M;E)$ is only non-trivial in certain degrees.  If $\Pi$ is the projection on the degrees where this kernel is non-trivial and if Assumption~\ref{do.26} holds for some $\widetilde{\mu}_L\ge \mu_{L}$ when $D_{b,i}$ is replaced by $D_{b,i}\Pi$ for all $i$, then in fact 
$$
 \ker_{v^\delta L^2_{\QFB}}\eth_{\QFB}\subset v^{\nu+\delta}x^{\mathfrak{w}}(\cA_{\QFB,2}(M;E)\cap \cA_{\QFB}(M;E))
$$
for all $\nu<\widetilde{\mu}_L-\frac12$.
\label{id.1}\end{corollary}
\begin{proof}
We can follow the same strategy as in the proof of Corollary~\ref{do.48}, except that we can replace \eqref{do.49} by
\begin{equation}
  \Pi Q^*D_{\QFB,\delta}\Pi= \Pi+ \Pi R^*\Pi
\label{id.2}\end{equation}
with $\Pi R^*\Pi$ having multiweight $\widetilde{\mathfrak{r}}$  similar to $\mathfrak{r}$ in Theorem~\ref{do.28}, except that
$$
   \widetilde{\mathfrak{r}}(H_{i0})=\widetilde{\mu}_L-\frac12 \quad \mbox{and}  \quad \widetilde{\mathfrak{r}}(H_{0i})= \bd_i +\frac12+ \mu_R
$$
for each $i$.  Proceeding as in Step~5 of the proof of Theorem~\ref{do.28}, we can also assume that
$$
    \widetilde{\mathfrak{r}}(\ff_i)= \widetilde{\mu}_L+\mu_R \quad \mbox{and} \quad \widetilde{\mathfrak{r}}(H_{ij})= \bd_j+1+ \widetilde{\mu}_L+\mu_R.
$$

  In particular, applying both sides of \eqref{id.2} to $u\in L^2_b(M;E)$ in the kernel of $D_{\QFB,\delta}$ yields
$$
    u= -\Pi R^* \Pi u.
$$ 
Hence by Proposition~\ref{lt.1} and Remark~\ref{do.28d}, $u\in v^{\nu}(L^2_b(M;E)\cap L^{\infty}(M;E))$ for all $\nu<\widetilde{\mu}_L-\frac12$.  Since $\Pi R^*\Pi$ stays of the same form when we apply $\QFB$ differential operators, we see that 
$$
u\in v^{\nu}(\cA_{\QFB,2}(M;E)\cap\cA_{\QFB}(M;E))
$$ 
for all $\nu<\widetilde{\mu}_L-\frac12$, from which the result follows.
\end{proof}

We can also give a pseudodifferential characterization of the inverse of the Fredholm operator 
\begin{equation}
       v^{-1}D_{\QFB,\delta}: M_{\QFC,\delta}\to L^2_b(M;E).
\label{hd.3}\end{equation}
By Corollary~\ref{do.48}, we know already that the $L^2$ orthogonal projection $P_1$ onto the kernel of this operator is a $\QFB$ residual operator,
$$
     P_1\in \Psi^{-\infty,\mathfrak{m}}_{\QFB,\res}(M;E).
$$
  By the formal self-adjointness of $D_{\QFB}$, one can check that the orthogonal complement of the range of \eqref{hd.3} is precisely
$$
     \ker_{L^2_b}D_{\QFB,-\delta-1}.
$$
Using a slightly different argument, we can get the following analog of Corollary~\ref{do.48} for the cokernel of \eqref{hd.3}.
\begin{corollary}
If  Assumptions~\ref{do.1}, \ref{su.7}, \ref{su.2}, \ref{do.46} and \ref{do.26} hold, then the space $\ker_{L^2_b}D_{\QFB,-\delta-1}$
is finite dimensional and included in 
$
     v^{\nu}(\cA_{\QFB,2}(M;E)\cap\cA_{\QFB}(M;E))
$
for all $\nu<\mu_R+\frac12$.  In particular, the $L^2$ orthogonal projection $P_2: L^2_b(M;E)\to \ker_{L^2_b}D_{\QFB,-\delta-1}$ is an element of $\Psi^{-\infty,\mathfrak{m}'}_{\QFB,\res}(M;E)$ with $\mathfrak{m}'$ the multiweight given by $\mu_R+\frac12$ and $\mu_R+\frac12 +\bd_i+1$ at the boundary hypersurfaces $H_i\times M$ and $M\times H_i$ on $M^2$. 
\label{do.48b}\end{corollary}
\begin{proof}
By Corollary~\ref{do.53} and the discussion above, $\ker_{L^2_b}D_{\QFB,-\delta-1}$ is finite dimensional.  By Theorem~\ref{do.28} applied to $D_{\QFB,\delta}$, there exist $Q\in \Psi^{-1,\cQ/\mathfrak{q}}_{\QFB,\cn}(M;E)$ and $R\in\Psi^{-\infty,\mathfrak{r}}_{\QFB,\cn}(M;E)$ such that 
$$
       D_{\QFB,\delta}Q=\Id+R.
$$
Now, if $w\in \ker_{L^2_b}D_{\QFB,-\delta-1}$ and $u\in L^2_b(M;E)$, then 
$$
\begin{aligned}
0&= \langle v^{-1}D_{\QFB,\delta}Qu, w\rangle_{L^2_b}= \langle v^{-1}(\Id+R)u,w\rangle_{L^2_b} \\
  &= \langle u, (\Id+R^*)v^{-1}w\rangle_{L^2_b}.
\end{aligned}
$$
Since $u\in L^2_b(M;E)$ is arbitrary, this means that $v^{-1}w= -R^*v^{-1}w$, that is,
\begin{equation}
 w= v(R^*v^{-1})w.
\label{do.48c}\end{equation}
By the properties of $\mathfrak{r}$, we deduce from \eqref{do.48c}, Proposition~\ref{lt.1} and Remark~\ref{do.28d} that 
$$
    w\in v^{\nu}(L^2_b(M;E)\cap L^{\infty}(M;E))
$$
for all $\nu<\mu_R+\frac12$.  Since $vR^*v^{-1}$ stays of the same form when composed with $\QFB$ differential operators, we have in fact that $w\in v^{\nu}(\cA_{\QFB,2}(M;E)\cap\cA_{\QFB}(M;E))$ for all $\nu<\mu_R+\frac12$.  
\end{proof}
\begin{remark}
In particular, since $\mu_R+\frac12>1$, Corollary~\ref{do.48b} shows that
$$
         \ker_{L^2_b} D_{\QFB,-\delta-1}= v\ker_{L^2_b}D_{\QFB,-\delta}.
$$
\label{do.48d}\end{remark}

By Corollaries~\ref{do.48}, \ref{do.48b} and the Fredholmness of the operator \eqref{hd.3}, there is a bounded operator $G_{\delta}: L^2_b(M;E)\to M_{\QFC,\delta}\subset L^2_b(M;E)$ such that
\begin{gather}
\label{hd.3a} G_{\delta} v^{-1}D_{\QFB,\delta}= \Id-P_1, \\
\label{hd.3b} v^{-1}D_{\QFB,\delta}G_{\delta}=\Id-P_2.
\end{gather} 

\begin{corollary}
The inverse $G_{\delta}$ of \eqref{hd.3} is an element of $\Psi^{-1,\cG/\mathfrak{g}}_{\QFB,\cn}(M;E)$, where $\cG$ is an index family given by
$$
     \left.\cG\right|_{H_{ii}}=\bd_i+1+\bbN_0, \quad \Re\lrp{\left.\cG\right|_{\ff_i}}=\bbN_0+1 \quad \forall i
$$
and elsewhere given by the empty set, while  $\mathfrak{g}$ is a multiweight such that for $i,j\in\{1,\ldots,\ell\}$,
$$
   \mathfrak{g}(H_{i0})=\mu_L-\frac12, \quad \mathfrak{g}(H_{0i})=\bd_i+\frac32+\mu_R, \quad \mathfrak{g}(\ff_i)>1\quad  
   \mathfrak{g}(H_{ii})>\bd_i+1\quad \mbox{and} \quad \mathfrak{g}(H_{ij})>\bd_j+2, \; 0\ne i\ne j\ne0. 
$$
Moreover, for each $i$, the term $A_i$ of order $\bd_i+1$ at $H_{ii}$ of $G_{\delta}$ is such that $\widetilde{\Pi}_{h,i}A_i=A_i\widetilde{\Pi}_{h,i}=A_i$. 
\label{hd.4}\end{corollary}
\begin{proof}
By Corollaries~\ref{do.48}, \ref{do.48b}, we can use the argument of \cite[Theorem~4.20]{MazzeoEdge} to obtain a pseudodifferential characterization of the inverse $G_{\delta}$.  More precisely, notice first that in terms of the parametrix $Q_{\delta}$ with remainder $R_{\delta}$ of $D_{\QFB,\delta}$ provided by Theorem~\ref{do.28}, 
\begin{equation}
\begin{aligned}
G_{\delta}&= G_{\delta}\Id= G_{\delta}(v^{-1}\Id v)= G_{\delta}(v^{-1}(D_{\QFB,\delta}Q_{\delta}-R_{\delta}) v) = G_{\delta}(v^{-1}D_{\QFB,\delta})Q_{\delta}v- G_{\delta}(v^{-1}R_{\delta}v) \\
   &=  (\Id-P_1)Q_{\delta}v-G_{\delta}(v^{-1}R_{\delta}v).
\end{aligned}
\label{hd.6}\end{equation}
By Remark~\ref{do.26b}, Theorem~\ref{do.28} can also be applied with $\delta$ replaced by $-\delta$, so that there is a parametrix $Q_{-\delta}$ with remainder $R_{-\delta}$ such that
$$
       D_{\QFB,-\delta}Q_{-\delta}=\Id +R_{-\delta}.
$$
Taking the adjoint on both sides and using the formal self-adjointness of $D_{\QFB}$ yields
$$
        Q_{-\delta}^*D_{\QFB,\delta}=\Id+R_{-\delta}^*,
$$
so that
\begin{equation}
\begin{aligned}
G_{\delta}&= \Id G_{\delta}= (Q_{-\delta}^*D_{\QFB,\delta}-R^*_{-\delta})G_{\delta}= Q_{-\delta}^*v(v^{-1}D_{\QFB,\delta}G_{\delta})- R^*_{-\delta}G_{\delta} \\
  &= Q^*_{-\delta}v(\Id-P_2)-R^*_{-\delta}G_{\delta}.
\end{aligned}
\label{hd.7}\end{equation}
Inserting \eqref{hd.7} in \eqref{hd.6} then yields
\begin{equation}
\begin{aligned}
G_{\delta} &= Q_{\delta}v- P_1Q_{\delta}v- \left[ Q_{-\delta}^*v(\Id-P_2)-R^*_{-\delta}G_{\delta} \right]v^{-1}R_{\delta}v \\
    &=\left[ Q_{\delta}-P_1Q_{\delta}- Q^*_{-\delta}v(\Id-P_2)v^{-1}R_{\delta} + R^*_{-\delta}G_{\delta}(v^{-1}R_{\delta}) \right] v.
\end{aligned}
\label{hd.8}\end{equation}     
Since $R^*_{-\delta}$ and $v^{-1}R_{\delta}$ are $\QFB$ residual operators, we see by the semi-ideal property of Proposition~\ref{com.8} that $R^*_{-\delta}G_{\delta}(v^{-1}R_{\delta})$ is a $\QFB$ residual operator.  Hence the result follows from \eqref{hd.8} and Theorem~\ref{co.9}, the better decay at some faces being a consequence of the multiplication on the right by $v$.  
\end{proof}

Let us conclude this section by providing a simple criterion to apply Theorem~\ref{do.28} in the depth $2$ case when $\eth_{\QFB}$ is a Hodge-deRham operator.  Lemma~\ref{do.27d} can then be used to determine what $\delta$ we can pick in Assumption~\ref{do.26}.  Similarly, Remark~\ref{su.7e} ensures that Assumption~\ref{su.7} is satisfied.  On the other hand, for $H_i$ submaximal, let $Z_i$ be a fiber of $\phi_i: H_i\to S_i$.  For  $H_j>H_i$, the fiber bundle $\phi_j$ on $H_j$ induces a fiber bundle $\phi_j:\pa _jZ_i\to Y_{ij}$ for some closed manifold $Y_{ij}$, where $\pa_jZ_i= Z_i\cap H_j$.  The operator $\eth_{v,i}$ induces a corresponding vertical family $\eth_{v,ij}$ on the fibers of 
$\phi_{j}:\pa_j Z_i\to Y_{ij}$.  Again, this operator corresponds to a family of (direct sums of) Hodge-deRham operators, so its fiber kernels form a flat vector bundle $\ker \eth_{v,ij}\to Y_{ij}$.  By \cite[Example~8.3]{KR0}, we will have that 
\cite[Assumption~8.1]{KR0} will hold with $\epsilon_1>1$ provided 
\begin{equation}
     H^q(Y_{ij};\ker\eth_{v,ij})=\{0\} \quad \mbox{for} \quad q=\frac{\dim Y_{ij}\pm \ell}2, \quad \ell\in \{0,1,2,3\}.  
\label{do.61}\end{equation} 
or
\begin{equation}
\begin{gathered}
H^q(Y_{ij};\ker\eth_{v,ij})=\{0\} \quad \mbox{for} \quad q=\frac{\dim Y_{ij}\pm \ell}2, \quad \ell\in \{0,1\} \\
\quad \mbox{and} \quad \\
\ker\eth_{v,i} \quad \mbox{is trivial}, 
\end{gathered}
\label{do.61b}\end{equation}
in fact only trivial in degrees $\frac{\dim Y_{ij}+1\pm \ell}2$ for $\ell\in\{1,2,3,4\}$ when $\pa_j Z_i=Y_{ij}$ and $\phi_{j}=\Id$.
Requiring that
\begin{equation}
    \dim Y_i>1
\label{do.61c}\end{equation}
will also ensure that \cite[Theorem~9.1]{KR0} can be applied so that Assumption~\ref{do.46} will hold.
These observations can be summarized as follows.
\begin{theorem}
   Suppose that $M$ is of depth 2, that \eqref{do.61c} holds for $H_i$ submaximal  and that  \eqref{do.61} or \eqref{do.61b} holds for each fiber of $\phi_i:H_i\to S_i$.  Then there exists a $\QFB$ metric $g_{\QFB}$ as in Assumption~\ref{do.1} such that Theorem~\ref{do.28} applies to the associated Hodge-deRham operator for $\delta$ not in \eqref{do.27e} and at distance $\mu>\frac12$ from this set.   Moreover, the metric $g_{\QFB}$ can be chosen so that Theorem~\ref{do.28} applies to the associated Hodge-deRham operator for $\delta=-\frac12$ provided $\mathfrak{d}_{S_i}$ has no $L^2$ kernel in degrees $\frac{\bd_i\pm q}2$ for $q\in\{0,1,2\}$ when $H_{i}$ is maximal or submaximal. 
\label{do.62}\end{theorem}
\begin{proof}
By \eqref{do.61c} as well as \eqref{do.61} or \eqref{do.61b} and \cite[Example~8.3]{KR0}, for $H_i$ submaximal, taking the metrics $g_{Y_{ij}}$ smaller if needed, we know that Assumptions~\ref{su.7} and \ref{do.46} will hold.  
From \eqref{do.61} or \eqref{do.61b}, we also deduce that for $H_i$ maximal, $\mathfrak{d}_{S_i}$ will be essentially self-adjoint by requiring that the induced metrics on the fibers of $\phi_{ij}: \pa_jS_i\to S_j$ be sufficiently small in the sense of Remark~\ref{e.3h} for all $H_j<H_i$. This means that Assumption~\ref{do.26} holds for $\delta$ at distance at least $\mu>\frac12$ from any point of \eqref{do.27e} for $H_i$ maximal or submaximal.  Changing the $\QFB$ metric by scaling the metrics $g_{S_i}$  to make them smaller if needed, we can also assume that Assumption~\ref{do.26} will hold for $\delta=-\frac12$ provided $\mathfrak{d}_{S_i}$ has no $L^2$ kernel in degrees $\frac{\bd_i\pm q}2$ for $q\in\{0,1,2\}$ for $H_{i}$ maximal or submaximal.  
\end{proof}
\begin{remark}
In Theorem~\ref{ift.15} below, which is a generalization to higher depth of \cite[Theorem~9.1]{KR0}, we were able to remove assumption \eqref{do.61c}.
\label{do.63}\end{remark}

\section{$\k,\QFB$ operators}\label{kqfb.0}

For the results of the previous section to hold more generally, we need to generalize the results of \cite{KR0} and give a pseudodifferential characterization of the low energy limit of the resolvent of a Hodge-deRham operator associated to a $\QFB$ metric.   The first step is to introduce the relevant class of pseudodifferential operators.  

Thus, as in \S~\ref{vf.0}, we suppose that $(M,\phi)$ is a manifold with fibered corners which, together with a choice of compatible boundary defining functions, comes with a Lie algebra of $\QFB$ vector fields.   On the manifold $M\times [0,\infty)_{\k}$, we can look at the lift of $\QFB$ vector fields, 
\begin{equation}
\kridx{\cV_{\k,\QFB}}{VQFBk}{$\k$ QFB vector fields}(M\times I)= \left\{ \xi\in \cV(M\times [0,\infty))\; | \; (\pr_2)_*\xi=0,\quad \xi |_{M\times \{\k\}}\in \cV_{\QFB}(M) \; \forall \; \k\in [0,\infty) \right\},
\label{kqfb.1}\end{equation}
where $\pr_2: M\times [0,\infty)_{\k}\to [0,\infty)_{\k}$ is the projection on the second factor. 

If $H_1,\ldots, H_{\ell}$ is an exhaustive list of the boundary hypersurfaces of $M$ compatible with the partial order in the sense that 
$$
H_i<H_j \; \Longrightarrow \; i<j,
$$
we can consider the blown up space
\begin{equation}
M_{\k,\QFB}:= [ M\times [0,\infty)_{\k}; H_{\ell}\times \{0\}, \ldots, H_1\times \{0\}],
\label{kqfb.2}\end{equation}
with blow-down map $\beta^1_{\k,\QFB}: M_{\k,\QFB}\to M\times [0,\infty)$, obtained by blowing up $H_{\ell}\times \{0\},\ldots, H_{1}\times \{0\}$ in that order.  In other words, in the terminology of \S~\ref{ds.0}, 
$$
M_{\k,\QFB}:= M\rttimes [0,\infty)_{\k}.
$$  
We denote by $H_{i,0}$ the boundary hypersurface created by the blow-up of $H_i\times\{0\}$ and by $H_{i,+}$ the lift of $H_i\times [0,\infty)_{\k}$ to $M_{\k,\QFB}$.  We also denote by $H_{0,0}$ the boundary hypersurface corresponding to the lift of $M\times \{0\}$.  
\begin{definition}
The \textbf{Lie algebra of $\k,\QFB$ vector fields} $\cV_{\k,\QFB}(M_{\k,\QFB})$ is the Lie algebra of vector fields on $M_{\k,\QFB}$ generated by $\CI(M_{\k,\QFB})$ and the lifts of vector fields in 
$\cV_{\k,\QFB}(M\times [0,\infty)_{\k}))$ to $M_{\k,\QFB}$.  The space of \textbf{differential $\k,\QFB$ operators} is the universal enveloping algebra over \\ $\CI(M_{\k,\QFB})$ of $\cV_{\k,\QFB}(M_{\k,\QFB})$.  In other words, the space
$\kridx{\Diff^m_{\k,\QFB}}{DiffQFBk}{$\k$ QFB differential operators}(M_{\k,\QFB})$ of differential $\k,\QFB$ operators of order $m$ is generated by multiplication by elements of $\CI(M_{\k,\QFB})$ and up to $m$ vector fields in $\cV_{\k,\QFB}(M_{\k,\QFB})$.  
\label{kqfb.2b}\end{definition}
If $E$ and $F$ are vector bundles on $M_{\k,\QFB}$, one can more generally consider the space 
\begin{equation}
   \Diff^m_{\k,\QFB}(M_{\k,\QFB};E,F):= \Diff^m_{\k,\QFB}(M_{\k,\QFB})\otimes_{\CI(M_{\k,\QFB})}\CI(M_{\k,\QFB}; E^*\otimes F).
\label{kqfb.2c}\end{equation}

By the Serre-Swan theorem, there is a vector bundle $\kridx{{}^{\k,\QFB}T}{TQFBk}{$\k,\QFB$ tangent bundle}M_{\k,\QFB}\to M_{\k,\QFB}$, the $\k,\QFB$ tangent bundle, together with a natural identification
\begin{equation}
  \cV_{\k,\QFB}(M_{\k,\QFB})=\CI(M_{\k,\QFB};{}^{\k,\QFB}TM_{\k,\QFB}).
\label{la.1}\end{equation}
This identification is induced by an anchor map $a: {}^{\k,\QFB}TM_{\k,\QFB}\to TM_{\k,\QFB}$ which confers a Lie algebroid structure to ${}^{\k,\QFB}TM_{\k,\QFB}$.  Clearly, the restriction of ${}^{\k,\QFB}TM_{\k,\QFB}$ to $H_{0,0}=M$ is just the $\QFB$ tangent bundle ${}^{\QFB}TM$.  On the boundary hypersurface $H_{i,+}$ instead, the fiber bundle $\phi_i:H_i\to S_i$ induces a fiber bundle $\phi_{i,+}: H_{i,+}\to S_i$ with fibers 
$$
             (\phi_{i,+})^{-1}(s)= (\phi_i^{-1}(s))_{\k,\QFB}:= \phi_i^{-1}(s)\rttimes [0,\infty),\quad s\in S_i.
$$
Consequently, there is a corresponding vertical $\k,\QFB$ tangent bundle $\kridx{{}^{\k,\QFB}T(H_{i,+}/S_i)}{TQFBkHS}{$\k,\QFB$ vertical tangent bundle}$ and the anchor map on the interior of  $H_{i,+}$ extends to give a short exact sequence of vector bundles
\begin{equation}
\xymatrix{
0 \ar[r] & \kridx{{}^{\k,\QFB}N_{i,+}}{NQFBk}{$\k,\QFB$ normal bundle} \ar[r] & \left.  {}^{\k,\QFB}TM_{\k,\QFB} \right|_{H_{i,+}} \ar[r]^-{a} & {}^{\k,\QFB}T(H_{i,+}/S_i)\ar[r] & 0.
}
\label{la.2}\end{equation}
If $\beta_{i,+}: H_{i,+}\to H_i\times [0,\infty)$ is the natural blow-down map, then there is a natural identification
$$
         {}^{\k,\QFB}N_{i,+}\cong \beta_{i,+}^*\pr_1^* {}^{\phi}NH_i,
$$
 where $\pr_1: H_i\times [0,\infty)\to H_i$ is the projection on the first factor.  Thanks to Lemma~\ref{nb.2}, there is therefore a natural isomorphism 
 \begin{equation}
   {}^{\k,\QFB}N_{i,+}\cong \phi_{i,+}^* {}^{\phi}NS_i.  
 \label{la.3}\end{equation}
This induces a short exact sequence of vector bundles
\begin{equation}
\xymatrix{
0 \ar[r] & {}^{\k,\QFB}T(H_{i,+}/S_i)\ar[r] & \left.{}^{\k,\QFB}TM_{\k,\QFB}\right|_{H_{i,+}} \ar[r]^-{(\phi_{i,+})_*} & \phi^*_{i,+}({}^{\phi}NS_i) \ar[r] & 0.
}
\label{la.4}\end{equation}
In particular, the short exact sequences \eqref{la.2} and \eqref{la.4} induce the splitting
\begin{equation}
    \left.{}^{\k,\QFB}TM_{\k,\QFB}\right|_{H_{i,+}}= {}^{\k,\QFB}N_{i,+}\oplus {}^{\k,\QFB}T(H_{i,+}/S_i).
\label{la.5}\end{equation}

On $H_{i,0}$ for $i\ne 0$, the fiber bundle $\phi_i: H_i\to S_i$ also induces a fiber bundle, but instead of being over $S_i$, it is over
\begin{equation}
    S_{i}\rttimes [0,\frac{\pi}2]:= [S_i\times [0,\frac{\pi}2]; \pa_{\ell_i}S_i\times \{0\}, \ldots, \pa_1S_i\times \{0\}],
\label{la.6}\end{equation}
where $\pa_1S_i,\ldots,\pa_{\ell_i}S_i$ is an exhaustive list of the boundary hypersurfaces of $S_i$ compatible with the partial order and on $[0,\frac{\pi}2]$, we only consider the boundary $\{0\}$ to define the reverse ordered product \eqref{la.6}.  More precisely, $\phi_i$ induces a fiber bundle $\phi_{i,0}: H_{i,0}\to S_i\rttimes [0,\frac{\pi}2]$ with fibers
$$
     \phi_{i,0}^{-1}(s)= \phi_i^{-1}(\pr_1\circ\beta_{i,0}(s)), \quad s\in S_i\rttimes [0,\frac{\pi}2],
$$
where $\beta_{i,0}: S_i\rttimes [0,\frac{\pi}2]\to S_i\times [0,\frac{\pi}2]$ is the natural blow-down map and $\pr_1: S_i\times [0,\frac{\pi}
2]\to S_i$ is the projection on the first factor.  The fibers are thus equipped with a natural Lie algebra of $\QFB$ vector fields, so that there is a natural vertical $\QFB$ tangent bundle ${}^{\QFB}T(H_{i,0}/(S_i\rttimes [0,\frac{\pi}2]))$.  Again, the anchor map induces a short exact sequence of vector bundles 
\begin{equation}
\xymatrix{
0\ar[r] & {}^{\k,\QFB}N_{i,0} \ar[r] & \left.{}^{\k,\QFB}TM_{\k,\QFB}\right|_{H_{i,0}} \ar[r]^-{a} & {}^{\QFB}(H_{i,0}/(S_i\rttimes [0,\frac{\pi}2])) \ar[r] &0.}
\label{la.7}\end{equation}
Using Lemma~\ref{nb.2}, one can see that there is a natural identification
\begin{equation}
  {}^{\k,\QFB}N_{i,0}= \phi_{i,0}^*( {}^{\phi}N(S_i\rttimes [0,\frac{\pi}2])) \quad \mbox{with}\quad  {}^{\phi}N(S_i\rttimes [0,\frac{\pi}2]):= \beta_{i,0}^*\pr_1^*{}^{\phi}NS_i.
\label{la.8}\end{equation}
This induces a short exact sequence of vector bundles
\begin{equation}
\xymatrix{
0 \ar[r] & {}^{\QFB}T(H_{i,0}/(S_i\rttimes[0,\frac{\pi}2])) \ar[r] & \left. {}^{\k,\QFB}TM_{\k,\QFB}\right|_{H_{i,0}} \ar[r]^-{(\phi_{i,0})_*} & {}^{\k,\QFB}N_{i,0} \ar[r] & 0.
}
\label{la.9}\end{equation}
The two short exact sequences \eqref{la.7} and \eqref{la.9} induces the splitting 
\begin{equation}
\left.{}^{\k,\QFB}TM_{\k,\QFB}\right|_{H_{i,0}}= {}^{\k,\QFB}N_{i,0}\oplus {}^{\QFB}T(H_{i,0}/(S_i\rttimes[0,\frac{\pi}2])).\label{la.10}\end{equation}

If the fiber bundles on the maximal boundary hypersurfaces of $M$ are all induced by the identity map $\Id:H_i\to H_i$, then $\cV_{\QFB}(M)$ corresponds to the Lie algebra of $\QAC$ vector fields $\cV_{\QAC}(M)$.  In this case, we denote $M_{\k,\QFB}$ by $M_{\k,\QAC}$, $\cV_{\k,\QFB}(M_{\k,\QFB})$ by $\cV_{\k,\QAC}(M_{\k,\QAC})$ and ${}^{\k,\QFB}TM_{\k,\QFB}$ by ${}^{\k,\QAC}TM_{\k,\QAC}$.  If $H_i$ is maximal, then ${}^{\QAC}T(H_{i,0}/(S_i\rttimes[0,\frac{\pi}2]))$ is the trivial vector bundle of rank $0$, which indicates that the vector fields of $\cV_{\k,\QAC}(M_{\k,\QAC})$ vanish to order one at the boundary hypersurface $H_{i,0}$. Hence, one can define a new Lie algebra of vector fields as follows.
\begin{definition}
The \textbf{Lie algebra of $\QAC-\Qb$ vector fields} on $M_{\k,\QAC}$ is given by
\begin{equation}
\kridx{\cV_{\QAC-\Qb}}{VQACQb}{$\QAC-\Qb$ vector fields}(M_{\k,\QAC}):= \frac{1}{x_{\max,0}}\cV_{\k,\QAC}(M_{\k,\QAC}),
\label{kqfb.3}\end{equation}
where $x_{\max,0}$ is a product of the boundary defining functions for $H_{j,0}$ with $H_j$ a maximal boundary hypersurface.  Similarly, on $\bM_{\k,\QAC}$, we set
$$
\cV_{\QAC-\Qb}(\bM_{\k,\QAC}):= \frac{1}{x_{\max,0}}\cV_{\k,\QAC}(\bM_{\k,\QAC}).
$$
 The space of \textbf{differential  $\QAC-\Qb$ operators} is the universal enveloping algebra over $\CI(M_{\k,\QAC})$ of the Lie algebra of $\QAC-\Qb$ vector fields.  Thus, the space $\Diff^m_{\QAC-\Qb}(M_{\k,\QAC})$ of differential $\QAC-\Qb$ operators of order $m$ is generated by multiplication of elements in $\CI(M_{\k,\QAC})$ and up to $m$ $\QAC-\Qb$ vector fields.  For $E$ and $F$ vector bundles on $M_{\k,\QAC}$, we define more generally the space of differential $\QAC-\Qb$ operators of order $m$ acting from sections of $E$ to sections of $F$ by
$$
\Diff^m_{\QAC-\Qb}(M_{\k,\QAC};E,F):= \Diff^m_{\QAC-\Qb}(M_{\k,\QAC})\otimes_{\CI(M_{\k,\QAC})}\CI(M_{\k,\QAC}; E^*\otimes F).
$$  
\label{kqfb.3b}\end{definition} 

\begin{remark}
In the asymptotically conical setting, the Lie algebra $\cV_{\QAC-\Qb}(M_{\k,\QAC})$ corresponds to the Lie algebra of vector fields of the transition calculus of \cite[\S~3.1]{Kottke}.
\label{kqfb.4}\end{remark}

\begin{definition}
The $\k,\QFB$ double space associated to $(M,\phi)$ and a choice of compatible boundary defining functions is the manifold with corners given by
$$
  \kridx{M^2_{\k,\QFB}}{M2QFBk}{$\k,\QFB$ double space}=[M^2_{\rp}\rttimes [0,\infty)_{\k}; \Phi_{1,+}, \Phi_{1,0}, \ldots, \Phi_{\ell,+}, \Phi_{\ell,0}]
$$
with blow-down map
$$
   \beta_{\k,\QFB}: M^2_{\k,\QFB}\to M^2\times [0,\infty)_{\k},
$$
where $\Phi_{i,+}$ and $\Phi_{i,0}$ denote the lifts of $\Phi_i\times [0,\infty)_{\k}$ and $\Phi_i\times \{0\}$ to $M^2_{rp}\rttimes [0,\infty)_{\k}$.  When the fiber bundles of the maximal boundary hypersurfaces are given by the identity map, we denote the $\k,\QFB$ double space by $M^2_{\k,\QAC}$ and call it the $\k,\QAC$ double space.  
\label{kqfb.5}\end{definition}  
To describe the various boundary hypersurfaces of $M^2_{\k,\QFB}$, we will use the following notation.  With the convention that $H_0:=M$, for $i,j\in \{0,1,\ldots,\ell\}$ not both equal to 0, denote by $H_{ij,0}$ the lift of $H_i\times H_j\times \{0\}$ in $M^2\times [0,\infty)$ to $M^2_{\k,\QFB}$, and by $H_{ij,+}$ the lift of $H_i\times H_j\times [0,\infty)$.  Similarly, let $\ff_{i,0}$ and $\ff_{i,+}$ be the lift of $\ff_i\times \{0\}$ and $\ff_i\times [0,\infty)$ in $M^2_{\QFB}\times [0,\infty)$ to $M^2_{\k,\QFB}$.  Finally, let $H_{00,0}$ be the lift of $M^2\times \{0\}$ to 
$M^2_{\k,\QFB}$.  
\begin{remark}
Since $\Phi_{i,+}$ is blown-up before $\Phi_{i,0}$, notice that $\ff_{i,+}\cap H_{jk,+}=\emptyset$ for all $j,k\in\{0,\ldots,\ell\}$ with $(j,k)\ne (0,0)$.  Moreover, by the proof of Lemma~\ref{new.2}, we have that $\ff_{i,0}\cap H_{j0,0}=\emptyset$ and $\ff_{i,0}\cap H_{0j,0}=\emptyset$ for all $j\in\{1,\ldots,\ell\}$. 
\label{new.3}\end{remark}

In the $\QAC$ setting, we see from \cite{KR0} that the analogue of the Guillarmou-Hassell double space should be the following.

\begin{definition}
If the fiber bundles of the maximal boundary hypersurfaces of $(M,\phi)$ are given by the identity map, the \textbf{$\QAC-\Qb$ double space} of $(M,\phi)$ with its choice of compatible boundary defining functions is the manifold with corners
$$
  \kridx{M^2_{\QAC-\Qb}}{M2QACQb}{$\QAC-\Qb$ double space}:=  [M^2_{\rp}\rttimes [0,\infty)_{\k}; \Phi_{1,+},\Phi_{1,0}, \ldots,\Phi_{\ell'-1,+}, \Phi_{\ell'-1,0}, \Phi_{\ell',+}, \ldots, \Phi_{\ell,+}]
$$
with blow-down map 
$$
       \beta_{\QAC-\Qb}: M^2_{\QAC-\Qb}\to M^2\times [0,\infty)_{\k},
$$
where $H_{\ell'},\ldots, H_{\ell}$ is assumed to be an exhaustive list of maximal boundary hypersurfaces.  
\label{kqfb.10}\end{definition}
We can describe the boundary hypersurfaces of $M^2_{\QAC-\Qb}$ using the following notation.  For $i,j\in \{0,1,\ldots,\ell\}$ not both equal to $0$, denote by $H_{ij,0}^{\Qb}$ and $H_{ij,+}^{\Qb}$ the lifts of $H_i\times H_j\times \{0\}$ and $H_i\times H_j\times [0,\infty)$ in $M^2 \times [0,\infty)$ to $M^2_{\QFB}$.  When $H_i$ is not maximal, let $\ff_{i,0}^{\Qb}$ and $\ff_{i,+}^{\Qb}$ denote the lifts of $\ff_i^{\Qb}\times \{0\}$ and $\ff_i^{\Qb}\times [0,\infty)$ in $M^2_{\Qb}\times [0,\infty)$ to $M^2_{\QAC-\Qb}$.  If instead $H_i$ is maximal, let $\ff_{i,+}^{\Qb}$ denote the lift of $\Phi_i\times [0,\infty)$ in $M^2_{\rp}\times [0,\infty)$ to $M^2_{\QAC-\Qb}$.  Finally, let $H_{00,0}^{\Qb}$ be the lift of $M^2\times \{0\}$ to $M^2_{\QAC-\Qb}$.

The $\k,\QFB$ double space admits natural maps on $M_{\k,\QFB}$.

\begin{lemma}
The projections $\pr_L\times \Id_{[0,\infty)_{\k}}: M^2\times [0,\infty)_{\k}\to M\times [0,\infty_{\k})$ and  $\pr_R\times \Id_{[0,\infty)_{\k}}: M^2\times [0,\infty)_{\k}\to M\times [0,\infty)_{\k}$
lift to $b$-fibrations  $\pi_{\k,L}: M^2_{\k,\QFB}\to M_{\k,\QFB}$ and $\pi_{\k,R}: M^2_{\k,\QFB}\to M_{\k,\QFB}$.  \label{kqfb.17}\end{lemma}
\begin{proof}
The proof is the same for both maps, so we will prove the result for $\pr_L\times \Id_{[0,\infty)_{\k}}$.  By Lemma~\ref{bf.1},  
notice first that the map 
$$
(\pr_L\circ\beta_{\rp})\times \Id_{[0,\infty)_{\k}}: M^2_{\rp}\times [0,\infty)_{\k}\to M\times [0,\infty)_{\k} 
$$
is a $b$-fibration.  By Lemma~\ref{bf.3}, this further lifts to a b-fibration
$$
   M^2_{\rp}\rttimes [0,\infty)_{\k} \to M_{\k,\QFB}= M\rttimes [0,\infty)_{\k}.
$$
Finally, applying Lemma~\ref{bf.1} once more, this lifts to a $b$-fibration
$$
    \pi_{\k,L}: M^2_{\k,\QFB}\to M_{\k,\QFB}.
$$
\end{proof}

There is a similar result for the $\QAC-\Qb$ -double space.
\begin{lemma}
The projections $\pr_L\times \Id_{[0,\infty)_{\k}}: M^2\times [0,\infty)_{\k}\to M\times [0,\infty_{\k})$ and  $\pr_R\times \Id_{[0,\infty)_{\k}}: M^2\times [0,\infty)_{\k}\to M\times [0,\infty)_{\k}$
lift to $b$-fibrations  $\pi^{\Qb}_{\k,L}: M^2_{\QAC-\Qb}\to M_{\k,\QAC}$ and $\pi^{\Qb}_{\k,R}: M^2_{\QAC-\Qb}\to M_{\k,\QAC}$.  
\label{kqfb.18}\end{lemma}
\begin{proof}
The approach is the same as in the proof of Lemma~\ref{kqfb.17}.  
\end{proof}

There is also a natural map from $M^2_{\k,\QFB}$ to $M^2_{\QFB}\times [0,\infty)_{\k}$.
\begin{lemma}
There is a natural surjective $b$-submersion 
\begin{equation}
     M^2_{\k,\QFB}\to M^2_{\QFB}\times [0,\infty)_{\k}.
\label{kqfb.18c}\end{equation}
\label{kqfb.18b}\end{lemma}
\begin{proof}
Since we only blow up corners, notice that the blow-down map
$$
      M^2_{\rp}\rttimes [0,\infty)_{\k}\to M^2_{\rp}\times [0,\infty)_{\k}
$$
is a surjective $b$-submersion.  Applying Lemma~\ref{bf.3}, it lifts to a natural $b$-submersion \eqref{kqfb.18c} as claimed.
\end{proof}
\begin{remark}
By \cite[Lemma~4.4]{KR0}, the surjective $b$-submersion \eqref{kqfb.18c} is actually a blow-down map when $M$ is a manifold with fibered boundary.
\label{kqfb.18d}\end{remark}

Let us denote by $\diag_{\k,\QFB}$ the lift of $\diag_M\times [0,\infty)_{\k}\subset M^2\times [0,\infty)_{\k}$ to $M^2_{\k,\QFB}$, where $\diag_M\subset M^2$ is the diagonal.  Clearly, the lifted diagonal $\diag_{\k,\QFB}$ is a $p$-submanifold.  Similarly, let us denote by $\diag_{\QAC-\Qb}$ the lift of $\diag_M\times [0,\infty)_{\k}$ to $M^2_{\QAC-\Qb}$.  For these lifted diagonals, we have the following transversality results.

\begin{lemma}
The lift of $\cV_{\k,\QFB}(M_{\k,\QFB})$ via the maps $\pi_{\k,L}$ and $\pi_{\k,R}$ are transversal to $\diag_{\k,\QFB}$.  
\label{kqfb.9}\end{lemma}
\begin{proof}
The result is trivial away from the boundary, so let $p\in \diag_{\k,\QFB}\cap \pa M^2_{\k,\QFB}$ be given.  Without loss of generality, we can assume that $\beta_{\k,\QFB}(p)\in (\pa M)^2\times\{0\}$, since the statement is trivial away from $(\pa M)^2\times [0,\infty)_{\k}$ and it follows from Lemma~\ref{ds.2} for $\k>0$.  We need to show that the lemma holds in a neighborhood of $p$ in $M^2_{\k,\QFB}$.    By Remark~\ref{ps.0a}, we only need to consider the result for $\pi_{\k,L}$.  After relabeling the boundary hypersurfaces of $M$ if necessary, we can assume that $H_1\times[0,\infty)_{\k},\ldots,H_k\times[0,\infty)_{\k}$ and $M^2\times\{0\}$ are the boundary hypersurfaces of $M^2\times [0,\infty)_{\k}$ containing $\beta_{\k,\QFB}(p)$ and that
\begin{equation}
  H_1<\cdots < H_k.
\label{lepso.1}\end{equation}   
Then the coordinates \eqref{ds.2c} together with $\k$ give coordinates
\begin{equation}
     x_i, x_i', y_i,y_i', z,z', \k,
\label{lepso.2}\end{equation}
 near $\beta_{\k,\QFB}(p)$ in $M^2\times [0,\infty)_{\k}$.  As indicated in the proof of Lemma~\ref{ds.2}, when we want to lift these coordinates to $M^2_{\rp}\times [0,\infty)_{\k}$, we only need to consider the blow-ups of $H_i\times H_i\times [0,\infty)_{\k}$ for $i\in \{1,\ldots,k\}$, so that  
 the coordinates \eqref{ds.6} together with $\k$ give coordinates on $M^2_{\rp}\times [0,\infty)_{\k}$.  In terms of these coordinates, the lift of $\cV_{K,\QFB}(M\times [0,\infty)_{\k})$ from the left is spanned by the basis of vector fields \eqref{ds.7}.  
 
 Now, in terms of \eqref{lepso.2}, when we blow up $M^2_{\rp}\times [0,\infty)_{\k}$ to obtain $M^2_{\rp}\rttimes [0,\infty)_{\k}$, only the blow-ups of $H_{kk}^{\rp}\times \{0\}, \ldots, H_{11}^{\rp}\times \{0\}$ occur and are near the lifted diagonal.  These blow-ups are implemented by the new coordinate system
\begin{equation}
  \sigma_i, y_i,y_i', z,z', \;i\in\{1,\ldots,k\}, \quad x_k',\ldots, x_{j+1}', \frac{v_j'}{\k}, x_{j-1}',\ldots, x_1', \frac{\k}{v_{j+1}'},
\label{lepso.3}\end{equation}    
near the intersection of the lifts of $H_{jj}^{\rp}\times [0,\infty)_{\k}$ and $H^{\rp}_{jj}\times\{0\}$ when $j\ne 0$ with the convention that $v_{k+1}'=1$, and by the coordinate system 
\begin{equation}
  \sigma_i, x_i',y_i,y_i', z,z', \;i\in\{1,\ldots,k\}, \quad \frac{\k}{v_1'},
\label{lepso.4}\end{equation}
near the intersection of the lifts of $H_{11}^{\rp}\times \{0\}$ and $M^2_{\rp}\times\{0\}$.  In terms of \eqref{lepso.3}, the vector fields of \eqref{ds.7} lift to
\begin{equation}
\begin{aligned}
& v_k'\sigma_k^2\frac{\pa}{\pa \sigma_k}, v_k'\sigma_k\frac{\pa}{\pa y_k^{n_k}}, \ldots, v_{j+1}'\sigma_{j+1}^2\frac{\pa}{\pa \sigma_{j+1}}, 
 v_{j+1}'\sigma_{j+1}\frac{\pa}{\pa y_{j+1}^{n_{j+1}}},  \\
& v_{j+1}'\lrp{\frac{\k}{v_{j+1}'}}\lrp{\frac{v_j'}{\k}}\sigma_j^2\frac{\pa}{\pa \sigma_j}, 
 v_{j+1}'\lrp{\frac{\k}{v_{j+1}'}}\lrp{\frac{v_j'}{\k}}\sigma_j\frac{\pa }{\pa y_j^{n_j}}, \\
 & v_{j+1}'\lrp{\frac{\k}{v_{j+1}'}}\lrp{\frac{v_j'}{\k}} x_{j-1}'\sigma^2_{j-1}\frac{\pa}{\pa \sigma_{j-1}}, v_{j+1}'\lrp{\frac{\k}{v_{j+1}'}}\lrp{\frac{v_j'}{\k}} x_{j-1}'\sigma_{j-1}\frac{\pa}{\pa y_{j-1}^{n_{j-1}}}, \ldots, \\\
  & \hspace{3cm} v_{j+1}'\lrp{\frac{\k}{v_{j+1}'}}\lrp{\frac{v_j'}{\k}} \lrp{\prod_{i=1}^{j-1}x_i'}\sigma^2_{1}\frac{\pa}{\pa \sigma_{1}}, v_{j+1}'\lrp{\frac{\k}{v_{j+1}'}}\lrp{\frac{v_j'}{\k}} \lrp{\prod_{i=1}^{j-1}x_i'}\sigma_{1}\frac{\pa}{\pa y_{1}^{n_{1}}}, \frac{\pa}{\pa z^q},
 \end{aligned}
\label{lepso.5}\end{equation}
while the vector fields keep the same form when we lift with respect to the coordinates \eqref{lepso.4}.  Now, in terms of the coordinates \eqref{lepso.4}, we only need to blow up the lifts of $\Phi_i\times\{0\}$ for $i\in\{1,\ldots,k\}$ to obtain $M^2_{\k,\QFB}$, so we can proceed as in the proof of Lemma~\ref{ds.2} to see that the vector fields in \eqref{ds.7} lift to be transversal to the lifted diagonal.  In terms of the coordinates \eqref{lepso.3}, the blow-ups needed to obtain $M^2_{\k,\QFB}$ are those of the lifts of 
\begin{equation}
\Phi_1\times[0,\infty)_{\k}, \ldots, \Phi_{j-1}\times [0,\infty)_{\k}, \Phi_j\times [0,\infty)_{\k}, \Phi_j\times \{0\}, , \Phi_{j+1}\times \{0\},\ldots, \Phi_k\times\{0\}.
\label{lepso.6a}\end{equation}
Blowing up those submanifolds corresponds to replacing the coordinates \eqref{lepso.3} by
\begin{equation}
 S_i=\frac{\sigma_i-1}{v_i'}, Y_i=\frac{y_i-y_i'}{v_i'}, y_i', i\in\{1,\ldots, k\}, \quad z,z',  \quad x_k',\ldots, x_{j+1}', \frac{v_j'}{\k},x_{j-1}',\ldots, x_1', \frac{\k}{v_{j+1}'},
\label{lepso.6}\end{equation}
so that the vector fields of \eqref{lepso.5} lift to 
\begin{equation}
  (1+v_i'S_i)\frac{\pa}{\pa S_i}, (1+v_i'S_i)\frac{\pa}{\pa Y_i^{n_i}}, \frac{\pa}{\pa z_q}, \quad i\in \{1,\ldots,k\}.
\label{lepso.7}\end{equation}
Since the lifted diagonal is given by the equations
\begin{equation}
    S_i=0, Y_i=0, z=z', \quad i\in\{1,\ldots,k\},
\label{lepso.8}\end{equation}
this local basis of vector fields is clearly transversal to the lifted diagonal $\diag_{\k,\QFB}$, from which the result follows.
\end{proof}

\begin{lemma}
The lift of $\cV_{\QAC-\Qb}(M_{\k,\QAC})$ via the maps $\pi^{\Qb}_{\k,L}$ and $\pi^{\Qb}_{\k,R}$ are transversal to $\diag_{\k,\Qb}$.  
\label{kqfb.11}\end{lemma}
\begin{proof}
As in the proof of Lemma~\ref{kqfb.9}, it suffices to prove the result for $\pi^{\Qb}_{\k,L}$ locally near a point $p\in M^2_{\QAC-\Qb}$ with $\beta_{\QAC-\Qb}(p)\in (\pa M)^2\times \{0\}$ starting with the coordinates \eqref{lepso.2}.  If $H_k$ is not maximal, the argument is exactly the same as in the proof of Lemma~\ref{kqfb.9}.  If instead $H_k$ is maximal, then we can still use the coordinates \eqref{lepso.3} and \eqref{lepso.4} on $M^2_{\rp}\rttimes [0,\infty)_{\k}$, but we must lift the vector fields \eqref{ds.5} and divide them by the boundary defining function of the boundary hypersurface created by the blow-up of $H^{\rp}_{kk}\times\{0\}$.  In terms of the coordinates \eqref{lepso.4}, this yields the vector fields \eqref{ds.13} and we can proceed as in the proof of Lemma~\ref{ds.11} to check that they further lift to be transversal to the lifted diagonal in $M^2_{\QAC-\Qb}$.  In terms of the coordinates \eqref{lepso.3}, this gives instead the vector fields \eqref{lepso.5} multiplied by $(x_k')^{-1}$ if $j<k$ and multiplied  by $\k^{-1}$ if $j=k$.  Compared to \eqref{lepso.6a}, the blow-ups needed in terms of the coordinates \eqref{lepso.3} to obtain $M^2_{\QAC-\Qb}$ are those of the lifts of 
\begin{equation}
\Phi_1\times[0,\infty)_{\k}, \ldots, \Phi_{j-1}\times [0,\infty)_{\k}, \Phi_j\times [0,\infty)_{\k}, \Phi_j\times \{0\}, , \Phi_{j+1}\times \{0\},\ldots, \Phi_{k-1}\times\{0\},
\label{lepso.9}\end{equation}
that is, compared to \eqref{lepso.6a}, the blow-up of the lift of $\Phi_k\times \{0\}$ is omitted.  Blowing up those submanifolds corresponds to introduce the new coordinates
\begin{equation}
S_i=\frac{\sigma_i-1}{w_i'}, Y_i=\frac{y_i-y_i'}{w_i'}, y_i', i<k, \quad  \quad x_k',\ldots, x_{j+1}', \frac{v_j'}{\k},x_{j-1}',\ldots, x_1', \frac{\k}{v_{j+1}'},
\label{lepso.10}\end{equation}
where $w_i'= \frac{v_i'}{x_k'}$ if $j<k$ and $w_i'=\frac{v_i'}{\k}$ if $j=k$, so that the vector fields lift to
\begin{equation}
(1+w_i'S_i)\frac{\pa}{\pa S_i}, (1+w_i'S_i)\frac{\pa}{\pa Y_i^{n_i}}, \; i<k, \quad \sigma_k\frac{\pa}{\pa \sigma_k}, \frac{\pa}{\pa y_k}.\label{lepso.11}\end{equation}
Since the lifted diagonal is given locally by the equations
$$
    S_i=0, Y_i=0, \; i<k, \quad \sigma_k=1, y_k=y_k', 
$$ 
the vector fields \eqref{lepso.11} are clearly transversal to it, from which the result follows.

\end{proof}

These transversality results allow to give a simple description of the Schwartz kernels of differential $\k,\QFB$ operators and differential $\QAC-\Qb$ operators.  Starting with the former, one computes as in \eqref{pdo.3} that in the coordinates \eqref{lepso.6}, the Schwartz kernel of the identity operator takes the form
\begin{equation}
\kappa_{\Id}= \left( \prod_{i=1}^{k} \delta(-S_i)\delta(-Y_i)\delta(z'-z)\right) \beta_{\k,\QFB}^*(\pr_R\times\Id_{[0,\infty)})^*\pr_1^*\nu_{\QFB},
\label{kqfb.19}\end{equation}
where $\pr_1: M\times [0,\infty)\to M$ is the projection on the first factor and $\nu_{\QFB}$ is some non-vanishing $\QFB$ density.  Thus, 
$$
\kappa_{\Id}\in \cD^0(\diag_{\k,\QFB})\cdot \nu^{R}_{\k,\QFB}
$$
with $\nu^{R}_{\k,\QFB}=\beta_{\k,\QFB}^*(\pr_R\times\Id_{[0,\infty)})^*\pr_1^*\nu_{\QFB}$ a lift from the right of some non-vanishing $\k,\QFB$ density.  More generally, by the transversality of Lemma~\ref{kqfb.9}, the Schwartz kernel of an operator $P\in \Diff^m_{\k,\QFB}(M_{\k,\QFB})$ is of the form
\begin{equation}
\kappa_P= \pi^*_{\k,L}P\cdot \kappa_{\Id}\in \cD^m(\diag_{\k,\QFB})\cdot \nu^{R}_{\k,\QFB},
\label{kqfb.20}\end{equation}
where $\cD^m(\diag_{\k,\QFB})$ is the space of smooth delta distributions of order $m$ supported on $\diag_{\k,\QFB}$,
$$
          \cD^{m}(\diag_{k,\QFB})=\Diff^m(M^2_{\k,\QFB})\cdot \cD^0(\diag_{\QFB}).
$$
In fact, by the transversality of Lemma~\ref{kqfb.9}, the space $\cD^m(\diag_{\k,\QFB})\cdot \nu^{R}_{\k,\QFB}$ exactly corresponds to the space of Schwartz kernels of differential $\k,\QFB$ operators of order $m$. This suggests the following definition for the small calculus of pseudodifferential $\k,\QFB$ operators.

\begin{definition}
The \textbf{small calculus of pseudodifferential $\k,\QFB$ operators} is the union over all $m\in\bbR$ of the spaces
\begin{multline}
\kridx{\Psi^{m}_{\k,\QFB}}{PsiQFBk}{$\k,\QFB$ pseudodifferential operators (small calculus)}(M;E,F):=\{ K\in I^{m-\frac14} (M^2_{\k,\QFB};\diag_{\k,\QFB}; \Hom_{\k,\QFB}(E,F)\otimes {}^{\k,\QFB}\Omega_R ); \\
  \kappa\equiv 0 \; \mbox{at}\; \pa M^2_{\k,\QFB}\setminus \ff_{\k,\QFB} \},
\label{kqfb.21b}\end{multline}
where $\ff_{\k,\QFB}$ is the union of the boundary hypersurfaces of $M^2_{\k,\QFB}$ intersecting the lifted diagonal $\diag_{\k,\QFB}$, 
$$
  \Hom_{\k,\QFB}(E,F):= \pi_{\k,L}^*\pr_1^*F \otimes \pi_{\k,R}^*\pr_1^*E^*
$$
and  
$$
\kridx{{}^{\k,\QFB}\Omega}{OQFBk}{right $\k,\QFB$ density bundle}_R:= \beta_{\k,\QFB}^*(\pr_R\times \Id_{[0,\infty)})^*\pr_1^* {}^{\QFB}\Omega.
$$
We also denote by $\dot{\Psi}^{-\infty}_{\k,\QFB}(M;E,F)$ the subspace of $\Psi^{-\infty}_{\k,\QFB}(M;E;F)$ consisting of those operators with Schwartz kernels vanishing rapidly at all boundary hypersurfaces of $M^2_{\k,\QFB}$.
\label{kqfb.21}\end{definition}

For differential $\QAC-\Qb$ operators, there is a parallel description.  First, using coordinates \eqref{lepso.10}, one can check that the Schwartz kernel of the identity operator takes the form
\begin{equation}
  \kappa_{\Id}\in \cD^0(\diag_{\QAC-\Qb})\cdot \nu_{\QAC-\Qb},
\label{kqfb.22}\end{equation}
where 
$$
   \nu_{\QAC-\Qb}= (\pi^{\Qb}_{\k,R})^*[(x^{\dim M+1}_{\max,0})(\beta^1_{\k,\QAC})^* \pr_1^*\nu_{\QAC}]
$$
with $\nu_{\QAC}$ a non-vanishing $\QAC$ density on $M$.  Hence, by the transversality of Lemma~\ref{kqfb.11}, we see more generally that differential $\QAC-\Qb$ operators of order $m$ corresponds to the space of Schwartz kernels
\begin{equation}
       \cD^m(\diag_{\QAC-\Qb})\cdot \nu_{\QAC-\Qb}.
\label{kqfb.23}\end{equation}
Let 
\begin{equation}
{}^{\QAC-\Qb}\Omega
\label{kqfb.23b}\end{equation}
 be the density bundle on $M_{\k,\QAC}$ whose space of sections is 
$$
        \CI(M_{\k,\QAC})\cdot x_{\max,0}^{\dim M +1} (\beta^1_{\k,\QAC})^*\pr_1^*\nu_{\QAC}.
$$
\begin{definition}
The small calculus of pseudodifferential $\QAC-\Qb$ operators is the union over $m\in\bbR$ of the spaces
\begin{multline}
\kridx{\Psi^m_{\QAC-\Qb}}{PsiQACQb}{$\QAC-\Qb$ pseudodifferential operators (small calculus)}(M;E,F):= \{ \kappa\in I^{m-\frac14}(M^2_{\QAC-\Qb};\diag_{\QAC-\Qb}; \Hom_{\QAC-\Qb}(E,F)\otimes 
(\pi_{\k,R}^{\Qb})^{*}{}^{\QAC-\Qb}\Omega); \\
\kappa\equiv 0 \; \mbox{at} \; \pa M^2_{\QAC-\Qb}\setminus \ff_{\QAC-\Qb}\},
\label{kaqfb.24b}\end{multline}
where $\ff_{\QAC-\Qb}$ is the union of the boundary hypersurfaces of $M^2_{\QAC-\Qb}$ intersecting the lifted diagonal $\diag_{\QAC-\Qb}$ and 
$$
    \Hom_{\QAC-\Qb}(E,F)= (\pi^{\Qb}_{\k,L})^*\pr_1^*F \otimes (\pi^{\Qb}_{\k,R})^*\pr_1^*E^*.
$$
\label{kqfb.24}\end{definition}

More generally, given an index family $\cE$ for $M^2_{\k,\QFB}$ and a multiweight $\mathfrak{s}$, we can consider the spaces of pseudodifferential operators
\begin{equation}
\begin{gathered}
\Psi^{-\infty,\cE/\mathfrak{s}}_{\k,\QFB}(M;E,F):= \sA^{\cE/\mathfrak{s}}_{\phg}(M^2_{\k,\QFB};\Hom_{\k,\QFB}(E,F)\otimes {}^{\k,\QFB}\Omega_R), \\
\Psi^{m,\cE/\mathfrak{s}}_{\k,\QFB}(M;E,F):= \Psi^{m}_{\k,\QFB}(M;E,F)+ \Psi^{-\infty,\cE/\mathfrak{s}}_{\k,\QFB}(M;E,F), \quad m\in\bbR.
\end{gathered}
\label{kqfb.25}\end{equation}

Similarly, for $\cE$ an index family for $M^2_{\QAC-\Qb}$ and a multiweight $\mathfrak{s}$, we can consider the spaces of pseudodifferential operators
\begin{equation}
\begin{gathered}
\Psi^{-\infty,\cE/\mathfrak{s}}_{\QAC-\Qb}(M;E,F):= \sA^{\cE/\mathfrak{s}}_{\phg}(M^2_{\QAC-\Qb};\Hom_{\QAC-\Qb}(E,F)\otimes (\pi^{\Qb}_{\k,R})^{*}{}^{\QAC-\Qb}\Omega_R), \\
\Psi^{m,\cE/\mathfrak{s}}_{\QAC-\Qb}(M;E,F):= \Psi^{m}_{\QAC-\Qb}(M;E,F)+ \Psi^{-\infty,\cE/\mathfrak{s}}_{\QAC-\Qb}(M;E,F).
\end{gathered}
\label{kqfb.26}\end{equation}

As for $\QFB$ and $\Qb$ operators, we will need weakly conormal versions of these calculi.  
\begin{definition}
For $E\to M$ a vector bundle over $M$, the space of $\k,\QFB$ conormal sections is defined by 
\begin{multline}
\cA_{\k,\QFB}(M_{\k,\QFB};E)= \{ \sigma\in L^\infty(M_{\k,\QFB};E)\; | \;  \forall p,q\in\bbN_0, \\ \forall X_1,\ldots,X_p\in \cV_{\k,\QFB}(M_{\k,\QFB}), \;
 \left(\k\frac{\pa}{\pa \k} \right)^q\nabla_{X_1}\cdots\nabla_{X_p}\sigma\in L^\infty(M_{\k,\QFB};E)\}, 
\end{multline} 
Similarly, the space of $\QAC-\Qb$ conormal sections is defined by
\begin{multline}
\cA_{\QAC-\Qb}(M_{\k,\QAC};E)= \{ \sigma\in L^\infty(M_{\k,\QAC};E)\; | \;  \forall p,q\in\bbN_0, \\ \forall X_1,\ldots,X_p\in \cV_{\QAC-\Qb}(M_{\k,\QAC}), \;
 \left(\k\frac{\pa}{\pa \k} \right)^q\nabla_{X_1}\cdots\nabla_{X_p}\sigma\in L^\infty(M_{\k,\QAC};E)\}.
\end{multline} \label{wc.1}\end{definition}
More generally, we can consider the weighted and slightly enlarged spaces of weakly conormal sections
\begin{equation}
\cA^{\mathfrak{s}}_{\k,\QFB,-}(M_{\k,\QFB};E)= \bigcap_{\mathfrak{t}<\mathfrak{s}} \rho^{\mathfrak{t}}\cA_{\k,\QFB}(M_{\k,\QFB};E) 
\label{wc.2}\end{equation}
and
\begin{equation}
\cA^{\mathfrak{s}}_{\QAC-\Qb,-}(M_{\k,\QAC};E)= \bigcap_{\mathfrak{t}<\mathfrak{s}} \rho^{\mathfrak{t}}\cA_{\QAC-\Qb}(M_{\k,\QAC};E) 
\label{wc.3}\end{equation}
for $\mathfrak{s}$ a multiweight for $M_{\k,\QFB}$ or $M_{\k,\QAC}$.  Similarly, on $M^2_{\k,\QFB}$ and $M^2_{\QAC-\Qb}$, we can define the spaces of weakly conormal functions 
\begin{multline}
\cA_{\k,\QFB}(M^2_{\k,\QFB})= \{ \kappa\in L^\infty(M^2_{\k,\QFB})\; | \;  \forall p,q,r\in\bbN_0, \;  \forall X_1,\ldots,X_{p+q}\in \cV_{\k,\QFB}(M_{\k,\QFB}), \\
 \left(\k\frac{\pa}{\pa \k} \right)^r\left(\pi^*_{\k,L}X_{1}\right)\cdots\left(\pi^*_{\k,L}X_{p}\right)\left(\pi^*_{\k,R}X_{p+1}\right)\cdots\left(\pi^*_{\k,R}X_{p+q}\right)\kappa\in L^\infty(M^2_{\k,\QFB})\}
\end{multline} 
and 
\begin{multline}
\cA_{\QAC-\Qb}(M^2_{\QAC-\Qb})= \{ \kappa\in L^\infty(M^2_{\QAC-\Qb})\; | \;  \forall p,q,r\in\bbN_0, \;  \forall X_1,\ldots,X_{p+q}\in \cV_{\QAC-\Qb}(M_{\k,\QAC}), \\
 \left(\k\frac{\pa}{\pa \k} \right)^r\left((\pi^{\Qb}_{\k,L})^*X_1\right)\cdots\left((\pi^{\Qb}_{\k,L})^*X_p\right)\left((\pi^{\Qb}_{\k,R})^*X_{p+1}\right)\cdots\left((\pi^{\Qb}_{\k,R})^*X_{p+q}\right)\kappa\in L^\infty(M^2_{\QAC-\Qb})\}, 
\end{multline} 
as well as the weighted versions 
$$
   \cA^{\mathfrak{s}}_{\k,\QFB}(M^2_{\k,\QFB})= \bigcap_{\mathfrak{t}<\mathfrak{s}} \rho^{\mathfrak{t}} \cA_{\k,\QFB}(M^2_{\k,\QFB})
$$
and 
$$
   \cA^{\mathfrak{s}}_{\QAC-\Qb}(M^2_{\QAC-\Qb})= \bigcap_{\mathfrak{t}<\mathfrak{s}} \rho^{\mathfrak{t}} \cA_{\QAC-\Qb}(M^2_{\QAC-\Qb})
$$
for $\mathfrak{s}$ a multiweight for $M^2_{\k,\QFB}$ or $M^2_{\QAC-\Qb}$.  If $\cB$ is any boundary hypersurface of $M^2_{\cV}$, where $\cV$ stands for either $\k,\QFB$ or $\QAC-\Qb$, then we can consider the space 
\begin{equation}
\cA_{\cV}(\cB)= \{  \kappa\in L^\infty(\cB)\; | \; \widetilde{\kappa}\in \cA_{\cV}(M^2_{\cV}) \},
\label{wc.4}\end{equation}
where $\widetilde{\kappa}$ is a smooth extension off $\cB$, and correspondingly the space
$$
    \cA^{\mathfrak{s}}_{\cV,-}(\cB)=\bigcap_{\mathfrak{t}<\mathfrak{s}} \rho^{\mathfrak{t}}\cA_{\cV}(\cB)
$$
for $\mathfrak{s}$ a multiweight for the manifold with corners $\cB$.  We can then proceed as in \eqref{ext.5} and \eqref{ext.7}, but with $\cV$ corresponding to $\k,\QFB$ or $\QAC-\Qb$, to define the space 
\begin{equation}
\cA^{\cE/\mathfrak{s}}_{\cV,\phg}(M^2_{\cV};E)
\label{wc.5}\end{equation}
of partially polyhomogeneous sections of $E$, where $\cE$ is an index family and $\mathfrak{s}$ is a multiweight both associated to the manifold with corners $M^2_{\cV}$.  We can now introduce the larger class of $\k,\QFB$ operators that we need.

\begin{definition}
Let $E,F$ be vector bundles on $M$.  For $\cE$ an index family for $M^2_{\k,\QFB}$ and $\mathfrak{s}$ a multiweight, we can consider the spaces of weakly conormal $\k,\QFB$ pseudodifferential operators
\begin{equation}
\begin{gathered}
    \Psi^{-\infty,\cE/\mathfrak{s}}_{\k,\QFB,\cn}(M;E,F):= \cA^{\cE/\mathfrak{s}}_{\k,\QFB,\phg}(M^2_{\k,\QFB};\Hom_{\k,\QFB}(E,F)\otimes {}^{\k,\QFB}\Omega_R)), \\
    \kridx{\Psi^{m,\cE/\mathfrak{s}}_{\k,\QFB,\cn}}{PsiQFBkEscn}{weakly conormal $\k,\QFB$ pseudodifferential operators}(M;E,F):=\Psi^{m}_{\k,\QFB}(M;E,F)  + \Psi^{-\infty,\cE/\mathfrak{s}}_{\k,\QFB,\cn}(M;E,F)  \quad m\in \bbR. \end{gathered}
\label{wc.6a}\end{equation}
\label{wc.6}\end{definition}
In particular, for $\mathfrak{s}$ a multiweight, we can consider the space of operator
\begin{equation}
      \Psi^{-\infty,\mathfrak{s}}_{\k,\QFB,\cn}(M;E,F):= \Psi^{-\infty,\emptyset/\mathfrak{s}}_{\k,\QFB,\cn}(M;E,F).
\label{wc.7}\end{equation}
Such a multiweight will be said to be \textbf{$\k,\QFB$ positive} if 
$$
        \mathfrak{s}(H_{ij,0})> h_j, \; \mathfrak{s}(H_{ij,+})=\infty\quad  \forall i,j \quad \mbox{and}  \quad \mathfrak{s}(\ff_{i,0})>0, \; \mathfrak{s}(\ff_{i,+})=\infty \quad \forall i>0,
$$
where we recall that $h_0=0$ and $ h_j=\bd_j+1$ for $j\ne 0$.  Similarly, an index family $\cE$ will be said to be \textbf{$\k,\QFB$ nonnegative} if 
$$
        \inf\Re(\cE(H_{ij,0}))> h_j, \; \cE(H_{ij,+})=\emptyset \quad  \forall i,j \quad \mbox{and}  \quad \inf\Re(\cE(\ff_{i,0}))\ge 0, \; \cE(\ff_{i,+})=\bbN_0 \; \forall i>0.
$$

There is a corresponding definition for $\QAC-\Qb$ operators.
\begin{definition}
Let $E,F$ be vector bundles on $M$.  For $\cE$ an index family for $M^2_{\QAC-\Qb}$ and $\mathfrak{s}$ a multiweight, we can consider the spaces of \textbf{weakly conormal $\QAC-\Qb$ pseudodifferential operators}
\begin{equation}
\begin{gathered}
\Psi^{-\infty,\cE/\mathfrak{s}}_{\QAC-\Qb,\cn}(M;E,F):= \cA^{\cE/\mathfrak{s}}_{\QAC-\Qb,\phg}(M^2_{\QAC-\Qb};\Hom_{\QAC-\Qb}(E,F)\otimes (\pi^{\Qb}_{\k,R})^*{}^{\QAC-\Qb}\Omega)), \\
    \kridx{\Psi^{m,\cE/\mathfrak{s}}_{\QAC-\Qb,\cn}}{PsiQACQbEscn}{weakly conormal $\QAC-\Qb$ pseudodifferential operators}(M;E,F):=\Psi^{m}_{\QAC-\Qb}(M;E,F)  + \Psi^{-\infty,\cE/\mathfrak{s}}_{\QAC-\Qb,\cn}(M;E,F)  \quad m\in \bbR.     \end{gathered}
\label{wc.9b}\end{equation}
 \label{wc.9}\end{definition}   
 Similarly, if $\mathfrak{s}$ is a multiweight, we can consider the space
 $$
    \Psi^{-\infty,\mathfrak{s}}_{\QAC-\Qb,\cn}(M;E,F)= \Psi^{-\infty,\emptyset/\mathfrak{s}}_{\QAC-\Qb,\cn}(M;E,F).
 $$
 We say that such a multiweight $\mathfrak{s}$ is \textbf{$\QAC-\Qb$ positive} if   
 $$
        \mathfrak{s}(H_{ij,0})> \left\{ \begin{array}{ll} 
      0, & H_j \; \mbox{maximal} \\
       h_j, &  \mbox{otherwise},  \end{array} \right. \; \mathfrak{s}(H_{ij,+})=\infty \quad  \forall i,j,  \quad \mathfrak{s}(\ff_{i,0})>0 \; (H_i \mbox{non-maximal}),\; \mathfrak{s}(\ff_{i,+})=\infty \quad \forall i>0. 
$$
 Similarly, we say that an index family $\cE$ is \textbf{$\QAC-\Qb$ nonnegative} if  
 $$
 \begin{aligned}
        \inf \Re(\cE(H_{ij,0})) \left\{ \begin{array}{ll} 
      >0, & H_j \; \mbox{maximal}, i\ne j \\
       \ge 0, & H_j \; \mbox{maximal}, i= j \\
       h_j, &  \mbox{otherwise},  \end{array} \right. \; \cE(H_{ij,+})=\emptyset \; \forall i,j,   \\ 
       \quad \inf\Re(\cE(\ff_{i,0}))\ge 0\; (H_i \; \mbox{non-maximal}), \; \cE(\ff_{i,+})=\bbN_0  \quad \forall i>0.
\end{aligned}       
$$

 \begin{remark}
By Lemma~\ref{kqfb.11} and the fact that $\k,\QFB$ vector fields lift from the left and from the right to $b$-vector fields on $M^2_{\cV}$, we see that the space $\Psi^{-\infty,\cE/\mathfrak{s}}_{\k,\QFB,\cn}(M;E,F)$ is stable under left or right composition with $\nabla^F_\xi$ and $\nabla^E_{\xi}$ respectively, where $\xi \in\cV_{\k,\QFB}(M_{\k,\QFB})$ and $\nabla^E$ and $\nabla^F$ are choices of connections for $E$ and $F$.  
\label{wc.11}\end{remark}

\section{The $\k,\QFB$ triple space} \label{tkqfb.0}

To obtain composition results for $\k,\QFB$ operators, we need first to introduce a corresponding $\k,\QFB$ triple space.  To this end, let us start with the Cartesian product $M^3\times [0,\infty)_{\k}$ and consider the projections 
$$
      \pr_{\k,o}= \pr_L\times \Id \quad \mbox{for}  \; o\in \{L,C,R\}
$$
with $\pr_o$ defined in \eqref{ts.1a}.  By Proposition~\ref{ts.8}, for $o=L,C,R$, the projection $\pr_{\k,o}$ lifts to a $b$-fibration
\begin{equation}
    \pi^{\rp}_o\times \Id: M^3_{\rp}\times [0,\infty)_{\k}\to M^2_{\rp}\times [0,\infty)_{\k}.
\label{tkqfb.1}\end{equation}
\begin{lemma}
For $o=L,C,R$, the $b$-fibration $\pi^{\rp}_{o}\times \Id$ lifts to a $b$-fibration
$$
     \pi^{\rp}_{\k,o}: M^3_{\rp}\rttimes [0,\infty)_{\k}\to M^2_{\rp}\rttimes [0,\infty)_{\k}.
$$
\label{tkqfb.2}\end{lemma}
\begin{proof}
By symmetry, it suffices to prove the result for $o=L$.  By definition of the partial order on $M^3_{\rp}$ and $M^2_{\rp}$, the blow-ups required to obtain $M^3\rttimes [0,\infty)_{\k}$ and $M^2_{\rp}\rttimes [0,\infty)_{\k}$ can be done in lexicographic order with respect to the reverse order, namely, on $M^2_{\rp}\times [0,\infty)_{\k}$, we can blow up 
\begin{equation}
\begin{aligned}
& H^{\rp}_{\ell \ell}\times \{0\}, H^{\rp}_{\ell (\ell-1)}\times \{0\}, \ldots, H^{\rp}_{\ell 0}\times \{0\}, H^{\rp}_{(\ell-1)\ell}\times \{0\}, \ldots, H_{(\ell-1)0}\times \{0\},..., 
 H^{\rp}_{1\ell}\times\{0\}, \ldots, H^{\rp}_{10}\times \{0\}, \\ 
 & H^{\rp}_{0\ell}\times \{0\},\ldots, H^{\rp}_{01}\times \{0\}
\end{aligned}
\label{tkqfb.3}\end{equation}
in this order, while on $M^3_{\rp}\times [0,\infty)_{\k}$, we can blow up 
\begin{equation}
\begin{aligned}
& H^{\rp}_{\ell \ell\ell}\times\{0\}, H^{\rp}_{\ell \ell (\ell-1)}\times \{0\}, \ldots, H^{\rp}_{\ell \ell 0}\times \{0\}, H^{\rp}_{\ell(\ell-1)\ell}\times \{0\},\ldots, H^{\rp}_{\ell(\ell-1)0}\times\{0\}, \ldots, H^{\rp}_{\ell0\ell}\times \{0\},\ldots, H^{\rp}_{\ell 0 0}\times \{0\}, \\
& H^{\rp}_{(\ell-1)\ell \ell}\times \{0\},\ldots, H^{\rp}_{(\ell-1)\ell 0}\times \{0\}, H^{\rp}_{(\ell-1)(\ell-1)\ell}\times \{0\}, \ldots, H^{\rp}_{(\ell-1)(\ell-1)0}\times \{0\},\ldots, H^{\rp}_{10\ell}\times\{0\}, \ldots, H^{\rp}_{100}\times\{0\}, \\
& H^{\rp}_{00\ell}\times\{0\},\ldots, H^{\rp}_{001}\times \{0\}
\end{aligned}
\label{tkqfb.4}\end{equation}
in that order.  Let $\cS$ be the ordered collection of $p$-submanifolds \eqref{tkqfb.4} with the last $\ell$ $p$-submanifolds 
\begin{equation}
    H^{\rp}_{00\ell}\times\{0\},\ldots, H^{\rp}_{001}\times \{0\} 
\label{tkqfb.5}\end{equation}
removed.  Applying Lemma~\ref{bf.3}, we see that $\pi^{\rp}_L\times \Id$ lifts to a $b$-fibration
\begin{equation}
   [M^3_{\rp}\times [0,\infty)_{\k};\cS]\to M^2_{\rp}\rttimes [0,\infty)_{\k}.
\label{tkqfb.6}\end{equation}
Applying Lemma~\ref{bf.1} to the blow-ups of \eqref{tkqfb.5}, we then see that \eqref{tkqfb.6} further lifts to a $b$-fibration
$$
     \pi^{\rp}_{\k,L}: M^3_{\rp}\rttimes [0,\infty)_{\k}\to M^2_{\rp} \rttimes [0,\infty)_{\k}
$$
as claimed.
\end{proof}

Similarly,  applying Lemma~\ref{bf.1}, we see that the projection on the first factor 
$$
   \pr_1: M^3_{\rp}\times [0,\infty)_{\k}\to M^3_{\rp}
$$
lifts to a $b$-fibration
\begin{equation}
  \widetilde{\pr}_1: M^3_{\rp}\rttimes [0,\infty)_{\k}\to M^3_{\rp}.
\label{tkqfb.7}\end{equation}
For $i,j,k$ not all equal to zero, let $H^{\rp}_{ijk,0}$ and $H^{\rp}_{ijk,+}$ be the boundary hypersurfaces of $M^3_{\rp}\rttimes [0,\infty)_{\k}$ corresponding to the lifts of $H^{\rp}_{ijk}\times \{0\}$ and $H^{\rp}_{ijk}\times [0,\infty)_{\k}$.  Let also $H^{\rp}_{000,0}$ be the boundary hypersurface of $M^3_{\rp}\rttimes [0,\infty)_{\k}$ corresponding to the lift of $M^3_{\rp}\times \{0\}$.  Now, recall from Definition~\ref{ts.11} that, to define the $\QFB$ triple space out of $M^3_{\rp}$, we need to blow the $p$-submanifolds 
$$
       \{\Phi_{i,T}, \{\Phi^L_{ij}\}_{j=0}^\ell,\{\Phi^C_{ij}\}_{j=0}^\ell, \{\Phi^R_{ij}\}_{j=0}^\ell\}
$$
for $i\in\{1,\ldots,\ell\}$ with $i$ and $j$ increasing.  Under the $b$-fibration \eqref{tkqfb.7}, each of these $p$-submanifolds lifts to two $p$-submanifolds on $M^3_{\rp}\rttimes [0,\infty)_{\k}$.  More precisely, we have that
$$
       \widetilde{\pr}_1^{-1}(\Phi_{i,T})= \Phi_{i,T,0}\cup \Phi_{i,T,+} 
$$
for $p$-submanifolds  $\Phi_{i,T,0}\subset H^{\rp}_{iii,0}$ and $\Phi_{i,T,+}\subset H^{\rp}_{iii,+}$ while 
$$
     \widetilde{\pr}_1^{-1}( \Phi^{L}_{ij})= \Phi^L_{ij,0}\cup \Phi^L_{ij,+}
$$
for $p$-submanifolds $\Phi^L_{ij,0}\subset H^{\rp}_{iij,0}$ and $\Phi^L_{ij,+}\subset H^{\rp}_{iij,+}$ with similar normal families $\{\Phi^C_{ij,0}, \Phi^C_{ij,+}\}$ and $\{\Phi^R_{ij,0}, \Phi^R_{ij,+}\}$ for the lifts of $\Phi^C_{ij}$ and $\Phi^R_{ij}$.  

\begin{definition}
The $\k, \QFB$ triple space $\kridx{M^3_{\k,\QFB}}{M3QFBk}{$\k,\QFB$ triple space}$ is the manifold with corners obtained from $M^3_{\rp}\rttimes [0,\infty)_{\k}$ by blowing up the sequence of families of $p$-submanifolds 
\begin{equation}
\{\Phi_{i,T,+}, \{\Phi^L_{ij,+}\}_{j=0}^{\ell}, \{\Phi^C_{ij,+}\}_{j=0}^{\ell}, \{\Phi^R_{ij,+}\}_{j=0}^{\ell}, \Phi_{i,T,0}, \{\Phi^L_{ij,0}\}_{j=0}^{\ell},\{\Phi^C_{ij,0}\}_{j=0}^{\ell}, \{\Phi^R_{ij,0}\}_{j=0}^{\ell}\}
\label{tkqfb.8a}\end{equation}
for $i\in\{1,\ldots,\ell\}$ with $i$ and $j$ increasing.  We denote by 
\begin{equation}
  \beta_{\k,\QFB}^3: M^3_{\k,\QFB}\to M^3\times [0,\infty)_{\k}
\label{tkqfb.8b}\end{equation}
the corresponding blow down map.
\label{tkqfb.8}\end{definition}

Similarly, in the $\QAC$ setting, we can define the $\QAC-\Qb$ triple space as follows.  
\begin{definition}
Suppose that the maximal boundary hypersurfaces of $M$ are given by $H_{k+1},\ldots,H_{\ell}$ and are such that 
$S_i=H_i$ with $\phi_i$ the identity map.  In this case, the $\QAC-\Qb$ triple space $\kridx{M^3_{\QAC-\Qb}}{M3QACQb}{$\QAC-\Qb$ triple space}$ is obtained from $M^3_{\rp}\rttimes[0,\infty)_{\k}$ by blowing up the sequence of families of $p$-submanifolds \eqref{tkqfb.8a} for $i\in\{1,\ldots,k\}$ with $i$ and $j$ increasing followed by the sequence of families of $p$-submanifolds
\begin{equation}
\{\Phi_{i,T,+}, \{\Phi^L_{ij,+}\}_{j=0}^{\ell}, \{\Phi^C_{ij,+}\}_{j=0}^{\ell}, \{\Phi^R_{ij,+}\}_{j=0}^{\ell}\}
\label{tkqfb.9a}\end{equation}
for $i\in\{k+1,\ldots,\ell\}$ with $i$ and $j$ increasing.  
\label{tkqfb.9}\end{definition}
\begin{proposition}
The projection $\pr_o\times \Id: M^3\times [0,\infty)_{\k}\to M^2\times [0,\infty)_{\k}$ for $o\in \{L,C,R\}$ lifts to a $b$-fibration
$$
      \pi^3_{\k,\QFB,o}: M^3_{\k,\QFB}\to M^2_{\k,\QFB}.
$$
Similarly, in the $\QAC$ setting, it lifts to a $b$-fibration 
$$
    \pi^3_{\QAC-\Qb,o}: M^3_{\QAC-\Qb}\to M^2_{\QAC-\Qb}.
$$
\label{tkqfb.10}\end{proposition}
\begin{proof}
The proof is similar to the one of Proposition~\ref{ts.23}.  First, by Lemma~\ref{tkqfb.2}, we know that $\pr_o\times \Id$ lifts to a $b$-fibration
$$
    \pi^{\rp}_{\k,o}: M^3_{\rp}\rttimes [0,\infty)_{\k}\to M^2_{\rp}\rttimes[0,\infty)_{\k},
$$
so it suffices to show that $\pi^{\rp}_{\k,o}$ lifts to a $b$-fibration
$$
    \pi^3_{\k,\QFB}: M^3_{\k,\QFB}\to M^2_{\k,\QFB}
$$
and similarly for the $\QAC-\Qb$ triple space.  Now, in the sequence of blow-ups 
\begin{equation}
\{\Phi_{i,T,+}, \{\Phi^L_{ij,+}\}_{j=0}^{\ell}, \{\Phi^C_{ij,+}\}_{j=0}^{\ell}, \{\Phi^R_{ij,+}\}_{j=0}^{\ell}\},
\label{tkqfb.11}\end{equation}
notice that by Proposition~\ref{ts.18}, the first blow-up separates $\Phi^L_{ii,+}$, $\Phi^C_{ii,+}$ and $\Phi^R_{ii,+}$.  Combined with Lemma~\ref{ts.12}, this means that in this sequence, the blow-ups after the one of $\Phi_{i,T}$ commute when those are associated to distinct $o,o'\in\{L,C,R\}$.  Replacing $+$ by $0$, a similar observation applies to the sequence of blow-ups
$$
\{\Phi_{i,T,0}, \{\Phi^L_{ij,0}\}_{j=0}^{\ell}, \{\Phi^C_{ij,0}\}_{j=0}^{\ell}, \{\Phi^R_{ij,0}\}_{j=0}^{\ell}\}.
$$
This means that without loss of generality, we can assume that $o=L$.  Looking first at the sequence of blow-ups \eqref{tkqfb.11} for $i=1$, set
$$
  M^3_{\k,\QFB,1+}:= [M^3_{\rp}\rttimes[0,\infty)_{\k}; \Phi_{1,T,+}, \{\Phi^L_{1j,+}\}_{j=0}^{\ell}, \{\Phi^C_{1j,+}\}_{j=0}^{\ell}, \{\Phi^R_{1j,+}\}_{j=0}^{\ell}].
$$ 
As in the proof of Proposition~\ref{ts.23}, we cannot apply Lemma~\ref{bf.3} and Lemma~\ref{bf.1} directly to show that $\pi^{\rp}_{\k,L}$ lifts to a $b$-fibration
\begin{equation}
   [M^3_{\rp}\rttimes[0,\infty)_{\k}; \Phi_{1,T,+}, \{\Phi^L_{1j,+}\}_{j=0}^{\ell}]   \to [M^2_{\rp}\rttimes [0,\infty)_{\k};\Phi_{1,+}],
\label{tkqfb.12}\end{equation}
the problem being that the blow-ups of $\Phi_{1,T,+}$ and $\Phi^{L}_{1j,+}$ do not necessarily commute when $j>1$.  However, as in the proof of Proposition~\ref{ts.23}, the proof of Lemma~\ref{bf.3} can still be applied even if we blow up first $\Phi_{i,T}$, so that we can still conclude that $\pi^{\rp}_{\k,L}$ lifts to a $b$-fibration \eqref{tkqfb.12}.  Applying Lemma~\ref{bf.1}, this further lifts to a $b$-fibration
\begin{equation}
  M^3_{\k,\QFB,1+}\to  [M^2_{\rp}\rttimes [0,\infty)_{\k};\Phi_{1,+}].
\label{tkqfb.13}\end{equation}
Setting 
$$
  M^3_{\k,\QFB,1}:= [M^3_{\k,\QFB,1+}; \Phi_{1,T,0}, \{\Phi^L_{1j,0}\}_{j=0}^{\ell}, \{\Phi^C_{1j,0}\}_{j=0}^{\ell}, \{\Phi^R_{1j,0}\}_{j=0}^{\ell}],
$$
this argument can be used again to show that the $b$-fibration \eqref{tkqfb.13} lifts to a $b$-fibration
$$
    M^3_{\k,\QFB,1}\to  [M^2_{\rp}\rttimes [0,\infty)_{\k};\Phi_{1,+},\Phi_{1,0}].
$$
Defining recursively 
$$
   M^3_{\k,\QFB,i}:= [M^3_{\k,\QFB,i-1}; \Phi_{i,T,+}, \{\Phi^L_{ij,+}\}_{j=0}^{\ell}, \{\Phi^C_{ij,+}\}_{j=0}^{\ell}, \{\Phi^R_{ij,+}\}_{j=0}^{\ell}, \Phi_{i,T,0}, \{\Phi^L_{ij,0}\}_{j=0}^{\ell}, \{\Phi^C_{ij,0}\}_{j=0}^{\ell}, \{\Phi^R_{ij,0}\}_{j=0}^{\ell}],
$$
we can more generally iterate this argument to show that $\pi^{\rp}_{\k,L}$ lifts to a $b$-fibration
$$
M^3_{\k,\QFB,i}\to  [M^2_{\rp}\rttimes [0,\infty)_{\k};\Phi_{1,+},\Phi_{1,0},\ldots,\Phi_{i,+},\Phi_{i,0}],
$$
so that the result follows for the $\k,\QFB$ triple space by taking $i=\ell$.  For the $\QAC-\Qb$ triple space, the same argument works, except that we should set 
$$
    M^3_{\QAC-\Qb,k}=M^3_{\k,\QFB,k}
$$
and define recursively 
$$
      M^3_{\QAC-\Qb,i}= [M^3_{\QAC-\Qb,i-1}; \Phi_{i,T,+}, \{\Phi^L_{ij,+}\}_{j=0}^{\ell}, \{\Phi^C_{ij,+}\}_{j=0}^{\ell}, \{\Phi^R_{ij,+}\}_{j=0}^{\ell}]
$$
for $i>k$ to conclude that $\pi^{\rp}_{\k,L}$ lifts to a $b$-fibration
$$
     M^3_{\QAC-\Qb,i}\to  [M^2_{\rp}\rttimes [0,\infty)_{\k};\Phi_{1,+},\Phi_{1,0},\ldots,\Phi_{k,+},\Phi_{k,0}, \Phi_{k+1,+},\ldots,\Phi_{i,+}]
 $$
for $i>k$.

\end{proof}

As for the $\QFB$ triple space, the $b$-fibrations $\pi^3_{\k,\QFB,o}$ and $\pi^3_{\QAC-\Qb,o}$ behave well with respect to lifted diagonals.  For $o\in \{L,C,R\}$, let $\diag_{\k,\QFB,o}$ (respectively $\diag_{\QAC-\Qb,o}$) be the $p$-submanifold corresponding to the lift of $(\pr_o\times\Id)^{-1}(\diag_M\times [0,\infty)_\k)$ to $M^3_{\k,\QFB}$ (respectively $M^3_{\QAC-\Qb}$).  For $o\ne o'$, the intersection $\diag_{\k,\QFB,o}\cap \diag_{\k,\QFB,o'}$ (respectively $\diag_{\QAC-\Qb,o}\cap \diag_{\QAC-\Qb,o'}$) is the $p$-submanifold $\diag_{\k,\QFB,T}$ (respectively $\diag_{\QAC-\Qb,T}$) given by the lift of the triple diagonal
$$
     \diag^3_{M}\times [0,\infty)_{\k} \subset M^3\times [0,\infty)_{\k}
$$
to $M^3_{\k,\QFB}$ (respectively $M^3_{\QAC-\Qb}$).

\begin{lemma}
For $o\ne o'$, the $b$-fibration $\pi^3_{\k,\QFB,o}$ is transversal to $\diag_{\k,\QFB,o'}$ and induces a diffeomorphism 
$$
      \diag_{\k,\QFB,o'}\cong M^2_{\k,\QFB}
$$
sending $\diag_{\k,\QFB,T}\subset \diag_{\k,\QFB,o'}$ onto $\diag_{\k,\QFB}\subset M^2_{\k,\QFB}$.  Furthermore, an analogous statement holds for the $\QAC-\Qb$ triple space.
\label{tkqfb.14}\end{lemma}
\begin{proof}
By Lemma~\ref{ts.8b}, the $b$-fibration $\pi^{\rp}_o\times \Id$ of \eqref{tkqfb.1} is transversal to $\diag_{\rp,o'}\times [0,\infty)_{\k}$ for $o\ne o'$ and induces a diffeomorphism 
$$
     \diag_{\rp,o'}\times [0,\infty)_{\k}\cong M^2_{\rp}\times [0,\infty)_{\k}  
$$
sending $\diag_{\rp,T}\times [0,\infty)_{\k}$ onto $\diag_{\rp}\times [0,\infty)$.  Performing the blow-ups leading to $M^3_{\k,\QFB}$, we can check at each step that transversality is preserved, so that $\pi^3_{\k,\QFB,o}$ is transversal to $\diag_{\k,\QFB,o'}$ and induces the claimed diffeomorphism.  There is a similar argument for the $\QAC-\Qb$ triple space.
\end{proof}

\section{Composition of $\k,\QFB$ pseudodifferential operators} \label{ckqfb.0}

The triple space of the previous section can be used to obtain composition results for $\k, \QFB$ pseudodifferential operators.  This consists essentially in adapting the discussion in \S~\ref{com.0} to $\k,\QFB$ operators.  Let $H_{ijk,0}$ and $H_{ijk,+}$ denote the lifts of $H^{\rp}_{ijk,0}$ and $H^{\rp}_{ijk,+}$ to $M^3_{\k,\QFB}$.  Let us also denote by $\ff^o_{i,T,0}$, $\ff^o_{i,T,+}$, $\ff^o_{ij,0}$ and $\ff_{ij,+}^o$ the boundary hypersurfaces of $M^3_{\k,\QFB}$ created by the blow-ups of $\Phi^o_{i,T,0}$, $\Phi^o_{i,T,+}$, $\Phi^o_{ij,0}$ and $\Phi^o_{ij,+}$ for $o\in\{L,C,R\}$, $i\in\{1,\ldots,\ell\}$ and $j\in\{0,1,\ldots,\ell\}$.  With this notation, we can now describe how boundary hypersurfaces are mapped in terms of the $b$-fibration of Proposition~\ref{tkqfb.10}.  First, the $b$-fibration $\pi^3_{\k,\QFB,L}$ sends $H_{00i,+}$ surjectively on $M^2_{\k,\QFB}$ for $i\in\{1,\ldots,\ell\}$, and otherwise is such that for $\nu\in \{0,+\}$ and $i,j\in \{1,\ldots,\ell\}$,  
\begin{equation}
\begin{aligned}
(\pi^{3}_{\k,\QFB,L})^{-1}(H_{i0,\nu})&= \ff^C_{i0,\nu}  \cup \bigcup_{k=0}^{\ell} H_{i0k,\nu}, \quad 
(\pi^{3}_{\k,\QFB,L})^{-1}(H_{0i,\nu})=  \ff^R_{i0,\nu}  \cup  \bigcup_{k=0}^{\ell} H_{0ik,\nu}, \\
(\pi^{3}_{\k,\QFB,L})^{-1}(H_{ij,\nu})&= \ff^C_{ij,\nu} \cup \ff^R_{ji,\nu} \cup \bigcup_{k=0}^{\ell} H_{ijk,\nu}, \quad
(\pi^{3}_{\k,\QFB,L})^{-1}(\ff_{i,\nu})= \ff_{i,T,\nu}\cup \bigcup_{k=0}^{\ell} \ff^{L}_{ik,\nu}, \\
 (\pi^{3}_{\k,\QFB,L})^{-1}(H_{00,0})&= \bigcup_{k=0}^{\infty} H_{00k,0}.
\end{aligned}\label{ckqfb.1}\end{equation}
Similarly, the $b$-fibration $\pi^3_{\k,\QFB,C}$ sends $H_{0i0,+}$ surjectively onto $M^2_{\k,\QFB}$ for $i\in\{1,\ldots,\ell\}$ and otherwise is such that for $\nu\in \{0,+\}$ and $i,j\in \{1,\ldots,\ell\}$, 
\begin{equation}
\begin{aligned}
(\pi^{3}_{\k,\QFB,C})^{-1}(H_{i0,\nu})&= \ff^L_{i0,\nu}\cup \bigcup_{k=0}^{\ell}H_{ik0,\nu}, \quad 
(\pi^{3}_{\k,\QFB,C})^{-1}(H_{0i,\nu})= \ff^R_{i0,\nu} \cup \bigcup_{k=0}^{\ell} H_{0ki,\nu}, \\ 
(\pi^{3}_{\k,\QFB,C})^{-1}(H_{ij,\nu})&= \ff^L_{ij,\nu}\cup \ff^R_{ji,\nu}\cup \bigcup_{k=0}^{\ell}H_{ikj,\nu}, \quad     
(\pi^{3}_{\k,\QFB,C})^{-1}(\ff_{i,\nu})= \ff_{i,T,\nu} \cup \bigcup_{k=0}^{\ell} \ff^C_{ik,\nu}, \\
(\pi^{3}_{\k,\QFB,C})^{-1}(H_{00,0})&= \bigcup_{k=0}^{\ell} H_{0k0,0}.
\end{aligned}
\label{ckqfb.2}\end{equation} 
Finally, the $b$-fibration $\pi^3_{\k,\QFB,R}$ sends $H_{i00,+}$ surjectively onto $M^2_{\k,\QFB}$ for $i\in\{1,\ldots,\ell\}$ and otherwise is such that for $\nu\in \{0,+\}$ and $i,j\in\{1,\ldots,\ell\}$,
\begin{equation}
\begin{aligned}
(\pi^{3}_{\k,\QFB,R})^{-1}(H_{i0,\nu})&=\ff^L_{i0,\nu}\cup \bigcup_{k=0}^{\ell} H_{ki0,\nu}, \quad
(\pi^{3}_{\k,\QFB,R})^{-1}(H_{0i,\nu})= \ff^C_{i0,\nu}\cup \bigcup_{k=0}^{\ell}   H_{k0i,\nu}, \\
(\pi^{3}_{\k,\QFB,R})^{-1}(H_{ij,\nu})&=\ff^L_{ij,\nu}\cup \ff^C_{ji,\nu}\cup \bigcup_{k=0}^{\ell} H_{kij,\nu}, \quad
(\pi^{3}_{\k,\QFB,R})^{-1}(\ff_{i,\nu})= \ff_{i,T,\nu}\cup \bigcup_{k=0}^{\ell} \ff^R_{ik,\nu},  \\
 (\pi^{3}_{\k,\QFB,R})^{-1}(H_{00,0})&= \bigcup_{k=0}^{\ell} H_{k00,0}.
 \end{aligned}\label{ckqfb.3}\end{equation}
 On the other hand, using Lemma~\ref{pdo.12a}, we see that 
 \begin{equation}
 (\beta^3_{\k,\QFB})^*({}^b \Omega(M^3\times [0,\infty))= \left[ \prod_{\nu\in\{0,+\}} \prod_{i=1}^{\ell}\left( \rho^2_{\ff_{i,T,\nu}} \prod_{j=0}^{\ell} \left( \rho_{\ff^L_{ij,\nu}} \rho_{\ff^C_{ij,\nu}} \rho_{\ff^R_{ij,\nu}} \right) \right)^{h_i}  \right]
 {}^{b}\Omega(M^3_{\k,\QFB}),
 \label{ckqfb.4}\end{equation}
 where $\rho_H$ denotes a boundary defining function for the boundary hypersurface $H$.  
 On the $\k,\QFB$ double space $M^2_{\k,\QFB}$, we also have that 
 \begin{equation}
 \pi_{\k,\QFB}^*(x_i)= \rho_{\ff_{i,0}} \rho_{\ff_{i,+}} \prod_{j=0}^{\ell} \left( \rho_{ji,0}\rho_{ji,+} \right),
 \label{ckqfb.5}\end{equation}
where $\rho_{ji,\nu}$ denotes a boundary defining function for the boundary hypersurface $H_{ji,\nu}$.  Pulling this back to the $\k,\QFB$ triple space via $\pi^3_{\k,\QFB,L}$ and $\pi^3_{\k,\QFB,R}$, we obtain from \eqref{ckqfb.1} and \eqref{ckqfb.3} that 
\begin{equation}
(\pi^3_{\k,\QFB,L})^* \pi_{\k,R}^* x_i= a_i \prod_{\nu\in \{0,+\}} \left[ \rho_{\ff_{i,T,\nu}} \prod_{j=0}^{\ell} \left( \rho_{\ff^L_{ij,\nu}}\rho_{\ff^R_{ij,\nu}}\prod_{k=0}^{\ell} \rho_{jik,\nu} \right) \left( \prod_{j=1}^{\ell}\rho_{\ff^C_{ji,\nu}} \right)  \right]
\label{ckqfb.6}\end{equation}
and 
\begin{equation}
(\pi^3_{\k,\QFB,R})^* \pi_{\k,R}^* x_i= b_i \prod_{\nu\in\{0,+\}} \left[ \rho_{\ff_{i,T,\nu}}\prod_{j=0}^{\ell}\left( \rho_{\ff^R_{ij,\nu}}\rho_{\ff^C_{ij,\nu}} \prod_{k=0}^{\ell}\rho_{kji,\nu} \right) \left( \prod_{j=1}^{\ell} \rho_{\ff^L_{ji,\nu}} \right)    \right],
\label{ckqfb.7}\end{equation}
where $a_i$ and $b_i$ are smooth positive functions and $\rho_{ijk,\nu}$ is a boundary defining function for $H_{ijk,\nu}$.  Hence, recalling equation \eqref{pdo.12}, we see that 
\begin{multline}
(\pi^3_{\k,\QFB,L})^*(\beta^{*}_{\k,\QFB}(\pr_L\times \Id_{[0,\infty)})^* ({}^{b}\Omega(M\times [0,\infty))) ) \; \cdot \; (\pi^3_{\k,\QFB,L})^*\beta_{\k,\QFB}^* [(\pr_R\times\Id_{[0,\infty)})^* \pr_1^* {}^{\QFB}\Omega(M)  ]  \\
  \cdot \; (\pi^3_{\k,\QFB,R})^* \beta^*_{\k,\QFB}[ (\pr_R\times \Id_{[0,\infty)})^* \pr_1^* {}^{\QFB}\Omega(M)  ]= (\rho^{\mathfrak{a}}){}^{b}\Omega(M^3_{\k,\QFB})
\label{ckqfb.8}\end{multline}
with multiweight $\mathfrak{a}$ such that 
\begin{equation}
\rho^{\mathfrak{a}}\prod_{\nu\in\{0,+\}}  \left[ \prod_{i=1}^{\ell} \left( \left( \prod_{j=1}^{\ell} \rho_{\ff^C_{ji,\nu}}\rho_{\ff^L_{ji,\nu}} \right)  \left(  \prod_{j=0}^{\ell} \rho_{\ff^R_{ij,\nu}} \prod_{k=0}^{\ell} \rho_{jik,\nu}\rho_{kji,\nu}  \right)   \right)^{-h_i}  \right].
\label{ckqfb.9}\end{equation}
We can now state and prove the following composition result.  
\begin{theorem}
Let $E,F$ and $G$ be vector bundles over $M$.  Suppose that $\cE$ and $\cF$ are index families and $\mathfrak{s}$ and $\mathfrak{t}$ are multiweights  such that for each $i$, 
\begin{equation}
   \min\{\mathfrak{s}_{0i,+},\min\Re\cE_{0i,+}\}+\min\{\mathfrak{t}_{i0,+},\min\Re \cF_{i0,+}\}> h_i=1+\dim S_i.
\label{ckqfb.9c}\end{equation}
Then given $A\in\Psi^{m,\cE/\mathfrak{s}}_{\k,\QFB}(M;F;G)$ and $B\in \Psi^{m',\cF/\mathfrak{t}}_{\k,\QFB}(M;E,F)$, their composition is well-defined with 
$$
  A\circ B\in \Psi^{m+m',\cK/\mathfrak{k}}_{\k,\QFB}(M;E,G),
$$
where, using the convention that $h_0=0$, $\cE_{00,+}=\cF_{00,+}=\bbN_0$ and $\mathfrak{s}_{00,+}=\mathfrak{t}_{00,+}=0$, the  index family $\cK$ is for $i,j\in\{1,\ldots,\ell\}$ and $\nu\in \{0,+\}$  given by 
\begin{equation}
\begin{aligned}
\cK_{i0,\nu}&= (\cE_{i0,\nu}+\cF_{00,\nu})\overline{\cup}(\cE_{\ff_{i,\nu}}+\cF_{i0,\nu}) \overline{\cup} \overline{\bigcup_{k\ge1}} (\cE_{ik,\nu}+\cF_{k0,\nu}-h_k), \\
\cK_{0i,\nu}&= (\cE_{00,\nu}+\cF_{0i,\nu})\overline{\cup} (\cE_{0i,\nu}+ \cF_{\ff_{i,\nu}})\overline{\cup} \overline{\bigcup_{k\ge 1}} (\cE_{0k,\nu}+\cF_{ki,\nu}-h_k), \\
\cK_{ij,\nu}&= (\cE_{\ff_{i,\nu}}+\cF_{ij,\nu})\overline{\cup} (\cE_{ij,\nu}+ \cF_{\ff_{j,\nu}})\overline{\cup} \overline{\bigcup_{k\ge 0}} (\cE_{ik,\nu}+\cF_{kj,\nu}-h_k), \\
\cK_{\ff_{i,\nu}}&= (\cE_{\ff_{i,\nu}}+\cF_{\ff_{i,\nu}})\overline{\cup} \overline{\bigcup_{k\ge 0}} (\cE_{ik,\nu}+\cF_{ki,\nu}-h_k), \\
\cK_{00,0} &= \overline{\bigcup_{k\ge 0}} (\cE_{0k,0}+ \cF_{k0,0}-h_k),
\end{aligned}
\label{ckqfb.9a}\end{equation}
and the multiweight $\mathfrak{k}$ is given by 
\begin{equation}
\begin{aligned}
\mathfrak{k}_{i0,\nu}&= \min\{\mathfrak{s}_{i0,\nu}+\mathfrak{t}_{00,\nu}, (\mathfrak{s}_{\ff_{i,\nu}}\dot{+}\mathfrak{t}_{i0,\nu}), \min_{k\ge 1}\{(\mathfrak{s}_{ik,\nu}\dot{+}\mathfrak{t}_{k0,\nu}-h_k)  \} \},
 \\
\mathfrak{k}_{0i,\nu}&= \min\{\mathfrak{s}_{00,\nu}+ \mathfrak{t}_{0i,\nu}, (\mathfrak{s}_{0i,\nu}\dot{+}\mathfrak{t}_{\ff_{i,\nu}}),  \min_{k\ge 1}\{
  (\mathfrak{s}_{0k,\nu}\dot{+}\mathfrak{t}_{ki,\nu}-h_k)\}\}, \\
\mathfrak{k}_{ij,\nu}&= \min\{(\mathfrak{s}_{\ff_{i,\nu}}\dot{+}\mathfrak{t}_{ij,\nu}), (\mathfrak{s}_{ij,\nu}\dot{+}\mathfrak{t}_{\ff_{j,\nu}}), \min_{k\ge 0}\{\mathfrak{s}_{ik,\nu}\dot{+}\mathfrak{t}_{kj,\nu}-h_k\}\},  \\
\mathfrak{k}_{\ff_{i,\nu}}&= \min\{(\mathfrak{s}_{\ff_{i,\nu}}\dot{+}\mathfrak{t}_{\ff_{i,\nu}}),\min_{k\ge 0} \{ (\mathfrak{s}_{ik,\nu}\dot{+}\mathfrak{t}_{ki,\nu}-h_k) \} \}, \\
\mathfrak{k}_{00,0}&= \min_{k\ge 0}\{ \mathfrak{s}_{0k,0}\dot{+} \mathfrak{t}_{k0,0}-h_k  \}.
\end{aligned}
\label{ckqfb.9b}\end{equation}
If instead $\cE=\cF=\emptyset$, except possibly at $H_{00,0}$ and at $\ff_{i,\nu}$ for $\nu\in\{0,+\}$ and $H_i$ a boundary hypersurface, where it could possibly be $\bbN_0$, then for $A\in\Psi^{m,\cE/\mathfrak{s}}_{\k,\QFB,\cn}(M;F,G)$ and 
$B\in\Psi^{m,\cF/\mathfrak{t}}_{\k,\QFB,\cn}(M;E,F)$ only weakly conormal $\k,\QFB$ pseudodifferential operators,
$$
       A\circ B\in \Psi^{m+m',\cK/\mathfrak{k}}_{\k,\QFB,\cn}(M;E,G)
$$
with indicial family $\cK$ and multiweight $\mathfrak{k}$  still given by \eqref{ckqfb.9a} and \eqref{ckqfb.9b}.
\label{ckqfb.10}\end{theorem}
\begin{proof}
Suppose first that $A\in \Psi^{m,\cE/\mathfrak{s}}_{\k,\QFB}(M;F,G)$ and $B\in\Psi^{m',\cF/\mathfrak{t}}_{\k,\QFB}(M;E,F)$.  For operators of order $-\infty$, it suffices to apply the pushforward theorem of \cite[Theorem~5]{Melrose1992}.  When the operator are of order $m$ and $m'$, we need to combine the pushforward theorem with Lemma~\ref{tkqfb.14} to see that the operator has the claimed order.  

If instead $A\in \Psi^{m,\cE/\mathfrak{s}}_{\k,\QFB,\cn}(M;F,G)$ and $B\in\Psi^{m',\cF/\mathfrak{t}}_{\k,\QFB,\cn}(M;E,F)$
are only weakly conormal $\k,\QFB$ operators, but with the above restriction on $\cE$ and $\cF$, then, as in the proof of Theorem~\ref{co.9}, we cannot use the pushforward theorem of \cite[Theorem~5]{Melrose1992}, but we can adapt the proof of \cite[Theorem~4]{Melrose1992} to this weakly conormal setting.  Indeed, by Lemma~\ref{kqfb.9} and the fact that the lift of $\k\frac{\pa}{\pa \k}$ and the lifts from the left and from the right of vector fields in $\cV_{\k,\QFB}(M_{\k,\QFB})$ are $b$-vector fields on $M^2_{\k,\QFB}$, notice that the stability of the composition under the action of these lifts is automatic.  Hence, if $m=m'=-\infty$ and $\cE=\cF=\emptyset$, it suffices to apply Fubini's theorem locally to obtain the result.  If $\cE$ and $\cF$ are not necessarily the empty set everywhere, then the expansions at $H_{00,0}$ and $\ff_{i,\nu}$ for $\nu\in\{0,+\}$ and $H_i$ a boundary hypersurface can be recovered inductively using the fact that $H_{000,0}$ and $\ff_{i,T,\nu}$ are natural triple spaces for those spaces. Otherwise, from the definition of weakly conormal $\k,\QFB$ pseudodifferential operators, we can assume that either $m=-\infty$ or else $m'=-\infty$, in which case by Lemma~\ref{tkqfb.14}, the conormal singularity along the diagonal is simply integrated out, so that the result follows as before.
\end{proof}

The same strategy can be applied to obtain composition results for  $\QAC-\Qb$ operators.  The same notation can be used to describe the boundary hypersurfaces of $M^2_{\QAC-\Qb}$ and $M^3_{\QAC-\Qb}$, except that when $H_i$ is maximal, $M^2_{\QAC-\Qb}$ does not have the boundary hypersurfaces $\ff_{i,0}$ and $M^3_{\QAC-\Qb}$ does not have the boundary hypersurfaces $\ff_{i,T,0}$ and $\ff^o_{ij,0}$ for $o\in\{L,C,R\}$ and $j\in\{0,\ldots,\ell\}$.  With this understood, and using the notations
\begin{equation}
   h_{i,+}:=h_i \quad \mbox{and} \quad h_{i,0}:= \left\{ \begin{array}{ll} 0, & H_i \; \mbox{is maximal},  \\ h_i, & \mbox{otherwise,} \end{array}   \right.
\label{not.1}\end{equation}
we have the following composition result.  
\begin{theorem}
Let $E,F$ and $G$ be vector bundles over $M$.  Suppose that on $M^2_{\QAC-\Qb}$, $\cE$ and $\cF$ are index families and $\mathfrak{s}$ and $\mathfrak{t}$ are multiweights  such that for each $i$, 
\begin{equation}
  \min\{ \mathfrak{s}_{0i,+}, \min\Re\cE_{0i,+}\}+\min\{\mathfrak{t}_{i0,+},\min\Re \cF_{i0,+}\}> h_{i,+}.
\label{ckqfb.12c}\end{equation}
Then given $A\in\Psi^{m,\cE/\mathfrak{s}}_{\QAC-\Qb}(M;F;G)$ and $B\in \Psi^{m',\cF/\mathfrak{t}}_{\QAC-\Qb}(M;E,F)$, their composition is well-defined with 
$$
  A\circ B\in \Psi^{m+m',\cK/\mathfrak{k}}_{\QAC-\Qb}(M;E,G),
$$
where the index family $\cK$, with the convention that $\cE_{\ff_{i,0}}=\cF_{\ff_{i,0}}=\emptyset$ when $H_i$ is a  maximal hypersurface and  that $\cE_{00,+}=\cF_{00,+}=\bbN_0$, is  given by \begin{equation}
\begin{aligned}
\cK_{i0,\nu}&= (\cE_{i0,\nu}+\cF_{00,\nu})\overline{\cup}(\cE_{\ff_{i,\nu}}+\cF_{i0,\nu}) \overline{\cup} \overline{\bigcup_{k\ge1}} (\cE_{ik,\nu}+\cF_{k0,\nu}-h_{k,\nu}), \\
\cK_{0i,\nu}&= (\cE_{00,\nu} +\cF_{0i,\nu})\overline{\cup} (\cE_{0i,\nu}+ \cF_{\ff_{i,\nu}})\overline{\cup} \overline{\bigcup_{k\ge 1}} (\cE_{0k,\nu}+\cF_{ki,\nu}-h_{k,\nu}), \\
\cK_{ij,\nu}&= (\cE_{\ff_{i,\nu}}+\cF_{ij,\nu})\overline{\cup} (\cE_{ij,\nu}+ \cF_{\ff_{j,\nu}})\overline{\cup} \overline{\bigcup_{k\ge 0}} (\cE_{ik,\nu}+\cF_{kj,\nu}-h_k), \\
\cK_{\ff_{i,\nu}}&= (\cE_{\ff_{i,\nu}}+\cF_{\ff_{i,\nu}})\overline{\cup} \overline{\bigcup_{k\ge 0}} (\cE_{ik,\nu}+\cF_{ki,\nu}-h_{k,\nu}) \quad (\mbox{for $H_i$ not maximal or $\nu=+$}), \\
\cK_{00,0}&= \overline{\bigcup_{k\ge 0}}\left( \cE_{0k,0}+\cF_{k0,0}-h_{k,0} \right)
\end{aligned}
\label{ckqfb.12a}\end{equation}
and where the multiweight $\mathfrak{k}$, with the convention that $\mathfrak{s}_{\ff_{i,0}}=\mathfrak{t}_{\ff_{i,0}}=\infty$ when $H_i$ is maximal and that $\mathfrak{s}_{00,+}=\mathfrak{t}_{00,+}=0$, is given by 
\begin{equation}
\begin{aligned}
\mathfrak{k}_{i0,\nu}&= \min\{\mathfrak{s}_{i0,\nu}+\mathfrak{t}_{00,\nu}, (\mathfrak{s}_{\ff_{i,\nu}}\dot{+}\mathfrak{t}_{i0,\nu}), \min_{k\ge 1}\{(\mathfrak{s}_{ik,\nu}\dot{+}\mathfrak{t}_{k0,\nu}-h_{k,\nu})  \} \},
 \\
\mathfrak{k}_{0i,\nu}&= \min\{ \mathfrak{s}_{00,\nu}+\mathfrak{t}_{0i,\nu}, (\mathfrak{s}_{0i,\nu}\dot{+}\mathfrak{t}_{\ff_{i,\nu}}),  \min_{k\ge 1}\{
  (\mathfrak{s}_{0k,\nu}\dot{+}\mathfrak{t}_{ki,\nu}-h_{k,\nu})\}\}, \\
\mathfrak{k}_{ij,\nu}&= \min\{(\mathfrak{s}_{\ff_{i,\nu}}\dot{+}\mathfrak{t}_{ij,\nu}), (\mathfrak{s}_{ij,\nu}\dot{+}\mathfrak{t}_{\ff_{j,\nu}}), \min_{k\ge 0}\{\mathfrak{s}_{ik,\nu}\dot{+}\mathfrak{t}_{kj,\nu}-h_{k,\nu}\}\},  \\
\mathfrak{k}_{\ff_{i,\nu}}&= \min\{(\mathfrak{s}_{\ff_{i,\nu}}\dot{+}\mathfrak{t}_{\ff_{i,\nu}}),\min_{k\ge 0} \{ (\mathfrak{s}_{ik,\nu}\dot{+}\mathfrak{t}_{ki,\nu}-h_{k,\nu}) \} \}  \quad (\mbox{for $H_i$ not maximal or $\nu=+$}), \\
\mathfrak{k}_{00,0} &= \min_{k\ge 0}\{ \mathfrak{s}_{0k,0}\dot{+} \mathfrak{t}_{k0,0}-h_{k,0}  \}.
\end{aligned}
\label{ckqfb.12b}\end{equation}
If instead $\cE=\cF=\emptyset$ except possibly at boundary hypersurfaces intersecting the lift of $\diag_M\times [0,\infty)$, where they could be given by $\bbN_0$, then for $A\in\Psi^{m,\cE/\mathfrak{s}}_{\QAC-\Qb,\cn}(M;F,G)$ and $B\in\Psi^{m',\cF/\mathfrak{t}}_{\QAC-\Qb,\cn}(M;E,F)$, we have that
$$
    A\circ B\in \Psi^{m+m',\cK/\mathfrak{k}}_{\QAC-\Qb,\cn}(M;E,G)
$$
with index family $\cK$ and multiweight $\mathfrak{k}$  still given by \eqref{ckqfb.12a} and \eqref{ckqfb.12b}.
\label{ckqfb.12}\end{theorem}
\begin{proof}
The proof is similar to the one of Theorem~\ref{ckqfb.10}.
\end{proof}

\section{Symbol maps} \label{ksm.0}

In this section, we will introduce various symbol maps for $\k,\QFB$ operators that will play an important role in the construction of parametrices.  First, the principal symbol of an operator $A\in\Psi^{m,\cE/\mathfrak{s}}_{\k,\QFB,\cn}(M;E,F)$ can be defined using its Schwartz kernel $\kappa_A$.  Indeed, by definition, $\kappa_A$ has  conormal singularities at the lifted diagonal $\diag_{\k,\QFB}$, so has a principal symbol 
$$
    \sigma_m(\kappa_A)\in \cS^{[m]}(N^*\diag_{\k,\QFB};\End(E,F)),
$$
hence we can define the principal symbol $\kridx{{}^{\k,\phi}\sigma_m}{sigmaQFBk}{$\k,\QFB$ principal symbol}(A)$ of $A$ to be $\sigma_m(\kappa_A)$.  By Lemma~\ref{kqfb.9}, there is a natural identification
$$
    N^*\diag_{\k,\QFB}\cong {}^{\k,\QFB}T^*M_{\k,\QFB},
$$
so that ${}^{\k,\phi}\sigma_m(A)$ can be seen as an element of $\cS^{[m]}({}^{\k,\QFB}T^*M_{\k,\QFB};\End(E,F))$.  The principal symbol induces a short exact sequence 
\begin{equation}
\xymatrix{
0 \ar[r] & \Psi^{m-1,\cE/\mathfrak{s}}_{\k,\QFB,\cn}\ar[r] & \Psi^{m,\cE/\mathfrak{s}}_{\k,\QFB,\cn}(M;E,F) \ar[r]^-{{}^{\k,\phi}\sigma_m} & \cS^{[m]}({}^{\k,\QFB}T^*M_{\k,\QFB};\End(E,F)) \ar[r] & 0.
}
\label{ksm.1}\end{equation}
To construct good parametrices, we need however other symbols, referred to as normal operators, capturing the asymptotic behavior of $\k,\QFB$ operators in various regimes.  More precisely, each boundary hypersurface of $M^2_{\k,\QFB}$ intersecting the lifted diagonal $\diag_{\k,\QFB}$ induces by restriction a normal operator.  The simplest to describe is the normal operator associated to $H_{00,0}$,
\begin{equation}
  N_{0,0}(A):= \left.\kappa_A\right|_{H_{00,0}}\quad \mbox{for} \; A\in \Psi^{m,\cE/\mathfrak{s}}_{\k,\QFB}(M;E,F) \quad \mbox{with} \; \Re\cE_{00,0}\ge 0, \; \mathfrak{s}(H_{00,0})>0.
\label{ksm.2}\end{equation} 
Indeed, since $H_{00,0}$ is naturally identified with $M^2_{\QFB}$, $N_{0,0}(A)$ can be interpreted as a $\QFB$ operator.  In terms of the small calculus, there is in fact a short exact sequence 
\begin{equation}
\xymatrix{
0 \ar[r] & x_{00,0}\Psi^m_{\k,\QFB}(M;E,F) \ar[r] & \Psi^m_{\k,\QFB}(M;E,F) \ar[r]^-{N_{0,0}} & \Psi^m_{\QFB}(M;E,F) \ar[r] &0,
}
\label{ksm.3}\end{equation}
where $x_{00,0}\in\CI(M^2_{\k,\QFB})$ is a boundary defining function for $H_{00,0}$.  
\begin{proposition}
For $A\in \Psi^{m,\cE/\mathfrak{s}}_{\k,\QFB,\cn}(M;F,G)$ and $B\in\Psi^{m',\cF/\mathfrak{t}}_{\k,\QFB,\cn}(M;E,F)$ with index families $\cE, \cF$ and multiweights $\mathfrak{s}, \mathfrak{t}$ as in Theorem~\ref{ckqfb.10} such that
$$
    \inf \Re(\cE_{00,0})\ge 0, \quad \inf\Re(\cF_{00,0})\ge0, \quad \mathfrak{s}(H_{00,0})>0, \quad \mathfrak{t}(H_{00,0})>0
$$
and 
$$
 \inf \Re(\cE_{0k,0}+ \cF_{k0,0})>h_k,  \quad \mathfrak{s}(H_{0k,0})+\mathfrak{t}(H_{k0,0})>h_k
$$
for all $k\in\{1,\ldots,\ell\}$, we have that
$$
       N_{0,0}(A\circ B)= N_{0,0}(A)\circ N_{0,0}(B)
$$
with composition on the right as $\QFB$ operators.  
\label{ksm.4}\end{proposition} 
\begin{proof}
By Theorem~\ref{ckqfb.10}, the composition $A\circ B$ is well-defined and its restriction to $H_{00,0}$ makes sense.  This restriction comes from the pushforward of the restriction of 
$$
     (\pi^3_{\k,\QFB,L})^*A\cdot (\pi^3_{\k,\QFB,R})^*B
$$
to $H_{000,0}$, so that $N_{0,0}(A\circ B)$ is given by the composition of $N_{0,0}(A)$ and $N_{0,0}(B)$ induced by $H_{000,0}$ seen as a triple space for $H_{00,0}$.  Since $H_{000,0}$ is naturally identified with the $\QFB$ triple space $M^3_{\QFB}$, the result follows.  
\end{proof}

For $i\ne 0$, the normal operator $\kridx{N_{i,0}}{Normi0}{$\k,\QFB$ normal operator}(A)$ associated to the boundary hypersurface $\ff_{i,0}$ corresponds instead to a suspended family of $\QFB$ operators, at least within the small calculus.  Indeed, as on $H_{i,0}$, the fiber bundle $\phi_i: H_i\to S_i$ induces a fiber bundle 
$$
      \phi_{\ff_{i,0}}: \ff_{i,0}\to S_i\rttimes [0,\frac{\pi}2]
$$
given by $\phi_{\ff_{i,0}}:= \phi_{i,0}\circ \pi_{\k,R}|_{\ff_{i,0}}= \phi_{i,0}\circ \pi_{\k,L}|_{\ff_{i,0}}$.  This is in fact just a pull-back, via the blow-down map
$$
 \beta_{i,0}: S_i\rttimes [0,\frac{\pi}2]\to S_i\times [0,\frac{\pi}2]
$$
and the projection $\pr_1: S_i\times [0,\frac{\pi}2]\to S_i$
 of the corresponding fiber bundle $\phi_{\ff_i}: \ff_i\to S_i$ on $\ff_i$ of \eqref{sm.18a}.
Thus, as for $\ff_i$ on $M^2_{\QFB}$, where we have the canonical identification \eqref{sm.18}, there is within the small calculus a canonical identification between the space of conormal distributions
$$
   \Psi^m_{\ff_{i,0}}(H_{i,0};E,F):= \{N_{i,0}(A) \; | \; A\in \Psi^m_{\k,\QFB}(M;E,F)\}
$$
and the space of suspended conormal distributions
$$
  \Psi^m_{\QFB, {}^{\phi}N(S_i\rttimes[0,\frac{\pi}2])}(H_{i,0}/(S_i\rttimes [0,\frac{\pi}2]);E,F).
$$
Since $\phi_{\ff_{i,0}}$ is just the pull-back of $\phi_{\ff_i}$, we can still use Lemma~\ref{sm.19} to conclude that the identification
$$
   \Psi^m_{\ff_{i,0}}(H_{i,0};E,F)\cong \Psi^m_{\QFB, {}^{\phi}N(S_i\rttimes[0,\frac{\pi}2])}(H_{i,0}/(S_i\rttimes [0,\frac{\pi}2]);E,F)$$
is also an identification as operators, namely the composition on the left induced by Theorem~\ref{ckqfb.10} corresponds to the composition as $\QFB$ suspended operators on the right.  There is in particular a short exact sequence 
\begin{equation}
\xymatrix{
0\ar[r] & x_{\ff_{i,0}}\Psi^m_{\k,\QFB}(M;E,F) \ar[r] & \Psi^m_{\k,\QFB}(M;E,F)\ar[r]^-{N_{i,0}} & \Psi^m_{\ff_{i,0}}(H_i;E,F) \ar[r] &0,
}
\label{ksm.5}\end{equation}  
where $x_{\ff_{i,0}}\in\CI(M^2_{\k,\QFB})$ is a boundary defining function for $\ff_{i,0}$.  More generally, we have the following composition result in the large calculus.  
\begin{proposition}
For $A\in\Psi^{m,\cE/\mathfrak{s}}_{\k,\QFB,\cn}(M;E,G)$ and $B\in \Psi^{m',\cF/\mathfrak{t}}_{\k,\QFB,\cn}(M;E,F)$ with index families $\cE,\cF$ and multiweight $\mathfrak{s},\mathfrak{t}$ as in Theorem~\ref{ckqfb.10} such that
$$
\inf\Re \cE_{\ff_{i,0}}\ge0, \quad \inf\Re \cF_{\ff_{i,0}}\ge0, \quad \mathfrak{s}(\ff_{i,0})>0, \quad \mathfrak{t}(\ff_{i,0})>0
$$
and 
$$
    \inf\Re(\cE_{ik,0}+ \cF_{ki,0}-h_k)>0, \quad \mathfrak{s}(H_{ik,0})+\mathfrak{t}(H_{ki,0})-h_k>0 
$$
for all $k\in\{0,1,\ldots,\ell\}$, we have that
$$
    N_{i,0}(A\circ B)= N_{i,0}(A)\circ N_{i,0}(B)
$$
with composition on the right induced by the triple space $\ff_{i,T,0}$.
\label{ksm.6}\end{proposition}   
\begin{proof}
By Theorem~\ref{ckqfb.10}, the composition $A\circ B$ is well-defined and its restriction to $\ff_{i,0}$ makes sense.  This restriction comes from the pushforward of the restriction of 
$$
      (\pi^3_{\k,\QFB,L})^* A \cdot (\pi^3_{\k,\QFB,R})^* B
$$
to $\ff_{i,T,0}$, which is just the composition of $N_{i,0}(A)$ and $N_{i,0}(B)$ using $\ff_{i,T,0}$ as a triple space. 
\end{proof}

For $i\ne 0$, the normal operator $\kridx{N_{i,+}}{Normi+}{$\k,\QFB$ normal operator}(A)$ associated to $\ff_{i,+}$ corresponds instead to some sort of suspended version of a $\k,\QFB$ operator, but as for $\ff_{i,0}$, the corresponding double space is more complicated.  In fact, there is a natural fiber bundle $\phi_{\ff_{i,+}}: \ff_{i,+}\to S_i$ given by
\begin{equation}
    \phi_{\ff_{i,+}}:= \phi_{i,+}\circ \pi_{\k,R}|_{\ff_{i,+}}= \phi_{i,+}\circ \pi_{\k,L}|_{\ff_{i,+}}.
\label{ffi.1}\end{equation}
 For $s\in S_i$, the fiber $\phi_{\ff_{i,+}}^{-1}(s)$ is not quite
$$
    \overline{{}^{\phi}N_sS}_i\times (\phi_i^{-1}(s))^2_{\k,\QFB}
$$
for two reasons.  Firstly, for $H_j>H_i$ the blow-ups of $\Phi_{j,+}$ and $\Phi_{j,0}$ corresponds to blowing up $\{0\}\times \Phi^s_{j,\nu}$ in 
$$
\overline{{}^{\phi}N_sS}_i\times [(\phi_i^{-1}(s))^2_{\rp}\rttimes [0,\infty)],
$$
not $\overline{{}^{\phi}N_sS}_i\times \Phi^s_{j,\nu}$,  where $\Phi_{j,\nu}^s\subset [(\phi_i^{-1}(s))^2_{\rp}\rttimes [0,\infty)]$ is the analog of $\Phi_{j,\nu}$ in $M^2_{\rp}\rttimes [0,\infty)$.  Secondly, the blow-up of $\Phi_{i,0}$ creates a new face at $\k=0$, some sort of adiabatic limit as $\k\searrow 0$.

Of course, for $\delta>0$ fixed, a slice of $\ff_{i,+}$ at $\k=\delta$ is canonically identified with $\ff_i$ in $M^2_{\QFB}$, so within the small calculus, the normal operator $N_{i,+}(A)$  corresponds for $\k=\delta$ to a family of suspended $\QFB$ operators via the identification \eqref{sm.18} and Lemma~\ref{sm.19}.  Again in this case, we have a general composition result.  

\begin{proposition}
For $A\in\Psi^{m,\cE/\mathfrak{s}}_{\k,\QFB,\cn}(M;E,G)$ and $B\in \Psi^{m',\cF/\mathfrak{t}}_{\k,\QFB,\cn}(M;E,F)$ with index families $\cE,\cF$ and multiweight $\mathfrak{s},\mathfrak{t}$ as in Theorem~\ref{ckqfb.10} such that
$$
\inf\Re \cE_{\ff_{i,+}}\ge 0, \quad \inf\Re \cF_{\ff_{i,+}}\ge 0, \quad \mathfrak{s}(\ff_{i,+})>0, \quad \mathfrak{t}(\ff_{i,+})>0
$$
and 
$$
    \inf\Re(\cE_{ik,+}+ \cF_{ki,+}-h_k)>0, \quad \mathfrak{s}(H_{ik,+})+\mathfrak{t}(H_{ki,+})-h_k>0 
$$
for all $k\in\{0,1,\ldots,\ell\}$, we have that
$$
    N_{i,+}(A\circ B)= N_{i,+}(A)\circ N_{i,+}(B)
$$
with composition on the right induced by the triple space $\ff_{i,T,+}$.
\label{ksm.6b}\end{proposition}  
\begin{proof}
The proof is similar to the proofs of Proposition~\ref{ksm.4} and Proposition~\ref{ksm.6}.
\end{proof}

For the $\QAC-\Qb$ calculus, one can define a principal symbol and normal operators in a similar way.  For us, the most important one will the be the normal operator $\kridx{N_{ii,0}}{Normii0}{$\QAC-\Qb$ normal opertor}(A)$ associated to $H^{\Qb}_{ii,0}$ for $H_i$ a maximal boundary hypersurface.  There is the following analog of Propositions~\ref{ksm.4},\ref{ksm.6} and \ref{ksm.6b}.

\begin{proposition}
For $A\in\Psi^{m,\cE/\mathfrak{s}}_{\QAC-\Qb,\cn}(M;E,G)$ and $B\in \Psi^{m',\cF/\mathfrak{t}}_{\QAC-\Qb,\cn}(M;E,F)$ with index families $\cE,\cF$ and multiweight $\mathfrak{s},\mathfrak{t}$ as in Theorem~\ref{ckqfb.10} such that
$$
\inf\Re \cE_{ii,0}\ge 0, \quad \inf\Re \cF_{ii,0}\ge 0, \quad \mathfrak{s}(H_{ii,0})>0, \quad \mathfrak{t}(H_{ii,0})>0
$$
and 
$$
    \inf\Re(\cE_{ik,0}+ \cF_{ki,0}-h_{k,0})>0, \quad \mathfrak{s}(H_{ik,0})+\mathfrak{t}(H_{ki,0})-h_{k,0}>0 
$$
for all $k\in\{1,\ldots,\ell\}$, we have that
$$
    N_{ii,0}(A\circ B)= N_{ii,0}(A)\circ N_{ii,0}(B)
$$
with composition on the right induced by the triple space $H_{iii,0}^{\Qb}$ corresponding to the lift of $H^{\rp}_{iii,0}$ to $M^3_{\QAC-\Qb}$.
\label{ksm.12}\end{proposition}

\begin{proof}
The proof is similar to the proofs of Proposition~\ref{ksm.4} and Proposition~\ref{ksm.6}.
\end{proof} 

\section{Resolvent of a $\QFB$ Dirac operator in the low energy limit} \label{qfble.0}

We have now all the ingredients to provide a pseudodifferential characterization of the low energy limit of the resolvent of a $\QFB$ Dirac operators.  The overall strategy is to follow the approach of \cite{GH2,KR0} suitably adapted to $\k,\QFB$ operators.    
Thus, let $\eth_{\QFB}$ be a Dirac operator as in \S~\ref{do.0}, so that Assumptions~\ref{do.1}, \ref{su.7}, \ref{su.2}, \ref{do.46} and \ref{do.26} hold.  For Assumption~\ref{do.26}, we suppose more precisely that it holds for $\delta=-\frac12$.  For each boundary hypersurface $H_i$, it will be convenient as well to assume that those assumptions hold for each member of the vertical $\eth_{v,i}$ seen as a Dirac operator.  By Remark~\ref{do.26b}, we can suppose  that 
\begin{equation}
  \mu_L= \mu_R+1>\frac32.
\label{qfble.1}\end{equation}
Let $\gamma\in \CI(M;\End(E))$ be self-adjoint with respect to the $\QFB$ metric $g_{\QFB}$ and the bundle metric of $E$ and suppose that 
\begin{equation}
\gamma^2=\Id_E, \quad \eth_{\QFB}\gamma+ \gamma\eth_{\QFB}=0.
\label{qfble.2}\end{equation}
In terms of \eqref{su.3b} and \eqref{do.15b}, suppose also that $\gamma$ anti-commutes with $\eth_{v,i}$, $\eth_{h,i}$, $c$ and $\eth_{S_i}$.  In this section, we will consider the first order $\k,\QFB$ operator
\begin{equation}
\eth_{\k,\QFB}:= \eth_{\QFB}+ \k \gamma.
\label{qfble.3}\end{equation}
As in \eqref{do.2}, it will be convenient to consider the conjugated operator
\begin{equation}
D_{\k,\QFB}:= x^{-\mathfrak{w}}\eth_{\k,\QFB}x^{\mathfrak{w}}= D_{\QFB}+\k\gamma.
\label{qfble.4}\end{equation}
By \eqref{qfble.2}, notice that 
$$
   \eth^2_{\k,\QFB}=\eth_{\QFB}^2+\k^2\Id_E,
$$
so that for $\k>0$ fixed, we see, proceeding as in the proof of Corollary~\ref{mp.33}, that $\eth^2_{\k,\QFB}$ and $\eth_{\k,\QFB}$ are invertible in the small $\QFB$ calculus.  However, when $\k=0$, $\eth_{\k,\QFB}=\eth_{\QFB}$ is no longer fully elliptic, but at least it is Fredholm in some sense by Corollary~\ref{do.53} with inverse in the weakly conormal large $\QFB$ calculus by Corollary~\ref{hd.4}.  

This and the invertibility for $\k>0$ can be combined to invert $\eth_{\k,\QFB}$ within the $\k,\QFB$ calculus, for the moment provided we make the following assumption.

\begin{assumption}
For each boundary hypersurface $H_i$ of $M$, the normal operator $N_{i,+}(D_{\k,\QFB})$ is invertible with inverse 
$$
    N_{i,+}(D_{\k,\QFB})^{-1}\in \Psi^{-1,\cE_i/\mathfrak{s}_i}_{\k,\phi-{}^{\phi}NS_i,\cn}(H_i/S_i;E),
$$
where $\cE_i$ is a $\k,\QFB$ nonnegative index family except at $H_{jj,0}$ for $H_j\ge H_i$ where we have that 
$$
     \cE_i|_{H_{jj,0}}=\bd_j+\bbN_0,
$$
and where $\mathfrak{s}_i$ is a $\k,\QFB$ positive multiweight, except at $H_{jj,0}$ for $H_j\ge H_i$ where 
$$
          \mathfrak{s}(H_{jj,0})>\bd_i.
$$ 
Moreover, for $H_j\ge H_i$, $N_{i,+}(D_{\k,\QFB})^{-1}$ agrees with $N_{j,0}(D_{\k,\QFB})^{-1}$ given by Assumption~\ref{do.46} on $\ff_{i,+}\cap\ff_{j,0}$ and with $N_{j,+}(D_{\k,\QFB})^{-1}$ for $H_j\ne H_i$.  Finally, the term $A_{ij}$ of order $\bd_j$ at $\ff_{i,+}\cap H_{jj,0}$ is such that $\widetilde{\Pi}_{h,j}A_{ij}= A_{ij}\widetilde{\Pi}_{h,j}=A_{ij}$. 
\label{qfble.5}\end{assumption}
\begin{remark}
Notice that $\ff_{i,+}\cap H_{0j,0}=\ff_{i,+}\cap H_{j0,0}=\emptyset$ for all $i>0$ and $j\ge 0$.
\label{qfble.6}\end{remark}

To give a good description of $(D_{\k,\QFB}+\gamma \k)^{-1}$ within the $\k,\QFB$ calculus, we will construct first a good parametrix following roughly the strategy of \cite{GH1,GH2, GS, KR0}.  Indeed, with all our hypotheses, we know how to invert each of the normal operators of $D_{\k,\QFB}$, a good start to construct such a parametrix.  First, thanks to Corollary~\ref{hd.4} with $\delta=-\frac12$, we know how to invert 
\begin{equation}
      N_{0,0}(D_{\k,\QFB})=D_{\QFB}.
\label{qfble.7}\end{equation}
More precisely, consider the quasi-fibered cusp operator
$$
   D_{\QFC}= v^{-\frac12}D_{\QFB}v^{-\frac12}= v^{-1}D_{\QFB,-\frac12}.
$$
This operator is formally self-adjoint with respect to $L^2_b(M;E)$.  By Corollary~\ref{do.53} with $\delta=-\frac12$, it is Fredholm.  If $\Pi_{\ker_{L^2_b}D_{\QFC}}$ denotes the orthogonal projection onto its $L^2_b$ kernel, then by Corollary~\ref{hd.4}, there exists $G_{-\frac12}\in \Psi^{-1,\cG/\mathfrak{g}}_{\QFB,\cn}(M;E)$ such that
\begin{equation}
G_{-\frac12}D_{\QFC}=\Id -\Pi_{\ker_{L^2_b}D_{\QFC}}\quad \mbox{and} \quad D_{\QFC}G_{-\frac12}=\Id -\Pi_{\ker_{L^2_b}D_{\QFC}}.
\label{qfble.7}\end{equation}     
If the $L^2_b$ kernel of $D_{\QFC}$ is trivial, then one can simply take $v^{-\frac12}G_{-\frac12}v^{-\frac12}$ to invert
\eqref{qfble.7}, since 
$$
\begin{aligned}
D_{\QFB}(v^{-\frac12}G_{-\frac12}v^{-\frac12}) &= v^{\frac12}D_{\QFC}G_{-\frac12}v^{-\frac12}= v^{\frac12}\Id v^{-\frac12}=\Id,  \\
(v^{-\frac12}G_{-\frac12}v^{-\frac12})D_{\QFB}& = v^{-\frac12}G_{-\frac12}D_{\QFC}v^{\frac12}= v^{-\frac12}\Id v^{\frac12}=\Id.
\end{aligned}
$$
If instead $\Pi_{\ker_{L^2_b}D_{\QFC}}\ne 0$, then we only have that 
$$
\begin{aligned}
D_{\QFB}(v^{-\frac12}G_{-\frac12}v^{-\frac12}) &=\Id - v^{\frac12}\Pi_{\ker_{L^2_b}D_{\QFC}}v^{-\frac12}, \\
(v^{-\frac12}G_{-\frac12}v^{-\frac12})D_{\QFB} & =\Id-v^{-\frac12}\Pi_{\ker_{L^2_b}D_{\QFC}}v^{\frac12}.
\end{aligned}
$$
Following the strategy of \cite{GH2}, see also \cite{GS} and \cite{KR0}, this can be improved as follows.  Let $\{\varphi\}_{j=1}^J$ be an orthonormal basis of the kernel of $D_{\QFC}$ in $L^2_b(M;E)$, so that
$$
  \Pi_{\ker_{L^2_b}D_{\QFC}}= \sum_{j=1}^J (\pr_L^*\varphi_j)\pr_R^*(\varphi_j \nu_b)
$$
for the $b$-density $\nu_b= x^{2\mathfrak{w}}dg_{\QFB}$ with respect to which $D_{\QFC}$ is formally self-adjoint.  By Corollary~\ref{do.48b} with $\delta=-\frac12$, or equivalently by Corollary~\ref{do.48} with $\delta=-\frac12$ and\eqref{qfble.1},
$$
     \varphi_j\in v^{\nu} \cA_{\QFB}(M;E) \quad \forall \nu<\mu_R+\frac12,
$$
so that
$$
          \varphi_j\in \cA^{\mathfrak{s}}_{\QFB,-}(M;E) 
$$
with multiweight $\mathfrak{s}$ given by $\mathfrak{s}(H_i)=\mu_R+\frac12$ for each $i$.  In fact, taking into account Corollary~\ref{id.1}, we can say that there is $\widetilde{\mu}_R\ge \mu_R$ such that 
\begin{equation}
 \varphi_j\in v^{\nu} \cA_{\QFB}(M;E) \quad \forall \nu<\widetilde{\mu}_R+\frac12.
\label{id.3}\end{equation}

In particular, since $\widetilde{\mu}_R>\frac12$, $\{v^{-\frac12}\varphi_j\}_{j=1}^J$ is a basis of the kernel of $D_{\QFB}$ in $L^2_b(M;E)$.  If $\{\psi_j\}_{j=1}^J$ is an orthonormal basis of $\ker_{L^2_b}D_{\QFB}$, then 
\begin{equation}
   \Pi_{\ker_{L^2_b}D_{\QFB}}= \sum_{j=1}^J (\pr_L^* \psi_j)\pr_R^*(\psi_j\nu_b)
 \label{qfble.8}\end{equation}
 and
$$
  \psi_i= \sum_{j=1}^J \alpha_{ij} v^{-\frac12}\varphi_j
$$
for some matrix $\alpha_{ij}$.  In particular, $\psi_i\in v^{\nu}\cA_{\QFB}(M;E)$ for all $\nu<\widetilde{\mu}_R$.  If $\alpha^{ij}$ denotes the inverse of the matrix $\alpha_{ij}$, then
$$
   v^{-\frac12}\varphi_i= \sum_{j=1}^{J} \alpha^{ij}\psi_j,
$$  
so that 
\begin{equation}
\begin{aligned}
\Pi_{\ker_{L^2_b}D_{\QFB}}(v^{\frac12}\varphi_j)&= \sum_{k=1}^J \left( \int_M \psi_k v^{\frac12}\varphi_j\nu_b \right)\psi_k=
  \sum_{k=1}^J \left( \int_M \left( \sum_{i=1}^J \alpha_{ki}v^{-\frac12}\varphi_i \right)v^{\frac12}\varphi_j \nu_b \right) \psi_k \\
  &= \sum_{k=1}^J \alpha_{kj}\psi_k.
\end{aligned}
\label{qfble.9}\end{equation}
This implies that 
\begin{equation}
\psi_j^{\perp}:= v^{\frac12}\varphi_j-\Pi_{\ker_{L^2_b}D_{\QFB}}(v^{\frac12}\varphi_j)\in v^{\nu}\cA_{\QFB}(M;E)\;\forall\nu<\widetilde{\mu}_R.
\label{qfble.10}\end{equation}
\begin{lemma}
There exists $$
\displaystyle \chi_k\in M_{\QFC,-\frac12}\cap \left( \bigcap_{\nu<\nu_R} v^{\nu+\frac12}(\cA_{\QFB,2}(M;E)\cap\cA_{\QFB}(M;E))\right)
$$ 
with $\nu_R:=\min\{ \mu_R, \widetilde{\mu}_R-1 \}$ such that
$$
       D_{\QFC}\chi_k= v^{-\frac12}\psi_k^{\perp}.
$$
\label{qfble.11}\end{lemma}
\begin{proof}
Since $\widetilde{\mu}_R\ge\mu_R>\frac12$ by Assumption~\ref{do.26}, notice from \eqref{qfble.10} that 
$$
  v^{-\frac12}\psi_k^{\perp}\in \bigcap_{\nu<\widetilde{\mu}_R}v^{\nu-\frac12}\cA_{\QFB,2}(M;E)\subset L^2_b(M;E).
$$
Now, $\{v^{-\frac12}\varphi_j\}_{j=1}^J$ is a basis of $\ker_{L^2_b}D_{\QFB}$, which means that $\psi_k^{\perp}$ is orthogonal to $v^{-\frac12}\varphi_j$, that is, $v^{-\frac12}\psi_k^{\perp}$ is orthogonal to $\varphi_j$.  This means that $\Pi_{\ker_{L^2_b}D_{\QFC}}(v^{-\frac12}\psi_k^{\perp})=0$.  Thus, it suffices to take 
$$
    \chi_k:= G_{-\frac12}(v^{-\frac12}\psi_k^{\perp})\in M_{\QFC,-\frac12},
$$
for then
$$
\begin{aligned}
D_{\QFC}\chi_k &= D_{\QFC}G_{-\frac12}(v^{-\frac12}\psi_k^{\perp})= (\Id-\Pi_{\ker_{L^2_b}D_{\QFC}})(v^{-\frac12}\psi_k^{\perp}) \\
 &= v^{-\frac12}\psi_k^{\perp},
\end{aligned}
$$
while by \eqref{qfble.10}, Corollary~\ref{con.2} and Corollary~\ref{hd.4} and \eqref{qfble.1}, we have that 
$$
    \chi_k\in \bigcap_{\nu<\nu_R}v^{\nu+\frac12}\cA_{\QFB,2}(M;E).
$$
Using boundedness of classical pseudodifferential operators on Hölder spaces as in \cite[Proposition~3.27]{MazzeoEdge}, we can deduce a $L^{\infty}$ version of Corollary~\ref{con.2} to deduce as well that
$$
    \chi_k\in \bigcap_{\nu<\nu_R}v^{\nu+\frac12}\cA_{\QFB}(M;E)
$$
as claimed.
\end{proof}

Thanks to \eqref{qfble.9} we see that
\begin{equation}
\begin{aligned}
D_{\QFB} v^{-\frac12} &\left(  G_{-\frac12} + \sum_{j=1}^{J} ( \pr_L^*\chi_j \pr_R^*(\varphi_j\nu_b)+  \pr_L^*\varphi_j \pr_R^*(\chi_j\nu_b) ) \right)v^{-\frac12} \\
&\hspace{5cm}= \Id -v^{\frac12}\Pi_{\ker_{L^2_b}D_{\QFC}}v^{-\frac12}+ \sum_{j=1}^J \pr_L^*\psi_j^{\perp} \pr_R^*(v^{-\frac12}\varphi_j \nu_b) \\
&\hspace{5cm}=\Id + \sum_{j=1}^J \left( -\pr_L^*(v^{\frac12}\varphi_j)\pr_R^*(v^{-\frac12}\varphi_j\nu_b) + \pr_L^*\psi_j^{\perp}\pr_R^*(v^{-\frac12}\varphi_j\nu_b) \right)\\
&\hspace{5cm}= \Id- \sum_{j=1}^J \pr_L^*(\Pi_{\ker_{L^2_b}D_{\QFB}}(v^{\frac12}\varphi_j))\pr_R^*(v^{-\frac12}\varphi_j\nu_b) \\
& \hspace{5cm}= \Id -\sum_{j=1}^J \sum_{k=1}^J \alpha_{kj}\pr_L^*(\psi_k) \sum_{i=1}^J \alpha^{ji}\pr_R^*(\psi_i \nu_b) \\
& \hspace{5cm}= \Id -\sum_{j=1}^J \pr_L^*\psi_j \pr_R^*(\psi_j\nu_b)=\Id-\Pi_{\ker_{L^2_b}D_{\QFB}}.
\end{aligned}
\label{qfble.12}\end{equation}
To invert $D_{\k,\QFB}$ at $H_{00,0}$, this suggests to consider the approximate inverse 
\begin{equation}
  Q_0:= \k^{-1}\gamma G^{-1}_{00,0}+ G^0_{00,0}
\label{qfble.13}\end{equation}
with
$$
\begin{gathered}
G_{00,0}^{-1}:= \sum_{j=1}^J \pr_L^*(\psi_j) \pr_R^*(\psi_j\nu_b)= \Pi_{\ker_{L^2_b}D_{\QFB}}, \\
G_{00,0}^0:= v^{-\frac12}\left( G_{-\frac12} +\sum_{j=1}^J \left( \pr_L^*\chi_j\pr_R^*(\varphi_j\nu_b)+ \pr_L^*\varphi_j \pr_R^*(\chi_j \nu_b) \right)  \right) v^{-\frac12}.
\end{gathered}
$$
On $M^2_{\QFB}\times [0,\infty)_{\k}$, it is such that 
$$
     (D_{\QFB}+\gamma\k)Q_0= \Id +R_0 \quad \mbox{with} \quad R_0=\k\gamma G^0_{00,0}.
$$

By Lemma~\ref{kqfb.18b} and the fact that $2\mu_R>1$, we can regard $Q_0$ as an element of $\Psi^{-1,\cQ_0/\mathfrak{q}_0}_{\k,\QFB,\cn}(M;E)$ with index family $\cQ_0$ given by
\begin{equation}
\cQ_0|_{H_{00,0}}=\bbN_0-1, \quad \cQ_0|_{\ff_{i,0}}= \cR_0|_{\ff_{i,+}}=\bbN_0 \quad \cQ_{0}|_{H_{ii,0}}=\cQ_0|_{H_{ii,+}}=\bbN_0+\bd_i,  \quad i>0,
\label{qfble.14}\end{equation} 
and the empty set elsewhere, while the multiweight $\mathfrak{q}_0$ is such that for $i>0$ and $j>0$,
\begin{equation}
\begin{gathered}
\mathfrak{q}_0(H_{i0,0})=\nu_R, \quad \mathfrak{q}_0(H_{i0,+})=\nu_R, \quad \mathfrak{q}_0(H_{0i,0})=\bd_i+1+\nu_R, \quad \mathfrak{q}_0(H_{0i,+})=\bd_i+1+\nu_R, \\
 \mathfrak{q}_0(H_{ij,0})=\mathfrak{q}_0(H_{ij,+})>\left\{\begin{array}{ll} \bd_i, & i=j, \\ \bd_j+1, & i\ne j, \end{array}  \right. \quad \mathfrak{q}_0(H_{00,0})=\infty \quad \mathfrak{q}_0(\ff_{i,0})= \mathfrak{q}_0(\ff_{i,+})>0.
 \end{gathered}
\label{qfble.15}\end{equation}
Correspondingly, the error term $R_0=\k \gamma G^0_{00,0}$ can be seen as an element of $\Psi^{-1,\cR_0/\mathfrak{r}_0}_{\k,\QFB,\cn}(M;E)$ with index family $\cR_0$ such that 
\begin{equation}
\cR_0|_{H_{00,0}}=\bbN_0+1, \quad \cR_0|_{\ff_{i,0}}=\bbN_0+1, \quad \cR_0|_{\ff_{i,+}}=\bbN_0, \quad \cR_{0}|_{H_{ii,0}}=\bbN_0+\bd_i+1, \quad \cR_0|_{H_{ii,+}}=\bbN_0+\bd_i,
\label{qfble.16}\end{equation}
and the empty set elsewhere, and with  multiweight $\mathfrak{r}_0$ such that for $i>0$ and $j>0$,
\begin{equation}
\begin{gathered}
\mathfrak{r}_0(H_{00,0})=\infty, \quad \mathfrak{r}_0(\ff_{i,0})>1, \quad \mathfrak{r}_0(H_{i0,0})=\nu_R+1, \quad \mathfrak{r}_0(H_{0i,0})=\bd_i+2+\nu_R, \quad \mathfrak{r}_0(H_{ij,0})> \left\{\begin{array}{ll} \bd_i+1, & i=j \\ \bd_j+2, & i\ne j,  \end{array}  \right. \\
 \mathfrak{r}_0(\ff_{i,+})>0, \quad \mathfrak{r}_0(H_{i0,+})=\nu_R, \quad \mathfrak{r}_0(H_{0i,+})=\bd_i+1+\nu_R, \quad \mathfrak{r}_0(H_{ij,+})> \left\{\begin{array}{ll} \bd_i, & i=j \\ \bd_j+1, & i\ne j.  \end{array}  \right.
\end{gathered}
\label{qfble.17}\end{equation}
This shows that $\cQ_0$ also inverts $D_{\k,\QFB}$ at $\ff_{i,0}$.  Since $\widetilde{\mu}_R\ge\mu_R>\frac12$ and since $\varphi_k$ is of order $v^{\nu+\frac12}$ for all $\nu<\widetilde{\mu}_R$ while $\psi_k$ is of order $v^{\nu}$ for all $\nu<\widetilde{\mu}_R$ and $\chi_k$ is of order $v^{\nu+\frac12}$ for all $\nu<\nu_R$, we see  also that the only term of order $0$ of $Q_0$ at $\ff_{i,0}$ comes from $v^{-\frac12}G_{-\frac12}v^{-\frac12}$ in $G^0_{00,0}$.  Similarly, the terms of order $\bd_i$ at $H_{ii,0}$ comes exclusively from $v^{-\frac12}G_{-\frac12}v^{-\frac12}$ in $G^0_{00,0}$.  In particular, the term $q_i$ of order $\bd_i$ at $H_{ii,0}$ of $Q_0$ is such that $\widetilde{\Pi}_{h,i}q_i= q_i\widetilde{\Pi}_{h,i}=q_i$.
However, at $H_{ii,0}$, the error term does not vanish at order $\bd_i+1$.  Moreover, the error term $R_0$ does not vanish rapidly at $H_{ij,+}$.  The lack of decay at $H_{ij,+}$ can be easily remedied. Indeed, let $\varphi\in \CI(\bbR)$ be a cut-off function which is equal to zero on $(-\infty,1]$ and equal to $1$ on $[2,\infty)$, and consider the function
\begin{equation}
    \varphi_{\epsilon}(r)= \varphi\left( \frac{r}{\epsilon} \right)
\label{qfble.17b}\end{equation}
for $\epsilon>0$.  Then for $\epsilon>0$ sufficiently small, it suffices to replace $Q_0$ by 
$$
Q_0'=\varphi_{\epsilon}\left(\frac{v}{\k}\right)Q_0\varphi_{\epsilon}\left(\frac{v}{\k}\right).
$$
By \eqref{su.10}, we see that 
\begin{equation}
 [D_{\QFB},\varphi_{\epsilon}\left(\frac{v}{\k}\right)]= \varphi_{\epsilon}'\left(\frac{v}{\k}\right)\frac{1}{\k} [D_{\QFB},v]\in \frac{v^2}{\k}\CI(M\times [0,\infty)_{\k};\End(E)),
\label{qfble.18}\end{equation}
which ensures that after cutting off, the good decay of the error term at $H_{0i,0}$ is preserved since $\varphi_{\epsilon}'\left(\frac{v}{\k}\right)$ is supported away from $H_{0i,0}$ (when $v$ is seen as a left variable), while at $H_{i0,0}$, the decay is preserved  by \eqref{qfble.18} and \eqref{qfble.15}.  However, in the process, part of the term of order $0$ of $Q_0$ at $\ff_{i,0}$ is removed, so that the new error term has a term of order 0 at $\ff_{i,0}$.  But by Remark~\ref{new.3}, we can add back this term and  cut it off  so that $Q_{0}'$ and $Q_0$ have the same term of order zero at $\ff_{i,0}$  without compromising the decay of the error term that was achieved at the other boundary hypersurfaces.  Moreover, the cut-off function used to add back this term of order $0$ at $\ff_{i,0}$ can be chosen to be constant on the fibers of the lift of the fiber bundle  \eqref{ps.5} to $H_{ii,0}$.  Consequently, this discussion shows that we can construct $Q_1\in\Psi^{-1,\cQ_1,\mathfrak{q}_1}_{\k,\QFB,\cn}(M;E)$ out of $Q_0$ with $\cQ_1$ and $\mathfrak{q_1}$ as $\cQ_0$ and $\mathfrak{q}_0$, except at $H_{ij,+}$ for $(i,j)\ne (0,0)$  and at $\ff_{i,+}$ where
$$
   \cQ_1|_{H_{ij,+}}=\emptyset, \quad \cQ_1|_{\ff_{i,+}}=\bbN_0, \quad \mbox{and} \quad \mathfrak{q}_1(H_{ij,+})=\infty, \quad \mathfrak{q}_1(\ff_{i,+})=\infty,
$$  
in such a way that 
$$
  D_{\k,\QFB}Q_1=\Id-R_1
$$
with $R_1\in \Psi^{-1,\cR_1/\mathfrak{r}_1}_{\k,\QFB,\cn}(M;E)$, where $\cR_1$ and $\mathfrak{r}_1$ are as $\cR_0$ and $\mathfrak{r}_0$, except at $H_{ij,+}$ where 
$$
      \cR_1|_{H_{ij,+}}=\emptyset \quad \mbox{and} \quad \mathfrak{r}_1(H_{ij,+})=\infty \quad \mbox{for} \; (i,j)\ne (0,0).
$$
Thanks to \eqref{qfble.18} and the fact the cut-off function used to add back part of the term order $0$ at $\ff_{i,0}$ is constant along the fibers of the lift of the fiber bundle \eqref{ps.5} to $H_{ii,0}$, notice that the term $r_i$ of order $\bd_i+1$ of $R_1$ at $H_{ii,0}$ is still such that $r_i\widetilde{\Pi}_{h,i}=r_i$ since $R_1$ is still such that $R_1D_{\QFB}$ has no term of order $b_i+1$ at $H_{ii,0}$.  In fact, recursively adding  terms in $\k\Psi^{-2}_{\k,\QFB}(M;E)$, $\k\Psi^{-3}_{\k,\QFB}(M;E)$ etc. supported near the lifted diagonal to $Q_1$, we can decrease the order of the error term as much as we want.  Taking an asymptotic sum of these corrections and adding them to $Q_1$, we may therefore supposed that in fact $R_1\in \Psi^{-\infty,\cR_1/\mathfrak{r}_1}_{\k,\QFB,\cn}(M;E)$ is of order $-\infty$.  

Using Assumption~\ref{qfble.5}, we can improve this parametrix as follows.

\begin{proposition}
There exists $Q_2\in \Psi^{-1,\cQ_2/\mathfrak{q}_2}_{\k,\QFB,\cn}(M;E)$ and $R_2\in\Psi^{-\infty,\cR_2/\mathfrak{r}_2}_{\k,\QFB}(M;E)$ such that 
$$
       D_{\k,\QFB}Q_2= \Id-R_2
$$
with $N_{i,+}(R_2)=0$ for each $i$, where $\cQ_{2},\mathfrak{q}_{2}, \cR_{2}$ and $\mathfrak{r}_{2}$ satisfy the same properties as $\cQ_{1},\mathfrak{q}_{1}, \cR_{1}$ and $\mathfrak{r}_{1}$.

Furthermore, the term $q_{2,i}$ of order $b_i$ at $H_{ii,0}$ of $Q_2$ is such that $\widetilde{\Pi}_{h,i}q_{2,i}=q_{2,i}\widetilde{\Pi}_{h,i}=q_{2,i}$ with decay of order $\mu_R$ and $\bd_j+1+\mu_R$ at $H_{j0,0}$ and $H_{0j,0}$ for $H_j\le H_i$.  Similarly, the term $r_{2,i}$ of order $\bd_i+1$ at $H_{ii,0}$ of $R_2$ still such that $r_{2,i}= r_{2,i}\widetilde{\Pi}_{h,i}$ with decay of order $\mu_R+1$ and $\bd_j+2+\mu_R$ at $H_{j0,0}$ and $H_{0j,0}$ for $H_j\le H_i$.  Finally, $\widetilde{\Pi}_{h,i}Q_2$ has the same asymptotic behavior as $Q_2$ at $H_{ii,0}$ up to a term of order $\bd_i+1$ and a weakly conormal section vanishing faster than $x_{H_{ii,0}}^{\bd_i+1+\nu}$ for some $\nu>0$.
\label{qfble.19}\end{proposition}
\begin{proof}
By Assumption~\ref{qfble.5} and Remark~\ref{qfble.6}, it suffices to choose $Q_2$ to be like $Q_1$, except at $\ff_{i,+}$ for all $i$ where we choose $Q_2$ such that $N_{i,+}(Q_2)=N_{i,+}(D_{\k,\QFB})^{-1}$. The possible better decay of $q_{2,i}$ and $r_{2,i}$ at $H_{j0,0}$ and $H_{0j,0}$ follows from the fact that these terms comes exclusively from $v^{-\frac12}G_{-\frac12}v^{-\frac12}$ in $G^0_{00,0}$.
\end{proof}

To improve the parametrix further, we need to remove the term of order $\bd_i+1$ at $H_{ii,0}$ of the error term.  Now, in terms of the model operator \eqref{su.5}, we need to consider the corresponding model operator for $D_{\QFB,\k}$,
\begin{equation}
  D_{\cC_{\phi_i}}+ \k\gamma= D_{v,i}+ x^{-\mathfrak{w}}\widetilde{\eth}_{\cC_i}x^{\mathfrak{w}}+ \k\gamma.
\label{qfble.20}\end{equation}
Since the term $r_{2,i}$ of $R_2$ of order $\bd_{i}+1$ at $H_{ii,0}$ is such that 
$$
      r_{2,i}\widetilde{\Pi}_{h,i}=r_{2,i},
$$
we can first invert the corresponding model operator induced from $\widetilde{\Pi}_{h,i}(x^{-\mathfrak{w}}\widetilde{\eth}_{\cC_i}x^{\mathfrak{w}}+\k\gamma)\widetilde{\Pi}_{h,i}$, namely, by \eqref{do.12} and \eqref{do.25},
\begin{equation}
  v\left( c v\frac{\pa}{\pa v} + \left( D_{S_i}+\frac{c}2) \right) \right)+ \k\gamma.
\label{qfble.21}\end{equation}  
As in \cite{GH1} or \cite{KR0}, it is useful to rewrite the operator \eqref{qfble.20} with respect to the variable $\kappa= \frac{\k}{v}$,
\begin{equation}
   D_{\cC_{\phi_i}}+\k\gamma= D_{v,i}+ \k D_{\cC_i},  \quad D_{\cC_i}:= -c\frac{\pa}{\pa\kappa}+ \frac{1}{\kappa}(D_{S_i}+\frac{c}2)+ \gamma.
\label{qfble.22}\end{equation}
The operator $D_{\cC_i}$ can be seen as an operator acting on sections of $\ker D_{v,i}$ on the cone $S_i\times [0,\infty)_{\kappa}$ with cone metric $d\kappa^2+\kappa^2g_{S_i}$.  If $H_i$ is maximal, $H_i=S_i$ and $\phi_i:H_i\to S_i$ is the identity map, the natural pseudodifferential calculus in which this operator can be inverted is the one induced by the front face $H^{\Qb}_{ii,0}$ in $M^2_{\QAC-\Qb}$ with composition induced by the lift $H^{\Qb}_{iii,0}$ of $H^{\rp}_{iii,0}$ to $M^3_{\QAC-\Qb}$.  More generally, we must consider the manifold with fibered corners $S_i\times [0,1)$ and the pseudodifferential calculus induced by the corresponding front face $H^{\Qb}_{ii,0}$ in $(S_i\times [0,1))^2_{\QAC-\Qb}$.  To show that $D_{\cC_i}$ can be inverted within this calculus, we will proceed by induction on $i$, assuming without loss of generality that the boundary hypersurfaces $H_1,\ldots, H_{\ell}$ of $M$ are listed in such a way that 
$$
    H_i<H_j\quad \Longrightarrow \quad i<j.
$$
In the process, we will prove the following result by induction on $i$, which in fact is really the result we are interested in.
\begin{proposition}
For $i\ge 0$ fixed, there exists $Q_{3,i}\in \Psi^{-1,\cQ_{3,i}/\mathfrak{q}_{3,i}}_{\k,\QFB,\cn}(M;E)$ and 
$R_{3,i}\in \Psi^{-\infty,\cR_{3,i}/\mathfrak{r}_{3,i}}$ such that 
$$
   D_{\k,\QFB}Q_{3,i}=\Id-R_{3,i}
$$
with $\cQ_{3,i},\mathfrak{q}_{3,i}, \cR_{3,i}$ and $\mathfrak{r}_{3,i}$ satisfying the same properties as 
 $\cQ_{1},\mathfrak{q}_{1}, \cR_{1}$ and $\mathfrak{r}_{1}$, except that 
 $$
  \mathfrak{q}_{3,i}(H_{00,0})>0, \quad \mathfrak{r}_{3,i}(H_{00,0})>1 \quad \mbox{and} \quad \mathfrak{r}_{3,i}(H_{0j,0})=\bd_j+1+\mu_R \quad \mbox{for} \; 0<j\le i.
 $$
Moreover, for $0<j\le i$, $R_{3,i}$ has no term of order $\bd_{j}+1$ at $H_{jj,0}$.  Finally, for $j>i$,  
the term $q_{2,j}$ of order $b_j$ at $H_{jj,0}$ of $Q_{3,i}$ is such that $\widetilde{\Pi}_{h,j}q_{2,j}=q_{2,j}\widetilde{\Pi}_{h,j}=q_{2,j}$ with decay of order $\mu_R$ and $\bd_k+1+\mu_R$ at $H_{k0,0}$ and $H_{0k,0}$ for $H_k\le H_j$, while the term $r_{2,j}$ of order $\bd_j+1$ at $H_{jj,0}$ of $R_{3,i}$ is such that $r_{2,j}=r_{2,j}\widetilde{\Pi}_{h,j}$ with multiweight at $H_{jj,0}\cap H_{k0,0}$ and $H_{jj,0}\cap H_{0k,0}$ for $H_k<H_j$ given by $\mu_R+1$ and $\bd_k+2+\mu_R$.
\label{qfble.23}\end{proposition}

For $i=0$, notice that the statement of Proposition~\ref{qfble.23} holds with $Q_{3,0}:=Q_2$ and $R_{3,i}:=R_2$.  Assuming now that the proposition holds for $i-1$ for some $i\in\{1,\ldots,\ell\}$, we will prove that the proposition also holds for $i$ by inverting the model operator $D_{\cC_i}$.  First notice that the parametrix $Q_{3,i-1}$ given by Proposition~\ref{qfble.23} for $i-1$ yields a parametrix for $D_{\cC_i}$ by restriction to $H_{ii,0}$.  More precisely, $Q_{3,i-1}$ has a term of order $\bd_i$, which in terms of $b$-densities corresponds to a term of order $-1$.  Hence, $\k Q_{3,i-1}$ is of order $0$ at $H_{ii,0}$ in terms of $b$-densities.  Let 
$$
        Q_{\cC_i}:= \k Q_{3,i-1}|_{H_{ii,0}}
$$
be its restriction.  On the other hand, $R_{3,i-1}$ has a term of order $\bd_i+1$ at $H_{ii,0}$, that is, of order 0 in terms of $b$-densities.  let 
$$
   R_{\cC_i}= \widetilde{\Pi}_{h,i}R_{3,i-1}|_{H_{ii,0}}
$$
be its restriction to $H_{ii,0}$.  Then 
\begin{equation}
\begin{aligned}
\widetilde{\Pi}_{h,i}-R_{\cC_i} &= \widetilde{\Pi}_{h,i}(\Id-R_{3,i-1})|_{H_{ii,0}}= \widetilde{\Pi}_{h,i}(D_{\k,\QFB}Q_{3,i-1})|_{H_{ii,0}}\\
  &= (\k D_{\cC_i}Q_{3,i-1})|_{H_{ii,0}}= (D_{\cC_i}\k Q_{3,i-1})|_{H_{ii,0}} \\
  &= D_{\cC_i}Q_{\cC_i},
\end{aligned}
\label{qfble.24}\end{equation}
that is, $Q_{\cC_i}$ is a parametrix of $D_{\cC_i}$ with 
\begin{equation}
   D_{\cC_i}Q_{\cC_i}= \widetilde{\Pi}_{h,i}-R_{\cC_i}.
\label{qfble.25}\end{equation}
Now, the operator $D_{\cC_i}$ being an operator geometrically associated to the cone $(S_i\times [0,\infty)_{\kappa}, d\kappa^2+ \kappa^2g_{S_i})$, it can be thought as a wedge-$\QAC$ operator.  If we let $u_i$ be a lift of a boundary defining function of $S_i\times \{0\}$ in $S_i\times [0,\infty]_{\frac{\k}{v_i}}$ to
$$
 [S_{i}\times [0,\infty]_{\frac{\k}{v_i}}; \pa_{i-1}S_i\times \{0\},\ldots, \pa_1S_i\times \{0\}],
$$
this means that $u_i D_{\cC_i}$ and $u_i^{\frac12}D_{\cC_i}u_i^{\frac12}$ are edge-$\QAC$ operators.  In particular, 
$u_i^{-\frac12}Q_{\cC_i} u_{i}^{-\frac12}$ is a parametrix for $u_i^{\frac12}D_{\cC_i}u_{i}^{\frac12}$,
\begin{equation}
(u_i^{\frac12}D_{\cC_i}u_{i}^{\frac12})(u_i^{-\frac12}Q_{\cC_i} u_{i}^{-\frac12})=\widetilde{\Pi}_{h,i}- u_i^{\frac12}R_{\cC_i}u_i^{-\frac12}.
\label{qfble.26}\end{equation} 
\begin{lemma}
The edge-$\QAC$ self-adjoint extension of $u_i^{\frac12}D_{\cC_i}u_i^{\frac12}$ acting formally on $L^2_b(S_i\times[0,\infty];\ker D_{v,i})$ is Fredholm.
\label{qfble.27}\end{lemma}
\begin{proof}
By Proposition~\ref{qfble.23} for $i-1$, $u_i^{\frac12} R_{\cC_i}u_i^{-\frac12}$ decays enough at each boundary hypersurfaces of $H_{ii,0}$ to ensure that $u_i^{\frac12}R_{\cC_i}u_i^{-\frac12}$ is compact when acting on $L^2_b(S_i\times [0,\infty];\ker D_{v,i})$, so that $u_i^{\frac12}D_{\cC_i}q^{\frac12}_i$ has a right inverse modulo compact operators.  Taking the adjoint gives an inverse on the left modulo compact operators on the edge-$\QAC$ Sobolev space of order $1$ associated to $L^2_b(S_i\times [0,\infty];\ker D_{v,i})$, showing as claimed that $u_i^{\frac12}D_{\cC_i}u_i^{\frac12}$ is Fredholm as an edge-$\QAC$ operators.  
\end{proof}
Knowing that $u_i^{\frac12}D_{\cC_i}u_i^{\frac12}$ is Fredholm, it is not hard to see that it is in fact invertible as the next lemma shows.
\begin{lemma}
The Fredholm operator of Lemma~\ref{qfble.27} is in fact an isomorphism.  
\label{qfble.28}\end{lemma}
\begin{proof}
We will follow the strategy of the proof of \cite[Lemma~8.9]{KR0}.  Using that $\gamma$ anti-commutes with $-c\frac{\pa}{\pa \kappa}+ \frac{1}{\kappa}(D_{S_i}+\frac{c}2)$ and that $c$ anti-commutes with $D_{S_i}$, we compute that 
$$
\kappa^2D^2_{\cC_i}= -\left(\kappa\frac{\pa}{\pa\kappa} \right)^2 + 2\kappa\frac{\pa}{\pa\kappa}+ \left( D_{S_i}^2+cD_{S_i}-\frac34 \right)+\kappa^2,  
$$
that is,
$$
\kappa^{-1}(\kappa^2D_{\cC_i}^2)\kappa=-\left(\kappa\frac{\pa}{\pa\kappa} \right)^2 + \left( D_{S_i}^2+cD_{S_i}+\frac14 \right)+\kappa^2.
$$
Setting $\widetilde{D}_{S_i}:= cD_{S_i}$, this becomes
\begin{equation}
\kappa^{-1}(\kappa^2D_{\cC_i}^2)\kappa=-\left(\kappa\frac{\pa}{\pa\kappa} \right)^2 + \left( \widetilde{D}_{S_i}^2+\widetilde{D}_{S_i}+\frac14 \right)+\kappa^2.
\label{qfble.29}\end{equation}
Now, $\widetilde{D}_{S_i}$ is formally self-adjoint since $(\widetilde{D}_{S_i})^*= (cD_{S_i})^*=-D_{S_i}c=cD_{S_i}=\widetilde{D}_{S_i}$.  Moreover, we see from \eqref{do.24} that 
\begin{equation}
  I(D_{b,i},\lambda)= c(\lambda-\widetilde{D}_{S_i}+\frac12).
\label{qfble.30}\end{equation}
Hence, Assumption~\ref{do.26} with $\delta=-\frac12$ implies that $\widetilde{D}_{S_i}$ has no eigenvalue in the range $(-\mu_R-\frac12,\mu_L-\frac12)=(-\mu_R-\frac12,\mu_R+\frac12)$.  Now, if $\lambda$ is an eigenvalue of $\widetilde{D}_{S_i}$ with eigensection $\widetilde{\sigma}$, then $\widetilde{\sigma}$ is an eigensection of $\widetilde{D}_{S_i}^2+ \widetilde{D}_{S_i}+\frac14$ with eigenvalue $(\lambda+\frac12)^2$.  Since $\lambda\notin (-\mu_R-\frac12,\mu_R+\frac12)$, we see in particular that 
\begin{equation}
(\lambda+\frac12)^2\ge \mu_R^2>\frac14.
\label{qfble.31}\end{equation}
Hence, decomposing the kernel of the operator \eqref{qfble.29} in terms of the eigenspaces of $\widetilde{D}_{S_i}$, we obtain the modified Bessel equation
\begin{equation}
\left( -\left( \kappa \frac{\pa}{\pa \kappa}\right)^2+\alpha^2+\kappa^2 \right)f=0 \quad \mbox{with} \quad \alpha^2= (\lambda+\frac12)^2\ge \mu_R^2>\frac14.
\label{qfble.32}\end{equation}
A basis of solutions of this equation is given by the modified Bessel functions $K_{\alpha}$ and $I_{\alpha}$.  The function $I_{\alpha}$ grows exponentially as $\kappa\to \infty$ and tends to zero as $\kappa\searrow 0$, while the solution $K_{\alpha}$ blows up like $\kappa^{-|\alpha|}$ as $\kappa\searrow 0$ and decays exponentially at infinity.  Thus, the modified Bessel equation \eqref{qfble.32} has no non-trivial solution in
$$
        u_i^{-\mu_R}L^2_b(S_i\times [0,\infty];\ker D_{v,i}),
$$  
where $u_i$ is seen as a boundary defining function for $S_i\times \{0\}$ in $S_i\times [0,\infty]_{\kappa}$.  Consequently, the operator $\kappa^{-1}(\kappa^2D_{\cC_i}^2)\kappa$ has trivial kernel in 
$$
     u_i^{-\mu_R}L^2_b(S_i\times [0,\infty];\ker D_{v,i}),
$$  
hence, that $D_{\cC_i}$ has trivial kernel in 
$$
\kappa u_i^{-\mu_R}L^2_b(S_i\times [0,\infty];\ker D_{v,i}),
$$
thus trivial kernel in
$$
u_i^{1-\mu_R}L^2_b(S_i\times [0,\infty];\ker D_{v,i})
$$
since elements of the kernel either grow or decrease exponentially as $\kappa\to \infty$.  Since $\mu_R>\frac12$, this implies that $D_{\cC_i}$ has trivial kernel in 
$$
     u_i^{\frac12}L^2_b(S_i\times [0,\infty];\ker D_{v,i}).
$$
In particular, $u^{\frac12}_iD_{\cC_i}u_i^{\frac12}$ has trivial kernel in $L^2_b(S_i\times [0,\infty];\ker D_{v,i})$.  Since $u^{\frac12}_iD_{\cC_i}u_i^{\frac12}$ is formally self-adjoint, this means that the Fredholm operator of Lemma~\ref{qfble.27} also has trivial cokernel, hence is an isomorphism.
\end{proof}

Let  $G_{\cC_i}$ be the operator such that $u_i^{-\frac12}G_{\cC_i}u_i^{-\frac12}$ is  the inverse of the Fredholm operator of Lemma~\ref{qfble.28}.  We can use the parametrix $Q_{\cC_i}$ to give a pseudodifferential characterization of the inverse.
\begin{lemma}
On the face $H^{\Qb}_{ii,0}$ of $(S_i\times [0,1))^2_{\QAC-\Qb}$, we have that  
$$
u_i^{-\frac12}G_{\cC_i}u_i^{-\frac12} \in \Psi^{-1,\cG_i/\mathfrak{g}_i}_{\QAC-\Qb,\cn}(H_{ii,0}^{\Qb};\ker D_{v,i})
$$ 
with $\QAC-\Qb$ nonnegative  index family $\cG_i$ except at $H^{\Qb}_{jj,0}$ for $H_j<H_i$, where instead  
$$
     \inf\Re(\cG_i|_{H^{\Qb}_{jj,0}})\ge\bd_j
$$
with
\begin{equation}
\cG_i|_{\ff_{j,+}^{\Qb}}=\bbN_0, \quad \cG_i|_{H^{\Qb}_{kj,+}}= \cG_i|_{H_{j0,+}^{\Qb}}=\cG_i|_{H^{\Qb}_{0j,+}}= \cG_i|_{H_{j0,0}^{\Qb}}=\cG_i|_{H^{\Qb}_{0j,0}}= \emptyset, 
\label{qfble.33b}\end{equation}
for $H_k\le H_i$ and $H_j\le H_i$,
and with $\QAC-\Qb$ positive multiweight $\mathfrak{g}_i$ except at $H^{\Qb}_{jj,0}$ for $H_j<H_i$, where instead  
$$
     \mathfrak{g}_i(H^{\Qb}_{jj,0})>\bd_j,
$$
and   such that
\begin{equation}
\mathfrak{g}_i(\ff_{j,+}^{\Qb})=\mathfrak{g}_i(H^{\Qb}_{kj,+})= \mathfrak{g}_i(H_{j0,+}^{\Qb})=\mathfrak{g}_i(H^{\Qb}_{0j,+})= \infty, \quad
\mathfrak{g}_i(H^{\Qb}_{j0,0})=\mu_R+\frac12, \quad \mathfrak{g}_i(H^{\Qb}_{0j,0})= \bd_j+\frac32 +\mu_R
\label{qfble.33c}\end{equation}
for $H_{j}\le H_i$ and $H_k\le H_i$. Furthermore, for $H_{j}< H_i$, $G_{\cC_i}$ and $Q_{\cC_i}$ have the same top order terms at $\ff_{i,+}^{\Qb}$, $\ff^{\Qb}_{j,+}$, $\ff^{\Qb}_{j,0}$,  $H^{\Qb}_{jj,0}$ and  $H^{\Qb}_{00,0}$.
\label{qfble.33}\end{lemma}
\begin{proof}
The parametrix $u_{i}^{-\frac12}Q_{C_i}u_{i}^{-\frac12}$ for the edge-$\QAC$ operator of Lemma~\ref{qfble.28} shows that  its models at  $\ff^{\Qb}_{i,+}$, $\ff^{\Qb}_{j,+}$, $\ff^{\Qb}_{j,0}$ and $H^{\Qb}_{00,0}$ for $H_j<H_i$ are invertible. In fact, Assumption~\ref{qfble.5} ensures that $u_i^{-\frac12}D_{\cC_i}u_i^{\frac12}$ is fully elliptic at $\ff^{\Qb}_{j,+}$ for $H_j\le H_i$.  To invert at $H^{\Qb}_{00,0}$ and at $\ff^{\Qb}_{j,0}$ for $H_j<H_i$, we can just use Assumption~\ref{do.26} and take the inverse Fourier transform as in Lemma~\ref{do.39} to get the right model inverse, since $u_i^{\frac12}\gamma u_i^{\frac12}$ does not contribute to the models related to the edge part of the operator.  Thus, we can replace $Q_{\cC_i}$ by $\widetilde{Q}_{\cC_i}$ such that $u_i^{-\frac12}\widetilde{Q}_{\cC_i}u_i^{-\frac12}\in \Psi^{-1,\widetilde{\cQ}_i/\widetilde{\mathfrak{q}}_i}_{\QAC-\Qb,\cn}(H^{\Qb}_{ii,0};\ker D_{v,i})$ with $\widetilde{\cQ}_i$ a $\QAC-\Qb$ nonnegative index family and $\widetilde{\mathfrak{q}}_i$ a $\QAC-\Qb$ positive index family, except at $H^{\Qb}_{jj,0}$ for $H_j<H_i$ where instead 
$$
     \widetilde{\mathfrak{q}}_i(H^{\Qb}_{jj,0})>\bd_j  \quad \mbox{and} \quad \inf\Re(\widetilde{\cQ}_i|_{H^{\Qb}_{ii,0}})\ge \bd_j,
$$
with
\begin{equation}
(u_i^{\frac12}D_{\cC_i}u_i^{\frac12})(u_i^{-\frac12}\widetilde{Q}_{\cC_i}u_i^{-\frac12})= \widetilde{\Pi}_{h,i}- u_i^{\frac12}\widetilde{R}_{\cC_i}u_i^{-\frac12}
\label{qfble.33d}\end{equation}
where $u_i^{\frac12}\widetilde{R}_{\cC_i}u_i^{-\frac12}\in \Psi^{-\infty,\widetilde{\mathfrak{r}}_i}_{\QAC-\Qb,\cn}(H^{\Qb}_{ii,0};\ker D_{v,i})$ with $\widetilde{\mathfrak{r}}_i$ a $\QAC-\Qb$ positive index family.  Moreover, $u_i^{-\frac12}\widetilde{Q}_{\cC_i}u_i^{-\frac12}$ has the same top order terms as $u_i^{-\frac12}Q_{\cC_i}u_i^{-\frac12}$ at $\ff^{\Qb}_{i,+}$, $\ff^{\Qb}_{j,+}$ for $H_j<H_i$ and $H^{\Qb}_{00,0}$.  

Using \eqref{qfble.33d} and its adjoint, we can apply the standard sandwich argument of \cite{MazzeoEdge} to obtain that
$$
u_i^{-\frac12}G_{\cC_i}u_i^{-\frac12}= u_i^{-\frac12}\widetilde{Q}_{\cC_i}u_i^{-\frac12}+ (u_i^{-\frac12}\widetilde{Q}^*_{\cC_i}u_i^{-\frac12})(u_i^{\frac12}\widetilde{R}_{\cC_i}u_i^{-\frac12})+ u_i^{-\frac12}\widetilde{R}_{\cC_i}^*G_{\cC_i}\widetilde{R}_{\cC_i}u_i^{-\frac12},
$$ 
or equivalently that
\begin{equation}
G_{\cC_i}= \widetilde{Q}_{\cC_i}+  \widetilde{Q}^*_{\cC_i} \widetilde{R}_{\cC_i}+  \widetilde{R}_{\cC_i}^*G_{\cC_i} \widetilde{R}_{\cC_i}.
\label{qfble.34}\end{equation}
Since $u_i^{-\frac12} \widetilde{R}^*_{\cC_i}u_i^{\frac12}$ and $u_i^{\frac12} \widetilde{R}_{\cC_i}u_i^{-\frac12}$ are $\mathfrak{r}$ residual for some multiweight $\mathfrak{r}$ and that $u_i^{-\frac12}G_{\cC_i}u_i^{-\frac12}$ is bounded on $L^2_b(S_i\times [0,\infty];\ker D_{v,i})$, we conclude that 
\begin{equation}
u_i^{-\frac12}G_{\cC_i}u_i^{-\frac12}\in \Psi^{-1,\cG_i/\mathfrak{g}_i}_{\QAC-\Qb,\cn}(H_{ii,0}^{\Qb};\ker D_{v,i})
\label{qfble.35}\end{equation}
for some index family $\cG_i$ and multiweight $\mathfrak{g}_i$ as claimed, but possibly not satisfying \eqref{qfble.33b} and \eqref{qfble.33c}. But using \eqref{qfble.35} and the rapid decay of $\widetilde{R}_{\cC_i}$  at $\ff_{j,+}^{\Qb}$ and $H^{\Qb}_{kj,+}$ for $H_j\le H_i$ and $H_k\le H_j$, we can apply a bootstrap argument via \eqref{qfble.34} using Theorem~\ref{ckqfb.12} to deduce that $G_{\cC_i}$ decays rapidly at $H_{kj,+}^{\Qb}$, $H_{0j,+}^{\Qb}$ and $H_{j0,+}^{\Qb}$ for $H_j\le H_i$, $H_k\le H_i$, and that it has a $\QAC-\Qb$ conormal  polyhomogeneous expansion at $\ff_{j,+}$ in nonnegative integer powers of $\rho_{\ff_{i,+}}$ as claimed. Using \eqref{qfble.26} instead of \eqref{qfble.33d}, we obtain
\begin{equation}
G_{\cC_i}= Q_{\cC_i}+  Q^*_{\cC_i} R_{\cC_i}+  R_{\cC_i}^*G_{\cC_i} R_{\cC_i},
\label{qfble.33e}\end{equation} 
which we can use to infer the claimed better decay at $H^{\Qb}_{j0,0}$ and $H^{\Qb}_{0j,0}$ in \eqref{qfble.33c}.

\end{proof}
We can now complete the inductive proof of Proposition~\ref{qfble.23}.
\begin{proof}[Proof of Proposition~\ref{qfble.23}]
As already observed, the Proposition holds for $i=0$, so it suffices to show that if it holds for $i-1\le \ell-1$, then it holds for $i$.  For this, we need to improve the parametrix $Q_{3,i-1}$ to get rid of the term of order $\bd_i+1$ at $H_{ii,0}$.  Let $r_i$ be the term of order $\bd_i+1$ of $R_{3,i-1}$ at $H_{ii,0}$.  To get rid of $\widetilde{\Pi}_{h,i}r_i$, we can use Lemmas~\ref{qfble.28} and \ref{qfble.33} and consider on $H_{ii,0}$ the term
$$
   q_i:= \k^{-1} G_{\cC_i}\widetilde{\Pi}_{h,i}r_i
$$
of order $\bd_i$ at $H_{ii,0}$, that is, of order $-1$ in terms of $b$-densities, since
$$
     (\k D_{\cC_i})q_i= D_{\cC_i}G_{\cC_i}\widetilde{\Pi}_{h,i}r_2=\widetilde{\Pi}_{h,i}r_2.
$$
Adding $q_i$ to $Q_{3,i-1}$ then gives the parametrix $Q_{3,i}'$ with error term almost as claimed, namely, its error terms still has possibly a term $r_i'$ of order $\bd_i+1$ at $H_{ii,0}$ but it is such that $ \widetilde{\Pi}_{h,i}r_i'=0$.  Hence, considering again the model operator \eqref{qfble.20}, it suffices to add a term
$$
    q_i':= D_{v,i}^{-1}r_i'
$$  
of order $\bd_i+1$ at $H_{ii,0}$ to $Q_{3,i}'$ to obtain the parametrix $Q_{3,i}$ with the desired parametrix.  Indeed, $q_i'$ is well-defined thanks to Corollary~\ref{hd.4} with $\delta=-\frac12$ and the analog of \eqref{qfble.12}  applied to the members of the vertical family $D_{v,i}$.   Extending $q_i$ using a right variable like $x_i'$ near $H_{j0,0}$ for $0<j\le i$ ensures that we can take $\mathfrak{r}_{3,i}(H_{j0,0})=\mathfrak{r}_0(H_{j0,0})=\nu_R+1$.  Similarly, extending $q_i$ and $q_i'$ using a right variable near $H_{jk,0}$ for $0<j$ and $0<k$ with $j\ne k$ ensures that we have $\mathfrak{r}_{3,i}(H_{jk,0})>\bd_k+2$. However, near $H_{0j,0}$ for $0<j\le i$, the extension of $q_i$ introduce in principle an error term with weight $\bd_j+1+\mu_R\le \bd_j+2+\nu_R$, hence a possible loss of decay at this boundary hypersurface.
\end{proof}
\begin{remark}
This proof by induction gives at the same time a proof of Lemmas~\ref{qfble.27}, \ref{qfble.28} and \ref{qfble.33} for each $i$.
\label{qfble.36}\end{remark}
In particular, Proposition~\ref{qfble.23} for $i=\ell$ yields the following parametrix.
\begin{proposition}
There exists $Q_4\in\Psi^{-1,\cQ_4,\mathfrak{q}_4}$ and $R_4\in \dot{\Psi}^{-\infty}_{\k,\QFB}(M;E)$  such that 
$$
  D_{\k,\QFB}Q_4=\Id-R_4
$$
with $\cQ_4$ and $\mathfrak{q}_4$ satisfying the same properties as $\cQ_1$ and $\mathfrak{q}_1$ except that
$$
  \mathfrak{q}_4(H_{00,0})>0,
$$ 
where $\dot{\Psi}^{-\infty}_{\k,\QFB}(M;E)$ is the subspace of $\Psi^{-\infty}_{\k,\QFB}(M;E)$ of Definition~\ref{kqfb.21}.
\label{qfble.37}\end{proposition}
\begin{proof}
Start with the parametrix $Q_{3,\ell}$ of Proposition~\ref{qfble.23} with error term $R_{3,\ell}$.  By the decay rates of $R_{3,\ell}$,  the composition results of Theorem~\ref{ckqfb.10} and the fact that
$$
    \mu_R+\widetilde{\mu}_R\ge 2\mu_R>1,
$$
taking $\mathfrak{r}_{3,\ell}$ possibly slightly smaller at some boundary hypersurfaces, we see that there exists $\delta>0$ such that for each hypersurface $H$ of $M^2_{\k,\QFB}$,
$$
      R_{3,\ell}= \cO(x_{H}^{\nu}) \; \forall \nu<\mu \quad \mbox{at} \; H \quad \Longrightarrow \quad R_{3,\ell}^k= \cO(x_H^{\nu+k\delta}) \; \forall\nu<\mu, \quad \forall k\in\bbN_0.
$$
Hence, we can take an asymptotic sum 
$$
S\sim \sum_{k=1}^{\infty}R_{3,\ell}^k
$$
with $S\in \Psi^{-\infty,\cS/\mathfrak{s}}_{\k,\QFB}(M;E)$, where $\cS$ and $\mathfrak{s}$ satisfy the same properties as $\cR_{3,\ell}$ and $\mathfrak{r}_{3,\ell}$.  By construction,
$$
   R_4:= \Id-(\Id-R_{3,\ell})(\Id+S)\in \dot{\Psi}^{-\infty}_{\k,\QFB}(M;E),
$$
so it suffices to take $Q_4= Q_{3,\ell}(\Id +S)$ so that
$$
     D_{\k,\QFB}Q_4= \Id-R_4
$$
as claimed.
\end{proof}
This can be used to give the following pseudodifferential characterization of the inverse of $D_{\k,\QFB}$.
\begin{theorem}
There exists $G_{\k,\QFB}\in \Psi^{-1,\cG/\mathfrak{g}}_{\k,\QFB,\cn}(M;E)$ such that
$$
     D_{\k,\QFB}G_{\k,\QFB}=\Id \quad \mbox{and} \quad G_{\k,\QFB}D_{\k,\QFB}=\Id,
$$
where $\cG$ is an index family given by the empty set at $H_{ij,+}$  for all $(i,j)\ne (0,0)$ and at $H_{ij,0}$ for $i\ne j$ and by 
$$
 \cG|_{\ff_{i,+}}=\cG|_{\ff_{i,0}}=\bbN_0, \quad \cG|_{H_{ii,0}}=\left\{\begin{array}{ll} \bd_i+\bbN_0, & i>0, \\
                                                                 \bbN_0-1, & i=0,\end{array}\right.
$$
and where $\mathfrak{g}$ is a multiweight such that 
$$
\begin{array}{ll}
\mathfrak{g}(H_{ij,+})=\infty, &  \forall (i,j)\ne (0,0), \\  \mathfrak{g}(H_{ij,0})>\bd_j+1, & i\ne j, 
\end{array}
$$ 
and for $i>0$,
$$
  \mathfrak{g}(\ff_{i,+})=\infty, \quad \mathfrak{g}(H_{ii,0})>\bd_i, \quad \mathfrak{g}(\ff_{i,0})>0, \quad \mathfrak{g}(H_{i0,0})=\nu_R, \quad \mathfrak{g}(H_{0i,0})=\bd_i+1+\nu_R, \quad \mathfrak{g}(H_{00,0})>0.
$$
\label{qfble.38}\end{theorem}
\begin{proof}
Let $Q_4$ be the parametrix of Proposition~\ref{qfble.37}.  Since its error term decays rapidly when $\k\searrow 0$, there exists $\delta>0$ such that $\Id-R_4(\k)$ is invertible with inverse given by $\Id+ S_4(\k)$ for all $\k<\delta$ with
$$
      S_4(\k)= \sum_{j=1}^{\infty} R_4(\k)^j \quad \forall \k\in [0,\delta),
$$ 
an element of $\dot{\Psi}^{-\infty}_{\k,\QFB}(M;E)$ for $\k<\delta$.  Hence, for $\k<\delta$, we can set
$$
      G_{\k,\QFB}= Q_4(\Id+S_4)  \quad \Longrightarrow D_{\k,\QFB}G_{\k,\QFB}=\Id.
$$
By Theorem~\ref{ckqfb.10}, $G_{\k,\QFB}\in \Psi^{-1,\cG/\mathfrak{g}}_{\k,\QFB,\cn}(M;E)$ as claimed.  
For $\k\ge \delta$, we can invert $D_{\k,\QFB}$ simply in the small $\QFB$ calculus, that is, with inverse in $\Psi^{-1}_{\QFB}(M;E)$ for $\k>0$ fixed.  This gives the desired inverse for all $\k\ge 0$.  Taking the adjoint of $G_{\k,\QFB}$, we see that
\begin{equation}
 G^*_{\k,\QFB} D_{\k,\QFB}=\Id,
\label{qfble.39}\end{equation}
which implies that
$$
 G_{\k,\QFB}^*=  G_{\k,\QFB}^*\Id= G_{\k,\QFB}^* (D_{\k,\QFB}G_{\k,\QFB})= \Id G_{\k,\QFB}=G_{\k,\QFB},
$$
that is, $G_{\k,\QFB}$ is self-adjoint with
$$
       G_{\k,\QFB}D_{\k,\QFB}=\Id.
$$
\end{proof}

\section{Inverse of a non-fully elliptic suspended Dirac $\QFB$ operator } \label{ift.0}

In the parametrix construction of \S~\ref{do.0}, one of the key assumptions is Assumption~\ref{do.46} about the invertibility of some normal operators and a pseudodifferential characterization of the inverses essentially asserting that these inverses exactly fit where they should on the front faces of the $\QFB$ double space.  Now, these normal operators correspond themselves to families of non-fully elliptic suspended Dirac $\QFB$ operators.  The purpose of the present section is to explain how the results of \S~\ref{qfble.0} can be used to show that these operators are indeed invertible with inverse as described in Assumption~\ref{do.46}.  As in \cite[\S~9]{KR0}, the strategy is to first take the Fourrier transform in the suspension parameters of such a non-fully elliptic suspended Dirac $\QFB$ operator and to apply the results of \S~\ref{qfble.0} to give a fine pseudodifferential characterization of the corresponding inverse.  Taking the inverse Fourier transform, one can then show through careful arguments that the inverse of the original operator has a pseudodifferential characterization as in Assumption~\ref{do.46}.  

With this objective in mind, let $\eth_{\QFB}$ be a Dirac operator as in \S~\ref{qfble.0} so that Assumptions~\ref{do.1}, \ref{su.7},  \ref{su.2}, \ref{do.46} hold and Assumption~\ref{do.26} holds for $\delta=-\frac12$.  For each boundary hypersurface $H_i$, assume again that those assumptions also hold for each member of the vertical family $\eth_{v,i}$ seen as a Dirac operator.   In this section, we will consider a corresponding $\bbR^q$ suspended Dirac operator
\begin{equation}
\eth_{\sus}= \eth_{\QFB}+ \eth_{\bbR^q},
\label{ift.1}\end{equation}
where $\eth_{\bbR^q}$ is a family of Euclidean Dirac operators on $\bbR^q$ parametrized by $M$ and anti-commuting with $\eth_{\QFB}$.  If $\{e_1,\ldots, e_q\}$ is the canonical basis of $\bbR^q$, then we suppose that 
$$
       \eth_{\bbR^q}= \sum_{j=1}^q \cl(e_j)\nabla_{e_j},
$$
with $\nabla$ the pull-back of the Clifford connection of $E$ to its pull-back on $M\times \bbR^q$ and with $\cl(e_j)$ denoting Clifford multiplication by $e_j$.  More explicitly, we suppose that the Clifford module structure of $E$ lifts to a Clifford module structure on its pull-back on $M\times \bbR^q$ for the Clifford bundle corresponding to the Cartesian product metric 
$$
   g_{\QFB}+ g_{\bbR^q} \quad \mbox{on}  \quad M\times \bbR^q,
$$
where $g_{\bbR^q}$ is the canonical Euclidian metric on $\bbR^q$.  If we take the Fourier transform in the $\bbR^q$ factor, the suspended operator \eqref{ift.1} becomes a family of operators 
\begin{equation}
\hat{\eth}_{\sus}(\xi)= \eth_{\QFB}+ i\cl(\xi),  \quad \xi\in\bbR^q.
\label{ift.2}\end{equation}
For $\xi\ne 0$, we can use  spherical coordinates and write 
\begin{equation}
\hat{\eth}_{\sus}= \eth_{\QFB}+ \k \gamma \quad \mbox{with} \quad \k=|\xi| \quad \mbox{and} \quad \gamma= \frac{i}{|\xi|}\cl(\xi).
\label{ift.3}\end{equation}
For $\frac{\xi}{|\xi|}\in \bbS^{q-1}$ fixed, this is precisely an operator of the form \eqref{qfble.3} with $\gamma$ anti-commuting with $\eth_{\QFB}$ and such that $\gamma^2=\Id_E$.  To describe the inverse of $\eth_{\sus}$, we will need to make the following assumption.
\begin{assumption}
For each fixed $\frac{\xi}{|\xi|}\in\bbS^{q-1}$, we suppose that Assumption~\ref{qfble.5} holds.  Correspondingly, in terms of \eqref{su.3b}, for each boundary hypersurface $H_i$ of $M$ and for each member of the family 
$$
         N_i(\eth_{\QFB})+ \eth_{\bbR^q}= \eth_{v,i}+\eth_{h,i}+\eth_{\bbR^q}
$$
specified by $s\in S_i$ and  
seen as a ${}^{\phi}N_sS_i\oplus \bbR^q$ suspended Dirac operator, suppose that Assumption~\ref{qfble.5} holds for $\gamma$ the Clifford multiplication by an element of the unit sphere of ${}^{\phi}N_sS_i\oplus \bbR^q$. 
\label{ift.4}\end{assumption}

Thanks to this assumption, Theorem~\ref{qfble.38} applies to give a uniform description of the inverse $\hat{\eth}_{\sus}^{-1}(\xi)$ as $\xi\to 0$.  To obtain a pseudodifferential characterization of the inverse of $\eth_{\sus}^{-1}$, it suffices then to determine how the description of $\hat{\eth}_{\sus}^{-1}(\xi)$ translates when we take the inverse Fourier transform in $\xi$.  

As in \S~\ref{qfble.0}, it is convenient to describe instead the inverse of the conjugated operator
$$
    D_{\sus}= x^{-\w}\eth_{\sus}x^{\w}= D_{\QFB}+ \eth_{\bbR^{q}}.
$$

Before stating the main result of this section, we need also to describe the natural double space on which $\eth_{\sus}^{-1}$ admits a nice pseudodifferential characterization.  Indeed, since $\eth_{\sus}$ is not assumed to be fully elliptic, its inverse is not in general a suspended $\QFB$ operator in the sense of Definition~\ref{sm.9}.   Thus, let $\varrho$ be a total boundary defining function for $M^2_{\rp}$ and let $V_{\varrho}:=M^2_{\rp}\times \varrho\bbR^q$ denote the vector bundle of rank $q$ over $M^2_{\rp}$ trivialized by the sections $\varrho e_1,\ldots,\varrho e_q$.  As sections of $V_{\varrho}$, these do not vanish on $\pa M^2_{\rp}$, though as sections of $M^2_{\rp}\times \bbR^q\to M^2_{\rp}$, they do.  
Let $\overline{V}_{\varrho}= M^2_{\rp}\times \overline{\varrho\bbR^q}$ be the fiberwise radial compactification of the vector bundle $V_{\varrho}$.  In terms of $\overline{V}_{\varrho}$ and using the notation of \eqref{ds.1}, the double space we need to consider is 
\begin{equation}
\kridx{\widetilde{M}^2_{\phi-\sus(V_{\varrho})}}{M2QFBsus}{suspended QFB double space}:= [ M^2_{\rp}\times \overline{\varrho\bbR^q};\Phi_1\times\{0\},\ldots, \Phi_{\ell}\times \{0\}].
\label{ift.5}\end{equation}
Denote by $H^{\sus}_{ij}$ the lift of the boundary hypersurface $\overline{V}_{\varrho}|_{H_{ij}^{\rp}}$ of $\overline{V}_{\varrho}$ to $\widetilde{M}^2_{\phi-\sus(V_{\varrho})}$ for $i,j\in\{0,1,\ldots,\ell\}$ with $(i,j)\ne (0,0)$.  let also $H^{\sus}_{\infty}$ denotes the lift to $\widetilde{M}^2_{\phi-\sus(V_{\varrho})}$ of the boundary hypersurface $M^2_{\rp}\times \pa(\overline{\varrho\bbR^q})$ and denote by $\ff^{\sus}_i$ the boundary hypersurface of $\widetilde{M}^2_{\phi-\sus(V_{\varrho})}$ created by the blow-up of $\Phi_i\times\{0\}$ in $\overline{V}_{\varrho}$.   Because of the factor $\varrho$ in our definition of $V_{\varrho}$ and $\widetilde{M}^2_{\phi-\sus(V_{\varrho})}$, notice that the space of suspended operators in \eqref{sm.9b} can be described using $\widetilde{M}^2_{\phi-\sus(V_{\varrho})}$ as follows, 
\begin{multline}
\kridx{\Psi^m_{\QFB,\bbR^q}}{PsiQFBsus}{suspended QFB pseudodifferential operators (small calculus)}(M;E,F)= \{\kappa\in I^m(\widetilde{M}_{\phi-\sus(V_{\varrho})}; \diag_{\sus(V_{\varrho})}; \widetilde{\pr}_1^*(\Hom(E,F)\otimes \pr_R^*{}^{\QFB}\Omega(M))\otimes \widetilde{\pr}_2^*(\varrho^{-q}\Omega_{\varrho\bbR^q}) ) \quad | \\
\kappa\equiv 0 \quad \mbox{at} \quad H^{\sus}_{\infty}\; \mbox{and} \; H^{\sus}_{ij}\quad \forall i,j \in \{0,\ldots,\ell\}, \; (i,j)\ne (0,0)\},
\label{ift.6}\end{multline}
where $\diag_{\sus(V_{\varrho})}$ is the lift of $\diag_{\rp}\times \{0\} \subset M^2_{\rp}\times \overline{\varrho\bbR^q}$ to $\widetilde{M}^2_{\phi-\sus(V_{\varrho})}$ with $\diag_{\rp}$ the lifted diagonal in $M^2_{\rp}$ and
$$
   \widetilde{\pr}_1: \widetilde{M}^2_{\phi-\sus(V_{\varrho})}\to M^2 \quad \mbox{and}\quad \widetilde{\pr}_2: \widetilde{M}^2_{\phi-\sus(V_{\varrho})}\to \overline{\varrho\bbR^q}
$$
are the maps induced by the blow-down maps $\widetilde{M}^2_{\phi-\sus(V_{\varrho})}\to \overline{V}_{\varrho}$ and $M^2_{\rp}\to M^2$ and the natural projections $\overline{V}_{\varrho}\to M^2_{\rp}$ and $\overline{V}_{\varrho}\to \overline{\varrho\bbR^q}$, while $\Omega_{\varrho\bbR^q} = \varrho^{q}\Omega_{\bbR^q}$ is the natural Euclidean density bundle on $\overline{\varrho\bbR^q}$.  

If $\cE$ is an indicial family and $\mathfrak{s}$ is a multiweight for the manifold with corners $\widetilde{M}^2_{\phi-\sus(V_{\varrho})}$, one can more generally consider the enlarged spaces of suspended operators 
\begin{equation}
\Psi^{-\infty,\cE/\mathfrak{s}}_{\QFB,\bbR^q}(M;E,F)= \cA^{\cE/\mathfrak{s}}_{\phg}(\widetilde{M}^2_{\phi-\sus(V_{\varrho})}; \widetilde{\pr}_1^*(\Hom(E,F)\otimes \pr_R^*{}^{\QFB}\Omega(M))\otimes \widetilde{\pr}_2^*(\varrho^{-q}\Omega_{\varrho\bbR^q}) )
\label{ift.7}\end{equation}
and 
\begin{equation}
\kridx{\Psi^{m,\cE/\mathfrak{s}}_{\QFB,\bbR^q}}{PsiQFBsusEs}{suspended QFB pseudodifferential operators (large calculus)}(M;E,F)= \Psi^m_{\QFB,\bbR^q}(M;E;F)+ \Psi^{-\infty,\cE/\mathfrak{s}}_{\QFB,\bbR^q}(M;E,F),  \quad m\in\bbR.
\label{ift.8}\end{equation}
We can also define the weakly conormal version of these spaces.  To do so, we can first define the space of weakly conormal functions on $\widetilde{M}_{\phi-\sus(V_{\varrho})}$, 
\begin{multline}
\sA_{\phi-\sus(V\varrho)}(\widetilde{M}^2_{\phi-\sus(V_{\varrho})})= \\
 \{ \kappa\in L^\infty(\widetilde{M}^2_{\phi-\sus(V_{\varrho})}) \; | \;
\forall p,r,s\in\bbN_0, \quad \forall X_1,\ldots, X_{p+r}\in \cV_{\QFB}(M),  \forall Y_1,\ldots,Y_s\in \{ e_1,\ldots, e_q\}, \\
\widetilde{\pr}_1^*(\pi_L^*X_1)\cdots \widetilde{\pr}_1^*(\pi_L^*X_p)\widetilde{\pr}_1^*(\pi_R^*X_{p+1})\cdots\widetilde{\pr}_1^*(\pi_R^*X_{p+r}) \widetilde{\pr}_2^*Y_1\cdots \widetilde{\pr}_2^*Y_s \kappa\in L^\infty(\widetilde{M}^2_{\phi-\sus(V_{\varrho})})\}.
\label{ift.9}\end{multline}
 as well as the weighted versions 
\begin{equation}
\sA^{\mathfrak{s}}_{\phi-\sus(V_{\varrho})}(\widetilde{M}^2_{\phi-\sus(V_{\varrho})})= \rho^{\mathfrak{s}}\sA_{\phi-\sus(V_\varrho)}(\widetilde{M}^2_{\phi-\sus(V_{\varrho})})
\label{ift.10}\end{equation}
and
\begin{equation}
\sA^{\mathfrak{s}}_{\phi-\sus(V_{\varrho}),-}(\widetilde{M}^2_{\phi-\sus(V_{\varrho})})= \bigcap_{\mathfrak{t}<\mathfrak{s}}
\sA^{\mathfrak{t}}_{\phi-\sus(V_{\varrho})}(\widetilde{M}^2_{\phi-\sus(V_{\varrho})}).
\label{ift.11}\end{equation}
If $\cB$ is any boundary hypersurface of $\widetilde{M}^2_{\phi-\sus(V_{\varrho})}$ and $\rho_{\cB}$ is a boundary defining function, one can consider the space
$$
 \sA_{\phi-\sus(V_{\varrho})}(\cB)= \{ \kappa\in L^\infty(\cB)\; | \; \widetilde{\kappa}\in \sA_{\phi-\sus(V_{\varrho})}(\widetilde{M}^2_{\phi-\sus(V_{\varrho})}) \},
$$
where $\widetilde{\kappa}$ is a smooth extension off $\cB$ of $\kappa$, and correspondingly the weighted spaces 
$$
  \sA^{\mathfrak{s}}_{\phi-\sus(V_{\varrho})}(\cB)= \rho^{\mathfrak{s}}\sA_{\phi-\sus(V_{\varrho})}(\cB) \quad \mbox{and} \quad \sA^{\mathfrak{s}}_{\phi-\sus(V_{\varrho}),-}(\cB)= \bigcap_{\mathfrak{t}<s}\sA^{\mathfrak{t}}_{\phi-\sus(V_{\varrho})}(\cB)
$$
for $\mathfrak{s}$ a multiweight associated to the manifold with corners $\cB$.  Proceeding as in \eqref{wc.5}, we can then define the space 
\begin{equation}
  \sA^{\cE/\mathfrak{s}}_{\phi-\sus(V_{\varrho}),\phg}(\widetilde{M}^2_{\phi-\sus(V_{\varrho})};F)
\label{ift.12}\end{equation}
of partially polyhomogeneous weakly conormal sections of a vector bundle $F\to \widetilde{M}^2_{\phi-\sus(V_{\varrho})}$, where $\cE$ is an index family and $\mathfrak{s}$ is a multiweight both associated to the manifold with corners $\widetilde{M}^2_{\phi-\sus(V_{\varrho})}$.  With this notation understood, we can then define the weakly conormal version of the spaces \eqref{ift.7} and \eqref{ift.8} by
\begin{equation}
\Psi^{-\infty,\cE/\mathfrak{s}}_{\QFB,\bbR^q,\cn}(M;E,F)= \cA^{\cE/\mathfrak{s}}_{\phi-\sus(V_{\varrho}),\phg}(\widetilde{M}^2_{\phi-\sus(V_{\varrho})}; \widetilde{\pr}_1^*(\Hom(E,F)\otimes \pr_R^*{}^{\QFB}\Omega(M))\otimes \widetilde{\pr}_2^*(\varrho^{-q}\Omega_{\varrho\bbR^q}) )
\label{ift.13}\end{equation}
and 
\begin{equation}
\kridx{\Psi^{m,\cE/\mathfrak{s}}_{\QFB,\bbR^q,\cn}}{PsiQFBqEscn}{weakly conormal suspended QFB pseudodifferential operators}(M;E,F) = \Psi^m_{\QFB,\bbR^q}(M;E,F)+ \Psi^{-\infty,\cE/\mathfrak{s}}_{\QFB,\bbR^q,\cn}(M;E,F),  \quad m\in\bbR.
\label{ift.14}\end{equation}
We are ready to state the main result of this section.
\begin{theorem}
If Assumptions~\ref{do.1}, \ref{su.7}, \ref{su.2},  \ref{do.46}, \ref{ift.4} hold and if Assumption~\ref{do.26} holds with $\delta=-\frac12$, then the inverse $D_{\sus}^{-1}$ of $D_{\sus}$ is an element of 
$
   \Psi^{-1,\check{\cG}/\check{\mathfrak{g}}}_{\QFB,\bbR^q,\cn}(M;E)
$
for an index family $\check{\cG}$ given by
$$
\check{\cG}|_{\ff^{\sus}_i}=\bbN_0, \quad \check{\cG}|_{H^{\sus}_{ii}}=\bbN_0+\bd_i+q, \quad \check{\cG}|_{H^{\sus}_{\infty}}=\bbN_0+q-1
$$
for all $i\in\{1,\ldots,\ell\}$ and by the empty set elsewhere, while $\check{\mathfrak{g}}$ is a multiweight such that for all $i,j\in\{1,\ldots,\ell\}$ with $i\ne j$, 
$$
\begin{aligned}
&\check{\mathfrak{g}}(\ff^{\sus}_i)>0, \quad \check{\mathfrak{g}}(H^{\sus}_{ii})>\bd_j+q, \quad \check{\mathfrak{g}}(H^{\sus}_{\infty})>q,  \\ 
&\check{\mathfrak{g}}(H_{i0}^{\sus})= q+\nu_R, \quad \check{\mathfrak{g}}(H^{\sus}_{0i})=\bd_i+q+1+\nu_R, \quad \check{\mathfrak{g}}(H^{\sus}_{ij})>\bd_j+1+q.
\end{aligned}
$$
Moreover, the term of order $q-1$ at $H^{\sus}_{\infty}$ of $D_{\sus}^{-1}$ corresponds to the term of order $-1$ at $H_{00,0}$ of $G_{\k,\QFB}=(D_{\k,\QFB})^{-1}$ in Theorem~\ref{qfble.38}, namely it corresponds to $\gamma G^{-1}_{00,0}=\gamma\Pi_{\ker_{L^2_b}D_{\QFB}}$ in \eqref{qfble.13}.   
\label{ift.15}\end{theorem}

By the discussion above, the inverse is given by
\begin{equation}
D_{\sus}^{-1}=\frac{1}{(2\pi)^q}\int_{\bbR^q} e^{ix\cdot \xi} \hat{D}^{-1}_{\sus}(\xi) d\xi, 
\label{ift.16}\end{equation}
where $\hat{D}^{-1}_{\sus}(\xi)$ is the inverse of the family of operators 
$$
  \hat{D}_{\sus}(\xi)= D_{\QFB}+ i\cl(\xi)
$$   
and $x\in\bbR^q$ is seen as the variable dual to $\xi$.  By Theorem \ref{qfble.38}, we have a pseudodifferential characterization of $\hat{D}^{-1}_{\sus}(\xi)$ all the way down to $\xi\to 0$.  To give a pseudodifferential characterization of the inverse Fourier transform \eqref{ift.16}, we can first notice that inverting $D_{\sus}$ symbolically, we can construct a parametrix $Q\in\Psi^{-1}_{\QFB,\bbR^q}(M;E)$ such that
$$
     D_{\sus}Q=\Id-R \quad \mbox{with} \quad R\in \Psi^{-\infty}_{\QFB,\bbR^q}(M;E).
$$
Taking its Fourier transform in the factor $\bbR^q$ gives a parametrix $\hat{Q}(\xi)$ such that 
$$
       \hat{D}_{\sus}(\xi)\hat{Q}(\xi)=\Id-\hat{R}(\xi)
$$ 
with $\hat{R}(\xi)$ such that $\hat{R}(\k \eta)\in \Psi^{-\infty}_{\k,\QFB}(M;E)$ for all $\eta\in\bbS^{q-1}$, in fact $\hat{R}(\xi)$ even descends to a smooth section on $M^2_{\QFB}\times [0,\infty)_{\k}$.  Thus, this means that 
\begin{equation}
   \hat{D}^{-1}_{\sus}(\xi)=  \hat{D}^{-1}_{\sus}(\xi)(\hat{D}_{\sus}(\xi)\hat{Q}(\xi)+ \hat{R}(\xi)   )= \hat{Q}(\xi)  + \hat{D}^{-1}_{\sus}(\xi)\hat{R}(\xi).
\label{new.1}\end{equation}
Since $\hat{R}(\xi)$ is of order $-\infty$, so is $\hat{D}^{-1}_{\sus}(\xi)\hat{R}(\xi)$, which means that $\hat{D}^{-1}_{\sus}(\xi)$ has the same symbol as $\hat{Q}(\xi)$.  Since for $\hat{Q}(\xi)$, we already know that the inverse Fourier transform is given by $Q$, this means that we can completely characterize the symbol of the inverse Fourier transform \eqref{ift.16}.  That is, we can characterize the inverse Fourier transform \eqref{ift.16} up to an operator of order $-\infty$.  Thus, what is left to do is to make sense of $D^{-1}_{\sus}$ as a conormal distribution on $\widetilde{M}^{2}_{\phi-\sus(V_{\varrho})}$.  In particular, for all practical purposes, we can ignore the conormal singularities of $\hat{D}_{\sus}(\xi)$ along the diagonal while taking the inverse Fourier transform, since the image of those conormal singularities has already been described.  It corresponds to the conormal singularities of $Q$ along the lifted diagonal.  Moreover, since $\hat{R}(\xi)$ decays rapidly as $|\xi|\to \infty$, $\Id-\hat{R}(\xi)$ is invertible for $|\xi|$ large with inverse 
$$
 (\Id-\hat{R}(\xi))^{-1}= \sum_{k=0}^{\infty} \hat{R}(\xi),  \quad \mbox{so that} \quad    \hat{D}^{-1}_{\sus}(\xi)=\hat{Q}(\xi)\lrp{ \sum_{k=0}^{\infty} \hat{R}(\xi)} \quad \mbox{for} \; |\xi| \; \mbox{large}.
$$ 
In particular, $\hat{D}^{-1}_{\sus}(\xi)\hat{R}(\xi)$ decays rapidly as  $|\xi|\to \infty$. By \eqref{new.1}, this means that the asymptotic behavior of $\hat{D}^{-1}_{\sus}(\xi)$ as $|\xi|\to \infty$ is the same as the one of $\hat{Q}(\xi)$.  The inverse Fourier transform of this asymptotic expansion yields conormal singularities at the lifted diagonal $\diag_{\sus(V_{\varrho})}$ in $\widetilde{M}^2_{\phi-\sus(V_{\varrho})}$ correspond to those of $Q$.  Thus, for all practical purposes, we can suppose that $\hat{D}_{\sus}^{-1}(\xi)$ decays rapidly at $|\xi|\to\infty$, since its asymptotic expansion there is already taken care of, namely it is the same as the one of $\hat{Q}(\xi)$, an operator for which we know already how to describe the inverse Fourier transform.  

Now, the derivatives from the left and from the right of $\QFB$ differential operators in \eqref{ift.9} clearly commute with the inverse Fourier transform, while a derivative with respect to $ e_j$ becomes multiplication  by  $i\xi_j$ inside the inverse Fourier transform, which is under control, since as just mentioned, we can pretend that $\hat{D}_{\sus}$ decays rapidly as $|\xi|\to\infty$.  Near $H_{00,0}$, but away from the other boundary hypersurfaces, the Fourier transform converts the polyhomogeneous expansion at $H_{00,0}$ into a polyhomogeneous expansion at $H^{\sus}_{\infty}$ with term of order $\k^{\ell}=|\xi|^{\ell}$ at $H_{00,0}$ turning into a term of order $\rho_{\infty}^{q+\ell}$ at $H^{\sus}_{\infty}$, where $\rho_{\infty}$ denotes a boundary defining function of $H^{\sus}_{\infty}$.  This implies that the top order term of order $-1$ at $H_{00,0}$ induces a term of order $\rho_{\infty}^{q-1}$ at $H_{\infty}^{\sus}$.  

For the part of the expansion at $H_{00,0}$ which is not polyhomogeneous, but only weakly conormal, we also need to check that the decay at $H_{00,0}$ gets translated by the inverse Fourier transform into appropriate decay at $H^{\sus}_{\infty}$. Let $\varphi\in\CI_c([\bbR^q;\{0\}])$ be a cut-off function which is equal to $1$ near $\pa [\bbR^q;\{0\}]$, the boundary hypersurface created by the blow-up of the origin.  If $(\varphi\cdot)$ denotes the operation of multiplication by $\varphi$, then using the fact that multiplication by $x_j$ becomes $i\frac{\pa}{\pa\xi_j}$ inside the inverse Fourier transform, we see that for $k\in\bbN_0$,  there is a bounded map
$$
\cF^{-1}\circ(\varphi\cdot): |\xi|^k\cA_{b,2}([\bbR^q;\{0\}])\to (1+|x|^2)^{-\frac{k+\frac{q}2}{2}}L^2(\bbR^q)=  (1+|x|^2)^{-\frac{k+q}2}L^2_b(\overline{\bbR^q})
$$  
for $q$ even as well as a bounded map
$$
\cF^{-1}\circ(\varphi\cdot): |\xi|^{k-\frac12}\cA_{b,2}([\bbR^q;\{0\}])\to (1+|x|^2)^{-\frac{k+\frac{q-1}2}{2}}L^2(\bbR^q)=  (1+|x|^2)^{-\frac{k-\frac12+q}2}L^2_b(\overline{\bbR^q})
$$  
for $q$ odd.

Using interpolation theory, this induces a map
$$
\cF^{-1}\circ(\varphi\cdot): |\xi|^{\epsilon}\cA_{b,2}([\bbR^q;\{0\}])\to (1+|x|^2)^{-\frac{\epsilon+\frac{q}2}{2}}L^2(\bbR^q)=  (1+|x|^2)^{-\frac{\epsilon+q}2}L^2_b(\overline{\bbR^q})
$$    
for all $\epsilon\ge 0$.   Since 
$$
   \cF(x_j \frac{\pa}{\pa x_k} f)= \frac{\pa}{\pa \xi_j}\lrp{\xi_j \cF(f)},
$$ 
we see that conormality is preserved by the inverse Fourier transform, so that $\cF^{-1}\circ(\varphi\cdot)$ induces a map
$$
   \cF^{-1}\circ(\varphi\cdot): |\xi|^{\epsilon} \cA_{b,2}([\bbR^q;\{0\}])\to (1+|x|^2)^{-\frac{\epsilon+q}2}\cA_{b,2}(\overline{\bbR^q})
$$
for $\epsilon\ge 0$.  This shows that the inverse Fourier transform \eqref{ift.16} has the claimed behavior near $H^{\sus}_{\infty}$, but away from the other boundary hypersurfaces.  Away from $\ff_{i,0}$ and $\ff_{i,+}$, we can take advantage of the rapid decay at $H_{kj,+}$ to make the change of variable 
$$
      \widetilde{\xi}=\frac{\xi}{\varrho},  \quad \widetilde{x}= \varrho x
$$
in the inverse Fourier transform \eqref{ift.16}, so that 
$$
\lrp{\frac{1}{(2\pi)^q}\int_{\bbR^q} e^{ix\cdot \xi} \hat{D}^{-1}_{\sus}(\xi) d\xi} dx= \lrp{\frac{1}{(2\pi)^q}\int_{\varrho^{-1}\bbR^q} e^{i\widetilde{x}\cdot \widetilde{\xi}} \hat{D}^{-1}_{\sus}(\varrho\widetilde{\xi}) d\widetilde{\xi},}d\widetilde{x}
$$
with $\widetilde{x}$ the natural variable in the fibers of $V_{\varrho}= M^2_{\rp}\times \varrho\bbR^q$, so that $d\widetilde{x}$ induces a nonvanishing section of $\Omega_{\varrho\bbR^q}= \varrho^{q}\Omega_{\bbR^q}$.  This shows that the Fourier transform will have the claimed behavior away from the lift of $V_{\varrho}|_{\Phi_i}$ for each $i$ on $\widetilde{M}^2_{\phi-\sus}(V_{\varrho})$.  

Near $\ff_{i,+}$, we will make use  of the following lemma.

\begin{lemma}
The expansion of $ \hat{D}^{-1}_{\sus}(\xi)$ at $\ff_{i,+}$ is in powers of $\rho_{\ff_{i,+}}\k$, where $\rho_{\ff_{i,+}}$ is a boundary defining function for $\ff_{i,+}$.   
\label{ift.17}\end{lemma}
\begin{proof}
We will adapt the approach of the proof of \cite[Lemma~9.2]{KR0}. First, the inverse of $N_{i,+}(\hat{D}_{\sus}(\xi))$ can be extended smoothly off $\ff_{i,+}$ to an operator $Q_0$ such that its expansion at $\ff_{i,+}$ is in powers of $\rho_{\ff_{i,+}}\k$.  This gives a parametrix $Q_0\in\Psi^{-1,\cQ_0/\mathfrak{q}_0}_{\k,\QFB,\cn}(M;E)$ such that
$$
    \hat{D}_{\sus}(\xi)Q_0= \Id-R_0
$$
with $R_0\in\Psi^{0,\cR_0/\mathfrak{r}_0}_{\k,\QFB,\cn}(M;E)$ also having an expansion at $\ff_{i,+}$ in powers of $\rho_{\ff_{i,+}}\k$, where $\cQ_0$ is an index family such that
$$
\begin{gathered}
\cQ_0|_{\ff_{j,+}}=\bbN_0, \quad j\in\{1,\ldots,\ell\}, \quad \cQ_0|_{\ff_{j,0}}=\bbN_0, \quad \cQ_0|_{H_{jj,0}}=\bd_j+\bbN_0, \quad H_j\ge H_i, \\
 \cQ_0|_{H_{jk,+}}=\emptyset, \quad j,k\in\{ 0,\ldots,\ell\}, \; (j,k)\ne (0,0), \\
  \cQ_0|_{H_{jk,0}}=\emptyset, \quad H_j\ge H_i, \; H_k\ge H_i,  \; j\ne k, \; (j,k)\ne (0,0), \\
\end{gathered}
$$
and $\mathfrak{q}_0$ is a $\k,\QFB$ positive multiweight, except at $H_{jj,0}$ for $H_j\ge H_i$, where 
$$
      \mathfrak{q}_0(H_{jj,0})>\bd_j,
$$
while $\cR_0$ is an index family such that 
$$
\begin{gathered}
   \cR_0|_{\ff_{i,+}}=\bbN_0+1, \quad  \cR_0|_{\ff_{j,+}}=\bbN_0, \quad j\ne i, \\
      \cR_0|_{\ff_{j,0}}=\bbN_0+1, \quad \cR_0|_{H_{jj,0}}=\bbN_0+\bd_j+1, \quad H_j\ge H_i, \\
    \cR_0|_{H_{jk,+}}=\emptyset, \quad  j,k\in\{ 0,\ldots,\ell\},  \; (j,k)\ne (0,0), \\
     \cR_0|_{H_{jk,0}}=\emptyset, \quad H_j\ge H_i, \; H_k\ge H_i, \; j\ne k, \; (j,k)\ne (0,0), 
\end{gathered}    
$$
and $\mathfrak{r}_1$ is a $\k,\QFB$ positive multiweight.  A priori, the expansion of $R_0$ in  powers of $\rho_{\ff_{i,+}}\k$ at $\ff_{i,+}$ has a term of order $0$, but since $N_{i,+}(Q_0)$ is the inverse of $N_{i,+}(\hat{D}_{\sus}(\xi))$, this top order term vanishes, so that we can take $\cR_0|_{\ff_{i,+}}=\bbN_0+1$.  On the other hand, since the expansion at $\ff_{i,+}$ is in powers of $\rho_{\ff_{i,+}}\k$, notice that the term of order $1$ of $R_0$ at $\ff_{i,+}$, namely $N_{i,+}(R_0\rho_{\ff_{i,+}}^{-1})$, has top order term of order $\bd_j+2$ at $H_{jj,0}\cap\ff_{i,+}$ for $H_j\ge H_i$ instead of just of order $\bd_j+1$.  Similarly, it has top order term of order $2$ at $\ff_{j,0}\cap\ff_{i,+}$ for $H_j\ge H_i$ instead of just of order $1$.  At $H_{kj,0}\cap\ff_{i,+}$ for $H_k\ge H_i$ and $H_j\ge H_i$ with $j\ne k$, it also vanishes up to order $\bd_j+2+\epsilon$ for some $\epsilon>0$.  This suggests to consider $Q_1\in\Psi^{-1,\cQ_1/\mathfrak{q}_1}_{\k,\QFB,\cn}(M;E)$ having an expansion in powers of $\rho_{\ff_{i,+}}\k$ at $\ff_{i,+}$ such that
\begin{equation}
  N_{i,+}(Q_1\rho^{-1}_{\ff_{i,+}}):= N_{i,+}(Q_0)N_{i,+}(R_0\rho^{-1}_{\ff_{i,+}}),
\label{ift.18}\end{equation} 
where $\cQ_1$ is an index family such that  
$$
\begin{gathered}
\cQ_1|_{\ff_{i,+}}=\bbN_0+1, \quad \cQ_1|_{\ff_{j,+}}=\bbN_0, \; j\ne i, \\
\quad \cQ_1|_{\ff_{j,0}}=\bbN_0+1, \quad \cQ_1|_{H_{jj,0}}=\bd_j+1+\bbN_0, \quad H_j\ge H_i, \\
 \cQ_1|_{H_{jk,+}}=\emptyset, \quad j,k\in\{ 0,\ldots,\ell\}, \; (j,k)\ne (0,0), \\
\cQ_1|_{H_{jk,0}}=\emptyset, \quad H_j\ge H_i, \; H_k\ge H_i, \; j\ne k, \; (j,k)\ne (0,0), 
\end{gathered}
$$
and $\mathfrak{q}_1$ is a $\QFB$ positive multiweight with $\mathfrak{q}_1(\ff_{j,0})>1$ for $H_j\ge H_i$.  By \eqref{ift.18}, we see that
$$
   \hat{D}_{\sus}(\xi)(Q_0+Q_1)= \Id-R_1
$$
with $R_1\in\Psi^{0,\cR_1/\mathfrak{r}_1}_{\k,\QFB,\cn}(M;E)$ having an expansion in powers of $\rho_{\ff_{i,+}}\k$ at $\ff_{i,+}$, where $\mathfrak{r}_1$ is a $\k,\QFB$ positive multiweight and $\cR_1$ is an index family satisfying the same properties as the index family $\cR_0$, except that 
$$
       \cR_1|_{\ff_{i,+}}=\bbN_0+2.
$$
Since the expansion at $\ff_{i,+}$ is in powers of $\rho_{\ff_{i,+}}\k$ at $\ff_{i,+}$, this means that $N_{i,+}(R_1\rho_{\ff_{i,+}}^{-2})$ has top order terms of order $\bd_j+3$ at $H_{jj,0}\cap\ff_{i,+}$ and $3$ at $\ff_{j,0}\cap\ff_{i,+}$ for $H_j\ge H_i$ respectively, while it vanishes at order $\bd_k+3+\epsilon$ at $H_{jk,0}$ for some $\epsilon>0$  for $H_j\ge H_i$ and $H_k\ge H_i$ with $j\ne k$.  At this point, the construction of $Q$ can be iterated, namely we can more generally define recursively $Q_m\in\Psi^{-1,\cQ_m/\mathfrak{q}_m}_{\k,\QFB,\cn}(M;E)$ such that 
$$
      N_{i,+}(Q_m\rho_{\ff_{i,+}}^{-m}):= N_{i,+}(Q_0)N_{i,+}(R_{m-1}\rho_{\ff_{i,+}}^{-m})
$$  
and 
$$
 \hat{D}_{\sus}(\xi)(Q_0+\cdots +Q_m)= \Id-R_m
$$
with $R_m\in\Psi^{0,\cR_m/\mathfrak{r}_m}_{\k,\QFB,\cn}(M;E)$, where $Q_m$ and $R_m$ have both expansion in powers of $\rho_{\ff_{i,+}}\k$ at $\ff_{i,+}$ and vanishing respectively at orders $(\rho_{\ff_{i,+}}\k)^m $ and 
$(\rho_{\ff_{i,+}}\k)^{m+1} $ there, where $\cQ_m$ is an index family such that 
$$
\begin{gathered}
\cQ_m|_{\ff_{i,+}}=\bbN_0+m, \quad \cQ_m|_{\ff_{j,+}}=\bbN_0, \; j\ne i, \\
 \cQ_m|_{\ff_{j,0}}=\bbN_0+m, \quad \cQ_m|_{H_{jj,0}}=\bd_j+m+\bbN_0,  \quad H_j\ge H_i, \\
  \cQ_m|_{H_{jk,+}}=\emptyset, \quad j,k\in\{ 0,\ldots,\ell\}, \; (j,k)\ne (0,0), \\
  \cQ_m|_{H_{jk,0}}=\emptyset, \quad H_j\ge H_i, \; H_k\ge H_i, \; j\ne k, \; (j,k)\ne (0,0), 
\end{gathered}
$$
and where $\mathfrak{q}_m$ is a $\k,\QFB$ positive multiweight with $\mathfrak{q}_m(\ff_{j,0})>m$ and $\mathfrak{q}_m(H_{jk,0})>\bd_k+m$ for $H_j\ge H_i$ and $H_k\ge H_i$, while $\mathfrak{r}_m$ is a $\k,\QFB$ positive multiweight and $\cR_m$ is an index family having the same properties as $\cR_0$, except that 
$$
    \cR_{m}|_{\ff_{i,+}}=\bbN_0+m+1.  
$$
If $Q\in\Psi^{-1,\cQ_0/\mathfrak{q}}_{\k,\QFB,\cn}(M;E)$ is a Borel sum of the $Q_j$ at $\ff_{i,+}$, then its expansion at $\ff_{i,+}$ is in powers of $\rho_{\ff_{i,+}}\k$ and such that
$$
       \hat{D}_{\sus}(\xi)Q= \Id-R
$$
with $R\in\Psi^{0,\cR/\mathfrak{r}}_{\k,\QFB,\cn}(M;E)$ vanishing rapidly at $\ff_{i,+}$, where $\mathfrak{q}$ and $\mathfrak{r}$ are multiweights satisfying the same properties as $\mathfrak{q}_0$ and $\mathfrak{r}_0$ respectively, and  where $\cR$ is an index family satisfying the same properties as $\cR_0$, except that $\cR|_{\ff_{i,+}}=\emptyset$.  Since 
$$
      \hat{D}_{\sus}^{-1}(\xi)=\hat{D}_{\sus}^{-1}(\xi)( \hat{D}_{\sus}(\xi)Q+R)= Q+ \hat{D}_{\sus}^{-1}(\xi)R,
$$
we see from Theorem~\ref{ckqfb.10} that $\hat{D}_{\sus}^{-1}(\xi)$ has the same expansion as $Q$ at $\ff_{i,+}$, from which the result follows.

\end{proof}

We can now complete the proof of Theorem~\ref{ift.15}.

\begin{proof}[End of the proof of Theorem~\ref{ift.15}]
Thanks to Lemma~\ref{ift.17}, we can take the inverse Fourier transform for each term in the expansion of $\hat{D}_{\sus}^{-1}(\xi)$ at $\ff_{i,+}$.  For the top order term, notice that this will simply yield the inverse of $D_{\sus}|_{\ff^{\sus}_i}$ on $\ff^{\sus}_i$.  In fact, instead of computing the restriction of \eqref{ift.16} to $\ff_{i,+}$, namely
$$
\frac{1}{(2\pi)^q}\int_{\bbR^q} e^{ix\cdot \xi} N_{i,+}(\hat{D}^{-1}_{\sus}(\xi)) d\xi, 
$$
we can first also take the Fourier transform of $N_{i,+}(\hat{D}^{-1}_{\sus}(\xi))$ in the suspension parameters of $\ff_{i,+}$.  If $u_i\in\bbR^{\bd_i+1}$ denotes locally (with respect to $S_i$) such a suspension parameter and $\eta_i$ denotes the dual variable, then taking the Fourier transform yields a family of $\k,\QFB$ operators
$$
      (\hat{D}_{\sus}|_{\ff^{\sus}_i})^{-1}(\xi,\eta_i)
$$
parametrized by $s\in S_i$ and with respect to the fibers of the fiber bundle $\phi_i:H_i\to S_i$.  Hence, by Assumption~\ref{ift.4}, we can apply Theorem \ref{qfble.38}  to get a suitable microlocal characterization of each member of the family.  Taking the inverse Fourier transform in $\xi$ and $\eta_i$, we can then simply apply the lower depth version of Theorem~\ref{ift.15}, which by induction on the depth of $M$ we can assume holds already for $(\hat{D}_{\sus}|_{\ff^{\sus}_i})^{-1}(\xi,\eta_i)$, showing that
$$
\frac{1}{(2\pi)^q}\int_{\bbR^q} e^{ix\cdot \xi} N_{i,+}(\hat{D}^{-1}_{\sus}(\xi)) d\xi= \frac{1}{(2\pi)^{q+\bd_i+1}}\int_{\bbR^q} e^{ix\cdot \xi}e^{iu_i\cdot\eta_i} (\hat{D}_{\sus}|_{\ff_i^{\sus}})^{-1}(\xi,\eta_i) d\xi d\eta_i
$$
is indeed $\lrp{D_{\sus}|_{\ff^{\sus}_i}}^{-1}$ on $\ff_i^{\sus}$ and that it has the claimed behavior of the statement of Theorem~\ref{ift.15}.  For the higher order terms in the expansion of $\hat{D}_{\sus}(\xi)^{-1}$ at $\ff_{i,+}$, notice that they will have nice expansions at $\ff_{j,+}$ for $H_j\ge H_i$ by Lemma~\ref{ift.17}.  Hence, we can again take first the Fourier transform in $u_i$, then the inverse Fourier transform in $(\xi,\eta_i)$ to compute their contributions to \eqref{ift.16}.  They yield a corresponding expansion for $D_{\sus}^{-1}$ at $\ff_i^{\sus}$ similar in behavior to the one coming from the term of order $0$.  However, because of the extra decay in $\rho_{\ff_{i,+}}\k$, notice that they yield only terms of order $\bd_j+1+q$ or higher at $H_{jj}^{\sus}$ for $H_j\ge H_i$.  

Removing this contribution from the expansion at $\ff_{i,+}$, we can thus assume that $D_{\sus}(\xi)^{-1}$ vanishes rapidly at $\ff_{i,+}$.  In particular, because we removed the top order term of the expansion of $\hat{D}_{\sus}(\xi)^{-1}$ at $\ff_{i,0}$, notice that we can assume that these terms at $\ff_{i,0}$ are of order at least $\rho_{\ff_{i,0}}$.  Since there are no `dual boundary face' corresponding to $\ff_{i,0}$ in $\widetilde{M}^{2}_{\phi-\sus(V_{\varrho})}$, this extra decay is very helpful, since we can simply use it to see that they contribute as a weakly conormal section vanishing at least to order $\bd_j+1+q$ at $H_{jj}^{\sus}$ for $H_j\ge H_i$.
\end{proof}

\section{Resolvent of a suspended $\QFB$ Dirac operator in the low energy limit} \label{sle.0}

To complete the induction on the depth for the constructions of the parametrices of \S~\ref{do.0} and \S~\ref{qfble.0}, we need to provide a pseudodifferential characterization of the low energy limit of the resolvent of a suspended $\QFB$ Dirac operator.  Indeed, such characterization will ensure that Assumption~\ref{qfble.5} is satisfied in higher depth cases.  To achieve this, our strategy will consist in suitably adapting the approach of \S~\ref{qfble.0} to suspended $\k,\QFB$ operators. 

  Thus, let $\eth_{\sus}=\eth_{\QFB}+\eth_{\bbR^q}$ be the $\bbR^q$ suspended Dirac operator \eqref{ift.1} of \S~\ref{ift.0} and suppose that Assumptions~\ref{do.1}, \ref{su.7}, \ref{su.2}, \ref{do.46} and \ref{ift.4} hold, as well as Assumption~ \ref{do.26} for $\delta=-\frac12$.  For each boundary hypersurface $H_i$, assume as well that the corresponding assumptions hold for each member of the vertical $\eth_{v,i}$ seen as a Dirac operator.  Let $\gamma\in\CI(M;\End(E))$ be self-adjoint with respect to the $\QFB$ metric $g_{\QFB}$ and the bundle metric of $E$ such that
\begin{equation}
   \gamma^2=\Id_E, \quad \eth_{\QFB}\gamma+ \gamma\eth_{\QFB}=0= \eth_{\bbR^q}\gamma+ \gamma\eth_{\bbR^q}.
\label{sle.1}\end{equation}
As in \S~\ref{qfble.0}, suppose also that $\gamma$ anti-commutes with $\eth_{v,i}$, $\eth_{h,i}$ and $\eth_{S_i}$ in \eqref{su.3b} and \eqref{do.15b}.  For $\k\ge 0$, we can then consider the family of operators
\begin{equation}
  \eth_{\k,\sus}:= \eth_{\sus}+ \k \gamma.
\label{sle.2}\end{equation}
By \eqref{sle.1}, notice that
$$
      \eth_{\k,\sus}^2= \eth^2_{\sus}+ \k^2\Id_{E},
$$
so that for $\k>0$ fixed, we see from Corollary~\ref{mp.4} that $\eth^2_{\k,\sus}$ and $\eth_{\k,\sus}$ are invertible as fully elliptic suspended $\QFB$ operators.  For $\k=0$, we know instead from Theorem~\ref{ift.15} that $\eth_{0,\sus}=\eth_{\sus}$ is invertible, but in an enlarged calculus of $\QFB$ pseudodifferential operators.  These two ways of inverting can be combined to invert the family of operators $\eth_{\k,\sus}$ within a suspended version of the $\k,\QFB$ pseudodifferential calculus.

The double space of this calculus can be defined as follows.  
Let $\bvarrho\in\CI(M_{\rp}\rttimes [0,\infty))$ be a total boundary defining function for $M_{\rp}\rttimes [0,\infty)$ and consider the trivial vector bundle of rank $q$
\begin{equation}
  V_{\bvarrho}:= (M^2_{\rp}\rttimes[0,\infty))\times \bvarrho\bbR^q \to (M^2_{\rp}\rttimes[0,\infty)).
\label{ksm.7}\end{equation}
More precisely, if $e_1,\ldots,e_q$ is the canonical basis of $\bbR^q$, then $\bvarrho e_1,\ldots,\bvarrho e_q$ gives a basis of non-vanishing sections trivializing $V_{\bvarrho}$.  Let 
$$
   \overline{V}_{\bvarrho}= (M^2_{\rp}\rttimes [0,\infty))\times \overline{\bvarrho\bbR^q}
$$ 
be the fiberwise radial compactification of the vector bundle $V_{\bvarrho}$.  Then, using the notation of Definition~\ref{kqfb.5},  consider the double space
\begin{equation}
\begin{gathered}
  M^2_{\k,\phi-q}:= [\overline{V}_{\bvarrho};\Phi_{0,0}\times\{0\}, \Phi_{1,+}\times\{0\},\Phi_{1,0}\times\{0\}, \ldots, \Phi_{\ell,+}\times\{0\}, \Phi_{\ell,0}\times\{0\}] \\
  \mbox{with blow-down map} \quad \beta_{\k,\phi-q}: M^2_{\k,\phi-q}\to (M^2_{\rp}\rttimes [0,\infty))\times \overline{\bvarrho\bbR^q},
\end{gathered}  
\label{ksm.8}\end{equation}
where $\Phi_{0,0}$ is the lift of $M^2\times \{0\}$ in $M^2\times [0,\infty)$ to $M^2_{\rp}\rttimes [0,\infty)$.  Let us denote by $\ff^{\sus}_{i,\nu}$ the boundary hypersurface created by the blow-up of $\Phi_{i,\nu}\times \{0\}$ for $i\ge 0$ and $\nu=\{0,+\}$ with $(i,\nu)\ne (0,+)$.  On $M^2_{\rp}\rttimes [0,\infty)$, let us denote by $H^{\rp}_{ij,0}$ the lift of $H_i\times H_j\times \{0\}$ in $M^2\times [0,\infty)$ for $i\ge 0$ and $j\ge 0$, and by $H^{\rp}_{ij,+}$ the lift of $H_i\times H_j\times [0,\infty)$ for $(i,j)\ne (0,0)$.  Correspondingly, on $M^2_{\k,\phi-q}$, let us denote by $H^{\sus}_{ij,\nu}$ the boundary hypersurface corresponding to the lift of $H^{\rp}_{ij,\nu}\times \overline{\bvarrho\bbR^q}$ for $i\ge 0$, $j\ge 0$, $\nu\in\{0,+\}$ and $(i,j,\nu)\ne (0,0,+)$.  Finally let $H^{\sus}_{00,+}$ be the boundary hypersurface of $M^2_{\k,\phi-q}$ corresponding to the lift of $(M^2_{\rp}\rttimes [0,\infty))\times \pa(\overline{\bvarrho\bbR^q})$ in $\overline{V}_{\bvarrho}$.  
Notice that the face created by the blow-up of $\Phi_{0,0}\times\{0\}$ in $[\overline{V}_{\bvarrho}; \Phi_{0,0}\times\{0\}]$ is naturally identified with the space $\overline{V}_{\varrho}$ used to define $\widetilde{M}^2_{\phi-\sus(V_{\varrho})}$ in \eqref{ift.5}, showing that the boundary hypersurface $\ff^{\sus}_{0,0}$ in $M^2_{\k,\phi-q}$ is naturally identified with $\widetilde{M}^2_{\phi-\sus(V_{\varrho})}$,
\begin{equation}
      \ff^{\sus}_{0,0}\cong \widetilde{M}^2_{\phi-\sus(V_{\varrho})}.
\label{ksm.8b}\end{equation}
Similarly, a slice of $M^2_{\k,\phi-q}$ at a fixed $\k>0$ is naturally identified with $\widetilde{M}^2_{\phi-\sus(V_{\varrho})}$.  In fact, we will need the following suspended version of Lemma~\ref{kqfb.18b}.
\begin{lemma}
There is a natural surjective $b$-submersion
\begin{equation}
  M^2_{\k,\phi-q}\setminus (H^{\sus}_{00,0}\cup H^{\sus}_{00,+})\to \widetilde{M}^2_{\phi-\sus(V_{\varrho})}\setminus H^{\sus}_{\infty}.
\label{ksm.8d}\end{equation}
\label{ksm.8c}\end{lemma}
\begin{proof}
Since we only blow up corners, notice first that the blow-down map 
\begin{equation}
(M^2_{\rp}\rttimes [0,\infty) )\times \varrho\bbR^q\to (M^2_{\rp}\times [0,\infty) )\times \varrho\bbR^q
\label{ksm.8e}\end{equation}
is a surjective $b$-submersion.  Now, there is a natural identification 
$$
  (M^2_{\rp}\rttimes [0,\infty) )\times \varrho\bbR^q= [(M^2_{\rp}\rttimes [0,\infty) )\times \bvarrho\bbR^q; \Phi_{0,0}\times \{0\}]\setminus H
$$
with $H$ the lift of the boundary hypersurface $H^{\rp}_{00,0}\times \bvarrho\bbR^q$ to $[(M^2_{\rp}\rttimes [0,\infty) )\times \bvarrho\bbR^q; \Phi_{0,0}\times \{0\}]$.  Applying Lemma~\ref{bf.3}, we thus see that \eqref{ksm.8e} lifts to the natural surjective $b$-submersion \eqref{ksm.8d}.
\end{proof}

Let $\diag_{\k,\rp}$ be the lift of $\diag_M\times [0,\infty)$ to $M^2_{\rp}\rttimes [0,\infty)$ and let $\diag_{\k,\phi-q}$ be the lift of $\diag_{\k,\rp}\times \{0\}$ in $\overline{V}_{\bvarrho}$ to $M^2_{\k,\phi-q}$.  Thanks to the factor $\bvarrho$ and the blow-ups in \eqref{ksm.8}, one can then define the space of $\bbR^q$ suspended $\k,\QFB$ operators of order $M$ by 
\begin{equation}
\kridx{\Psi^m_{\k,\phi-q}}{PsiQFBksus}{$\bbR^q$ suspended $\k,\QFB$ pseudodifferential operators}(M) = \{\kappa\in I^{m-\frac14}(M^2_{\k,\phi-q};\diag_{\k,\phi-q}; {}^{\k,\phi-q}\Omega)\; | \; \kappa\equiv 0 \; \mbox{at}\; \pa M^2_{\k,\phi-q}\setminus \ff_{\k,\phi-q}\},
\label{ksm.9}\end{equation}
where $\ff_{\k,\phi-q}$ is the union of the boundary hypersurfaces intersecting $\diag_{\k,\phi-q}$ and 
$$
   \kridx{{}^{\k,\phi-q}\Omega}{OQFBksus}{suspended $\k,\QFB$ density bundle}:= \nu_R^*({}^{\QFB}\Omega)\otimes \nu_q^*(\bvarrho^{-q}\Omega_{\bvarrho\bbR^q})
$$
with $\nu_R$ the composition of the natural maps
$$
\xymatrix{
M^2_{\k,\phi-q} \ar[r]^-{\beta_{\k,\phi-q}} & \overline{V}_{\bvarrho}\to (M^2_{\rp}\rttimes [0,\infty))\ar[r] & M^2_{\rp} \ar[r] & M^2 \ar[r]^-{\pr_R} & M
}
$$
and $\nu_q$ the composition of the natural maps
$$
\xymatrix{
M^2_{\k,\phi-q} \ar[r]^-{\beta_{\k,\phi-q}} & \overline{V}_{\bvarrho}\ar[r] & \overline{\bvarrho\bbR^q}
}
$$
and with $\Omega_{\bvarrho\bbR^q}= \bvarrho^{q}\Omega_{\bbR^q}$ the natural scattering density bundle on $\overline{\bvarrho\bbR^q}$.  

If $M$ is a fiber $\phi_i^{-1}(s)$ of a fiber bundle $\phi_i: H_i\to S_i$ for a boundary hypersurface $H_i$ of a manifold with fibered corners $W$ with base $S_i$ of dimension $q$, then $M^2_{\k,\phi-q}$ is naturally diffeomorphic to the fiber $\phi_{\ff_{i,+}}^{-1}(s)$ of the fiber bundle $\phi_{\ff_{i,+}}: \ff_{i,+}\to S_i$ of \eqref{ffi.1} for the corresponding $\k,\QFB$ double space $W^2_{\k,\QFB}$.  Thus, using the $\k,\QFB$ pseudodifferential calculus of $W$ and Proposition~\ref{ksm.6b} to restrict it to $\phi_{\ff_{i,+}}^{-1}(s)$, one can define an enlarged space of $\bbR^q$ suspended $\k,\QFB$ operators
\begin{equation}
            \kridx{\Psi^{m,\cE/\mathfrak{s}}_{\k,\phi-q}}{PsiQFBksusEs}{$\bbR^q$ suspended $\k,\QFB$ pseudodifferential operators (large calculus)}(M;E,F)
\label{ksm.10}\end{equation}
for $E,F$ vector bundles on $M$ and  $\cE$ and $\mathfrak{s}$ an indicial family and a multiweight for $M^2_{\k,\phi-q}$.  
We also denote by $\dot{\Psi}^{-\infty}_{\k,\phi-q}(M;E,F)$ the subspace of $\Psi^{-\infty}_{\k,\phi-q}(M;E;F)$ consisting of those operators with Schwartz kernels vanishing rapidly at all boundary hypersurfaces of $M^2_{\k,\phi-q}$ and admitting a smooth expansion in powers of $\k^{-1}$ as $\k\to \infty$.
We can more generally define, still through the $\k,\QFB$ calculus of $W$, a weakly conormal version 
\begin{equation}
 \kridx{\Psi^{m,\cE/\mathfrak{s}}_{\k,\phi-q,\cn}}{PsiQFBksusEscn}{weakly conormal $\bbR^q$ suspended $\k,\QFB$ pseudodifferential operators}(M;E,F)
 \label{ksm.11}\end{equation} 
of \eqref{ksm.10}.    A multiweight $\mathfrak{s}$ will be said to be \textbf{$\k,\phi-q$ positive} if 
$$
        \mathfrak{s}(H_{ij,0}^{\sus})> h_j+q, \; \mathfrak{s}(H_{ij,+}^{\sus})=\infty\quad  \forall i,j,  \quad \mathfrak{s}(\ff^{\sus}_{i,0})>0, \; \mathfrak{s}(\ff^{\sus}_{i,+})=\infty \quad \; i>0, \quad \mathfrak{s}(\ff^{\sus}_{0,0})>0,
$$
where we recall that $h_0=0$ and $ h_j=\bd_j+1$ for $j\ne 0$.  Similarly, an index family $\cE$ will be said to be \textbf{$\k,\phi-q$ nonnegative} if 
$$
        \inf\Re(\cE(H^{\sus}_{ij,0}))> h_j+q, \; \cE(H^{\sus}_{ij,+})=\emptyset \;  \forall i,j,  \; \inf\Re(\cE(\ff^{\sus}_{i,0}))\ge 0, \; \cE(\ff^{\sus}_{i,+})=\bbN_0 \; \forall i>0 \; \inf\Re(\cE(\ff^{\sus}_{0,0}))\ge 0. 
$$

By restriction of the composition of the $\k,\QFB$ pseudodifferential calculus of $W$ described above, we have the following composition result.  
\begin{theorem}
Let $E,F$ and $G$ be vector bundles over $M$.  Suppose that $\cE$ and $\cF$ are index families and $\mathfrak{s}$ and $\mathfrak{t}$ are multiweights  on $M^2_{\k,\phi-q}$ such that for each $i>0$, 
\begin{equation}
   \min\{\mathfrak{s}(H^{\sus}_{0i,+}),\min\Re\cE|_{H^{\sus}_{0i,+}}\}+\min\{\mathfrak{t}(H^{\sus}_{i0,+}),\min\Re \cF|_{H^{\sus}_{i0,+}}\}> h_i+q=1+\dim S_i+q.
\label{cks.1a}\end{equation}
Then given $A\in\Psi^{m,\cE/\mathfrak{s}}_{\k,\phi-q}(M;F;G)$ and $B\in \Psi^{m',\cF/\mathfrak{t}}_{\k,q-\phi}(M;E,F)$, their composition is well-defined with 
$$
  A\circ B\in \Psi^{m+m',\cK/\mathfrak{k}}_{\k,\phi-q}(M;E,G),
$$
where, using the convention that $h_0=0$, that
$$\cE|_{\ff^{\sus}_{0,+}}=\cF|_{\ff^{\sus}_{0,+}}=\bbN_0 \quad  \mbox{and that} \quad \mathfrak{s}(\ff^{\sus}_{0,+})=\mathfrak{t}(\ff^{\sus}_{0,+})=0,
$$ 
the  index family $\cK$ is for $i,j\in\{0,1,\ldots,\ell\}$ and $\nu\in \{0,+\}$  given by 
\begin{equation}
\begin{aligned}
\cK|_{H^{\sus}_{ij,\nu}}&= (\cE|_{\ff^{\sus}_{i,\nu}}+\cF|_{H^{\sus}_{ij,\nu}})\overline{\cup} (\cE|_{H^{\sus}_{ij,\nu}}+ \cF|_{\ff^{\sus}_{j,\nu}})\overline{\cup} \overline{\bigcup_{k\ge 0}} (\cE|_{H^{\sus}_{ik,\nu}}+\cF|_{H^{\sus}_{kj,\nu}}-h_k-q),\\
\cK|_{\ff^{\sus}_{i,\nu}}&= (\cE|_{\ff^{\sus}_{i,\nu}}+\cF|_{\ff^{\sus}_{i,\nu}})\overline{\cup} \overline{\bigcup_{k\ge 0}} (\cE|_{H^{\sus}_{ik,\nu}}+\cF|_{H^{\sus}_{ki,\nu}}-h_k-q), \quad (i,\nu)\ne (0,+),
 \end{aligned}
\label{cks.1b}\end{equation}
while  the multiweight $\mathfrak{k}$ is given by 
\begin{equation}
\begin{aligned}
\mathfrak{k}(H^{\sus}_{ij,\nu})&= \min\{(\mathfrak{s}(\ff^{\sus}_{i,\nu})\dot{+}\mathfrak{t}(H^{\sus}_{ij,\nu})), (\mathfrak{s}(H^{\sus}_{ij,\nu})\dot{+}\mathfrak{t}(\ff^{\sus}_{j,\nu})), \min_{k\ge 0}\{\mathfrak{s}(H^{\sus}_{ik,\nu})\dot{+}\mathfrak{t}(H^{\sus}_{kj,\nu})-h_k-q\}\},    \\
\mathfrak{k}(\ff^{\sus}_{i,\nu})&= \min\{(\mathfrak{s}(\ff^{\sus}_{i,\nu})\dot{+}\mathfrak{t}(\ff^{\sus}_{i,\nu})),\min_{k\ge 0} \{ (\mathfrak{s}(H^{\sus}_{ik,\nu})\dot{+}\mathfrak{t}(H^{\sus}_{ki,\nu})-h_k-q) \} \}, \quad (i,\nu)\ne (0,+). \end{aligned}
\label{cks.1c}\end{equation}
If instead $\cE=\cF=\emptyset$, except possibly  at $\overline{\ff}^{\sus}_{i,\nu}$ for $(i,\nu)\ne(0,+)$, where it could possibly be $\bbN_0$, then for $A\in\Psi^{m,\cE/\mathfrak{s}}_{\k,\QFB,\cn}(M;F,G)$ and 
$B\in\Psi^{m,\cF/\mathfrak{t}}_{\k,\QFB,\cn}(M;E,F)$ only weakly conormal $\k,\QFB$ pseudodifferential operators,
$$
       A\circ B\in \Psi^{m+m',\cK/\mathfrak{k}}_{\k,\QFB,\cn}(M;E,G)
$$
with indicial family $\cK$ and multiweight $\mathfrak{k}$ still given by \eqref{cks.1b} and \eqref{cks.1c}.
\label{cks.1}\end{theorem}

We will also need a suspended version of the $\QAC-\Qb$ calculus.  The corresponding double space is obtained from \eqref{ksm.8} by omitting the blow-up of $\Phi_{j,0}\times\{0\}$ for $H_j$ maximal, namely it is given by
\begin{equation}
 M^2_{\k,\Qb-q}:= [ \overline{V}_{\bvarrho}; \Phi_{0,0}\times\{0\}, \Phi_{1,+}\times\{0\},  \Phi_{1,0}\times\{0\}, \ldots,  \Phi_{k,+}\times\{0\},  \Phi_{k,0}\times\{0\}, \Phi_{k+1,+}\times\{0\},\ldots,  \Phi_{\ell,+}\times\{0\}]
\label{qbq.1}\end{equation}
with blow-down map
$$
        \beta_{\k,\Qb-q}: M^2_{\k,\Qb-q}\to \overline{V}_{\bvarrho}.
$$
We denote by $\ff^{\Qb-q}_{i,\nu}$ the boundary hypersurface created by the blow-up of $\Phi_{i,\nu}\times\{0\}$ for 
$$
(i,\nu)\notin\{(0,+), (k+1,0),\ldots,(\ell,0)\}.
$$  
Let us also denote by $H^{\Qb-q}_{ij,\nu}$ the boundary hypersurface corresponding to the lift of $H^{\rp}_{ij,\nu}\times \overline{\bvarrho\bbR^q}$ for $i\ge 0$, $j\ge 0$ and $\nu\in \{0,+\}$ such that $(i,j,\nu)\ne (0,0,+)$.  Finally, let $H^{\Qb-q}_{00,+}$ be the boundary hypersurface of $M^2_{\k,\Qb-q}$ corresponding to the lift of $(M^2_{\rp}\rttimes [0,\infty))\times \pa(\overline{\bvarrho \bbR^q})$.  Let $\diag_{\k,\Qb-q}$ be the lift of $\diag_{\k,\rp}\times\{0\}$ in $\overline{V_{\bvarrho}}$ to $M^2_{\k,\Qb-q}$.  In parallel to \eqref{ksm.9}, we can define the space of operators
\begin{equation}
\Psi^{m}_{\k,\Qb-q}(M)= \{ \kappa\in I^{m-\frac14}(M^2_{\k,\Qb-q};\diag_{\k,\Qb-q}; {}^{\k,\Qb-q}\Omega) \; | \; \kappa\equiv 0 \; \mbox{at} \; \pa M^2_{\k,\Qb-q}\setminus \ff_{\k,\Qb-q} \},
\label{qbq.2}\end{equation}
where $\ff_{\k,\Qb-q}$ is the union of the boundary hypersurfaces of $M^2_{\k,\Qb-q}$ intersecting $\diag_{\k,\Qb-q}$ and 
\begin{equation}
{}^{\k,Qb-q}\Omega := \varpi_R^*({}^{\QAC-\Qb}\Omega)\otimes \varpi_q^{*}(\bvarrho^{-q}\Omega_{\bvarrho\bbR^q}),
\label{qbq.3}\end{equation}
where $\varpi_R$ is the composition of the natural maps
$$
\xymatrix{
    M^2_{\k,\Qb-q} \ar[r]^-{\beta_{\k,\Qb-q}} & \overline{V_{\bvarrho}} \ar[r] & M^2_{\rp}\rttimes [0,\infty) \ar[r] & M_{\k,\QAC}
}    
$$
with the last map being the lift of $\pr_R\times \Id: M^2\times [0,\infty)\to M\times [0,\infty)$ and where $\varphi_q$ is the composition of the natural maps
$$
\xymatrix{
M^2_{\k,\Qb-q}\ar[r]^-{\beta_{\k,\Qb-q}} & \overline{V_{\bvarrho}} \ar[r] & \overline{\bvarrho\bbR^q}.
}
$$
If $M$ is a fiber $\phi_i^{-1}(s)$ of a fiber bundle $\phi_i: H_i\to S_i$ for a non-maximal boundary hypersurface $H_i$ of a manifold with fibered corners $W$ with base $S_i$ of dimension $q$ and if the maximal boundary hypersurfaces of $W$ have corresponding  fiber bundle given by the identity map, then $M^2_{\k,\Qb}$ is naturally diffeomorphic to the fiber $\phi^{-1}_{\ff^{\Qb}_{i,+}}(s)$ of $\phi_{\ff^{\Qb}_{i,+}}: \ff^{\Qb}_{i,+}\to S_i$ in \eqref{ffi.1} for the corresponding $\QAC-\Qb$ double space $W^2_{\QAC-\Qb}$.  Thus, using the $\QAC-\Qb$ pseudodifferential calculus of $W$ and restricting to $\phi^{-1}_{\ff^{\Qb}_{i,+}}(s)$, we can define an enlarged space 
\begin{equation}
      \Psi^{m,\cE/\mathfrak{s}}_{\k,\Qb-q}(M;E,F)  
\label{qbq.4}\end{equation}
with $E$ and $F$ vector bundles on $M$ and $\cE$ and $\mathfrak{s}$ an indicial family and a multiweight on $M^2_{\k,\Qb-q}$.   We can more generally define, still through the $\QAC-\Qb$ calculus of $W$, a weakly conormal version
\begin{equation}
 \Psi^{m,\cE/\mathfrak{s}}_{\k,\Qb-q,\cn}(M;E,F)  
\label{qbq.5}\end{equation}
of \eqref{qbq.4}.  

 A multiweight $\mathfrak{s}$ will be said to be \textbf{$\k,\Qb-q$ positive} if  
 $$
        \mathfrak{s}(H^{\Qb-q}_{ij,0})> \left\{ \begin{array}{ll} 
      q, & H_j \; \mbox{maximal,} \\
       h_j+q, &  \mbox{otherwise},  \end{array} \right. \; \mathfrak{s}(H^{\Qb-q}_{ij,+})=\infty \quad  \forall i,j,  \quad \mathfrak{s}(\ff^{\Qb-q}_{i,0})>0 \; (H_i \; \mbox{non-maximal}),
$$
$\mathfrak{s}(\ff^{\Qb-q}_{i,+})=\infty$ for all $i>0$ and $ \mathfrak{s}(\ff^{\Qb-q}_{0,0})>0$.   Similarly, we say that an index family $\cE$ is \textbf{$\Qb-q$ nonnegative} if  
 $$
 \begin{aligned}
    \cE(H^{\Qb-q}_{00,\infty})= \bbN_0, \quad  \cE(H_{ij,+})=\emptyset \; \forall i,j, \quad    \inf \Re(\cE(H^{\Qb-q}_{ij,0})) \left\{ \begin{array}{ll} 
      >q, & H_j \; \mbox{maximal}, i\ne j, \\
       \ge q, & H_j \; \mbox{maximal}, i= j, \\
      \ge  h_j+q, &  \mbox{otherwise},  \end{array} \right.    \\ 
       \quad \inf\Re(\cE(\ff^{\Qb-q}_{i,0}))\ge 0\; (H_i \; \mbox{non-maximal}), \;   \inf\Re(\cE(\ff^{\Qb-q}_{0,0}))\ge 0, \quad \cE(\ff^{\Qb-q}_{i,+})=\bbN_0  \quad \forall i>0.
\end{aligned}       
$$

By restriction of the composition result of Theorem~\ref{ckqfb.12}, we obtain the following.
\begin{theorem}
Let $E,F$ and $G$ by vector bundles over $M$.  Suppose that $\cE$ and $\cF$ are index families and $\mathfrak{s}$ and $\mathfrak{t}$ are multiweights on $M^2_{\k,\Qb-q}$ such that for each $i>0$,
\begin{equation}
   \min\{\mathfrak{s}(H^{\Qb-q}_{0i,+}),\min\Re\cE|_{H^{\Qb-q}_{0i,+}}\}+\min\{\mathfrak{t}(H^{\Qb-q}_{i0,+}),\min\Re \cF|_{H^{\Qb-q}_{i0,+}}\}> h_i+q=1+\dim S_i+q.
\label{qbq.6a}\end{equation}
Then given $A\in \Psi^{m,\cE/\mathfrak{s}}_{\k,\Qb-q}(M; F,G)$ and $B\in \Psi^{m',\cF/\mathfrak{t}}_{\k,\Qb-q}(M;E,F)$, their composition is well-defined with
$$
   A\circ B\in \Psi^{m+m',\cK/\mathfrak{k}}_{\k,\Qb-q}(M;E,G),
$$
where, using the conventions that $h_0=0$, $\cE|_{\ff^{\Qb-q}_{0,+}}=\cF|_{\ff^{\Qb-q}_{0,+}}=\bbN_0$, $\mathfrak{s}(\ff^{\Qb-q}_{0,+})=\mathfrak{t}(\ff^{\Qb-q}_{0,+})=0$ and that
$\cE|_{\ff^{\Qb-q}_{i,0}}=\cF|_{\ff^{\Qb-q}_{i,0}}= \emptyset$ and $\mathfrak{s}(\ff^{\Qb-q}_{i,0})=\mathfrak{t}(\ff^{\Qb-q}_{i,0})=\infty$ when $H_i$ is a maximal boundary hypersurface, the index family $\cK$ 
is for $i,j\in\{0,1,\ldots,\ell\}$ and $\nu\in \{0,+\}$  given by 
\begin{equation}
\begin{aligned}
\cK|_{H^{\Qb-q}_{ij,\nu}}&= (\cE|_{\ff^{\Qb-q}_{i,\nu}}+\cF|_{H^{\Qb-q}_{ij,\nu}})\overline{\cup} (\cE|_{H^{\Qb-q}_{ij,\nu}}+ \cF|_{\ff^{\Qb-q}_{j,\nu}})\overline{\cup} \overline{\bigcup_{k\ge 0}} (\cE|_{H^{\Qb-q}_{ik,\nu}}+\cF|_{H^{\Qb-q}_{kj,\nu}}-h_{k,\nu}-q),\\
\cK|_{\ff^{\Qb-q}_{i,\nu}}&= (\cE|_{\ff^{\Qb-q}_{i,\nu}}+\cF|_{\ff^{\Qb-q}_{i,\nu}})\overline{\cup} \overline{\bigcup_{k\ge 0}} (\cE|_{H^{\Qb-q}_{ik,\nu}}+\cF|_{H^{\Qb-q}_{ki,\nu}}-h_{k,\nu}-q), \quad (i,\nu)\ne (0,+),
 \end{aligned}
\label{qbq.6b}\end{equation}
in the latter case with $H_i$ not maximal when $\nu=0$,
while  the multiweight $\mathfrak{k}$ is given by 
\begin{equation}
\begin{aligned}
\mathfrak{k}(H^{\Qb-q}_{ij,\nu})&= \min\{(\mathfrak{s}(\ff^{\Qb-q}_{i,\nu})\dot{+}\mathfrak{t}(H^{\Qb-q}_{ij,\nu})), (\mathfrak{s}(H^{\Qb-q}_{ij,\nu})\dot{+}\mathfrak{t}(\ff^{\Qb-q}_{j,\nu})), \min_{k\ge 0}\{\mathfrak{s}(H^{\Qb-q}_{ik,\nu})\dot{+}\mathfrak{t}(H^{\Qb-q}_{kj,\nu})-h_{k,\nu}-q\}\},  \\
\mathfrak{k}(\ff^{\Qb-q}_{i,\nu})&= \min\{(\mathfrak{s}(\ff^{\Qb-q}_{i,\nu})\dot{+}\mathfrak{t}(\ff^{\Qb-q}_{i,\nu})),\min_{k\ge 0} \{ (\mathfrak{s}(H^{\Qb-q}_{ik,\nu})\dot{+}\mathfrak{t}(H^{\Qb-q}_{ki,\nu})-h_{k,\nu}-q) \} \}, \quad (i,\nu)\ne (0,+), \end{aligned}
\label{qbq.6c}\end{equation}
with again in the latter case $H_i$ not maximal when $\nu=0$.
If instead $\cE=\cF=\emptyset$, except possibly  at boundary hypersurfaces intersecting $\diag_{\k,\Qb-q}$, where it could possibly be $\bbN_0$, then for $A\in\Psi^{m,\cE/\mathfrak{s}}_{\k,\Qb-q,\cn}(M;F,G)$ and 
$B\in\Psi^{m,\cF/\mathfrak{t}}_{\k,\Qb-q,\cn}(M;E,F)$ only weakly conormal $\k,\Qb-q$ pseudodifferential operators,
$$
       A\circ B\in \Psi^{m+m',\cK/\mathfrak{k}}_{\k,\Qb-q,\cn}(M;E,G)
$$
with indicial family $\cK$ and multiweight $\mathfrak{k}$  still given by \eqref{qbq.6b} and \eqref{qbq.6c}.

\label{qbq.6}\end{theorem}

Coming back to  $\eth_{\k,\sus}$, to invert it within the $\k,\phi-q$ calculus, it is convenient, as in \S~\ref{qfble.0}, to consider the conjugated operator
\begin{equation}
    D_{\k,\sus}:= x^{-\mathfrak{w}}\eth_{\k,\sus}x^{\mathfrak{w}}= D_{\sus}+\k\gamma= D_{\QFB}+\eth_{\bbR^q}+\k\gamma.
\label{ksm.13}\end{equation}
We know already how to invert $D_{\k,\sus}$ at various front faces of $M^2_{\k,\phi-q}$.  At $\ff^{\sus}_{0,0}$ the model to invert is 
\begin{equation}
                N^{\sus}_{0,0}(D_{\k,\sus}):= D_{\sus},
\label{ksm.14}\end{equation}
and its inverse, described in Theorem~\ref{ift.15}, fits nicely on $\ff^{\sus}_{0,0}$ via the identification \eqref{ksm.8b}. Similarly, at $\ff^{\sus}_{i,0}$ for $i>0$, we see from \eqref{do.41} that the model to invert is 
\begin{equation}
   N^{\sus}_{i,0}(D_{\k,\sus}):= D_{v,i}+ \eth_{h,i}+\eth_{\bbR^q}.
\label{ksm.14}\end{equation}
As we will see and similarly to what was happening in \S~\ref{qfble.0}, inverting $D_{\k,\sus}$ at $\ff^{\sus}_{0,0}$ will yield automatically an inverse at $\ff_{i,0}^{\sus}$ for $i>0$.   On the other hand, for $i>0$, we will need to rely on the suspended version of Assumption~\ref{qfble.5}.

\begin{assumption}
For each boundary hypersurface $H_i$ of $M$, the normal operator
\begin{equation}
    N^{\sus}_{i,+}(D_{\k,\sus}):= \left.  D_{\k,\sus} \right|_{\ff^{\sus}_{i,+}}
\label{ksm.15a}\end{equation}
is invertible with inverse 
$$
   N^{\sus}_{i,+}(D_{\k,\sus})^{-1}\in \Psi^{-1,\cE_i/\mathfrak{s}_i}_{\ff^{\sus}_{i,+},\cn}(H_i;E)
$$
with  $\Psi^{-1,\cE_i/\mathfrak{s}_i}_{\ff^{\sus}_{i,+},\cn}(H_i;E)$ the space of operators obtained by restriction of \eqref{ksm.11} to $\ff^{\sus}_{i,+}$, where $\cE_i$ is a $\k,\phi-q$ nonnegative  index family, except at $H^{\sus}_{jj,0}\cap \ff^{\sus}_{i,+}$ for $H_j\ge H_i$, where we have that 
$$
   \cE_{i}|_{H^{\sus}_{jj,0}}=\bd_j+q+\bbN_0,
$$ 
and where $\mathfrak{s}_i$ is a $\k,\phi-q$ positive multiweight, except at $H^{\sus}_{jj,0}\cap \ff^{\sus}_{i,+}$ for $H_j\ge H_i$, where 
$$
    \mathfrak{s}(H^{\sus}_{jj,0})>\bd_i+q.
$$
Moreover, for $H_j\ge H_i$, $N^{\sus}_{i,+}(D_{\k,\sus})^{-1}$ agrees with $N^{\sus}_{j,0}(D_{\k,\sus})^{-1}$ on $\ff^{\sus}_{i,+}\cap\ff^{\sus}_{j,0}$, while for $H_j\ne H_i$, it agrees with
$N^{\sus}_{j,+}(D_{\k,\sus})^{-1}$ on $\ff^{\sus}_{i,+}\cap \ff^{\sus}_{j,+}$.  Finally, for $H_j\ge H_i$, the term $A_{ij}$ of order $\bd_j+q$ at $\ff^{\sus}_{i,+}\cap H^{\sus}_{jj,0}$ is such that $\widetilde{\Pi}_{h,j}A_{ij}= A_{ij}\widetilde{\Pi}_{h,j}=A_{ij}$.  
\label{ksm.15}\end{assumption}

\begin{remark}
As in Remark~\ref{qfble.6}, we have that $\ff^{\sus}_{i,+}\cap H^{\sus}_{0j,0}=\ff^{\sus}_{i,+}\cap H^{\sus}_{j0,0}=\emptyset$ for all $i>0$ and $j\ge 0$.  
\label{ksm.16}\end{remark}

Starting with $N^{\sus}_{0,0}(D_{\k,\sus})^{-1}$, we can via Lemma~\ref{ksm.8c} and the identification \eqref{ksm.8b} pull it back to $M^2_{\k,\phi-q}$, yielding a parametrix $Q_0\in\Psi^{-1,\cQ_0/\mathfrak{q}_0}_{\k,\phi-q,\cn}(M;E)$   such that
$$
     (D_{\k,\sus}Q_0)=\Id+R_0 \quad \mbox{with}\quad R_0=\k\gamma Q_0,
$$  
where $\cQ_0$ is the  index family given by
\begin{equation}
\begin{gathered}
\cQ_0|_{\ff^{\sus}_{0,0}}=\cQ_0|_{\ff^{\sus}_{i,0}}= \cQ_0|_{\ff^{\sus}_{i,+}}=\bbN_0 \quad i>0,  \\
 \cQ_{0}|_{H^{\sus}_{ii,0}}=\cQ_0|_{H^{\sus}_{ii,+}}=\bbN_0+h_i-1+q,  \quad i\ge0,
\end{gathered}
\label{ksm.17}\end{equation}
and the empty set elsewhere,
while $\mathfrak{q}_0$ is a multiweight such that for $i>0$ and $j>0$,
\begin{equation}
\begin{gathered}
\mathfrak{q}_0(H^{\sus}_{i0,0})=q + \nu_R, \quad \mathfrak{q}_0(H^{\sus}_{i0,+})=q+\nu_R, \quad \mathfrak{q}_0(H^{\sus}_{0i,0})=\bd_i+1+q+\nu_R, \quad \mathfrak{q}_0(H^{\sus}_{0i,+})=\bd_i+1+q+\nu_R, \\
 \mathfrak{q}_0(H^{\sus}_{ij,0})=\mathfrak{q}_0(H^{\sus}_{ij,+})>\left\{\begin{array}{ll} \bd_i+q, & i=j, \\ \bd_j+1+q, & i\ne j, \end{array}  \right. \quad \mathfrak{q}_0(H^{\sus}_{00,0})= \mathfrak{q}_0(H^{\sus}_{00,+})>q \quad \mathfrak{q}_0(\ff^{\sus}_{i,0})= \mathfrak{q}_0(\ff^{\sus}_{i,+})>0
 \end{gathered}
\label{ksm.18}\end{equation}
and $\mathfrak{q}(\ff^{\sus}_{0,0})>0$.
Correspondingly, the error term $R_0$ can be seen as an element of $\Psi^{-1,\cR_0/\mathfrak{r}_0}_{\k,\phi-q,\cn}(M;E)$ 
 with index family $\cR_0$ given by  
  \begin{equation}
  \begin{gathered}
\cR_0|_{\ff^{\sus}_{0,0}}= \cR_0|_{\ff^{\sus}_{i,0}}=\bbN_0+1, \quad \cR_0|_{\ff_{i,+}}=\bbN_0, \quad i>0,  \\
 \cR_{0}|_{H^{\sus}_{ii,0}}=\bbN_0+h_i+q, \quad \cR_0|_{H^{\sus}_{ii,+}}=\bbN_0+h_i-1+q, \quad i\ge 0,
 \end{gathered}
\label{ksm.19}\end{equation}
and the empty set elsewhere, and with  multiweight $\mathfrak{r}_0$ such that for $i>0$ and $j>0$,
\begin{equation}
\begin{gathered}
\mathfrak{r}_0(H^{\sus}_{00,0})>q, \quad  \mathfrak{r}_0(\ff^{\sus}_{i,+})>0, \quad \mathfrak{r}_0(\ff^{\sus}_{i,0})>1, \quad \mathfrak{r}_0(\ff^{\sus}_{0,0})>1,   \\
\quad \mathfrak{r}_0(H^{\sus}_{i0,0})=q+\nu_R+1, \quad \mathfrak{r}_0(H^{\sus}_{0i,0})=h_i+1+q+\nu_R,  \quad
\mathfrak{r}_0(H^{\sus}_{i0,+})=q+\nu_R, \quad \mathfrak{r}_0(H^{\sus}_{0i,+})=\bd_i+1+q+\nu_R,  \\
 \quad \mathfrak{r}_0(H^{\sus}_{ij,0})> \left\{\begin{array}{ll} h_i, & i=j \\ h_i+1, & i\ne j,  \end{array}  \right. \quad \mathfrak{r}_0(H^{\sus}_{ij,+})> \left\{\begin{array}{ll} \bd_i+q, & i=j \\ \bd_j+1+q, & i\ne j.  \end{array}  \right.
\end{gathered}
\label{ksm.20}\end{equation}
This shows in particular that $Q_0$ inverts $D_{\k,\sus}$ also at $\ff^{\sus}_{i,0}$ for $i>0$.  The error term however does not decay rapidly at $H^{\sus}_{ij,+}$ for $i\ge 0$ and $j\ge 0$.  To enforce such a rapid decay, it suffices to cut-off $Q_0$ as follows.  Let $r$ be the canonical radial function of $\bbR^q$, so that $\frac{r}{\bvarrho}$ is the radial function of $\bvarrho\bbR^q$.  Seen as a function on $M^2_{\k,\phi-q}$, notice that $\frac{1}{\sqrt{1+r^2}}$ vanishes at the boundary hypersurfaces that to not intersect the lifted diagonal $\diag_{\k,\phi-q}$.  Hence, it suffices to replace $Q_0$ by 
$$
    Q_1:= \varphi_{\epsilon}\lrp{\frac{1}{\k \sqrt{1+r^2} }}Q_0
$$ 
for $\epsilon>0$ small with $\varphi_{\epsilon}$ the cut-off function of \eqref{qfble.17b},
so that $Q_1\in\Psi^{-1,\cQ_1/\mathfrak{q}_1}_{\k,\phi-q,\cn}(M;E)$ with 
 $\cQ_1$ the  index family given by
\begin{equation}
\begin{gathered}
\cQ_1|_{\ff^{\sus}_{0,0}}=\cQ_1|_{\ff^{\sus}_{i,0}}= \cQ_1|_{\ff^{\sus}_{i,+}}=\bbN_0 \quad i>0,  \\
 \cQ_{1}|_{H^{\sus}_{ii,0}}=\bbN_0+h_i-1+q,  \quad i\ge0,
\end{gathered}
\label{ksm.21}\end{equation}
and by the empty set elsewhere,
while $\mathfrak{q}_1$ is a multiweight such that $i>0$ and $j>0$,
\begin{equation}
\begin{gathered}
\mathfrak{q}_1(H^{\sus}_{i0,0})=q+\nu_R,  \quad \mathfrak{q}_1(H^{\sus}_{0i,0})=\bd_i+1+q+\nu_R, \\
 \mathfrak{q}_1(H^{\sus}_{ij,0})>\left\{\begin{array}{ll} \bd_i+q, & i=j, \\ \bd_j+1+q, & i\ne j, \end{array}  \right. \quad \mathfrak{q}_1(H^{\sus}_{00,0})>q \quad \mathfrak{q}_1(\ff_{i,0})= \mathfrak{q}_1(\ff_{i,+})>0, \quad \mathfrak{q}_1(\ff^{\sus}_{0,0})>0,
 \end{gathered}
\label{ksm.22}\end{equation}
and given by $\infty$ elsewhere.

Since  $ \varphi_{\epsilon}\lrp{\frac{1}{\k \sqrt{1+r^2} }}$ is supported away from $\ff^{\sus}_{i,0}$ for $i\ge 0$ and since
$$
\left[D_{\k,\sus}, \varphi_{\epsilon}\lrp{\frac{1}{\k \sqrt{1+r^2} }}  \right]= \left[\eth_{\bbR^q}, \varphi_{\epsilon}\lrp{\frac{1}{\k \sqrt{1+r^2} }}  \right]= \varphi_{\epsilon}'\lrp{\frac{1}{\k \sqrt{1+r^2} }} \frac{1}{\k \sqrt{1+r^2}} \mathcal{O}(\frac{1}{r}),
$$
we see that 
\begin{equation}
    D_{\k,\sus}Q_1= \Id+R_1
\label{ksm.23}\end{equation}
with error term $R_1\in\Psi^{-1,\cR_1/\mathfrak{r}_1}_{\k,\phi-q,\cn}(M;E)$, where $\cR_1$ is an index set satisfying the same properties as $\cR_0$, but with 
$$
     \cR_1|_{H^{\sus}_{ij,+}}=\emptyset \quad \mbox{for all} \; i,j,
$$  
 and with $\mathfrak{r}_1$ a multiweight satisfying the same properties as $\mathfrak{r}_0$, but with
 $$
     \mathfrak{r}_1(H^{\sus}_{ij,+})=\infty  \quad \mbox{for all} \; i,j.
 $$ 
Our way of cutting off also ensures that the term $r_i$ of order $\bd_i+1+q$ of $R_1$ at $H^{\sus}_{ii,0}$ is still such that $r_i\widetilde{\Pi}_{h,i}=r_i$.  As in \S~\ref{qfble.0}, adding a term in $\k\Psi^{-2}_{\k,\phi-q}(M;E)$ supported near the lifted diagonal to $Q_1$, we can assume that $R_1\in\Psi^{-2,\cR_0/\mathfrak{r}_0}_{\k,\phi-q,\cn}(M;E)$.  Iterating this argument by adding to $Q_1$ a term in $\k\Psi^{-3}_{\k,\phi-q}(M;E)$, $\k\Psi^{-4}_{\k,\phi-q}(M;E)$ and so forth supported near the lifted diagonal and taking a Borel sum, we can assume in fact that $R_1\in \Psi^{-\infty,\cR_1/\mathfrak{r}_1}_{\k,\phi-q,\cn}(M;E)$.   Using Assumption~\ref{ksm.15}, we can improve this parametrix as follows.
\begin{proposition}
There exist $Q_2 \in\Psi^{-1,\cQ_2/\mathfrak{q}_2}_{\k,\phi-q,\cn}(M;E)$ and $R_2\in\Psi^{-\infty,\cR_2/\mathfrak{r}_2}_{\k,\phi-q,\cn}(M;E)$ such that
$$
      D_{\k,\sus}Q_2= \Id-R_2,
$$
where $\cQ_2,\mathfrak{q}_2, \cR_2$ and $\mathfrak{r}_2$ satisfy the same properties as $\cQ_1,\mathfrak{q}_1, \cR_1$ and $\mathfrak{r}_1$ with the exception that
$$
            \cR_2|_{\ff^{\sus}_{i,+}}=\bbN_0+1 \quad \mbox{for all} \; i>0.
$$
\label{ksm.23b}\end{proposition}  
\begin{proof}
By Assumption~\ref{ksm.15} and Remark~\ref{ksm.16}, the parametrix $Q_2$ can be obtained from $Q_1$ by changing it on $\ff^{\sus}_{i,+}$ for $i>0$ in such a way that 
$$
      N^{\sus}_{i,+}(Q_2)= N^{\sus}_{i,+}(D_{\k,\sus})^{-1}.
$$
\end{proof}  

To improve the parametrix further, we need to remove the term of order $h_i+q$ of the error term at $H^{\sus}_{ii,0}$ for $i\ge 0$.  This means that we need to choose more carefully the term $A_i$ of $Q_2$ of order $h_i+q-1$ at $H^{\sus}_{ii,0}$.  Since this term is such that $\widetilde{\Pi}_{h,i}A\widetilde{\Pi}_{h,i}=A_i$, the model operator to invert involves only the part of $D_{\QFB}$ acting on the range of $\widetilde{\Pi}_{h,i}$.  More precisely, for $i=0$, 
\begin{equation}
\widetilde{\Pi}_{h,0}:=\Pi_{\ker_{L^2}D_{\QFB}},
\label{ksm.24}\end{equation}
so that the model operator to invert is $\widetilde{\Pi}_{h,0}(\eth_{\bbR^q}+\k\gamma)\widetilde{\Pi}_{h,0}$ acting on the range of $\Pi_{\ker_{L^2}D_{\QFB}}$.  However, since we want to consider the operator $\eth_{\bbR^q}$ on $H^{\sus}_{00,0}$, not on $\ff^{\sus}_{0,0}$, we should write it as 
$$
     \widetilde{\Pi}_{h,0}\eth_{\bbR^q}\widetilde{\Pi}_{h,0}= \k \eth_{\bvarrho \bbR^q,0},
$$
where $\eth_{\bvarrho\bbR^q,0}:= \k^{-1} \widetilde{\Pi}_{h,0}\eth_{\bbR^q}\widetilde{\Pi}_{h,0}$ is the naturally associated operator on $\bvarrho\bbR^q$.  Hence, the model to invert on $H^{\sus}_{00,0}$ is 
$$
    \k (\eth_{\bvarrho\bbR^q,0}+\gamma). 
$$
Since 
\begin{equation}
   (\eth_{\bvarrho\bbR^q,0}+\gamma)^2= \eth_{\bvarrho\bbR^q,0}^2+ \gamma^2= \eth_{\bvarrho\bbR^q,0}^2+ \Id_E,
\label{ksm.24b}\end{equation}
this operator is invertible as a scattering operator, in fact, as a suspended operator.  Now, as in the proof of Proposition~\ref{do.40}, the leading order behavior of $(\eth_{\bvarrho\bbR^q,0}+\gamma)^{-1}$ at $\ff^{\sus}_{0,0}\cap H^{\sus}_{00,0}$ is the inverse Fourier transform of its principle symbol, which by Theorem~\ref{ift.15}, taking into account the factor $\k$ (and keeping in mind that the term $\gamma$ in Theorem~\ref{ift.15} comes from the principal symbol of $\eth_{\bbR^q}$ as described in \eqref{ift.3} and is distinct from the term $\gamma$ in \eqref{ksm.24b}) agrees with the top order term of $N^{\sus}_{0,0}(D_{\k,\sus})^{-1}$ at $\ff^{\sus}_{0,0}\cap H^{\sus}_{00,0}$.  Hence, by changing the parametrix $Q_2$ by requiring that  its term of order $q-1$ at $H^{\sus}_{00,0}$ be $\k^{-1}(\eth_{\bvarrho\bbR^q,0}+\gamma)^{-1}$, we can find a parametrix $Q_3\in \Psi^{-1,\cQ_2/\mathfrak{q}_2}_{\k,\phi-q,\cn}(M;E)$ and $R_3\in \Psi^{-\infty,\cR_2/\mathfrak{r}_2}_{\k,\phi-q,\cn}(M;E)$ such that
$$
       D_{\k,\sus}Q_3=\Id-R_3
$$   
with $R_3$ having no term of order $q$ at $H^{\sus}_{00,0}$.  To get rid of the term of order $\bd_i+1+q$ of the error term at $H^{\sus}_{ii,0}$ for $i>0$, we can proceed essentially as in the proof of Theorem~\ref{qfble.38}.  More precisely, near $H^{\sus}_{ii,0}$, we have, compared to \eqref{qfble.22}, to add a term to $D_{\cC_i}$, namely the model to invert is
\begin{equation}
D_{\cC_i,\sus}= D_{\cC_i}+ \frac{1}{\kappa} \eth_{v\bbR^q,i}= -c\frac{\pa}{\pa \kappa}+ \frac{1}{\kappa}\lrp{D_{S_i}+\eth_{v\bbR^q,i}+\frac{c}2} \quad \mbox{with}\; \eth_{v\bbR^q,i}:= v^{-1}\widetilde{\Pi}_{h,i}\eth_{\bbR^q}\widetilde{\Pi}_{h,i}.
\label{ksm.25}\end{equation}
By Lemmas~\ref{qfble.28}, \ref{qfble.33} and Remark~\ref{qfble.36}, we know that $D_{\cC_i}$ is invertible with inverse admitting a nice pseudodifferential characterization.  Nevertheless, to conclude that its suspended version $D_{\cC_i,\sus}$ is also invertible with inverse admitting a nice pseudodifferential characterization, we will need as in \S~\ref{qfble.0} to proceed by induction on $i$, again assuming without loss of generality that the boundary hypersurfaces $H_1, \ldots, H_{\ell}$ of $M$ are listed in a way compatible with the partial order, that is, in such a way that 
$$
       H_i< H_j \quad \Longrightarrow \quad i<j.
$$
First, the analogue of Proposition~\ref{qfble.23} is the following.
\begin{proposition}
For $i\ge 0$ fixed, there exists $Q_{3,i}\in\Psi^{-1,\cQ_{3,i}/\mathfrak{q}_{3,i}}_{\k,\phi-q,\cn}(M;E)$ and $R_{3,i}\in\Psi^{-\infty,\cR_{3,i}/\mathfrak{r}_{3,i}}_{\k,\phi-q,\cn}(M;E)$ such that
$$
      D_{\k,\sus}Q_{3,i}=\Id-R_{3,i}
$$
with $R_{3,i}$ having no term of order $h_j+q$ at $H^{\sus}_{jj,0}$ for $0\le j\le i$ and with $\cQ_{3,i}, \mathfrak{q}_{3,i}, \cR_{3,i}$ and $\mathfrak{r}_{3,i}$ satisfying the same properties as $\cQ_{2}, \mathfrak{q}_{2}, \cR_{2}$ and $\mathfrak{r}_{2}$.  On the other hand, for $j>i$, the term $q_{3,ij}$ of order $\bd_j+q$ at $H^{\sus}_{jj,0}$ of $Q_{3,i}$ is such that $\widetilde{\Pi}_{h,j}q_{3,ij}= q_{3,ij}\widetilde{\Pi}_{h,j}= q_{3,ij}$, while the term $r_{3,ij}$ of order $\bd_j+1+q$ at $H^{\sus}_{jj,0}$ of $R_{3,i}$ is such that $r_{3,ij}=r_{3,ij}\widetilde{\Pi}_{h,j}$.
\label{ksm.26}\end{proposition}

For $i=0$, notice that we can just take $Q_{3,0}=Q_3$ and $R_{3,0}=R_3$ to see that Proposition~\ref{ksm.26} holds.  Thus, to prove Proposition~\ref{ksm.26}, we can proceed by induction and assume it holds for $i-1$ to show it holds for $i$.  The key step will be to invert the model operator $D_{\cC_i,\sus}$ at $H^{\sus}_{ii,0}$.  First, notice that the parametrix of Proposition~\ref{ksm.26} for $i-1$ yields a parametrix for $D_{\cC_i,\sus}$ by restriction to $H^{\sus}_{ii,0}$.  Indeed, $Q_{3,i-1}$ has a term of order $h_i-1+q$ at $H^{\sus}_{ii,0}$, which in terms of $\QAC-\Qb$ densities as defined in \eqref{kqfb.23b} and the density $\Omega_{\bvarrho\bbR^q}$, can be seen as a term of order $-1$.  This means that $\k Q_{3,i-1}$ is of order $0$ at $H^{\sus}_{ii,0}$ in terms of the densities ${}^{\QAC-\Qb}\Omega$ and $\Omega_{\bvarrho\bbR^q}$.  Similarly, $R_{3,i-1}$ has a term of order $h_i+q$ at $H^{\sus}_{ii,0}$, that is, of order $0$ in terms of ${}^{\QAC-\Qb}\Omega$ and $\Omega_{\bvarrho\bbR^q}$.  Thus, if we let 
$$
   Q_{\cC_i,\sus}:= \left.\k Q_{3,i-1}\right|_{H^{\sus}_{ii,0}} \quad \mbox{and} \quad R_{\cC_i,\sus}:= \left. \widetilde{\Pi}_{h,i}R_{2,i-1}\right|_{H^{\sus}_{ii,0}}
$$
be these terms of order $h_i+q$, we see that
\begin{equation}
\begin{aligned}
\widetilde{\Pi}_{h,i}-R_{\cC_i,\sus} &= \left. \widetilde{\Pi}_{h,i}(\Id-R_{2,i-1})\right|_{H^{\sus}_{ii,0}}= \left.  \widetilde{\Pi}_{h,i}( D_{\k,\sus}Q_{3,i-1})\right|_{H^{\sus}_{ii,0}} \\
&= \left. (\k D_{\cC_i,\sus} Q_{3,i-1})\right|_{H^{\sus}_{ii,0}}= \left.(D_{\cC_i,\sus}\k Q_{3,i-1})\right|_{H^{\sus}_{ii,0}} \\
&= D_{\cC_i,\sus}Q_{\cC_i,\sus}.
\end{aligned}
\label{ksm.27}\end{equation}
In other words, $Q_{\cC_i,\sus}$ is a parametrix for $D_{\cC_i,\sus}$.  Now, $D_{\cC_i,\sus}$ is an operator geometrically associated to the cone
\begin{equation}
(S_i\times v\bbR^q\times [0,\infty)_{\k}, d\kappa^2+\kappa^2(g_{S_i}+g_{v\bbR^q})),
\label{ksm.28}\end{equation}
where $g_{v\bbR^q}=v^2g_{\bbR^q}$ is the natural euclidian metric on $v\bbR^q$.   Taking $u_i$ as in \eqref{qfble.26}, this suggests to consider the operator
\begin{equation}
   u_i^{\frac12}(D_{\cC_i,\sus})u_i^{\frac12},
\label{ksm.29}\end{equation}
which can be thought as an edge-$\QAC$ operator, but with $S_i$ replaced by  $S_i\times v\bbR^q$.  It acts formally on $L^2_{b,v\bbR^q}(S_i\times v\bbR^q\times [0,\infty];\ker D_{v,i})$, the $L^2$ space specified by a choice of $b$-metric on $S_i\times [0,\infty]$ and the Euclidean metric $g_{v\bbR^q}$ on the factor $v\bbR^q$.  In particular, $u_i^{-\frac12}Q_{\cC_i,\sus}u_i^{-\frac12}$ is a parametrix for $u_i^{\frac12}D_{\cC_i,\sus}u_i^{\frac12}$, 
\begin{equation}
 (u_i^{\frac12}D_{\cC_i,\sus}u_i^{\frac12})(u_i^{-\frac12}Q_{\cC_i,\sus}u_i^{-\frac12})=\widetilde{\Pi}_{h,i}- u_i^{\frac12}R_{\cC_i,\sus}u_i^{-\frac12}.
\label{ksm.30}\end{equation}
\begin{lemma}
The edge-$\QAC$ self-adjoint extension of $u_i^{\frac12}D_{\cC_i,\sus}u_i^{\frac12}$ associated to  
$$
L^2_{b,v\bbR^q}(S_i\times v\bbR^q\times [0,\infty];\ker D_{v,i})
$$  
is Fredholm.
\label{ksm.31}\end{lemma}
\begin{proof}
By Proposition~\ref{ksm.26} for $i-1$, since $\nu_R+1\ge \mu_R>\frac12$, we see that the error term $u_i^{\frac12}R_{\cC_i}u_i^{-\frac12}$ is compact when acting on $L^2_{b,v\bbR^q}(S_i\times v\bbR^q\times [0,\infty];\ker D_{v,i})$, so that $u_i^{\frac12}D_{\cC_i,\sus} u_i^{\frac12}$ has a right inverse modulo compact operators.  Taking the adjoint gives a left inverse modulo compact operators acting on the Sobolev space
$$
  \{ \sigma \in L^2_{b,v\bbR^q}(S_i\times v\bbR^q\times [0,\infty];\ker D_{v,i}) \; | \; D_{\cC_i,\sus}\sigma\in L^2_{b,v\bbR^q}(S_i\times v\bbR^q\times [0,\infty];\ker D_{v,i})\},
$$
from which the result follows.  
\end{proof}

As for $u_i^{\frac12}D_{\cC_i}u_i^{\frac12}$, one can check that this Fredholm operator is in fact invertible.  
\begin{lemma}
The Fredholm operator of Lemma~\ref{ksm.31} is in fact an isomorphism.
\label{ksm.32}\end{lemma}
\begin{proof}
The idea is to replace $u_i^{\frac12}D_{\cC_i}u_i^{\frac12}$ by $u_i^{\frac12}D_{\cC_i,\sus}u_i^{\frac12}$ and $\widetilde{D}_{S_i}$ by $\widetilde{D}_{S_i,\sus}:=c(D_{S_i}+ \eth_{v\bbR^q})$ in the proof of Lemma~\ref{qfble.28}.  Indeed, with these changes, we can essentially proceed as in this proof, the only new feature to add being an argument  showing that the spectrum of $\widetilde{D}_{S_i,\sus}$, which compared to the spectrum of $\widetilde{D}_{S_i}$ has a continuous part,  is still disjoint from the interval $(-\mu_R-\frac12,\mu_R+\frac12)$.  As in the proof of Lemma~\ref{qfble.28}, we know from \eqref{qfble.30} and Assumption~\ref{do.26} that $\widetilde{D}_{S_i}$ has no eigenvalue in the range $(-\mu_R-\frac12,\mu_R+\frac12)$, so that
$$
      \inf \Spec(\widetilde{D}_{S_i}^2)\ge (\mu_R+\frac12)^2.
$$  
Since $\widetilde{\eth}_{v\bbR^2}:=c \eth_{v\bbR^q}$ anti-commutes with $\widetilde{D}_{S_i}$, notice on the other hand that 
$$
        \widetilde{D}^2_{S_i,\sus}= (\widetilde{D}_{S_i}+ \widetilde{\eth}_{v\bbR^q})^2= \widetilde{D}_{S_i}^2+ \widetilde{\eth}^2_{v\bbR^q}.
$$
Since $\Spec(\widetilde{\eth}_{v\bbR^2}^2)= [0,\infty)$ and that $\widetilde{\eth}_{v\bbR^q}^2$ commutes with $\widetilde{D}_{S_i}^2$, we thus see that
$$
       \inf\Spec(\widetilde{D}^2_{S_i,\sus})\ge \inf \Spec(\widetilde{D}_{S_i}^2)\ge (\mu_R+\frac12)^2,
$$
so that the spectrum of the formally self-adjoint operator $\widetilde{D}_{S_i,\sus}$ must be disjoint from $(-\mu_R-\frac12,\mu_R+\frac12)$ as claimed.  With this result, we can use separation of variables as before to show that $u_i^{\frac12}D_{\cC_i,\sus}u_i^{\frac12}$ is invertible.

\end{proof}

Let $G_{\cC_i,\sus}$ be the operator such that $u_i^{-\frac12}G_{\cC_i,\sus}u_i^{-\frac12}$ is the inverse of the isomorphism of Lemma~\ref{ksm.32}.  Using the parametrix $Q_{\cC_i,\sus}$, we obtain the following characterization of $G_{\cC_i,\sus}$.
\begin{lemma}
On the face $H_{ii}^{\Qb-q}$ of $(S_i\times [0,1))^2_{\k,\Qb-q}$, we have that 
$$
u_i^{-\frac12}G_{\cC_i,\sus}u_i^{-\frac12} \in \Psi^{-1,\cG_i/\mathfrak{g}_i}_{\k,\Qb-q,\cn}(H_{ii,0}^{\Qb-q};\ker D_{v,i})
$$ 
with   $\Psi^{-1,\cG_i/\mathfrak{g}_i}_{\k,\Qb-q,\cn}(H_{ii,0}^{\Qb-q};\ker D_{v,i})$ the space of operators obtained by restriction to $H_{ii}^{\Qb}$ of \eqref{qbq.4} with $M$ replaced by $M_{S_i}= S_i\times [0,1)$, where  $\cG_i$ is a  $\k,\Qb-q$ nonnegative  index family, except at $H^{\Qb-q}_{jj,0}$ for $H_j<H_i$, where instead  
$$
     \cG_i|_{H^{\Qb}_{jj,0}}=\bd_j+q+\bbN,
$$
and such that
\begin{equation}
\cG_i|_{\ff_{j,+}^{\Qb-q}}=\bbN_0, \quad  \cG_i|_{H^{\Qb-q}_{00,+}}= \cG_i|_{H^{\Qb-q}_{kj,+}}= \cG_i|_{H_{j0,+}^{\Qb-q}}=\cG_i|_{H^{\Qb-q}_{0j,+}}= \cG_i|_{H_{j0,0}^{\Qb-q}}=\cG_i|_{H^{\Qb-q}_{0j,0}}= \emptyset, 
\label{ksm.33b}\end{equation}
for $H_k\le H_i$ and $H_j\le H_i$,
and with $\k,\Qb-q$ positive multiweight $\mathfrak{g}_i$ except at $H^{\Qb-q}_{jj,0}$ for $H_j<H_i$, where instead  
$$
     \mathfrak{g}_i(H^{\Qb-q}_{jj,0})>\bd_j+q,
$$
and   such that
\begin{equation}
\mathfrak{g}_i(\ff_{j,+}^{\Qb-q})= \mathfrak{g}_i(H^{\Qb-q}_{00,+})=  \mathfrak{g}_i(H^{\Qb-q}_{kj,+})= \mathfrak{g}_i(H_{j0,+}^{\Qb-q})=\mathfrak{g}_i(H^{\Qb-q}_{0j,+})= \infty, 
\label{ksm.33c}\end{equation}
and
\begin{equation}
\mathfrak{g}_i(H^{\Qb-q}_{j0,0})=\mu_R+q+\frac12, \quad \mathfrak{g}_i(H^{\Qb-q}_{0j,0})= h_j+q+\frac12 +\mu_R
\label{ksm.33cc}\end{equation}
for $H_{j}\le H_i$ and $H_k\le H_i$. Furthermore, for $H_{j}< H_i$, $G_{\cC_i,\sus}$ and $Q_{\cC_i,\sus}$ have the same top order terms at $\ff_{i,+}^{\Qb-q}$, $\ff^{\Qb}_{j,+}$, $\ff^{\Qb-q}_{j,0}$,  $H^{\Qb-q}_{jj,0}$ and  $H^{\Qb-q}_{00,0}$.

\label{ksm.33}\end{lemma}
\begin{proof}
 This lemma is a suspended version of Lemma~\ref{qfble.33}.  As such, it can be proved using a suspended version of the proof of Lemma~\ref{qfble.33}.  First, the parametrix $u_i^{-\frac12}Q_{\cC_i,\sus}u_i^{-\frac12}$ shows that the models at $\ff^{\Qb-q}_{i,+}$, $\ff^{\Qb-q}_{j,+}$, $\ff^{\Qb-q}_{j,0}$, $\ff^{\Qb-q}_{0,0}$ and $H^{\Qb-q}_{00,0}$ for $H_j<H_i$ are invertible.  In fact, Assumption~\ref{ksm.15} ensures that $u_i^{\frac12}(D_{\cC_i,\sus})u_i^{\frac12}$ is fully elliptic at $\ff^{\Qb-q}_{j,+}$ for $H_j\le H_i$.  To invert $H^{\Qb-q}_{00,0}$ and at $\ff_{j,0}^{\Qb-q}$ for $H_j<H_i$, we can again rely on Assumption~\ref{do.26} and applied a suspended version of Lemma~\ref{do.39} to obtain the correct model inverse.  In this case, the inverse at $\ff^{\Qb-q}_{0,0}$ will corresponds to the principal symbol of the inverse at $H^{\Qb-q}_{00,0}$ ( after taking the Fourier transform).
Thus, we can replace $Q_{\cC_i,\sus}$ by $\widetilde{Q}_{\cC_i,\sus}$ such that $u_i^{-\frac12}\widetilde{Q}_{\cC_i,\sus}u^{-\frac12}_i\in  \Psi^{-1,\widetilde{\cQ}_i/\widetilde{\mathfrak{q}}_i}_{\k,\Qb-q}(H_{ii,0}^{\Qb-q};\ker D_{v,i})$ with
\begin{equation}
  (u_i^{\frac12}D_{\cC_i,\sus}u_i^{\frac12})(u_i^{-\frac12}\widetilde{Q}_{\cC_i,\sus}u_i^{-\frac12})= \widetilde{\Pi}_{h,i}- u_i^{\frac12}\widetilde{R}_{\cC_i,\sus}u_i^{\frac12}
\label{ksm.33d}\end{equation}
for some $u_i^{\frac12}\widetilde{R}_{\cC_i,\sus}u_i^{-\frac12}\in  \Psi^{-\infty,\widetilde{\mathfrak{r}}_i}_{\k,\Qb-q}(H_{ii,0}^{\Qb-q};\ker D_{v,i})$, where $\widetilde{\cQ}_i$ is a $\k,\Qb-q$ nonnegative index family and $\widetilde{\mathfrak{q}}$ is a $\k,\Qb-q$ positive multiweight, except at $H^{\Qb-q}_{jj,0}$ for $H_j<H_i$ where instead
$$
 \widetilde{\mathfrak{q}}(H^{\Qb-q}_{jj,0})>\bd_j+q
$$
 and where $\widetilde{\mathfrak{r}}_i$ is a $\k,\Qb-q$ positive multiweight.  Using the standard sandwich argument of \cite{MazzeoEdge}, we see  that
$$
u_i^{-\frac12}G_{\cC_i,\sus}u_i^{-\frac12}= u_i^{-\frac12}\widetilde{Q}_{\cC_i,\sus}u_i^{-\frac12}+ (u_i^{-\frac12}\widetilde{Q}^*_{\cC_i,\sus}u_i^{-\frac12})(u_i^{\frac12}\widetilde{R}_{\cC_i,\sus}u_i^{-\frac12})+ u_i^{-\frac12}\widetilde{R}_{\cC_i,\sus}^*G_{\cC_i}\widetilde{R}_{\cC_i,\sus}u_i^{-\frac12},
$$ 
or equivalently that
\begin{equation}
G_{\cC_i,\sus}= \widetilde{Q}_{\cC_i,\sus}+ \widetilde{Q}^*_{\cC_i,\sus}\widetilde{R}_{\cC_i,\sus}+ \widetilde{R}_{\cC_i,\sus}^*G_{\cC_i,\sus}\widetilde{R}_{\cC_i,\sus}.
\label{ksm.34}\end{equation}
Since the operators $u_i^{-\frac12}\widetilde{R}^*_{\cC_i,\sus}u_i^{\frac12}$ and $u_i^{\frac12}\widetilde{R}_{\cC_i,\sus}u_i^{-\frac12}$ are $\mathfrak{r}$ residual for some multiweight $\mathfrak{r}$ and that $u_i^{-\frac12}G_{\cC_i,\sus}u_i^{-\frac12}$ is bounded on $L^2$, we conclude that 
\begin{equation}
u_i^{-\frac12}G_{\cC_i,\sus}u_i^{-\frac12}\in \Psi^{-1,\cG_i/\mathfrak{g}_i}_{\k,\Qb-q,\cn}(H_{ii,0}^{\Qb-q};\ker D_{v,i})
\label{ksm.35}\end{equation}
for some $\k,\Qb-q$ index family $\cG_i$ and some $\k,\Qb-q$ multiweight $\mathfrak{g}_i$ as claimed, but possibly not satisfying \eqref{ksm.33b} and \eqref{ksm.33c}.  However, relying on the rapid decay of $\widetilde{R}_{\cC_i,\sus}$ and $\widetilde{R}_{\cC_i,\sus}^*$ at $\ff^{\Qb-q}_{j,+}$, $H^{\Qb-q}_{00,+}$ and $H^{\Qb-q}_{jk,+}$ for $H_j\le H_i$, and $H_k\le H_i$, we can use a boothstrap argument relying on \eqref{ksm.34} to conclude that the index family $\cG_i$ and the multiweight $\mathfrak{g}_i$ can be chosen as claimed at those faces.  Using \eqref{ksm.30} instead of \eqref{ksm.33d}, we obtain
\begin{equation}
G_{\cC_i,\sus}= Q_{\cC_i,\sus}+  Q^*_{\cC_i} R_{\cC_i,\sus}+  R_{\cC_i,\sus}^*G_{\cC_i,\sus} R_{\cC_i\sus},
\label{ksm.33e}\end{equation} 
which we can use to infer the claimed better decay at $H^{\Qb-q}_{j0,0}$ and $H^{\Qb-q}_{0j,0}$ in \eqref{ksm.33cc}.

\end{proof}

This lemma can be used as follows to prove Proposition~\ref{ksm.26}.
\begin{proof}[Proof of Proposition~\ref{ksm.26}]
Since the proposition holds for $i=0$, it suffices to show that it holds for $i\in\{1,\ldots,\ell\}$ if it holds for $i-1$.  To show this, we need to improve the parametrix $Q_{3,i-1}$ by removing the term $r_i$ of order $\bd_i+1+q$ of its error term $R_{3,i-1}$ at $H^{\Qb-q}_{ii,0}$.  As a first step, we can eliminate $\widetilde{\Pi}_{h,i}r_i$ by considering the term
$$
   q_i:= \k^{-1}G_{\cC_i,\sus}\widetilde{\Pi}_{h,i}r_i
$$
of order $\bd_i+q$ at $H^{\Qb-q}_{ii,0}$, that is, of order $-1$ in terms of the densities ${}^{\QAC-\Qb}\Omega$ and $\Omega_{\bvarrho\bbR^q}$, where $G_{\cC_i,\sus}$ is the operator of Lemma~\ref{ksm.33}.   By construction, 
$$
     (\k D_{\cC_i,\sus}q_i)= D_{\cC_i,\sus}G_{\cC_i,\sus}\widetilde{\Pi}_{h,i}r_i= \widetilde{\Pi}_{h,i}r_i.
$$
Adding $q_i$ as a term of order $\bd_i+q$ at $H^{\Qb-q}_{ii,0}$ to $Q_{3,i-1}$ then gives a parametrix $Q_{3,i}'$ with error term almost as claimed, namely with term $r_i'$ of order $\bd_i+1+q$ at $H^{\Qb-q}_{ii,0}$ such that $\widetilde{\Pi}_{h,i}r_i'=0$.  Hence, this suggests to consider the term 
$$
      q_i':=D_{v,i}^{-1}r_i',
$$ 
which is well-defined by Corollary~\ref{hd.4} with $\delta=-\frac12$ and the analog of \eqref{qfble.12}  applied to the members of the vertical family $D_{v,i}$.  Adding it as a term of order $\bd_i+1+q$ to $Q_{3,i}'$ yields a parametrix $Q_{3,i}$ with error term as claimed.

\end{proof}

\begin{remark}
The completion of this proof by induction gives at the same time a proof for each $i$ of Lemmas~\ref{ksm.31}, \ref{ksm.32} and \ref{ksm.33}.
\label{ksm.36}\end{remark}

Using Proposition~\ref{ksm.26} for $i=\ell$ yields the following improved parametrix.
\begin{proposition}
There exists $Q_4\in\Psi^{-1,\cQ_4/\mathfrak{q}_4}_{\k,\phi-q,\cn}(M;E)$ and $R_4\in \dot{\Psi}^{-\infty}_{\k,\phi-q}(M;E)$  such that 
$$
   D_{\k,\sus}Q_4= \Id-R_4
$$
with $\cQ_4$ and $\mathfrak{q}_4$ satisfying the same properties as $\cQ_2$ and $\mathfrak{q}_2$, where $\dot{\Psi}^{-\infty}_{\k,\phi-q}(M;E)$ is the space introduced below \eqref{ksm.10}.
\label{ksm.37}\end{proposition}
\begin{proof}
Let us start with the parametrix $Q_{3,\ell}$ of Proposition~\ref{ksm.26} for $i=\ell$.  By the decay rates of the error term $R_{3,\ell}$ and the composition result of Theorem~\ref{cks.1}, there is $\delta>0$ such that for each boundary hypersurface $H$ of $M^2_{\k,\phi-q}$, there is $\mu>0$ such that 
$$
     R_{3,\ell}^{k}= \mathcal{O}(x^{\nu+k\delta}_H) \quad \forall \nu<\mu, \; \forall k\in\bbN_0,
$$
where $x_H$ is a boundary defining function for $H$.  This means that we can take an asymptotic sum
$$
      S\sim \sum_{k=1}^{\infty} R^k_{3,\ell}
$$
with $S\in \Psi^{-\infty,\mathcal{S}/\mathfrak{s}}_{\k,\Qb-q,\cn}(M;E)$, where $\cS$ and $\mathfrak{s}$ are satisfying the same properties as $\cR_{3,\ell}$ and $\mathfrak{r}_{3,\ell}$.  Essentially by definition of $S$, notice that 
$$
   R_4:=\Id-(\Id-R_{3,\ell})(\Id+S)\in \dot{\Psi}^{-\infty}_{\k,\phi-q}(M;E),
$$
so 
$$
     Q_4: Q_{3,\ell}(\Id+S)
$$
is the desired parametrix, since 
$$
D_{\k,\sus}Q_4= D_{\k,\sus}Q_{3,\ell}(\Id+S)= (\Id-R_{3,\ell})(\Id+S)=\Id-R_4.
$$

\end{proof}

Finally, we can give the following pseudodifferential characterization of the inverse $D_{\k,\sus}$.
\begin{theorem}
There exists $G_{\k,\sus}\in \Psi^{-1,\cG/\mathfrak{g}}_{\k,\phi-q,\cn}(M;E)$ such that 
$$
   D_{\k,\sus}G_{\k,\sus}= G_{\k,\sus}D_{\k,\sus}=\Id,
$$
where $\cG$ is an index family given by the empty set at $H^{\sus}_{ij,+}$ for all $i\ge 0$ and $j\ge 0$ and at $H_{ij,0}^{\sus}$ for $i\ne j$, and elsewhere given by
$$
   \cG|_{\ff^{\sus}_{i,+}}= \cG|_{\ff^{\sus}_{i,0}}=\bbN_0 \quad \mbox{and} \quad \cG|_{H^{\sus}_{ii,0}}=h_i+q-1+\bbN_0,
$$
and where $\mathfrak{g}$ is a multiweight such that 
$$
  \mathfrak{g}(H^{\sus}_{ij,+})=\infty \; \forall i,j, \quad \mathfrak{g}(H^{\sus}_{ij,0})>h_j+q, \; \mbox{for} \; i>0, j>0 \; \mbox{with} \; i\ne j, \quad \mathfrak{g}(H^{\sus}_{ii,0})>h_i -1+q \; \forall i\ge 0,
$$
and 
$$
\mathfrak{g}(\ff^{\sus}_{i,+})=\infty, \quad \mathfrak{g}(\ff^{\sus}_{i,0})>0, \quad  \mathfrak{g}(\ff^{\sus}_{0,0})>0, \quad \mathfrak{g}(H^{\sus}_{i0,0})=\nu_R, \quad \mathfrak{g}(H^{\sus}_{0i,0})=h_i+q+\nu_R
$$
for $i>0$.
\label{ksm.38}\end{theorem}
\begin{proof}
The proof is the same as the one of Theorem~\ref{qfble.38}, except that we use the parametrix $Q_4$ of Proposition~\ref{ksm.37} instead of the parametrix of Proposition~\ref{qfble.37} and the composition result of Theorem~\ref{cks.1} instead of the one of Theorem~\ref{ckqfb.10}.
\end{proof}

\section{Results for Hodge-deRham $\QFB$ operators} \label{HdR.0}

The parametrix construction of \S~\ref{do.0} crucially relies on 5 assumptions, namely on Assumptions~\ref{do.1}, \ref{su.7}, \ref{su.2}, \ref{do.46} and \ref{do.26}.  Among those, Assumptions~\ref{do.1} and \ref{su.2} are mild geometric assumptions relatively easy to check or to enforce.  Assumption~\ref{do.26} is not as straightforward, but can be dealt with efficiently thanks to the recent work of Albin and Gell-Redman \cite{AG}.  For the remaining two assumptions, namely Assumptions~\ref{su.7} and \ref{do.46}, they can be seen to hold in depth 2 for important classes of examples thanks to \cite{HHM2004} and \cite{KR0}.  However, to check those conditions in higher depth, we need to use an inductive argument involving the parametrix construction of \S~\ref{do.1}, but also those of \S~\ref{qfble.0} for the low energy limit of the resolvent of a $\QFB$ Dirac operator, of \S~\ref{ift.0} for the inverse of a non-fully elliptic suspended Dirac $\QFB$ operator and of \S~\ref{sle.0} for the resolvent of a suspended $\QFB$ Dirac operator in the low energy limit.  Each of these parametrix constructions relies on assumptions which in general can only be checked through an inductive argument involving the depth of the underlying manifold with fibered corners.  The relations between these various parametrix constructions are summarized in the following diagram.
\begin{equation}
\xymatrix{
\mbox{depth} \; k & \QFB \ar[dddd]^-{Ass.~\ref{su.7}}\ar[rrrr]^-{Cor.~\ref{id.1},\ref{hd.4}} & & & & \k,\QFB \ar[dd]_-{\mbox{Inverse Fourier transform}}  \ar[rrrr] & & & & \k,\QFB, \sus \ar[dldllddl]^-{Ass.~\ref{qfble.5}}\ar[dddd]^-{Ass.~\ref{ksm.15}} \\
 & & & & &  & & &  & \\
 &  &  & & & \QFB, \sus \ar[ddllll]^-{Ass.~\ref{do.46}}\ar[uurrrr]^-{\eqref{ksm.8b}, \eqref{ksm.14}}  & & & &  \\
  & & & & & &  & & & \\
 \mbox{depth} \; k+1 & \QFB \ar[rrrr]_-{Cor.~\ref{id.1},\ref{hd.4}} & & & & \k,\QFB  \ar[rrrr] & & & & \k,\QFB, \sus 
}
\label{HdR.1}\end{equation}  

Focusing on the Hodge-deRham operator, we will now implement such an inductive argument to obtain results free of inductive hypotheses.  To that end, it will be convenient in Assumption~\ref{do.26} to take $\delta=-\frac12$, in which case
\begin{equation}
    \mu_L=\mu_R+1>\frac32
\label{HdR.2}\end{equation}
as observed already in \eqref{qfble.1}.  In fact, to enforce Assumptions~\ref{su.7} and \ref{do.46} at each inductive step, we will require that $\mu_R>1$, or else that $\mu_R>\frac12$ and $\widetilde{\mu}_R:=\widetilde{\mu_L}-1>1$ in Corollary~\ref{id.1}.  Thus, let $g_{\QFB}$ be a $\QFB$ metric on a manifold with fibered corners $M$ satisfying Assumption~\ref{do.1}. Let $\eth_{\QFB}$ be the corresponding Hodge-deRham operator.  In this particular setting, we will replace Assumptions~\ref{su.7} and \ref{do.26} by the following simpler assumptions.  

\begin{assumption}
For $H_i$ a boundary hypersurface of $M$, the vertical family $\eth_{v,i}$ is such that its fiberwise $L^2$ kernels in $L^2_{\QFB}(H_i/S_i;E)$ form a finite rank vector bundle $\ker \eth_{v,i}$ over $S_i$.
\label{HdR.3}\end{assumption}
\begin{assumption}
For $H_i$ a boundary hypersurface of $M$, we assume that the flat vector bundle $\cH^*_{L^2}(H_i/S_i)\to S_i$ is such that 
\begin{equation}
      \left| q-\frac{\bd_i}2 \right|\le 1  \quad \Longrightarrow \quad \ker_{L^2_{w}}\mathfrak{d}_{S_i} \; \mbox{is trivial in degree $q$},
\label{HdR.4a}\end{equation} 
where $\mathfrak{d}_{S_i}$ is the wedge Hodge-deRham operator occurring in \eqref{do.17}.  Similarly, for each $H_j<H_i$ and for each fiber $F_{ji}$ of $\phi_{ji}: \pa_jS_i\to S_j$, the associated wedge Hodge-deRham operator $\mathfrak{d}_{F_{ji}}$ acting on forms taking values in $\cH^*_{L^2}(H_i/S_i)$ is such that 
\begin{equation}
      \left| q-\frac{\dim F_{ji}}2 \right|\le 1  \quad \Longrightarrow \quad \ker_{L^2_{w}}\mathfrak{d}_{F_{ji}} \; \mbox{is trivial in degree $q$}.
\label{HdR.4b}\end{equation}
If $H_i$ is maximal and $H_j$ submaximal with $H_j<H_i$, \eqref{HdR.4b} can in fact be replaced by the weaker assumption
\begin{equation}
      \left| q-\frac{\dim F_{ji}}2 \right|< 1  \quad \Longrightarrow \quad \ker_{L^2_{w}}\mathfrak{d}_{F_{ji}} \; \mbox{is trivial in degree $q$}.
\label{HdR.4d}\end{equation}
\label{HdR.4}\end{assumption}
\begin{remark}
This assumption can be seen as an enlarged Witt condition on the flat vector bundle 
$$
\cH^*_{L^2}(H_i/S_i)\to S_i.
$$
\label{HdR.4c}\end{remark}
To be able to apply Corollary~\ref{id.1} in each inductive step, we will also need to make the following assumption.  For $H_j$ a boundary hypersurface, let $\mathfrak{P}_j$ be the subset of degrees where the fibers of $\phi_j: H_j\to S_j$ have a non-trivial reduced $L^2$-cohomology.  Then for $H_i>H_j$,
let 
$$
\cP_{ji}: \CI(F_{ji}; \Lambda^*({}^wT^*F_{ji})\otimes \cH^*_{L^{2}}(H_j/S_j))\to \CI(F_{ji}; \Lambda^*({}^wT^*F_{ji})\otimes \cH^*_{L^{2}}(H_j/S_j)) 
$$ 
be the projections on sections of total degrees $q$ (that is, the sum of the horizontal degree in the factor $\Lambda^*({}^wT^*F_{ji})$ and the vertical degree in the factor $\cH^*_{L^2}(H_i/S_i)$ is equal to $q$)  such that $q$ or $q+1$ is in $\mathfrak{P}_j$.
\begin{assumption}
 For $H_i>H_j$, 
\begin{equation}
      \left| q-\frac{\dim F_{ji}}2 \right|\le \frac32  \quad \Longrightarrow \quad  \cP_{ij} (\ker_{L^2_{w}} \mathfrak{d}_{F_{ji}})_q \quad \mbox{is trivial,}
\label{HdR.5a}\end{equation}
where $(\ker_{L^2_{w}} \mathfrak{d}_{F_{ji}})_q$ is the $L^2$ kernel of $ \mathfrak{d}_{F_{ji}}$ restricted to forms with horizontal degree $q$.
\label{HdR.5}\end{assumption}
We can now formulate the main result of this section.
\begin{theorem}
Let $g_{\QFB}$ be a $\QFB$ metric on a $\QFB$ manifold $M$ for which Assumption~\ref{do.1} holds.  Suppose that Assumptions~\ref{HdR.3}, \ref{HdR.4} and \ref{HdR.5} hold for the corresponding Hodge-deRham operator $\eth_{\QFB}$.  Then there exists a $\QFB$ metric $\widetilde{g}_{\QFB}$ on $M$ such that Theorem~\ref{do.28} holds with $\delta=-\frac12$ for $\eth_{\QFB}$.  Moreover, by Corollary~\ref{id.1},  Corollary~\ref{decay.1} in the introduction holds in this case. 
\label{HdR.6}\end{theorem}
\begin{proof}
We need to check that the five assumptions of Theorem~\ref{do.28} hold.  By hypothesis, Assumptions~\ref{do.1} holds, and since $\eth_{\QFB}$ is a Hodge-deRham operator, this implies Assumption~\ref{su.2}.  

On the other hand, the first part of Assumption~\ref{do.26} requires that the wedge operator $cD_{D_i}-\frac12$ be essentially self-adjoint.  By Assumption~\ref{HdR.4}, Remark~\ref{e.3h} and Corollary~\ref{e.11}, this can be achieved by suitably modifying each of the model metrics $g_{S_i}$.  To enforce the second part of Assumption~\ref{do.26}, namely invertibility of the indicial family $cI(D_{b,i},\lambda)$ for certain values of $\lambda$, it suffices then to modify the $\QFB$ metric in such a way that the model metric $g_{S_i}$ is replaced by $\epsilon g_{S_i}$ for some $\epsilon>0$ sufficiently small.  Indeed, scaling $g_{S_i}$ in such a way, we can make the positive eigenvalues of $\delta^{S_i}d^{S_i}$ and $d^{S_i}\delta^{S_i}$ as large as we want, so combined with Lemma~\ref{do.27d} and Assumption~\ref{HdR.4}, this means that we can require the indicial family $cI(D_{b,i},\lambda)$ to be invertible for a range of $\Re \lambda$ as large as claimed in Assumption~\ref{do.26}.

Now, proceeding by induction on the depth of $M$, we can suppose that Theorem~\ref{HdR.6} holds on $\QFB$ manifolds of lower depth.  Hence, using Assumptions~\ref{HdR.4} and \ref{HdR.5} together with Lemma~\ref{do.27d} (possibly still modifying the $\QFB$ metric to scale away as above some of the indicial roots), we can apply Corollary~\ref{id.1} to each member of the vertical family $\eth_{v,i}$, which together with Assumption~\ref{HdR.3} shows that Assumption~\ref{su.7} holds for each vertical family $\eth_{v,i}$.  When $H_i$ is maximal and $H_j$ is submaximal with $H_j<H_i$, we use \eqref{HdR.4d} instead of \eqref{HdR.4b} to deduce Assumption~\ref{su.7} since we can appeal to \cite[Corollary~3.16]{KR0} and Lemma~\ref{do.27d}.  Hence, proceeding by recursion on the depth of $M$ as in the diagram of \eqref{HdR.1}, we can show that Theorems~\ref{qfble.38}, \ref{ift.15} and Theorem~\ref{ksm.38} hold with $\widetilde{\mu}_R>1$, that is, with 
$$
\nu_R=\min\{\mu_R,\widetilde{\mu}_R-1\}>0
$$
for each member of the vertical family $\eth_{v,i}$.  In particular, Theorem~\ref{ift.15} with $\nu_R>0$ implies that Assumption~\ref{do.46} holds for each vertical family.  This shows that the five assumptions of Theorem~\ref{do.28} hold, from which the result follows. 
\end{proof}

As in \cite{KR0}, this can be used to obtain a pseudodifferential characterization of the low energy limit of the resolvent of the Hodge Laplacian.  
\begin{corollary}
Let $\eth_{\QFB}$ be the Hodge-deRham operator of the $\QFB$ metric $\widetilde{g}_{\QFB}$ of Theorem~\ref{HdR.6} and assume that $\bd_i=\dim S_i>1$ for each boundary hypersurface $H_i$ of $M$.    Then there exists $G^2_{\k,\QFB}\in \Psi^{-2,\cG/\mathfrak{g}}_{\k,\QFB,\cn}(M;E)$ such that 
$$
       (D^2_{\QFB}+\k^2)G^2_{\k,\QFB}= G^{2}_{\k,\QFB}(D^2_{\QFB}+\k^2)=\Id,
$$
where $\cG$ is an index family given by the empty set, except at $H_{00,0}$ and $\ff_{i,\nu}$ for $\nu\in\{0,+\}$ and $H_i$ a boundary hypersurface, where
$$
       \cG(H_{00,0})= \bbN_0-2, \quad \cG(\ff_{i,\nu})=\bbN_0,
$$
and where $\mathfrak{g}$ is a multiweight such that 
$$
               \mathfrak{g}(H_{ij,+})=\infty  \quad \forall (i,j)\ne (0,0) \quad \mbox{and} \quad \mathfrak{g}(H_{ij,0})>\bd_j, \quad \mbox{for} \; i\ne j, \; i\ne 0, j\ne 0,
$$
while for $i>0$, 
$$
\mathfrak{g}(\ff_{i,+})=\infty, \quad \mathfrak{g}(H_{ii,0})=\bd_i-1, \quad \mathfrak{g}(\ff_{i,0})>0, \quad \mathfrak{g}(H_{i0,0})=\nu_R-1, \quad \mathfrak{g}(H_{0i,0})=\bd_i+\nu_R, \quad \mathfrak{g}(H_{00,0})>-1.
$$
\label{HdR.7}\end{corollary}
\begin{proof}
We can start as in the proof of \cite[Corollary~8.13]{KR0} and consider the operator 
$$
    \widetilde{\eth}_{\QFB}:= \left(\begin{matrix} \eth_{\QFB} & 0 \\ 0 & -\eth_{\QFB}  \end{matrix} \right) 
$$
acting on sections of $E\oplus E$ together with the self-adjoint operator
$$
  \widetilde{\gamma}:= \left( \begin{matrix} 0 & -\sqrt{-1}  \\ \sqrt{-1} & 0 \end{matrix} \right).
$$
In this case, \eqref{qfble.2} holds for $\widetilde{\eth}_{\QFB}$ and $\widetilde{\gamma}$.  Let $\widetilde{D}_{\QFB}:= x^{-\w}\widetilde{\eth}_{\QFB} x^{\w}$ be the corresponding conjugated operator as in \eqref{do.2}.  Then by Theorem~\ref{ksm.38} applied to $N_{i,+}(\widetilde{D}_{\QFB}+ \widetilde{\gamma}\k)$, Assumption~\ref{qfble.5} holds for $\widetilde{D}_{\QFB}$, so that Theorem~\ref{qfble.38} gives an inverse $\widetilde{G}_{\k,\QFB}$.  Using the composition result of Theorem~\ref{ckqfb.10} to take the square of $\k G_{\k,\QFB}$, so that
$$
      \widetilde{G}^2_{\k,\QFB}= \k^{-2}(\k\widetilde{G}_{\k,\QFB})^2,
$$
we obtain an inverse for $(\widetilde{D}_{\QFB}^2+ \k^2)$.  If $P_1: E\oplus E\to E$ is the projection on the first factor, we get an inverse of the claimed form by taking 
$$
         G^2_{\k,\QFB}:= P_1\widetilde{G}^2_{\k,\QFB}P_1.
$$
\end{proof}

A simple way to ensure that Assumptions~\ref{HdR.3}, \ref{HdR.4} and \ref{HdR.5} hold is as follows.  
\begin{corollary}
Let $g_{\QFB}$ be a metric on a $\QFB$ manifold $M$ satisfying Assumption~\ref{do.1}.  Whenever the fibers of $\phi_i:H_i\to S_i$ have non-trivial reduced $L^2$ cohomology with respect to the induced $\QFB$ metrics, suppose that the stratified space $\widehat{S}_i$ corresponding to $S_i$ is a quotient of $\bbS^{\bd_i}$ by the action of a finite subgroup of the orthogonal group $O(\bd_i+1)$ (this holds automatically when $g_{\QFB}$ is a $\QALE$ metric), and that $\dim S_i\ge 3$, as well as $\dim F_{ji}=\dim S_i-\dim S_j-1>3$ for each boundary hypersurface $H_j<H_i$.  Then Theorem~\ref{HdR.6} holds for $g_{\QFB}$.  Moreover, if $\dim S_i>3$ whenever the fibers of $\phi_i: H_i\to S_i$ have non-trivial reduced $L^2$ cohomology, then we can also assume that Corollary~\ref{id.1} holds with $\widetilde{\mu}_L=\mu_L=\mu_R+1\ge \frac{\bd_i+1}2>2$ for the $\QFB$ metric $\widetilde{g}_{\QFB}$ of Theorem~\ref{HdR.6}. 
\label{HdR.8}\end{corollary}
\begin{proof}
Notice first that Assumption~\ref{HdR.3} together with the hypotheses of Corollary~\ref{HdR.8} ensures that Assumptions~\ref{HdR.4} and \ref{HdR.5} hold.  Indeed, the flat vector bundle $\ker \eth_{v,i}$ lifts to be trivial on the universal cover of $S_i$, so $\ker_{L^2_w}\mathfrak{d}_{S_i}$ must be trivial, except possibly in degrees $0$ and $\dim S_i$.  Similarly, $\ker_{L^2_{w}}\mathfrak{d}_{F_{ji}}$ must be trivial, except possibly in degrees $0$ and $\dim F_{ji}$.  This shows that Assumptions~\ref{HdR.4} and \ref{HdR.5} hold when $\dim S_i\ge 3$ and $\dim F_{ji}>3$ for each $i$ and $j$.  

To prove the corollary, it suffices then to check that Assumption~\ref{HdR.3} holds.  To do so, we can proceed by induction on the depth of $M$ and assume that Corollary~\ref{HdR.8} holds for $\QFB$ manifolds of lower depth.  In  particular, applying it to the members of the vertical family $\eth_{v,i}$ shows that they have finite dimensional $L^2$ kernel.  Since these spaces are canonically identified with reduced $L^2$-cohomology, their dimensions only depends on the quasi-isometric class of the corresponding $\QFB$ metrics.  Hence, by the Ehresmann lemma of \cite[Corollary~A.6]{KR3}, this dimension is locally constant on $S_i$, showing that $\ker\eth_{v,i}$ is indeed a finite rank a vector bundle.  This shows that Assumption~\ref{HdR.3} holds for $g_{\QFB}$.  
\end{proof}

An important example where Corollary~\ref{HdR.8} holds is in the setting provided by the hyperK\"ahler metric on the moduli space of $\SU(2)$ monopoles of charge $3$ on $\bbR^3$, see \cite{KR2} for further details. At the cost of imposing restrictions in the degrees where the reduced $L^2$-cohomology of the fibers of $\phi_i: H_i\to S_i$ lies, we can relax the hypotheses of Corollary~\ref{HdR.8} as follows.
\begin{corollary}
Let $g_{\QFB}$ be a $\QFB$ metric on a $\QFB$ manifold $M$ satisfying Assumption~\ref{do.1}.   Suppose that, except possibly in middle degree, the fibers of $\phi_i:H_i\to S_i$ have trivial reduced $L^2$ cohomology with respect to the induced $\QFB$ metrics.   When it is non-trivial in middle degree, suppose that the stratified space $\widehat{S}_i$ corresponding to $S_i$ is a quotient of $\bbS^{\bd_i}$ by the action of a finite subgroup of the orthogonal group $O(\bd_i+1)$ (this holds automatically when $g_{\QFB}$ is a $\QALE$ metric), and that $\dim S_i\ge 3$, as well as $\dim F_{ji}=\dim S_i-\dim S_j-1\ge 3$ for each boundary hypersurface $H_j<H_i$.  Then Theorem~\ref{HdR.6} holds.  Moreover, if, except in middle degree, $g_{\QFB}$ has trivial reduced $L^2$-cohomology, then Corollary~\ref{id.1} holds with $\widetilde{\mu}_L=\mu_L=\mu_R+1>2$ for the $\QFB$ metric $\widetilde{g}_{\QFB}$ of Theorem~\ref{HdR.6}.
\label{HdR.9}\end{corollary}
\begin{proof}
The proof is the same as the one of Corollary~\ref{HdR.8}, except that to check that Assumption~\ref{HdR.5} holds, we will use instead the hypotheses on reduced $L^2$ cohomology.  Indeed, thanks to these hypotheses, $\ker_{L^2_w}(\cP_{ji}\mathfrak{d}_{F_{ji}}\cP_{ji})$ is possibly non-trivial only in degrees $\frac{\dim F_{ji}\pm 1}2$.  But since $\ker_{L^2_w} \mathfrak{d}_{F_{ji}}$ is possibly non-trivial only in degrees $0$ and $\dim F_{ji}$ and since $\dim F_{ji}\ge 3$, this shows that  $\ker_{L^2_w}(\cP_{ji}\mathfrak{d}_{F_{ji}}\cP_{ji})$ is trivial in all degrees, showing that Assumption~\ref{HdR.5} holds in this case.
\end{proof}
An important example where Corollary~\ref{HdR.9} holds is in the setting provided by the hyperK\"ahler metric of Nakajima on the Hilbert scheme of $n$ points on $\bbC^2$, see \cite{KR2} for all the details.

\printindex
\bibliography{QFBop}

\def\cprime{$'$}
\providecommand{\bysame}{\leavevmode\hbox to3em{\hrulefill}\thinspace}
\providecommand{\MR}{\relax\ifhmode\unskip\space\fi MR }
\providecommand{\MRhref}[2]{%
  \href{http://www.ams.org/mathscinet-getitem?mr=#1}{#2}
}
\providecommand{\href}[2]{#2}
\begin{thebibliography}{10}

\bibitem{AG}
P.~Albin and J.~Gell-Redman, \emph{The index formula for families of {D}irac
  type operators on pseudomanifolds}, J. Differential Geom. \textbf{125}
  (2023), no.~2, 207--343. \MR{4649388}

\bibitem{ALMP2012}
P.~Albin, \'{E}. Leichtnam, R.~Mazzeo, and P.~Piazza, \emph{The signature
  package on {W}itt spaces}, Ann. Sci. \'{E}c. Norm. Sup\'{e}r. (4) \textbf{45}
  (2012), no.~2, 241--310. \MR{2977620}

\bibitem{AM2011}
P.~Albin and R.~Melrose, \emph{Resolution of smooth group actions}, Spectral
  theory and geometric analysis, Contemp. Math., vol. 535, Amer. Math. Soc.,
  Providence, RI, 2011, pp.~1--26. \MR{2560748}

\bibitem{ARS1}
P.~Albin, F.~Rochon, and D.~Sher, \emph{Resolvent, heat kernel, and torsion
  under degeneration to fibered cusps}, Mem. Amer. Math. Soc. \textbf{269}
  (2021), no.~1314.

\bibitem{ARS3}
\bysame, \emph{A {C}heeger--{M}\"{u}ller theorem for manifolds with wedge
  singularities}, Anal. PDE \textbf{15} (2022), no.~3, 567--642. \MR{4442836}

\bibitem{ALN04}
B.~Ammann, R.~Lauter, and V.~Nistor, \emph{On the geometry of {R}iemannian
  manifolds with a {L}ie structure at infinity}, Internat. J. Math. (2004),
  161--193.

\bibitem{BGV}
N.~Berline, E.~Getzler, and M.~Vergne, \emph{Heat kernels and {D}irac
  operators}, Springer-Verlag, Berlin, 2004.

\bibitem{Biquard-Delcroix}
Olivier Biquard and Thibaut Delcroix, \emph{Ricci flat {K}\"{a}hler metrics on
  rank two complex symmetric spaces}, J. \'{E}c. polytech. Math. \textbf{6}
  (2019), 163--201. \MR{3932737}

\bibitem{Biquard-Gauduchon}
Olivier Biquard and Paul Gauduchon, \emph{La m\'{e}trique
  hyperk\"{a}hl\'{e}rienne des orbites coadjointes de type sym\'{e}trique d'un
  groupe de {L}ie complexe semi-simple}, C. R. Acad. Sci. Paris S\'{e}r. I
  Math. \textbf{323} (1996), no.~12, 1259--1264. \MR{1428547}

\bibitem{Chou}
A.W. Chou, \emph{The {D}irac operator on spaces with conical singularities and
  positive scalar curvatures}, Trans. Amer. Math. Soc. \textbf{289} (1985),
  no.~1, 1--40.

\bibitem{CDR}
Ronan Conlon, Anda Degeratu, and Fr\'{e}d\'{e}ric Rochon,
  \emph{Quasi-asymptotically conical {C}alabi-{Y}au manifolds}, Geom. Topol.
  \textbf{23} (2019), no.~1, 29--100, With an appendix by Conlon, Rochon and
  Lars Sektnan. \MR{3921316}

\bibitem{CR}
Ronan~J. Conlon and Fr\'{e}d\'{e}ric Rochon, \emph{New examples of complete
  {C}alabi-{Y}au metrics on {$\Bbb C^n$} for {$n \geq 3$}}, Ann. Sci. \'{E}c.
  Norm. Sup\'{e}r. (4) \textbf{54} (2021), no.~2, 259--303. \MR{4258163}

\bibitem{DLR}
C.~Debord, J.-M. Lescure, and F.~Rochon, \emph{Pseudodifferential operators on
  manifolds with fibred corners}, Ann. Inst. Fourier \textbf{65} (2015), no.~4,
  1799--1880.

\bibitem{DM2018}
Anda Degeratu and Rafe Mazzeo, \emph{Fredholm theory for elliptic operators on
  quasi-asymptotically conical spaces}, Proc. Lond. Math. Soc. (3) \textbf{116}
  (2018), no.~5, 1112--1160. \MR{3805053}

\bibitem{Ehresmann}
Charles Ehresmann, \emph{Les connexions infinit\'{e}simales dans un espace
  fibr\'{e} diff\'{e}rentiable}, S\'{e}minaire {B}ourbaki, {V}ol. 1, Soc. Math.
  France, Paris, 1995, pp.~Exp. No. 24, 153--168. \MR{1605161}

\bibitem{EMM}
C.~L. Epstein, R.~B. Melrose, and G.~A. Mendoza, \emph{Resolvent of the
  {L}aplacian on strictly pseudoconvex domains}, Acta Math. \textbf{167}
  (1991), no.~1-2, 1--106. \MR{1111745 (92i:32016)}

\bibitem{FKS}
K.~Fritzsch, C.~Kottke, and M.~Singer, \emph{Monopoles and the {S}en
  conjecture}, arXiv:1811.00601, 2018.

\bibitem{Froese-Herbst}
Richard Froese and Ira Herbst, \emph{Exponential lower bounds to solutions of
  the {S}chr\"{o}dinger equation: lower bounds for the spherical average},
  Comm. Math. Phys. \textbf{92} (1983), no.~1, 71--80. \MR{728448}

\bibitem{GKM}
J.B. Gil, T.~Krainer, and G.A. Mendoza, \emph{On the closure of elliptic wedge
  operators}, J. Geom. Anal. \textbf{23} (2013), no.~4, 2035--2062.
  \MR{3107690}

\bibitem{GSC2005}
Alexander Grigor'yan and Laurent Saloff-Coste, \emph{Stability results for
  {H}arnack inequalities}, Ann. Inst. Fourier (Grenoble) \textbf{55} (2005),
  no.~3, 825--890. \MR{2149405}

\bibitem{GH1}
Colin Guillarmou and Andrew Hassell, \emph{The resolvent at low energy and
  {R}iesz transform for {S}chr{\"o}dinger operators on asymptotically conic
  manifolds, {P}art {I}}, Math. Ann. \textbf{341} (2008), no.~4, 859--896.

\bibitem{GH2}
\bysame, \emph{The resolvent at low energy and {R}iesz transform for
  {S}chr{\"o}dinger operators on asymptotically conic manifolds, {P}art
  {I}{I}}, Ann. Inst. Fourier \textbf{59} (2009), no.~2, 1553--1610.

\bibitem{GS}
Colin Guillarmou and David Sher, \emph{Low energy resolvent for the {H}odge
  {L}aplacian: {A}pplications to {R}iesz transform, {S}obolev estimates and
  analytic torsion}, Int. Math. Res. Not (2014), 1--75.

\bibitem{hmm}
Andrew Hassell, Rafe Mazzeo, and Richard~B. Melrose, \emph{Analytic surgery and
  the accumulation of eigenvalues}, Comm. Anal. Geom. \textbf{3} (1995),
  no.~1-2, 115--222.

\bibitem{HHM2004}
T.~Hausel, E.~Hunsicker, and R.~Mazzeo, \emph{Hodge cohomology of gravitational
  instantons}, Duke Math. J. \textbf{122} (2004), no.~3, 485--548.

\bibitem{Naber}
H.-J. Hein and A.~Naber, \emph{Isolated {E}instein singularities with singular
  tangent cones}, in preparation.

\bibitem{Hormander3}
L.~H{\"o}rmander, \emph{The analysis of linear partial differential operators.
  vol. 3}, Springer-Verlag, Berlin, 1985.

\bibitem{Joyce}
D.~D. Joyce, \emph{Compact manifolds with special holonomy}, Oxford
  Mathematical Monographs, Oxford University Press, Oxford, 2000. \MR{1787733
  (2001k:53093)}

\bibitem{Kottke}
C.~Kottke, \emph{A {C}allias-type index theorem with degenerate potentials},
  Comm. Partial Differential Equations \textbf{40} (2015), no.~2, 219--264.
  \MR{3277926}

\bibitem{KM}
C.~Kottke and R.B. Melrose, \emph{Generalized blow-up of corners and fiber
  products}, Trans. Amer. Math. Soc. \textbf{367} (2015), no.~1, 651--705.

\bibitem{KR0}
C.~Kottke and F.~Rochon, \emph{Low {E}nergy {L}imit for the {R}esolvent of
  {S}ome {F}ibered {B}oundary {O}perators}, Comm. Math. Phys. \textbf{390}
  (2022), no.~1, 231--307. \MR{4381189}

\bibitem{KR3}
\bysame, \emph{Products of manifolds with fibered corners}, Ann. Global Anal.
  Geom. \textbf{64} (2023), no.~2, 9. \MR{4624070}

\bibitem{KR2}
\bysame, \emph{{$L^2$}-cohomology of quasi-fibered boundary metrics}, Invent.
  Math. \textbf{236} (2024), no.~3, 1083--1131. \MR{4743516}

\bibitem{KS}
C.~Kottke and M.~Singer, \emph{Partial compactification of monopoles and metric
  asymptotics}, Mem. Amer. Math. Soc. \textbf{280} (2022), no.~1383, vii+110.
  \MR{4493580}

\bibitem{Lawson}
H.~Blaine Lawson, Jr. and Marie-Louise Michelsohn, \emph{Spin geometry},
  Princeton Mathematical Series, vol.~38, Princeton University Press,
  Princeton, NJ, 1989. \MR{1031992}

\bibitem{Li2019}
Yang Li, \emph{A new complete {C}alabi-{Y}au metric on {$\Bbb C^3$}}, Invent.
  Math. \textbf{217} (2019), no.~1, 1--34. \MR{3958789}

\bibitem{MazzeoEdge}
R.~Mazzeo, \emph{Elliptic theory of differential edge operators. {I}}, Comm.
  Partial Differential Equations \textbf{16} (1991), no.~10, 1615--1664.

\bibitem{Mazzeo-MelrosePhi}
R.~Mazzeo and R.~B. Melrose, \emph{Pseudodifferential operators on manifolds
  with fibred boundaries}, Asian J. Math. \textbf{2} (1999), no.~4, 833--866.

\bibitem{MM1987}
Rafe Mazzeo and Richard~B. Melrose, \emph{Meromorphic extension of the
  resolvent on complete spaces with with asymptotically negative curvature}, J.
  Funct. Anal. (1987), 260--310.

\bibitem{Melrose1992}
R.~B. Melrose, \emph{Calculus of conormal distributions on manifolds with
  corners}, Int. Math. Res. Not. (1992), no.~3, 51--61.

\bibitem{MelroseAPS}
\bysame, \emph{The {A}tiyah-{P}atodi-{S}inger index theorem}, A. K. Peters,
  Wellesley, Massachusetts, 1993.

\bibitem{MelroseGST}
\bysame, \emph{Geometric scattering theory}, Cambridge University Press,
  Cambridge, 1995.

\bibitem{Melrose1995}
Richard~B. Melrose, \emph{The eta invariant and families of pseudodifferential
  operators}, Math. Res. Lett. \textbf{2} (1995), no.~5, 541--561. \MR{1359962}

\bibitem{Gabor2019}
G\'{a}bor Sz\'{e}kelyhidi, \emph{Degenerations of {$\bold C^n$} and
  {C}alabi-{Y}au metrics}, Duke Math. J. \textbf{168} (2019), no.~14,
  2651--2700. \MR{4012345}

\bibitem{Tian2006}
G.~Tian, \emph{Aspects of metric geometry of four manifolds}, Inspired by {S}.
  {S}. {C}hern, Nankai Tracts Math., vol.~11, World Sci. Publ., Hackensack, NJ,
  2006, pp.~381--397. \MR{2313343}

\bibitem{Vaillant}
B.~Vaillant, \emph{Index and spectral theory for manifolds with generalized
  fibred cusp}, Ph.D. dissertation, Bonner Math. Schriften 344, Univ. Bonn.,
  Mathematisches Institut, Bonn, math.DG/0102072 (2001).

\bibitem{Vasy2004}
Andr\'{a}s Vasy, \emph{Exponential decay of eigenfunctions in many-body type
  scattering with second-order perturbations}, J. Funct. Anal. \textbf{209}
  (2004), no.~2, 468--492. \MR{2044232}

\end{thebibliography}
\bibliographystyle{amsplain}

\end{document}